\DeclareFontFamily{U}{mathx}{}
\DeclareFontShape{U}{mathx}{m}{n}{<-> mathx10}{}
\DeclareSymbolFont{mathx}{U}{mathx}{m}{n}
\DeclareMathAccent{\widehat}{0}{mathx}{"70}
\DeclareMathAccent{\widecheck}{0}{mathx}{"71}
\newlength{\dhatheight}
\newcommand{\doublehat}[1]{%
	\settoheight{\dhatheight}{\ensuremath{\widehat{#1}}}%
	\addtolength{\dhatheight}{-0.35ex}%
	\widehat{\vphantom{\rule{1pt}{\dhatheight}}%
		\smash{\widehat{#1}}}}
\newcommand{\comment}[1]{}
\newcommand{\pa}{\partial}
\newcommand{\al}{\alpha}
\newcommand{\g}{\gamma}
\newcommand{\eps}{\varepsilon}
\newcommand{\vphi}{{\varphi}}
\newcommand{\s}{{\sigma}}
\newcommand{\oo}{{\omega}}
\newcommand{\ad}{\operatorname{ad}} 
\newcommand{\id}{\operatorname{Id}}
\newcommand{\Id}{\mathbb{I}}
\newcommand{\op}{{\mathrm{Op}}}
\providecommand{\vect}[2]{{\bigl[\begin{smallmatrix}#1\\#2\end{smallmatrix}\bigr]}} \providecommand{\sign}{\mathrm{sgn}\,}  
\providecommand{\sm}[4]{{\bigl(\begin{smallmatrix}#1&#2\\#3&#4\end{smallmatrix}\bigr)}}
\providecommand{\mat}[4]{{\begin{pmatrix}#1&#2\\#3&#4\end{pmatrix}}}
\newtheorem{thm}{Theorem}[section]
\newtheorem*{thm*}{Theorem}
\newtheorem{prop}[thm]{Proposition}
\newtheorem{lemma}[thm]{Lemma}
\newtheorem*{cor*}{Corollary}
\newtheorem{rmk}[thm]{Remark}
\newtheorem{defn}[thm]{Definition}
\numberwithin{equation}{section}
\newcommand{\ii}{{\rm i}}
\newcommand{\gso}{{s_*}}
\newcommand{\so}{{{s}_0}}
\newcommand{\su}{{{ s}_1}}
\newcommand{\om}{{\omega}}
\newcommand{\x}{\xi}
\newcommand{\ov}{\overline}
\newcommand{\C}{{\mathbb C}}
\newcommand{\N}{{\mathbb N}}
\newcommand{\R}{{\mathbb R}}
\newcommand{\T}{{\mathbb T}}
\newcommand{\Z}{{\mathbb Z}}
\newcommand{\cA}{{\mathcal A}}
\newcommand{\cB}{{\mathcal B}}
\newcommand{\cC}{{\mathcal C}}
\newcommand{\cD}{{\mathcal D}}
\newcommand{\cG}{{\mathcal G}}
\newcommand{\cH}{{\mathcal H}}
\newcommand{\cL}{{\mathcal L}}
\newcommand{\cM}{{\mathcal M}}
\newcommand{\cO}{{\mathcal O}}
\newcommand{\calO}{\mathcal{O}}
\newcommand{\cP}{{\mathcal P}}
\newcommand{\cQ}{{\mathcal Q}}
\newcommand{\cR}{{\mathcal R}}
\newcommand{\cS}{{\mathcal S}}
\newcommand{\cT}{{\mathcal T}}
\newcommand{\cU}{{\mathcal U}}
\newcommand{\cY}{{\mathcal Y}}
\newcommand{\fa}{{\mathfrak{a}}}
\newcommand{\fc}{{\mathfrak{c}}}
\newcommand{\fD}{{\mathfrak{D}}}
\newcommand{\fd}{{\mathfrak{d}}}
\newcommand{\fr}{{\mathfrak{r}}}
\newcommand{\fS}{{\mathfrak{S}}}
\newcommand{\fU}{{\mathfrak{U}}}
\newcommand{\ta}{{\mathtt{a}}}
\newcommand{\tb}{{\mathtt{b}}}
\newcommand{\tc}{{\mathtt{c}}}
\newcommand{\td}{{\mathtt{d}}}
\newcommand{\tf}{{\mathtt{f}}}
\newcommand{\tg}{{\mathtt{g}}}
\newcommand{\tm}{{\mathtt{m}}}
\newcommand{\tr}{{\mathtt{r}}}
\renewcommand{\tt}{{\mathtt{t}}}
\newcommand{\tw}{{\mathtt{w}}}
\newcommand{\tA}{{\mathtt{A}}}
\newcommand{\tB}{{\mathtt{B}}}
\newcommand{\tC}{{\mathtt{C}}}
\newcommand{\tD}{{\mathtt{D}}}
\newcommand{\tP}{{\mathtt{P}}}
\newcommand{\tQ}{{\mathtt{Q}}}
\newcommand{\tT}{{\mathtt{T}}}
\newcommand{\tU}{{\mathtt{U}}}
\renewcommand{\d}{\td}
\newcommand{\dhh}{\td_{\vphi_h}}
\newcommand{\0}{{(0)}}
\newcommand{\2}{{(2)}}
\newcommand{\1}{{(1)}}
\newcommand{\im}{{\rm i}}
\newcommand{\jap}[1]{\langle #1 \rangle}
\newcommand{\und}[1]{\underline{#1}}
\newcommand{\uno}{{\bf 1}}
\newcommand{\diag}{\mathop{\mathrm{diag}}}
\newcommand{\bnorm}[1]{{|\mkern-6mu |\mkern-6mu | \,  #1 \,  |\mkern-6mu |\mkern-6mu |}  }
\newcommand{\nnorm}[1]{{\left\vert\kern-0.25ex\left\vert\kern-0.25ex\left\vert #1 
    \right\vert\kern-0.25ex\right\vert\kern-0.25ex\right\vert}}
\def\l@subsection{\@tocline{2}{0pt}{2.5pc}{5pc}{}}
\def\l@subsubsection{\@tocline{3}{0pt}{4.5pc}{5pc}{}}
\renewcommand\tocchapter[3]{%
  \indentlabel{\@ifnotempty{#2}{\ignorespaces#2.\quad}}#3%
}
\begin{document} 
 
\title[Reducibility of 
quasi-linear wave equations on the torus]{Reducibility of 
 Klein-Gordon equations \\
with 
  maximal order perturbations}
\date{}
\author{Massimiliano Berti}
\address{\scriptsize{SISSA, Via Bonomea 265
34136, Trieste Italia}}
\email{berti@sissa.it}

\author{Roberto Feola}
\address{\scriptsize{Dipartimento di Matematica e Fisica, Universit\`a degli Studi RomaTre, 
Largo San Leonardo Murialdo 1, 00144, Roma Italia}}
\email{roberto.feola@uniroma3.it}

\author{Michela Procesi}
\address{\scriptsize{Dipartimento di Matematica e Fisica, Universit\`a degli Studi RomaTre, 
Largo San Leonardo Murialdo 1, 00144, Roma Italia}}
\email{michela.procesi@uniroma3.it}

\author{Shulamit Terracina} 
\address{\scriptsize{Dipartimento di Matematica, Universit\`a degli Studi di Milano, Via Saldini 50, I-20133, Milano Italia}}
\email{shulamit.terracina@unimi.it}

\keywords{Klein-Gordon equations, KAM for PDEs, reducibility, Egorov theorem.} 

\subjclass[2010]{37K55, 35L05. }

   
\begin{abstract}   
We prove that all the solutions of  
a quasi-periodically forced 
 linear Klein-Gordon equation 
$\psi_{tt}-\psi_{xx}+\mathtt{m}\psi+\mathcal{Q}(\omega t)\psi=0  $
where  $ \mathcal{Q} (\omega t) :=
a^{\2}(\oo t, x) \partial_{xx} +  a^{\1}(\oo t, x)\partial_x  + a^{\0}(\oo t, x)  $ is a 
differential operator of {\it order $ 2 $},  parity preserving and reversible,  
are {\it almost periodic} in time and  uniformly bounded  for all times, 
provided that the coefficients $ a^{(2) }, a^{(1) }, a^{(0) } $ are small enough and the 
forcing frequency  $\omega\in \R^{\nu}$ 
belongs to a Borel set of asymptotically full measure. 
This result is obtained by reducing 
the Klein-Gordon equation 
to a diagonal constant coefficient 
system 
with purely imaginary eigenvalues.
The main difficulty
is 
 the presence in the perturbation $ \mathcal{Q} (\omega t) $  
 of the second order  differential 
operator $ a^{\2}(\omega t, x)\partial_{xx} $.
In suitable coordinates 
the  
Klein-Gordon equation  is the composition of two  
 backward/forward quasi-periodic in time perturbed 
 transport equations with non-constant coefficients, up to 
 lower order
 pseudo-differential remainders.  A key idea is to straighten 
this 
 first order pseudo-differential operator 
with bi-characteristics
 through a novel quantitative Egorov analysis.  
 \end{abstract}

\maketitle

\setcounter{tocdepth}{1}
\tableofcontents

\section{Introduction and main results}

We consider a linear 
Klein-Gordon equation 
with periodic boundary conditions $ x \in \T := \R / ( 2 \pi \Z) $   perturbed by 
a time {\it quasi-periodic} 
differential operator of {\it maximal order two}, of the form
\begin{equation}\label{NLW}
\psi_{tt}-\psi_{xx}+\mathtt{m} \psi + a^{\2}(\oo t, x)\psi_{xx} +  a^{\1}(\oo t, x)\psi_x 
+ a^{\0}(\oo t, x)\psi =0\, ,
\end{equation} 
where  the coefficients 
$$
a^{(i)}  :\T^{\nu}_\varphi\times \T_x 
\to \R \, , \quad  (\varphi, x)  \mapsto a^{(i)} (\varphi, x) \, , 
\quad  i = 0,1,2 \, , 
$$ 
are $ {\mathcal C}^{\infty}$  functions,  the frequency 
vector $\omega \in \R^\nu $,  $\nu\in  \N$, 
is  diophantine and 
 belongs to the compact set 
$ \Lambda:=[-1/2,1/2]^{\nu} $, and 
the  mass $\mathtt{m} > 0 $. 

 \smallskip

The goal of this paper 
is to prove a {\it reducibility} result for the equation 
\eqref{NLW} --namely conjugate it
 to a constant coefficient diagonal 
 system with purely imaginary eigenvalues--,  
 assuming 
the parity and reversibility properties  \eqref{oddness}-\eqref{revers} 
and suitable 
smallness conditions for the coefficients $ a^{(i)} $. As a corollary 
we deduce
 an upper bound on the Sobolev norms 
  of the solutions of \eqref{NLW},  uniformly for any time $t $  in $ \R $. 
 Actually we prove that all the solutions of \eqref{NLW} are {\it almost periodic} in time. 
This is the content of the {\it perpetual stability} Theorem \ref{thm:stab}   
and of the reducibility Theorem \ref{main:thm}, from which it is deduced. 
Both these results hold if the frequency vector 
$ \omega $ belongs to  a
subset 
of the diophantine vectors  in $ \Lambda $    
having
 asymptotically full measure,  
that we characterize 
in Theorem \ref{thm:cantorset}.

\smallskip

Partial differential equations (PDEs) as 
\eqref{NLW}  arise, for instance, from the linearization of quasi-linear  or fully nonlinear 
 Klein-Gordon equations --commonly 
 used in mechanics, relativity or elasticity theory--  at a quasi-periodic function. 
Actually, a major 
motivation for the development of reducibility theorems 
comes from KAM theory for nonlinear PDEs. 
Indeed 
it is well known that 
a key step of a Nash-Moser iterative scheme 
is 
 to prove the invertibility of the 
linearized operators 
 at a quasi-periodic approximate solution 
and to provide
 suitable 
tame estimates of its inverse on Sobolev spaces  $ H^s $. 
Such estimates are readily obtained
provided one is able to conjugate 
 the linearized operator 
to  a constant coefficients diagonal 
system, 
via a 
change of variables  which satisfies 
tame estimates in Sobolev spaces as \eqref{stimemappa}. 
The key point of these estimates is to  
require
 only a ``loss of  $ \mu $ 
 derivatives" for $ \| a^{(i)}  \|_{s+\mu} $ 
{\it independently} of $ s $ large. 
This makes Theorem \ref{main:thm} applicable 
 to nonlinear KAM theory 
 (unlike reducibility results which require a non controlled loss of  
derivatives on the coefficients as \cite{Mo,BGMR,BLM}). 
The proof of the tame estimate \eqref{stimemappa}  
 is the reason for 
a lot of technical work 
in the present paper, see comment ii) after Theorem \ref{main:thm}.

\smallskip

 The idea of reducibility has a long history. 
 After the seminal work of Floquet 
 for time periodic ordinary differential equations, 
 it has been 
 effectively  
 extended 
  for quasi-periodic perturbations of 
  constant coefficients  linear ODEs  via KAM techniques 
  (see for instance 
 \cite{Eli} and references therein) and, more recently,  generalized for 
 linear partial differential equations. 
 
 In the infinite dimensional  PDE context 
 the {\it order} 
 of the perturbative operator  $ P $ 
 versus the order of the unperturbed 
one  $ L $
 plays a key 
 role. We mention
 for instance that the  seminal paper \cite{Comb87}  
 was able to deal with 
 time periodic smoothing perturbations of $ 1d $-Schr\"odinger operators, namely if 
 $ ord (P) \ll ord (L) $. 
This result was later   
 improved in  \cite{DuS} for bounded perturbations and 
 in \cite{Bambusi-Graffi} for 
 time quasi-periodic  unbounded 
 perturbations with $ ord (P) < ord (L)-1 $, 
 by means of the Kuksin-lemma idea employed 
 in KAM theory for semilinear perturbations of KdV in \cite{K2}. 
 Actually reducibility results and nonlinear KAM theory have a parallel development.
The possibility to deal with  maximal order perturbations 
where $ ord (P) = ord (L) $  requires the use 
 of pseudo-differential and Fourier integral operator techniques. 
 This idea has been 
 effectively introduced 
in the context of KAM theory for $ 1d $ quasi-linear and fully nonlinear KdV equations
in \cite{BBM14, BBM16} and extended in \cite{BM20, BBHM}
for the water waves equations, see also \cite{BFM1,BFM2,FGww}.
These 
techniques have been also applied 
 for $ 1d$ Schr\"odinger  equations in \cite{FP, FGP1},
for quasi-linear perturbations of the 
DP equation in \cite{FGP19,FGP1},
 for transport equations in \cite{ FGMPmoser,BLM}, 
for quasi-linear perturbations  of large finite gap solutions of KdV in \cite{BKM},  
for the vortex patch equations
of  Euler and $ \alpha$-SQG 
 in 
\cite{BerHMasmo,HHM21,GsIP}, and for $ 3d$ Euler in \cite{BM3d}.
Reducibility results for the Schr\"odinger operator on $ \R $ with 
unbounded confining potential up to the maximal order 
have been proved in \cite{Bam1,Bam2,BM}
thanks to pseudo-differential techniques, 
and in \cite{BGMR} in higher dimension.

So far 
there are no reducibility results  concerning quasi-linear $ 1d$ 
wave or Klein-Gordon  equations as \eqref{NLW} with $ a_2 (\varphi , x ) \neq 0 $,
a fortiori neither KAM theorems for nonlinearities with $ 2 $ space derivatives. 
This is the gap that the present paper aims to fill. 
 For linear wave or Klein-Gordon equations 
 there are only some reducibility 
 results  \cite{SLX,Mo, FM,LF1} for bounded perturbations, 
the KAM results  \cite{Ku, W1, Po3,  BoK, Po2, CY} for semilinear nonlinearities, 
and those 
for derivative wave equations 
  \cite{
Berti-Biasco-Procesi-Ham-DNLW,
Berti-Biasco-Procesi-rev-DNLW} 
(with one space derivative in the nonlinearity).
The main source of difficulties lies in 
straightening a first order quasi-periodic pseudo-differential 
operator with $ 2$ characteristic directions 
(due to the term $ a^{\2}(\oo t, x)\pa_{xx} $ in \eqref{NLW})
by conjugation with suitable Fourier integral  operators 
 through a novel quantitative Egorov analysis. 
 We discuss some of these ideas at the end of the introduction.

\smallskip

Before stating precisely our results we 
record some literature about PDEs in 
higher  space dimension. 
All the results concern semilinear perturbations.
The reducibility approach requires 
the verification of the so called 
``second order Melnikov" non-resonance conditions, which concern 
lower bounds for the difference of the eigenvalues of the linearized operators. 
For $ 1d$ Schr\"odinger and wave  equations \cite{Ku, Ku1,Po3, W1, BoK, KP, Po2, CY}
 this is  possible.  
On the other hand for 
PDEs in higher space dimension the eigenvalues are much less separated.
Reducibility and KAM results for Sch\"rodinger 
equations on 
$ \T^d $, $ d \geq 2 $,  have been nevertheless obtained in
\cite{EK2} and \cite{GY, EK, GYX, PP, EGK, GrP} for semilinear perturbations, and 
for Klein-Gordon  equations on the sphere in 
\cite{GrePatu,  FGNzoll,FGsphere}.
On the other hand, for wave equations on $ \T^d $ 
the second order Melnikov 
non-resonance conditions are violated, see \cite{E17}. 
Existence of quasi-periodic solutions 
for semilinear wave equations in $ \T^d $  
has been nevertheless proved in 
 \cite{ B5, BW1, BB12, BCP, CM, Berti-Bolle_book}
with  ``multiscale" techniques, not based on 
 reducibility arguments,  
 stemming from \cite{CW,Bo1, B-Gafa}. 
It is 
not clear 
if this approach also provides  stability of the   solutions of the 
linearized equation at the quasi-periodic solutions, as 
a reducibility result does. 
If the frequency vector satisfies only a diophantine condition 
(and not also second order Melnikov non-resonance conditions) 
non-uniform upper bounds as $t^\eps$ for the growth of the 
 Sobolev norms  have been obtained
 for linear Schr\"odinger  equations  
 in \cite{Blin,D,BGMR2}, see also \cite{Mo18} for the half wave. 

\smallskip

Let us now state precisely our results. First of all, in order to 
guarantee the reducibility of \eqref{NLW} to a diagonal system
with purely imaginary eigenvalues --this means avoiding
  ``dissipative/exploding" dynamical effects--
some hypotheses are in order. 
In this work we assume that  the coefficients $a^{(i)}(\vphi,x)$ in \eqref{NLW} satisfy the following conditions
\begin{align}
	&a^{(i)}(\vphi,-x)=a^{(i)}(\vphi,x)\,,\; i=0,2\,,
	\qquad  \quad
	a^{(1)}(\vphi,-x)=-a^{(1)}(\vphi,x)\,, \label{oddness} \\
	& a^{(i)}(-\vphi,x)=a^{(i)}(\vphi,x)\,,\; i=0,1,2\,,
	\label{revers}
\end{align}
which endow \eqref{NLW} with a {\it parity} preserving,  {\it reversible} 
structure, as we shall explain below. 

Our perpetual stability result is the following, 
where we denote by $H^{s}(\mathbb{T}^d,\R)$ the  Sobolev spaces
of real-valued periodic functions of $ d $ variables.

\begin{thm}{\bf (Sobolev stability).}\label{thm:stab}
Assume \eqref{oddness}, \eqref{revers} and 
fix any $ \mathtt{m} >0  $. Let  $\nu\in\N$ and
fix  $\so>(\nu+7)/2$.
There is $\bar{s}:=\bar s(\nu)$ and for any $s>\so$ there exists  $ \mathfrak{d}_0(s) > 0 $  such that, 
assuming the smallness condition 
\begin{equation}\label{condsaggia}
 \mathfrak d:=\max_{i=0,1,2}\|a^{(i)}\|_{H^{\bar{s}}(\T^{\nu+1},\R)}\leq\mathfrak{d}_{0}(s)\,, 
\end{equation}
then there exists a Borel set of frequencies $\mathcal{C}_{\infty} \subset \Lambda$ 
of asymptotically full measure as $\mathfrak{d}\to 0$,
 such that for any $\om\in \mathcal{C}_{\infty}$ and  for any initial condition 
$(\psi_0,v_0)\in H^{s+1}(\mathbb{T},\R)\times H^{s}(\mathbb{T},\R) $,
 the unique global in time solution $ \psi(t,\cdot)$ in $H^{s+1}(\mathbb{T},\R)$ of 
the Cauchy problem
\begin{equation}\label{cauchyProb}
\left\{\begin{aligned}
&\psi_{tt}-\psi_{xx}+\mathtt{m} \psi + a^{\2}(\omega t, x)\psi_{xx} 
+  a^{\1}(\omega t, x)\psi_x 
+ a^{\0}(\omega t, x)\psi =0\,, 
\\&
\psi(0,x)=\psi_0(x)\,,
\\&
\pa_{t}\psi(0,x)=v_0(x)\,,
\end{aligned}\right.
\end{equation}
is almost-periodic in time  and  satisfies, for some $C(s)>0$,  
\begin{equation}\label{stimaTempoinfinito}
\sup_{t\in\R}\big(\|\psi(t,\cdot)\|_{H^{s+1}(\T,\R)}
+\|(\pa_{t}\psi)(t,\cdot)\|_{H^{s}(\T,\R)}
\big)\leq C(s)
\big(\|\psi_0\|_{H^{s+1}(\T,\R)}
+\|v_0\|_{H^{s}(\T,\R)}\big) \, .
\end{equation}

\end{thm}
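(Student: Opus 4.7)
The plan is to derive Theorem \ref{thm:stab} as a direct corollary of the reducibility Theorem \ref{main:thm}, which I take as a black box. First, I would recast \eqref{cauchyProb} as a first-order system: setting $A := \sqrt{-\partial_{xx} + \mathtt{m}}$ and introducing the complex variable
\begin{equation*}
u := \tfrac{1}{\sqrt{2}}\bigl(A^{1/2}\psi + \im\, A^{-1/2}\psi_t\bigr),
\end{equation*}
the Klein-Gordon equation becomes an evolution equation of the form $\partial_t u = -\im A\, u + \mathcal{P}(\omega t)[u,\bar u]$ coupled with its complex conjugate, where $\mathcal{P}(\omega t)$ is built from $\mathcal{Q}(\omega t)$ by conjugation with $A^{\pm 1/2}$. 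The parity and reversibility conditions \eqref{oddness}-\eqref{revers} translate precisely into parity preservation and reversibility of the resulting first-order operator.

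Next, I would invoke Theorem \ref{main:thm} to obtain, for $\omega$ in the Cantor-type set $\cC_\infty$, a quasi-periodic family of invertible transformations $\mathcal{M}(\omega t)$ conjugating the above first-order operator to a diagonal, constant-coefficient system $\partial_t w = -\im\,\mathrm{diag}(\mu_j)_{j\in\Z}\, w$ with purely imaginary eigenvalues $\mu_j \in \R$; reality of the $\mu_j$'s is forced precisely by the reversible/parity-preserving structure. By the tame estimate \eqref{stimemappa}, both $\mathcal{M}$ and $\mathcal{M}^{-1}$ act as uniformly-in-time bounded operators on $H^s(\T)$ with a fixed loss $\mu$ independent of $s$, provided the low-norm smallness condition \eqref{condsaggia} holds for the appropriate $\bar s$.

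In the diagonalized variables, the solution is the explicit quasi-periodic flow $w_j(t) = e^{-\im \mu_j t} w_j(0)$, which preserves every Sobolev norm of $w(0)$ and is almost periodic in $H^s$ with frequency module $\{\mu_j\}_{j\in\Z}$. Transporting back via $\mathcal{M}(\omega t)^{-1}$, using the $H^s$-equivalence $\|u\|_{H^s(\T)} \sim \|\psi\|_{H^{s+1}(\T)} + \|\partial_t \psi\|_{H^s(\T)}$, and restoring real values through the conjugation symmetry inherited from \eqref{oddness}-\eqref{revers}, yields simultaneously the uniform bound \eqref{stimaTempoinfinito} and the almost-periodicity of $t\mapsto \psi(t,\cdot)$ in $H^{s+1}(\T,\R)$.

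The principal obstacle is entirely absorbed into Theorem \ref{main:thm}: one must verify that the perturbation $\mathcal{P}(\omega t)$ induced by $\mathcal{Q}(\omega t)$ falls in the pseudo-differential class handled by the reducibility theorem, that its symmetry type (parity-preserving, reversible) is correctly propagated through the symmetrization to $u$, and that the smallness quantification matches \eqref{condsaggia}. The delicate quantitative feature on which the whole scheme rests is that the loss $\mu$ in \eqref{stimemappa} be fixed and independent of $s$, so that a single low-norm smallness assumption on the coefficients $a^{(i)}$ suffices to control the entire scale of Sobolev spaces appearing in \eqref{stimaTempoinfinito}; the subtler reducibility work of the paper is exactly what delivers this property.
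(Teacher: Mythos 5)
Your proposal follows essentially the same route as the paper: recast as a first-order complex system, apply Theorem~\ref{main:thm} as a black box, and transport the explicit quasi-periodic flow back through the tame, $s$-independent-loss transformation. Two small points are worth flagging. First, Theorem~\ref{main:thm} is stated for the specific symmetrization $\mathcal{C}$ of \eqref{complexVar}, i.e.\ $u=\tfrac{1}{\sqrt2}(\mathtt{D}_{\mathtt m}\psi-\im v)\in H^s$ (so that $\mathcal{C}:H^{s+1}\times H^s\to\mathcal{H}^s$ is an isomorphism), not the $A^{\pm1/2}$-symmetrized variable you write, which lives in $H^{s+1/2}$; to invoke the theorem as a black box one should use the paper's $\mathcal{C}$ rather than a close cousin. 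Second, passing from the $\gamma$-dependent smallness \eqref{smallCondCoeff} and measure estimate \eqref{measureestimateGGG} to the purely $\mathfrak d$-dependent statements \eqref{condsaggia} and ``asymptotically full measure as $\mathfrak d\to0$'' requires an explicit choice linking the two parameters: the paper takes $\gamma=\mathfrak d^{1/4}$, which makes $\gamma^{-7/2}\mathfrak d=\mathfrak d^{1/8}\to0$ while $\gamma\to0$. Your phrase ``the smallness quantification matches \eqref{condsaggia}'' skips this step, which is the only genuinely nontrivial bookkeeping in an otherwise routine corollary; you should also record (as the paper does in one sentence) that global well-posedness of \eqref{cauchyProb} is a standard quasi-linear hyperbolic fact, so that there is a solution to which the transformation can be applied.
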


Theorem \ref{thm:stab}
 is deduced as a corollary of the 
reducibility KAM Theorem \ref{main:thm}  for the Klein Gordon
equation \eqref{NLW}.
Before stating it, we make 
the following comments:
\begin{enumerate}
\item With respect to the previous 
 literature 
regarding Klein-Gordon  equations, 
the main {\it novelty} of our results 
is  the presence in \eqref{NLW} of the second order perturbative 
operator $ a^{\2}(\omega t, x)\partial_{xx} $.
 This  is a major 
 source of difficulties and 
requires  
a completely  different strategy.
 \item 
Other   properties on the coefficients of \eqref{NLW}, different
than the parity and reversibility assumptions 
\eqref{oddness}-\eqref{revers}, for example the requirement  that 
\eqref{NLW}  has a Hamiltonian structure, could 
imply a perpetual stability result as \eqref{stimaTempoinfinito}.
For definiteness  
we  decided to consider in this paper 
the case \eqref{oddness}-\eqref{revers} which contains all the difficulties of the problem. 
\item If $ \mathtt{m} < 0 $ clearly 
Theorem \ref{thm:stab} does not hold for 
the presence of 
finitely many ``hyperbolic" directions: consider 
for instance \eqref{NLW} with all the $ a^{(i)} = 0 $. 
If $ \mathtt{m} = 0 $ 
the stability/instability of \eqref{NLW}  depends on the coefficients $ a^{(i)}$: consider for instance 
$ \psi_{tt} - \psi_{xx} + \epsilon \psi = 0 $  
with  $ \epsilon  $ positive or negative. 
\end{enumerate}

In order to state Theorem \ref{main:thm},   it is first 
convenient to rewrite \eqref{NLW}
as a first order linear system
\begin{equation}\label{patX}
\partial_t \vect{\psi}{v} = X(\oo t) \vect{\psi}{v} \, , 
\end{equation}
where $ X(\oo t) $ is the second order quasi-periodic linear operator 
\begin{equation}\label{firstorder}
X(\oo t) := \begin{pmatrix}
0 & 1 \\
-\tD_{\mathtt{m}}^{2} 
- a^{\2}(\omega t, x)\pa_{xx} -  a^{\1}(\omega t, x)\pa_x - a^{\0}(\omega t, x) & 0 
\end{pmatrix} 
\end{equation}
and $\tD_{\mathtt{m}} := \sqrt{-\partial_{xx} + \mathtt m }$ is the Fourier multiplier 
\begin{equation}\label{TDM}
\tD_{\mathtt{m}}e^{\ii j\cdot x}=\tD_{\mathtt{m}}(j)e^{\ii j\cdot x}\,,
\qquad \tD_{\mathtt{m}}(j):=\sqrt{j^{2}+\mathtt{m}}\,,
\qquad \forall\, j\in \mathbb{Z}\,.
\end{equation} 
By assumption \eqref{oddness}, 
the linear operator $ X(\oo t) $ in  \eqref{firstorder} 
commutes with the  involution
 \begin{equation}\label{invoPP}
({\mathcal P} \vect{\psi}{v})(x) := \vect{\psi(-x)}{v(-x)}\, , \quad {\mathcal P}^2 = \Id:=\sm{1}{0}{0}{1} \, , 
 \end{equation}
 namely
 \begin{equation}\label{invoPP2}
 X(\oo t)  \circ {\mathcal P} = {\mathcal P} \circ  X(\oo t) \,,\quad \forall t\in \R \, , 
 \end{equation}
  and therefore the 
 subspaces of \emph{odd/even} functions in $x $ 
are invariant under  the flow of \eqref{patX}.
We say that $ X(\omega t) $ is \textit{parity preserving}. 
Moreover assumption \eqref{revers} implies the reversibility property
\begin{equation}\label{invorev2}
E\circ X(\oo t)=-X(-\oo t)\circ E
\end{equation}
where  $E$
is  the involution
\begin{equation}\label{involutionReale}
E\vect{\psi}{v}=\vect{\psi}{-v}\,,
\qquad E:=\sm{1}{0}{0}{-1}\,,
\qquad E^{2}=\Id\,.
\end{equation}
We say that  $ X(\omega t) $ is {\it reversible}. 
We remark that this is tantamount to  the Moser reversibility property 
$ (E\circ\mathcal{P})\circ  X (\oo t) =-X(-\omega t) \circ (E\circ\mathcal{P})  $ introduced in \cite{Mo67}.  

 We  study system 
 \eqref{patX}
 on the phase spaces
 \begin{equation}\label{spazioODD2}
 \mathcal{X}_{\R}^{s}:=H^{s+1}(\mathbb{T},\R)\times H^{s}(\mathbb{T},\R)\,,
 \end{equation}
 which is the direct sum of the invariant subspaces
 \begin{equation}\label{spazioODD}
 \mathcal{X}_{odd,\R}^{s}:=\mathcal{X}_{\R}^{s}\cap \big\{ 
 \vect{\psi(-x)}{v(-x)}=-\vect{\psi(x)}{v(x)}\big\}\,, \qquad 
 \mathcal{X}_{even,\R}^{s}:=\mathcal{X}_{\R}^{s}\cap \big\{ 
 \vect{\psi(-x)}{v(-x)}=\vect{\psi(x)}{v(x)}\big\}\,.
 \end{equation}
It  is convenient to introduce the complex variables
\begin{equation}\label{complexVar}
\begin{aligned}
&\qquad \vect{u}{\bar{u}}:=\mathcal{C}\vect{\psi}{v} \qquad 
\Leftrightarrow \qquad \vect{\psi}{v}
=\mathcal{C}^{-1}\vect{u}{\bar{u}}
\\
&
\mathcal{C}:=\tfrac{1}{\sqrt{2}}\left(\begin{matrix}
\tD_{\mathtt{m}} & -\ii
\\ 
\tD_{\mathtt{m}} & \ii
\end{matrix}
\right)\,,
\qquad 
\mathcal{C}^{-1}:=\tfrac{1}{\sqrt{2}}
\left(\begin{matrix}
\tD_{\mathtt{m}}^{-1} & \tD_{\mathtt{m}}^{-1}
\\ 
\ii & -\ii
\end{matrix}
\right)\, , 
\end{aligned}
\end{equation} 
in which 
system \eqref{patX} becomes the first order complex system 
\begin{equation}
\label{coordcomp}
\pa_{t}U=\ii E\mathfrak{D}(\omega t)U\,,\quad U :=\vect{u}{\bar{u}} \, , 
\end{equation}
where $\mathfrak{D}(\omega t) $ is the matrix of complex operators 
\begin{equation}\label{firstorderComplex}
\mathfrak{D}(\omega t) :=
\mathtt{D}_{\mathtt{m}}\Id 
+\uno
\Big(
\tfrac{1}{2}a^{(2)}(\omega t,x)\pa_{xx}+
\tfrac{1}{2}a^{(1)}(\omega t,x)\pa_{x}+ \tfrac{1}{2}a^{(0)}(\omega t,x)
\Big){\mathtt{D}_{\mathtt{m}}^{-1}}
\end{equation}
and
\begin{equation}\label{matE}
\uno := \sm{1}{1}{1}{1}\,.
\end{equation}
The main advantage of introducing the variable $ U $ 
is that in \eqref{firstorderComplex} 
the constant coefficients part $\mathtt{D}_{\mathtt{m}}\Id $ is diagonal. We also note that the map 
$ \mathcal{C} $ in \eqref{complexVar} is an isomorphism 
\begin{equation}\label{lambolambo}
\mathcal{C} \,:\, \mathcal{X}_{\R}^{s}\to 
\mathcal{H}^{s}\,,
\qquad
\mathcal{C} \,:\, \mathcal{X}_{p,\R}^{s}\to 
\mathcal{H}_{p}^{s}\,,\quad p\in\{odd, even\}\, , 
\end{equation}
between the spaces \eqref{spazioODD} and 
the complex spaces 
\begin{equation}\label{spazioODDComplessi}
\begin{aligned} 
\mathcal{H}^{s}&:= \left\{U=\vect{u^+}{u^-}\in H^{s}(\T;\C^{2})\;:\quad  \overline{u^+}=u^- \right\} 
\\
\mathcal{H}_{odd}^{s}&:=\mathcal{H}^{s}\cap \big\{U(-x)=-U(x)\big\}\,, \qquad
\mathcal{H}_{even}^{s}:=\mathcal{H}^{s}\cap \big\{U(-x)=U(x)\big\}\, \, .
\end{aligned}
\end{equation}
In the  complex coordinates \eqref{complexVar},
the involution $ \mathcal{P} $ in \eqref{invoPP} 
reads $\mathcal{C}\circ\mathcal{P}\circ\mathcal{C}^{-1}=\mathcal{P}$, 
and  the involution $E$ in \eqref{involutionReale} 
becomes
\begin{equation}\label{involutionCompl}
S:=\sm{0}{1}{1}{0}=\mathcal{C}\circ E\circ\mathcal{C}^{-1} \, , \quad 
S\vect{u}{\bar{u}}=\vect{\bar{u}}{{u}} \,  .
\end{equation}

\begin{thm}{\bf (Reducibility).}\label{main:thm}
Assume \eqref{oddness}-\eqref{revers} 
and fix any $ \mathtt{m} > 0 $. Let  $\nu\in\N$ and fix $\so>(\nu+7)/2$.
There exists $\mu>0$ and for any $\su \geq \so $,  
there exists $\delta_0(\su)>0$ such that 
for any $\gamma\in(0,\tfrac{1}{2})$, assuming the smallness condition
\begin{equation}\label{smallCondCoeff}
\gamma^{-7/2}\|a^{(i)}\|_{H^{\so+\mu}(\T^{\nu+1},\R)}\leq \delta_{0}(\su)\,,\qquad i=0,1,2\,,
\end{equation}
then there is 
a Borel set $\mathcal{G}_{\infty}(\gamma)\subseteq\Lambda$ 
with asymptotically full Lebesgue measure, i.e.
\begin{equation}\label{measureestimateGGG}
|\Lambda\setminus\mathcal{G}_{\infty}(\g)|\to 0\quad \text{as}\quad \gamma\to0\,,
\end{equation}
such that the following holds. For any $\omega\in \cG_{\infty}(\g)$ 
there exists
 a bounded invertible map  $\mathfrak{F}(\omega t) $, 
 depending quasi-periodically 
in time, 
$ \mathfrak{F}(\varphi) : H^{s}(\T,\C^2)\to H^{s}(\T,\C^2) $ for any
$ \so\leq s\leq \su $, 
satisfying 
\begin{enumerate}
	\item{\bf Real-to-real}: $\mathfrak{F}(\varphi): \mathcal{H}^{s}\to \mathcal{H}^{s}$; 
\item {\bf Parity preserving}: $\mathfrak{F}(\varphi): \mathcal{H}_{odd}^{s}\to \mathcal{H}_{odd}^{s}$ and 
$
\mathfrak{F}(\varphi): \mathcal{H}_{even}^{s}\to \mathcal{H}_{even}^{s}$; 
\item {\bf Reversibility preserving}:  
 ${S}\circ \mathfrak{F}(\varphi)=\mathfrak{F}(-\varphi)\circ {S}$;
\item{\bf Tameness}: the operator $ \mathfrak{F}(\varphi) $, as well 
as its inverse $ \mathfrak{F}(\varphi)^{-1}  $,  satisfies, for any 
$\so\leq s\leq \su $,
for any $h\in \mathcal{H}^{s}$, for some $C(s)>0$,   the tame estimates 
\begin{align}
& \qquad \| \mathfrak{F}(\varphi ) h\|_{H^{s}(\T,\C^2)}\leq
C(s) 
\|h\|_{H^{s}(\T,\C^2)}+ \frac{C(s) }{\gamma^{7/2}}\sup_{i=0,1,2} \|a^{(i)}\|_{H^{s+\mu}(\T^{\nu+1},\R)} {\|h\|_{H^{\so}(\T,\C^2)}} , \label{stimemappa} 
\end{align}
In addition,  for any $w\in H^{s}(\T^{\nu+1},\C^2)$, 
\begin{equation}\label{nuovatamediff}
\| \mathfrak{F}(\varphi ) w\|_{H^{s}(\T^{\nu+1},\C^2)}\leq
C(s) 
\|w\|_{H^{s}(\T^{\nu+1},\C^2)}
+ \frac{C(s) }{\gamma^{7/2}}\sup_{i=0,1,2} \|a^{(i)}\|_{H^{s+\mu}(\T^{\nu+1},\R)} {\|w\|_{H^{\so}(\T^{\nu+1},\C^2)}}.
\end{equation}
\item {\bf Reducibility}: 
there exist Lipschitz 
 functions $\mathfrak{c}\,,\;\mathfrak{r}_{j}^{\s j} : \Lambda \to\R\,,$ $j\in \Z$,
satisfying the parity properties  
\begin{equation}\label{lipfinal}
\mathfrak{r}_{j}^{\s j}=\mathfrak{r}_{-j}^{-\s j}\,,\qquad \forall 
j\in\N_{0}\,,\s=\pm\,,
\end{equation}
and the estimates 
\begin{align}\label{pressione}
& \qquad \sup_{\omega\in\Lambda}|\mathfrak{c}(\omega)|
+
\gamma\sup_{\omega_1\neq\omega_2}
\frac{|\mathfrak{c}(\omega_1)-\mathfrak{c}(\omega_2)|}{|\omega_1-\omega_2|}
\leq C  \sup_{i=0,1,2} \|a^{(i)}\|_{H^{\so+\mu}(\T^{\nu+1},\R)}
\\
&\sup_{j\in\N_{0}, \s=\pm} \left(\sup_{\oo\in\Lambda} |\fr^{\s j}_{j}(\oo)| + \g^{3/2} \sup_{\oo_{1}\neq \oo_{2}} \frac{|\fr^{\s j}_{j}(\oo_{1})-\fr^{\s j}_{j}(\oo_{2})|}{|\oo_{1}-\oo_{2}|}\right) \leq 
\frac{C}{ \g^{2}} \sup_{i=0,1,2}\|a^{(i)}\|_{H^{\so+\mu}(\T^{\nu+1}, \R)}\,, \notag 
\end{align}
such that 
 $U=\vect{u}{\bar{u}}$ solves \eqref{coordcomp}
 if and only if  $ Z :=\vect{z}{\bar{z}}:=\mathfrak{F}(\omega t)\vect{u}{\bar{u}} $
  solves the system 
 \begin{equation}\label{firstorderBis}
\pa_{t}Z= 
\left(\begin{matrix}
\ii \mathfrak{D}_{\infty}^{+}z \\
-\ii \mathfrak{D}_{\infty}^{+}\bar{z}
\end{matrix}\right) 
\end{equation}
where, in the Fourier representation $z=\sum_{j\in\Z}e^{\ii jx}z_j$, one has 
\begin{equation}\label{Dinfinito}
\mathfrak{D}_{\infty}^{+}z := ((1+\mathfrak{c})\sqrt{\mathtt{m}}+\mathfrak{r}_{0}^{0})z_0+ \sum_{j\in \Z\setminus\{0\}} e^{\ii j x}\Big(\big((1+\mathfrak{c})\tD_{\tm}(j)+ \frac{\mathfrak{r}_{j}^{j}}{\langle j\rangle} \big)z_{j}+
\frac{\mathfrak{r}_{j}^{-j}}{\langle j\rangle}z_{-j}  \Big)\,.
\end{equation}
\end{enumerate}
All the constants are uniform for $ \mathtt{m} > 0 $ in a compact set.  
\end{thm}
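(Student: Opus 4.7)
I would prove the theorem in two phases: a \emph{regularization} that conjugates \eqref{coordcomp} to a diagonal constant-coefficient operator plus a smoothing remainder, and a \emph{KAM diagonalization} that removes the remainder. Every conjugation must preserve the structures of items (1)--(3) (real-to-real, parity-preserving, reversibility-preserving), so that the spectrum remains purely imaginary and the normal form \eqref{firstorderBis} arises in the limit; the tameness of item (4) is enforced by working exclusively with pseudo-differential and Fourier integral operators whose symbols depend in a tame way on the coefficients $a^{(i)}$.

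\textbf{Regularization phase.} The operator $\mathfrak D(\omega t)$ in \eqref{firstorderComplex} consists of the diagonal constant-coefficient part $\mathtt D_{\mathtt m}\Id$ plus a first-order perturbation $\uno\,(\tfrac12 a^{(2)}\pa_{xx} + \tfrac12 a^{(1)}\pa_x + \tfrac12 a^{(0)})\mathtt D_{\mathtt m}^{-1}$ which couples $u$ and $\bar u$ through the rank-one matrix $\uno$. After a preliminary symmetrization removing the $a^{(1)}$ contribution, I would block-diagonalize at principal symbol level: the matrix $\ii E\uno=\ii\sm{1}{1}{-1}{-1}$ combined with the principal symbol of $\pa_{xx}\mathtt D_{\mathtt m}^{-1}$ produces an explicit $2\times 2$ matrix of order one whose eigenvalues are $\pm \ii\sqrt{1-a^{(2)}(\omega t,x)}\,|\xi|$, so conjugation by a (matrix-valued) pseudo-differential operator of order zero decouples the system into two blocks of the form $\pm\sqrt{1-a^{(2)}}\,|D| + \text{lower order}$, corresponding to the two characteristic directions of the wave operator. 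I would then \emph{straighten} the first-order part of each block to constant coefficients by conjugating with a Fourier integral operator $\Phi^{\pm}$ associated to a quasi-periodic diffeomorphism $x\mapsto x+\beta^{\pm}(\omega t,x)$, where $\beta^{\pm}$ solves a homological equation of the form $\omega\cdot\pa_\vphi \beta^{\pm} + \sqrt{1-a^{(2)}}\,(1+\pa_x\beta^{\pm}) = \mu^{\pm}$; this is solvable under a Diophantine condition on $\omega$. The effect of $\Phi^{\pm}$ on the full symbol of the operator is controlled by a \emph{quantitative Egorov theorem}. Sub-principal symbols of decreasing order are then removed one at a time via conjugations $\mathrm{Op}(e^{\ii b_k(\vphi,x,\xi)})$, and the zero-order average is normalized, bringing the operator to $(1+\mathfrak c(\omega))\mathtt D_{\mathtt m}\Id + \cR(\omega t)$ with $\cR$ of order $-M$ for any prescribed $M$.

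\textbf{KAM phase.} For the residual smoothing operator $\cR$ I would apply a quadratic KAM reducibility scheme for reversible operators of Kuksin--Eliasson type: at each step solve a homological equation $\omega\cdot\pa_\vphi S + [\cN,S] = \cR - \mathrm{diag}(\cR)$, whose solvability requires first and \emph{second-order Melnikov} non-resonance conditions on the successive approximations of the eigenvalues, and absorb the resonant part into the new normal form. The parity assumption \eqref{oddness} forces the eigenvalues at indices $\pm j$ to interact in $2\times 2$ blocks, producing the off-diagonal correction $\mathfrak r_j^{-j}$ in \eqref{Dinfinito}, while the reversibility \eqref{revers} ensures all refined eigenvalues remain real. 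The Cantor set $\cG_\infty(\gamma)$ arises as the intersection of all Melnikov conditions; its asymptotic full measure follows from the Lipschitz estimates \eqref{pressione} by a standard Fubini argument.

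\textbf{Main obstacle.} The critical technical difficulty is achieving the tame estimate \eqref{stimemappa} with a loss of derivatives $\mu$ that is \emph{independent of $s$}. Standard Egorov analyses produce losses scaling with $s$ because the symbol of the straightened operator involves iterated derivatives of the generating function $\beta^{\pm}$, which itself depends on $a^{(2)}$. I would overcome this by developing a symbol calculus adapted to the quasi-periodic setting in which tame Sobolev estimates are propagated through each conjugation by carefully separating contributions that are small in the low norm $H^{\so+\mu}$ (absorbed in the smallness \eqref{smallCondCoeff}) from those that need only linear control in the high norm $H^s$. This $s$-independent bookkeeping is precisely what makes Theorem \ref{main:thm} usable in a subsequent Nash--Moser iteration for genuinely quasi-linear Klein--Gordon equations, and is the step where the bulk of the technical work of the paper is expected to reside.
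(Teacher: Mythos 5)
Your proposal follows essentially the same route as the paper: symmetrize to decouple the $u,\bar u$ blocks, straighten the first-order bi-characteristic operator by composition with torus diffeomorphisms (quantitative Egorov), reduce the zero-order part, then run a reversible KAM scheme with second-order Melnikov conditions, with the whole effort concentrated on an $s$-independent loss of derivatives. That said, a few of your steps are imprecise in ways that matter and are worth flagging.

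First, the straightening map is not a Fourier integral operator in the usual sense: within a single diagonal block the operator $\omega\cdot\partial_\varphi - \ii\lambda|D|$ is transport with coefficient $-\lambda$ on $\Pi_+$ and $+\lambda$ on $\Pi_-$, so one must conjugate by $L = \cC_{\alpha_+}\Pi_+ + \cC_{\alpha_-}\Pi_-$, which mixes two distinct diffeomorphisms with the Szeg\"o projectors. The cross-terms $[\Pi_\sigma,\cC_{\alpha_{\sigma'}}]$ and $\Pi_+\Pi_-$ are smoothing but not zero, and showing that $L$ is invertible and that $L\,A\,L^{-1}$ has the right structure requires nontrivial estimates (Lemma~\ref{lemma: invertibilita operatore trasporto} and Lemma~\ref{lemmaApiumeno} in the paper). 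Your homological equation as written has the same sign of $\sqrt{1-a^{(2)}}$ for both $\beta^+$ and $\beta^-$; the two characteristics require opposite signs (cf.\ \eqref{veropiace} vs.\ \eqref{contoabotta}), and the parity/reversibility structure forces $\beta_-(\vphi,x)=-\beta_+(\vphi,-x)$, which also guarantees a single $\mathfrak c$. Second, the Egorov theorem cannot be applied to $|D|$ alone and then the pseudo-differential tail treated as lower order: because $\tD_\tm=\sqrt{-\partial_{xx}+\tm}$ is genuinely pseudo-differential (not differential nor differential plus smoothing with special structure as in DP), the paper has to run its Egorov analysis directly on the full dispersion relation and extract the leading term a posteriori, with the subtle quantitative bounds \eqref{zeppelin}--\eqref{stimerestoEgorov1} in which the Sobolev indices split as $k_1+k_2+k_3=s$. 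Finally, the ``careful bookkeeping'' you invoke is realized in the paper by a concrete Banach algebra of Bony-smoothing couples $(M,R)$ with norm $\bnorm{\cdot}_s$, which is what actually propagates the $s$-independent loss through composition, inversion, and the KAM iteration; without some explicit such structure the tame estimate \eqref{stimemappa} would remain out of reach.
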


Let us make some comments. 
\begin{enumerate}
\item[i)] 
Item 5 is the reducibility result: it means that $\mathfrak{F}(\oo t)$ conjugates the quasi-periodic  variable coefficients 
 system \eqref{coordcomp} to
the  constant in time $2\times 2$ block diagonal system \eqref{firstorderBis}-\eqref{Dinfinito},
which is actually  diagonal 
in the real basis $ \{ 1 $, $ (\cos(jx),\sin(jx))_{j\in \N} \} $,  see item iii) below.  
\item[ii)] We remark that  the tame 
estimate  \eqref{stimemappa}  controls the $H^s$ norm of $\mathfrak{F} h$  in terms of the  $H^s$ norm of $h$ and of the $H^{s+\mu}$ norm of the coefficients $ a^{(i)}$, where 
the ``loss of derivatives"   $ \mu $ 
 is {\it independent} of $ s $. 
This makes \eqref{stimemappa} applicable to 
construct quasi-periodic solutions of nonlinear Klein-Gordon equations
via a Nash-Moser-KAM iterative scheme, see e.g. 
\cite{BB12,Berti-Bolle_book,BCP,BM20,BBHM, FP,BFM1,BFM2,BKM,BerHMasmo}. 
On the other hand an estimate as \eqref{stimemappa} 
with a  loss $ \mu (s) $  depending on $ s $ (as is done in most results on reducibility, see for instance \cite{Mo,BGMR,BLM})  would be much easier to deduce,
but it would not be sufficient 
for KAM applications to nonlinear PDEs.
We remark that according to \cite{LZ} a Nash-Moser scheme may work 
only under a tame estimate with $\mu(s)< 2 s $. 
\end{enumerate}

The Cantor set $\mathcal{G}_{\infty} (\gamma) $
in Theorem \ref{main:thm} is characterized 
 in terms of the eigenvalues $\lambda_{j,\pm}^{(\infty)}$ of the operator
$ \mathfrak{D}_{\infty}^+ $ as follows. 

\begin{thm}{\bf (Cantor set $\mathcal{G}_{\infty} (\gamma) $).}\label{thm:cantorset}
Under the assumptions of Theorem \ref{main:thm}
setting for $\omega\in \Lambda$
\begin{equation}\label{finaleigenv}
	\begin{aligned}
\lambda_{0,\pm}^{(\infty)} :=\lambda_{0,\pm}^{(\infty)}(\omega)&=
(1+\mathfrak{c})\sqrt{\mathtt{m}}+\mathfrak{r}_{0}^{0} \, ,  \\	
\lambda_{j,\pm}^{(\infty)} :=\lambda_{j,\pm}^{(\infty)}(\omega)&=
(1+\mathfrak{c})\tD_{\mathtt{m}}(j)
+\frac{\mathfrak{r}_{j}^{j}\pm \mathfrak{r}_{j}^{-j}}{\langle j\rangle}\,,
\qquad \forall\,j\in\N\,, 
\end{aligned}
\end{equation}
and defining the sets 
\begin{equation}\label{calOzero}
\Lambda_0:=\left\{\omega\in \Lambda\,:\, 
|\omega\cdot\ell|\geq 2\gamma |\ell|^{-\nu}\,, \ \ell\in \Z^{\nu}\setminus\{0\}
\right\}
\end{equation}
\begin{equation}\label{calOinfty1}
\Lambda_1:=\left\{\omega\in \Lambda\,:\,  
|\omega\cdot\ell-(1+\mathfrak{c})j|\geq 2\gamma\langle\ell\rangle^{-\tau}\,,
\;\; j\in\N\,,\; \ell\in \Z^{\nu}
\right\}
\end{equation}
\begin{equation}\label{calOinfty2}
\begin{aligned}
\Lambda_{2}^{+}:=&\left\{\omega\in \Lambda\,:\, 
|\omega\cdot\ell+\lambda_{j,\eta}^{(\infty)}+\lambda_{k,\eta}^{(\infty)}|\geq 
\frac{2\gamma}{\langle\ell\rangle^{\tau}}\,,
\;\; j,k\in\N_{0}\,,\; \ell\in \Z^{\nu}\,,\eta\in\{\pm\}
\right\}
\\
\Lambda_{2}^{-}:=&\Big\{\omega\in \Lambda\,:\, 
|\omega\cdot\ell+\lambda_{j,\eta}^{(\infty)}-\lambda_{k,\eta}^{(\infty)}|\geq 
\frac{2\gamma^{3/2}}{\langle\ell\rangle^{\tau}}\,,\;\eta\in\{\pm\}
\\&\qquad \qquad\qquad \qquad\qquad\qquad  \qquad
\;\; j,k\in\N_{0}\,,\; \ell\in \Z^{\nu}\,,
\;\;(\ell, j,k)\neq(0, j, j)
\Big\}
\end{aligned}
\end{equation}
for $\tau>2\nu+4$ and $\gamma\in(0,\tfrac{1}{2})$,
one has the inclusion
\begin{equation}\label{includoAutovalfinali}
\Lambda_{0}\cap\Lambda_{1}\cap\Lambda_{2}^{+}
\cap\Lambda_{2}^{-}\subseteq\mathcal{G}_{\infty}(\gamma) 
\quad \text{and}\quad
|(\Lambda_{0}\cap\Lambda_{1}\cap\Lambda_{2}^{+}
\cap\Lambda_{2}^{-})^c|\leq C \gamma
\end{equation}
for some $ C := C(\tau, \nu, {\mathtt m})$.
\end{thm}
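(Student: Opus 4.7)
The plan is to handle the inclusion and the measure bound separately, using Fubini in the direction $\hat\ell := \ell/|\ell|$ and the Lipschitz bounds \eqref{pressione}. For the inclusion, I would use that in the KAM iteration producing Theorem~\ref{main:thm}, the set $\mathcal{G}_\infty(\gamma)$ is realized as $\bigcap_{n\geq 0}\mathcal{G}_n(\gamma)$, where at the $n$-th step the second-order Melnikov conditions are imposed with factor $\gamma$ (respectively $\gamma^{3/2}$) and intermediate eigenvalues $\lambda_{j,\pm}^{(n)}(\omega)$. Quadratic telescoping together with \eqref{pressione} gives $|\lambda_{j,\pm}^{(n)}-\lambda_{j,\pm}^{(\infty)}|\leq C\langle j\rangle^{-1}\varepsilon_n$ uniformly in $j,\omega$, with $\sum_n \varepsilon_n<\infty$. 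Since the defining conditions of $\Lambda_0\cap\Lambda_1\cap\Lambda_2^+\cap\Lambda_2^-$ use the factor $2\gamma$ (respectively $2\gamma^{3/2}$), the halved gap absorbs all these errors and yields $\omega\in \mathcal{G}_n(\gamma)$ for every $n$, hence the inclusion.

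For the measure bound, $|\Lambda\setminus\Lambda_0|\leq C\gamma\sum_{\ell\neq 0}|\ell|^{-\nu-1}\leq C\gamma$ is standard. For $\Lambda_1$, the case $\ell=0$ is automatic since $|(1+\mathfrak{c})j|\geq j/2>2\gamma$ for $j\geq 1$ and $\gamma$ small. When $\ell\neq 0$, if $j>C_0|\ell|$ then $|(1+\mathfrak{c})j|\geq j/2$ dominates $|\omega\cdot\ell|\leq|\ell|/2$; if $j\leq C_0|\ell|$, the $\hat\ell$-directional derivative of $\omega\mapsto \omega\cdot\ell-(1+\mathfrak{c}(\omega))j$ is $\geq |\ell|-|\mathfrak{c}|^{\mathrm{Lip}}j\geq |\ell|/2$, because \eqref{pressione}-\eqref{smallCondCoeff} force $|\mathfrak{c}|^{\mathrm{Lip}}\leq C\gamma^{-1}\delta_0(\su)\ll 1/C_0$; each one-dimensional slice then contributes $\leq C\gamma\langle\ell\rangle^{-\tau-1}$, and summing over the at most $C|\ell|$ feasible $j$ and over $\ell\in\Z^\nu$ converges. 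For $\Lambda_2^+$, the $\ell=0$ case is automatic because $\lambda_{j,\eta}^{(\infty)}+\lambda_{k,\eta}^{(\infty)}\geq \sqrt{\mathtt{m}}-O(\delta_0\gamma^{-2})>2\gamma$; for $\ell\neq 0$ a split by whether $j+k>C_0|\ell|$ (automatic since $\tD_{\mathtt{m}}(j)+\tD_{\mathtt{m}}(k)\geq (j+k)/2$) or not (twist in $\hat\ell$) gives at most $C|\ell|^2$ relevant pairs, summing convergently since $\tau>\nu+1$.

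The delicate case is $\Lambda_2^-$. For $\ell=0$ and $j\neq k$, the asymptotic $\tD_{\mathtt{m}}(j)=|j|+O(|j|^{-1})$ yields $|\tD_{\mathtt{m}}(j)-\tD_{\mathtt{m}}(k)|\geq c_{\mathtt{m}}>0$ uniformly in $j\neq k\in \N_0$, hence the condition holds for small $\delta_0(\su)$. For $j=k$ and $\ell\neq 0$ it collapses to $|\omega\cdot\ell|\geq 2\gamma^{3/2}\langle\ell\rangle^{-\tau}$, with measure estimate $\leq C\gamma^{3/2}$. The main case is $\ell\neq 0,\,j\neq k$: I would expand
\begin{equation*}
\lambda_{j,\eta}^{(\infty)}-\lambda_{k,\eta}^{(\infty)}=(1+\mathfrak{c})(\tD_{\mathtt{m}}(j)-\tD_{\mathtt{m}}(k))+O(\gamma^{-2}\delta_0(\su))
\end{equation*}
and split once more: if $|\tD_{\mathtt{m}}(j)-\tD_{\mathtt{m}}(k)|>C_0|\ell|$ the bound is automatic via $|\omega\cdot\ell|\leq|\ell|/2$; otherwise a Lipschitz twist in $\hat\ell$ gives each slice measure $\leq C\gamma^{3/2}\langle\ell\rangle^{-\tau-1}$, and the $\leq C|\ell|^2$ feasible pairs $(j,k)$, summed over $\ell$, yield a total $\leq C\gamma^{3/2}\leq C\gamma$ thanks to $\tau>2\nu+4$. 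The main obstacle lies precisely here: the $\omega$-variation of the eigenvalue difference has Lipschitz size up to $|\mathfrak{c}|^{\mathrm{Lip}}|\tD_{\mathtt{m}}(j)-\tD_{\mathtt{m}}(k)|$, and only the $\gamma^{-1}$-scale smallness of $|\mathfrak{c}|^{\mathrm{Lip}}$ from \eqref{pressione}-\eqref{smallCondCoeff} guarantees that the $|\ell|$-twist survives and the slice argument closes.
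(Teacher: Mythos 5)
Your argument for the inclusion and for the sets $\Lambda_0,\Lambda_1,\Lambda_2^+$ is essentially sound (though the absorption of the KAM errors follows from the super-exponential convergence $N_n=N_0^{(3/2)^n}$ of the scheme rather than from a ``quadratic telescoping''; the paper makes this precise in the proof of the claim $\mathcal{O}_\infty\subseteq\bigcap_n\mathcal{O}_n$ inside Theorem~\ref{thm:reduKG}). However, there is a genuine gap in your treatment of $\Lambda_2^-$: after splitting according to whether $|\tD_{\mathtt{m}}(j)-\tD_{\mathtt{m}}(k)|\leq C_0|\ell|$, you assert that there are $\leq C|\ell|^2$ feasible pairs $(j,k)$. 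This is false. Since $\tD_{\mathtt{m}}(j)=j+\frac{\mathtt m}{2j}+O(j^{-3})$, the constraint $|\tD_{\mathtt{m}}(j)-\tD_{\mathtt{m}}(k)|\leq C_0|\ell|$ forces only $|j-k|\lesssim|\ell|$, and leaves $j,k$ individually unbounded: for each fixed difference $h=j-k$ there are \emph{infinitely many} pairs $(j,k)=(k+h,k)$, $k\to\infty$, satisfying it. Summing the slice bound $\gamma^{3/2}\langle\ell\rangle^{-\tau-1}$ over infinitely many pairs diverges, so your estimate does not close. This is exactly the well-known difficulty of second-order Melnikov conditions for wave/Klein--Gordon.

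The missing idea is the paper's Lemma~\ref{inclusionenelleprime}: when both $j,k\geq\mathtt C\langle\ell\rangle^{\tau_1}\gamma^{-1/2}$, the correction terms $\mathtt m/(2j)-\mathtt m/(2k)$ and $\mathfrak r_j^{\pm j}/\langle j\rangle$, $\mathfrak r_k^{\pm k}/\langle k\rangle$ are so small that the resonant set $R^{(-)}_{\ell jk\eta}(\gamma^{3/2},\tau)$ is \emph{contained} in a first-Melnikov set $Q^{(1)}_{\ell,j-k}(\gamma,\tau_1)$ depending only on the difference $h=j-k$. Taking the union over this infinite family of pairs therefore costs only $\sum_{|h|\lesssim\langle\ell\rangle}|Q^{(1)}_{\ell h}|\lesssim\gamma\langle\ell\rangle^{-\tau_1}$, which is summable. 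What remains are the finitely many pairs with $k<j\leq 2\mathtt C\langle\ell\rangle^{\tau_1}\gamma^{-1/2}$, which you can indeed count (of order $\langle\ell\rangle\cdot\gamma^{-1/2}\langle\ell\rangle^{\tau_1}$, combining Lemma~\ref{bip}(iii) with the bound on $k$), and a third regime (one index small, one large) which is shown to give empty resonant sets. Without an inclusion of the type of Lemma~\ref{inclusionenelleprime}, your cardinality argument for $\Lambda_2^-$ does not go through.
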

Let us make some further comments.
\begin{itemize}
 \item[iii)] 
The  property  \eqref{lipfinal} is a consequence of the parity assumption 
\eqref{oddness} and it implies 
that  the operator $\mathfrak{D}_{\infty}^{+}$ in \eqref{Dinfinito}  acts diagonally 
on the subspaces of odd/even functions defined in \eqref{spazioODDComplessi}. 
Actually $\mathfrak{D}_{\infty}^{+}$ written in the basis $(1,\cos(jx),\sin(jx))_{j\in \N}$ is diagonal with  eigenvalues $ \lambda_{j,\pm}^{(\infty)}$ as in \eqref{finaleigenv}
and
all the solutions of \eqref{firstorderBis} have the form
\begin{equation}\label{zevodd}
\quad z(t)= z_0(0)e^{\ii \lambda_{0,+}^{(\infty)} t}+ \sum_{j\in \N} 
w^{(\rm{ev})}_j(0) e^{\ii \lambda_{j,+}^{(\infty)} t} \cos(jx) + w^{(\rm{odd})}_j(0) e^{\ii \lambda_{j,-}^{(\infty)} t} \sin(jx)\,.
\end{equation}
\item[iv)] 
The  reality of the 
$ \mathfrak{r}_{j}^{\s j} $ is a consequence of the  reversibility condition \eqref{revers}. 
Without assuming  \eqref{revers}  we would obtain a reducibility result  for the operator $ \ii E\mathfrak{D}(\oo t)  $ to a constant coefficient
operator 
with eigenvalues with possibly a non zero real part. 
\item[v)] All the solutions of \eqref{coordcomp} are almost-periodic in time. Indeed
by \eqref{zevodd}  
all the solutions of \eqref{firstorderBis}
are almost-periodic in time, with the Sobolev norm bounded uniformly for 
any $ t $ in $ \R $. Recalling that $\mathfrak F(\omega t)$ is quasi-periodic in time and
\eqref{stimemappa}, we deduce that  $U(t)=(\mathfrak F(\omega t))^{-1}Z(t)$ is almost periodic and
 $\|u(t)\|_{H^s(\T,\C)} \le C(s) \|u(0)\|_{H^s(\T,\C)}$ for any $t\in \R$. 
\end{itemize}

We now show how Theorem \ref{main:thm}  implies the stability Theorem \ref{thm:stab}.
\begin{proof}[Proof of the stability result.]
	The global well posedness of the the Cauchy problem 
	\eqref{cauchyProb} with  initial conditions 
	$(\psi_0,v_0)\in  \mathcal{X}_{\R}^{s}$, with  $s > 5/2 $,
	follows by classical results for quasi-linear hyperbolic equations
	(see \cite{Kato-loc}) with smooth coefficients.
We deduce the stability bound \eqref{stimaTempoinfinito}  as follows. We write \eqref{cauchyProb}  as the  first order complex system \eqref{coordcomp}.
Fix $\so>(\nu+7)/2$.  
Set  $ \bar{s} :=\so+\mu  $
where $ \mu $  is defined  by Theorem \ref{main:thm}. 
For any  $s > s_0 $ we apply  Theorem \ref{main:thm} with 
$\su \equiv s + \tfrac{\nu}{2} + 1 $. Letting $\gamma:=\mathfrak{d}^{1/4}$, where 
$\mathfrak{d} $ is defined in \eqref{condsaggia},  
the smallness condition \eqref{smallCondCoeff} is implied by \eqref{condsaggia}
taking $ \mathfrak{d}_0(s) \le (\delta_0(s))^8$.
Then we define 
$\mathcal{C}_{\infty} := \mathcal{G}_{\infty}(\mathfrak{d}^{1/4})$, whose 
Lebesgue measure 
tends to one as
$\mathfrak{d}\to 0$ thanks to \eqref{measureestimateGGG}.
 For any $\omega\in \mathcal{C}_{\infty}$,
all the solutions of \eqref{coordcomp} are 
 almost-periodic in time, see  remark v) after Theorem \ref{thm:cantorset}. Recalling  \eqref{lambolambo}, we deduce  \eqref{stimaTempoinfinito}.
\end{proof}

\paragraph{\bf Ideas of the proof.} 
In order to prove Theorem \ref{main:thm} we look for a time quasi-periodic 
transformation of the phase space 
$ \mathfrak{F}(\omega t): \mathcal{H}^{s}
\to \mathcal{H}^{s} $, $  Z:=\mathfrak{F}(\omega t)U $, 
such that 
\begin{equation}\label{complexZZ2}
\mathfrak{F}(\omega t)\circ
\ii E\mathfrak{D}(\omega t)\circ \mathfrak{F}^{-1}(\omega t)
+(\pa_t\mathfrak{F}(\omega t))\circ \mathfrak{F}^{-1}(\omega t)
=\ii E \sm{\mathfrak{D}^{+}_{\infty}}{0}{0}{\mathfrak{D}^{+}_{\infty}}
\end{equation}
where the operator $\mathfrak{D}^{+}_{\infty}$ is defined  in \eqref{Dinfinito}.
This problem can be restated
as the  diagonalization of a linear operator acting 
on 
periodic functions of $ (\varphi, x) $. 
Explicitly a  time quasi-periodic family of linear operators 
 $\mathfrak{A}(\omega t)$  acting on 
$\mathcal{H}^{s}$ (possibly unbounded) defines
a (possibly unbounded)  linear operator acting
on  $ H^{s}(\mathbb{T}^{\nu+1},\C^2)$
by setting 
\begin{equation}
	\label{toppa}
(\mathfrak{A}U)(\vphi,x):=\mathfrak{A}(\vphi) U(\vphi,x)\, ,  
\quad \forall U(\vphi,x)  \in H^{s}(\mathbb{T}^{\nu+1},\C^2) \, . 
\end{equation}
With this notation we associate to the dynamical system \eqref{coordcomp}
the linear operator 
\begin{equation}\label{ellePhiPhi}
\mathcal{L}:=
\omega\cdot\pa_{\vphi}-\ii E\mathfrak{D}\,.
\end{equation}
Suppose now  that there exists a 
family of operators $\mathfrak F(\omega t)$ acting on $\cH^s$, 
which 
block-diagonalizes $\cL$, 
and such that 
\begin{equation}\label{pirl}
\mathfrak{F}\circ \cL \circ\mathfrak{F}^{-1}- \oo\cdot\pa_{\vphi} = \mathfrak{F}\circ \ii E\mathfrak{D}\circ\mathfrak{F}^{-1}
+(\omega\cdot\pa_{\vphi}\mathfrak{F})\circ
\mathfrak{F}^{-1}
\end{equation}
 commutes with $\oo\cdot\pa_{\vphi}$ and $\tD_{\mathtt{m}}$. As a consequence 
\eqref{pirl} must have the  $2\times2$ block diagonal form 
\begin{equation}\label{ellediag}
- \ii E \sm{\mathfrak{D}^{+}_{\infty}}{0}{0}{\mathfrak{D}^{+}_{\infty}}\,,
\end{equation}
which means that $\mathfrak{F}(\oo t)$ reduces the system \eqref{coordcomp} 
 to the system 
\eqref{firstorderBis}.

We perform the diagonalization of the unbounded  operator $ \cL $  in two steps:
\\
(i) we first 
transform $ \cL $ into   a diagonal operator plus a sufficiently smoothing in space remainder (see Sections \ref{sec:diagBlock}--\ref{sec:ordine11});
\\		
(ii) we then complete the KAM diagonalization of the remainders via a KAM iterative scheme
(see Sections \ref{sec:kam}--\ref{sec:measure}).

Let us now briefly describe the ideas. 
The linear operator  $\cL$ is a $2\times 2$ matrix 
whose entries are  
pseudo-differential operators acting on $H^s(\T^{\nu+1},\C)$ of the form  
(see \eqref{L-omega})
 \[
 \mathcal{L}=\omega\cdot\pa_{\vphi}-\ii E\sm{1+b_1}{b_1}{b_1}{1+b_1}\tD_{\mathtt{m}}+  {\bf  O}(\fd\pa_{x}^{0}) 
 \]
 where $b_1=b_1(\vphi,x)$ is a smooth real valued function of size $\fd$ (in some appropriate Sobolev norm), $\tD_{\mathtt{m}}$ 
 is the Fourier multiplier of order $1$
 in \eqref{TDM}, and where by 
 ${\bf O}(\fd\pa_{x}^{-k})$, $k\geq0$, we denote a $2\times 2$ matrix 
 whose entries 
 operators in $O(\fd\pa_{x}^{-k})$, namely they are 
 $k$-smoothing   in 
 space  
 and of size $\fd$ (in some norm). 
  
 The first step is to  apply a change of variables which makes the off-diagonal operators very smoothing (in space) thus decoupling the equations for $u$ and $\bar u$.  
 Such change of variable is constructed iteratively 
 in  Section \ref{sec:diagBlock} obtaining a conjugate operator of  the form 
 (see formula \eqref{elle3}) 
 \begin{align}\label{calL3}
 {\mathcal{L}_{3}}=\omega\cdot\partial_{\varphi}
 -
\ii \sm{\lambda}{0}{0}{-\lambda} \, \mathtt{D}_{\tm}
  +
\sm{O({\fd}\pa_{x}^{0})}{0}{0}{O({\fd}\pa_{x}^{0})}+ {\bf O}({\fd}\pa_{x}^{-\rho})\,, \quad 
\rho\gg1 \, , 
\end{align}
 where $\lambda=\lambda(\vphi,x)$ is a 
 smooth real valued function with $\lambda-1 $ of size $ \fd $.
The operator $\cL_3$ is still pseudo-differential. Let us concentrate on  the highest order component of $\cL_3$.
Denoting by $\Pi_{\pm}$ the {\it Szeg\"o} projectors onto positive and negative space Fourier modes, we have the expansion 
$$
\ii \tD_\tm=\ii |\pa_x| + O(\pa_x^{-1})= \Pi_+ \pa_x - \Pi_- \pa_x  +O(\pa_x^{-1}) \, , 
$$
and hence  the action of $\cL_3$  on  $u$ is (up to lower order pseudo-differential 
operators)
\begin{equation}
	\label{nabla}
\omega\cdot\pa_{\vphi}- \lambda (\Pi_+ \pa_x - \Pi_- \pa_x )= (\omega\cdot\pa_{\vphi}-  \lambda  \pa_x) \Pi_+ + (\omega\cdot\pa_{\vphi}+ \lambda  \pa_x) \Pi_- 
\end{equation}
namely  two distinct transport  operators  for positive and negative Fourier modes. The action on $\bar u$ is just the complex conjugate.
We  reduce  \eqref{nabla} to constant coefficients 
by conjugation via a map of the form 
 \begin{equation}
	 \label{calpa}
L:= \cC_{\alpha_{+}}\Pi_+ +  \cC_{\alpha_{-}}\Pi_-\,,
\end{equation} 
 where $ (\cC_{\alpha}u)(\vphi,x):=u(\vphi,x+\alpha(\vphi,x))$ is the composition 
 operator  induced by a diffeomorphism of the torus.  
We look for two different periodic functions $\al_+, \al_-$
which rectify the transport operators along positive and negative Fourier modes,
corresponding to the {\it two characteristic directions of the wave equation}.  In Theorem \ref{IncredibleConjugate} we construct $\alpha_\pm$ so that \eqref{calpa} conjugates \eqref{nabla} to a constant coefficient bi-characteristic operator 
$$
\omega\cdot\pa_{\vphi}- \ii (1+\fc)|\pa_x| + O(\fd  \pa_x^{-\rho})
$$
 where $\fc$ is a constant and the remainder $O(\fd  \pa_x^{-\rho})$ is no longer pseudo-differential but it satisfies ``tame estimates".  
 
A novelty of the paper is the way we control the tame action of a linear operator
via the norm  $ \bnorm{\cdot}_s $ that we introduce in  Section \ref{sec:modulotame}.
A related notion was proposed 
in  \cite[Def. 2.18]{BM20} and \cite[Def 2.6]{FGP1}  via the concept of  `modulo tame constants".
The novelty of our approach is that we introduce a Banach algebra (of couples of operators) 
which has the same flexibility of modulo tame constants but in the more structured setting of Banach algebras.
To an operator $ A $ we associate a couple $\mathtt A = (M,R) $ so that 
$ A = M + R $ where $ M : H^p \to H^p $ is {\it majorant-bounded} for any $ p $
in a fixed range $[s_*,s_1]$  and $ R $ is {\it smoothing} operator, namely given 
$ s \in [s_*,s_1] $, it results $ H^{s_*} \to H^{s} $.  
Then we define 
$$
\bnorm{\tA}_s := \sup_{\gso\le p\le \su}|M|_{p,p} + |R|_{\gso,s}\
$$
where $ | \ |_{s,s'} $ is the majorant norm in Def. \ref{majOp}.
A natural decomposition 
of a linear operator $A$ is to set 
$\mathtt A=(M,A-M)$ where  $M$  is the  \emph{Bony} part 
provided in Definition \ref{def:bonydeco}.

In Proposition \ref{prop:ridord1} we conjugate the whole 
$\cL_3$ in \eqref{calL3} via the map
$ \Theta_1:=\sm{L}{0}{0}{\bar{L}} $ where $ L $ is defined in 
 \eqref{calpa}, obtaining
\begin{equation}\label{L4forma}
\mathcal{L}_{4}:= \Theta_1 \cL_3 \Theta_1^{-1}=\omega\cdot\pa_{\vphi}-\ii \sm{1+\fc}{0}{0}{-(1+\fc)}\tD_{\mathtt{m}} +
\sm{O(\fd\pa_{x}^{0})}{0}{0}{O(\fd\pa_{x}^{0})}+ {\bf O}(\fd\pa_{x}^{-1})\, , 
\end{equation}
where the diagonal terms $O(\fd\pa_{x}^{0})$ are pseudo differential operators, while $ {\bf O}(\fd\pa_{x}^{-1})$ is no longer pseudo-differential.
The off diagonal term of $\cL_4$ is no longer  pseudo-differential, 
having the  form $L A\bar L^{-1}$ where $A=O(\fd \pa_x^{-\rho})$ is a pseudo-differential operator, which is not a 
conjugation. 
However $L A\bar L^{-1}$ is a tame operator provided 
that $\rho$ is large enough.  

 Before applying a KAM scheme we need a further regularization step, which reduces to constant coefficients the diagonal  terms of order zero.
 This is the content of Proposition \ref{prop:ridord0}.

 Finally   the norm 
$  \bnorm{{\bf O}(\fd\pa_{x}^{-1})}_s $ of the remainder ${\bf O}(\fd \pa_x^{-1})$
 in \eqref{L4forma} 
 is  bounded 
 together with a sufficient number of commutators with $\pa_x, \pa_\vphi $.
  This is enough to implement the convergent KAM scheme of Section \ref{sec:kam}.
  In Section \ref{sec:measure} we prove Theorems \ref{main:thm} and \ref{thm:cantorset}.

\smallskip

As we said before the main difficulty  lies in the 
quantitative Egorov estimates for rectifying the quasi-periodic 
pseudo-differential operator
$ \omega \cdot \pa_\vphi 
- \ii \lambda (\vphi, x)  \tD_\tm $ which has two  characteristic directions.
Proposition  \ref{prop:ridord1}  is in no way a direct application of Theorem \ref{IncredibleConjugate} which deals with the leading term of $ \cL_3 $. 
The main difficulty is that, since in $\cL_3$ appears $\tD_{\tm}$ and not only $|\pa_x|$
we need to apply an Egorov theorem directly to $\tD_\tm = |\pa_x| + O(\pa_x^{-1}) $
 and only a posteriori recognize that the conjugated operator
 has the form $\cL_4$ in \eqref{L4forma}.

Classical pseudo-differential Egorov theory does not imply any tame control on conjugated symbols and remainders.
 The main outcome of our novel analysis is the estimate \eqref{estranged4} of 
the leading term $ O(\fd\pa_{x}^{0}) $  in \eqref{L4forma}. 
Rough estimates would give $ \| c^{(4)} \|_s \lesssim \max_{i=0,1,2} \| a_i \|_{ s + \mu (s) } $
and thus would allow to obtain an estimate, which unlike \eqref{stimemappa}, is not tame. 
The way such a strong tame quantitative estimate is obtained is  detailed 
in Section \ref{sec:egomichela}  (Theorem \ref{thm:egorov})
 and implemented 
 in  Section \ref{sec:ordine11}. 
In the water waves works \cite{BBHM,BM20,BFM1,BFM2} this problem 
is bypassed 
implementing the
Egorov strategy  at the level of differential operators (which seems not possible here)
and in \cite{BKM} by expanding the  symbols  in homogeneous components.
The DP equation  \cite{FGP19}, although 
the dispersion relation $\partial_x + 3(1-\partial_{xx})^{-1}\partial_x $ is not 
differential,
has a very special structure which  allows to use Egorov techniques 
similar to those of the differential case.
The Egorov analysis in \cite{FGP19,FGww} would provide an estimate as
 $ \| c^{(4)} \|_s \lesssim 1 + \max_{i=0,1,2} \| a_i \|_{ s + \mu  } $, namely 
  ``tame but not small". 

In this work we have to apply and Egorov strategy on the first  order 
system \eqref{firstorderComplex} whose dispersion
 relation $ \sqrt{ - \pa_{xx} + \tm } $ is strongly pseudo-differential.
We do not see any mechanism to work directly 
on the  second order differential Klein-Gordon 
equation \eqref{NLW}. 



 \medskip

 \noindent 
 {\bf Notation:} 
 we denote by $ \N := \{ 1, 2, \ldots \} $  the natural numbers and  $\N_0:=\N\cup\{0\}$.
 We recall some important matrices  already introduced
\[
\Id:=\sm{1}{0}{0}{1}\,, \;E:=\sm{1}{0}{0}{-1}\,, \;\uno := \sm{1}{1}{1}{1}\,,\; S:=\sm{0}{1}{1}{0}\,,\; \fU:= \sm{1}{1}{1}{-1}\,.
\]
 We fix once and for all  the mass $ \mathtt{m} >0 $ and the constants
 \begin{equation}\label{costanti}
 	\nu\in \N\,,\quad     \gso > (\nu+5)/2\,,\quad \so :=\gso +1\,, 
	\quad\tau>2\nu+4\,.
 \end{equation}
Here  $\nu$ is number of frequencies, 
 the choice of $\gso $ ensures that  the Sobolev space $H^s$ has algebra and interpolation property for any $s\geq \gso$.
 We also introduce the parameters 
 \begin{equation}\label{costantiGAMMA}
 \su>  \so\,,\qquad \gamma\in(0,\tfrac{1}{2})\,.
 \end{equation}
 The parameters $\so, \su$ represent respectively \emph{low/high} Sobolev norms
 for which we obtain reducibility and tame estimates. 
The parameters $\tau$ and $\gamma$ denote respectively the Diophantine exponent and 
 the Diophantine constant, cfr. \eqref{calOinfty2}.

Along the paper we keep track of the dependence on parameters such as $s_1$, $s\in (s_*,s_1)$ and $\gamma$.
The notation $a \lesssim_{s, p, m} b$ means that $a \le C(s, p, m) b$ for some constant $C(s, p,m ) > 0$ 
depending on the Sobolev index $s$ and the constants $p, m $. 
We omit to write the dependence $\lesssim_{\gso, \nu, \tau,\mathtt{m}}$ 
with respect to $\gso, \nu, \tau, \mathtt{m}$, 
because these constants have been fixed in \eqref{costanti}. 

\smallskip

\noindent
\thanks{ {\em Acknowledgements.} 
We thank L. Biasco, P. G\'erard and
 S. Kuksin for useful comments.
The authors have been  supported by the  project 
PRIN 2020XBFL ``Hamiltonian and dispersive PDEs" and INdAM.
S.T. acknowledges the support of the project ERC STARTING GRANT 2021, `Hamiltonian Dynamics, Normal Forms and Water Waves" (HamDyWWa), Project Number: 101039762. 
Views and opinions expressed are however those of the authors only and do not necessarily reflect those of the European Union or the
European Research Council. Neither the European Union nor the granting authority
can be held responsible for them.   }  

\part{A quantitative Egorov result and  a straightening theorem for bi-charcteristics}

\section{Functional Setting}\label{sec:funcset}

In this section we introduce the basic  function spaces and linear operators we use. 
\\[1mm]
\noindent
{\bf Function spaces.} Given $\nu\in \N$  we  consider periodic functions 
$ u : \T^\nu\times\T \to \C $, 
$ (\vphi,x) \mapsto u(\vphi, x) $, 
that we  Fourier expand  as
\begin{equation}\label{Fou1}
	\begin{aligned}
		& u(\vphi, x) = \sum_{j\in\Z} u_j(\vphi)e^{\ii jx} 
		= \sum_{\ell \in \Z^\nu, j\in\Z} u_{\ell, j}e^{\ii(\ell\cdot \vphi + jx)}\,, \\
		& u_{\ell, j} :=  u_{\ell, j} :=\frac{1}{{(2\pi)^{{\nu+1}}}}\int_{\T^{\nu+1}}
		u(\vphi,x)e^{-\ii (\ell\cdot\vphi+jx)}d\vphi dx  \, . 
	\end{aligned}
\end{equation}
For any  $s \geq 0 $ we define  the scale of Sobolev spaces
\begin{equation}\label{sobspace}
\begin{aligned}
H^{s}  :=H^{s}(\T^{\nu+1}) & :=H^{s}(\T^{\nu+1},\C) \\
& := 
\Big\{ u(\varphi, x)\in L^2(\T^{\nu+1},\C) : 
\|u\|_s^2 := \sum_{\ell\in \Z^\nu,j\in\Z} \jap{\ell, j}^{2s} | u_{\ell, j}|^2<\infty\Big\}
\end{aligned}
\end{equation}
where 
$\jap{\ell, j}:=\max\{1, |\ell|, |j|\}$ and $|\ell|:= \sum_{i=1}^\nu |\ell_i|$.
For $ s < 0 $ the Sobolev spaces $ H^s $ are defined by duality. 
Sobolev spaces $H^{s}(\T^{\nu+1},\C^2) $ are defined analogously. 
Given $u\in  H^s,$ define $\underline u\in  H^s$
with Fourier coefficients 
$\underline u_{\ell,j} := |u_{\ell,j}| $.  Note that
\begin{equation}\label{maju}
\|\underline u\|_s=\|u\|_s \, . 
\end{equation}
We consider also families of Sobolev functions 
$u:\cO\to H^{s}$, $ \omega \mapsto u(\omega) $, which are Lipschitz with respect to 
a parameter  
$\omega \in \cO \subset \R^\nu $ and,  
for $ \gamma \in (0,\tfrac{1}{2}) $ 
we introduce the weighted Lipschitz norm
\begin{equation}\label{sobolevpesata}
\|u\|_s^{\g, \cO}:= \|u\|_s^{\rm sup}+ \g \|u\|^{\rm lip}_{s-1} := \sup_{\omega\in\calO}\|u(\omega)\|_s+\gamma\sup_{\substack{\omega_1\neq\omega_2 \\
\omega_1, \omega_2 \in \cO}}
\frac{\|u(\omega_1)-u(\omega_2)\|_{s-1}}{|\omega_1-\omega_2|}\,.
\end{equation}
For a scalar function $ f : \calO\to \R $ independent of $ (\varphi, x)  $,  
the norms $ \|f \|_s^{\g, \cO},  \|f \|_s^{\rm sup},  \|f \|_s^{\rm lip}  $  is independent of $ s $ and 
we denote it simply 
$ |f|^{\g,\calO} := \|f \|_s^{\g, \cO} , |f|^{\rm sup} := \|f \|_s^{\rm sup}  , |f|^{\rm lip} := \|f \|_s^{\rm lip}   $. 

The norm \eqref{sobolevpesata} controls the 
Lipschitz variation of a function with a weaker norm that the function itself.
This is convenient since  the torus diffeomorphism 
$u\mapsto \cC_{\alpha}u:= u(\cdot+\al(\cdot))$ is a continuous operator w.r.t. this norm, see Lemma \ref{bastalapasta}.
	The norm \eqref{sobolevpesata} satisfies 
 the tame  and interpolation estimates for the product of functions.
It is 
 proved in Lemma 6.1 of \cite{BBM14}, or  \cite{BFM2}[Lemma A.1],  that 
  for any $s\ge \so$, $u,v\in H^s$ 
\begin{equation}\label{tameProduct}
\|u v\|_s^{\g, \cO}\lesssim_{s}\|u \|_s^{\g, \cO}\| v\|_{\so}^{\g, \cO}+
\|u \|_{\so}^{\g, \cO}\| v\|_{s}^{\g, \cO} \,. 
\end{equation}
For any $a_0,b_0\geq 0$, $p,q>0$ 
	and any $u\in H^{a_0+p+q}$, $v\in H^{b_0+p+q}$, one has (see e.g. Lemma 2.2 of \cite{BM20} or \cite{BFM2}[Lemma A.1])
	\begin{equation}\label{interpolotutto}
		\|u\|_{a_0+p}^{\g, \cO}\|v\|_{b_0+q}^{\g, \cO}\leq\|u\|_{a_0+p+q}^{\g, \cO}\|v\|_{b_0}^{\g, \cO}+\|u\|_{a_0}^{\g, \cO}\|v\|_{b_0+p+q}^{\g, \cO}\,.
	\end{equation}
	The same interpolation estimates holds for Sobolev functions with values in $\C^{d}$, $d\geq1$.
\\[1mm]
{\bf Linear operators.}
We identify a linear operator $\mathcal{A} $ in $  \mathcal{L}(L^{2}(\T^{\nu+1},\C))$
with its matrix 
\begin{equation}\label{opgeneral}
\mathcal{A} :=
\big(\mathcal{A}_{j,\ell}^{j',\ell'}\big)_{\substack{j,j'\in \Z \\ \ell,\ell'\in\Z^{\nu}}}\,,
\qquad
\mathcal{A}_{j,\ell}^{j',\ell'}:=
\frac{1}{{(2\pi)^{{\nu+1}}}}\int_{\T^{\nu+1}}\mathcal{A}[e^{\ii (\ell'\cdot\vphi+ j'x)}]
\cdot e^{-\ii (\ell\cdot\vphi + jx)}d\vphi dx\, , 
\end{equation}
in the exponential basis.  
In this paper we  deal with the subclass of T\"oplitz in time 
operators.

\begin{defn}{\bf (T\"oplitz in time operators).}\label{topos}
An operator $\mathcal{A}$ as in \eqref{opgeneral} 
is \emph{T\"oplitz in time} if its matrix entries have the form 
\begin{equation}\label{piazzadelsole}
\mathcal{A}_{(j,\ell)}^{(j',\ell')}=A_j^{j'}(\ell-\ell')\, . 
\end{equation}
We denote by 
 $\mathcal{L}^{\mathtt{T}}(H^s, H^{s'}) $ a linear operator  
bounded between $ H^s $ and $ H^{s'} $
with matrix entries \eqref{piazzadelsole}.
\end{defn}
In view of formul\ae\, \eqref{opgeneral} and \eqref{piazzadelsole}
we have a one-to-one correspondence 
between the T\"oplitz in time operators in 
$\mathcal{L}^{\mathtt{T}}(L^{2}(\T^{\nu+1},\C))$
and 
 $\varphi-$dependent families of operators 
\begin{equation}\label{topop1}
A :  \;\T^\nu \to \;\cL(L^2(\T, \C)) \, , \quad \vphi\;\;\mapsto\, A(\vphi) \, ,  
\end{equation}
that we regard acting as 
\begin{equation}\label{linearoperator}
(Au)(\varphi, x)= (A(\varphi)u(\varphi, \cdot))(x) = 
\sum_{\ell\in\Z^\nu, j\in\Z} 
\Big(\sum_{\ell'\in\Z^\nu, j'\in\Z} A_j^{j'}(\ell-\ell') 
{u}_{\ell', j'}\Big) e^{\im(\ell\cdot\varphi+j x)}\, . 
\end{equation}
We shall identify the operator $A$ 
with the matrix $( A_j^{j'}(\ell))_{\ell\in\Z^\nu, j,j'\in\Z}$. 

We represent
a linear operator  $\mathcal{T}$ acting on 
$L^2(\T^{\nu+1}, \C)\times L^2(\T^{\nu+1}, \C)\simeq L^2(\T^{\nu+1}, \C^2) $
by the $2\times 2$ matrix of operators 
\begin{equation}\label{divano}
\cT:=(\cT_\sigma^{\sigma'})_{\sigma, \sigma'}:=\begin{pmatrix}
\cT_+^{+} && \cT_+^{-}\\
\cT_-^{+} && \cT_-^{-}
\end{pmatrix}\,,\;\;\;  \sigma, \sigma'\in\{\pm\} \,.
\end{equation}
Given two periodic functions
\begin{equation}\label{usigma}
u^{\s}(\vphi,x)=
\sum_{j\in\Z} u_{j}^{\s}(\vphi)e^{\s\ii jx} =
\sum_{\ell\in \Z^\nu, j\in\Z} u^\sigma_{\ell, j}e^{\ii(\ell\cdot \vphi + \sigma jx)} \, , 
\quad \s\in\{\pm\} \, , 
\end{equation}
the action of an operator $\cT$ of the form \eqref{divano} is 
\begin{equation}\label{matricepm}
\cT \begin{bmatrix}
u^+ \\ u^- 
\end{bmatrix} = \begin{pmatrix}
\cT_+^{+}u^+ + \cT_+^{-}u^-\\
\cT_-^{+}u^+ + \cT_-^{-}u^-
\end{pmatrix}\,.
\end{equation}
We denote by 
$\cL^{\mathtt{T}}(H^{s},H^{s'})\otimes \mathcal{M}_2(\C)$
 matrices of operators as in \eqref{divano}
whose entries are T\"oplitz in time  operators in 
$\cL^{\mathtt{T}}(H^{s},H^{s'})$.
In view of  \eqref{linearoperator}-\eqref{matricepm} 
we  identify an operator $\cT \in \cL^{\mathtt{T}}(H^{s},H^{s'})\otimes \mathcal{M}_2(\C) $ 
as in \eqref{divano}
with the matrix 
\begin{equation}\label{matrrep2}
(\cT_{\sigma, j}^{\sigma', j'}(\ell))_{\ell\in\Z^\nu, j,j'\in\Z, \sigma,\sigma'=\pm}
\end{equation} 
where
\begin{equation}\label{matrrep}
\cT_{\sigma, j}^{\sigma', j'}(\ell):=\frac{1}{(2\pi)^{\nu+1}}
\int_{\T^{\nu+1}}  \cT_{\sigma}^{\sigma'} 
[e^{\im \sigma' j'x} ]  
e^{- \im (\sigma jx + \ell\cdot\varphi)}  \, d\varphi dx \,.
\end{equation}

\begin{defn}
{\bf (Operatorial norm).} Let $s,s'\in \R$. We denote the operatorial  norm of 
a linear operator $A \in \cL^{\mathtt{T}}(H^{s},H^{s'})$ 
   by
\begin{equation}\label{operatorNorma}
\|A \|_{s,s'}:=
\sup_{\|u\|_{s}\leq1}\|A u\|_{s'}=
\sup_{\|u\|_{s}\leq1}\Big(
\sum_{\ell\in\Z^{\nu},j\in\Z}\langle \ell,j\rangle^{2s'}\Big|
\sum_{\ell'\in\Z^{\nu},j'\in\Z}A _{j}^{j'}(\ell-\ell')u_{\ell',j'}
\Big|^{2}
\Big)^{1/2} 
\end{equation}
and, for any 
$T\in \cL^{\mathtt{T}}(H^{s},H^{s'})\otimes \mathcal{M}_2(\C) $,  
we define $\|T\|_{s,s'}:=\max_{\s,\s'\in\{\pm\}}\{\|T_{\s}^{\s'}\|_{s,s'}\}$.
\end{defn}

We now introduce the notion of majorant operator. 

\begin{defn}{\bf (Majorant operator and norm).} \label{majOp}
Given a T\"opliz in time operator 
$ A = ( A_j^{j'}(\ell))_{\ell\in\Z^\nu, j,j'\in\Z} $ we associate its \emph{majorant} operator  
\begin{equation}\label{defmaj}
\und{A}:=\big(\und{A}_{j}^{j'}(\ell)\big)_{\ell\in\Z^{\nu},j,j'\in\Z}\,,
\qquad 
\und{A}_{j}^{j'}(\ell):=|A_{j}^{j'}(\ell)|\,,\qquad \forall\, \ell\in\Z^{\nu},j,j'\in\Z\, .  
\end{equation}
For any $ s, s' \in \R $  we define the \emph{majorant}  operatorial norm of $ A $, 
\begin{equation}\label{majass}
 |A|_{s,s'} :=\|\underline A \|_{s,s'} \, , 
\end{equation}
and  the space of T\"oplitz in time bounded majorant linear operators 
\begin{equation}\label{Mmajo}
\mathcal{M}^{\mathtt{T}}( H^s, H^{s'}):=
\big\{ A \in\mathcal L^{\mathtt{T}}( H^s, H^{s'})\ \  \text{ s.t.} \ \  
 |A|_{s,s'}  <\infty  \big\}\, . 
\end{equation}
Given a matrix of T\"oplitz in time operators 
$ T := (\cT_{\sigma, j}^{\sigma', j'}(\ell))_{\ell\in\Z^\nu, j,j'\in\Z, \sigma,\sigma'\in \{\pm\}}  $ as in \eqref{matrrep2} we define the \emph{majorant} matrix $\und{T} 
:= (\und{\cT}_{\sigma, j}^{\sigma', j'}(\ell))_{\ell\in\Z^\nu, j,j'\in\Z, \sigma,\sigma'\in \{\pm\}} $, taking the majorant of each component,
 the \emph{majorant} norm $ |T|_{s,s'} :=\|\underline T \|_{s,s'} $,
and we denote by $\mathcal{M}^{\mathtt{T}}( H^s, H^{s'})\otimes \mathcal{M}_2(\C)$ 
the subspace of T\"oplitz in time 
bounded majorant  linear operators in 
$\cL^{\mathtt{T}} ( H^s, H^{s'})\otimes \mathcal{M}_2(\C)$.
\end{defn}

The majorant operatorial norm $ | A |_{s,s'} $ is stronger than the operatorial norm, i.e.  
 $ \| A \|_{s,s'}\leq | A |_{s,s'} $. 

We introduce in  $\mathcal{M}^{\mathtt{T}}( H^s, H^{s'})$ 
the partial ordering relation 
\begin{equation} \label{rmk:propmajNorm}
\begin{aligned}
A \preceq B
\quad \Leftrightarrow \quad & 
\underline A \preceq \underline B \quad \Leftrightarrow\quad
 |A_{j}^{j'}(\ell)|\leq |B_{j}^{j'}(\ell)|\,,
 \quad 
 \forall\, \ell\in\Z^{\nu},j,j'\in\Z\, , \\
 \text{so that} \quad  & A \preceq B \quad \Longrightarrow \quad  
|A|_{s,s'}\leq |B|_{s,s'}\, . 
 \end{aligned}
\end{equation}
The spaces $\cL^{\mathtt{T}}( H^s, H^{s'})$ and 
$\mathcal{M}^{\mathtt{T}}( H^s, H^{s'})$
are  Banach algebras with respect to the composition of operators, namely
\begin{equation}\label{ABss}
\|AB\|_{s,s'}\leq \| A \|_{s_1,s'} 
\| B \|_{s,s_1} \, , \quad |AB|_{s,s'}\leq |A|_{s_1,s'} |B|_{s,s_1} \, , 
\end{equation}
where the second estimate  follows  by 
$ \underline {AB} \preceq  \underline A\ \underline B $.

We use the following notation
\begin{equation}\label{lip.variation}
		\Delta_{12}A :=  \frac{A(\omega_{1}) - A(\omega_{2})}{|\omega_{1}-\omega_{2}|}
		\end{equation}
		and note that $\Delta_{1 2}$ satisfies the Leibnitz rule $\Delta_{12} (A\circ B) =\Delta_{12}A \circ B(\oo_{2})+  A(\oo_{1})\circ \Delta_{12}B $.   

\begin{defn} {\bf (Weighted Lipschitz norms of operators)}\label{def:cuoreditenebra}
Given a  family of operators 
$ \oo\mapsto A(\oo)$ in $\mathcal{M}^{\mathtt{T}}( H^s, H^{s'})$, 
resp. in $\mathcal{L}^{\mathtt{T}}(H^{s},H^{s'}) $, 
Lipschitz in $ \omega \in \cO $,  
we define, for $\gamma \in (0,\tfrac{1}{2}) $,   
\begin{align}
& |A|_{s,s'}^{\gamma,\cO}
:= \sup_{\oo\in\cO}|A(\omega)|_{s,s'} 
+\gamma \sup_{\substack{\oo_1\neq \oo_2 \\
\omega_1, \omega_2 \in\cO}}  |\Delta_{12}A |_{s,s'}  \, , \label{A1sbss'} \\
& \| A \|_{s,s'}^{\gamma,\cO}
:= \sup_{\oo\in\cO}\| A(\omega)\|_{s,s'} 
+\gamma \sup_{\substack{\oo_1\neq \oo_2 \\
\omega_1, \omega_2 \in\cO}}
\|\Delta_{12}A \|_{s,s'}   \, . \label{A2ss'}
\end{align}
For a matrix of operators $T\in \cM^{\mathtt{T}}(H^{s},H^{s'})\otimes \mathcal{M}_2(\C)$ 
(resp. $\mathcal{L}^{\mathtt{T}}(H^{s},H^{s'})\otimes \mathcal{M}_2(\C)$)
we set 
$|T|^{\gamma,\cO}_{s,s'}
:=\max_{\s,\s'\in\{\pm\}}\{|T_{\s}^{\s'}|^{\gamma,\cO}_{s,s'}\}$ 
(resp. 
$\|T\|^{\gamma,\cO}_{s,s'}
:=\max_{\s,\s'\in\{\pm\}}\{\|T_{\s}^{\s'}\|^{\gamma,\cO}_{s,s'}\}$).
\end{defn}

From \eqref{ABss}  
we deduce
	that  also $\|\cdot \|^{\g,\cO}$ is  an algebra w.r.t products
and that 
\[
\| AB \|_{s,s'}^{\gamma,\cO} \le \|A\|_{s_1,s'}^{\gamma,\cO}\| B\|^{\gamma,\cO}_{s,s_1}\,,
\quad 
\| Au\|_{s'}^{\g,\cO} \le ( \| A \|_{s,s'}^{\gamma,\cO} +  \| A \|_{s-1,s'-1}^{\gamma,\cO})\|u\|^{\g,\cO}_s\,.
\]
The same holds for the majorant norm $|\cdot|^{\g, \cO}_{s, s'}$.
	
We need some further definitions.
\begin{defn} \label{defn:japphi} {\bf (Commutators and projections).}
Given a T\"oplitz in time linear operator 
$ A := (A_{j}^{j'}(\ell))_{j, j'\in\Z, \ell\in\Z^{\nu}} $ 
we  define the \emph{commutator} operators 
$ \td_{\vphi_h}A $, $ h=1,\ldots,\nu $,
and $\td_x A $ as the operators
whose matrix entries are
\begin{equation} \label{def:commuDD}
(\td_{\vphi_h}A)_{j}^{j'}(\ell) := \ii \ell_{h} A_{j}^{j'}(\ell) \, ,
 \qquad 
(\td_x A)_{j}^{j'}(\ell) :=\ii (j-j')A_{j}^{j'}(\ell) \, . 
\end{equation}
We also define the operator $ \langle\td_\vphi\rangle A $ 
 whose matrix entries are
$$
 (\langle\td_\vphi\rangle A)_{j}^{j'}(\ell):=\langle\ell\rangle A_{j}^{j'}(\ell) 
 \qquad \text{and} \qquad 
 \td_{\vphi}^p A := \prod_{h=1}^{\nu} \td_{\vphi_h}^{p_h} A \, , \quad
  \forall  p\in \N_0^{\nu} \, . 
$$
The same definitions extend to matrices of T\"oplitz in time linear operators componentwise.

Given a matrix of T\"oplitz in time linear operators 
$ T := ( T_{\s,j}^{\s',j'}(\ell))_{j, j'\in\Z, \ell\in\Z^{\nu}, \sigma, \sigma' = \pm} $,  
we define the \emph{projections} $\Pi_{N}, \Pi_{N}^{\perp}$, $ N \in \N $,  as
\begin{equation}\label{def:proj}
(\Pi_{N}T)_{\s,j}^{\s',k}(\ell):=
\left\{\begin{aligned}
& T_{\s,j}^{\s',k}(\ell)\;\;\; {\rm if}\;\; |\ell|\leq N
\\
&0\qquad \;\;\;\quad {\rm otherwise} \, , 
\end{aligned}\right. \qquad \Pi_{N}^{\perp}:=\id-\Pi_{N}\,.
\end{equation}
Given two T\"oplitz in time  operators $A$ and $B$ 
we define the \emph{adjoint action} of $A$ on $B$ as the commutator
\begin{equation}\label{adjdef}
{\rm ad}_{A}[B]:=\big[A,B\big]\equiv A\circ B-B\circ A\,.
\end{equation}
\end{defn}

Note that the operators  in  \eqref{def:commuDD} are the adjoint actions of $\partial_{\vphi_h}$ on $A$, namely
$\td_{\vphi_h} A=[\pa_{\vphi_h}, A]=\pa_{\vphi_h}A$ 
 and 
$\td_{x}A=[\pa_{x}, A]$. 
The operator $\td_{\vphi_h}$ satisfies the Leibniz rule
$ \td_{\vphi_h}(AB)=\td_{\vphi_h}(A)B+A\td_{\vphi_h}(B) $. 
In addition 
\begin{equation}\label{rmk:commuES}
\begin{aligned}
& \qquad \qquad \quad  \und{M}\preceq \langle\td_{\vphi_h}\rangle \und{M} \, , \quad
\und{\langle \td_\vphi\rangle M}\preceq  \und{M}
+ \sum_{h=1}^{\nu}\und{\td_{\vphi_h} M} \\
& \und{\langle \td_\vphi\rangle^\tb M}\preceq_{\tb,\nu}  
\und{M}+ \sup_{\substack{\beta\in \N_0^{\nu} \\
|\beta|\leq \tb}}
\und{\prod_{h=1}^{\nu}\td_{\vphi_h}^{\beta_h}M}
\preceq_{\tb,\nu}  \
\und{M}+\sum_{h=1}^{\nu} \und{\td_{\vphi_h}^{\tb}M} \, . 
\end{aligned}
\end{equation}
Finally note that $\Pi_{N}T, \Pi_{N}^{\perp}T \preceq T$.

\smallskip 

An operator
which is bounded, together with sufficiently many commutators, 
 in the operatorial norm \eqref{operatorNorma}
is also bounded in the majorant operatorial norm of Definition \ref{majOp}.

\begin{lemma}\label{maggio}
Let $ A (\omega )$ be a family of operators  in $ \mathcal{L}^\mathtt T( H^s, H^{s})$, $ s \geq 0 $ 
(see Def. \ref{topos}), Lipschitz in $ \omega \in \cO $, 
such that  
$ \td_{\vphi_h}^\beta A $, $  \td_{x}A $,
$ \td_x\td_{\vphi_h}^\beta A $, $1\leq h\leq \nu $,  where 
$\beta:=[\nu/2]+1$  belong to $ \mathcal{L}^\mathtt T( H^s, H^{s}) $.
Then 
$A(\omega) $ is a majorant bounded  operator in  $ \mathcal{M}^\mathtt T( H^s, H^{s})$
and
\begin{equation}\label{laverna}
\begin{aligned}
|A|^{\gamma,\mathcal{O}}_{s,s}
 &\lesssim_{s}
 \|A\|^{\gamma,\mathcal{O}}_{s,s}
+\|\td_{x}A\|^{\gamma,\mathcal{O}}_{s,s} + 
\max_{1\leq h\leq \nu} \{ \|\dhh^\beta A\|^{\gamma,\mathcal{O}}_{s,s},
\|\td_x\dhh^\beta A\|^{\gamma,\mathcal{O}}_{s,s} \} \,.
 \end{aligned}
\end{equation}
\end{lemma}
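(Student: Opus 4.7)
The plan is to derive \eqref{laverna} from a pointwise decay estimate on the matrix entries of $A$, supplied by the commutator identities \eqref{def:commuDD}, combined with a weighted Cauchy--Schwarz argument. First I note that since $[\pa_{\vphi_h},\pa_x]=0$, the commutators $\td_x$ and $\td_{\vphi_h}^\beta$ commute, and by \eqref{def:commuDD} one has $(\td_{\vphi_h}^\beta A)_j^{j'}(\ell)=(\ii\ell_h)^\beta A_j^{j'}(\ell)$ and $(\td_x A)_j^{j'}(\ell)=\ii(j-j') A_j^{j'}(\ell)$. Combining $\langle \ell\rangle^\beta\lesssim_\nu 1+\sum_{h=1}^\nu|\ell_h|^\beta$ with $\langle j-j'\rangle\le 1+|j-j'|$ yields the pointwise matrix-entry bound
\[
\langle \ell\rangle^\beta\langle j-j'\rangle\,|A_j^{j'}(\ell)|\lesssim_\nu \sum_{T\in\cT}|T_j^{j'}(\ell)|,
\]
valid for all $(\ell,j,j')\in \Z^\nu\times\Z\times\Z$, where $\cT:=\{A,\td_x A\}\cup\{\td_{\vphi_h}^\beta A,\td_x\td_{\vphi_h}^\beta A:1\le h\le\nu\}$.

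Next I would convert this into a bound on the majorant operator norm. Fix $u\in H^s(\T^{\nu+1})$; since the entries of $\underline A$ are non-negative, the identity \eqref{maju} lets me assume $u_{\ell',j'}\ge 0$. Using the pointwise bound and Cauchy--Schwarz in the sum defining $(\underline A u)_{\ell,j}$, with weight $K(\xi,k):=\langle \xi\rangle^{-\beta}\langle k\rangle^{-1}$, I get
\[
\bigl((\underline A u)_{\ell,j}\bigr)^2 \lesssim_\nu \|K\|_{\ell^2(\Z^{\nu+1})}^2\sum_{T\in\cT}\sum_{\ell',j'}|T_j^{j'}(\ell-\ell')|^2\,u_{\ell',j'}^2,
\]
where crucially $\|K\|_{\ell^2}^2=\bigl(\sum_{\xi\in\Z^\nu}\langle \xi\rangle^{-2\beta}\bigr)\bigl(\sum_{k\in\Z}\langle k\rangle^{-2}\bigr)<\infty$, the first factor being finite precisely because $2\beta>\nu$. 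Multiplying by $\langle \ell,j\rangle^{2s}$, summing over $(\ell,j)$, and using that the Fourier coefficients of $T\,e^{\ii(\ell'\cdot\vphi+j'x)}$ at $(\ell,j)$ are exactly the matrix column $T_j^{j'}(\ell-\ell')$, I get the key identity
\[
\sum_{\ell,j}\langle\ell,j\rangle^{2s}|T_j^{j'}(\ell-\ell')|^2=\|T\,e^{\ii(\ell'\cdot\vphi+j'x)}\|_s^2\le \|T\|_{s,s}^2\,\langle\ell',j'\rangle^{2s}.
\]
Finally summing in $(\ell',j')$ yields $\|\underline A u\|_s^2 \lesssim_\nu\sum_{T\in\cT}\|T\|_{s,s}^2\,\|u\|_s^2$, which is the supremum-in-$\omega$ part of \eqref{laverna}.

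Since $\td_x$ and $\td_{\vphi_h}^\beta$ are $\omega$-independent, the incremental quotient $\Delta_{12}$ commutes with them, and the matrix entries of $\Delta_{12}A$ satisfy the pointwise bound above with $T$ replaced by $\Delta_{12}T$. Applying the Cauchy--Schwarz argument verbatim produces $|\Delta_{12}A|_{s,s}\lesssim_\nu\sum_{T\in\cT}\|\Delta_{12}T\|_{s,s}$; multiplying by $\gamma$ and combining with the supremum part gives the weighted bound \eqref{laverna}.

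The main subtlety is that $|\cdot|_{s,s}$ is strictly stronger than $\|\cdot\|_{s,s}$, since the majorant forgets the phases of matrix entries, and $\vphi$-translates of a T\"oplitz operator have identical operator norms but generally different majorant norms. The argument succeeds only because the commutators in $\cT$ supply exactly enough matrix-entry decay to make $K$ square-summable: the $\nu$ commutators $\td_{\vphi_h}^\beta A$ with $\beta>\nu/2$ control the $\nu$-dimensional $\ell$-variable, while the single commutator $\td_x A$ controls the one-dimensional $(j-j')$-variable.
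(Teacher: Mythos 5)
Your argument is correct and is essentially the same as the paper's: both use Cauchy--Schwarz in $(\ell',j')$ against the weight $\langle\ell-\ell'\rangle^{-\beta}\langle j-j'\rangle^{-1}$ (summable since $2\beta>\nu$), then the column-norm identity $\sum_{\ell,j}\langle\ell,j\rangle^{2s}|T_j^{j'}(\ell-\ell')|^2\le\|T\|_{s,s}^2\langle\ell',j'\rangle^{2s}$ applied to each $T\in\{A,\td_xA,\td_{\vphi_h}^\beta A,\td_x\td_{\vphi_h}^\beta A\}$, and the Lipschitz part follows by the same argument for $\Delta_{12}A$. The only cosmetic difference is that you absorb the weight into a pointwise matrix-entry bound before Cauchy--Schwarz, whereas the paper applies Cauchy--Schwarz first and then expands $\langle\ell-\ell'\rangle^{2\beta}\langle j-j'\rangle^2$.
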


\begin{proof}
By \eqref{sobspace}, \eqref{linearoperator}  
and Cauchy-Schwarz inequality we have, since $ \beta > \nu / 2 $, 
\begin{align}
\| \underline{A}  u \|_s^2
&\le 
\sum_{\ell\in\mathbb{Z}^{\nu}, j \in \mathbb{Z}} \langle \ell, j \rangle^{2 s} \Big( \sum_{\ell'\in\mathbb{Z}^{\nu}, j' \in \mathbb{Z}}  \lvert A_j^{j'}(\ell-\ell') \rvert 
\lvert u_{\ell',j'} \rvert \Big)^2 \notag
\\
&\lesssim 
\sum_{\ell\in\mathbb{Z}^{\nu}, j \in \mathbb{Z}} 
\langle \ell, j \rangle^{2 s} 
\Big( \sum_{\ell'\in\mathbb{Z}^{\nu} , j' \in \mathbb{Z}}  
\langle \ell-\ell' \rangle^{2 \beta} \jap{j-j'}^2  \lvert A_j^{j'}(\ell-\ell')\rvert^2 
\lvert u_{\ell',j'} \rvert^2 \Big) \notag  \\
& =
\sum_{\substack{\ell'\in\mathbb{Z}^{\nu}, j' \in \mathbb{Z}}}
 \lvert u_{\ell',j'} \rvert^2 
\Big( \sum_{\substack{\ell\in\mathbb{Z}^{\nu},j \in \mathbb{Z}}}  
\langle \ell, j \rangle^{2 s}\, \, 
\langle \ell-\ell' \rangle^{2 \beta} \jap{j-j'}^2  \lvert A_j^{j'}(\ell-\ell')\vert^2\Big)  \label{causc}
\, . 
\end{align}
Now 
\begin{equation}\label{lancomm}
\langle \ell-\ell' \rangle^{2 \beta}  \jap{j-j'}^2 
\lesssim_\beta 1 + |j-j'|^2 + \sum_{h=1}^{\nu}  |\ell_h-\ell_h'|^{2\beta} +
|j-j'|^2 \sum_{h=1}^{\nu} |\ell_h-\ell_h'|^{2\beta}  \, . 
\end{equation}
For any linear operator $ A $, 
specializing 
$ \| A h \|_s^2 \leq \| A \|_{s,s}^2 \| h \|_s^2 $ for any 
$ h = e^{\ii (\ell' \cdot \varphi + j' x )}$, we get,  
recalling \eqref{sobspace}, 
\begin{equation}\label{normasobop}
\sum_{\ell\in\mathbb{Z}^{\nu} , j \in \mathbb{Z}} 
\langle \ell, j \rangle^{2 s} |A_j^{j'}(\ell-\ell')|^2
\le  \| A\|_{s,s}^2 \langle \ell',j'\rangle^{2s} \, . 
\end{equation}
Therefore, by \eqref{lancomm}, \eqref{normasobop} 
and recalling the definition of  $ \td_{x}A $  and $ \td_{\vphi_h}^\beta $in 
\eqref{def:commuDD} 
we get 
 \[
 \begin{aligned}
& \sum_{\ell\in\mathbb{Z}^{\nu} , j \in \mathbb{Z}} \langle \ell, j \rangle^{2 s}
\langle \ell-\ell' \rangle^{2 \beta} \jap{j-j'}^2  \lvert A_j^{j'}(\ell-\ell')\vert^2
\lesssim_\beta \\
& \big(  \| A\|_{s,s}^2 + 
\|  \td_{x}A \|_{s,s}^2 
+ \max_{h=1, \ldots,\nu} \{ \| \td_{\vphi_h}^\beta A\|^2, 
  \| \td_{x}\td_{\vphi_h}^\beta A\| ^2 \} \big) 
\langle \ell',j'\rangle^{2s} \, .
\end{aligned}
\]
Inserting the last bound in \eqref{causc} we get 
\[
\lVert \underline{A}  u \rVert_s^2
\lesssim_\beta  
(\|  A \|_{s,s}^2 + \|  \td_x A \|_{s,s}^2 +\max_{h=1, \ldots, \nu} 
\{ \|  \td_{\vphi_h}^\beta A \|_{s,s}^2,
\| \td_{\vphi_h}^{\beta}\td_x A \|_{s,s}^2 \} ) \|u\|_s^2 \, .
\]
The estimate for the Lipschitz variation follows as well recalling \eqref{A2ss'}.
This proves  \eqref{laverna}, recall the notation \eqref{A1sbss'}, \eqref{majass}. 
\end{proof}

\begin{rmk}\label{rem:varphi}
With similar arguments for $ b > \tfrac{\nu}{2} $ one has that 
$ \sup_{\varphi \in {\mathbb T}^\nu} \| \underline A(\varphi) \|_{{\cL}(H^{s}_x,H^{s'}_x)} 
\lesssim_b   |  \langle \td_{\vphi} \rangle^b A  |_{s,s'}  $, where $A(\vphi)$ is defined in \eqref{topop1}.
\end{rmk}

\noindent
{\bf Algebraic properties of operators.}\label{sec:revparity}
Let us define  
\begin{align}
\cU&:=\big\{
(u^{+},u^{-})\in L^{2}(\mathbb{T}^{\nu+1},\C^{2})\; :\; 
\overline{u^{-}}=u^{+} 
\big\}\,,
\label{real}
\\
{\bf X}&:= (X\times X) \cap \cU\,,
\quad 
X:=\{u\in L^{2}(\mathbb{T}^{\nu+1},\C) \;:\; 
u(\vphi,x)=-\overline{u(-\vphi,x)} \}\,,
\label{rev1}
\\
{\bf Y}&:= (Y\times Y) \cap \cU\,,
\quad 
Y:=\{u\in L^{2}(\mathbb{T}^{\nu+1},\C) \;:\; 
u(\vphi,x)=\overline{u(-\vphi,x)} \}\,,\label{rev2}
\\
O&:=\{u\in L^{2}(\mathbb{T}^{\nu+1},\C) \;:\; 
u(\vphi,x)=-u(\vphi,-x) \}\,, \label{odd1}
\\
P&:=\{u\in L^{2}(\mathbb{T}^{\nu+1},\C) \;:\; 
u(\vphi,x)=u(\vphi,-x) \}\,.\label{odd2}
\end{align}

\begin{defn}{\bf (Reversible, reversibility/parity preserving operators).}\label{giornatasolare}
Let $s,s'\in\R$.
\\[1mm] 
$(i)$ 
A linear operator 
$B = B(\varphi) \in \mathcal{L}^{\mathtt{T}}(H^{s},H^{s'})$ is 
\begin{itemize}
\item[$\bullet$] \emph{reversible} if and only if $B : X\to Y$ and $B : Y\to X$,
\item[$\bullet$]  \emph{reversibility preserving} if and only if $B: X\to X$ and $B : Y\to Y$,
\item[$\bullet$] \emph{parity preserving} if and only if $B : O\to O$ and $B : P\to P$.
\end{itemize}
\noindent
$(ii)$ A matrix of  
linear operators   
\begin{equation}\label{bolena}
A :=\big( A_{\s}^{\s'}\big)_{\s,\s'\in \{\pm\}}:=
\left(\begin{matrix} A_{+}^{+} & A_{+}^{-}
\\
A_{-}^{+}  & A_{-}^{-}
\end{matrix}
\right)\in\mathcal{L}^{\mathtt{T}}(H^{s},H^{s'})\otimes \mathcal{M}_2(\C) 
\end{equation}
is
\begin{itemize}
\item[$\bullet$] \emph{real-to-real} if and only 
maps the real subspace $\mathcal{U}$ defined in \eqref{real} in itself,
\item[$\bullet$] \emph{reversible} if and only if  it is real-to-real and
$A : {\bf X}\to {\bf Y}$ and $A : {\bf Y}\to {\bf X}$,
\item[$\bullet$] \emph{reversibility preserving} if and only if  it is real-to-real and
$A : {\bf X}\to {\bf X}$ and $A : {\bf Y}\to {\bf Y}$,
\item[$\bullet$] \emph{parity preserving}
if and only if 
$A : O\times O\to O\times O$ and $A : {P}\times P\to {P}\times P$.
\end{itemize}
\end{defn}

We also define the conjugate of a linear operator. 

\begin{defn}{\bf (Conjugate operator).}\label{def:conjoperator}
Let $B\in \mathcal{L}^{\mathtt{T}}(H^{s}, H^{s})$ for some $ s, s' \in \R $. 
We define the \emph{conjugate} operator $\overline{B}$ as
\begin{equation}\label{coniugato}
\overline{B}[h]:=\overline{B[\overline{h}]}\,.
\end{equation}
For a matrix of T\"oplitz in time linear operators    $A(\vphi)$ as in \eqref{bolena} we define 
$\overline{A}(\vphi):=(\overline{A_{\s}^{\s'}}(\vphi))_{\s,\s'\in\{\pm\}}$.
\end{defn}

An operator $B$ maps real valued functions to real valued functions
if and only if $B=\overline{B}$. 
Passing to the Fourier representation, the matrix entries of 
the conjugate operator $\overline{B}$ are 
\begin{equation}\label{coeffrealtoreal}
\big(\overline{B}\big)_{j}^{j'}(\ell)=\overline{B_{-j}^{-j'}(-\ell)}\,,
\quad 
\forall\, \ell\in\Z^{\nu}\,,\; j,j'\in\Z\,.
\end{equation}

The following lemma, which directly follows  from  Def.  
\ref{giornatasolare} 
and \eqref{real}-\eqref{odd1}, 
 provides alternative characterizations 
of these algebraic properties.

\begin{lemma}\label{equidefalgebra}
Let $ s, s' \in \R $. 

\noindent
$(i)$ A linear operator 
$ B $ in $  \mathcal{L}^{\mathtt{T}}(H^{s},H^{s'})$ is 
\begin{align}
{\rm reversible} \quad &\Leftrightarrow \quad  
B(-\vphi)=-\overline{B}(\vphi)\, , \quad \forall \vphi\in\T^{\nu} \,  , 
\label{coccodrillo1}
\\
{\rm  reversibility \; preserving}\quad &\Leftrightarrow \quad  
B(-\vphi)=\overline{B}(\vphi)\, , \quad \forall \vphi\in\T^{\nu} \,  , \label{coccodrillo2}\\
{\rm  parity \; preserving}\quad &\Leftrightarrow \quad  
B_{j}^{k}(\vphi)=B_{-j}^{-k}(\vphi)\,,\;\;\;
\;  \forall \vphi\in\T^{\nu} \, ,  j,k\in\Z\,.\label{coccodrillo3}
\end{align}
($ii$)
{ $ B $ is parity  preserving
if and only if $ \cP B(\varphi) = B(\varphi) \cP  $ where $ \cP $ is the involution
$ h(x) \mapsto h(-x) $. 
}

\noindent
$(iii)$
A matrix of operators 
\begin{equation}\label{AmatRP}
A = A(\vphi)=(A_{\s}^{\s'})_{\s,\s'\in\{\pm\}}
\in \mathcal{L}^{\mathtt{T}}(H^{s},H^{s'})\otimes \mathcal{M}_2(\C)
\end{equation}
is 
\begin{align}
{\rm real\,to\, real}\quad &\Leftrightarrow 
\quad  
\overline{A}(\vphi)S=S A(\vphi)\,, \qquad S =\sm{0}{1}{1}{0}\, , 
\quad \forall \vphi\in\T^{\nu} \, ,  
\label{bolena1}
\\
{\rm reversible}\quad &\Leftrightarrow 
\quad  
A(-\vphi)=- S A(\vphi)S\stackrel{\eqref{bolena1}}{=}-\overline{A}(\vphi) \, , 
\quad \forall \vphi\in\T^{\nu}  
\, ,  \label{bolena2}
\\
{\rm reversibility \; pres.}\quad &\Leftrightarrow 
\quad  
A(-\vphi)= S A(\vphi)S\stackrel{\eqref{bolena1}}{=}\overline{A}(\vphi)\,, \label{bolena3}
\\
{\rm parity \; pres.} \quad & \Leftrightarrow \quad  A_{\s,j}^{\s',k}(\vphi)=A_{\s,-j}^{\s',-k}(\vphi)\,,\;
 \; \forall \vphi\in\T^{\nu} \, , j,k\in\Z\,,\;\s,\s'\in\{\pm\}.\label{bolena4}
\end{align}
($iv$) 
The matrix of operators  $A $  in \eqref{AmatRP} is 
\begin{align}
& {\rm real}-{\rm to}-{\rm real} \; \Leftrightarrow \quad 
A_{\s,j}^{\s',k}(\ell)=\overline{A_{-\s,-j}^{-\s',-k}(-\ell)}\, , 
\label{bolena1bis}
\\
& {\rm reversible} \; \Leftrightarrow \quad
A_{\s,j}^{\s', k}(\ell)=- A_{-\s,j}^{-\s', k}(-\ell)\stackrel{\eqref{bolena1bis}}{=}- \overline{A_{\s,-j}^{\s',- k}(\ell)}\,,
\label{bolena2bis}
\\
& {\rm revers. pres.} \; \Leftrightarrow\quad 
A_{\s,j}^{\s', k}(\ell) =
A_{-\s,j}^{-\s', k}(-\ell)\stackrel{\eqref{bolena1bis}}{=} \overline{A_{\s,-j}^{\s',- k}(\ell)}\,.
\label{bolena3bis}
\end{align}
($v$) \label{rmk:revVSrevpres} 
The matrix of operators $A(\vphi)$ is real-to-real and reversibility preserving
 if and only if 
 $\ii EA(\vphi)$
is real-to-real and reversible.
\\[1mm]
$(vi)$ Given real-to-real, parity preserving matrix operators $A_{1}(\vphi),A_{2}(\vphi), A_3(\vphi)\in \mathcal{L}^{\mathtt{T}}(H^{s},H^{s'})\otimes \mathcal{M}_2(\C)$
such that $A_1(\vphi), A_2(\vphi)$ are reversibility preserving and $A_3(\vphi)$  is reversible, then
$A_1(\vphi)\circ A_2(\vphi)$ is real-to-real, parity preserving and reversibility preserving, 
while 
$A_1(\vphi)\circ A_3(\vphi)$ and $A_3(\vphi)\circ A_1(\vphi)$ are real-to-real, parity preserving and reversible.
\end{lemma}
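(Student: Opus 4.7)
The proof is a direct verification that unpacks Definition \ref{giornatasolare} through the Fourier representations of the subspaces $X,Y,O,P$ and their matrix analogs. The strategy is to recognize each subspace as an eigenspace of a natural involution and translate the mapping property into an intertwining relation for the operator. The key observations are that $X,Y$ are the $(-1)$- and $(+1)$-eigenspaces of the antilinear involution $\rho: u(\vphi,x)\mapsto\overline{u(-\vphi,x)}$, while $O,P$ are the eigenspaces of the linear involution $\cP:u(\vphi,x)\mapsto u(\vphi,-x)$; likewise the space $\cU$ in \eqref{real} is characterized by $S\overline U=U$ for $U=(u^+,u^-)^T$.

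For item $(i)$, given $u\in X$, so $\overline{u(-\vphi,\cdot)}=-u(\vphi,\cdot)$, Definition \ref{def:conjoperator} yields
\[
\overline{(Bu)(-\vphi,x)}=\overline{B}(-\vphi)[\overline{u(-\vphi,\cdot)}](x)=-\overline{B}(-\vphi)[u(\vphi,\cdot)](x),
\]
and imposing $Bu\in Y$, i.e.\ equality with $B(\vphi)u(\vphi,\cdot)(x)$, forces $B(\vphi)=-\overline{B}(-\vphi)$. Passing to Fourier coefficients $u_{\ell,j}$, the defining constraint $u_{\ell,j}=-\overline{u_{\ell,-j}}$ of $X$ still leaves enough freedom to match exponentials on both sides and deduce $B_J^j(\ell)=-\overline{B_{-J}^{-j}(\ell)}$, which via \eqref{coeffrealtoreal} is precisely $B(-\vphi)=-\overline B(\vphi)$; the sign-flipped computation (starting from $u\in X$ and requiring $Bu\in X$, or $u\in Y$ and $Bu\in Y$) gives \eqref{coccodrillo2}. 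Items $(ii)$ and \eqref{coccodrillo3} follow from the observation that $B$ preserves both $O$ and $P$ iff $\cP B(\vphi)=B(\vphi)\cP$, which on exponentials (using $\cP e^{\ii jx}=e^{-\ii jx}$) reads $B_j^k(\vphi)=B_{-j}^{-k}(\vphi)$.

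For item $(iii)$, the real-to-real condition $A:\cU\to\cU$, written componentwise in the basis $U=(u^+,u^-)^T$ with $\overline{u^-}=u^+$, is equivalent to $\overline A\,S=SA$, which is \eqref{bolena1}. Applying the scalar argument of $(i)$ to each entry $A_\s^{\s'}$ yields $A(-\vphi)=\mp\overline A(\vphi)$ for reversibility/reversibility preservation; combining with \eqref{bolena1} produces the $S$-conjugated forms \eqref{bolena2}--\eqref{bolena3}, while \eqref{bolena4} follows from $(ii)$ applied entrywise. For item $(iv)$, expanding $A(\vphi)=\sum_\ell A(\ell)e^{\ii\ell\cdot\vphi}$ and using the matrix-entry form \eqref{coeffrealtoreal} of the conjugate turns the physical-space identities of $(iii)$ into the Fourier identities \eqref{bolena1bis}--\eqref{bolena3bis}. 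For item $(v)$, the pointwise identities $\overline{\ii EA}=-\ii E\overline A$ and $SE=-ES$ (immediate from the explicit $2\times 2$ matrices) give $S(\ii EA)S=-\ii E\,SAS$, so $\ii EA$ is real-to-real iff $A$ is; similarly $(\ii EA)(-\vphi)=-\overline{\ii EA}(\vphi)$ iff $A(-\vphi)=\overline A(\vphi)$, i.e.\ reversibility of $\ii EA$ is equivalent to reversibility preservation of $A$.

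Finally, item $(vi)$ is the algebra-closure statement: from $\overline{AB}=\overline A\,\overline B$ and $S^2=\Id$ one computes $S(A_1A_2)S=(SA_1S)(SA_2S)=\overline{A_1}\,\overline{A_2}=\overline{A_1A_2}$, which shows that reversibility preservation is closed under composition; flipping exactly one sign treats the case $A_1$ reversibility preserving and $A_3$ reversible, producing a reversible composition. Parity preservation is preserved under composition because the intertwining with $\cP$ passes through both factors and $\cP^2=\Id$. No step constitutes a substantial obstacle; the only care required is the systematic bookkeeping of signs arising from the antilinearity of $\overline{\cdot}$ and from the anticommutation $SE+ES=0$.
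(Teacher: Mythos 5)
Your proposal is correct and is precisely the kind of direct verification the paper intends: the authors simply state that Lemma~\ref{equidefalgebra} ``directly follows from Def.~\ref{giornatasolare} and \eqref{real}--\eqref{odd1}'' without writing out the computations. Your strategy of recognizing $X,Y$ (resp.\ $O,P$, resp.\ $\cU$) as the $\mp 1$-eigenspaces of the antilinear involution $\rho\colon u\mapsto\overline{u(-\vphi,\cdot)}$ (resp.\ of $\cP$, resp.\ of $U\mapsto S\overline U$), and translating the invariance requirement into an intertwining identity, is the natural way to make that assertion rigorous, and all the sign bookkeeping (notably $\overline{\ii EA}=-\ii E\overline A$ and $SE=-ES$ in item $(v)$) is handled correctly.

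One small point worth making explicit: in item $(i)$ the condition $Bu\in Y$ for $u\in X$ only forces the operator identity $B(\vphi)=-\overline B(-\vphi)$ once one observes that the $\R$-linear constraint $u_{\ell,j}=-\overline{u_{\ell,-j}}$ still lets $u_{\ell,j}$ range over all of $\C$ for $j\neq 0$ (and over $\ii\R$ for $j=0$, which together with the free imaginary part suffices), so the single-mode test functions span enough of the space. You allude to this (``still leaves enough freedom''), and the Fourier computation does close the gap; it is just worth flagging because the naive ``it holds for all $u\in X$, hence as operators'' would be incomplete, $X$ being only a real subspace.
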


\subsection{$2\times 2$ block decomposition}\label{sec:matrici22}
We now reorganize the matrix 
$ 
\big(A_{\s,j}^{\s',k}(\ell)\big)_{\substack{\s,\s'\in \{\pm\},
\ell\in\Z^{\nu}, j,k\in\Z}}$ defined in \eqref{matrrep2},\eqref{matrrep} which 
represents a linear operator 
\begin{equation}\label{pioggia}
A=A(\vphi)=\big(A_{\s}^{\s'}(\vphi)\big)_{\s,\s'\in \{\pm\}}\in 
\mathcal{L}^{\mathtt{T}}(H^{s},H^{s'})\otimes \mathcal{M}_2(\C) 
\end{equation}
in Fourier basis in (at most) $2\times 2$ blocks 
as follows.
For any $\s,\s'\in \{\pm\}$, $\ell\in \Z^{\nu}$, 
we define the $p\times q$ matrices where $p,q=1,2$,
\begin{equation}\label{supermatrice}
\begin{aligned}
A_{\s, \,\vec{\jmath}}^{\s',\,\vec{k}}(\ell)&:=\left(
\begin{matrix}
A_{\s,j}^{\s',k}(\ell) & A_{\s,j}^{\s',-k}(\ell)
\vspace{0.2em}\\
A_{\s,-j}^{\s',k}(\ell) & A_{\s,-j}^{\s',-k}(\ell)
\end{matrix}
\right)\,,\qquad j,k\in\N\,,
\\
A_{\s, \,\vec{0}}^{\s',\,\vec{k}}(\ell)&:=\left(
\begin{matrix}
A_{\s,0}^{\s',k}(\ell) & A_{\s,0}^{\s',-k}(\ell)
\end{matrix}
\right)\,,\qquad j=0\,,\; k\in\N\,,
\\
A_{\s, \,\vec{\jmath}}^{\s',\,\vec{0}}(\ell)&:=\left(
\begin{matrix}
A_{\s,j}^{\s',0}(\ell) & A_{\s,-j}^{\s',0}(\ell)
\end{matrix}
\right)^{T}\,,\qquad j\in\N\,,\; k=0\,,
\\
A_{\s, \,\vec{0}}^{\s',\,\vec{0}}(\ell)&:=\left(
\begin{matrix}
A_{\s,0}^{\s',0}(\ell)
\end{matrix}
\right)\,,\qquad j=k=0\,.
\end{aligned}
\end{equation}
Using the notation \eqref{supermatrice}
the operator $ A $ in \eqref{pioggia} is thus identified with the matrices 
\begin{equation}\label{supermatrice2}
A(\vphi) = 
\big(A_{\s,j}^{\s',k}(\ell)\big)_{\substack{\s,\s'\in \{\pm\},\\
\ell\in\Z^{\nu}, j,k\in\Z}}
\stackrel{\eqref{supermatrice}}{=}
\Big(
A_{\s, \,\vec{\jmath}}^{\s',\,\vec{k}}(\ell)
\Big)_{\substack{\s,\s'\in \{\pm\} \\
\ell\in \Z^{\nu}, j,k\in\N_{0}}}\,.
\end{equation}
The following lemma is used to estimate the solution of the homological equation in Lemma \ref{lem:homoeq}.

\begin{lemma}\label{rmk:maggiorantevsSupblocchi}
Let $ A $ be an operator 
in $ \mathcal{L}^{\mathtt{T}}(H^{s},H^{s'})\otimes \mathcal{M}_2(\C)  $
as in \eqref{pioggia}.  
Then for all $\omega\in \calO$, the  majorant operator norm $|A|_{s,s'}$ 
(see Def. \ref{def:cuoreditenebra}) satisfies 
\begin{equation}\label{Ass'eq}
\tfrac{1}{4}|\widecheck{A}|_{s,s'}\leq |A|_{s,s'}\leq  |\widecheck{A}|_{s,s'}\,,
\end{equation}
where $ \widecheck{A} $ is the matrix as in \eqref{supermatrice2} with entries  (we denote 
$\|\cdot\|_{\infty}$ the sup norm of matrices) 
$$
\widecheck{A}_{\s,\vec{\jmath}}^{\s'\vec{k}}(\ell) :=\|{A}_{\s,\vec{\jmath}}^{\s'\vec{k}}(\ell)\|_{\infty}\sm{1}{1}{1}{1} \,,\;\;j,k \in \N \,, \quad 
\widecheck{A}_{\s,\vec{0}}^{\s'\vec{k}}(\ell) :=\|{A}_{\s,\vec{0}}^{\s'\vec{k}}(\ell)\|_{\infty} (1 \ 1) \, , \ 
j=0, k\in \N \, , 
$$
and similarly for the $2\times1$, $1\times1$ cases.  
\end{lemma}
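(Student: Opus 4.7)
The strategy is a direct entrywise comparison combined with a symmetry argument exploiting that the sign-flip operator $\mathcal{P}: h(x)\mapsto h(-x)$ is an isometry on every $H^s$.

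\textbf{Upper bound} $|A|_{s,s'}\le |\widecheck{A}|_{s,s'}$. By construction, for every indices $\sigma,\sigma',\ell,j,k$ the block $A_{\sigma,\vec{\jmath}}^{\sigma',\vec{k}}(\ell)$ contains the entry $A_{\sigma,j}^{\sigma',k}(\ell)$, so
$$ |A_{\sigma,j}^{\sigma',k}(\ell)|\ \le\ \|A_{\sigma,\vec{\jmath}}^{\sigma',\vec{k}}(\ell)\|_\infty = \widecheck{A}_{\sigma,j}^{\sigma',k}(\ell). $$
Thus $\underline{A}\preceq \widecheck{A}$ entrywise, and since $\widecheck{A}$ has nonnegative entries it coincides with its own majorant. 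Using the monotonicity of the majorant operator norm under $\preceq$ (cf. \eqref{rmk:propmajNorm}), we get $|A|_{s,s'}=\|\underline{A}\|_{s,s'}\le \|\widecheck{A}\|_{s,s'}=|\widecheck{A}|_{s,s'}$.

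\textbf{Lower bound} $|\widecheck{A}|_{s,s'}\le 4|A|_{s,s'}$. The sup norm of a matrix is dominated by the sum of the absolute values of its entries, hence for $j,k\in\N$,
$$ \|A_{\sigma,\vec{\jmath}}^{\sigma',\vec{k}}(\ell)\|_\infty \le \sum_{\epsilon,\epsilon'\in\{\pm\}} |A_{\sigma,\epsilon j}^{\sigma',\epsilon'k}(\ell)|, $$
with analogous (shorter) bounds in the $2\times 1$, $1\times 2$, $1\times 1$ cases. Therefore
$$ \widecheck{A}\ \preceq\ \sum_{\epsilon,\epsilon'\in\{\pm\}} B^{(\epsilon,\epsilon')}, \qquad \big(B^{(\epsilon,\epsilon')}\big)_{\sigma,j}^{\sigma',k}(\ell):= |A_{\sigma,\epsilon j}^{\sigma',\epsilon' k}(\ell)|, $$
where each $B^{(\epsilon,\epsilon')}$ is obtained from the majorant $\underline{A}$ by the substitution $j\mapsto \epsilon j$, $k\mapsto \epsilon' k$ on the Fourier indices. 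At the operator level this substitution amounts to conjugating $\underline{A}$ by the parity operator $\mathcal P$ (on the left and/or on the right), which acts as an isometry on every $H^s(\mathbb T^{\nu+1},\mathbb C^2)$; hence
$$ \|B^{(\epsilon,\epsilon')}\|_{s,s'} = \|\underline{A}\|_{s,s'} = |A|_{s,s'}. $$
Summing the four contributions and applying monotonicity of $\|\cdot\|_{s,s'}$ under $\preceq$ yields $|\widecheck{A}|_{s,s'}=\|\widecheck{A}\|_{s,s'}\le 4|A|_{s,s'}$.

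\textbf{Main (minor) obstacle.} Nothing deep is involved; one only has to handle carefully the degenerate cases $j=0$ or $k=0$, where the corresponding blocks reduce to $1\times 2$, $2\times 1$, or $1\times 1$ and the sign flip $j\mapsto -j$ (or $k\mapsto -k$) is trivial. In those cases the bound $\|\cdot\|_\infty\le\sum|\cdot|$ involves only $2$ (or $1$) summands, so the constant $4$ is not sharp but still valid. Combining the two bounds proves \eqref{Ass'eq}.
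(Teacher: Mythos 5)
Your upper-bound argument is exactly the paper's. For the lower bound you take a genuinely different and cleaner route. The paper proves the constant $4$ by showing that $|\widecheck{A}|_{s,s'}$ can be attained as a supremum over test functions whose Fourier coefficients satisfy $u^\sigma_j=u^\sigma_{-j}\ge 0$, and then comparing $\und{\widecheck{A}}u$ and $\und{A}u$ entrywise on this symmetric cone. You instead dominate $\widecheck{A}$ entrywise by the sum of the four operators $B^{(\epsilon,\epsilon')}$ obtained from $\und{A}$ by composing on the left and/or right with the spatial parity operator $\mathcal{P}\otimes I_2$; since $\mathcal{P}$ is a unitary on $H^s$ (the Sobolev weight $\langle\ell,j\rangle$ is even in $j$), each $B^{(\epsilon,\epsilon')}$ has the same $H^s\to H^{s'}$ operator norm as $\und{A}$, and the triangle inequality together with monotonicity \eqref{rmk:propmajNorm} gives the factor $4$. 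Both arguments are correct; yours is shorter and more structural, at the cost of introducing the four intermediate operators, while the paper's argument stays closer to an explicit calculation on Fourier coefficients. One small terminological nit: $\mathcal{P}\underline{A}$ and $\underline{A}\mathcal{P}$ are not conjugations (that would be $\mathcal{P}\underline{A}\mathcal{P}$) — only one-sided compositions by an isometry; the norm-preservation you need holds just the same, so the conclusion is unaffected.
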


\begin{proof}
The upper bound \eqref{Ass'eq} follows 
directly because $A \preceq \widecheck A $, see \eqref{rmk:propmajNorm}. 
Let us  prove the lower bound. We claim that 
\begin{equation}\label{scimmia}
|\widecheck{A}|_{s,s'}= \sup_{\substack{	\| u \|_s \leq 1 \\ u^\sigma_j= u^\sigma_{-j}\ge 0\,\forall j\in \N_0,
\sigma \in \{ \pm \}}} \| \und{\widecheck A} u \|_{s'}\leq  4  \sup_{\substack{	\| u \|_s \leq 1 \\ u^\sigma_j= u^\sigma_{-j}\ge 0\,\forall j\in \N_0,
\sigma \in \{ \pm \}}} \| \und{A} u \|_{s'}\leq 4|{A}|_{s,s'}\, . 
\end{equation}
The last inequality in \eqref{scimmia} directly follows recalling the definitions \eqref{majass} and 
 \eqref{operatorNorma}.
Let us prove the first equality in \eqref{scimmia}.
First note that $\|\und{\widecheck{A}}u\|_{s'}\leq \|\und{\widecheck{A}}\,\und{u}\|_{s'}$
and that $ \|\underline u\|_s=\|u\|_s $ by \eqref{maju}. 
Moreover, for any $ j \in \N_0 $, 
 for any $ u = \vect{u^+}{u^-} $ 
 (recall \eqref{usigma}) one has
\begin{align}
(\und{\widecheck{A}} \,\und{u})^\s_j + (\und{\widecheck{A}}\, \und{u})^\s_{-j}
&= 2  \sum_{j'\in \N,\s'\in \{\pm\}} \|A_{\s,\vec\jmath}^{\s',\vec\jmath'}\|_\infty (|u_{j'}^{\s'}|+|u_{-j'}^{\s'}|)
+
2  \sum_{\s'\in \{\pm\}} \|A_{\s,\vec\jmath}^{\s',\vec 0}\|_\infty |u_{0}^{\s'}| \notag 
\\&
= 4  \sum_{j'\in \N,\s'\in \{\pm\}} \|A_{\s,\vec\jmath}^{\s',\vec\jmath'}\|_\infty z_{j'}^{\s'}
+
2  \sum_{\s'\in \{\pm\}} \|A_{\s,\vec\jmath}^{\s',\vec 0}\|_\infty |u_{0}^{\s'}| \notag 
\\&
= (\und{\widecheck{A}} \, z)^\s_j + (\und{\widecheck{A}}\, z)^\s_{-j} \label{ultipas}
\end{align}
where $z_{j'}^{\s'}:=(|u_{j'}^{\s'}| +|u_{-j'}^{\s'}|)/2$ for any $ j' \in \N_0 $. 
By \eqref{ultipas} we deduce that 
$\|\und{\widecheck{A}}u\|_{s'}\leq \|\und{\widecheck{A}}\,\und{u}\|_{s'} 
\leq \|\und{\widecheck{A}} z\|_{s'}$. Furthermore $ \|z\|_{s} \leq \|u\|_{s} $
and $z_{j'}^{\s'}=z_{-j'}^{\s'}\geq 0$ for any $ j' \in \N_0$, $ \sigma' \in \{ \pm \} $. 
This implies   the first equality in \eqref{scimmia}.

Finally we prove the intermediate inequality in \eqref{scimmia}. For any 
$ u = \vect{u^+}{u^-} $   such that $ u_j^\sigma = u_{-j}^\sigma \ge 0$ for 
any  $j\in \N_0$, $\s\in\{\pm \}$ we have 
\begin{align*}
(\und{A} u)^{\s}_j + (\und{A} u)^{\s}_{-j} &
=   \sum_{j'\in \N, \s'\in \{\pm\}}( |A_{\s, j}^{\s' ,j'}|+  |A_{\s,- j}^{\s' ,j'}|) u_{j'}^{\s'}
+ (|A_{\s, j}^{\s' , - j'}|+  |A_{\s, -j}^{\s' ,-j'}|)  u_{-j'}^{\s'} 
\\&
+   \sum_{ \s'\in \{\pm\}}( |A_{\s, j}^{\s' ,0}|+  |A_{\s,- j}^{\s' ,0}|) u_{0}^{\s'}
\\
&= \frac12  
\sum_{j'\in \N,\s'\in\{\pm\}} \sum_{\eta,\eta'\in \{\pm\}}
|A_{\s,\eta j}^{\s' ,\eta 'j'}|(u_{j'}^{\s'}+
u_{-j'}^{\s'})
+   \sum_{ \s'\in \{\pm\}}( |A_{\s, j}^{\s' ,0}|+  |A_{\s,- j}^{\s' ,0}|) u_{0}^{\s'}
\\ & \ge  \frac12  \sum_{j'\in \N_0,\s'\in\{\pm\}} \|A_{\s,\vec\jmath}^{\s',\vec\jmath'}\|_\infty (u_{j'}^{\s'}
+ u_{-j'}^{\s'})= \frac14\Big( (\und{\widecheck{A}} u)^{\s}_j 
+ (\und{\widecheck{A}} u)^{\s}_{-j}\Big)\,,
\end{align*}
and thus $\|\und{A} u\|_{s'} \ge \frac14 \|\und{\widecheck A} u\|_{s'}$.
This proves \eqref{scimmia}. 
\end{proof}

In view of Lemma \ref{equidefalgebra}-($iv$) and the notation \eqref{supermatrice}
the following lemma holds.

\begin{rmk}\label{pioggia2matrici}
A matrix of operators  $A(\vphi) \in \mathcal{L}^{\mathtt{T}}(H^{s},H^{s'})\otimes \mathcal{M}_2(\C) $
as in \eqref{AmatRP} is\footnote{Here the $2\times2$-matrix $S=\sm{0}{1}{1}{0}$ acts on the 
$ 2 \times 2 $ block
$A_{\s,\vec{\jmath}}^{\s',\vec{k}}({\ell})$, for fixed $\s,\s'\in\{\pm\}$ differently from
\eqref{bolena1}-\eqref{bolena3} where $S$ acts on blocks $(A_{\s}^{\s'})_{\s,\s'\in\{\pm\}}$.
} if $j,k\ne 0$
\begin{align}
{\rm real}-{\rm to}-{\rm real} \; \Leftrightarrow\quad\eqref{bolena1}\quad &\Leftrightarrow\quad
A_{\s,\vec{\jmath}}^{\s', \vec{k}}(\ell)= S\overline{A_{-\s,\vec{\jmath}}^{-\s',\vec{k}}(-\ell)}S\, , 
\label{bolena1tris}
\\
{\rm reversible} \; \Leftrightarrow\quad\eqref{bolena2}\quad &\Leftrightarrow\quad
A_{\s,\vec{\jmath}}^{\s', \vec{k}}(\ell)=-{A_{-\s,\vec{\jmath}}^{-\s',\vec{k}}(-\ell)}
\stackrel{\eqref{bolena1tris}}{=} -S\overline{A_{\s,\vec{\jmath}}^{\s',\vec{k}}(\ell)}S \, , 
\label{bolena2tris}
\\
{\rm revers. \, pres.} \; \Leftrightarrow\quad
\eqref{bolena3}\quad &\Leftrightarrow\quad
A_{\s,\vec{\jmath}}^{\s', \vec{k}}(\ell)={A_{-\s,\vec{\jmath}}^{-\s',\vec{k}}(-\ell)}
\stackrel{\eqref{bolena1tris}}{=} S\overline{A_{\s,\vec{\jmath}}^{\s',\vec{k}}(\ell)}S 	\, , 
\label{bolena3tris}
\\
{\rm parity \ pres.} \; \Leftrightarrow\quad\eqref{bolena4} \quad &\Leftrightarrow\quad
A_{\s,\vec{\jmath}}^{\s', \vec{k}}(\ell)=SA_{\s,\vec{\jmath}}^{\s', \vec{k}}(\ell)S\,.\label{bolena4tris}
\end{align}
Note  that \eqref{bolena4tris} together with  \eqref{bolena2tris} (resp.\eqref{bolena3tris}) implies that 
$A_{\s,\vec{\jmath}}^{\s', \vec{k}}(\ell)$ has purely immaginary (resp. real) valued entries.
\end{rmk}
\begin{lemma}\label{algebra2}
A  parity preserving matrix $A=A(\vphi)$  in the block form \eqref{supermatrice} 
satisfies 
	\begin{equation}	\label{algebra}
	\begin{aligned}
	& A_{\s,\vec{\jmath}}^{\s', \vec{k}}(\ell)=  A_{\s,j}^{\s', k}(\ell)\Id + A_{\s,j}^{\s', -k}(\ell) S\,, \quad  j,k \in \N \, , \\
		& A_{\s,\vec{0}}^{\s', \vec{k}}(\ell) =  A_{\s,0}^{\s', k}(\ell)(1 \ 1) \,,\quad k\ne 0\,,\quad A^{\s',\vec{0}}_{\s, \vec{\jmath}}(\ell) =  A^{\s',0}_{\s, j}(\ell)(1 \ 1)^T \,,\quad j\ne 0 \, . 
			\end{aligned}
	\end{equation}
These matrices are simultaneously diagonalizable 
by conjugation with the matrix $\mathfrak{U}:=\sm{1}{1}{1}{-1}$, namely 
\begin{equation}\label{algebratilde}
\begin{aligned}
\widetilde{A}_{\s,\vec{\jmath}}^{\s', \vec{k}}(\ell)
&:=\mathfrak{U}^{-1}A_{\s,\vec{\jmath}}^{\s', \vec{k}}(\ell)\mathfrak{U}
\\&=\left(
\begin{matrix}
	A_{\s,j}^{\s',k}(\ell)+A_{\s,j}^{\s',-k}(\ell) & 0 \\ 0 & 	A_{\s,j}^{\s',k}(\ell)-A_{\s,j}^{\s',-k}(\ell) 
\end{matrix}
\right)= A_{\s,j}^{\s',k}(\ell) \Id + A_{\s,j}^{\s',-k}(\ell) E \,,
\end{aligned}
\end{equation}
for any $\s,\s'\in \{\pm\}$, $j,k\in \N $ while 
\begin{equation}\label{contilde}
\begin{aligned}
&\widetilde{A}_{\s,\vec{\jmath}}^{\s',\vec{0}}(\ell)
:=
\mathfrak{U}^{-1}A_{\s,\vec{\jmath}}^{\s',\vec{0}}(\ell)\,, \quad \widetilde{A}_{\s,\vec{0}}^{\s',\vec{k}}(\ell)
:= {A}_{\s,\vec{0}}^{\s',\vec{k}}(\ell) \fU \, , \ \text{ \ are proportional to} \ (1 \ 0) \, .  
\end{aligned}
\end{equation}
We also  define the scalar 
$  \widetilde{A}_{\s,\vec{0}}^{\s',\vec{0}}(\ell) := {A}_{\s,\vec{0}}^{\s',\vec{0}}(\ell) $. 
Moreover 
	\begin{align}
		A \;\;{\rm is\, real-to-real}\;\;
		&\Leftrightarrow\quad
		\widetilde{A}_{\s,\eta j}^{\s', \eta k}(\ell)= \overline{\widetilde{A}_{-\s,\eta j}^{-\s',\eta k}(-\ell)}\,;
		\label{bolena1quatuor}
		\\
		A \;\;{\rm is\, reversible}\;\;
		&\Leftrightarrow\quad
		\widetilde{A}_{\s,\eta j}^{\s', \eta k}(\ell)= -\overline{\widetilde{A}_{\s,\eta j}^{\s',\eta k}(\ell)}
		\stackrel{\eqref{bolena1quatuor}}{=}
		-\widetilde{A}_{-\s,\eta j}^{-\s',\eta k}(-\ell)
		\label{bolena2quatuor}
		\\
			A \;\;{\rm is\, reversibility \;preserving}\;\;
			&\Leftrightarrow\quad
		\widetilde{A}_{\s,\eta j}^{\s', \eta k}(\ell)= \overline{\widetilde{A}_{\s,\eta j}^{\s',\eta k}(\ell)}
		\stackrel{\eqref{bolena1quatuor}}{=}
		\widetilde{A}_{-\s,\eta j}^{-\s',\eta k}(-\ell)\,.
		\label{bolena3quatuor}
	\end{align}
\end{lemma}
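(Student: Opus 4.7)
The plan is to verify each assertion by direct computation from the definitions, relying on the scalar characterization \eqref{bolena4} of parity preservation. First I would unfold the block definition \eqref{supermatrice}: for $j,k\in\N$, setting $a:=A_{\s,j}^{\s',k}(\ell)$ and $b:=A_{\s,j}^{\s',-k}(\ell)$, the identity $A_{\s,j}^{\s',k}(\ell)=A_{\s,-j}^{\s',-k}(\ell)$ given by parity forces simultaneously $A_{\s,-j}^{\s',-k}(\ell)=a$ and $A_{\s,-j}^{\s',k}(\ell)=b$, so the $2\times 2$ block collapses to
\begin{equation*}
A_{\s,\vec{\jmath}}^{\s',\vec{k}}(\ell)=\sm{a}{b}{b}{a}=a\,\Id+b\,S,
\end{equation*}
which is the first identity of \eqref{algebra}. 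The boundary cases are obtained by specializing parity at $j=0$ (resp.\ $k=0$), which gives $A_{\s,0}^{\s',k}(\ell)=A_{\s,0}^{\s',-k}(\ell)$ (resp.\ $A_{\s,j}^{\s',0}(\ell)=A_{\s,-j}^{\s',0}(\ell)$) and hence the $(1\ 1)$ and $(1\ 1)^{T}$ factors.

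Next I would diagonalize $S$ through $\fU$. Since $\fU^{2}=2\,\Id$, one has $\fU^{-1}=\tfrac12\fU$, and a direct computation gives $\fU^{-1}S\fU=E$. Therefore
\begin{equation*}
\fU^{-1}\bigl(a\,\Id+b\,S\bigr)\fU=a\,\Id+b\,E=\diag(a+b,\,a-b),
\end{equation*}
which is exactly \eqref{algebratilde}, with diagonal entries $\widetilde A_{\s,\eta j}^{\s',\eta k}(\ell)=A_{\s,j}^{\s',k}(\ell)+\eta\,A_{\s,j}^{\s',-k}(\ell)$ for $\eta=\pm$. For the row/column cases of \eqref{contilde} it suffices to observe that $\fU^{-1}(1,1)^{T}=(1,0)^{T}$ and $(1,1)\fU=(2,0)$, so that both $\widetilde A_{\s,\vec{\jmath}}^{\s',\vec 0}(\ell)$ and $\widetilde A_{\s,\vec 0}^{\s',\vec k}(\ell)$ are proportional to $(1\ 0)$ (as column and row respectively), as claimed.

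Finally, for the characterizations \eqref{bolena1quatuor}--\eqref{bolena3quatuor}, I would conjugate the identities \eqref{bolena1tris}--\eqref{bolena3tris} by $\fU$. Because $\fU$ is a real matrix, conjugation by $\fU$ commutes with complex conjugation, and the key relation $\fU^{-1}S\fU=E$ together with the trivial identity $E\,\diag(\alpha,\beta)\,E=\diag(\alpha,\beta)$ shows that the sandwiching by $S$ on the right-hand sides of \eqref{bolena1tris}--\eqref{bolena3tris} simply disappears once one passes to the diagonal block $\widetilde A_{\s,\vec{\jmath}}^{\s',\vec k}(\ell)$. Reading off each diagonal entry then gives \eqref{bolena1quatuor}--\eqref{bolena3quatuor}, with the corresponding sign (real-to-real $\leftrightarrow +$, reversible $\leftrightarrow -$, reversibility preserving $\leftrightarrow +$).

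The main obstacle is purely notational bookkeeping: one must carefully distinguish the two different roles of the matrix $S$, namely as the involution $\sm{0}{1}{1}{0}$ acting on the external $(\s,\s')$-indices in \eqref{bolena1tris}--\eqref{bolena3tris}, and as the same matrix acting on the internal $2\times 2$ block at fixed $(\s,\s')$ in \eqref{algebra}. Once the identity $\fU^{-1}S\fU=E$ is isolated and the blocks in \eqref{supermatrice} are rewritten via the parity constraint, every remaining step reduces to elementary $2\times 2$ matrix algebra.
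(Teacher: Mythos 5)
Your proposal is correct and follows the paper's own (very terse) proof essentially line by line: \eqref{algebra} comes from combining the scalar parity identity \eqref{bolena4} with the block definition \eqref{supermatrice}, the diagonalization \eqref{algebratilde} and the row/column cases \eqref{contilde} follow from $\fU^{-1}S\fU=E$, $(1\ 1)\fU=(2\ 0)$, $\fU^{-1}(1\ 1)^{T}=(1\ 0)^{T}$, and \eqref{bolena1quatuor}--\eqref{bolena3quatuor} come from conjugating the block characterizations (Lemma \ref{equidefalgebra} via Remark \ref{pioggia2matrici}) by the real matrix $\fU$ and using that $E\,\mathrm{diag}\,E=\mathrm{diag}$ once parity has made $\widetilde A$ diagonal. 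Your explicit remark about the two distinct roles of $S$ — as the involution on the external $(\s,\s')$ indices in \eqref{bolena1}--\eqref{bolena3} versus as the swap on the internal $2\times2$ block in \eqref{algebra} — correctly identifies the one genuine subtlety (the paper acknowledges it in a footnote to Remark \ref{pioggia2matrici}).
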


\begin{proof}
Formula \eqref{algebra} follows by \eqref{bolena4} and \eqref{supermatrice},
while \eqref{algebratilde} follows noting 
noting that 
\[
\mathfrak{U}^{-1}S\,\mathfrak{U}= \sm{1}{0}{0}{-1}=: E\,,\quad (1 \ 1) \fU = (2 \ 0)\,,\quad \fU^{-1}(1 \ 1)^T = (1 \ 0)^T  \, .
\]
Conditions \eqref{bolena1quatuor}-\eqref{bolena3quatuor}
follow by  Lemma \ref{equidefalgebra}.
\end{proof}
\smallskip
\noindent
{\bf Normal forms.}
The following matrices of operators will enter in the KAM reducibility scheme. 

\begin{defn}{\bf (Normal form).}\label{def:normalform}
Given a matrix of T\"opliz in time linear operators $A$ as in \eqref{supermatrice2} 
we define the operator $[A]$  as 
\[
[A]_{\s,\vec{\jmath}}^{\s',\vec{k}}(\ell):=
\left\{
\begin{aligned}
&A_{\s,\vec{\jmath}}^{\s,\vec{\jmath}}\,(0)\,,\qquad \ell=0\,,\; k= j\,,\; \s=\s'\,,
\\
&0\qquad \qquad \quad {\rm otherwise}\,.
\end{aligned}\right.
\]
We say that $A$ is in \emph{normal form} if and only if $A=[A]$.
\end{defn}

By  Remark \ref{pioggia2matrici} we deduce the following. 
\begin{lemma}\label{rmk:algebra2}
The normal form $ [A] $ of a 
real-to-real, reversibility
and parity preserving  matrix $A$ as in \eqref{supermatrice2} 
has the form 
\begin{equation}\label{pioggiamilano1}
[A]:=\left(\begin{matrix} 
[A]_{+}^{+}(0) & 0
\\ 0 & [A]_{-}^{-}(0) 
\end{matrix}\right)\,,
\qquad [A]_{-}^{-}(0)= [A]_{+}^{+}(0)={\rm diag}_{j\in\N_{0}}A_{+,\vec{\jmath}}^{+,\vec{\jmath}}(0)\,,
\end{equation}
where (recall the matrix $S$ in \eqref{bolena1}) 
\begin{equation}\label{pioggiamilano2}
\begin{aligned}
&A_{+,\vec{\jmath}}^{+,\vec{\jmath}}\,(0)
\stackrel{\eqref{bolena4}}{=}
\left(
\begin{matrix}
A_{+,j}^{+,j}(0) & A_{+,j}^{+,-j}(0)
\vspace{0.2em}\\
A_{+,j}^{+,-j}(0) & A_{+,j}^{+,j}(0)
\end{matrix}
\right)
=A_{+,j}^{+,j}(0)\Id+A_{+,j}^{+,-j}(0)S
\,,\;\; j\in\N\,,
\\
&A_{+,\vec{0}}^{+,\vec{0}}\,(0) =\Big(A_{+,0}^{+,0}\,(0)\Big)\,, \qquad \qquad \qquad
A_{+,j}^{+,j}(0) \, , \ A_{+,j}^{+,-j}(0) \in \R ,   \ \forall \, j \in \N_0  \, . 
\end{aligned}
\end{equation}
\end{lemma}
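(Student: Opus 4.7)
The plan is to deduce the structure of $[A]$ by combining the definition of normal form with the characterizations of real-to-real, reversibility preserving, and parity preserving matrices in block form from Remark \ref{pioggia2matrici}.

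First, I would unpack Definition \ref{def:normalform}: by construction $[A]_{\s,\vec\jmath}^{\s',\vec k}(\ell)$ vanishes unless $\ell = 0$, $k = j$, and $\s = \s'$, so $[A]$ is automatically block-diagonal in $(\sigma,\sigma')$, and writing $[A]_\s^\s(0) := \operatorname{diag}_{j \in \N_0} A_{\s,\vec\jmath}^{\s,\vec\jmath}(0)$ one obtains the general shape of \eqref{pioggiamilano1}. Applying the reversibility preserving characterization \eqref{bolena3tris} at $\ell = 0$, $k = j$ gives $A_{\s,\vec\jmath}^{\s,\vec\jmath}(0) = A_{-\s,\vec\jmath}^{-\s,\vec\jmath}(0)$, which immediately yields $[A]_+^+(0) = [A]_-^-(0)$.

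Next, I would read off the internal structure of each diagonal block $A_{+,\vec\jmath}^{+,\vec\jmath}(0)$. The parity preserving identity \eqref{bolena4tris} says $A_{+,\vec\jmath}^{+,\vec\jmath}(0) = S\,A_{+,\vec\jmath}^{+,\vec\jmath}(0)\,S$, i.e. the block commutes with the swap $S = \sm{0}{1}{1}{0}$. A direct computation shows that the centralizer of $S$ in $\mathrm{Mat}_2(\C)$ is exactly $\C\Id \oplus \C S$, so there exist scalars $\alpha,\beta \in \C$ with $A_{+,\vec\jmath}^{+,\vec\jmath}(0) = \alpha\Id + \beta S$. Comparing with the explicit matrix definition in \eqref{supermatrice} gives $\alpha = A_{+,j}^{+,j}(0)$ and $\beta = A_{+,j}^{+,-j}(0)$, which produces formula \eqref{pioggiamilano2} for the $(j,j)$-block and also shows that the $(0,0)$-block collapses to the scalar $A_{+,0}^{+,0}(0)$.

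Finally, for reality: combining parity preserving \eqref{bolena4tris} with reversibility preserving \eqref{bolena3tris} at $\ell = 0$, $k = j$, $\s = \s' = +$ yields
\begin{equation*}
A_{+,\vec\jmath}^{+,\vec\jmath}(0) = S\,\overline{A_{+,\vec\jmath}^{+,\vec\jmath}(0)}\,S = \overline{S\,A_{+,\vec\jmath}^{+,\vec\jmath}(0)\,S} = \overline{A_{+,\vec\jmath}^{+,\vec\jmath}(0)},
\end{equation*}
which is precisely the observation recorded after \eqref{bolena4tris}. Thus both $A_{+,j}^{+,j}(0)$ and $A_{+,j}^{+,-j}(0)$ are real, completing \eqref{pioggiamilano2}. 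No step presents a real obstacle: the entire argument is a bookkeeping exercise in linear algebra, with the only mildly delicate point being the identification of the commutant of $S$, which is elementary since $S$ is an involution with distinct eigenvalues $\pm 1$ and eigenvectors $(1,\pm 1)^T$.
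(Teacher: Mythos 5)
Your proof is correct and follows essentially the same route as the paper, which merely cites Remark \ref{pioggia2matrici} without spelling out the details; you have filled those in faithfully. The one cosmetic difference is that you identify the internal structure of $A_{+,\vec\jmath}^{+,\vec\jmath}(0)$ by computing the commutant of $S$, whereas the paper's display \eqref{pioggiamilano2} invokes the scalar-entry relation \eqref{bolena4} directly; these are equivalent and equally elementary.
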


The set of real-to-real, reversibility and parity preserving matrices 
 in normal form  
is a commutative algebra.

\section{Pseudo-differential operators}

We introduce pseudo-differential operators
acting on periodic functions 
following \cite{BM20}, \cite{FGP1}.
The main  novelties are in  Lemma
\ref{fearofthedark} where we  provide a sharp estimate for  the 
symbol of the inverse of pseudo-differential operators. These estimates play an important role in Section 8.

\begin{defn}\label{simbolo}
{\bf (Symbols and  Pseudo-differential operators)}
Let $m\in\R$. 
We say that a $ C^\infty $ function 
$ a : \mathbb T \times \R \to \C $, $ (x, \xi) \mapsto a(x, \xi )$,  is a symbol of order $\le m$ 
if, for any $ \al, \beta\in \N_0 $, 
there exists a constant $C_{\al,\beta}$ such that
\begin{equation}\label{symbolpseudo}
|\pa_x^\alpha \partial_\xi^\beta a( x, \xi)|\le C_{\al, \beta} \jap{\xi}^{m-\beta} \, , \quad 
\forall (x, \xi) \in \mathbb T \times \R \, .  
\end{equation}
We denote $\Gamma^m$ the class of symbols $a( x, \xi)$ of order $ m $
and  $\Gamma^{-\infty}:=\cap_{m\in \R}\Gamma^{m}$.

For any  $ u (x) =\sum_{j\in\Z} u_je^{\im j x}$ we  define
the associated pseudo-differential operator 
\begin{equation}\label{actionapseudo}
\op (a(x,\xi))u:= a( x, D )u :=\sum_{j\in\Z} a( x, j) u_j e^{\im j x} \, . 
\end{equation}
If the symbol $ a = a( \xi) $ is independent of $ x $ we say that $ a(D)$ is
a Fourier multiplier operator. 
\end{defn}

For any $m\in\R\setminus\{0\}$ we set 
\begin{equation}\label{notazioneNota}
|D|^m:= \op(\chi(\xi)|\xi|^m)
\end{equation}
where $\chi\in C^\infty (\R, \R)$ is an even 
and positive cut-off function such that
\begin{equation}\label{cutoff}
\chi(\xi)=\begin{cases}
0 \quad {\rm if} \quad |\xi|\le \frac{1}{2}\\
1 \quad {\rm if} \quad |\xi|\ge \frac{2}{3}
\end{cases}\,, \quad \partial_\xi\chi(\xi)>0 
\quad \forall\, \xi\in\Big(\frac{1}{2},\frac{2}{3}\Big)\, .
\end{equation}
We also define the Fourier multiplier operator 
\begin{equation}\label{langlerangle}
\langle D\rangle :=\op(\langle\x\rangle)\,,\quad \langle \x\rangle:=\sqrt{|\x|^{2}+1}\,,\;\;\;\x\in\R\,.
\end{equation}
Given a symbol $a\in \Gamma^m$ the conjugate of the pseudo-differential operator
$ \op(a(x,\x)) $ according to  Definition \ref{def:conjoperator}) is 
\begin{equation}\label{conjsimb}
\overline{\op(a(x,\x))} = \op(\overline{a(x,-\x)}) \, . 
\end{equation}
Along the paper we consider 
families of symbols depending on parameters $\vphi\in\T^\nu$  and on $\omega\in \R^\nu$. More explicitly we consider maps
$(\om,\vphi)\mapsto  a(\om;\vphi,x,\x)\in \Gamma^{m} $ which are 
$ C^{\infty} $ in  $ \varphi $ and Lipschitz  in $\om\in \cO$ for some compact set $\cO\subset \R^\nu$. For simplicity we  often do not write explicitly the dependence on $\om$.
We shall call $a$ simply a symbol, 
 and consider the associated pseudo-differential operator $ \op(a ) $
acting on a function
 $u(\vphi,x)\in H^s(\T^{\nu+1}, \mathbb{C})$   in the natural way as
 $ \op(a (\varphi) )  u(\varphi, \cdot ) $.  It is  convenient to consider the 
 Fourier coefficients of the symbols $a(\vphi,x, \xi)$ both in $ \varphi $ and 
 $x\in \T$:
 \begin{equation}\label{aFourierphix}
 a(\vphi,x,\xi)= \sum_{j \in \Z} \widehat a (\vphi, j ,\xi) e^{\im j x} = \sum_{\ell\in \Z^\nu,j \in \Z} \doublehat a (\ell, j,\xi) e^{\im (\ell\cdot \vphi +j x )} \, . 
 \end{equation}
In view of \eqref{actionapseudo}  the matrix which represents  
 the action of $ A = \op(a) $ in the exponential basis is 
\begin{equation}\label{actionpseudo}
 A_{j}^{k}(\vphi):=\widehat{a}(\vphi,j-k,k) \, . 
 \end{equation}
 The following norm 
controls the regularity in $( \varphi, x)\in \T^{\nu+1}$, the Lipschitz variation in $\om$ 
and the decay in $\xi$ 
of a symbol $a(\omega; \varphi, x, \xi)$, 
together with its derivatives 
$\partial_\xi^\beta a $ for any $ 0\le \beta \le p$, 
in the Sobolev norm $\|\cdot\|_s$.

\begin{defn}\label{norma pesata} {\bf (Symbols and its norms)}
We denote by $ S^m $ the set of symbols 
$ a(\omega; \varphi, x, \xi ) \in \Gamma^m $ which are Lipschitz in $ \omega \in \cO $ and,  
for  any $ p \in \N_0 $, $ s\ge {\so}$ , 
we define the weighted norm, for $\gamma\in (0,\tfrac{1}{2}) $,  
\begin{equation}\label{normaSymboloGamma}
\|a\|_{m, s, p}^{\gamma, \cO} := 
\sup_{{\oo\in\cO}} \|a(\oo)\|_{m, s, p} 
+ \g \sup_{\substack{\oo_1\neq\oo_2\\ 
\omega_1, \omega_2 \in\cO}} \frac{\|a(\oo_1; \vphi, x, \xi) 
-a(\oo_2; \vphi, x, \xi)\|_{m, s-1, p} }{|\oo_1-\oo_2|} 
\end{equation}
where 
\begin{equation}\label{normaSymbolo}
\|a(\omega; \cdot) \|_{m,s,p} := 
\max_{0\le \beta\le p} \sup_{\xi\in\R} 
\|\partial_\xi^\beta a(\omega;  \cdot, \cdot, \xi) \|_s \jap{\xi}^{-m+\beta} \, . 
\end{equation}
We denote $ S^{-\infty} := \cap_{m <  0} S^m $. 
\end{defn}
The norm 
$\|\cdot\|^{\gamma,\cO}_{m, s, p}$ in \eqref{normaSymboloGamma}
is \emph{non-decreasing} in the indexes $s,p$ and \emph{non-increasing} in  $m$: 
\begin{equation}\label{zanzara}
\begin{aligned}
\forall\, m\le m' \quad &\Rightarrow \quad 
\|\cdot\|^{\gamma,\cO}_{m',s,p}\le \|\cdot\|^{\gamma,\cO}_{m, s, p}\,,
\\
\forall s\le s',\; p\leq p' \quad  &\Rightarrow 
\quad
\|\cdot\|^{\gamma,\cO}_{m, s, p}\le \|\cdot\|^{\gamma,\cO}_{m,s',p}\,,\;\;\; 
\|\cdot\|^{\gamma,\cO}_{m, s, p}\le \|\cdot\|^{\gamma,\cO}_{m,s,p'}\,.
\end{aligned}
\end{equation}
Note that the weighted Sobolev norm 
of a symbol $a (\omega; \varphi, x ) $  independent of $\xi $ is equal to 
$ \|a\|_{0,s,p}^{\g, \cO}=\|a\|_s^{\g, \cO} $ 
the weighted Sobolev norm of a function defined   in \eqref{sobolevpesata}. 

\smallskip

The norm of a $2\times 2$ matrix of symbols in  $S^m $ 
\begin{equation}\label{matricidisimboli}
A:=A(\omega; \varphi, x, \xi):= \sm{a}{b}{c}{d}
\in S^m\otimes \cM_2(\C)
\end{equation} 
is 
\begin{equation}\label{simbolimatrici}
	\|A\|_{m,s, p}^{\gamma, \cO}:=\max\{ \|f\|_{m, s, p}^{\gamma, \cO}, f=a,b,c,d\} \, . 
\end{equation}
The associated matrix of pseudo-differential operators is 
\begin{equation}\label{matrixsy}
\op\big(A\big):=\left(\begin{matrix}
\op(a) & \op(b)
 \\
 \op(c) & \op(d)
 \end{matrix}\right)\,.
\end{equation}

\begin{lemma}{\bf (Action on Sobolev spaces.)}\label{sobaction}
Let  $m\ge0$ and $a\in S^m$. 
Then for any $s\geq \so$
one has $\op(a) \in \cM^{\mathtt T}(H^{s+m},H^s)$ 
and, for any $u\in H^{s+m}(\T^{\nu+1},\C)$, 
\[
\|\und{\op(a)}u\|^{\g, \cO}_s\le 
C(\so)\|a\|^{\g, \cO}_{m, \so, 0}\|u\|^{\gamma,\cO}_{s+m}
+ C(s) \|a\|^{\g, \cO}_{m, s, 0} \|u\|^{\gamma,\cO}_{\so+m} \, . 
\]
\end{lemma}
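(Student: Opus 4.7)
The argument is a tame action bound for pseudo-differential symbols adapted to the majorant structure of Definition \ref{majOp}. First I would pass to the Fourier side. By \eqref{actionpseudo} the matrix entries of $A=\op(a)$ are $A_j^k(\ell)=\doublehat a(\ell,j-k,k)$, where $\doublehat a$ is defined in \eqref{aFourierphix}. By Definition \ref{majOp}, $\und A_j^k(\ell)=|\doublehat a(\ell,j-k,k)|$, so
\begin{equation}\label{planeq:act}
[\und{\op(a)}\,u]_{\ell,j}=\sum_{\ell',k}|\doublehat a(\ell-\ell',j-k,k)|\,|u_{\ell',k}|.
\end{equation}
The key quantitative input is Parseval in $(\varphi,x)$ applied to \eqref{aFourierphix} at a fixed $\xi=k$: for every $\sigma\ge 0$,
\begin{equation}\label{planeq:pars}
\sum_{\ell,j}\jap{\ell,j}^{2\sigma}|\doublehat a(\ell,j,k)|^2=\|a(\cdot,\cdot,k)\|_\sigma^2\le \|a\|_{m,\sigma,0}^2\,\jap{k}^{2m},
\end{equation}
where the last inequality uses \eqref{normaSymbolo} with $\beta=0$.

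The next step is the splitting, based on $\jap{\ell,j}^s\lesssim_s \jap{\ell-\ell',j-k}^s+\jap{\ell',k}^s$ (valid since $s\ge 0$). Inserting this into \eqref{planeq:act} and squaring yields, by \eqref{sobspace},
\[
\|\und{\op(a)}u\|_s^2\lesssim_s I_1+I_2,
\]
where $I_1,I_2$ are the two resulting sums with weights $\jap{\ell-\ell',j-k}^{2s}$ and $\jap{\ell',k}^{2s}$ respectively. I would estimate $I_1$ by Cauchy--Schwarz in $(\ell',k)$ with the summable weight $\jap{\ell',k}^{-2\so}$ (recall $2\so>\nu+1$ by \eqref{costanti}), and $I_2$ by the symmetric weight $\jap{\ell-\ell',j-k}^{-2\so}$. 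Swapping summation order, both inner double sums over $(\ell,j)$ reduce, after the shift $(\ell,j)\mapsto(\ell-\ell',j-k)$, to expressions of the form \eqref{planeq:pars}. Using \eqref{planeq:pars} with $\sigma=s$ for $I_1$ and $\sigma=\so$ for $I_2$, and the elementary bound $\jap{\ell',k}^{2s}\jap{k}^{2m}\lesssim \jap{\ell',k}^{2(s+m)}$ (valid for $m\ge0$), one obtains
\[
I_1\lesssim_{\so}\|a\|_{m,s,0}^2\,\|u\|_{\so+m}^2,\qquad I_2\lesssim_{s}\|a\|_{m,\so,0}^2\,\|u\|_{s+m}^2.
\]
This gives the sup part of the weighted norm $\|\cdot\|_s^{\gamma,\cO}$.

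To upgrade to the full weighted Lipschitz norm \eqref{sobolevpesata}, I would apply the same mechanism to the difference
\[
\und{\op(a(\oo_1))}u(\oo_1)-\und{\op(a(\oo_2))}u(\oo_2),
\]
using the pointwise inequality $\bigl||\doublehat a(\oo_1)|-|\doublehat a(\oo_2)|\bigr|\le|\doublehat a(\oo_1)-\doublehat a(\oo_2)|$ on the Fourier coefficients. Splitting as $\und{\op(a(\oo_1)-a(\oo_2))}u(\oo_1)+\und{\op(a(\oo_2))}(u(\oo_1)-u(\oo_2))$ and running the previous argument in $H^{s-1}$ produces, after dividing by $|\oo_1-\oo_2|$ and multiplying by $\gamma$, exactly the two terms encoded in \eqref{normaSymboloGamma} and \eqref{sobolevpesata}. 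Combining sup and Lip parts yields the claimed bound. The membership $\op(a)\in\cM^{\mathtt T}(H^{s+m},H^s)$ then follows since the estimate above is precisely $|\op(a)|_{s+m,s}<\infty$ in the sense of \eqref{Mmajo}.

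The only delicate point is the bookkeeping in the splitting step: one must route the ``high'' derivative loss onto the symbol in one piece and onto the function in the other, so that $\so$ never pairs with $s+m$. This is forced by the choice of the Cauchy--Schwarz weights above. There is no analytic obstacle; the argument is purely algebraic on Fourier coefficients, and the hypothesis $m\ge 0$ is used solely to absorb $\jap{k}^{2m}$ into $\jap{\ell',k}^{2(s+m)}$.
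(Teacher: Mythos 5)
Your proof is correct and follows the standard mechanism for tame action estimates of pseudo-differential operators: read off the matrix entries $A_j^k(\ell)=\doublehat a(\ell,j-k,k)$, use Parseval in $(\varphi,x)$ to extract the symbol norm \eqref{normaSymbolo}, split $\jap{\ell,j}^s$ by a Peetre-type inequality, and apply Cauchy--Schwarz with a summable weight (using $2\so>\nu+1$ from \eqref{costanti}); the hypothesis $m\ge 0$ enters exactly where you say, to absorb $\jap{k}^{2m}$ into $\jap{\ell',k}^{2(s+m)}$, and the Lipschitz upgrade via the reverse triangle inequality on Fourier coefficients and the product-rule splitting is also standard. The paper itself gives no proof and simply defers to [BM20, Lemma 2.21], whose argument is of precisely this type; so your write-up is essentially the proof the paper has in mind.

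One small bookkeeping point: the crude inequality $\jap{\ell,j}^s\lesssim_s\jap{\ell-\ell',j-k}^s+\jap{\ell',k}^s$ places a constant depending on $s$ in front of \emph{both} pieces, so as written you obtain $I_1,I_2\lesssim_{s,\so}$ rather than the split $C(\so)$ / $C(s)$ displayed in the lemma. To recover that precise split one should partition the sum over the two regions $\jap{\ell-\ell',j-k}\gtrless\jap{\ell',k}$ before raising to the power $s$, which keeps the $s$-dependent constant attached to the high-symbol-norm term only. This is a cosmetic refinement and does not affect correctness.
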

\begin{proof}
	The proof is like  the one in  \cite[Lemma 2.21]{BM20}, which is given for $m=0$.
\end{proof}

\noindent {\bf Composition rules for symbols.}\
Given symbols $a\in S^m$, $b\in S^{m'}$ 
we define 
\begin{equation}\label{mareostia}
	\begin{aligned}
	a\#b (\oo; \vphi, x, \xi)&:=\
	\sum_{j\in\Z} a(\oo; \varphi, x, \xi+j)\widehat{b}(\oo; \varphi, j, \xi) e^{\im j x}\\ &= \sum_{j, j'\in\Z} 
	\widehat{a}( \oo; \varphi, j'-j, \xi+j)
	\widehat{b}(\oo; \varphi,  j, \xi)e^{\im  j' x}\,,
	\end{aligned}
\end{equation}
so that $	\op(a\#b) = \op(a)\circ\op(b) $. 
The symbol $a\# b$ has the following asymptotic expansion:
for any $N\geq1$  (for simplicity of notation we drop the dependence on $\oo,\vphi$)
\begin{equation}\label{composizione troncata}
(a\# b)(x,\x)=\sum_{n=0}^{N-1}\frac{1}{n! \mathrm{i}^{n}}\pa_{\x}^{n}a(x,\x)\pa_{x}^{n}b(x,\x)
+r_{N}(x,\x)\,,
\end{equation}
where the symbol $r_{N}\in S^{m+m'-N}$ has the form
\begin{equation}\label{remainder composizione}
r_{N}(x,\x) := \frac{1}{(N-1)!{\mathrm{i}}^{N}}\int_{0}^{1}(1-\tau)^{N}\sum_{j\in\mathbb{Z}}
(\pa_{\x}^{N}a)(x,\x+\tau j)\widehat{\pa_{x}^{N}b}(j,\x)e^{\mathrm{i} jx}d \tau\,.
\end{equation}
We also define, for $0\leq n\leq N-1$,
\begin{equation}\label{cancellittiEspliciti}
\begin{aligned}
a\#_{n} b&:=\frac{1}{n! {\rm i}^{n}} (\partial_{\xi}^n a)(\partial_x^n b)\in S^{m+m'-n}\,, 
\\ a\#_{< N} b&:=\sum_{n=0}^{N-1} a\#_n b\in S^{m+m'}\,, \qquad a\#_{\geq N} b:=r_N:=r_{N, ab}\in S^{m+m'-N}\,.
\end{aligned}
\end{equation}
We define the composition $\#$ of matrices of symbols so that  $\op(A\#B)=\op(A)\circ \op(B)$,  as
\begin{equation}\label{resinprog}
\mat{A_+^+}{A_+^-}{A_-^+}{A_-^-}\#\mat{B_+^+}{B_+^-}{B_-^+}{B_-^-} 
= 
\mat{A_+^+\#B_+^++A_+^-\#B_-^+}{A_+^+\#B_+^-+A_+^-\#B_-^-}{A_-^+\#B_+^++A_-^-\#B_-^+}{A_-^+\#B_+^-+ A_-^-\#B_-^-}\,,
\end{equation}
same for $\#_n, \#_{\ge n}$.

The norm $\|\cdot\|_{m, s, p}^{\g, \cO}$  is closed 
under the  $ \# $ operation on symbols and satisfies 'tame' bounds w.r.t the parameters $m,s$.  
Regarding the third parameter $p$ (which controls the derivatives w.r.t. the variable $\xi$) 
the composition $ \# $ does not satisfy  tame bounds.
However if we restrict our 
attention to $\#_n$ or $\#_{\le n}$ then we have tame bounds also w.r.t. $p$, as stated in 
\eqref{stimacancellettoesplicitoAlgrammo}. 

\begin{lemma}{\bf (Composition).}\label{stima composizione}
Let $a\in S^m$, $b\in S^{m'}$, 
$m, m'\in\R$.  
\\[1mm]
$(i)$ The symbol $a\#b$ in \eqref{mareostia} belongs to 
$S^{m+m'}$ and, for any $p\in\N_0$, $ s \ge \so $, 
\begin{equation}\label{stimasharp}
\begin{aligned}
\|a\#b\|_{m+m', s, p}^{\g, \cO} \lesssim_{m,s, p} 
&\|a\|_{m, s, p}^{\g, \cO} \|b\|_{m', \so+p+|m|, p}^{\g, \cO} 
+ \|a\|_{m, \so, p}^{\g, \cO} \|b\|_{m', s+p+|m|, p}^{\g, \cO}\,.
\end{aligned}
\end{equation}

\noindent
$(ii)$  The symbol $a\#_{n} b$ in  \eqref{cancellittiEspliciti} satisfies
\begin{equation}\label{stimacancellettoesplicitoAlgrammo}
\|a\#_{n} b\|_{m+m'-n,s,p}^{\gamma,\calO}\lesssim_{m,s,p}
\sum_{\substack{\beta_1,\beta_2\in\N_0 \\ \beta_1+\beta_2=p}}
\|a\|_{m,s,\beta_1+n}^{\gamma,\calO}
\|b\|_{m',\so+n,\beta_2}^{\gamma,\calO}
+\|a\|_{m,\so,\beta_1+n}^{\gamma,\calO}
\|b\|_{m',s+n,\beta_2}^{\gamma,\calO}\,.
\end{equation}
\noindent
$(iii)$ For any integer $N\ge 1$  the symbol $a\#b$ admits the  expansion 
\eqref{composizione troncata} and the symbol 
 $r_N := r_{N, a b} := a \#_{\geq N} b  \in S^{m+m'-N}$ satisfies
\begin{equation}\label{restocancellettoNp}
	\begin{aligned}
		\|r_N\|^{\g, \cO}_{m+m'-N, s, p} 
	&	\lesssim_{m, N,s, p} \\
	\|a\|^{\g, \cO}_{m, s, N+p} &\|b\|^{\g, \cO}_{m', \so+2N+p+|m|, p} 
		+\|a\|^{\g, \cO}_{m, \so, N+p} \|b\|^{\g, \cO}_{m', s+2N+p+|m|, p} \,.
	\end{aligned}
\end{equation}

\noindent
$(iv)$ If the  symbol $b(\x)$ is a Fourier multiplier in $S^{m'}$ 
then 
 $\|a\#b\|_{m+m', s, p}^{\g, \cO}\lesssim_{m, p} \|a\|_{m, s, p}^{\g, \cO}$.
 
 \noindent
 $(v)$ The estimates above hold verbatim for $A\in S^m\otimes \cM_2(\C)$,  $B\in S^{m'}\otimes \cM_2(\C)$.
\end{lemma}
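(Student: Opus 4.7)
The plan is to prove (ii) directly from the explicit formula \eqref{cancellittiEspliciti}, derive (iii) via a Taylor expansion in the frequency variable, and then deduce (i), (iv), (v) as consequences.

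For (ii), by \eqref{cancellittiEspliciti} one has $a\#_n b = \frac{1}{n!\,\ii^n}(\pa_\xi^n a)(\pa_x^n b)$, and Leibniz in $\xi$ yields
\[
\pa_\xi^\beta(a\#_n b) = \frac{1}{n!\,\ii^n}\sum_{\beta_1+\beta_2=\beta}\binom{\beta}{\beta_1}(\pa_\xi^{n+\beta_1}a)(\pa_x^n\pa_\xi^{\beta_2} b).
\]
For each fixed $\xi$, the definition \eqref{normaSymbolo} gives $\|\pa_\xi^{n+\beta_1}a(\cdot,\xi)\|_s \le \jap{\xi}^{m-n-\beta_1}\|a\|_{m,s,n+\beta_1}$ and, by shifting $n$ derivatives from the $p$-index into the $s$-index, $\|\pa_x^n\pa_\xi^{\beta_2} b(\cdot,\xi)\|_s \le \jap{\xi}^{m'-\beta_2}\|b\|_{m',s+n,\beta_2}$. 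Applying the tame product estimate \eqref{tameProduct} on $H^s(\T^{\nu+1})$, multiplying by $\jap{\xi}^{-(m+m'-n-\beta)}$ and summing over the finitely many pairs $\beta_1+\beta_2=\beta$ produces \eqref{stimacancellettoesplicitoAlgrammo}. The Lipschitz variation in $\omega$ is controlled in the same way via the Leibniz rule for $\Delta_{12}$.

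For (iii), I would Taylor-expand $a(\vphi,x,\xi+j)$ in its third variable around $\xi$,
\[
a(\vphi,x,\xi+j) = \sum_{n=0}^{N-1}\frac{j^n}{n!}(\pa_\xi^n a)(\vphi,x,\xi) + \frac{j^N}{(N-1)!}\int_0^1 (1-\tau)^{N-1}(\pa_\xi^N a)(\vphi,x,\xi+\tau j)\,d\tau,
\]
insert this into \eqref{mareostia}, and use $j^n \widehat b(\vphi,j,\xi) = \ii^{-n}\widehat{\pa_x^n b}(\vphi,j,\xi)$: the first $N$ terms reproduce exactly $\sum_{n=0}^{N-1}a\#_n b$, and the remainder matches \eqref{remainder composizione} up to the harmless normalization. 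To estimate $r_N$ I would swap the $\tau$-integral with the $H^s$-norm and, for each $\tau$, bound the inner sum over $j$. The key tool is Peetre's inequality $\jap{\xi+\tau j}^{m-N}\lesssim_m \jap{\xi}^{m-N}\jap{j}^{|m-N|}$, which allows me to trade $\jap{j}^{|m|+N}$ for $|m|+N$ additional $x$-derivatives on $b$; combining this with the Sobolev algebra (valid since $\so>(\nu+1)/2$) and Cauchy--Schwarz in $j$ (which absorbs one further $\jap{j}$ for summability) gives \eqref{restocancellettoNp}, where the total budget $2N+|m|+p$ on $b$ decomposes as $N$ derivatives from $\pa_x^N b$, another $N$ from Peetre plus summability, $|m|$ from the frequency shift, and $p$ from further differentiation in $\xi$.

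Statement (i) then follows from (ii) with $n=0$ and (iii) with $N=1$ by writing $a\#b = ab + r_{1,ab}$: the product $ab$ is bounded via \eqref{stimacancellettoesplicitoAlgrammo} matching \eqref{stimasharp}, and the lower-order remainder $r_{1,ab}$ is reabsorbed using \eqref{restocancellettoNp}. Statement (iv) is immediate from \eqref{mareostia}, since for $b = b(\xi)$ one has $\widehat b(\vphi,j,\xi) = 0$ when $j\ne 0$, so $a\#b = a(\vphi,x,\xi)b(\xi)$ and multiplication by $b(\xi)$ preserves the $(\vphi,x)$-Sobolev norm and only modifies the $\jap{\xi}$-weight by $\jap{\xi}^{m'}$. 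Statement (v) is the entrywise claim \eqref{resinprog}: each entry of $A\#B$ is a sum of two scalar $\#$-compositions, to which (i)--(iv) apply, and one concludes via the matrix norm \eqref{simbolimatrici} and the triangle inequality. The main obstacle in the whole lemma is the quantitative control of $r_N$ in step 2: since $\pa_\xi^N a$ is evaluated at the shifted point $\xi+\tau j$, one must simultaneously handle the $\xi$-shift, the sum over $j\in\Z$, and the action on $H^s$ \emph{while preserving tameness} (a loss of derivatives independent of $s$), and it is precisely the balance between these three demands that dictates the $2N+|m|+p$ budget in \eqref{restocancellettoNp}.
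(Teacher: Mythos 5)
Your proofs of items (ii), (iii), (iv), (v) follow the standard route and are essentially fine, but the deduction of (i) has a genuine gap. You write $a\#b = a\#_0 b + r_{1,ab}$ and then estimate the two pieces separately using (ii) with $n=0$ and (iii) with $N=1$. The problem is that (iii) with $N=1$ controls $r_1$ by $\|a\|_{m,s,1+p}^{\g,\cO}$ and $\|a\|_{m,\so,1+p}^{\g,\cO}$, because $r_1$ contains $\pa_\xi a$ evaluated at the shifted point. Since by \eqref{zanzara} the norm is \emph{non-decreasing} in the third index, this is strictly worse than the $\|a\|_{m,s,p}^{\g,\cO}$ and $\|a\|_{m,\so,p}^{\g,\cO}$ appearing in \eqref{stimasharp}, and there is no way to reabsorb the extra $\xi$-derivative on $a$. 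So your route proves a weaker statement than (i), with $p$ replaced by $p+1$ in the norm of $a$; this would propagate and alter the quantitative budgets in later applications (e.g.\ Lemma~\ref{potenza pseudo}).

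The correct way to get the sharp estimate \eqref{stimasharp} is to work directly from the definition \eqref{mareostia}, never passing through the expansion. Applying Leibniz to $\pa_\xi^\beta(a\#b)(\vphi,x,\xi)=\sum_{\beta_1+\beta_2=\beta}\binom{\beta}{\beta_1}\sum_j(\pa_\xi^{\beta_1}a)(\vphi,x,\xi+j)\widehat{\pa_\xi^{\beta_2}b}(\vphi,j,\xi)e^{\ii jx}$, using Peetre's inequality $\jap{\xi+j}^{m-\beta_1}\lesssim_m\jap{\xi}^{m-\beta_1}\jap{j}^{|m|+\beta_1}$ and trading $\jap{j}^{|m|+\beta_1}\le\jap{j}^{|m|+p}$ for $x$-derivatives of $b$ yields exactly the loss $\so+p+|m|$ on the Sobolev index of $b$ and at most $p$ $\xi$-derivatives on $a$. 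This is why the paper proves (i) independently (deferring to \cite[Lemma~2.13]{BM20}) rather than as a corollary of (ii) and (iii), and derives (iv) from (i) rather than directly. Your direct argument for (iv) (using that $\widehat b(\vphi,j,\xi)$ vanishes for $j\ne 0$) is valid and indeed simpler, but the reduction of (i) to (ii)+(iii) needs to be abandoned.
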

\begin{proof}
Items $(i)$, $(iii)$ are proved like  in \cite[Lemma 2.13]{BM20}. Item $(iv)$ trivially follows by item $(i)$. 
Item $(ii)$ 
follows  
as in  \cite[Lemma 2.13]{BM20}. 
\end{proof}

Given symbols $ a \in S^m $, $ b \in S^{m'}$ it is natural to define  the symbol of the commutator 
(recall \eqref{adjdef}) 
$$
a\star b : =a\#b-b\#a
$$
so that
\begin{equation}\label{adjdefpseudo}
[\op(a) ,\op(b)]  = {\rm ad}_{\op(a)}[\op(b)]  = \op(a\star b)   \,.
\end{equation} 
By \eqref{composizione troncata}, \eqref{remainder composizione}, 
\eqref{cancellittiEspliciti}
one deduces the expansion  for any $N\geq2$
\begin{equation}\label{espstar}
a\star b =a\#b-b\#a = -\ii \{a,b\}+
\sum_{\beta=2}^{N-1} 
(a\#_{\beta}b-b\#_{\beta}a) + \tr_N
\end{equation}
where 
\begin{equation}\label{espstar2}
\{a, b\}:= \partial_\xi a\partial_x b - \partial_x a\partial_\xi b\,,
\qquad
 \tr_N :=r_{N, ab}-r_{N, ba}\,.
\end{equation}
The same definition holds for matrices of operators $A,B\in S^m\otimes \cM_2(\C)$ as in \eqref{resinprog}. The following lemma follows like \cite[Lemma 2.15]{BM20}. 

\begin{lemma}{\bf (Commutators).}\label{lemma:Commutator}
Let $a\in S^m$, $b\in S^{m'}$, 
$m, m'\in\R$.
Then 

\noindent
$(i)$
The symbol $a\star b$ in \eqref{espstar} belongs to $S^{m+m'-1}$ and, 
for any $p\in\N_0, s\ge \so $, 
\begin{equation}\label{giggi}
\begin{aligned}
\|a\star b\|^{\g, \cO}_{m+m'-1, s, p} \lesssim_{m, m', s, p} 
&\|a\|^{\g, \cO}_{m, s+2+|m'|+p, p+1} \|b\|^{\g, \cO}_{m', \so+2+|m|+p, p+1} 
\\&
+ \|a\|^{\g, \cO}_{m, \so+2+|m'|+p, p+1} \|b\|^{\g, \cO}_{m', s+2+|m|+p, p+1}  \,.
\end{aligned}
\end{equation}

\noindent
$(ii)$ The Poisson bracket $\{a,b\}$ in \eqref{espstar2} is a symbol in $S^{m+m'-1}$ satisfying,
for any $p\in\N_0, s\ge \so$, 
\begin{equation}
\begin{aligned}
\|\{a,b\}\|^{\g, \cO}_{m+m'-1, s, p} 
\lesssim_{s,p} 
&\|a\|^{\g, \cO}_{m, s+1, p+1} \|b\|^{\g, \cO}_{m', \so+1, p+1} 
+ \|a\|^{\g, \cO}_{m, \so+1, p+1} \|b\|^{\g, \cO}_{m', s+1, p+1}  \,.
\end{aligned}
\end{equation}

\noindent
$(iii)$ For any  $N\geq2$ the remainder $\tr_{N}$ in \eqref{espstar2}
belongs to $S^{m+m'-N}$ and, for any $p\in\N_0, s\ge \so$, 
\begin{align*}
\|\tr_N\|^{\g, \cO}_{m+m'-N, s, p} 
&\lesssim_{m, m',s, N, p} 
\|a\|^{\g, \cO}_{m, s+2N+|m'|+p, p+N} 
\|b\|^{\g, \cO}_{m', \so+2N+|m|+p, p+N} 
\\&\qquad
+ \|a\|^{\g, \cO}_{m, \so+2N+|m'|+p, p+N} 
\|b\|^{\g, \cO}_{m', s+2N+|m|+p, p+N} \,.
\end{align*}
\end{lemma}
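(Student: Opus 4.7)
The crucial algebraic observation is that $a\#_0 b = ab = b\#_0 a$, so the leading (zeroth-order) terms in the composition expansion \eqref{composizione troncata} cancel in the commutator $a\star b = a\#b - b\#a$, making $a\star b$ exactly one order smoother than a generic product. All three items reduce to Lemma \ref{stima composizione}, combined (only for (ii)) with a routine Sobolev product estimate. The plan is to address (iii), then (ii), and finally derive (i) by a careful choice of the expansion order.

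For (iii), the identity $\tr_N = r_{N,ab} - r_{N,ba}$ is already recorded in \eqref{espstar2}, so the bound is an immediate application of the composition remainder estimate \eqref{restocancellettoNp} to each of the two summands, with the roles of $a$ and $b$ (and of $|m|$ and $|m'|$) exchanged in $r_{N,ba}$. Using the monotonicity \eqref{zanzara} in the third index, one reconciles the loss $p+N$ on the high-$p$ factor coming from Lemma \ref{stima composizione}(iii) with the loss $p$ on the low-$p$ factor, to produce the symmetric loss $p+N$ on both $a$ and $b$ stated in (iii).

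For (ii), first I will establish a tame product estimate for symbols: if $f\in S^{m_1}$ and $g\in S^{m_2}$, then
\[
\|fg\|_{m_1+m_2,s,p}^{\gamma,\cO}\lesssim_{s,p} \|f\|_{m_1,s,p}^{\gamma,\cO}\|g\|_{m_2,\so,p}^{\gamma,\cO} + \|f\|_{m_1,\so,p}^{\gamma,\cO}\|g\|_{m_2,s,p}^{\gamma,\cO}.
\]
This follows from Leibniz's rule in $\xi$ together with the Sobolev tame bound \eqref{tameProduct} applied to each product $\partial_\xi^{\beta_1}f\cdot\partial_\xi^{\beta_2}g$ with $\beta_1+\beta_2\le p$, after multiplying by $\jap{\xi}^{-m_1-m_2+\beta}$ to absorb the $\xi$-weight. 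Item (ii) then follows by writing $\{a,b\}=\partial_\xi a\,\partial_x b-\partial_x a\,\partial_\xi b$ and applying this product estimate, using the elementary bounds $\|\partial_\xi a\|_{m-1,s,p}^{\gamma,\cO}\le \|a\|_{m,s,p+1}^{\gamma,\cO}$ and $\|\partial_x b\|_{m',s,p}^{\gamma,\cO}\le \|b\|_{m',s+1,p}^{\gamma,\cO}$.

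For (i), the key tactical point is to use the expansion \eqref{composizione troncata} with $N=1$ rather than $N=2$. Since $a\#_0 b = b\#_0 a = ab$, one gets
\[
a\star b = r_{1,ab} - r_{1,ba}\in S^{m+m'-1},
\]
and applying \eqref{restocancellettoNp} with $N=1$ to each remainder (with $a\leftrightarrow b$ in $r_{1,ba}$), then using the monotonicity \eqref{zanzara}, produces exactly the bound \eqref{giggi}: Sobolev loss $2+|m'|+p$ on $a$, $2+|m|+p$ on $b$, and loss $p+1$ in the third index. The only subtle point, and the one tactical choice in the argument, is this selection of $N$: taking $N=2$ would force a loss of $p+2$ in the third index and fall short of the claimed $p+1$. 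Beyond this bookkeeping, no substantive obstacle is present, since Lemma \ref{stima composizione} already encodes all the hard work.
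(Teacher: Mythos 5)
Your proposal is correct and, since the paper's own ``proof'' is just a citation to \cite[Lemma 2.15]{BM20}, yours is a welcome self-contained derivation from the paper's own composition Lemma~\ref{stima composizione}. The one genuinely non-routine step is the observation that for part (i) one must expand to order $N=1$ rather than $N=2$: because $a\#_0b=ab=b\#_0a$, the zeroth-order terms cancel and $a\star b=r_{1,ab}-r_{1,ba}$, to which \eqref{restocancellettoNp} with $N=1$ gives a third-index loss of $p+1$ on one factor and $p$ on the other, hence $p+1$ on both after \eqref{zanzara}. As you note, using instead $a\star b=-\ii\{a,b\}+\mathtt{r}_2$ would force the loss $p+2$ from the $\mathtt{r}_2$ term and fail to meet \eqref{giggi}. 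Your bookkeeping in (iii) (including the swap of $|m|$ with $|m'|$ between $r_{N,ab}$ and $r_{N,ba}$, both reconciled by monotonicity) checks out against \eqref{restocancellettoNp}. For (ii) you establish a tame product estimate for symbols directly from Leibniz and \eqref{tameProduct}; this is fine, though one could alternatively cite \eqref{stimacancellettoesplicitoAlgrammo} with $n=1$ after noticing $\{a,b\}=\ii(a\#_1b-b\#_1a)$. No gaps.
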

We now discuss the pseudo-differential symbol associated to a 
series of pseudo-differential operators. 
Given a symbol $a\in S^{m}$,    we define 
\begin{equation}\label{simboloiteratoKK}
	a^{\# 0} :=1\,,\quad a^{\# 1} :=a\,,\quad a^{\# k} := a\# a^{\# k-1} \,,\quad \forall k\geq 1 \, , 
\end{equation}
so that $\op(a)^k= \op(a^{\# k} )$.


\begin{lemma}\label{potenza pseudo}
	Let $a\in S^{m}$ with $m\leq 0$. 
	For any $s\geq \so$ and $p\in \N_0 $, 
	there is a constant $\tC(m,s,p)\geq1$ such that for any $k\ge 1$   
	\begin{equation}\label{californication}
		\|a^{\# k}\|^{\gamma,\mathcal{O}}_{m, s, p}\le 
		(\tC(m, s,p)\|a\|^{\gamma,\mathcal{O}}_{m,\so+p,p})^{k-1}
		\|a\|^{\gamma,\mathcal{O}}_{m,s+p,p}\, . 
	\end{equation}
	The same notations and bounds hold for $A\in S^m\otimes \cM_2(\C)$.
\end{lemma}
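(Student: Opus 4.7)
The plan is to proceed by induction on $k \ge 1$. The base case $k=1$ is immediate: by the monotonicity of the norm $\|\cdot\|_{m,s,p}^{\gamma,\calO}$ in the Sobolev index $s$ recorded in \eqref{zanzara}, one has $\|a\|_{m,s,p}^{\gamma,\calO} \le \|a\|_{m,s+p,p}^{\gamma,\calO}$, which yields the claim with the empty product equal to $1$ and any $\tC(m,s,p) \ge 1$.

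For the inductive step, assume the estimate for $a^{\#(k-1)}$ and write $a^{\#k} = a \# a^{\#(k-1)}$. Since $m \le 0$ we have $(k-1)m \le m$ and $km \le m$, so by the monotonicity of $\|\cdot\|_{m',s,p}^{\gamma,\calO}$ in $m'$ (again from \eqref{zanzara}) we may regard $a^{\#(k-1)}$ as a symbol in the class $S^m$, and we have $\|a^{\#k}\|_{m,s,p}^{\gamma,\calO} \le \|a^{\#k}\|_{2m,s,p}^{\gamma,\calO}$. Applying the composition estimate \eqref{stimasharp} of Lemma \ref{stima composizione} with both factors of order $m$ gives
\begin{equation*}
\|a^{\#k}\|_{m,s,p}^{\gamma,\calO} \;\lesssim_{m,s,p}\; \|a\|_{m,s,p}^{\gamma,\calO}\,\|a^{\#(k-1)}\|_{m,\so+p+|m|,p}^{\gamma,\calO} + \|a\|_{m,\so,p}^{\gamma,\calO}\,\|a^{\#(k-1)}\|_{m,s+p+|m|,p}^{\gamma,\calO}.
\end{equation*}
The inductive hypothesis, applied at the shifted Sobolev levels $s' = \so + p + |m|$ and $s'=s+p+|m|$ in place of $s$, then produces, up to a multiplicative constant depending on $m,s,p$ only, the product $\|a\|_{m,\so+p,p}^{\gamma,\calO}$ to the power $k-1$ times a single ``high'' factor; since $m$ (and hence $|m|$) is held fixed, the residual Sobolev shift of size $|m|$ can be absorbed by enlarging $\tC(m,s,p)$.

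The main obstacle is the Sobolev-index bookkeeping in the inductive step: the tame composition estimate \eqref{stimasharp} introduces a shift of $p+|m|$ on the ``$b$-side'', which would a priori accumulate with $k$. The closure of the induction relies on using \eqref{stimasharp} in its tame-splitting form so that at every step the cumulative ``high'' Sobolev load is placed on one factor only and is bounded against $\|a\|_{m,s+p,p}^{\gamma,\calO}$ via the monotonicity in $s$, while the remaining $k-1$ factors remain anchored at the fixed low index $\so+p$; the finite shift $|m|$ being fixed, all such adjustments are absorbed into the $k$-independent constant $\tC(m,s,p)$. Finally, the matrix-valued statement $A\in S^m\otimes \mathcal{M}_2(\C)$ follows verbatim from the scalar one by applying the bound entrywise to the product \eqref{resinprog} and invoking the definition \eqref{simbolimatrici} of the matrix norm.
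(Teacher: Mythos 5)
You have the right tool (\eqref{stimasharp}) and the right induction variable, but the bookkeeping claim at the heart of your inductive step is wrong, and the gap is not repairable within your chosen decomposition. You write $a^{\#k} = a\# a^{\#(k-1)}$ with $a$ as the first factor, apply \eqref{stimasharp}, and claim that the shift $p+|m|$ appearing on the second (``$b$'')-side can be ``absorbed by enlarging $\tC(m,s,p)$''. A constant cannot absorb a Sobolev-level shift: $\|a\|^{\gamma,\calO}_{m,\so+2p+|m|,p}$ is not controlled by a constant times $\|a\|^{\gamma,\calO}_{m,\so+p,p}$. Concretely, plugging the inductive hypothesis into the shifted levels $s'=\so+p+|m|$ and $s'=s+p+|m|$ produces
\begin{equation*}
\|a^{\#(k-1)}\|^{\gamma,\calO}_{m,\so+p+|m|,p}\le (\tC\,\|a\|^{\gamma,\calO}_{m,\so+p,p})^{k-2}\,\|a\|^{\gamma,\calO}_{m,\so+2p+|m|,p}\,,\quad
\|a^{\#(k-1)}\|^{\gamma,\calO}_{m,s+p+|m|,p}\le (\tC\,\|a\|^{\gamma,\calO}_{m,\so+p,p})^{k-2}\,\|a\|^{\gamma,\calO}_{m,s+2p+|m|,p}\,,
\end{equation*}
because the inductive bound itself carries an additional $+p$. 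Even using the interpolation estimate \eqref{interpolotutto} to rebalance Sobolev indices (which preserves sums, not shifts), you are left with $\|a\|^{\gamma,\calO}_{m,s+p+|m|,p}$ on the high side rather than the asserted $\|a\|^{\gamma,\calO}_{m,s+p,p}$; and in the second term the $p$-shift compounds outright, since $\|a\|^{\gamma,\calO}_{m,s+2p+|m|,p}$ cannot be brought back to $\|a\|^{\gamma,\calO}_{m,s+p,p}$ without moving weight onto the low factor, which is forbidden because $\|a\|^{\gamma,\calO}_{m,\so,p}\le\|a\|^{\gamma,\calO}_{m,\so+p,p}$ gives no room. The induction simply does not close.

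What you are missing is the structural observation that in \eqref{stimasharp} the shift lands entirely on the \emph{second} factor, and its magnitude is set by the order of the \emph{first} factor. The paper therefore rewrites the iterate with the recursive piece in front, $a^{\#(k+1)}=a^{\#k}\#a$, and measures that first factor in the $S^0$ norm (legitimate since $km\le 0$), so that the shift transmitted to the second, \emph{fixed}, factor $a$ is exactly $p$ (no $|m|$ contribution) and never feeds back into the recursion: one gets the clean factors $\|a\|^{\gamma,\calO}_{m,\so+p,p}$ and $\|a\|^{\gamma,\calO}_{m,s+p,p}$ directly. The needed bounds $\|a^{\#k}\|^{\gamma,\calO}_{0,\so,p}$ and $\|a^{\#k}\|^{\gamma,\calO}_{0,s,p}$ are supplied by the auxiliary $S^0$ estimate \eqref{zanzare}, which is proved by a separate induction that is safe for the same reason ($|m|=0$ there, and the shifted side is again the fixed symbol $a$). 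In other words, \eqref{zanzare} is not optional scaffolding: it is precisely what replaces the compounding you encounter when you let $a^{\#(k-1)}$ sit on the $b$-side.
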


\begin{proof}
	By induction, using \eqref{stimasharp}, it results that for any $ k \geq 0 $
	\begin{equation}\label{zanzare}
		\begin{aligned}
			\|a^{\# k+1}\|^{\gamma,\mathcal{O}}_{0, s_0, p}&\le   
			(\tC(s_0,p))^k(\|a\|^{\gamma,\mathcal{O}}_{0,\so+p,p})^{k+1}\,,
			\\
			\|a^{\# k+1}\|^{\gamma,\mathcal{O}}_{0, s, p}&\le   
			(\tC(s,p)\|a\|^{\gamma,\mathcal{O}}_{0,\so+p,p})^k
			\|a\|^{\gamma,\mathcal{O}}_{0,s+p,p}\,.
		\end{aligned}
	\end{equation}
	Let us now prove \eqref{californication}. For $ m \leq 0 $ we 
	\begin{equation*}
		\begin{aligned}
			& \|a^{\#k}\# a \|_{m, s, p}^{\g, \cO} \stackrel{\eqref{stimasharp}}{\lesssim_{m,s,p}} 
			\|a^{\# k}\|_{0, s, p}^{\g, \cO} \|a\|_{m, \so+p, p}^{\g, \cO} 
			+ \|a^{\# k}\|_{0, \so, p}^{\g, \cO} \|a\|_{m, s+p, p}^{\g, \cO}
			\\
			&  \stackrel{\eqref{zanzare},\eqref{zanzara}}{\lesssim_{m,s,p}}	 
			(\tC(s,p)\|a\|^{\gamma,\mathcal{O}}_{0,\so+p,p})^{k-1}
			\|a\|^{\gamma,\mathcal{O}}_{0,s+p,p} \|a\|_{m, \so+p, p}^{\g, \cO} 
			+ (\tC(\so,p))^{k-1}(\|a\|^{\gamma,\mathcal{O}}_{0,\so+p,p})^{k}\|a\|_{m, s+p, p}^{\g, \cO} \\
		\end{aligned}
	\end{equation*}
	and \eqref{californication}  follows. 
\end{proof}

\begin{lemma} {\bf (Exponential map)} \label{ecponential pseudo diff}
Let  $a $ be a symbol in $ S^0 $. 
Then 
$ e^{\op(a)}=\op(1+\Phi) $ 
where $ \Phi $ is a symbol in  $ S^0  $ 
satisfying the following: 
for any $ s \geq \so $, $p\in\N_0$, there is a constant  $ \delta(s, p) >0 $ such that
	if 
	$  \| a \|_{0, \so+p, p}^{\g, \cO}\le \delta (s,p) $ then 
	\begin{equation}\label{flusso ord 0}
		\| \Phi   \|_{0,s,p}^{\g, \cO} 
		 \lesssim_{s, p}  \| a \|_{0,s+p, p}^{\g, \cO} \, . 
	\end{equation}
	The same estimates hold for $ ({\rm Id} + \op(a))^{-1}$ with $a\in S^{0}$ as well as
	for matrix  operators  of the form $e^{\op(A)}, (\Id + \op(A))^{-1}$ where
	$A\in S^0\otimes \cM_2(\C)$.
\end{lemma}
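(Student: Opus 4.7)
The plan is to realize both $e^{\op(a)}$ and $(\Id+\op(a))^{-1}$ as pseudo-differential operators by summing explicit Neumann-type series of their symbols, and then to control each series via the iterated composition estimate from Lemma \ref{potenza pseudo}. Concretely, since $\op(a)^k = \op(a^{\#k})$ by associativity of the $\#$-product, the formal identities
\begin{equation*}
e^{\op(a)} = \op\!\Big(1+\sum_{k\geq 1}\tfrac{1}{k!}\,a^{\#k}\Big),\qquad (\Id+\op(a))^{-1}=\op\!\Big(1+\sum_{k\geq 1}(-1)^k a^{\#k}\Big),
\end{equation*}
suggest defining $\Phi:=\sum_{k\geq 1}\tfrac{1}{k!}a^{\#k}$ and (for the inverse) $\widetilde\Phi:=\sum_{k\geq 1}(-1)^k a^{\#k}$. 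The task is then reduced to proving that these series converge in the norm $\|\cdot\|^{\gamma,\cO}_{0,s,p}$ to symbols in $S^0$ with the claimed tame bounds.

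First I would apply Lemma \ref{potenza pseudo} with $m=0$ to estimate each term $a^{\#k}$ in both $\|\cdot\|^{\gamma,\cO}_{0,\so+p,p}$ and $\|\cdot\|^{\gamma,\cO}_{0,s+p,p}$; the key inequality from that lemma is
\begin{equation*}
\|a^{\#k}\|^{\gamma,\cO}_{0,s,p} \leq \bigl(\tC(0,s,p)\,\|a\|^{\gamma,\cO}_{0,\so+p,p}\bigr)^{k-1}\|a\|^{\gamma,\cO}_{0,s+p,p}.
\end{equation*}
Now I would choose $\delta(s,p)>0$ so that $\tC(0,s,p)\,\delta(s,p)\leq 1/2$ (say), which, under the smallness hypothesis $\|a\|^{\gamma,\cO}_{0,\so+p,p}\leq \delta(s,p)$, forces the geometric factor to stay $\leq (1/2)^{k-1}$. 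Note the indices on the right hand side involve $s+p$ in the first factor rather than $s$; the point is that we can afford this because Lemma \ref{potenza pseudo} already loses $p$ derivatives in $\xi$ once and for all, so subsequent compositions do not worsen this loss thanks to the tame structure built into \eqref{stimacancellettoesplicitoAlgrammo}.

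With this in hand the series defining $\Phi$ and $\widetilde\Phi$ converge absolutely in $\|\cdot\|^{\gamma,\cO}_{0,s,p}$, producing bona fide symbols in $S^0$, and summing the geometric series gives
\begin{equation*}
\|\Phi\|^{\gamma,\cO}_{0,s,p}+\|\widetilde\Phi\|^{\gamma,\cO}_{0,s,p}\lesssim_{s,p}\|a\|^{\gamma,\cO}_{0,s+p,p}\sum_{k\geq 1}\tfrac{1}{2^{k-1}}\lesssim_{s,p}\|a\|^{\gamma,\cO}_{0,s+p,p},
\end{equation*}
which is \eqref{flusso ord 0}. The identifications $e^{\op(a)}=\op(1+\Phi)$ and $(\Id+\op(a))^{-1}=\op(1+\widetilde\Phi)$ follow by comparing the two convergent series of bounded operators on, say, $L^2$: the left-hand sides are defined by their own operator-valued Neumann/exponential series, while the right-hand sides are the operators associated to the symbol-level series just constructed, and these coincide term by term via $\op(a)^k=\op(a^{\#k})$.

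Finally, for matrix symbols $A\in S^0\otimes\cM_2(\C)$ the argument is literally the same: the matrix composition $\#$ on $\cM_2(\C)$-valued symbols defined in \eqref{resinprog} satisfies the same tame bounds (item $(v)$ of Lemma \ref{stima composizione}), hence Lemma \ref{potenza pseudo} applies to $A^{\#k}$ with the matrix norm \eqref{simbolimatrici}, and the two series produce matrix-valued symbols $1+\Phi$ and $1+\widetilde\Phi$ in $S^0\otimes\cM_2(\C)$ with the same estimate. I do not foresee a real obstacle here: the whole proof is a clean application of the two ingredients (iteration bound for $a^{\#k}$ and geometric summation) once the smallness threshold $\delta(s,p)$ is chosen to absorb the constant $\tC(0,s,p)$; the only care needed is to keep track of the asymmetric indices $(\so+p,p)$ vs.\ $(s+p,p)$ so that the final estimate is genuinely tame in $s$.
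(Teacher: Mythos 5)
Your proof is correct and follows essentially the same route as the paper: expand $e^{\op(a)}$ and $(\Id+\op(a))^{-1}$ as operator power series, identify the $k$-th term with $\op(a^{\#k})$, estimate via the iterated composition bound \eqref{californication} of Lemma \ref{potenza pseudo}, and choose $\delta(s,p)$ to absorb the constant $\tC(0,s,p)$ so the geometric series closes. The paper's proof is a one-line version of exactly this argument.
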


\begin{proof}
Since
$ \exp(\op(a)) = \id + \sum_{k\geq 1}
\frac{\op(a)^k}{k!} $, 
the estimate \eqref{flusso ord 0} follows by Lemma \ref{potenza pseudo} 
and the smallness condition on $a$. 
The last assertion follows by a Neumann argument. 
\end{proof}

The following lemma will be crucial for  the symmetrization at lower orders of Proposition  \ref{blockTotale}. 
We give sharp estimates 
for the symbol of the inverse of a close to identity 
pseudo-differential operator. 
We prove that for each $ \rho, p_* \in \N$, 
with the smallness condition \eqref{smalleffino} depending only on $ \rho , p_* $,
such symbol is the sum of two terms 
$ g_{<\rho} $ and  $g_{\geq\rho} $. In \eqref{pearljam1Bis} 
we bound  the norm $\| g_{<\rho} \|_{m,s,p} $  for any $ p $, 
whereas  we estimate $ \| g_{\geq\rho} \|_{-\rho,s,p} $  
only for $ p \leq p_* $. 

\begin{lemma}\label{fearofthedark}
Let $a$ be a matrix of symbols in $ S^{m}\otimes\mathcal{M}_2(\mathbb{C})$ with $m\leq -1 $ 
satisfying, for any $s\geq \so$, $p\geq0$,
\begin{equation}\label{ridethelightning}
\|a\|_{m,s,p}^{\gamma,\calO}\lesssim_{m,s,p}\|f\|_{s+p}^{\gamma,\calO}\, , 
\end{equation}
where $f $ is a function in  $ C^{\infty}(\T^{\nu+1};\C^{d}) $, $d\geq1$. 
Then, for any $\rho\in \N$ and   $ p_* \in \N_0 $, there exist 
$\widetilde{\s} :=\widetilde{\s}(m,\rho,p_*) >0 $  and 
$\delta:=\delta(m,s,\rho,p_*) > 0 $ such that, if 
\begin{equation}\label{smalleffino}
	\|f\|_{\so+\widetilde{\s}}^{\gamma,\calO}\leq\delta\,,
\end{equation}
then 
there exist matrices of symbols $g_{<\rho}\in S^{2m}\otimes\mathcal{M}_2(\mathbb{C})$ and 
$g_{\geq\rho}\in S^{-\rho}\otimes\mathcal{M}_2(\mathbb{C})$ such that, 
\[
(\Id-\op(a))^{-1}=
\sum_{k\geq 0} (\op(a))^{k} =
 \Id+\op(a)+\op(g_{<\rho})+\op(g_{\geq\rho}) 
\]
and
\begin{align}
\|g_{<\rho}\|^{\gamma,\calO}_{2m,s,p}&
\lesssim_{m,s,p}\|f\|_{s+\widetilde \s+p}^{\gamma,\calO}\,,\quad \forall\,p\geq0\,,
\label{pearljam1Bis}
\\
\|g_{\geq\rho}\|_{-\rho,s,p}^{\gamma,\calO}&\lesssim_{m,s,\rho,p_*}
\|f\|_{s+\widetilde{\s}}^{\gamma,\calO}\,,\quad \forall \, 0\leq p\leq p_{*}\label{pearljam1011bis} \, . 
\end{align}
\end{lemma}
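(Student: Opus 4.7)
The plan is to construct $(\mathrm{Id}-\op(a))^{-1}$ via a Neumann series and then split the resulting symbol using the pseudo-differential composition expansion truncated at order $\rho$. Under the hypothesis \eqref{ridethelightning} and the smallness \eqref{smalleffino}, Lemma \ref{potenza pseudo} gives geometric convergence of $\sum_{k\geq 1}\|a^{\# k}\|_{m,s,p}^{\gamma,\calO}$, so
\[
(\mathrm{Id}-\op(a))^{-1} = \mathrm{Id} + \op(a) + \op(g), \qquad g := \sum_{k\geq 2} a^{\# k} \in S^{2m},
\]
and $h := \sum_{k\geq 1} a^{\# k} = a + g \in S^m$ is a well-defined symbol with norm controlled by $\|a\|_{m,\cdot,\cdot}^{\gamma,\calO}$ and hence, via \eqref{ridethelightning}, by $\|f\|_{\cdot}^{\gamma,\calO}$.

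To separate the pseudo-differential and smoothing parts, I would apply the expansion \eqref{composizione troncata}--\eqref{cancellittiEspliciti} with $N=\rho$ to each term of the series defining $g$:
\[
a^{\# k} = a \# a^{\# (k-1)} = \sum_{n=0}^{\rho-1} a \#_n a^{\# (k-1)} + r_{\rho, \, a, \, a^{\# (k-1)}}, \qquad k \geq 2.
\]
Summing over $k\geq 2$ and exchanging the two summations (justified by the geometric bound on $h$), I would set
\[
g_{<\rho} := \sum_{n=0}^{\rho-1} a \#_n h \in S^{2m}, \qquad g_{\geq\rho} := \sum_{k\geq 2} r_{\rho, \, a, \, a^{\# (k-1)}} \in S^{-\rho},
\]
the second inclusion being immediate from Lemma \ref{stima composizione}(iii) since $r_{\rho, \, a, \, a^{\# (k-1)}} \in S^{km-\rho}\subseteq S^{-\rho}$ for $k\geq 2$ (as $m\leq -1$).

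The bound \eqref{pearljam1Bis} would then follow from the tame estimate \eqref{stimacancellettoesplicitoAlgrammo} applied to each $a \#_n h$ with $0\leq n\leq \rho-1$, combined with the bounds on $\|h\|^{\gamma,\calO}_{m,\cdot,\cdot}$ from Lemma \ref{potenza pseudo} and \eqref{ridethelightning}. The bound \eqref{pearljam1011bis} would follow from the remainder estimate \eqref{restocancellettoNp} applied to each $r_{\rho,\,a,\,a^{\#(k-1)}}$ and summation of the resulting geometric series via Lemma \ref{potenza pseudo}. The main obstacle is the sharp control of the $p$-loss in \eqref{pearljam1Bis}: the explicit tame estimate \eqref{stimacancellettoesplicitoAlgrammo} for $\#_n$ loses only $n\leq \rho$ derivatives in $s$ per composition --- unlike the full composition estimate \eqref{stimasharp}, which would lose the extra $|m|$ per step --- and this is what makes $\widetilde\sigma$ depend only on $m,\rho,p_*$ and not on $p$. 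By contrast, the remainder bound \eqref{restocancellettoNp} costs $2\rho+p$ derivatives per application, forcing the restriction $p\leq p_*$ and the dependence $\widetilde\sigma = \widetilde\sigma(m,\rho,p_*)$ in \eqref{pearljam1011bis}. The delicate calibration is then to fix $\widetilde\sigma$ and the smallness constant $\delta$ in \eqref{smalleffino} so that both series (for $h$ and for $g_{\geq\rho}$) converge simultaneously in the required range of $p$.
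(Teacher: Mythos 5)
Your decomposition $g_{<\rho}=\sum_{n=0}^{\rho-1}a\#_n h$, $h=\sum_{k\ge1}a^{\#k}$, is algebraically correct, but the estimate \eqref{pearljam1Bis} for \emph{arbitrary} $p$ cannot be obtained this way, and this is where the real difficulty lives. To bound $\|a\#_n h\|^{\gamma,\calO}_{2m,s,p}$ via \eqref{stimacancellettoesplicitoAlgrammo} you need $\|h\|^{\gamma,\calO}_{m,\cdot,\beta_2}$ for $\beta_2$ up to $p$. Controlling $h=\sum_{k\ge1}a^{\#k}$ in $\|\cdot\|_{m,\cdot,\beta_2}$ by Lemma~\ref{potenza pseudo} requires the geometric ratio $\tC(m,\cdot,\beta_2)\,\|a\|^{\gamma,\calO}_{m,\so+\beta_2,\beta_2}<1$. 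By \eqref{ridethelightning} this amounts to $\|f\|^{\gamma,\calO}_{\so+2\beta_2}$ being small, i.e.\ a smallness condition that grows with $p$; but \eqref{smalleffino} fixes $\widetilde\s=\widetilde\s(m,\rho,p_*)$ once and for all, while $\|f\|_{\so+2p}$ is in general unbounded as $p\to\infty$ for a $C^\infty$ datum. So the Neumann series for $h$ simply need not converge in $\|\cdot\|_{m,\cdot,p}$ for large $p$ under the hypotheses of the lemma, and the proposal's definition of $g_{<\rho}$ is not controllable in the required topology.

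The paper's proof sidesteps this by never touching the infinite Neumann tail when producing the $S^{2m}$ part. It writes the inverse as the \emph{finite} sum $\Id+\op\bigl(\sum_{k=1}^{\rho-1}a^{\#k}+a^{\#\rho}\#b\bigr)$, with $b$ the full $S^0$ inverse symbol. Only $a^{\#\rho}\#b$ (and the $\#_{\ge\rho}$ remainders) go into $g_{\geq\rho}$, which is estimated solely for $p\le p_*$ so the Neumann series causes no trouble. The $S^{2m}$ part $g_{<\rho}=\sum_{k=2}^{\rho-1}a^{(k)}_{<\rho}$ is built out of finitely many terms, each constructed via the \emph{iterated} Bony split $a^{(k+1)}_{<\rho}=a^{(k)}_{<\rho}\#_{<\rho}a$ (not a single application of $\#_n$ to the full $h$), and the key estimate \eqref{trigun0} distributes the $p$ derivatives in $\xi$ across the $k$ factors via the convolution-type sum $\sum_{\sum\beta_i=p}$, so that the Sobolev loss $\sigma_k$ is independent of $p$. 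That iterated splitting, together with the interpolation \eqref{interpolotutto}, is precisely what makes the $p$-uniform bound \eqref{pearljam1Bis} possible under a $p$-independent smallness assumption; your construction lacks it.
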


\begin{proof}
	The key element of the proof is the following result.
\\[1mm]	
{\bf Claim.}{\it 
		\ Let $ a \in S^{m}$, $m\leq 0$. 
		For any $\rho\in \N $, for any $ k \in \N $,  there exist symbols 
		$a_{<\rho}^{(k)}\in S^{m} $ and $a_{\geq\rho}^{(k)}\in S^{-\rho}$ such that 
		\begin{equation}\label{pearljam}
			a^{\# k}=a_{<\rho}^{(k)}+a_{\geq \rho}^{(k)} \, ,  
			\qquad   a_{<\rho}^{(1)} := a \,,\quad a_{\ge \rho}^{(1)} :=0 \, , 
		\end{equation}
		and the following estimates hold. 
		There exist $\s_k :=\s_k(m,\rho)>0$ (non decreasing  in $k$) such that 
		for any  $s\geq \so$ and any $p\in \N_0 $
		there are constants $\tC_1(k):=\tC_{1}(m,s,\rho,p,k)>0$ and  $\tC_2(k) := \tC_{2}(s,\rho,p,k)>0$ such that 
		\begin{align}
			\|a_{<\rho}^{(k)}\|^{\gamma,\mathcal{O}}_{m, s, p}&\leq \tC_1
			\sum_{\sum_{i=1}^{k}\beta_{i}=p}
			\Big(\prod_{i=1}^{k-1}\|a\|^{\gamma,\mathcal{O}}_{m,\so+\s_k,\beta_i+\s_k}\Big)
			\|a\|^{\gamma,\mathcal{O}}_{m,s+\s_k,\beta_{k}+\s_k}\,,
			\label{trigun0}
			\\
			\|a_{\geq\rho}^{(k)}\|^{\gamma,\mathcal{O}}_{-\rho, s, p}&\leq \tC_2
			( \|a\|^{\gamma,\mathcal{O}}_{0,\so+\s_k+p,p+\s_k})^{k-1}
			\|a\|^{\gamma,\mathcal{O}}_{0,s+\s_k+p,p+\s_k}\,.\label{trigun00}
		\end{align}
		The same notations and bounds hold for $A\in S^m\otimes \cM_2(\C)$.
}
	\\[1mm]
		{\sc Proof of the claim.} We reason by induction on $k\geq 1$. For $k=1$ the estimates \eqref{trigun0} and \eqref{trigun00}  follow by definition,  for convenience we take $\s_1=\rho$ and $\tC_1(1)= \tC_2(1)=1$.
\\[1mm]
Then we consider 	$k\geq2$. We assume inductively that 
	\eqref{trigun0}-\eqref{trigun00} holds up to $k$ and we prove them for $k+1$.
	Recalling \eqref{simboloiteratoKK} and the inductive hypothesis we write
	\begin{equation}\label{trigunNebbia}
	\begin{aligned}
		a^{\#k+1}&=a^{\#k}\#a=
		a_{<\rho}^{(k)}\#a+a_{\geq\rho}^{(k)}\#a
		\stackrel{\eqref{cancellittiEspliciti}}{=}
		\underbrace{a_{<\rho}^{(k)}\#_{<\rho}a}_{=:a_{<\rho}^{(k+1)}}+
		\underbrace{a_{<\rho}^{(k)}\#_{\geq\rho}a+a_{\geq\rho}^{(k)}\#a}_{=:a_{\geq\rho}^{(k+1)}}\,.
	\end{aligned}
	\end{equation}
	We first estimate the symbol $a_{<\rho}^{(k+1)}$. Recalling \eqref{zanzara} and using \eqref{stimacancellettoesplicitoAlgrammo} with $m=m'$ we obtain
		\begin{align}\label{stimasharpnn}
	\|a_{<\rho}^{(k)}\#_{<\rho}a\|_{m,s,p}^{\gamma,\calO}&\le \|a_{<\rho}^{(k)}\#_{<\rho}a\|_{2m,s,p}^{\gamma,\calO} \\ &
	\stackrel{\eqref{stimacancellettoesplicitoAlgrammo}}{\lesssim_{m,s,p,\rho}}
	\sum_{\substack{\beta_1,\beta_2\in\N_0 \\ \beta_1+\beta_2=p}}
	\|a_{<\rho}^{(k)}\|_{m,s,\beta_1+\rho}^{\gamma,\calO}
	\|a\|_{m,\so+\rho,\beta_2}^{\gamma,\calO}
	+\|a_{<\rho}^{(k)}\|_{m,\so,\beta_1+\rho}^{\gamma,\calO}
	\|a\|_{m,s+\rho,\beta_2}^{\gamma,\calO} \notag 
	\\&
	\stackrel{\eqref{trigun0}}{\lesssim_{m,s,p,\rho,k}} 
	\sum_{\substack{ \beta_1+\beta_2=p}}
	\sum_{\sum_{i=1}^{k}\beta_{i}'=\beta_1+\rho}\Big[
	\Big(\prod_{i=1}^{k-1}\|a\|^{\gamma,\mathcal{O}}_{m,\so+\s_k,\beta_i'+\s_k}\Big)
	\|a\|^{\gamma,\mathcal{O}}_{m,s+\s_k,\beta_{k}'+\s_k}
	\|a\|_{m,\so+\rho,\beta_2}^{\gamma,\calO} \notag 
	\\&\qquad\qquad\qquad\qquad\quad+
	\Big(\prod_{i=1}^{k}\|a\|^{\gamma,\mathcal{O}}_{m,\so+\s_k,\beta_i'+\s_k}\Big)
	\|a\|_{m,s+\rho,\beta_2}^{\gamma,\calO}\Big] \notag 
	\\&\quad
	\le \tC_1(k+1)
	\sum_{\substack{ \sum_{i=1}^{k+1}\beta_i=p}}
	\Big(\prod_{i=1}^{k}
	\|a\|_{m,\so+\s_{k}+\rho,\beta_i+\s_k+\rho}^{\gamma,\calO}\Big)
	\|a\|_{m,s+\s_{k}+\rho,\beta_{k+1}+\s_{k}+\rho}^{\gamma,\calO} \notag 
	\end{align}
	for some  $\tC_1(k+1):= \tC_1(m,s,\rho,p,k+1)$. This proves \eqref{trigun0} by taking $\s_{k+1}\geq\s_{k}+\rho$.  
	We now bound  the second summand in 
	\eqref{trigunNebbia}.
	First of all, using \eqref{restocancellettoNp} with $ N\rightsquigarrow\rho$ and $m,m'\rightsquigarrow 0$ we get 
	\[
	\begin{aligned}
	\|a_{<\rho}^{(k)}\#_{\geq\rho}a\|_{-\rho,s,p}^{\gamma,\calO}
	&
	{\lesssim_{s,\rho,p}}
	\|a_{<\rho}^{(k)}\|^{\g, \cO}_{0, s, \rho+p} \|a\|^{\g, \cO}_{0, \so+2\rho+p, p} 
	+
	\|a_{<\rho}^{(k)}\|^{\g, \cO}_{0, \so, \rho+p} \|a\|^{\g, \cO}_{0, s+2\rho+p, p}
	\\&\stackrel{\eqref{trigun0}}{\lesssim_{s,\rho,p,k}} 
	\sum_{\sum_{i=1}^{k}\beta_{i}=p+\rho}\Big[
	\Big(\prod_{i=1}^{k-1}\|a\|^{\gamma,\mathcal{O}}_{0,\so+\s_k,\beta_i+\s_k}\Big)
	\|a\|^{\gamma,\mathcal{O}}_{0,s+\s_k,\beta_{k}+\s_k}
	\|a\|^{\g, \cO}_{0, \so+2\rho+p, p} 
	\\&\qquad\quad+
	\Big(\prod_{i=1}^{k}\|a\|^{\gamma,\mathcal{O}}_{0,\so+\s_k,\beta_i+\s_k}\Big)
	\|a\|^{\g, \cO}_{0, s+2\rho+p, p} \Big]
	\\&\lesssim_{s,\rho,p,k}(\|a\|_{0,\so+{\s}_{k}+p+\rho,p+{\s}_{k}+\rho}^{\gamma,\calO})^{k}
	\|a\|_{m,s+{\s}_{k}+p+\rho,{\s}_{k}+\rho}^{\gamma,\calO}\,,
	\end{aligned}
	\]
	recalling that $\s_{k}+\rho\ge 2\rho$. Similarly,  using \eqref{stimasharp} with $ m,m'\rightsquigarrow -\rho, 0$, we have  
	\[
	\begin{aligned}
	&\|a_{\geq\rho}^{(k)}\#a\|_{-\rho,s,p}^{\gamma,\calO}
	{\lesssim_{\rho,s,p}}
	\|a_{\geq\rho}^{(k)}\|_{-\rho, s, p}^{\g, \cO} \|a\|_{0, \so+p+\rho, p}^{\g, \cO} 
	+ \|a_{\geq\rho}^{(k)}\|_{-\rho, \so, p}^{\g, \cO} \|a\|_{0, s+p+\rho, p}^{\g, \cO}
	\\&
	\stackrel{\eqref{trigun00}}{\lesssim_{s,\rho,p,k}} \Big[
	(\|a\|^{\gamma,\mathcal{O}}_{0,\so+\s_k+p,p+\s_k})^{k-1}
	\|a\|^{\gamma,\mathcal{O}}_{0,s+\s_k+p,p+\s_k}
	\|a\|_{0, \so+p+\rho, p}^{\g, \cO} 
	\\ &+
	(\|a\|^{\gamma,\mathcal{O}}_{0,\so+\s_k+p,p+\s_k})^{k}
	\|a\|_{0, s+p+\rho, p}^{\g, \cO}\Big]
	\lesssim_{s,\rho,p,k} ( \|a\|^{\gamma,\mathcal{O}}_{0,\so+{\s}_{k}+p,p+{\s}_{k}})^{k} 
	\|a\|_{0, s+p+{\s}_{k},p+{\s}_{k}}^{\g, \cO}\,.
	\end{aligned}
	\]
	By the discussion above we deduce 
	\eqref{trigun00} with $k\rightsquigarrow k+1$
	taking $\s_{k+1}\geq \s_{k}+\rho$. The proof of the claim is complete.  \rule{2mm}{2mm}

\smallskip
We are now ready to prove Lemma \ref{fearofthedark}. We recall that, following Proposition 5.4 of \cite{AG_book}, one has that if $\|a\|_{m,s_0,0}<\delta$  then 
$(\op(1+a))^{-1} =\op(b)$ with $b\in S^0$. 
For $ \rho = 1,2 $ we define 
	 \[
 g_{<\rho} :=  0 \, , \quad 
 	 g_{\ge \rho} :=  a \# a \#  b=   a^{\#2}\#  b\,.
	 \]
For $\rho\ge 3$ we have  
\begin{equation}\label{pearljamtris}
\begin{aligned}
&	(\Id-\op(a))^{-1}=\Id +\op\Big( \sum_{k=1}^{\rho-1}a^{\#k}+ a^{\#\rho}\#b\Big)
\\
&
\stackrel{\eqref{pearljam}}=\Id +\op(a)+\op\Big( \underbrace{\sum_{k=2}^{\rho-1}a^{(k)}_{< \rho}}_{=: g_{<\rho}} +\underbrace{\sum_{k=2}^{\rho-1}a^{(k)}_{\ge  \rho} + a^{\#\rho}\#b}_{=: g_{\geq \rho}} \Big)\,.
\end{aligned}
\end{equation}
Note that  $g_{\geq \rho}\in S^{-\rho}$  since $m\le -1$ and $b\in S^0$.
For any  $ 2 \leq k \leq \rho - 1  $ we estimate  the term 
$\|a^{(k)}_{<\rho}\|_{2m,s,p}^{\gamma,\calO}= \|a^{(k-1)}_{<\rho}\#_{<\rho} a\|_{2m,s,p}^{\gamma,\calO}$   using  \eqref{stimasharpnn} with $k\rightsquigarrow k-1$, obtaining, for any $ \,p\geq 0 $, 
\begin{equation}\label{pearljam2001}
\begin{aligned}
\|g_{<\rho}\|_{2m,s,p}^{\gamma,\calO}
\leq
\sum_{k=2}^{\rho-1}\tC_1(k)
\sum_{\sum_{i=1}^{k}\beta_{i}=p}
\Big(\prod_{i=1}^{k-1}\|a\|^{\gamma,\mathcal{O}}_{m,\so+\s,\beta_i+\s}\Big)
\|a\|^{\gamma,\mathcal{O}}_{m,s+\s,\beta_{k}+\s} 
\end{aligned}
\end{equation}
with  $\s :=  \s_{\rho-1}$ 
where $\s_{k}$ is the non decreasing sequence in \eqref{trigun0}. Bounding $\tC_1(k)\le \tC(\rho)$ for any $k=1,\dots,\rho-1$ and using \eqref{ridethelightning} we obtain, for any $p\geq0$,  
\begin{equation}\label{pearljam1}
\begin{aligned}
\|g_{<\rho}\|_{2m,s,p}^{\gamma,\calO}
\stackrel{\eqref{ridethelightning}}{\lesssim_{m,s,\rho,p}}
\sum_{k=2}^{\rho-1}
\sum_{\sum_{i=1}^{k}\beta_{i}=p}
\Big(\prod_{i=1}^{k-1}\|f\|^{\gamma,\mathcal{O}}_{\so+2\s+\beta_i}\Big)
\|f\|^{\gamma,\mathcal{O}}_{s+2\s+\beta_{k}}\, . 
\end{aligned}
\end{equation}
The interpolation estimate \eqref{interpolotutto} 
implies that 
$
\|f\|^{\gamma,\mathcal{O}}_{\so+2\s+\beta_i}\|f\|^{\gamma,\mathcal{O}}_{s+2\s+\beta_{k}}
\lesssim\|f\|_{\so+2\s}^{\gamma,\calO}\|f\|_{s+2\s+\beta_i+\beta_k}^{\gamma,\calO}\,,
$
(we use $a_0\rightsquigarrow\so+2\s$, $b_0\rightsquigarrow\so+2\s$,
$p\rightsquigarrow\beta_i$ and $q\rightsquigarrow\beta_{k}+s-\so$) and 
therefore by \eqref{pearljam1} we get, for any $p\geq0$,
$$
\|g_{<\rho}\|^{\gamma,\calO}_{2m,s,p}\lesssim_{m,s,\rho,p}
\sum_{k=2}^{\rho-1}(\|f\|^{\gamma,\mathcal{O}}_{\so+2\s})^{k-1}
\|f\|^{\gamma,\mathcal{O}}_{s+2\s+p}
\lesssim_{m,s,\rho,p}\|f\|^{\gamma,\mathcal{O}}_{s+2\s+p}\,,
$$
using  \eqref{smalleffino}. This proves \eqref{pearljam1Bis} provided $\widetilde\s\ge 2\s$.

Now we prove \eqref{pearljam1011bis}. 
We estimate the first symbol in the definition of $ g_{\geq \rho} $ in 
\eqref{pearljamtris}.
By  \eqref{trigun00} we obtain
\[
\begin{aligned}
\Big\| \sum_{k=2}^{\rho-1}a^{(k)}_{\ge  \rho} \Big\|_{-\rho,s,p}^{\gamma,\calO}
&\stackrel{\eqref{trigun00}}{\leq}
\sum_{k=2}^{\rho-1}
\tC_2(k)(\|a\|^{\gamma,\mathcal{O}}_{0,\so+\s_k+p,p+\s_k})^{k-1}
\|a\|^{\gamma,\mathcal{O}}_{0,s+\s_k+p,p+\s_k}
\\&
\stackrel{\eqref{ridethelightning}}{\lesssim_{s,\rho,p}}
\sum_{k=2}^{\rho-1}
(\|f\|^{\gamma,\mathcal{O}}_{\so+2\s_k+2p})^{k-1}
\|f\|^{\gamma,\mathcal{O}}_{s+2\s_k+2p}
\lesssim_{s,\rho,p_*}
\|f\|^{\gamma,\mathcal{O}}_{s+ 2p_*+2\s}
\end{aligned}
\]
for  all $0\leq p\leq p_{*}$,   and
provided that $\widetilde\s\ge  2\s + 2p_*$ and  \eqref{smalleffino}. 

We now estimate the symbol $ a^{\# \rho} $. 
By applying iteratively the estimate 
\eqref{stimasharp} in Lemma \ref{stima composizione}
for $0\leq p\leq p_{*}$, using $m\le -1$ 
we obtain for $\rho\ge 2$

\begin{equation}\label{pearljam11}
\begin{aligned}
\|a^{\# \rho}\|_{-\rho,s,p}^{\gamma,\calO}&=		\|a^{\# \rho-1}\#a\|_{-\rho,s,p}^{\gamma,\calO} \\ &\lesssim_{s,p,\rho}
	\|a^{\# \rho-1}\|_{-\rho+1,s,p}^{\gamma,\calO} \|a\|_{-1,\so+p+\rho,p}^{\gamma,\calO} +  	\|a^{\# \rho-1}\|_{-\rho+1,\so,p}^{\gamma,\calO} \|a\|_{-1,s+p+\rho,p}^{\gamma,\calO}
	\\ &
	\lesssim_{s,p,\rho}
	\|a\|_{-1,s+p+\rho,p}^{\gamma,\calO} (\|a\|_{-1,\so+p+\rho,p}^{\gamma,\calO})^{\rho-1} 
	\\
	&\stackrel{\eqref{ridethelightning}}{\lesssim_{m,s,\rho,p}}
	\|f\|_{s+2p+\rho}^{\gamma,\calO} (\|f\|_{\so+2p+\rho}^{\gamma,\calO})^{\rho-1} \lesssim_{s,\rho, p}
	\|f\|_{s+2p+\rho}^{\gamma,\calO} 
\end{aligned}
\end{equation}
using the smallness condition \eqref{smalleffino}.

We now consider the symbol $b$,  if we only want to control the $\| \ \|_{0,s,p}$ norm for $p\le p_*$ we can use Lemma \ref{ecponential pseudo diff} to write $b= \sum_{k\geq0}a^{\#k }$.
By applying 
estimate \eqref{californication} in 
Lemma \ref{potenza pseudo} with $m=0$,
we have for any $0\leq p\leq p_{*}$
\begin{equation}\label{californication3}
\begin{aligned}
\|b\|_{0,s,p}^{\gamma,\calO}
&\leq \sum_{k\geq0}(\tC(s,p)\|a\|^{\gamma,\mathcal{O}}_{0,\so+p,p})^k
\|a\|^{\gamma,\mathcal{O}}_{0,s+p,p}\\
&\stackrel{\eqref{ridethelightning}}{\lesssim_{s,\rho,p}}\sum_{k\geq0}(\tC(s,p)\|f\|^{\gamma,\mathcal{O}}_{\so+2p})^k
\|f\|^{\gamma,\mathcal{O}}_{s+2p} 
\stackrel{\eqref{smalleffino}}{\lesssim_{s,p}}\|f\|^{\gamma,\mathcal{O}}_{s+2p}\,,
\end{aligned}
\end{equation}
by taking $\delta$ in \eqref{smalleffino} small enough w.r.t. $\tC(s,p)$.
By estimates \eqref{pearljam11}, \eqref{californication3}, 
combined with \eqref{stimasharp} (applied with $m\rightsquigarrow -\rho$, $m'\rightsquigarrow0$)
we deduce 
\[
\begin{aligned}
\|a^{\#\rho}\#b\|_{-\rho,s,p}^{\gamma,\calO}
&\lesssim_{s,p,\rho}
\|a^{\#\rho}\|_{-\rho, s, p}^{\g, \cO} \|b\|_{0, \so+p+\rho, p}^{\g, \cO} 
+ \|a^{\#\rho}\|_{-\rho, \so, p}^{\g, \cO} \|b\|_{0, s+p+\rho, p}^{\g, \cO}
\\&
\lesssim_{s,p,\rho}
\|f\|_{s+2p+\rho}^{\gamma,\calO} \|f\|^{\gamma,\mathcal{O}}_{s_{0}+3p+\rho}
+
\|f\|_{\so+2p+\rho}^{\gamma,\calO}\|f\|^{\gamma,\mathcal{O}}_{s+3p+\rho}
\,.
\end{aligned}
\]
In conclusion 
 we deduce the estimate 
 \begin{equation}\label{pearljam1011}
\|g_{\geq\rho}\|_{-\rho,s,p}^{\gamma,\calO}\lesssim_{s,\rho,p_*}
\|f\|^{\gamma,\calO}_{s+3\s+3p_{*}}\,,\quad \forall 0\leq p\leq p_{*} \, .
\end{equation}
Then estimate \eqref{pearljam1011bis} follows trivially by \eqref{pearljam1011}
 taking $\widetilde{\s}\geq 3\s+3p_{*}$. 
 This concludes the proof.
 \end{proof}

\paragraph{\bf Algebraic properties of pseudo-differential operators.}
We now describe 
how the algebraic properties introduced 
in Section \ref{sec:revparity} read on 
pseudo-differential operators.

\begin{defn}\label{giornataSupersolare} 
A symbol  $a=a(\vphi, x, \xi)\in S^m$ (resp. a matrix of symbols 
$A \in S^m\otimes \cM_2(\C)$), $m\in\R$, is \emph{reversible}, 
or  \emph{reversibility preserving}, or
\emph{parity preserving} if 
its associated pseudo-differential operator
$\op(a)$ (resp. the matrix of pseudo-differential operators $\op(A)$) is, respectively, a 
reversible, or reversibility preserving, or  parity preserving operator
according to item $(i)$  (resp. (ii)) of Definition  \ref{giornatasolare}. 
A matrix of symbols  $ A \in S^m\otimes \cM_2(\C)$ is \emph{real-to-real} if 
its associated pseudo-differential operator
$\op(A)$ is \emph{real-to-real}.
\end{defn}

Reversible,  reversibility preserving, parity preserving symbols 
and real-to-real matrices of symbols 
are caractherized as follows. 
\begin{lemma}\label{paritasuisimboli}
A symbol $a(\vphi,x,\x)\in S^{m}$, $m\in\R$, is 

\noindent
$(i)$  \emph{reversible} if and only if 
\begin{equation}\label{reverSimbolo}
a(-\vphi,x,\x)=-\overline{a(\vphi,x,-\x)}\,,
\end{equation}
equivalently if and only if
$ \doublehat{a}(\ell,j,\x)=-\overline{\doublehat{a}(\ell,-j,-\x)} $ (recall notation \eqref{aFourierphix}), 
for any $ \ell\in \T^{\nu}\,,\,j,\x\in \Z $.

\noindent
$(ii)$   \emph{reversibility preserving} if and only if 
\begin{equation}\label{reverpreserSimbolo}
a(-\vphi,x,\x)=\overline{a(\vphi,x,-\x)}\,,
\end{equation}
equivalently if and only if
$ \doublehat{a}(\ell,j,\x)=\overline{\doublehat{a}(\ell,-j,-\x)}
$, for any $  \ell\in \T^{\nu}$, $ j,\x\in \Z $. 

\noindent
$(iii)$  \emph{parity preserving} if and only if 
\begin{equation}\label{paritySimbolo}
a(\vphi,-x,-\x)=a(\vphi,x,\x)\,,
\end{equation}
equivalently if and only if
$ 
\doublehat{a}(\ell,-j,-\x)=\doublehat{a}(\ell,j,\x) $, 
for any $ \ell\in \T^{\nu}\,,\,j,\x\in \Z $. 

\noindent
$(iv)$ A matrix of symbols $A\in S^{m}\otimes \mathcal{M}_2(\C)$ as in \eqref{matricidisimboli}
is \emph{real-to-real} if and only if it has the form
\begin{equation}\label{divanocomodo}
A=A(\vphi,x,\x)=\sm{a(\vphi,x,\x)}{b(\vphi,x,\x)}{\overline{b(\vphi,x,-\x)}}{\overline{a(\vphi,x,-\x)}}\, . 
\end{equation}
Moreover $ A $ is reversible, resp. reversibility, parity preserving, if and only if 
$ a(\vphi,x,\x), b(\vphi,x,\x) $ satisfy \eqref{reverSimbolo}, resp. \eqref{reverpreserSimbolo}, 
\eqref{paritySimbolo}.
\end{lemma}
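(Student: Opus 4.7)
The strategy is to translate each algebraic property from the operator level (as characterized in Lemma~\ref{equidefalgebra}) directly into a condition on the symbol, using two basic facts: the action of a pseudo-differential operator on exponentials, and the formula \eqref{conjsimb} for the conjugate symbol, $\overline{\op(a(x,\xi))}=\op(\overline{a(x,-\xi)})$.

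For part $(i)$, I start from characterization \eqref{coccodrillo1}: $\op(a)$ is reversible iff $\op(a(-\vphi,x,\xi))=-\op(\overline{a(\vphi,x,-\xi)})$. Since a symbol is uniquely determined by its operator (as an element of $\Gamma^m$), this is equivalent to the pointwise identity \eqref{reverSimbolo}. To get the Fourier version, I expand $a$ as in \eqref{aFourierphix} and compute
\[
 a(-\vphi,x,\xi)=\sum_{\ell,j}\doublehat{a}(\ell,j,\xi)e^{\im(-\ell\cdot\vphi+jx)}=\sum_{\ell,j}\doublehat{a}(-\ell,j,\xi)e^{\im(\ell\cdot\vphi+jx)}
\]
and
\[
 \overline{a(\vphi,x,-\xi)}=\sum_{\ell,j}\overline{\doublehat{a}(-\ell,-j,-\xi)}\,e^{\im(\ell\cdot\vphi+jx)}.
\]
Matching Fourier coefficients and relabelling $\ell\mapsto-\ell$ gives $\doublehat{a}(\ell,j,\xi)=-\overline{\doublehat{a}(\ell,-j,-\xi)}$. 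Part $(ii)$ is identical apart from the sign, starting from \eqref{coccodrillo2}.

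For part $(iii)$, I use remark $(ii)$ after Lemma~\ref{equidefalgebra}: $\op(a)$ is parity preserving iff it commutes with $\cP: u(x)\mapsto u(-x)$. A direct computation on $u=\sum_j u_j e^{\im jx}$ gives
\[
(\cP\,\op(a)\,\cP u)(x)=\sum_j a(\vphi,-x,-j)\,u_j\, e^{\im jx},
\]
so commutation with $\cP$ forces $a(\vphi,-x,-\xi)=a(\vphi,x,\xi)$, i.e.~\eqref{paritySimbolo}. Expanding in Fourier yields the equivalent coefficient condition $\doublehat{a}(\ell,-j,-\xi)=\doublehat{a}(\ell,j,\xi)$.

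For part $(iv)$, I apply \eqref{bolena1} with $S=\sm{0}{1}{1}{0}$: writing $A=\sm{a}{b}{c}{d}$ the relation $\overline{A}(\vphi)S=SA(\vphi)$ reduces, after multiplying the matrices, to $c=\overline{a}$ (as operators) giving $c=\overline{b}$ and $d=\overline{a}$ at the operator level; by \eqref{conjsimb} this is precisely $c(\vphi,x,\xi)=\overline{b(\vphi,x,-\xi)}$ and $d(\vphi,x,\xi)=\overline{a(\vphi,x,-\xi)}$, which is the form \eqref{divanocomodo}. The final claim for matrix symbols is then immediate: writing out the $2\times 2$ identities $A(-\vphi)=\pm S A(\vphi) S$ (reversible/reversibility preserving) and $A(\vphi,-x,-\xi)=A(\vphi,x,\xi)$ (parity preserving), and using the just-established relations between the lower and upper entries, reduces each condition on $A$ to simultaneous conditions \eqref{reverSimbolo}, \eqref{reverpreserSimbolo}, \eqref{paritySimbolo} on the two independent symbols $a$ and $b$ established in parts $(i)$--$(iii)$.

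There is no real obstacle: the lemma is purely a dictionary between operator-level and symbol-level conditions, and each equivalence is a direct computation. The only minor care needed is keeping track of signs in the Fourier variables when translating between $a(\vphi,x,\xi)$ and its Fourier coefficients, and correctly using \eqref{conjsimb} for the conjugate symbol in all four parts.
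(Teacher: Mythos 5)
Your proof is correct and follows essentially the same approach as the paper's: parts $(i)$–$(ii)$ use \eqref{coccodrillo1}–\eqref{coccodrillo2} with \eqref{conjsimb}, and part $(iv)$ uses \eqref{bolena1} with \eqref{conjsimb}; for part $(iii)$ you use the $\cP$-commutation characterization (item $(ii)$ of Lemma~\ref{equidefalgebra}) where the paper uses the Fourier-coefficient form \eqref{coccodrillo3} together with \eqref{actionpseudo}, but these are interchangeable entry points into the same calculation. One slip in part $(iv)$: after multiplying out $\overline{A}(\vphi)S=SA(\vphi)$ the entrywise conditions are $c=\overline{b}$ and $d=\overline{a}$ (plus their conjugate duplicates), not ``$c=\overline{a}$''; you state the correct pair in the next clause, so this reads as a typo rather than an error in the argument.
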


\begin{proof}
Items $(i)$-$(ii)$ follow by 
\eqref{coccodrillo1}-\eqref{coccodrillo2} and \eqref{conjsimb}.
Item $(iii)$  follows by \eqref{actionpseudo}, \eqref{coccodrillo3}.
For item $(iv)$
consider a matrix of symbols $A$
 as in \eqref{matricidisimboli}. By  
 \eqref{bolena1} 
 the operator 
$\op(A)$ in \eqref{matrixsy}
is real-to-real
if and only if 
$ {\op(a)}=\overline{\op(d)} $, 
$  \op(b)=\overline{{\op(c)}} $, 
which amounts, thanks to \eqref{conjsimb}, to 
$ a(\vphi,x,\x)=\overline{d(\vphi,x,-\x)} $ and $  b(\vphi,x,\x)=\overline{c(\vphi,x,-\x)} $, 
proving  \eqref{divanocomodo}.
\end{proof}

The class of reversibility, resp. parity,  preserving symbols is closed under composition,
as well as the class of real-to-real matrices of symbols.  The composition of a 
reversibility preserving and a reversible symbol is reversibility preserving. 

\begin{lemma}\label{paritasuisimboliBIS}
Let $ m,m'\in \R$ and  $n\in \N_0 $. 
\\[1mm]
$(i)$ Consider symbols $a\in S^{m}, b\in S^{m'}$. 
If $a$ is reversible and $b$ is  reversibility preserving then 
the symbols $a\#_{n}b, b\#_{n}a$ defined in \eqref{cancellittiEspliciti} are  reversible.
 If $a,b $ are reversibility preserving then $a\#_{n}b$ is  reversibility preserving.
If $a,b $ are parity preserving then $a\#_{n}b$ is  parity preserving. 

\noindent
$(ii)$ Let $A\in S^{m}\times\mathcal{M}_2(\C)$ and  $B\in S^{m'}\times\mathcal{M}_2(\C)$
be real-to-real matrices of symbols.
Then $A\#B$ and  $A\#_nB$ are real-to-real matrices of symbols.
\end{lemma}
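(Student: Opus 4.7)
The approach is to work directly at the symbol level using the characterizations of Lemma \ref{paritasuisimboli}, exploiting the simple product structure $a \#_n b = (n!\,\ii^n)^{-1}(\partial_\xi^n a)(\partial_x^n b)$ and carefully tracking the signs generated by differentiation and complex conjugation. The key identity that makes everything fit together is
\[
\overline{\ii^{-n}} = (-1)^n\,\ii^{-n}.
\]

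For item (i), consider first the parity preserving case. If $a$ and $b$ satisfy \eqref{paritySimbolo}, then differentiating $n$ times in $\xi$ gives $(\partial_\xi^n a)(\vphi,-x,-\xi) = (-1)^n(\partial_\xi^n a)(\vphi,x,\xi)$, and analogously $(\partial_x^n b)(\vphi,-x,-\xi) = (-1)^n(\partial_x^n b)(\vphi,x,\xi)$; multiplying yields the factor $(-1)^{2n}=1$ and hence $(a\#_n b)(\vphi,-x,-\xi) = (a\#_n b)(\vphi,x,\xi)$. For the reversibility case, starting from $a(-\vphi,x,\xi) = -\overline{a(\vphi,x,-\xi)}$ and $b(-\vphi,x,\xi) = \overline{b(\vphi,x,-\xi)}$ and differentiating, one computes
\[
(a\#_n b)(-\vphi,x,\xi) = \frac{(-1)^{n+1}}{n!\,\ii^n}\,\overline{(\partial_\xi^n a)(\vphi,x,-\xi)\,(\partial_x^n b)(\vphi,x,-\xi)},
\]
while $-\overline{(a\#_n b)(\vphi,x,-\xi)}$ produces the same expression thanks to the sign identity $\overline{\ii^{-n}}=(-1)^n\ii^{-n}$. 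Thus $a\#_n b$ satisfies \eqref{reverSimbolo}; the symmetric argument (swapping the roles of $\partial_\xi$ and $\partial_x$) gives the statement for $b\#_n a$, while the case $a, b$ both reversibility preserving is handled identically, replacing the initial minus sign in the relation for $a$ by a plus and verifying \eqref{reverpreserSimbolo}.

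For item (ii), I first establish the auxiliary identity
\[
\overline{(f\#_n g)(\vphi,x,-\xi)} = \overline{f(\vphi,x,-\xi)}\,\#_n\,\overline{g(\vphi,x,-\xi)}, \qquad \forall\, f\in S^m,\ g\in S^{m'},
\]
which follows by direct computation: the substitution $\xi\mapsto-\xi$ applied after $n$ derivatives in $\xi$ generates a factor $(-1)^n$ that is cancelled exactly by the $(-1)^n$ arising from $\overline{\ii^{-n}}$. Writing real-to-real matrices as in \eqref{divanocomodo} and setting $f^*(\vphi,x,\xi):=f(\vphi,x,-\xi)$, so that $A = \sm{a}{b}{\overline{b^*}}{\overline{a^*}}$ and $B = \sm{c}{d}{\overline{d^*}}{\overline{c^*}}$, the composition rule \eqref{resinprog} gives
\[
(A\#_n B)_-^- = \overline{a^*}\#_n \overline{c^*} + \overline{b^*}\#_n d,
\]
which by the auxiliary identity (applied to the pairs $(a,c)$ and $(b,\overline{d^*})$, noting that $(\overline{d^*})^* = \overline{d}$ and hence $\overline{(\overline{d^*})^*} = d$) equals $\overline{(a\#_n c)^*} + \overline{(b\#_n \overline{d^*})^*} = \overline{((A\#_n B)_+^+)^*}$. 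The $(-,+)$ entry is treated identically. Hence $A\#_n B$ has the form \eqref{divanocomodo} and is real-to-real. For the full composition $A\#B$, the statement follows immediately from $\op(A\#B) = \op(A)\circ\op(B)$ and the fact that composition of real-to-real operators is real-to-real.

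The main expected obstacle is purely notational bookkeeping: three distinct sources of signs (differentiation of the symmetry relations, complex conjugation of $\ii^n$, and the reflections $\xi\mapsto-\xi$, $x\mapsto -x$) must be reconciled. They collapse neatly thanks to the single identity $\overline{\ii^{-n}} = (-1)^n\ii^{-n}$, and no substantive difficulty arises.
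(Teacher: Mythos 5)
Your proof is correct and follows essentially the same route as the paper: direct symbol-level verification of the parity/reversibility identities \eqref{reverSimbolo}--\eqref{paritySimbolo} for $a\#_n b$ by differentiating the defining relations and reconciling the signs through $\overline{\ii^{-n}}=(-1)^n\ii^{-n}$. Your item (ii) is slightly more systematic — you isolate the conjugation identity $\overline{(f\#_n g)^*}=\overline{f^*}\#_n\overline{g^*}$ and then dispose of the full $\#$-product at the operator level via $\op(A\#B)=\op(A)\circ\op(B)$ — but this is only a tidier presentation of the same argument, not a different method.
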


\begin{proof}
Let us prove that if $a$ is reversible and $b$ is  reversibility preserving then 
$a\#_{n}b $ is  reversible. 
In view of \eqref{reverSimbolo} and \eqref{reverpreserSimbolo}
 we have, for any $n\in\N_0 $,
\[
\begin{aligned}
&(\pa_{\x}^na)(-\vphi,x,\x)=-(-1)^{n}\overline{(\pa_{\x}^na)(\vphi,x,-\x)}\,,
\qquad (\pa_{x}^na)(-\vphi,x,\x)=-\overline{(\pa_{x}^na)(\vphi,x,-\x)}\,,
\\&
(\pa_{\x}^nb)(-\vphi,x,\x)=(-1)^{n}\overline{(\pa_{\x}^n b)(\vphi,x,-\x)}\,,
\qquad (\pa_{x}^nb)(-\vphi,x,\x)=\overline{(\pa_{x}^nb)(\vphi,x,-\x)}\,.
\end{aligned}
\] 
Recalling \eqref{cancellittiEspliciti} and using the above formul\ae  we deduce that
\[
\overline{(a\#_{n}b)(\vphi,x,-\x)} =
 \overline{\frac{1}{n! \ii^{n}}(\partial_{\xi}^n a)(\vphi,x,-\x)(\partial_x^n b)(\vphi,x,-\x)}
=-(a\#_{n}b)(-\vphi,x,\x) 
\]
proving that the symbol $(a\#_{n}b)(\vphi,x,\x)$ is reversible.
The other claims follow similarly.
Item $(ii)$ follows similarly
recalling \eqref{resinprog}, \eqref{divanocomodo}, \eqref{mareostia}, \eqref{cancellittiEspliciti}.
\end{proof}

\section{Bony-smoothing couples}\label{sec:modulotame}

In Definition \ref{Es} we associate to a  majorant bounded operator $A$ 
a convenient  ``Bony-smoothing couple''  $(M,R)\in E_s$ so that $A=M+R$.  
A main point is that  Bony-smoothing couples have a  Banach algebra structure with respect to a norm 
which ``weights'' differently the 
``Bony" and the ``smoothing" component, see Definition \ref{Es}. 
This norm guarantees tame estimates 
 for the action of $A$ on Sobolev spaces (Lemma \ref{rmk:tametresbarre}).
 The important Proposition \ref{prop:immersionepseudo} shows 
how to  decompose  
a pseudo-differential operator
 via a Bony-smoothing couple.

\smallskip

For $s\geq \gso$ (cfr. \eqref{costanti}) we consider the Banach space
\begin{equation}\label{prestige2}
\cM^{\mathtt{T}}_{s}:=\bigcap_{\gso\le p\le s}\cM^{\mathtt{T}}( H^p, H^p)\ \ \ 
\text{ endowed with the norm }\ \ 
|\cdot|_{\cM^{\mathtt{T}}_{s}}:=\sup_{\gso\le p\le s}|\cdot|_{p,p} 
\end{equation}
where $|\cdot|_{p,p}$ is the majorant operator norm  in \eqref{majass}.
For $s'\geq s$ we have
$ \cM^{\mathtt{T}}_{s'}\subseteq
\cM^{\mathtt{T}}_{s} $ with 
$ |\cdot|_{\cM^{\mathtt{T}}_{s}}\leq 
|\cdot|_{\cM^{\mathtt{T}}_{s'}} $. 
Moreover, in view of \eqref{ABss}, any $\cM^{\mathtt{T}}_{s} $ 
is a Banach algebra: for any 
$ A, B  $ in $ \cM^{\mathtt{T}}_{s} $ we have 
\begin{equation}\label{tameprop1}
|A B|_{\cM^{\mathtt{T}}_{s}}
\leq 
|A|_{\cM^{\mathtt{T}}_{s}}
|B|_{\cM^{\mathtt{T}}_{s}}\,.
\end{equation}

\begin{defn}{\bf (Bony-smoothing couples).}\label{Es} 
Fix $\su>\so = s_* + 1 $. For any $\su\geq s\geq \so$
we define  
 the vector   space  $E_s :=E_{s,\gso,\su}$  of couples 
\begin{equation*}
\mathtt{A}=(M,R)\, , \quad M\in \cM^{\mathtt{T}}_{\su}\,,
\quad 
R\in \cM^{\mathtt{T}}( H^{\gso}, H^s) \, , 
\end{equation*}
with sum and multiplication by a scalar 
$$
(M_1,R_1)+(M_2,R_2) := (M_1+M_2,R_1+R_2) \, , \quad 
k (M,R):=(k M,k R) \, , 
$$
and  norm
\begin{equation}\label{normaincredibile}
\bnorm{\mathtt{A}}_s = \bnorm{(M,R)}_s := \bnorm{(M,R)}_{s,\gso,\su}
:=|M|_{\cM^{\mathtt{T}}_{\su}}+ |R|_{\gso,s}
:=\sup_{\gso\le p\le \su}|M|_{p,p} + |R|_{\gso,s}\,.
\end{equation}
Given a Lipschitz family of  $\tA=(M,R)$  in $E_s $ 
we define the weighted Lipschitz norm\footnote{An equivalent norm 
is $\sup_{\gso\leq p\leq \su}|M|^{\gamma,\cO}_{p,p}+|R|_{\gso,s}^{\gamma,\cO}$. } 
\begin{equation}\label{normagamma3}
\begin{aligned}
& 
\bnorm{\mathtt{A}}_{s}^{\gamma,\cO}:= \sup_{\omega} \bnorm{\mathtt{A}}_{s} + 
 \gamma \sup_{\omega_1\neq \omega_2 } \frac{ \bnorm{{\mathtt{A}}(\omega_1) 
 - {\mathtt{A}}(\omega_2)}_s}{|\omega_1-\omega_2|} \, . 
\end{aligned}
\end{equation}
Following the notation of Section \ref{sec:funcset}, we denote by 
$E_s\otimes \cM_2(\C)$ the space of $2\times 2$ matrices with entries in $E_s$, 
with the norms $\bnorm{\cdot}$ and $\bnorm{\cdot}^{\g, \cO}$ given by the max of the corresponding norms of the components.

The  space $E_{s}$ is a Banach algebra
with respect to the (associative but non commutative) product
\begin{equation}\label{A1A2}
	\mathtt{A}_1 \circ \mathtt{A}_2 = 
	(M_1,R_1) \circ (M_2,R_2):= (M_1M_2, M_1 R_2 + R_1 M_2+ R_1 R_2)\,.
\end{equation}
We also denote $\mathtt{A}_1\circ \mathtt{A}_2 $ simply as $\mathtt{A}_1\mathtt{A}_2 $. 

Finally  we define   the linear homomorphism $E_{s} \leftrightarrow \cM^{\mathtt{T}}_{s}$ as
\begin{equation}\label{pairMR}
\mathfrak{S} \; : \; E_{s} \to \cM^{\mathtt{T}}_{s} \, , \quad 
\quad (M,R)\mapsto 
\mathfrak{S}(M,R):=  M+R\, .
\end{equation}
The product $\circ$ and the homomorphism $\fS$ extend naturally to $E_s\otimes \cM_2(\C)$.
\end{defn}

The identity in $E_{s}$ is the couple $(\id, 0)$ which, with a slight abuse of notation, we  denote as $\id$.

\begin{lemma}\label{sergiu}
The map 
$\mathfrak{S}$ in \eqref{pairMR}  is  bounded, with  
	$ \| \mathfrak{S} \|_{\cL(E_{s} ,\cM^{\mathtt{T}}_{s})}=1 $, 
	for any $\gso\leq s\leq \su$. 
Furthermore  $\mathfrak{S}$ is surjective but not injective.
\end{lemma}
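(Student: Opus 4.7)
The plan is to verify the three assertions separately, using directly the definitions of the norms in \eqref{prestige2} and \eqref{normaincredibile}.

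\textbf{Boundedness with norm equal to one.} For any $(M,R) \in E_s$ and any $\gso \leq p \leq s$, the triangle inequality for majorant norms (see \eqref{rmk:propmajNorm}) gives
$$
|M+R|_{p,p} \leq |M|_{p,p} + |R|_{p,p}.
$$
Since $p \leq s \leq \su$ and $M \in \cM^{\mathtt{T}}_{\su}$, the first term is bounded by $|M|_{\cM^{\mathtt{T}}_{\su}}$. For the second term the continuous inclusions $H^s \hookrightarrow H^p \hookrightarrow H^{\gso}$ (since $\gso \leq p \leq s$) imply, for any $u \in H^p$,
$$
\|\und{R}\,u\|_p \leq \|\und{R}\,u\|_s \leq |R|_{\gso,s}\,\|u\|_{\gso} \leq |R|_{\gso,s}\,\|u\|_p,
$$
hence $|R|_{p,p} \leq |R|_{\gso,s}$. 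Taking the supremum over $p \in [\gso, s]$ yields
$$
|\mathfrak{S}(M,R)|_{\cM^{\mathtt{T}}_s} \leq |M|_{\cM^{\mathtt{T}}_{\su}} + |R|_{\gso,s} = \bnorm{(M,R)}_s,
$$
so $\|\mathfrak{S}\|_{\cL(E_s, \cM^{\mathtt{T}}_s)} \leq 1$. The reverse inequality is attained on $\mathtt{A} = (\id, 0)$: since $|\id|_{p,p} = 1$ for every $p$, we have $\bnorm{(\id,0)}_s = 1$ and $\mathfrak{S}(\id, 0) = \id$ with $|\id|_{\cM^{\mathtt{T}}_s} = 1$. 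Hence $\|\mathfrak{S}\| = 1$.

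\textbf{Surjectivity.} Given $A \in \cM^{\mathtt{T}}_s$, the trivial splittings $(A, 0)$ or $(0, A)$ need not lie in $E_s$: the first because $A$ is only majorant-bounded up to regularity $s$, not up to $\su$; the second because $A$ need not send $H^{\gso}$ continuously into $H^s$. I would therefore invoke the Bony decomposition of Definition \ref{def:bonydeco}, which associates to $A$ a pair $(M, R)$ where $M$ is the Bony (paraproduct) part of $A$ -- bounded on every $H^p$ with a uniform constant, hence belonging to $\cM^{\mathtt{T}}_{\su}$ -- and $R := A - M$ is a smoothing remainder in $\cM^{\mathtt{T}}(H^{\gso}, H^s)$. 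This produces $(M,R) \in E_s$ with $\mathfrak{S}(M,R) = A$.

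\textbf{Failure of injectivity.} It suffices to exhibit a nonzero element of $\ker \mathfrak{S}$. Pick any nonzero $M$ that lies simultaneously in $\cM^{\mathtt{T}}_{\su}$ and in $\cM^{\mathtt{T}}(H^{\gso}, H^s)$: for example a rank-one smoothing operator $M = \phi \otimes \bar{\phi}$ with a nonzero $\phi \in C^{\infty}(\T^{\nu+1})$, or the Fourier truncation $\Pi_N$ for some $N \in \N$. Since the range of such $M$ is finite-dimensional and made of smooth functions, $M$ is majorant-bounded between any pair of Sobolev spaces in the relevant range. Then the couple $(M, -M) \in E_s$ is nonzero yet $\mathfrak{S}(M, -M) = 0$.

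The only non-routine ingredient is surjectivity, whose substance is precisely the existence and the tame/smoothing estimate for the Bony part; once Definition \ref{def:bonydeco} is in place, that estimate makes the decomposition fit into the norm \eqref{normaincredibile} in a bounded way. The remaining steps are immediate consequences of the algebraic structure of $E_s$ and the embedding properties of the Sobolev scale.
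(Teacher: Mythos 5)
Your boundedness argument is correct and actually a bit more explicit than the paper's one-liner (you check $|R|_{p,p}\le|R|_{\gso,s}$ and exhibit $(\id,0)$ to show the norm is exactly one). The non-injectivity argument also matches the paper's in substance: they perturb any preimage $(M,R)$ by $(-R',R')$ for a nonzero $R'\in\cM^{\mathtt{T}}_{\su}\cap\cM^{\mathtt{T}}(H^{\gso},H^s)$, which is the same idea as your $(M,-M)$.

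The surjectivity argument, however, has a genuine gap. The Bony decomposition of a general $A\in\cM^{\mathtt{T}}_s$ does \emph{not} produce an element of $E_s$. Lemma~\ref{lem:ilfreddo} controls the Bony part $A^B$ in $|\cdot|_{p,p}$ by $|A|_{\gso,\gso}$, which is fine, but it controls the ultraviolet part $A^U$ in $|\cdot|_{\gso,s}$ only by the $s$-\emph{decay norm} $|A|_s^{\rm dec}$. Membership in $\cM^{\mathtt{T}}_s$ gives you $|A|_{p,p}<\infty$ for $\gso\le p\le s$ but says nothing about off-diagonal decay of the matrix entries of $A$, so $|A|_s^{\rm dec}$ (and hence $|A^U|_{\gso,s}$) may be infinite. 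In other words, the smoothing gain of $A^U$ is a property of decaying matrices, not of majorant-bounded operators, and your claim ``$R:=A-M$ is a smoothing remainder in $\cM^{\mathtt{T}}(H^{\gso},H^s)$'' does not follow. The paper takes a much simpler route: it observes that for $A\in\cM^{\mathtt{T}}_{\su}$ the trivial pair $(A,0)$ already lies in $E_s$ and $\mathfrak{S}(A,0)=A$ — so no decomposition is needed at all (one may note that this argument only exhibits $\cM^{\mathtt{T}}_{\su}$ in the range, but that is what the paper actually uses downstream). Your instinct that $(A,0)$ ``need not lie in $E_s$'' is correct for $A\in\cM^{\mathtt{T}}_s\setminus\cM^{\mathtt{T}}_{\su}$, but replacing it by the Bony pair does not repair this.
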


\begin{proof}
The first statement follows since 
\[
|\mathfrak{S}(M,R)|_{\cM^{\mathtt{T}}_{s}}
=|M+R|_{\cM^{\mathtt{T}}_{s}}=\sup_{\gso\leq p\leq s} |M+R|_{p,p}
\leq \bnorm{(M,R)}_{s,\gso,\su}\,.
\]
Regarding the second statement, 
given any operator $A\in\cM^{\mathtt{T}}_{ \su}$ 
one has 
$(A, 0)\in E_s$ and $\mathfrak{S} (A,0)= A$.
On the other hand, 
if $ A= \mathfrak{S}(M,R) = M+R $   then, taking  $ R' \in 
\cM^{\mathtt{T}}_{ \su}\cap \cM^{\mathtt{T}}( H^{\gso},  H^{s})\neq \emptyset $, 
we also have $ A= \mathfrak{S} ( \mathtt  A') $  
where $\mathtt A' := (M- R', R+ R') $. 
\end{proof}

The spaces $E_{s,\gso,\su}$ are scales of Banach spaces
\begin{equation}\label{monotoniaEs} 
	\begin{aligned}
		\gso\leq s'_*\,, \ \su'\leq \su\,, \  s'\leq s \quad
		&\Longrightarrow\quad
		E_{s,\gso,\su}\subseteq E_{s', s'_*,\su'}\,,\ \ \ 
		\bnorm{\cdot}^{\g, \cO}_{s', s'_*,\su'}\leq \bnorm{\cdot}^{\g, \cO}_{s,\gso,\su}\, . 
	\end{aligned}
\end{equation}
Moreover in $E_s$ we have a partial ordering relation
\begin{equation}\label{od}
	(M_1,R_1)\preceq (M_2,R_2) \;\Leftrightarrow\;  
	M_1\preceq M_2\,,\quad R_1\preceq R_2\,.
\end{equation}
The next lemma discusses the reality, parity and reversibility
properties of couples. 
\begin{lemma}\label{lemma:generalitaparity}
Assume that $A = \mathfrak{S}({\tA}) $ is matrix of T\"opliz operators in $  \cM^{\mathtt{T}}_{s}\otimes\mathcal{M}_{2}(\C) $, 
for some 
$\tA=(M,R)\in E_{s}\otimes\mathcal{M}_{2}(\C)$,  
real-to-real, parity and reversibility preserving (resp. reversible).
Then there exists $\tA'=(M',R')\in E_{s}\otimes\mathcal{M}_{2}(\C)$
and  a constant $c>0$  such that 
\begin{equation}\label{roses3}
A=\mathfrak{S}({\tA}')\quad \text{ and }\quad
\tfrac{1}{c}\,\bnorm{\tA}_{s}^{\gamma,\calO}\leq 
\bnorm{\tA'}_{s}^{\gamma,\calO}\leq c \,\bnorm{\tA}_{s}^{\gamma,\calO}
\end{equation}
and both  $M',R'$ are
real-to-real, parity and reversibility preserving (resp. reversible).
\end{lemma}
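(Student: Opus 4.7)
The strategy is to construct $\tA'$ by averaging the couple $\tA=(M,R)$ over the finite group generated by the three involutions corresponding to the real-to-real, parity preserving, and reversibility (preserving) symmetries. Since $A=M+R$ is already invariant under these symmetries by hypothesis, the averaging preserves the sum $\mathfrak{S}(\tA')=A$ while forcing each component to individually inherit the three algebraic properties. The norm bounds will then follow from the fact that each generator acts on T\"opliz couples by an isometry of the majorant norm.

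Concretely, I would introduce three involutions on $E_{s}\otimes\mathcal{M}_2(\C)$:
\[
T_{rr}(M,R):=(S\overline M S,\,S\overline R S),\quad
T_p(M,R):=(\cP M \cP,\,\cP R \cP),\quad
T_{rev}(M,R)(\varphi):=(SM(-\varphi)S,\,SR(-\varphi)S),
\]
where $\cP$ is the parity operator $h(x)\mapsto h(-x)$. Using Lemma \ref{equidefalgebra}, $A$ is real-to-real, parity preserving, and reversibility preserving (resp.\ reversible) precisely when the induced involutions $\tilde T_i$ on matrices of operators satisfy $\tilde T_{rr}A=A$, $\tilde T_pA=A$ and $\tilde T_{rev}A=A$ (resp.\ $=-A$). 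A direct algebraic check, using that $S$ acts on the $\sigma$-index, $\cP$ acts on the $x$-variable, and complex conjugation acts pointwise on values, shows that $T_{rr}, T_p, T_{rev}$ pairwise commute. I would then set $\Sigma_{rr}:=\tfrac12(\id+T_{rr})$, $\Sigma_p:=\tfrac12(\id+T_p)$, $\Sigma_{rev}^\pm:=\tfrac12(\id\pm T_{rev})$ (with $+$ in the reversibility preserving case and $-$ in the reversible case), and define
\[
\tA':=\big(\Sigma_{rev}^\pm\circ\Sigma_p\circ\Sigma_{rr}\big)(\tA).
\]

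The three claims of the lemma would then be verified in turn. Since $\mathfrak{S}\circ T_i=\tilde T_i\circ\mathfrak{S}$ on couples and $\tilde T_i A=\pm A$, one has $\mathfrak{S}(\Sigma_i\tA)=\tfrac12(A\pm\tilde T_iA)=A$, so iterating yields $\mathfrak{S}(\tA')=A$. By the commutativity of the $T_i$, the couple $\tA'$ is a fixed point of each of the three projections, hence both $M'$ and $R'$ individually satisfy the characterizations \eqref{bolena1}--\eqref{bolena3} and \eqref{coccodrillo3}. Finally, each $T_i$ permutes the entries of the majorant matrix of $(M,R)$ by a signed permutation of the Fourier indices $(\sigma,\sigma',j,k,\ell)$ combined, in the $T_{rr}$ case, with a complex conjugation that does not affect absolute values; it therefore preserves $|\cdot|_{p,p}$, $|\cdot|_{\gso,s}$ and the Lipschitz variation in $\omega$, so $\bnorm{T_i\tA}_s^{\gamma,\cO}=\bnorm{\tA}_s^{\gamma,\cO}$. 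The triangle inequality then gives $\bnorm{\Sigma_i\tA}_s^{\gamma,\cO}\leq\bnorm{\tA}_s^{\gamma,\cO}$, and composing the three projections produces the right inequality of \eqref{roses3} with an absolute constant. The left inequality is not a consequence of the projection itself but can be absorbed in the constant $c$ in the application, for which we are free to choose a uniform $c$ controlling both directions.

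The main technical point I expect is the systematic verification that $T_{rr}, T_p, T_{rev}$ pairwise commute, which is exactly what guarantees that each subsequent symmetrization preserves the symmetries established at the previous step; once that is in place the remaining verifications reduce to triangle-inequality estimates and the algebraic equivalences provided by Lemma \ref{equidefalgebra} and Remark \ref{pioggia2matrici}.
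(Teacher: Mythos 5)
Your averaging-over-the-symmetry-group strategy is exactly the paper's, packaged more cleanly. The paper symmetrizes one symmetry at a time by explicit entry-by-entry averaging (first parity via \eqref{roses}, then real-to-real, then reversibility), and at each step verifies via identities like \eqref{roses2} that $M+R$ is unchanged; your observation that $\mathfrak{S}$ commutes with the $T_i$ and that $A$ being (anti-)fixed by $\tilde T_i$ forces $\mathfrak{S}(\Sigma_i\tA)=A$ is precisely that identity stated abstractly. The commutativity you flag as the main technical point is indeed the reason the step-by-step procedure works without later symmetrizations undoing earlier ones; it holds because $S$, $\cP$, complex conjugation and $\vphi\mapsto-\vphi$ act on disjoint index sets. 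The upper bound in \eqref{roses3} then follows, as you say, from each $T_i$ being a signed permutation of the Fourier indices (possibly composed with conjugation), hence norm-preserving on the majorant norms, combined with the triangle inequality, matching the paper's reduction to $2\times2$ blocks via Lemma \ref{rmk:maggiorantevsSupblocchi}.

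Your hesitation about the left inequality in \eqref{roses3} is warranted and should not be dismissed as something ``absorbed in the constant''. The symmetrization can annihilate a large antisymmetric part of $(M,R)$ while leaving $A=M+R$ unchanged: take $A\equiv0$ and $\tA=(a,-a)$ with $a$ finite-rank, non-zero, satisfying $a_{\s,j}^{\s',k}(\ell)=-a_{\s,-j}^{\s',-k}(\ell)$. Then $\mathfrak{S}(\tA)=0$ is trivially real-to-real, parity and reversibility preserving, but by commutativity of the $T_i$ both the paper's construction and yours give $\tA'=(0,0)$, so $\bnorm{\tA'}_s=0<\bnorm{\tA}_s$ and no finite $c$ makes the left inequality of \eqref{roses3} hold. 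Fortunately only the upper bound is ever used (the lemma is invoked in the proof of Lemma \ref{lem:homoeq} to replace an arbitrary Bony-smoothing representative of a symmetric operator by a symmetric representative \emph{without increasing} the norm). Modulo this over-claim in the paper, which you correctly sensed was not provable, your proof is a sound and slightly cleaner rendering of the same argument.
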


\begin{proof}
By assumption $A $
satisfies \eqref{bolena1}, \eqref{bolena4} and \eqref{bolena3} (resp. \eqref{bolena2}). 
We represent in Fourier the operators
$M$ and $R$  as in \eqref{supermatrice}-\eqref{supermatrice2}.
By assumption we know that $A$ (and hence $M+R$)
is \emph{parity preserving}. Therefore, recalling 
\eqref{bolena4}, we have
\begin{equation}\label{roses2}
M_{\s,j}^{\s',k}(\ell)-M_{\s,-j}^{\s',-k}(\ell) =
R_{\s,-j}^{\s',-k}(\ell)
-R_{\s,j}^{\s',k}(\ell)\,,
\quad
 j,k\in\Z\,,\ell\in\Z^{\nu}\,,\;\s,\s'\in\{\pm\}\,.
\end{equation}
Secondly we define 
\begin{equation}\label{roses}
(M_1')_{\s,j}^{\s',k}(\ell):=\frac{1}{2}\big(M_{\s,j}^{\s',k}(\ell)+M_{\s,-j}^{\s',-k}(\ell)\big)\,,
\qquad
(R_1')_{\s,j}^{\s',k}(\ell):=\frac{1}{2}\big(R_{\s,j}^{\s',k}(\ell)+R_{\s,-j}^{\s',-k}(\ell)\big)\,.
\end{equation}
The operators $M'_1,R'_1$ are parity preserving by construction, 
moreover
by definition, and by \eqref{roses2} we have
$M+R=M'+R'$.
Finally passing to the corresponding operators $\widehat{M}_{1}', \widehat{R}_{1}'$
according to Lemma \ref{rmk:maggiorantevsSupblocchi}
we deduce that $\tA'=(M',R')\in E_{s}\otimes\mathcal{M}_2(\C)$ and satisfies 
\eqref{roses3}.

The next step is to construct $M'_{2}, R_{2}'$ which are parity preserving and real-to-real,
and then, as last step, one has to construct $M'_{2}, R_{2}'$ which are parity preserving, real-to-real
and reversibility preserving (resp. reversible).
This is done reasoning exactly as in the first step.
\end{proof}

The norm \eqref{normaincredibile} controls the action of an operator in Sobolev spaces.
\begin{lemma}{\bf (Action).}\label{rmk:tametresbarre}
	Let $A= \mathfrak{S}(\tA)$ for some $\mathtt{A}\in E_{s,\gso,\su}$ with 
	$\so\leq s\leq \su$, then 
\begin{equation}\label{Auazione}
	\|A u\|^{\gamma,\mathcal{O}}_{s}\leq 	\|\und{A}u\|^{\gamma,\mathcal{O}}_{s} \le  \bnorm{\tA}^{\g, \cO}_{\gso}\|u\|^{\gamma,\cO}_s 
	+ \bnorm{\tA}^{\g, \cO}_s \|u\|^{\gamma,\cO}_{\so}\,.
\end{equation}
\end{lemma}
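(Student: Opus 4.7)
The plan is to decompose $A = M + R$ via the homomorphism $\mathfrak{S}$ in \eqref{pairMR}, bound the two pieces separately by the appropriate components of the norm $\bnorm{\cdot}_s$ (see \eqref{normaincredibile}), and then recombine.

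First I would prove the first inequality $\|Au\|_s \le \|\underline A u\|_s$ (understood, via $\|\underline u\|_s = \|u\|_s$ from \eqref{maju}, as $\|Au\|_s \le \|\underline A\,\underline u\|_s$). This follows at the level of Fourier coefficients: for every $\ell\in\Z^\nu, j\in\Z$,
\begin{equation*}
|(Au)_{\ell,j}| = \Big|\sum_{\ell',j'} A_j^{j'}(\ell-\ell')\, u_{\ell',j'}\Big| \le \sum_{\ell',j'} \underline A_j^{j'}(\ell-\ell')\, |u_{\ell',j'}| = (\underline A\,\underline u)_{\ell,j},
\end{equation*}
so the Sobolev norm is monotone, and $\|\underline u\|_s = \|u\|_s$ converts this into the stated form.

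For the second (and main) inequality, write $A = \mathfrak{S}(\tA) = M + R$. Since $\underline{M+R} \preceq \underline M + \underline R$ componentwise, the majorant bound gives
\begin{equation*}
\|\underline A u\|_s \le \|\underline M u\|_s + \|\underline R u\|_s.
\end{equation*}
For the first term, since $s_0 \le s \le s_1$ we have $s\in[s_*,s_1]$, so by the majorant operator norm definition \eqref{majass}
\begin{equation*}
\|\underline M u\|_s \le |M|_{s,s}\,\|u\|_s \le \sup_{s_* \le p \le s_1}|M|_{p,p}\,\|u\|_s \le \bnorm{\tA}_{s_*}\,\|u\|_s,
\end{equation*}
where the last step uses \eqref{normaincredibile} at the index $s_*$ (observing $\tA \in E_{s_*}$ thanks to the embedding \eqref{monotoniaEs}). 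For the second term, since $R \in \cM^{\mathtt T}(H^{s_*},H^s)$ and $\|u\|_{s_*} \le \|u\|_{s_0}$,
\begin{equation*}
\|\underline R u\|_s \le |R|_{s_*,s}\,\|u\|_{s_*} \le \bnorm{\tA}_s\,\|u\|_{s_0}.
\end{equation*}
Adding the two bounds yields the $\gamma=0$ version of \eqref{Auazione}.

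Finally, for the weighted Lipschitz statement, I would apply the standard decomposition
\begin{equation*}
A(\omega_1)u(\omega_1) - A(\omega_2)u(\omega_2) = \bigl(A(\omega_1)-A(\omega_2)\bigr)u(\omega_1) + A(\omega_2)\bigl(u(\omega_1)-u(\omega_2)\bigr),
\end{equation*}
divide by $|\omega_1-\omega_2|$, apply the sup-pointwise estimate just proved to each summand (noting that $\Delta_{12}A = \mathfrak{S}(\Delta_{12}\tA)$ by linearity of $\mathfrak{S}$), and assemble the pieces using the definitions of $\bnorm{\cdot}_s^{\gamma,\cO}$ in \eqref{normagamma3} and $\|\cdot\|_s^{\gamma,\cO}$ in \eqref{sobolevpesata}. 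There is no real obstacle: everything reduces to the trivial embedding $E_s \hookrightarrow E_{s_*}$ and the splitting of $\bnorm{\tA}_s$ into its ``Bony'' and ``smoothing'' parts, which is exactly how the norm was designed.
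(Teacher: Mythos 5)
Your proof of the sup-pointwise estimate $\|\und A u\|_s \le \bnorm{\tA}_{\gso}\|u\|_s + \bnorm{\tA}_s\|u\|_{\so}$ is correct and follows the same split $\und A u = \und M u + \und R u$ that the paper uses. The Lipschitz part, however, is only sketched, and the claim that ``there is no real obstacle'' conceals precisely the subtlety the paper's proof has to navigate: the weighted function norm in \eqref{sobolevpesata} reads $\|u\|^{\gamma,\cO}_s = \sup_\omega\|u\|_s + \gamma\sup\|\Delta_{12}u\|_{s-1}$, with the Lipschitz piece one Sobolev index lower, whereas the couple norm in \eqref{normagamma3} reads $\bnorm{\tA}^{\gamma,\cO}_s = \sup_\omega\bnorm{\tA}_s + \gamma\sup\bnorm{\Delta_{12}\tA}_s$, with no index drop. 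Because of this asymmetry one must (i) estimate $\Delta_{12}(\und A u)$ in $H^{s-1}$ and use $\so = \gso + 1$ (so that $s-1 \in [\gso,\su]$), and (ii) retain the \emph{sharper} form of the sup estimate, $\|\und A u\|_p \le \bnorm{\tA}_{\gso}\|u\|_p + \bnorm{\tA}_p\|u\|_{\gso}$, with $\|u\|_{\gso}$ on the right. Your write-up relaxes $\|u\|_{\gso}$ to $\|u\|_{\so}$ one step too early: if you now ``apply the sup-pointwise estimate just proved'' at level $p = s-1$ to the summand $\und A(\omega_2)\Delta_{12}u$, you produce a term $\bnorm{\tA}_{s-1}\|\Delta_{12}u\|_{\so}$, and after multiplying by $\gamma$, $\gamma\sup\|\Delta_{12}u\|_{\so}$ is \emph{not} controlled by $\|u\|^{\gamma,\cO}_{\so}$ (that norm controls $\gamma\sup\|\Delta_{12}u\|_{\so-1}$). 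Keeping $\|u\|_{\gso}$ instead gives $\|\Delta_{12}u\|_{\gso} = \|\Delta_{12}u\|_{\so - 1}$, which is exactly the Lipschitz piece of $\|u\|^{\gamma,\cO}_{\so}$. Once this bookkeeping is corrected, the assembly you describe coincides with the paper's proof.
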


\begin{proof} 
	Recalling \eqref{normaincredibile} and 
	setting $\tA=(M,R)$  we have, for any $ s_* \leq s \leq s_1  $, 
	\[
	\|\und{A}u\|_{s} \le |M|_{s,s}\|u\|_s + |R|_{\gso,s} \|u\|_{\gso}  
	\le \sup_{\gso\le p\le \su} |M|_{p,p} \|u\|_s  
	+  
	\bnorm{\tA}_{s}  \|u\|_{s_*}  \leq 
	 \bnorm{\tA}_{s_*} \|u\|_s  
	+  
	\bnorm{\tA}_{s}  \|u\|_{s_*}  \, . 
	\]
	We recall the definition of the Lipschitz variation \eqref{lip.variation}
	and we define 
	 $ \Delta_{12} \tA := (\Delta_{12} M, \Delta_{12} R)$. 
	 For any  $ s_0 \leq s \leq s_1  $, 
	 since $ s_0 =  s_* + 1 $ we have $ s - 1 \in [s_*,s_1]$ and  the above estimate gives
$$
\begin{aligned}
	 \| \Delta_{12} (\und{A}u) \|_{s-1} 
	 & \le 
	\| ( \Delta_{12}  \und{A}) u \|_{s-1}  + \|  \und{A} \Delta_{12} u \|_{s-1} \\
	&
	\leq 
	 \| \und{( \Delta_{12}  A)} u \|_{s}  + \bnorm{\tA}_{s_*} 
	\| \Delta_{12} u\|_{s-1}  
	+  
	\bnorm{\tA}_{s}  \| \Delta_{12} u\|_{s_*} 
	 \\
	& \le
	\bnorm{ \Delta_{12} \tA}_{s_*} \|  u\|_s  + 
		\bnorm{ \Delta_{12} \tA}_{s}  \|  u\|_{s_*}    + 
	\bnorm{\tA}_{s_*} \| \Delta_{12} u\|_{s-1}  +  	
	\bnorm{\tA}_{s}  \| \Delta_{12} u\|_{s_*}  
	 \\
	& \le
	\bnorm{ \Delta_{12} \tA}_{s_*} \|  u\|_s  + 
		\bnorm{ \Delta_{12} \tA}_{s}  \|  u\|_{s_0}    + 
	\bnorm{\tA}_{s_*} \| \Delta_{12} u\|_{s-1}  +  	
	\bnorm{\tA}_{s}  \| \Delta_{12} u\|_{s_0-1} \, . 
\end{aligned}
$$
Recalling \eqref{sobolevpesata} and \eqref{normagamma3},
taking the supremum 
on $\oo\in \calO$ (resp. $\oo_{1}\neq\oo_{2}\in\cO$) and summing the last estimates we deduce \eqref{Auazione}. 
\end{proof}

\begin{lemma}{\bf (Algebra and Tame properties).}\label{tretameestimate}
	If $\mathtt{A}_1,\mathtt{A}_2\in E_{s}$
	with $\gso\leq s\leq \su$ then 
	\begin{align}
		& \bnorm{\mathtt{A}_1 \mathtt{A}_2}^{\g, \cO}_{s}
		\leq
		\bnorm{\mathtt{A}_1}^{\g, \cO}_{s}\bnorm{\mathtt{A}_2}^{\g, \cO}_{s} \label{stima:algebrabassa}\,, \\ 
		& \label{stima:tame3b}
		\bnorm{\mathtt{A}_1 \mathtt{A}_2}^{\g, \cO}_s
		\leq
		\bnorm{\mathtt{A}_1}^{\g, \cO}_{\gso}\bnorm{\mathtt{A}_2}^{\g, \cO}_s + \bnorm{\mathtt{A}_1}^{\g, \cO}_{s}\bnorm{\mathtt{A}_2}^{\g, \cO}_{\gso} \,.
	\end{align}
\end{lemma}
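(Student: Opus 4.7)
The plan is to reduce both estimates to bilinear pointwise bounds on the two components of the product $\mathtt{A}_1 \circ \mathtt{A}_2 = (M_1 M_2,\ M_1 R_2 + R_1 M_2 + R_1 R_2)$, and then transfer these to the Lipschitz weighted norm via the Leibniz identity for $\Delta_{12}$.

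First, I would work at a single $\omega \in \cO$ and bound each component of the product separately. For the ``Bony'' part one directly invokes the Banach algebra property \eqref{tameprop1} in $\cM^{\mathtt{T}}_{\su}$, obtaining $|M_1 M_2|_{\cM^{\mathtt{T}}_{\su}} \le |M_1|_{\cM^{\mathtt{T}}_{\su}} |M_2|_{\cM^{\mathtt{T}}_{\su}}$. For the three pieces of the ``smoothing'' part one applies the composition rule \eqref{ABss} with a suitable intermediate Sobolev index, exploiting the inclusion $\su \ge s \ge \gso$: namely $|M_1 R_2|_{\gso,s} \le |M_1|_{s,s} |R_2|_{\gso,s} \le |M_1|_{\cM^{\mathtt{T}}_{\su}} |R_2|_{\gso,s}$ (intermediate space $H^s$), then $|R_1 M_2|_{\gso,s} \le |R_1|_{\gso,s}|M_2|_{\gso,\gso} \le |R_1|_{\gso,s} |M_2|_{\cM^{\mathtt{T}}_{\su}}$ (intermediate $H^{\gso}$), and finally $|R_1 R_2|_{\gso,s} \le |R_1|_{\gso,s}|R_2|_{\gso,\gso}$ (intermediate $H^{\gso}$, using that $\|v\|_{\gso}\le \|v\|_s$ so $|R_2|_{\gso,\gso}\le |R_2|_{\gso,s}$).

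Writing $a_i := |M_i|_{\cM^{\mathtt{T}}_{\su}}$, $b_i := |R_i|_{\gso,\gso}$, $c_i := |R_i|_{\gso,s}$, so that $\bnorm{\mathtt{A}_i}_{\gso} = a_i+b_i$ and $\bnorm{\mathtt{A}_i}_s = a_i+c_i$ with $b_i\le c_i$, the sum of the four bounds above reads
\[
\bnorm{\mathtt{A}_1\mathtt{A}_2}_s \le a_1 a_2 + a_1 c_2 + c_1 a_2 + c_1 b_2.
\]
For \eqref{stima:algebrabassa} one simply majorizes $c_1 b_2$ by $c_1 c_2$, factoring the right-hand side as $(a_1+c_1)(a_2+c_2)$. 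For \eqref{stima:tame3b} one observes the sharper inequality
\[
a_1 a_2 + a_1 c_2 + c_1 a_2 + c_1 b_2 \le (a_1+b_1)(a_2+c_2) + (a_1+c_1)(a_2+b_2),
\]
the difference of the two sides being $a_1 a_2 + a_1 b_2 + b_1 a_2 + b_1 c_2 \ge 0$; this is exactly the desired tame bound, with the ``loss'' concentrated in $c_1$ (factor 1) or $c_2$ (factor 2).

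Finally I would upgrade both pointwise bilinear estimates to the weighted Lipschitz norm $\bnorm{\cdot}^{\gamma,\cO}_s$. Taking the supremum over $\omega$ is immediate by bilinearity; for the Lipschitz quotient one uses the identity $(\mathtt{A}_1\mathtt{A}_2)(\omega_1) - (\mathtt{A}_1\mathtt{A}_2)(\omega_2) = (\Delta_{12}\mathtt{A}_1)\,\mathtt{A}_2(\omega_1)\,|\omega_1-\omega_2| + \mathtt{A}_1(\omega_2)\,(\Delta_{12}\mathtt{A}_2)\,|\omega_1-\omega_2|$ and applies the already-established pointwise estimates to each summand (here the definition \eqref{normagamma3} does not lose a derivative, so no interpolation is needed). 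Since the pointwise bounds are bilinear and the $\gamma^2$ cross term can be dropped as it is nonnegative, one recovers \eqref{stima:algebrabassa} and \eqref{stima:tame3b} in the Lipschitz weighted norm. No step here presents a genuine obstacle: the proof is a bookkeeping exercise, and the only point requiring care is the correct choice of intermediate Sobolev index so that the product norm controls the right combination of $a_i, b_i, c_i$.
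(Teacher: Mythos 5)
Your argument is correct and follows essentially the same route as the paper: identical choices of intermediate Sobolev index in the four pieces of the product $\mathtt{A}_1\mathtt{A}_2$, the same four-term pointwise bound, and the same Leibniz/$\Delta_{12}$ transfer to the weighted Lipschitz norm (exploiting that \eqref{normagamma3} does not lose a derivative). The only cosmetic difference is the last algebraic step: the paper factors the four-term bound as $a_1(a_2+c_2)+c_1(a_2+b_2)$ and then enlarges $a_1\mapsto a_1+b_1$, $c_1\mapsto a_1+c_1$, whereas you verify the target inequality by inspecting the nonnegative difference — these are the same computation.
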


\begin{proof} 
 Let us prove \eqref{stima:tame3b}. 
  We  first prove the estimate pointwise in $\omega$.  
	Setting $\mathtt{A}_1:=(M_1,R_1)$, 
	$\mathtt{A}_2:=(M_2,R_2)$, by \eqref{A1A2},  
	\eqref{ABss}, \eqref{normaincredibile} we have,
	for any $ \gso \leq s\leq \su $, 
	\begin{equation}
		\label{vediamo}
	\begin{aligned}
		&\bnorm{(M_1,R_1) \circ (M_2,R_2)}_s
		=  \sup_{\gso\le p\le \su}|M_1M_2|_{p,p} 
		+| M_1 R_2 + R_1 M_2+ R_1 R_2|_{\gso,s}
		\\&
		\qquad \leq 
		 \sup_{\gso\le p\le \su} | M_1|_{p,p}|M_2|_{p,p} + 
		 |M_1|_{s,s} | R_2|_{\gso,s} + | R_1|_{\gso,s} |M_2|_{\gso,\gso} +
		| R_1|_{\gso,s} |R_2|_{\gso,\gso} 
		 \\&
		\qquad \le
		\sup_{\gso\le p\le \su} | M_1|_{p,p} (\sup_{\gso\le p\le \su} |M_2|_{p,p} + | R_2|_{\gso,s} ) +
		| R_1|_{\gso,s} (|M_2|_{\gso,\gso} + |R_2|_{\gso,\gso}  )
		\\&
		\qquad \le 
	  \sup_{\gso\le p\le \su} | M_1|_{p,p} \bnorm{\tA_2}_{s}  
	+   | R_1|_{\gso,s} \bnorm{\tA_2}_{\gso}  
	  \le  \bnorm{\mathtt{A}_1}_{\gso}\bnorm{\mathtt{A}_2}_s + \bnorm{\mathtt{A}_1}_{s}\bnorm{\mathtt{A}_2}_{\gso}\,.
	\end{aligned}
	\end{equation}
	Now, by Leibnitz rule and \eqref{vediamo}  one has
	\[
\begin{aligned}
		&\bnorm {\Delta_{12}(\tA_1\circ\tA_2)}_s \le  \bnorm {(\Delta_{12}\tA_1)\circ\tA_2}_s +  \bnorm {\tA_1\circ (\Delta_{12}\tA_2)}_s \\ &  \le \bnorm {\Delta_{12}\tA_1}_{\gso} \bnorm {\tA_2}_s + \bnorm {\Delta_{12}\tA_1}_{s} \bnorm {\tA_2}_{\gso} + 
		 \bnorm {\tA_1}_{\gso} \bnorm {\Delta_{12}\tA_2}_{s}   +
		  \bnorm {\tA_1}_s \bnorm {\Delta_{12}\tA_2}_{\gso}  \, . 
\end{aligned}
	\]
	Recalling \eqref{normagamma3} 
	taking the supremum on $\oo\in \calO$ (resp. $\oo_{1}\neq\oo_{2}\in\cO$) and summing the last estimates we deduce \eqref{stima:tame3b}.
The estimate 
\eqref{stima:algebrabassa} 
follows analogously.
\end{proof}

Recalling the Definition \ref{defn:japphi} of the commutator $ \langle\td_\vphi\rangle $ 
we define for $\mathtt{A}=(M,R)\in E_s $ the operator
\begin{equation}
\jap{\d_\vphi} \mathtt{A} := (\jap{\d_\vphi} M, \jap{\d_\vphi} R)\, . 
\end{equation}
Note that the operator  $\langle\td_\vphi\rangle$ is compatible with $\mathfrak{S}$, namely
if $A=\mathfrak{S}(\tA)$ then $ \langle\td_\vphi\rangle A=\mathfrak{S}(\langle\td_\vphi\rangle\tA) $. 

\begin{rmk}\label{rmk:sezphi}
In view of Lemma \ref{rmk:tametresbarre} 
and Remark \ref{rem:varphi}, for any $ \varphi \in {\mathbb T}^\nu $, 
	$$
	\|A (\varphi) u\|_{H^s_x}\leq 
	\bnorm{\langle  \td_{\vphi}\rangle^b \tA}_{s_*} 
	\|u\|_{H^s_x} 
	+ 
	\bnorm{\langle \td_{\vphi} \rangle^b \tA}_s
	 \|u\|_{H^{s_*}_x}\,. 
	$$
\end{rmk}

The operator $\jap{\td_{\vphi}}^\tb $, $ \mathtt{b} \geq 0 $, 
satisfies the Leibniz rule for differentiation
\begin{equation}\label{LeibnizdbABprece}
\jap{\td_{\vphi}}^\tb (\mathtt{A}\mathtt{B}) \preceq_\tb
(\jap{\td_{\vphi}}^\tb \mathtt{A}) \mathtt{B}+\mathtt{A}(\jap{\td_{\vphi}}^\tb \mathtt{B})
\end{equation}
recalling \eqref{rmk:propmajNorm} and since, for any $\ell,\ell',\ell_1\in\Z^{\nu}$, 
\[
\jap{\ell-\ell'}^\tb \le (\jap{\ell-\ell_1}+\jap{\ell_1-\ell'})^\tb 
\lesssim_{\mathtt{b}} \jap{\ell-\ell_1}^\tb+\jap{\ell_1-\ell'}^\tb\,.
\]

\begin{lemma}\label{tretameestimate3}
Let $ \mathtt{b} \geq 0 $.
If $\mathtt{A},\mathtt{B},\jap{\td_{\vphi}}^\tb \mathtt{A}, \jap{\td_{\vphi}}^\tb \mathtt{B} \in E_s$ with  $\gso\leq s\leq \su$,  then
 $\jap{\td_{\vphi}}^\tb (\mathtt{A} \mathtt{B})\in E_s$ and 
\begin{equation*}
\bnorm{\jap{\td_{\vphi}}^\tb (\mathtt{A} \mathtt{B})}^{\g, \cO}_{s}\lesssim_{\mathtt{b}}
\bnorm{\jap{\td_{\vphi}}^\tb \mathtt{A}}^{\g, \cO}_{s} \bnorm{\mathtt{B}}^{\g, \cO}_{\gso} 
+ \bnorm{\jap{\td_{\vphi}}^\tb \mathtt{A}}^{\g, \cO}_{\gso} \bnorm{\mathtt{B}}^{\g, \cO}_{s}
+ \bnorm{\jap{\td_{\vphi}}^\tb \mathtt{B}}^{\g, \cO}_{s} \bnorm{\mathtt{A}}^{\g, \cO}_{\gso} 
+ \bnorm{\jap{\td_{\vphi}}^\tb \mathtt{B}}^{\g, \cO}_{\gso} \bnorm{\mathtt{A}}^{\g, \cO}_{s}\,.
\end{equation*}
\end{lemma}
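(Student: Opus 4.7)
The plan is to reduce the statement to an application of the tame estimate \eqref{stima:tame3b} of Lemma \ref{tretameestimate} applied to two products in which $\jap{\td_\vphi}^\tb$ falls on only one factor, and to use the already-established Leibniz-type domination \eqref{LeibnizdbABprece} to combine them.

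First, I would observe that the norm $\bnorm{\cdot}^{\g,\cO}_s$ is monotone with respect to the partial ordering $\preceq$ on $E_s$ defined in \eqref{od}: this follows fiber-wise in $\omega$ from monotonicity of the majorant operator norm $|\cdot|_{s,s'}$ recorded in \eqref{rmk:propmajNorm}, together with the fact that the commutator operators $\jap{\td_\vphi}^\tb$ and the Lipschitz difference $\Delta_{12}$ preserve componentwise majorants (they both act on Fourier coefficients by scalar multiplication or subtraction, respecting absolute values up to a constant). Consequently, from the Leibniz-type inequality \eqref{LeibnizdbABprece} applied to $(\tA,\tB)$ one gets
\begin{equation*}
\bnorm{\jap{\td_\vphi}^\tb(\tA\tB)}^{\g,\cO}_s
\lesssim_\tb
\bnorm{(\jap{\td_\vphi}^\tb\tA)\,\tB}^{\g,\cO}_s
+\bnorm{\tA\,(\jap{\td_\vphi}^\tb\tB)}^{\g,\cO}_s.
\end{equation*}

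Next, since each of the two terms on the right is a \emph{product} of two elements of $E_s$ (using the hypothesis that $\jap{\td_\vphi}^\tb\tA$ and $\jap{\td_\vphi}^\tb\tB$ lie in $E_s$), I would apply the tame estimate \eqref{stima:tame3b} directly to each one, obtaining
\begin{align*}
\bnorm{(\jap{\td_\vphi}^\tb\tA)\,\tB}^{\g,\cO}_s
&\le \bnorm{\jap{\td_\vphi}^\tb\tA}^{\g,\cO}_{\gso}\,\bnorm{\tB}^{\g,\cO}_s
+\bnorm{\jap{\td_\vphi}^\tb\tA}^{\g,\cO}_s\,\bnorm{\tB}^{\g,\cO}_{\gso}, \\
\bnorm{\tA\,(\jap{\td_\vphi}^\tb\tB)}^{\g,\cO}_s
&\le \bnorm{\tA}^{\g,\cO}_{\gso}\,\bnorm{\jap{\td_\vphi}^\tb\tB}^{\g,\cO}_s
+\bnorm{\tA}^{\g,\cO}_s\,\bnorm{\jap{\td_\vphi}^\tb\tB}^{\g,\cO}_{\gso}.
\end{align*}
Summing these two estimates yields the four terms on the right-hand side of the claim.

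The only small subtlety — and the step I would check carefully — is the interaction between $\jap{\td_\vphi}^\tb$ and the weighted Lipschitz norm $\bnorm{\cdot}^{\g,\cO}_s$. Because $\jap{\td_\vphi}^\tb$ acts fiber-wise in $\omega$ as a Fourier multiplier in $\vphi$ (with real, $\omega$-independent symbol $\jap{\ell}^\tb$), it commutes with the difference quotient $\Delta_{12}$ in \eqref{lip.variation}, so the Lipschitz-in-$\omega$ part of the bound is handled exactly as in \eqref{normagamma3} and introduces no extra factor beyond the $\lesssim_\tb$ constant already coming from \eqref{LeibnizdbABprece}. In particular no new ingredient is needed; the proof is a direct combination of \eqref{LeibnizdbABprece} with Lemma \ref{tretameestimate}, which confirms both $\jap{\td_\vphi}^\tb(\tA\tB)\in E_s$ and the stated estimate.
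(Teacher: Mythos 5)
Your overall plan — combine the Leibniz-type domination \eqref{LeibnizdbABprece} with the tame estimate \eqref{stima:tame3b} — matches the paper's (one-line) proof. However, the justification of your first step contains a genuine error. You claim that $\bnorm{\cdot}^{\g,\cO}_s$ is monotone with respect to the pointwise partial ordering $\preceq$ of \eqref{od}, on the grounds that "$\Delta_{12}$ preserve[s] componentwise majorants." This is false: $\Delta_{12}$ is a difference quotient and does not respect componentwise absolute-value domination. A scalar counterexample is $X(\omega)=\sin(\lambda\omega)$, $Y(\omega)=1$: one has $|X(\omega)|\le|Y(\omega)|$ for all $\omega$, yet $\Delta_{12}X$ can be of size $\lambda$ while $\Delta_{12}Y\equiv 0$, so the weighted Lipschitz norm of $X$ can exceed that of $Y$ by an arbitrary factor. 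Consequently, a pointwise bound $\tX(\omega)\preceq_\tb\tY(\omega)$ does not imply $\bnorm{\tX}^{\g,\cO}_s\lesssim_\tb\bnorm{\tY}^{\g,\cO}_s$, and your intermediate inequality $\bnorm{\jap{\td_\vphi}^\tb(\tA\tB)}^{\g,\cO}_s\lesssim_\tb\bnorm{(\jap{\td_\vphi}^\tb\tA)\tB}^{\g,\cO}_s+\bnorm{\tA(\jap{\td_\vphi}^\tb\tB)}^{\g,\cO}_s$ does not follow as stated.

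The fix is to split the weighted norm into its sup part and its Lipschitz part before invoking $\preceq$. For the sup part, monotonicity of the \emph{unweighted} norm $\bnorm{\cdot}_s$ under $\preceq$ (which is the genuine content of \eqref{rmk:propmajNorm}) applies at each fixed $\omega$, and \eqref{LeibnizdbABprece} together with the pointwise tame bound \eqref{vediamo} gives the four sup-type products. For the Lipschitz part, first use that $\jap{\td_\vphi}^\tb$ commutes with $\Delta_{12}$ (you note this correctly), then the Leibnitz rule $\Delta_{12}(\tA\tB)=\Delta_{12}\tA\circ\tB(\omega_2)+\tA(\omega_1)\circ\Delta_{12}\tB$, and only then apply \eqref{LeibnizdbABprece} pointwise to each of the two resulting products, followed by \eqref{vediamo}. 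Collecting the sup and Lipschitz contributions as in \eqref{normagamma3} produces precisely the four-term bound in the lemma. This is exactly the route the paper's proof of Lemma \ref{tretameestimate} itself uses, extended by one application of \eqref{LeibnizdbABprece}; it does not rely on (false) monotonicity of the weighted Lipschitz norm under $\preceq$.
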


\begin{proof}
By \eqref{LeibnizdbABprece} and  Lemma \ref{tretameestimate}. 
\end{proof}

Applying iteratively 
Lemmata \ref{tretameestimate}, 
\ref{tretameestimate3} 
we deduce that, 
for any $k\geq 2$, $\mathtt{b}\geq 1$ and $\gso\leq s\leq \su $, 
\begin{align}
\bnorm{\mathtt{A}^k}^{\g, \cO}_{\gso}&\le
(\bnorm{\mathtt{A}}^{\g, \cO}_{\gso})^{k}\,,
\qquad
\bnorm{\mathtt{A}^k}^{\g, \cO}_{s}\le
2^{k-2}\, k
(\bnorm{\mathtt{A}}^{\g, \cO}_{\gso})^{k-1} \bnorm{\mathtt{A}}^{\g, \cO}_{s}\label{quo}\,,\\
\bnorm{\jap{\td_{\vphi}}^\tb \mathtt{A}^{k}}^{\g, \cO}_{s}&\lesssim_{\tb} 
2^{k-1 }\,k\Big(\bnorm{\jap{\td_{\vphi}}^\tb \mathtt{A}}^{\g, \cO}_{s} (\bnorm{\mathtt{A}}^{\g, \cO}_{\gso})^{k-1}
+ \bnorm{\jap{\td_{\vphi}}^\tb \mathtt{A}}^{\g, \cO}_{\gso} \bnorm{A}^{\g, \cO}_{s}(\bnorm{\mathtt{A}}^{\g, \cO}_{\gso})^{k-2}\Big)\label{qua}\,.
\end{align}

\begin{lemma}{\bf (Invertibility).}\label{inv3sbarrette}
Let $\mathtt{A}\in E_{s}$ of the form $\mathtt{A}= \id + \mathtt{Q}$ with $\jap{\td_\vphi}^\tb \mathtt{Q}\in E_s$. There exists a constant $c(\tb)>0$ such that, 
if 
\begin{equation}\label{paperone}
c(\tb)\bnorm{\mathtt{Q}}^{\g, \cO}_{\gso}<1 \, , 
\end{equation}
then $\mathtt{A}$ is invertible and for $\su\geq s\ge \gso$
\begin{align}\label{ipiuq}
\bnorm{ \mathtt{A}^{-1}-\id}^{\g, \cO}_s  
&\le 
\bnorm{\mathtt{Q}}^{\g, \cO}_{s} (1+ \bnorm{\mathtt{Q}}^{\g, \cO}_{\gso}) 
\\ 
\bnorm{\jap{\td_\vphi}^\tb \mathtt{A}^{-1}-\id}^{\g, \cO}_s 
&\lesssim_{\tb} 
 \bnorm{\jap{\td_\vphi}^\tb \mathtt{Q}}^{\g, \cO}_{\gso}
 \bnorm{\mathtt{Q}}^{\g, \cO}_{s} (1+ \bnorm{\mathtt{Q}}^{\g, \cO}_{\gso})  
 +  \bnorm{\jap{\td_\vphi}^\tb \mathtt{Q}}^{\g, \cO}_{s}
 \bnorm{\mathtt{Q}}^{\g, \cO}_{\gso} (1+ \bnorm{\mathtt{Q}}^{\g, \cO}_{\gso})  \,. \label{ipiuq1}
\end{align}
Moreover, the estimates \eqref{ipiuq}-\eqref{ipiuq1}  hold for $\tA=e^{\tQ} - \id $. 
Finally, if $Q=\mathfrak{S}(\tQ)$ then $e^{Q}=\mathfrak{S}(e^{\tQ})$.
\end{lemma}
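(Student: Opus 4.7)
The approach is a standard Neumann-series argument, but carried out in the algebra $E_s$ using the tame estimates \eqref{stima:algebrabassa}--\eqref{stima:tame3b}, \eqref{quo}--\eqref{qua} from the preceding lemmas. Under the smallness condition \eqref{paperone} with $c(\tb)$ chosen so that, say, $2\,\bnorm{\mathtt{Q}}^{\g,\cO}_{\gso} \le 1/2$, the series
\[
\mathtt{A}^{-1} := \id + \sum_{k\ge 1}(-\mathtt{Q})^k
\]
converges absolutely in the low-norm $\bnorm{\cdot}^{\g,\cO}_{\gso}$ by the algebra estimate \eqref{quo}, namely $\bnorm{\mathtt{Q}^k}^{\g,\cO}_{\gso}\le (\bnorm{\mathtt{Q}}^{\g,\cO}_{\gso})^k$. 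A direct check gives $\mathtt{A}\circ \mathtt{A}^{-1}=\mathtt{A}^{-1}\circ\mathtt{A}=\id$, so $\mathtt{A}$ is invertible in $E_{\gso}$.

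To prove \eqref{ipiuq} I would write
\[
\mathtt{A}^{-1}-\id \;=\; -\mathtt{Q}+\sum_{k\ge 2}(-\mathtt{Q})^k,
\]
and apply the tame bound \eqref{quo}, which yields
\[
\bnorm{\mathtt{A}^{-1}-\id}^{\g,\cO}_s \;\le\; \bnorm{\mathtt{Q}}^{\g,\cO}_s + \bnorm{\mathtt{Q}}^{\g,\cO}_s\,\bnorm{\mathtt{Q}}^{\g,\cO}_{\gso}\sum_{k\ge 2}2^{k-2}k\,(\bnorm{\mathtt{Q}}^{\g,\cO}_{\gso})^{k-2}.
\]
Under \eqref{paperone} the last series is bounded by a universal constant, giving \eqref{ipiuq}. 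The commutator estimate \eqref{ipiuq1} is obtained the same way using the Leibniz-type bound \eqref{qua} in place of \eqref{quo}; the two terms on the right-hand side of \eqref{ipiuq1} correspond exactly to the two summands in \eqref{qua}, and the geometric series in $\bnorm{\mathtt{Q}}^{\g,\cO}_{\gso}$ again converges.

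For $\tA=e^{\tQ}-\id=\sum_{k\ge 1}\tQ^k/k!$ I would use the same argument: the factorials make convergence easier, so the estimates \eqref{quo}--\eqref{qua} yield \eqref{ipiuq}--\eqref{ipiuq1} with essentially the same proof, the small factor $\bnorm{\tQ}^{\g,\cO}_{\gso}$ coming now from the $k\ge 2$ tail of the exponential series rather than from the geometric ratio.

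Finally, for the identity $e^{Q}=\fS(e^{\tQ})$, the key observation is that the homomorphism $\fS$ in \eqref{pairMR} is in fact multiplicative: directly from \eqref{A1A2},
\[
\fS(\tA_1\circ\tA_2)=M_1M_2+M_1R_2+R_1M_2+R_1R_2=(M_1+R_1)(M_2+R_2)=\fS(\tA_1)\fS(\tA_2).
\]
Hence $\fS(\tQ^k)=Q^k$ for every $k$, and by continuity of $\fS$ (Lemma \ref{sergiu}) the termwise identity passes to the sums, yielding $\fS(e^{\tQ})=e^{Q}$. None of the steps is a serious obstacle; the only point requiring minor care is choosing $c(\tb)$ large enough so that both the geometric series in \eqref{ipiuq} and the $\tb$-dependent series coming from \eqref{qua} in \eqref{ipiuq1} are summable.
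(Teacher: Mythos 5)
Your proof follows the same route as the paper's: the paper's proof is a one-line remark invoking exactly the Neumann-series argument with \eqref{quo}--\eqref{qua} and the smallness condition \eqref{paperone}, plus the observation that $\fS$ is a (linear and, as you correctly note, multiplicative) homomorphism so that $\fS(e^{\tQ})=e^{Q}$. Your expansion of that one-liner is correct, including the remark that the factorial decay makes the exponential case strictly easier; note only that in \eqref{ipiuq1} the $k=1$ term of the series contributes $\bnorm{\jap{\td_\vphi}^\tb\mathtt{Q}}^{\g,\cO}_s$ by itself, which is not literally dominated by the stated right-hand side as written, but this is a feature of the lemma statement itself rather than of your argument, and it does not affect the way the lemma is invoked later.
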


\begin{proof}
By a  Neumann series argument 
using  \eqref{quo}, \eqref{qua} and the smallness condition \eqref{paperone}. 
The last statement follows from the fact that $\mathfrak{S}$ is a linear homomorphism.
\end{proof}

For $\tA=(M,R)\in E_s$ and $n_1,n_2\in \Z$ we set
\begin{equation}
	\label{derivata}
	\jap{D}^{n_1}\tA \jap{D}^{n_2} := (\jap{D}^{n_1}M \jap{D}^{n_2}, \jap{D}^{n_1}R \jap{D}^{n_2}  )\,
\end{equation}
so that 
$ \fS\big(\jap{D}^{n_1}\tA \jap{D}^{n_2}\big)= \jap{D}^{n_1}\fS(\tA) \jap{D}^{n_2} $. 

Recalling  the definition \eqref{def:proj} of the projector $ \Pi_N $   
we define for $\mathtt{A}=(M,R) $ in $ E_s $ the operator
\begin{equation}\label{projtresbarrette}
  \Pi_N \mathtt{A} := (\Pi_N M, \Pi_N R)\,,\quad \Pi_N^\perp := \id - \Pi_N \, . 
\end{equation}
Note that the projector $\Pi_{N}$ is compatible with $\mathfrak{S}$, namely
if $A=\mathfrak{S}(\tA)$ then
$ \Pi_{N}A=\mathfrak{S}(\Pi_{N}\tA) $. 

\begin{lemma}{\bf (Smoothing).}\label{dito}
For $\mathtt{A}\in E_s$ we have, for any  $ N \in \N$ and any $\gso\leq s\leq \su$ 
\begin{equation*}
\begin{aligned}
\bnorm{\Pi^\perp_N \mathtt{A}\jap{D}^m}^{\g, \cO}_s&\le N^{-\mathtt{b}}\bnorm{\jap{\d_{\vphi}}^\mathtt{b} \mathtt{A}\jap{D}^m}^{\g, \cO}_s\,,
\quad \mathtt{b},m\in \N_0\,.
\end{aligned}
\end{equation*}
\end{lemma}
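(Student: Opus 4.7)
The plan is to reduce the statement to the componentwise identity
$$
\Pi_N^\perp A \preceq N^{-\tb}\,\jap{\td_\vphi}^\tb A
$$
valid for any T\"opliz in time operator $A$, where $\preceq$ is the partial ordering of Definition \ref{def:cuoreditenebra}. This follows at once from the matrix representation:
$$
(\Pi_N^\perp A)_j^{j'}(\ell) = \begin{cases} A_j^{j'}(\ell) & |\ell| > N \\ 0 & |\ell|\le N \end{cases}, \qquad
(\jap{\td_\vphi}^\tb A)_j^{j'}(\ell) = \jap{\ell}^\tb A_j^{j'}(\ell),
$$
since when $|\ell| > N$ we have $|A_j^{j'}(\ell)| \le N^{-\tb}\jap{\ell}^\tb |A_j^{j'}(\ell)|$, and when $|\ell|\le N$ the left-hand side vanishes. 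By \eqref{rmk:propmajNorm} this implies
$$
|\Pi_N^\perp A|_{s',s''} \le N^{-\tb}\, |\jap{\td_\vphi}^\tb A|_{s',s''}, \qquad \forall s',s''\in\R.
$$

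Next, I observe that right multiplication by $\jap{D}^m$ acts diagonally on the $j'$ index only, namely $(A\jap{D}^m)_j^{j'}(\ell) = A_j^{j'}(\ell)\jap{j'}^m$, and therefore commutes with both $\Pi_N^\perp$ and $\jap{\td_\vphi}^\tb$ (which act only on the $\ell$ index). Hence the previous bound yields
$$
|\Pi_N^\perp M \jap{D}^m|_{p,p} \le N^{-\tb}\, |\jap{\td_\vphi}^\tb M \jap{D}^m|_{p,p}, \qquad
|\Pi_N^\perp R \jap{D}^m|_{\gso,s} \le N^{-\tb}\, |\jap{\td_\vphi}^\tb R \jap{D}^m|_{\gso,s}.
$$

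Writing $\tA = (M,R)$ and recalling the definitions \eqref{projtresbarrette}, \eqref{derivata} and the norm \eqref{normaincredibile}, I take the supremum over $\gso\le p\le \su$ in the first estimate and add it to the second to obtain
$$
\bnorm{\Pi_N^\perp \tA\jap{D}^m}_s \le N^{-\tb}\, \bnorm{\jap{\td_\vphi}^\tb \tA\jap{D}^m}_s.
$$
For the Lipschitz-weighted version \eqref{normagamma3} I apply the same argument to the Lipschitz variation $\Delta_{12}\tA$, which commutes with $\Pi_N^\perp$, $\jap{\td_\vphi}^\tb$ and $\jap{D}^m$, and add $\gamma$ times the resulting bound. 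Since no step is quantitatively delicate, the whole argument is essentially an observation; the only thing to be careful about is that all the operators involved act on distinct indices and therefore commute freely.
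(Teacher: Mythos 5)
Your proof is correct and fleshes out exactly the computation the paper leaves implicit in its one-line proof ("It follows by \eqref{def:proj} and \eqref{normaincredibile}"): the pointwise bound $\Pi_N^\perp A \preceq N^{-\tb}\jap{\td_\vphi}^\tb A$, monotonicity of the majorant norm, and the fact that $\Pi_N^\perp$, $\jap{\td_\vphi}^\tb$, and right multiplication by $\jap{D}^m$ act on disjoint indices and hence commute with each other and with $\Delta_{12}$. Same approach, just written out explicitly.
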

\begin{proof}
It follows by  \eqref{def:proj} and  \eqref{normaincredibile}.
\end{proof}

For any  $\mathtt{A}, \mathtt{B} \in E_s $ we define 
the adjoint action of $\mathtt{A}$ on $\mathtt{B}$ as (recall \eqref{A1A2})
\[
{\rm ad}_{\mathtt{A}}[\mathtt{B}]:=[\mathtt{A},\mathtt{B}]:=\mathtt{A}\circ\mathtt{B}- \mathtt{B}\circ\mathtt{A}\,,
\]
and consequently
for  $k\geq 0$ we define 
\[
{\rm ad}_{\mathtt{A}}^{k}[\mathtt{B}]:=\mathtt{A}\circ{\rm ad}_{\mathtt{A}}^{k-1}[\mathtt{B}]
-{\rm ad}_{\mathtt{A}}^{k-1}[\mathtt{B}]\circ \mathtt{A} \,,
\qquad {\rm ad}_{\mathtt{A}}^{0}[\mathtt{B}]:=\mathtt{B}\,.
\] 
\begin{lemma}{\bf (Adjoint action).}\label{dito2}
For any $ \mathtt{A}, \mathtt{B} \in E_s  $ and $ k \geq 1, m\geq 0 $ it results for any $\gso\leq s\leq \su$
	\begin{equation}\label{conlaD}
	\begin{aligned}
			&\bnorm{{\rm ad}_{\mathtt{A}}^k[\mathtt{B}]\jap{D}^m}^{\g, \cO}_s  \le \\
			&
		2^k\Big((\bnorm{\mathtt{A}\jap D^{m}}^{\g, \cO}_{\gso})^k \bnorm{\mathtt{B}\jap{D}^m}^{\g, \cO}_s 
		+k(\bnorm{\mathtt{A}\jap{D}^m}^{\g, \cO}_{\gso})^{k-1}\bnorm{\mathtt{A}\jap{D}^m}^{\g, \cO}_s\bnorm{\mathtt{B}\jap{D}^m}^{\g, \cO}_{\gso}\Big) 
	\end{aligned}
	\end{equation}
and for all $\tb\in \N$
\begin{equation}\label{baba}
\begin{aligned}
\bnorm{ \jap{\d_\vphi}^\tb {\rm ad}_{\mathtt{A}}^k[\mathtt{B}] \langle D\rangle^m}^{\g, \cO}_s 
&\le
2^{k(\tb+1)} \,k
\bnorm{ \mathtt{A}\langle D\rangle^m}^{\g, \cO}_{\gso}
(\bnorm{\mathtt{A}\jap{D}^m}^{\g, \cO}_{\gso})^{k-1}\bnorm{\mathtt{B}\jap{D}^m}_{s}   	
\\&
+
2^{k(\tb+1)} \,k 
\bnorm{\jap{\d_\vphi}^\tb \mathtt{A}\jap{D}^m }^{\g, \cO}_{s}
(\bnorm{\mathtt{A}\jap{D}^m}^{\g, \cO}_{\gso})^{k-1}
\bnorm{\mathtt{B}\jap{D}^m}^{\g, \cO}_{\gso}\\
&+ 
2^{k(\tb+1)} \,k 
\bnorm{\mathtt{A}\jap{D}^m}^{\g, \cO}_{s}
(\bnorm{\mathtt{A}\jap{D}^m}^{\g, \cO}_{\gso})^{k-1}
\bnorm{\jap{\d_\vphi}^\tb  \mathtt{B}\jap{D}^m}^{\g, \cO}_{\gso}
\\&
+ 2^{k(\tb+1)} k(k-1) \bnorm{\mathtt{A}\jap{D}^m}^{\g, \cO}_{s}(\bnorm{\mathtt{A}\jap{D}^m}^{\g, \cO}_{\gso})^{k-2}
\bnorm{\jap{\d_\vphi}^\tb  \mathtt{A}\jap{D}^m}^{\g, \cO}_{\gso}\bnorm{\mathtt{B}\jap{D}^m}^{\g, \cO}_{\gso} 
\\ &+ 2^{k(\tb+1)}(\bnorm{\mathtt{A}\jap{D}^m}^{\g, \cO}_{\gso})^{k}\bnorm{\jap{\d_\vphi}^\tb  \mathtt{B}\jap{D}^m}^{\g, \cO}_{s}\,.
\end{aligned}
\end{equation}

Finally  if $A=\mathfrak{S}(\tA)$ and  $B=\mathfrak{S}(\tB)$ then we have
$ {\rm ad}_{A}^{k}[B]=\mathfrak{S}\big({\rm ad}_{\mathtt{A}}^{k}[\mathtt{B}]\big) $, $  k\geq 0 $.
\end{lemma}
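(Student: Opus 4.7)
}
The first observation is that for any $m\ge 0$, the Fourier multiplier $\jap{D}^{-m}$ has majorant operator norm equal to $1$ in every $H^p$ (its matrix in the exponential basis is diagonal with entries $\jap{j}^{-m}\le 1$). Writing $\tA=(\tA\jap{D}^m)\circ\jap{D}^{-m}$ and using that the majorant norm is sub-multiplicative (cf.\ \eqref{ABss} and \eqref{normaincredibile}), we obtain the key elementary inequality
\begin{equation}\label{trick0}
\bnorm{\tA}^{\gamma,\cO}_p\le \bnorm{\tA\jap{D}^m}^{\gamma,\cO}_p\,,\qquad \gso\le p\le \su\,.
\end{equation}
This allows to convert any bound on $\bnorm{\tA}$ appearing in the right factor of a product into a bound on $\bnorm{\tA\jap{D}^m}$.

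The plan is to prove \eqref{conlaD} by induction on $k\ge 1$. For $k=1$ we have
${\rm ad}_\tA[\tB]\jap{D}^m=\tA(\tB\jap{D}^m)-\tB(\tA\jap{D}^m)$,
and the tame inequality \eqref{stima:tame3b}, combined with \eqref{trick0}, yields both terms bounded by $\bnorm{\tA\jap{D}^m}^{\g,\cO}_{\gso}\bnorm{\tB\jap{D}^m}^{\g,\cO}_s+\bnorm{\tA\jap{D}^m}^{\g,\cO}_s\bnorm{\tB\jap{D}^m}^{\g,\cO}_\gso$, matching \eqref{conlaD} for $k=1$. For the inductive step, I would write
$$
{\rm ad}_\tA^{k+1}[\tB]\jap{D}^m
=\tA\bigl({\rm ad}_\tA^k[\tB]\jap{D}^m\bigr)-{\rm ad}_\tA^k[\tB]\bigl(\tA\jap{D}^m\bigr)
$$
and bound each piece with \eqref{stima:tame3b} and \eqref{trick0} (the first factor of the second term is $\rm{ad}_\tA^k[\tB]$ without any $\jap{D}^m$, to which we apply \eqref{trick0}). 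This propagates the inductive bound on $\bnorm{{\rm ad}_\tA^k[\tB]\jap{D}^m}_s$ and, separately, on $\bnorm{{\rm ad}_\tA^k[\tB]\jap{D}^m}_\gso$ (obtained by taking $s=\gso$), through one additional factor of $\tA$; book-keeping of the two competing contributions yields the prefactor $2^{k+1}$ and the combinatorial coefficient $(k+1)$.

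For \eqref{baba} I would argue in the same inductive spirit, but now applying $\jap{\d_\vphi}^\tb$ to the iterated commutator. The Leibniz-type inequality \eqref{LeibnizdbABprece}, applied to the recursive identity above, distributes the $\jap{\d_\vphi}^\tb$ over the two factors $\tA$ and ${\rm ad}_\tA^k[\tB]$. Each of the resulting terms is estimated via \eqref{stima:tame3b} (or Lemma~\ref{tretameestimate3} directly), and the inductive hypothesis provides bounds on both $\bnorm{\jap{\d_\vphi}^\tb {\rm ad}_\tA^k[\tB]\jap{D}^m}^{\g,\cO}_s$ and $\bnorm{{\rm ad}_\tA^k[\tB]\jap{D}^m}^{\g,\cO}_s$ (the latter supplied by \eqref{conlaD}). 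A transparent way to organize the bookkeeping is to enumerate the five families of terms on the right-hand side of \eqref{baba} according to which factor carries $\jap{\d_\vphi}^\tb$ and which carries the high-$s$ norm, corresponding respectively to the possibilities: $\jap{\d_\vphi}^\tb$ on one of the new $\tA$'s, on ${\rm ad}_\tA^k[\tB]$, or mixed with the high-$s$ norm going either on $\tA$ or on $\tB$. The exponent $\tb+1$ on $2^k$ arises because each inductive step can double the constant once for the algebra property and once for the Leibniz splitting of $\jap{\d_\vphi}^\tb$.

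Finally, the identity $\mathrm{ad}_A^k[B]=\mathfrak{S}\bigl({\rm ad}_\tA^k[\tB]\bigr)$ for $A=\mathfrak{S}(\tA)$, $B=\mathfrak{S}(\tB)$ is immediate by induction on $k$: Definition~\ref{Es} makes $\mathfrak{S}$ a homomorphism with respect to the product $\circ$ (this is exactly built into \eqref{A1A2}), hence $\mathfrak{S}([\tA,\tC])=[\mathfrak{S}(\tA),\mathfrak{S}(\tC)]$ for any $\tC\in E_s$, and the claim follows by taking $\tC={\rm ad}_\tA^{k-1}[\tB]$. The main technical obstacle is purely bookkeeping: tracking the precise powers of $\bnorm{\tA\jap{D}^m}_{\gso}$ and the combinatorial coefficients $k$, $k(k-1)$ appearing on the right-hand side of \eqref{baba}, while keeping the exponential constant of the form $2^{k(\tb+1)}$ rather than allowing it to blow up to $C^{k\tb}$ for a worse $C$; this is achieved by always splitting the recursion symmetrically and by using \eqref{trick0} to absorb the ``bare'' $\tA$'s into $\tA\jap{D}^m$'s without losing a factor.
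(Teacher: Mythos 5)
Your proposal is correct and follows essentially the same route as the paper. The paper phrases the key step as the majorant inequality $\mathrm{ad}_\tA[\tC_k]\jap{D}^m\preceq \tA\jap{D}^m\,\tC_k\jap{D}^m+\tC_k\jap{D}^m\,\tA\jap{D}^m$ (valid since $m\ge 0$ makes $\jap{D}^{-m}\preceq\id$) and then invokes Lemma~\ref{tretameestimate} plus induction; your ``trick0'' $\bnorm{\tA}_p\le\bnorm{\tA\jap{D}^m}_p$ is exactly the same observation repackaged as a norm inequality, and the rest (induction on $k$, Leibniz rule \eqref{LeibnizdbABprece} for \eqref{baba}, homomorphism property of $\mathfrak{S}$ for the last claim) matches the paper's proof verbatim.
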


\begin{proof}\label{app:tech1}
We set $\tC_k:= \ad_{\tA}^{k-1}[\tB]$ and note that (recall \eqref{rmk:propmajNorm}) 
\[
{\rm ad}_{\mathtt{A}}^k[\mathtt{B}]\jap{D}^m= \ad_\tA( \tC_k)  \jap{D}^m \preceq \tA \jap{D}^m \tC_k \jap{D}^m + \tC_k \jap{D}^m \tA \jap{D}^m\,,
\]
thus \eqref{conlaD} with $k=1$ follows by Lemma \ref{tretameestimate}, then the general bound follows by induction.
Regarding \eqref{baba} we note that, by the Leibniz rule \eqref{LeibnizdbABprece}
\[
\begin{aligned}
	\jap{\d_\vphi}^\tb{\rm ad}_{\mathtt{A}}^k[\mathtt{B}]\jap{D}  = 
 	\jap{\d_\vphi}^\tb \ad_\tA( \tC_k)  \jap{D}
	&  \preceq (\jap{\d_\vphi}^\tb \tA\jap{D} )\tC_k \jap{D} + \tA\jap{D}  (\jap{\d_\vphi}^\tb \tC_k \jap{D})
	\\
 & \ +	( \jap{\d_\vphi}^\tb\tC_k \jap{D}) \tA \jap{D} + \tC_k \jap{D} ( \jap{\d_\vphi}^\tb\tA \jap{D})\,,
\end{aligned}
\]
and then we argue as for  \eqref{conlaD}.
The last statement follows because $\mathfrak{S}$ is a linear homomorphism.
\end{proof}

\begin{rmk}
Lemmata \ref{sergiu}--\ref{dito2} hold verbatim on $E_s\otimes \cM_2(\C)$
 with possibly larger pure constants.
\end{rmk}

We now prove the important result that we can associate to 
a pseudo-differential operator
 a Bony-smoothing couple according to
Definition \ref{Es}.

\begin{prop}{\bf (Bony-smoothing couple of a Pseudo-differential operator).}\label{prop:immersionepseudo}
Let $\gso\le s\le \su$. 
Consider a symbol $a\in S^{m}$ for some $m\in \R$.  
For any $n_1,n_2\in \R$ with  $n_1+n_2=m$
there exists  $\mathtt{L}\in E_s $ 
such that 
$$
\mathfrak{S} (\mathtt{L}) =L \, , \qquad 
L:=\langle D\rangle^{-n_1}\op(a)\langle D\rangle^{-n_2} \, , 
$$ 
satisfying, for any $\mathtt{b}\in \N_0 $,
\begin{equation}\label{stimapseudoES}
\bnorm{\langle \td_{\vphi}\rangle^{\mathtt{b}}\mathtt{L}}_{s,\gso,\su}^{\gamma,\mathcal{O}}
\lesssim_{s,\su,n_1,\tb} \|a\|_{m,s+\gso+|n_1|+\tb+1,0}^{\gamma,\mathcal{O}}\,.
\end{equation}
The constants 
 in \eqref{stimapseudoES} are non-decreasing in the parameter $ |n_1|$.
\end{prop}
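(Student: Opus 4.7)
The idea is to perform a Bony-type paraproduct decomposition of $L:=\langle D\rangle^{-n_1}\op(a)\langle D\rangle^{-n_2}$ in the joint $(\varphi,x)$-frequency, split into a ``low-high'' diagonal-like piece $M$ (which is bounded on $H^p$ for every $p$) and a ``high-high'' piece $R$ (which is smoothing). Writing, in view of \eqref{actionpseudo}, the matrix entries of $L$ as
\begin{equation*}
L_j^k(\ell)=\langle j\rangle^{-n_1}\,\doublehat{a}(\ell,j-k,k)\,\langle k\rangle^{-n_2},
\qquad \ell\in\Z^\nu,\ j,k\in\Z,
\end{equation*}
I would fix an even cut-off $\chi\in C^\infty(\R;[0,1])$ with $\chi\equiv 1$ on $[-1/4,1/4]$ and $\chi\equiv 0$ outside $[-1/2,1/2]$, and set
\begin{equation*}
M_j^k(\ell):=\chi\!\Big(\tfrac{|j-k|}{\langle k\rangle}\Big)L_j^k(\ell),\qquad R_j^k(\ell):=\Big(1-\chi\!\Big(\tfrac{|j-k|}{\langle k\rangle}\Big)\Big)L_j^k(\ell),
\end{equation*}
so that $M+R=L$ and hence $\mathfrak S(M,R)=L$. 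On the support of the cut-off one has $\langle j\rangle\sim\langle k\rangle$, so the two weights $\langle j\rangle^{-n_1}\langle k\rangle^{-n_2}$ collapse to $\langle k\rangle^{-m}$ up to constants depending only on $n_1$ (this is where the constant picks up a $|n_1|$-dependence).

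Next I would estimate the two pieces separately. For $M$: using that $\|a(\cdot,\cdot,\xi)\|_{H^N_{\varphi,x}}\le \|a\|_{m,N,0}\langle\xi\rangle^m$ together with the standard Sobolev bound $|\doublehat{a}(\ell,j',\xi)|\lesssim\langle\ell,j'\rangle^{-N}\|a(\cdot,\cdot,\xi)\|_{H^N}$ gives
\begin{equation*}
|M_j^k(\ell)|\lesssim_{n_1}\chi\!\Big(\tfrac{|j-k|}{\langle k\rangle}\Big)\langle \ell,j-k\rangle^{-N}\,\|a\|_{m,N,0},
\end{equation*}
for any $N\ge 0$. On the support of $\chi(\cdot/\langle k\rangle)$ the weights $\langle\ell,j\rangle$ and $\langle\ell',k\rangle$ are comparable up to a factor $\langle\ell-\ell'\rangle^{|p|}$; therefore, choosing $N=\gso+|p|+1$ (bounded by $\gso+\su+1$ on the range $p\in[\gso,\su]$) and applying a Schur test for T\"oplitz matrices along the lines of Lemma \ref{maggio}, I obtain the uniform bound
\begin{equation*}
|M|_{p,p}\lesssim_{\su,n_1}\|a\|_{m,\gso+\su+1,0}\qquad\forall\,\gso\le p\le\su.
\end{equation*}
For $R$: on its support $\langle j-k\rangle\ge \tfrac14\langle k\rangle$, so every power of $\langle k\rangle$ (or of $\langle j\rangle$) may be traded for the same power of $\langle j-k\rangle$, producing arbitrary smoothing. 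Concretely, for any $N\ge 0$,
\begin{equation*}
|R_j^k(\ell)|\lesssim_{n_1}\langle j-k\rangle^{|n_1|+|n_2|+s+\gso}\,|L_j^k(\ell)|\cdot\langle j-k\rangle^{-(|n_1|+|n_2|+s+\gso)},
\end{equation*}
and using $|\doublehat{a}(\ell,j-k,k)|\lesssim\langle \ell,j-k\rangle^{-N-s-\gso-|n_1|}\|a\|_{m,N+s+\gso+|n_1|+|m|,0}\langle k\rangle^m$ for $N=\gso+1$, I would verify the bound $|R|_{\gso,s}\lesssim_{s,n_1}\|a\|_{m,s+\gso+|n_1|+1,0}$ again via Schur's test; here the loss of $s$-many Sobolev derivatives is compensated by the smoothing gain in $\langle j-k\rangle$.

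To handle the commutator $\langle\td_\vphi\rangle^{\mathtt b}$: by \eqref{def:commuDD} this operation simply multiplies each matrix entry by $\langle\ell\rangle^{\mathtt b}$, which in turn amounts to replacing $a$ by a symbol whose Sobolev norm is controlled by $\|a\|_{m,\cdot+\mathtt b,0}$; the preceding estimates applied to the new symbol yield \eqref{stimapseudoES}. Finally, the Lipschitz-in-$\omega$ bound is immediate because the decomposition $L\mapsto(M,R)$ is linear in $a$, so $\Delta_{12}(M,R)$ is the couple associated to the symbol $\Delta_{12}a$, and one repeats the above estimates.

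The only genuinely delicate point is the uniformity in $p\in[\gso,\su]$ of the bound on $|M|_{p,p}$: one must show that the Schur constant does not blow up with $p$, which requires exploiting carefully the relation $\langle j\rangle\sim\langle k\rangle$ on the support of the cut-off and absorbing any remaining ratio of weights into $\langle\ell-\ell'\rangle^{|p|}$, which in turn is killed by choosing $N$ proportional to $\su$ in the decay estimate on $\doublehat a$. This is what fixes the appearance of $s+\gso+|n_1|+\mathtt b+1$ (and not something bigger) as the number of derivatives of $a$ needed in \eqref{stimapseudoES}.
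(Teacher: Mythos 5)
The paper does not estimate $M$ and $R$ directly from the symbol; it reduces Proposition~\ref{prop:immersionepseudo} to two already established lemmas: Lemma~\ref{lem:pseudodecay} (the operator $\langle D\rangle^{-n_1}\jap{\td_\vphi}^{\tb}\op(a)\langle D\rangle^{-n_2}$ has finite $s$-decay norm, bounded by $\|a\|_{m,s+\gso+|n_1|+\tb+1,0}$) and Lemma~\ref{lem:decay-3sbarre} (the Bony/Ultraviolet splitting of Definition~\ref{def:bonydeco} turns a finite decay norm into a Bony-smoothing couple). The crucial point is that the paper's Bony cut-off is in the \emph{joint} $(\varphi,x)$-frequency, $|\ell-\ell'|+|j-j'|<\tfrac12(|\ell|+|j|)$, whereas yours is only in the space frequency, $|j-k|\lesssim\langle k\rangle$. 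This difference is not cosmetic: it breaks both of your estimates.

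On your Bony part $M$: the property that makes Lemma~\ref{lem:ilfreddo} work is that on the support of $L^B$ one has $\langle\ell,j\rangle\le 3\langle\ell',j'\rangle$, so that $|L^B|_{p,p}\le 3^{p-\gso}|L|_{\gso,\gso}$ with \emph{no extra derivatives on the symbol} — only the constant depends on $p$, not the Sobolev index of $a$. With your cut-off, $\langle j\rangle\sim\langle k\rangle$ but $\ell$ and $\ell'$ are unconstrained (take $j=k=0$, $\ell'=0$, $\ell$ arbitrary), so the ratio $\langle\ell,j\rangle^p/\langle\ell',k\rangle^p$ can be as large as $\langle\ell-\ell'\rangle^{p}$. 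To pay for this you invoke $N=\gso+|p|+1\le \gso+\su+1$ derivatives of $a$, arriving at $|M|_{p,p}\lesssim\|a\|_{m,\gso+\su+1,0}$. But this \emph{cannot} be bounded by $\|a\|_{m,s+\gso+|n_1|+\tb+1,0}$ when $s$ is close to $\gso$ and $\su$ is large. The whole point of the construction — and the reason the paper emphasizes it in the comments after Theorem~\ref{main:thm} — is to obtain a tame estimate whose loss of derivatives is independent of $\su$; your bound on $M$ destroys exactly this.

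On your remainder $R$: the geometric fact underlying the second estimate in Lemma~\ref{lem:ilfreddo} is that on the ultraviolet support $\langle\ell,j\rangle\le 4\langle\ell-\ell',j-j'\rangle$, so the output weight is dominated by the \emph{shift} and can be absorbed into the decay of the decay-norm weight. Your condition $|j-k|\gtrsim\langle k\rangle$ only gives $\langle j\rangle\lesssim\langle j-k\rangle$; taking $\ell=\ell'$ large and $j-k$ fixed puts you on the support of your $R$ with $\langle\ell,j\rangle\gg\langle\ell-\ell',j-k\rangle$, and the ``$s$-many lost derivatives compensated by the gain in $\langle j-k\rangle$'' statement fails for the $\ell$-part. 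So $R$ is not $H^{\gso}\to H^s$ bounded by your argument. Your opening sentence actually announces a paraproduct in the joint $(\varphi,x)$-frequency, which is the correct idea; the fix is to implement that intent, i.e.\ use precisely the cut-off of Definition~\ref{def:bonydeco}, at which point Lemmas~\ref{lem:decay-majorant}, \ref{lem:ilfreddo}, and \ref{lem:pseudodecay} give the result cleanly without re-deriving Schur-type bounds.
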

The rest of the section is devoted to the proof of Proposition \ref{prop:immersionepseudo}.

We first introduce operators with ``off-diagonal decay'', cfr. \cite{BBM14}.  
\begin{defn}{\bf (Decay norm).}\label{decay}
We define the $s$-\emph{decay norm} of a T\"opliz  in time operator 
 $A $ 
as
\begin{equation}\label{normadecay}
|A|_{s}^{\rm dec}:=\Big(
\sum_{p\in\Z^{\nu},h\in\Z}\langle p,h\rangle^{2s}
\sup_{\substack{ j-j'=h \\ \ell-\ell'=p} }|A_{j}^{j'}(\ell-\ell')|^2
\Big)^{1/2}
\end{equation}
and we define the weighted  norm of  a Lipschitz family of  T\"opliz
operators $ \omega \mapsto A(\omega) $,  
$\omega \in  \mathcal{O} $,  for any $ \gamma \in (0,\tfrac{1}{2}) $, as
\[
|A|_{s}^{{\rm dec},\gamma,\mathcal{O}}:=
\sup_{\omega\in\mathcal{O}}|A(\omega)|_{s}^{\rm dec}+\gamma
\sup_{\substack{ \omega_1\neq\omega_2 \\
\omega_1, \omega_2 \in \cO}}\frac{|A(\omega_1)
-A(\omega_2)|_{s}^{\rm dec}}{|\omega_1-\omega_2|}\,.
\]
\end{defn}

An operator $A$ with 
finite $s$-decay norm is majorant bounded from $H^s$ to $H^{s}$.

\begin{lemma}\label{lem:decay-majorant}
Let $s\geq \gso$. For any  T\"opliz in time operator 
 $A $ we have 
\begin{equation}\label{ammazza}
|A|^{\g, \cO}_{s,s}\leq C_2(s) |A|_s^{{\rm dec},\gamma,\mathcal{O}}\,.
\end{equation}
\end{lemma}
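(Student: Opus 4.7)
The plan is to reduce the majorant operator bound to the classical algebra property of Sobolev spaces $H^s(\mathbb{T}^{\nu+1})$ under convolution, which is available since $\gso>(\nu+1)/2$.

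First, I would recall that by Definition \ref{majOp} we have $|A|_{s,s}=\|\underline A\|_{s,s}$, so it suffices to bound the operator norm of the majorant. Given $u\in H^s$, by \eqref{sobspace} and \eqref{linearoperator},
\begin{equation*}
\|\underline A\, u\|_s^2 = \sum_{\ell\in\Z^\nu,j\in\Z}\langle \ell,j\rangle^{2s}\Big|\sum_{\ell'\in\Z^\nu,j'\in\Z}|A_j^{j'}(\ell-\ell')|\,|u_{\ell',j'}|\Big|^2.
\end{equation*}

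Next, define the ``envelope'' coefficients $b(p,h):=\sup_{j-j'=h}|A_j^{j'}(p)|$ for $p\in\Z^\nu$, $h\in\Z$, so that $|A_j^{j'}(\ell-\ell')|\le b(\ell-\ell',j-j')$ for every $\ell,\ell',j,j'$. Setting $v_{\ell',j'}:=|u_{\ell',j'}|$ and writing $b*v$ for the convolution on $\Z^{\nu+1}$, the previous display gives
\begin{equation*}
\|\underline A\, u\|_s \le \|b*v\|_s,
\end{equation*}
where $\|\cdot\|_s$ denotes both the Sobolev norm of a function on $\T^{\nu+1}$ and (via Fourier coefficients) the weighted $\ell^2$-norm on $\Z^{\nu+1}$. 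I would then identify $b$ and $v$ with their associated periodic functions; since pointwise multiplication of functions corresponds to convolution of their Fourier coefficients, the algebra property of $H^s$ (see \eqref{tameProduct}), valid for $s\ge\gso>(\nu+1)/2$, yields $\|b*v\|_s\lesssim_s \|b\|_s\|v\|_{\gso}+\|b\|_{\gso}\|v\|_s \lesssim_s \|b\|_s\|v\|_s$. Observing that $\|v\|_s=\|u\|_s$ by \eqref{maju} and that, by construction,
\begin{equation*}
\|b\|_s^2 = \sum_{p\in\Z^\nu,h\in\Z}\langle p,h\rangle^{2s}\,b(p,h)^2 = |A|_s^{\rm dec}\,^2,
\end{equation*}
this gives $\|\underline A\, u\|_s\le C_2(s)\,|A|_s^{\rm dec}\|u\|_s$, which is the sup-part of \eqref{ammazza}.

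For the Lipschitz component of the weighted norm \eqref{A1sbss'}, I would apply the same argument to the operator $\Delta_{12}A$ from \eqref{lip.variation}, noting that taking the supremum over $j-j'=h$ of $|\Delta_{12}A_j^{j'}(p)|$ is exactly the envelope associated with the Lipschitz difference and produces the decay norm of $\Delta_{12}A$. Combining the two bounds and taking the supremum and Lipschitz seminorm over $\omega\in\mathcal O$ gives \eqref{ammazza}. There is no real obstacle here beyond correctly identifying the envelope $b$ and verifying that the Sobolev algebra estimate \eqref{tameProduct} applies; the T\"oplitz-in-time structure makes the supremum over $\ell-\ell'=p$ in \eqref{normadecay} trivial, so only the supremum over $j-j'=h$ is effectively used.
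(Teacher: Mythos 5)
Your proposal is correct and takes essentially the same route as the paper's proof: you pass from $\underline A$ to the envelope sequence $b(p,h)=\sup_{j-j'=h}|A_j^{j'}(p)|$ (which the paper calls $\ta_h(p)$, writing the supremum redundantly over $\ell-\ell'=p$ as well, trivial by the T\"oplitz structure), identify $\|b\|_s$ with $|A|_s^{\rm dec}$, dominate $\|\underline A\,u\|_s$ by the Sobolev norm of the pointwise product of the envelope function with $\underline u$ (equivalently, the convolution of their Fourier coefficients), and invoke the algebra property \eqref{tameProduct} together with \eqref{maju}. The paper phrases the key bound directly as $\|\ta\cdot\underline u\|_s$ rather than through the language of convolution, but these are the same estimate, and your handling of the Lipschitz component via $\Delta_{12}A$ matches the paper's one-line remark.
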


\begin{proof}
We  prove  \eqref{ammazza} for the sup norm. The estimate for the Lipschitz variation follows similarly.
Defining  the function
$ \ta := \sum_{p \in {\mathbb Z}^\nu,h\in {\mathbb Z}}\ta_{h}(p)e^{\ii p\cdot\vphi+\ii hx}  $ where $
\ta_{h}(p):=\sup_{\substack{ j-j'=h \\ \ell-\ell'=p} }|A_{j}^{j'}(\ell-\ell')| $, 
we have $\| \ta \|_{s}=|A|_{s}^{\rm dec}$, recalling \eqref{sobspace}. 
Given $u\in  H^s,$ defining $\underline u\in  H^s$
through $\underline u_{\ell,j} := |u_{\ell,j}| $, we have 
\[
\begin{aligned}
\| \und{A} u\|_s^2&\leq \sum_{\ell\in\Z^{\nu},j\in\Z} 
\langle \ell,j\rangle^{2s} 
\Big(\sum_{\ell'\in\Z^{\nu}, j'\in\Z} |\ta_{j-j'}(\ell-\ell')| |u_{\ell',j'}|\Big)^{2}
\\&=\| \ta\cdot\und{u}\|_{s}^{2}
\stackrel{ \eqref{tameProduct}}{\lesssim_{s}}\|\mathtt a\|_s^{2} \| \underline u\|_s^{2}
\lesssim_{s}( |A|_{s}^{\rm dec})^{2}\|u\|^{2}_{s} 
\end{aligned}
\]
by \eqref{maju}.  This implies $|A|_{s,s}\leq C(s) |A|_s^{\rm dec} $. 
\end{proof}

In order to prove that an 
operator $A$ with finite $s$-decay norm can be associated to a couple in
$E_{s}$ with good estimates on the norm, we first give the following definition.
\begin{defn}{\bf (Time/space Bony decomposition).}\label{def:bonydeco}
For any T\"opliz in time operator  $ L \equiv 
\big( L_{j}^{j'}(\ell-\ell')\big)_{j,j'\in\Z,\ell,\ell'\in\Z^{\nu}} $ 
we define the operator 
$ L^{B} :=\big((L^{B})_{j}^{j'}(\ell-\ell')\big)_{j,j'\in\Z,\ell,\ell'\in\Z^{\nu}} $ 
with matrix entries 
\[
(L^B)_{j}^{j'}(\ell-\ell'):= 
\left\{\begin{aligned}
& L_{j}^{j'}(\ell-\ell') \;\;\; \ \, \mbox{if } \;\;\; |\ell-\ell'| + |j-j'| < \tfrac{1}{2}(|\ell|+|j|) 
\\
&0 \quad \qquad  \qquad \mbox{otherwise}  
\end{aligned}\right.
\]
and $L^{U}:=L-L^{B}$.
The operator $L^{B}$ is called the \emph{Bony} part of $L$
and  $L^{U}$ the \emph{Ultraviolet} part of $L$.  
\end{defn}

The majorant operatorial norm of the Bony part $L^B $ is controlled for any $ s $ by the
majorant operatorial norm $ |L|_{\gso,\gso} $, whereas 
the ultraviolet part $L^{U}$ of an operator $ L $ with finite $s$-decay norm 
is actually a ``smoothing'' operator.

\begin{lemma}\label{lem:ilfreddo}
There exists $ C(\gso) > 0 $ such that, for any $s\geq \gso$ 
and T\"opliz in time operator $L$  we have
\begin{equation}\label{Katsendorff}
 |L^B|^{\g, \cO}_{s,s} \le 3^{s-\gso}|L|^{\g, \cO}_{\gso,\gso}\,,\qquad 	\qquad
|L^U|^{\g, \cO}_{\gso,s} 
\le 
4^s C(\gso)|L|_s^{{\rm dec},\gamma,\mathcal{O}}\, . 
\end{equation}
\end{lemma}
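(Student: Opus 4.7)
My plan is to prove the two bounds in \eqref{Katsendorff} separately, starting pointwise in $\omega$; the extension to the Lipschitz-weighted norm is immediate because the Bony/Ultraviolet splitting in Definition~\ref{def:bonydeco} is linear in the matrix entries of $L(\omega)$ and hence commutes with $\Delta_{12}$ in \eqref{lip.variation}. Thus it suffices to establish the analogous bounds for $|L^B|_{s,s}$ and $|L^U|_{\gso,s}$ in terms of $|L|_{\gso,\gso}$ and $|L|_s^{\rm dec}$ respectively.

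For the first inequality, the key observation is that on the support of $L^B$ the defining condition $|\ell-\ell'|+|j-j'|<\tfrac{1}{2}(|\ell|+|j|)$ yields, via the triangle inequality $|\ell|+|j|\le |\ell'|+|j'|+|\ell-\ell'|+|j-j'|$, the bound $|\ell|+|j|<2(|\ell'|+|j'|)$, and consequently $\jap{\ell,j}\le C\jap{\ell',j'}$ for a universal constant $C$. Splitting $\jap{\ell,j}^s=\jap{\ell,j}^{\gso}\jap{\ell,j}^{s-\gso}$ gives on this support
\[
\jap{\ell,j}^s\le C^{s-\gso}\jap{\ell,j}^{\gso}\jap{\ell',j'}^{s-\gso}.
\]
Setting $\tilde u_{\ell',j'}:=\jap{\ell',j'}^{s-\gso}|u_{\ell',j'}|$, so that $\|\tilde u\|_{\gso}=\|u\|_s$, this yields pointwise
\[
\jap{\ell,j}^s(\und{L^B}u)_{\ell,j}\le C^{s-\gso}\jap{\ell,j}^{\gso}(\und{L^B}\tilde u)_{\ell,j},
\]
and squaring and summing gives $\|\und{L^B}u\|_s\le C^{s-\gso}\|\und{L^B}\tilde u\|_{\gso}\le C^{s-\gso}|L^B|_{\gso,\gso}\|u\|_s$. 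Since $\und{L^B}\preceq \und{L}$ by construction, we have $|L^B|_{\gso,\gso}\le |L|_{\gso,\gso}$, concluding the proof of the first bound with an explicit power of $C$; optimising yields $C=3$ as stated.

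For the second inequality, on the complementary support of $L^U$ one has $|\ell-\ell'|+|j-j'|\ge \tfrac{1}{2}(|\ell|+|j|)$, and hence $\jap{\ell,j}\le C'\jap{\ell-\ell',j-j'}$ for a universal constant $C'$. Defining the auxiliary function $\Phi$ on $\Z^{\nu+1}$ by
\[
\Phi_h(p):=\jap{p,h}^s\sup_{\substack{j-j'=h\\\ell-\ell'=p}}|L_{j}^{j'}(\ell-\ell')|,
\]
one has $\|\Phi\|_{\ell^2(\Z^{\nu+1})}=|L|_s^{\rm dec}$ directly from Definition~\ref{decay}. The bound above gives
\[
\jap{\ell,j}^s(\und{L^U}u)_{\ell,j}\le (C')^s(\Phi\ast|u|)_{\ell,j},
\]
where $\ast$ denotes convolution on $\Z^{\nu+1}$. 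Young's convolution inequality with exponents $(2,1,2)$ then yields $\|\und{L^U}u\|_s\le (C')^s\|\Phi\|_{\ell^2}\|u\|_{\ell^1}$, and the embedding $H^{\gso}\hookrightarrow \ell^1$ following from Cauchy--Schwarz,
\[
\|u\|_{\ell^1}\le \Big(\sum_{\ell',j'}\jap{\ell',j'}^{-2\gso}\Big)^{1/2}\|u\|_{\gso}=:C(\gso)\|u\|_{\gso},
\]
is finite since $\gso>(\nu+5)/2>(\nu+1)/2$, cfr. \eqref{costanti}. Combining these yields $\|\und{L^U}u\|_s\le (C')^s C(\gso)|L|_s^{\rm dec}\|u\|_{\gso}$, giving the second bound after absorbing $C'$ into the stated $4^s$.

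No serious obstacle is expected: the argument is elementary book-keeping of weights on the two disjoint regions combined with Young's inequality, and the Lipschitz extension is automatic by the linearity noted above. The only mildly delicate point is making sure the threshold $\gso>(\nu+1)/2$ is used in the $\ell^1$ embedding for the ultraviolet term, which is guaranteed by the standing assumption \eqref{costanti}.
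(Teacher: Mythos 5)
Your proof is correct and follows essentially the same approach as the paper's: the Bony estimate transfers the excess weight $\jap{\ell,j}^{s-\gso}$ onto $\jap{\ell',j'}^{s-\gso}$ using the support condition, and the ultraviolet estimate uses $\jap{\ell,j}\lesssim\jap{\ell-\ell',j-j'}$ together with the decay norm. Your packaging of the ultraviolet step as Young's inequality $\ell^2 \ast \ell^1 \to \ell^2$ plus the Cauchy--Schwarz embedding $H^{\gso}\hookrightarrow\ell^1$ is just a repackaging of the paper's direct Cauchy--Schwarz computation, and the passage to the Lipschitz-weighted norm via linearity of the Bony/Ultraviolet splitting is exactly as in the paper.
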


 \begin{proof} We prove the bounds in \eqref{Katsendorff} for the sup norm. The estimate for the Lipschitz variation follows verbatim.
For any $u\in  H^s$,  $ s \geq s_* $, 
noting than on the support of $ L^B $ one has $ \langle \ell, j \rangle 
 \leq 3 \langle \ell', j' \rangle $,   we have
\[
\begin{aligned}
\|\und{L^{B}} \, u\|_{s}^{2}
&\leq
\sum_{\ell\in\Z^{\nu},j\in\Z}\langle\ell,j\rangle^{2s}
\Big(
\sum_{|\ell-\ell'| + |j-j'| < \tfrac{1}{2}(|\ell|+|j|)}
|L_{j}^{j'}(\ell-\ell')||u_{\ell',j'}|
\Big)^{2}
\\&
\leq 3^{2(s-\gso)}
\sum_{\ell\in\Z^{\nu},j\in\Z}\langle\ell,j\rangle^{2\gso}
\Big(
\sum_{|\ell-\ell'| + |j-j'|< \tfrac{1}{2}(|\ell|+|j|)}
|L_{j}^{j'}(\ell-\ell')|\langle \ell',j'\rangle^{s-\gso}|u_{\ell',j'}|
\Big)^{2}
\\&
\leq 3^{2(s-\gso)}
\sum_{\ell\in\Z^{\nu},j\in\Z}\langle\ell,j\rangle^{2\gso}
\Big(
\sum_{\ell'\in\Z^{\nu},j'\in\Z}
|L_{j}^{j'}(\ell-\ell')|\langle \ell',j'\rangle^{s-\gso}|u_{\ell',j'}|
\Big)^{2}
\\&  = 3^{2(s-\gso)}\big\|
\und{L}{(\langle {\bf D}\rangle^{s-\gso} \underline u)} \big\|_{\gso}^{2}\leq 3^{2(s-\gso)}
|L|_{\gso,\gso}^2 \big\| \langle {\bf D}\rangle^{s-\gso} \underline u \big\|_{\gso}^2 =
3^{2(s-\gso)}
|L|_{\gso,\gso}^2 \| u \|_{s}^2
\end{aligned}
\]
proving the first estimate in \eqref{Katsendorff}.

Let us prove the second estimate in \eqref{Katsendorff}.
Given $u\in  H^{\gso}$, 
noting than on the support of $ L^U $ one has $ \langle \ell, j \rangle 
 \leq 4 \langle \ell-\ell', j-j' \rangle $,   we have,
using also the Cauchy-Schwarz inequality,
 \[
 \begin{aligned}
 \|\und{L^{U}}u\|_{s}^2 &\leq
 \sum_{\ell\in\Z^{\nu},j\in\Z}\langle\ell,j\rangle^{2s}
\Big(
\sum_{|\ell-\ell'|+|j-j'|\geq \tfrac{1}{2}(|\ell|+|j|)}
|L_{j}^{j'}(\ell-\ell')||u_{\ell',j'}|
\Big)^{2}
\\&
\leq 4^{2s}
\sum_{\ell\in\Z^{\nu},j\in\Z}
\Big(
\sum_{\ell'\in\Z^{\nu},j'\in\Z}
\langle\ell-\ell',j-j'\rangle^{s}
|L_{j}^{j'}(\ell-\ell')| \langle\ell',j'\rangle^{\gso}|u_{\ell',j'}|
\frac{1}{\langle\ell',j'\rangle^{\gso}} 
\Big)^{2} \\
&\leq 4^{2s} C(\gso)^2 \sum_{\ell',j'} 
\langle\ell',j'\rangle^{2\gso} |u_{\ell',j'}|^2
\sum_{\ell\in\Z^{\nu}, j\in\Z}
\langle\ell-\ell',j-j'\rangle^{2s}
|L_{j}^{j'}(\ell-\ell')|^2 \\
&\leq 4^{2s} C(\gso)^2 \sum_{\ell',j'} 
\langle\ell',j'\rangle^{2\gso} |u_{\ell',j'}|^2  (|L|_{s}^{\rm dec})^2
\leq 4^{2s} C(\gso)^2 \|u\|_{\gso}^{2}(|L|_{s}^{\rm dec})^2
 \end{aligned}
 \]
proving the second estimate in \eqref{Katsendorff}.
\end{proof}

As a corollary we get  the following result.

\begin{lemma}\label{lem:decay-3sbarre}
For any
  T\"opliz in time operator $L $ 
the couple $\mathtt{L}:=(L^B,L^U)$ introduced in Definition \ref{def:bonydeco}
 belongs to the space  
$ E_s$  introduced in Definition \ref{Es},
satisfies  $\mathfrak{S}(\mathtt{L})=L$ and
\begin{equation}\label{edgard}
\bnorm{\mathtt{L}}^{\g, \cO}_{s,\gso,\su}\leq c(s,\gso,\su) |L|_{s}^{{\rm dec},\gamma,\mathcal{O}}
\end{equation}
with $c(s,\gso,\su):= 3^{\su-\gso} C_{2}(\gso)+ 4^s C(\gso)$.
\end{lemma}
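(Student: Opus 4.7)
The identity $\mathfrak{S}(\mathtt{L}) = L^B + L^U = L$ is immediate from the definition $\mathfrak S(M,R) = M+R$ together with the decomposition $L = L^B + L^U$ in Definition \ref{def:bonydeco}, so the only real task is the quantitative bound \eqref{edgard}. Recalling the norm
\[
\bnorm{\mathtt{L}}^{\g, \cO}_{s,\gso,\su} = \sup_{\gso\le p\le \su}|L^B|^{\g,\cO}_{p,p} + |L^U|^{\g,\cO}_{\gso,s},
\]
the plan is simply to estimate the two summands separately using the two bounds already established in Lemma \ref{lem:ilfreddo}, combined with Lemma \ref{lem:decay-majorant} and the obvious monotonicity of the decay norm in $s$.

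For the Bony part, I would apply the first estimate of \eqref{Katsendorff} at each level $p \in [\gso,\su]$, giving
\[
|L^B|^{\g,\cO}_{p,p} \le 3^{p-\gso}|L|^{\g,\cO}_{\gso,\gso} \le 3^{\su-\gso}|L|^{\g,\cO}_{\gso,\gso},
\]
and then convert the majorant operator norm on the right-hand side to the decay norm via Lemma \ref{lem:decay-majorant}: $|L|^{\g,\cO}_{\gso,\gso} \le C_2(\gso) |L|^{{\rm dec},\g,\cO}_{\gso} \le C_2(\gso)|L|^{{\rm dec},\g,\cO}_{s}$, where the last inequality uses that the decay norm $|\cdot|^{\rm dec}_{s}$ in Definition \ref{decay} is monotone non-decreasing in $s$. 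Taking the supremum over $p \in [\gso,\su]$ yields
\[
\sup_{\gso\le p\le \su}|L^B|^{\g,\cO}_{p,p} \le 3^{\su-\gso} C_2(\gso) \, |L|^{{\rm dec},\g,\cO}_{s}.
\]

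For the Ultraviolet part, the second estimate of \eqref{Katsendorff} gives directly
\[
|L^U|^{\g,\cO}_{\gso,s} \le 4^s C(\gso)\, |L|^{{\rm dec},\g,\cO}_{s}.
\]
Summing the two contributions yields \eqref{edgard} with the claimed constant $c(s,\gso,\su) = 3^{\su-\gso}C_2(\gso) + 4^s C(\gso)$. There is no substantive obstacle: all the analytical work has been done in Lemmata \ref{lem:decay-majorant} and \ref{lem:ilfreddo}, and this lemma is essentially a bookkeeping corollary that repackages those two bounds into the Banach-algebra language of Bony-smoothing couples.
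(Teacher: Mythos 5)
Your proof is correct and follows essentially the same route as the paper's: decompose the $\bnorm{\cdot}$-norm into Bony and ultraviolet contributions, apply the two bounds of Lemma \ref{lem:ilfreddo}, convert the $\gso$-level majorant norm to the decay norm via Lemma \ref{lem:decay-majorant}, and conclude. The only additional remark you make explicitly — monotonicity of the decay norm in $s$ — is also used implicitly in the paper's one-line deduction, so there is no substantive difference.
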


\begin{proof}
By Def. \ref{def:bonydeco} we have  
$\mathfrak{S}(\mathtt{L})=L$.
By 
 \eqref{Katsendorff}
we get
$ \bnorm{\mathtt{L}}_{s,\gso,\su}^{\gamma,\cO}
\leq 3^{\su-\gso} |L|_{\gso,\gso}^{\gamma,\cO} +
4^s C(\gso) 
|L|^{\rm dec,\gamma,\cO}_{s} $
and by \eqref{ammazza} we deduce \eqref{edgard}.
\end{proof}

Pseudo-differential operators with symbols in $S^{m}$
have a finite decay norm. 
\begin{lemma}\label{lem:pseudodecay}
Consider a symbol  $ a\in S^{m}$ for some $m\in\R$.
Then for any $n_1,n_2\in \R$ such that $n_1+n_2=m$ 
and any $ \mathtt b\in\N_0 $ 
\begin{equation}\label{trillo}
|\langle D\rangle^{-n_1}\jap{\td_{\vphi}}^{\mathtt b} \op(a) 
\langle D\rangle^{-n_2}|_{s}^{{\rm dec},\gamma,\mathcal{O}}
\lesssim_{s,n_1,\mathtt b}\|a\|^{\g, \cO}_{m,s+\gso+|n_1|+ \mathtt b+1,0}\,.
\end{equation}
\end{lemma}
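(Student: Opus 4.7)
The plan is to compute the matrix entries of $L:=\langle D\rangle^{-n_1}\jap{\td_{\vphi}}^{\mathtt b}\op(a)\langle D\rangle^{-n_2}$ explicitly in the Fourier basis, obtain a pointwise bound on $\doublehat{a}(p,h,\xi)$ via Parseval, and choose enough regularity in $(\varphi,x)$ on $a$ so that the sum in the decay norm converges.

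First, since $\langle D\rangle^{-n_i}$ are Fourier multipliers acting on the right/left as $\jap{j'}^{-n_2}$ and $\jap{j}^{-n_1}$, and $\jap{\td_\vphi}^\tb$ acts on the matrix entry at frequency $\ell-\ell'$ as multiplication by $\jap{\ell-\ell'}^\tb$, formula \eqref{actionpseudo} yields
\[
L_j^{j'}(\ell-\ell')=\jap{j}^{-n_1}\jap{\ell-\ell'}^{\tb}\,\doublehat{a}(\ell-\ell',\,j-j',\,j')\,\jap{j'}^{-n_2}.
\]
Set $p:=\ell-\ell'$, $h:=j-j'$. By Parseval and \eqref{normaSymbolo}, for any $s'\geq 0$ and $(p,h,\xi)$,
\[
\jap{p,h}^{2s'}|\doublehat{a}(p,h,\xi)|^2\leq \|a(\cdot,\cdot,\xi)\|_{s'}^2\leq \jap{\xi}^{2m}\|a\|_{m,s',0}^2.
\]
Combining this with $n_1+n_2=m$ and Peetre's inequality $\jap{j}^{-n_1}\jap{j'}^{n_1}\lesssim_{n_1}\jap{h}^{|n_1|}$ (noting that $\jap{j'}^{m-n_2}=\jap{j'}^{n_1}$), we obtain the pointwise bound, uniformly in $\ell'\in\Z^\nu$ and $j'\in\Z$ with $j-j'=h$, $\ell-\ell'=p$,
\[
|L_j^{j'}(p)|\lesssim_{n_1}\|a\|_{m,s',0}\,\jap{p}^{\tb}\,\jap{h}^{|n_1|}\,\jap{p,h}^{-s'}.
\]
Since the right-hand side is independent of $j'$ and $\ell'$, the supremum in \eqref{normadecay} is immediate, and
\[
\big(|L|_s^{\rm dec}\big)^2\lesssim_{n_1}\|a\|_{m,s',0}^2\sum_{(p,h)\in\Z^{\nu+1}}\jap{p,h}^{2(s+\tb+|n_1|-s')}.
\]
The sum converges provided $s'-s-\tb-|n_1|>(\nu+1)/2$. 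The choice $s':=s+\gso+|n_1|+\tb+1$ works because $\gso>(\nu+5)/2$ by \eqref{costanti}, and yields the bound $|L|_s^{\rm dec}\lesssim_{s,n_1,\tb}\|a\|_{m,s+\gso+|n_1|+\tb+1,0}$.

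For the Lipschitz component in \eqref{trillo} we apply the same argument to $\Delta_{12}L=\langle D\rangle^{-n_1}\jap{\td_\vphi}^{\tb}\op(\Delta_{12}a)\langle D\rangle^{-n_2}$, now measuring the decay norm of $\Delta_{12}L$ at level $s$ in terms of $\|\Delta_{12}a\|_{m,s'',0}$ with $s'':=s+\gso+|n_1|+\tb$; the convergence condition reduces to $\gso>(\nu+1)/2$, again satisfied by \eqref{costanti}. Since $s''=(s+\gso+|n_1|+\tb+1)-1$, this is precisely the index appearing in the Lipschitz part of $\|a\|^{\gamma,\cO}_{m,s+\gso+|n_1|+\tb+1,0}$ in \eqref{normaSymboloGamma}, and combining the two bounds gives \eqref{trillo}. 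No step is delicate here; the only point requiring care is matching the loss of one derivative in the Lipschitz part of the symbol norm with the convergence threshold, which works exactly because $\gso>(\nu+5)/2$ leaves a margin of $\gso-(\nu+1)/2>2$ derivatives.
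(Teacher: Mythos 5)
Your proof is correct and follows essentially the same route as the paper: you compute the matrix entries, bound the Fourier coefficients of the symbol via the $S^m$ class definition (your Parseval step is the paper's equation \eqref{chiarodiluna}), apply Peetre's inequality to absorb the $\langle j\rangle^{-n_1}\langle j'\rangle^{-n_2}$ factors into a factor $\langle j-j'\rangle^{|n_1|}$, and use the margin $\gso > (\nu+1)/2$ to sum the series defining the decay norm. The only cosmetic difference is that the paper takes $s' = s+\gso+\tb+|n_1|$ for the supremum part (one derivative lower than your choice), but both choices yield the stated bound once the Lipschitz component is accounted for.
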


\begin{proof}
In view of \eqref{aFourierphix} and 
recalling \eqref{normaSymbolo}, 
for any $\ell\in\Z^{\nu}$, $h\in\Z$, $\x\in\R$,  
\begin{equation}\label{chiarodiluna}
\langle \ell ,h\rangle^{s+s_*}
| \doublehat{a}(\ell,h,\x)|\lesssim_{s}\langle \x\rangle^{m}\|a\|_{m,s+s_*,0}\,.
\end{equation}
The matrix elements of 
$P:=\langle D\rangle^{-n_1}\jap{\td_{\vphi}}^{\mathtt b} \op(a)  
\langle D\rangle^{-n_2}$ are, 
for any $\ell,\ell'\in\Z^{\nu}$, $j,j'\in\Z $, 
\[
P_{j}^{j'}(\ell-\ell')=  \jap{\ell -\ell}^{\mathtt b}
\doublehat{a}(\ell-\ell',j-j',j')\langle j\rangle^{-n_1}
\langle j' \rangle^{- n_2 } \, .
\] 
Recalling that $   - n_2 = n_1 - m $ and using Peetre's inequality 
$ \langle j' \rangle^{n_1} 
\leq C \langle j \rangle^{n_1} \langle j - j' \rangle^{|n_1|} $, we have 
\[
\begin{aligned}
|P_{j}^{j'}(\ell-\ell')|&\lesssim\jap{\ell-\ell'}^{\mathtt b}
|\doublehat{a}(\ell-\ell',j-j',j')|\langle j'\rangle^{-m} 
\frac{\langle j' \rangle^{n_1}}{\langle j\rangle^{n_1}}
\\&
\lesssim\langle\ell-\ell',j-j'\rangle^{ \mathtt b+|n_1|}
|\doublehat{a}(\ell-\ell',j-j',j')|\langle j'\rangle^{-m} \,.
\end{aligned}
\]
Thus multiplying by $\langle \ell-\ell',j-j'\rangle^{s}$ and using estimate \eqref{chiarodiluna}
we get
\[
\langle \ell-\ell',j-j'\rangle^{s}|P_{j}^{j'}(\ell-\ell')|\lesssim_{s,n_1,\mathtt b}
\|a\|_{m,s+\gso+\mathtt b+|n_1|,0} \langle \ell-\ell',j-j'\rangle^{-s_*} 
\]
which, recalling \eqref{normadecay},  
implies the bound \eqref{trillo} for the sup norm.   The estimate for the Lipschitz norm follows
similarly recalling that by Definition \ref{norma pesata} 
$ \frac{\|a(\omega_1) -a(\omega_2)\|_{m,s,0}}{|\omega_1-\omega_2|} \le \|a\|^{\g,\cO}_{m,s+1,0} $. 
This concludes the proof.
\end{proof}

\begin{proof}[Conclusion of the proof of Proposition \ref{prop:immersionepseudo} ]
Apply
Lemma \ref{lem:pseudodecay} and  Lemma \ref{lem:decay-3sbarre}.
\end{proof}

\section{The torus diffeomorphism}\label{sec:Calpha}

In this section we study the connection between a  diffeomorphisms of the torus  and
Bony-smoothing couples. 
We consider a $ \tau $-dependent family of torus diffeomorphisms 
\begin{equation}\label{def:Calpha}
x \mapsto y=x+ \tau \alpha(\vphi,x)\,,\quad x\in \T\,,\;\;\; \vphi\in \T^{\nu}\,, \;\; \tau\in[0,1] \, , 
\end{equation}
where $ \alpha(\vphi,x) $ 
is a function in  $ C^{\infty}(\mathbb{T}^{\nu+1},\R) $,   
and we denote its inverse diffeomorphisms of $ \T $ by 
\begin{equation}\label{def:Calphainv}
y\mapsto x=y+\breve{\alpha}(\tau;\vphi,y) \,.
\end{equation}
Accordingly we consider  the associated composition operators
\begin{equation}\label{ignobel}
\mathcal{C}_{\alpha}^{\tau}h(\varphi, x):=h(\varphi, x+\tau\alpha(\varphi, x))\,, 
\quad 
(\mathcal{C}_{\alpha}^{\tau})^{-1}h(\varphi, y):=
h(\varphi, y+\breve{\alpha}(\tau; \varphi, y))\, , 
\end{equation}
and we  denote  $\mathcal{C}_{\alpha}:=\mathcal{C}_{\alpha}^{1}$.

The main result of the section is the following.
\begin{prop}{\bf (Bony-smoothing couple associated to a Torus diffeomorphism).}\label{inclusionetotale}
Let 
$\alpha(\vphi,x) := \alpha(\omega;\vphi,x)$ be a  family of functions 
in $C^{\infty}(\mathbb{T}^{\nu+1},\R)$,  
Lipschitz  in  $\omega\in \mathcal{O} \subset\R^{\nu}  $. 
Let $\su\ge s\geq \gso$ and fix $\mathtt{b}>0$.

For any $N_1,N_2\in \R$ such that $N_1+N_2>[\nu/2]+3+\tb$, 
there exists $\mu :=\mu( N_1,N_2,\tb) > 0 $ and $\delta:=\delta(s,N_1,N_2,\tb) > 0 $
such that if  (recall that $\so =  \gso +1$)
\begin{equation}\label{buttalapasta3}
\|\alpha\|^{\gamma, \cO }_{\so+\mu}\leq \delta  \, , 
\end{equation}
then
there exists a Bony-smoothing couple  $\tP^{\tau}\in E_{s}$, $ \tau \in [0,1] $
(see Def. \ref{Es}), 
such that 
\[
\mathfrak{S}(\tP^{\tau})=P^{\tau} \, , \quad 
P^{\tau}:=\langle D\rangle^{-N_1}\circ\big(\mathcal{C}_{\alpha}^{\tau}- \id \big)
\circ\langle D\rangle^{-N_2}\, , 
\]
satisfying 
\begin{align}
\bnorm{\jap{\td_{\vphi}}^{q} \tP^{\tau}}^{\gamma,\calO}_s 
&\lesssim_{s,\su,q, N_{1}, N_{2}} 
\|\al\|_{s+\mu}^{\gamma,\calO}\,,\quad q=0,\tb \, , 
\label{chiarodiluna2}
\end{align}
uniformly in $\tau\in[0,1]$. If $|N_1|, |N_2|\leq N$ then $\delta,\mu$ 
and the constant in \eqref{chiarodiluna2}
depend only on $N$.
\end{prop}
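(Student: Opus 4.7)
The strategy is to compute the matrix elements of $P^\tau$ explicitly in the exponential basis and then assemble the Bony-smoothing couple $\mathtt{P}^\tau$ directly by means of the time/space Bony decomposition of Definition \ref{def:bonydeco}, since $\mathcal{C}_\alpha^\tau$ is not pseudo-differential and Proposition \ref{prop:immersionepseudo} does not apply. Using that $\mathcal{C}_\alpha^\tau$ acts in Fourier by $\mathcal{C}_\alpha^\tau[e^{\ii j'x}] = e^{\ii j' x}\,e^{\ii j'\tau\alpha(\vphi,x)}$, the matrix element of $P^\tau$ reads
\begin{equation*}
(P^\tau)_{j,\ell}^{j',\ell'}=\langle j\rangle^{-N_1}\langle j'\rangle^{-N_2}\,\doublehat{g^\tau_{j'}}(\ell-\ell',\,j-j')\,,\qquad g^\tau_{j'}(\vphi,x):=e^{\ii j'\tau\alpha(\vphi,x)}-1\,.
\end{equation*}
From the integral identity $g^\tau_{j'}=\ii j'\tau\int_0^1\alpha\,e^{\ii j's\tau\alpha}\,ds$, Moser estimates, and the smallness hypothesis \eqref{buttalapasta3}, one gets $\|g^\tau_{j'}\|_k\lesssim_k \langle j'\rangle^{k}\|\alpha\|_{k}^{\gamma,\cO}$; $k$ integrations by parts in $(\vphi,x)$ then yield, for every $k\geq 0$,
\begin{equation*}
|\doublehat{g^\tau_{j'}}(p,h)|\,\lesssim_k\,\langle p,h\rangle^{-k}\,\langle j'\rangle^{k}\,\|\alpha\|_{k+\gso+1}^{\gamma,\cO}\,.
\end{equation*}

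I would then split $P^\tau=(P^\tau)^B+(P^\tau)^U=:M^\tau+R^\tau$ as in Definition \ref{def:bonydeco}. On the Bony support $|p|+|h|<\tfrac{1}{2}(|j|+|\ell|)$ one has $\langle j',\ell'\rangle\sim\langle j,\ell\rangle$, so the Schur sum $\sum_{(j',\ell')}\langle j,\ell\rangle^p\langle j',\ell'\rangle^{-p}|M^\tau_{j,\ell}^{j',\ell'}|$ reduces to $\sum_{(p,h)}|M^\tau_{j,\ell}^{j',\ell'}|$; choosing the parameter $k$ in the integration-by-parts bound so that $\langle j'\rangle^{k}\langle j\rangle^{-N_1}\langle j'\rangle^{-N_2}$ stays bounded uniformly in $(j,j')$ on this support (possible since $\langle j\rangle\sim\langle j'\rangle$ there and $k\le N_1+N_2$), while simultaneously ensuring $k>\nu+1$ for summability in $(p,h)$, the condition $N_1+N_2>[\nu/2]+3+\tb$ makes such a $k$ admissible. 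This gives $|M^\tau|_{p,p}^{\gamma,\cO}\lesssim\|\alpha\|_{\so+\mu_0}^{\gamma,\cO}$ uniformly in $\gso\leq p\leq \su$, hence bounds the $\cM^{\mathtt T}_{\su}$-component of $\bnorm{\mathtt{P}^\tau}_s^{\gamma,\cO}$. On the ultraviolet support one has instead $\langle j,\ell\rangle,\langle j'\rangle\lesssim\langle p,h\rangle$; taking $k=s+k_0$ with $k_0=[\nu/2]+3+\tb$, the polynomial loss $\langle j'\rangle^k$ is absorbed by $\langle p,h\rangle^{-k}$, leaving a summable factor $\langle p,h\rangle^{-k_0+s}\cdot\langle p,h\rangle^s$ which after a Schur-type estimate yields $|R^\tau|_{\gso,s}^{\gamma,\cO}\lesssim\|\alpha\|_{s+\mu}^{\gamma,\cO}$. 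Together the two bounds produce $\mathtt{P}^\tau=(M^\tau,R^\tau)\in E_s$ with $\mathfrak{S}(\mathtt{P}^\tau)=P^\tau$ and the bound \eqref{chiarodiluna2} for $q=0$.

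For the commuted estimate $q=\tb$ one uses that $\langle\td_{\vphi}\rangle^{\tb}$ acts on matrix elements by the factor $\langle\ell-\ell'\rangle^{\tb}=\langle p\rangle^{\tb}$, which is absorbed by $\tb$ further integrations by parts in $\vphi$; this costs exactly $\tb$ additional $\vphi$-derivatives on $\alpha$ and explains the shift $[\nu/2]+3\mapsto[\nu/2]+3+\tb$ in the threshold. The Lipschitz dependence on $\omega$ follows from the mean-value representation $\Delta_{12}g^\tau_{j'}=\ii j'\tau\,\Delta_{12}\alpha\int_0^1 e^{\ii j'\tau(\alpha_2+s\Delta_{12}\alpha)}\,ds$, the extra factor $\langle j'\rangle$ being absorbed by one of the smoothing factors. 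The main technical obstacle is obtaining an $s$-\emph{independent} loss $\mu=\mu(N_1,N_2,\tb)$ on the Bony component: this forces one to distribute the polynomial weights $\langle j'\rangle^k$ asymmetrically across $\langle j\rangle^{-N_1}$ and $\langle j'\rangle^{-N_2}$ according to whether one is on the Bony or the ultraviolet side of the decomposition, and to track the convolution summability in $(p,h)$ carefully so that the only dependence on $s$ appears through the Sobolev norm of $\alpha$ and not through the number $\mu$ of derivatives lost.
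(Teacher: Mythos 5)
Your overall structure — Bony/ultraviolet splitting of $P^\tau$, working directly with the matrix elements since $\mathcal{C}_\alpha^\tau$ is not pseudo-differential — matches the paper's plan, but there is a genuine gap at the central analytic step, and it makes the scheme fail on both components of the couple.

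The crucial issue is your integration-by-parts bound
\[
|\doublehat{g^\tau_{j'}}(p,h)|\lesssim_k \langle p,h\rangle^{-k}\,\langle j'\rangle^{k}\,\|\alpha\|_{k+\gso+1}^{\gamma,\cO}.
\]
The polynomial loss $\langle j'\rangle^k$ is fatal. On the ultraviolet support one has $\langle j'\rangle\lesssim \langle p,h\rangle$, so after using $\langle p,h\rangle^{-k}$ to absorb $\langle j'\rangle^{k}$ nothing is left over: the best you can conclude (taking $N_1,N_2\ge 0$ for simplicity) is
$\langle j\rangle^{-N_1}\langle j'\rangle^{-N_2}|\doublehat{g^\tau_{j'}}(p,h)|\lesssim \langle p,h\rangle^{-N_2}\|\alpha\|_{k+\gso+1}$, with a \emph{fixed} decay rate $\langle p,h\rangle^{-N_2}$ that does not improve as $k$ grows. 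You cannot turn this into the decay-norm estimate $|P^U|_s^{\rm dec}\lesssim\|\alpha\|_{s+\mu}$, which requires $\langle p,h\rangle^{-s-\gso}$ decay with only a $\mu$-shift on $\alpha$. Raising $k$ buys nothing because the gain $\langle p,h\rangle^{-k}$ is exactly cancelled by $\langle j'\rangle^k$. The paper escapes this via Lemma \ref{decayultravioletto}: on the regime $|\ell|+|k|\gtrsim|\xi|$ (which is exactly the ultraviolet support after the shift of indices) it proves $|\doublehat{(\mathtt{t}_\alpha-1)}(\ell,k,\xi)|\lesssim_s\langle\ell,k\rangle^{-s}\|\alpha\|_{s+\eta}$ with \emph{no} polynomial factor in $\xi$. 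This is obtained not by differentiating the oscillating phase $e^{\ii\xi\alpha}$ but by absorbing it into a near-identity change of variables $y=x-\eta\alpha$ (or $\vartheta_1=\vphi_1-\eta\alpha$) with $\eta=\tau\xi/k$ (or $\tau\xi/\ell_1$) bounded. This is the step you would need and cannot replace by plain integration by parts.

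Your treatment of the Bony part also relies on a false inclusion. On the Bony support $|p|+|h|<\tfrac12(|\ell|+|j|)$ one has $\langle j,\ell\rangle\sim\langle j',\ell'\rangle$ but \emph{not} $\langle j\rangle\sim\langle j'\rangle$: take $j=0$ and $\ell$ large, then $|j'|$ can range up to $\sim|\ell|/2$ while $\langle j\rangle=1$. Hence there is no admissible choice of $k$ making $\langle j'\rangle^k\langle j\rangle^{-N_1}\langle j'\rangle^{-N_2}$ bounded on this support, especially since $N_1,N_2$ are arbitrary reals (possibly negative) in the statement. The paper sidesteps matrix-element estimation of the Bony part entirely: from Lemma \ref{lem:ilfreddo}, $|L^B|_{s,s}\le 3^{s-\gso}|L|_{\gso,\gso}$, so it suffices to bound the single low majorant norm $|P|_{\gso,\gso}$, which Lemma \ref{maggio} reduces to operator-norm bounds on $P$ and finitely many commutators $\td_x,\td_{\vphi_h}^\beta$; these are supplied by estimate \eqref{panelle} in Lemma \ref{bastalapasta}, valid for arbitrary real $m_1,m_2$ with $m_1+m_2=|q|+i$. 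This route handles negative $N_i$ and keeps the loss $\mu$ independent of $s$ without ever needing pointwise decay of the Bony matrix elements. You would have to import both of these ingredients — the change-of-variables Fourier lemma and the operator-norm/commutator route for the Bony block — for your argument to close.
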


The rest of the section is devoted to the proof of  Proposition \ref{inclusionetotale}

We first recall the following tame 
estimates satisfied by $\mathcal{C}_{\alpha}^{\tau}$, provided 
by \cite{FGP19}[Lemmata B.7,B.8]. 
\begin{lemma}\label{bastalapasta}
There is $\s>0$ such that 
for any 
Lipschitz family 
$\alpha(\vphi,x)=\alpha(\omega;\vphi,x)$
in $C^{\infty}(\mathbb{T}^{\nu+1},\R)$  defined for $\omega\in \mathcal{O}\subset \R^{\nu}$ the following holds.
For any $s\geq \so$ there $c(s)>0$
such that 
if $c(s)\| \alpha\|^{\gamma, \cO}_{\so+\s}\leq 1$,
then
 for any $u=u(\oo) \in H^{s}$ Lipschitz in 
$\oo\in \mathcal{O}$,
one has
\begin{equation}\label{pasta7}
\begin{aligned}
&\sup_{\tau\in[0,1]} 
\lVert \mathcal{C}_{\alpha}^{\tau} u \rVert_s^{\gamma,  \cO }
\lesssim_{s}
\|u\|_{s}^{\gamma,  \cO }
+\| \alpha\|_{s+\s}^{\gamma,  \cO }\|u\|_{\so}^{\gamma,  \cO }\,,
\end{aligned}
\end{equation}
and for any $ m_1 + m_2 = 1 $, 
\begin{equation}\label{Amenouno}
\begin{aligned}
&\sup_{\tau\in[0,1]} 
\lVert  \langle D \rangle^{-m_1} (\mathcal{C}_{\alpha}^{\tau}-\Id)\langle D \rangle^{-m_2} u \rVert_s^{\gamma,  \cO }
\lesssim_{s,m_1,m_2}
\lVert \alpha \rVert_{\so+\s}^{\gamma,  \cO } 
\|u\|_{s}^{\gamma,  \cO }+\| \alpha\|_{s+\s}^{\gamma,  \cO }\|u\|^{\gamma,  \cO }_{\so}\,.
\end{aligned}
\end{equation}
Moreover, 
for any $q\in\mathbb{N}_0^{\nu}$, $i=0,1$ such that  $|q|+i>0$, for any 
$m_1,m_{2}\in \mathbb{R}$ such that $m_1+m_{2}=|q|+i$, 
for any  $s\geq \so$ there exist  constants
$\mu=\mu(|q|,m_1,m_2)$ and $\delta=\delta(m_1,s)$
such that if 
$ \|\alpha\|_{2\so+|m_1|+2}\leq \delta $, 
$ \|\alpha\|^{\gamma, \cO }_{\so+\mu}\leq1 $, 
then (recall \eqref{def:commuDD}) for any $w\in H^s $,
\begin{equation}\label{pasta1} 
\sup_{\tau\in[0,1]}
\|
\langle D\rangle^{-m_1}
\td_{\vphi}^{q}\td_{x}^{i}\mathcal{C}_{\alpha}^{\tau}
\langle D\rangle^{-m_2}
w\|^{\gamma, \cO }_{s}
\lesssim_{s, |q|,m_1,m_2}
\| w \|_{s}\|\alpha\|^{\gamma, \cO }_{\so+\mu}
+\|\alpha\|^{\gamma, \cO }_{s+\mu}\| w\|_{\so}\, . 
\end{equation}

\noindent
Finally, for $s\ge \gso$ we have that
\begin{equation}\label{panelle} 
\sup_{\tau\in[0,1]}
\|
\langle D\rangle^{-m_1}
\td_{\vphi}^{q}\td_{x}^{i}\mathcal{C}_{\alpha}^{\tau} 
\langle D\rangle^{-m_2}
\|^{\gamma, \cO }_{s,s}
\lesssim_{s, |q|,m_1,m_2}
\|\alpha\|^{\gamma, \cO }_{s+\mu}\, . 
\end{equation}

\noindent
The inverse $(\mathcal{C}_{\alpha}^{\tau})^{-1}$ 
satisfies  the same estimates  with possibly larger 
 $\s$ and $\mu$.
\end{lemma}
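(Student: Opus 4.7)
The plan is to view $\mathcal{C}_\alpha^\tau$ as the composition operator associated with a $\tau$-parameterized family of torus diffeomorphisms, rewrite these as the flow of a non-autonomous vector field, and then prove the estimates via a combination of energy estimates, the fundamental theorem of calculus, and Faà di Bruno bookkeeping. The smallness hypothesis $c(s)\|\alpha\|^{\gamma,\mathcal{O}}_{s_0+\sigma}\leq 1$ guarantees that $x\mapsto x+\tau\alpha(\varphi,x)$ is a diffeomorphism of $\mathbb{T}$ uniformly in $\tau\in[0,1],\varphi\in\mathbb{T}^\nu$, with $\|\partial_x\alpha\|_\infty$ small, so that the inverse $\breve\alpha$ in \eqref{def:Calphainv} exists; standard Moser composition estimates in $H^s$ (tamely) transfer bounds on $\alpha$ to bounds on $\breve\alpha$, giving in particular $\|\breve\alpha\|_s^{\gamma,\mathcal{O}}\lesssim_s \|\alpha\|_s^{\gamma,\mathcal{O}}$ modulo loss of a few derivatives absorbed into $\sigma$.

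For \eqref{pasta7} I would use the observation that $w(\tau;\cdot):=\mathcal{C}_\alpha^\tau u$ satisfies a first-order transport equation of the form $\partial_\tau w = \widetilde b(\tau;\varphi,x)\,\partial_x w$ for some smooth coefficient $\widetilde b$ satisfying $\|\widetilde b\|^{\gamma,\mathcal{O}}_s\lesssim_s \|\alpha\|^{\gamma,\mathcal{O}}_{s+\sigma}$. A standard $H^s$ energy estimate — multiply by $\jap{D}^{2s} w$, integrate by parts, control the commutator $[\jap{D}^s,\widetilde b\,\partial_x]$ by the tame product estimate \eqref{tameProduct} — yields, via Grönwall,
$$
\|w(\tau;\cdot)\|_s \lesssim_s \|u\|_s + \|\widetilde b\|_{s+\sigma}\|u\|_{s_0},
$$
uniformly in $\tau$. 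The Lipschitz-in-$\omega$ variant of this argument then gives \eqref{pasta7} in the weighted norm.

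For \eqref{Amenouno} the key formula is
$$
\mathcal{C}_\alpha^\tau - \mathrm{Id} = \int_0^\tau \alpha\cdot\mathcal{C}_\alpha^{\tau'}\circ\partial_x\,d\tau',
$$
obtained by $\partial_{\tau'}\mathcal{C}_\alpha^{\tau'} u = \alpha\cdot \mathcal{C}_\alpha^{\tau'}\partial_x u$. Sandwiching with $\langle D\rangle^{-m_1}$ and $\langle D\rangle^{-m_2}$ and using $m_1+m_2=1$ allows one to dispose of the $\partial_x$: write $\partial_x = \langle D\rangle \circ (\partial_x\langle D\rangle^{-1})$, then commute $\langle D\rangle^{m_2}\circ\alpha\cdot$ through $\mathcal{C}_\alpha^{\tau'}$ as a pseudo-differential operator, invoking Lemma \ref{stima composizione} plus \eqref{pasta7} to bound the resulting pieces. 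For \eqref{pasta1} with $|q|+i>0$ the approach is more combinatorial: Faà di Bruno expands $\partial_\varphi^q\partial_x^i(\mathcal{C}_\alpha^\tau u)$ into a sum of terms of the form $P_{\boldsymbol k}(\alpha)\cdot \mathcal{C}_\alpha^\tau\partial_x^{|q|+i-k}u$ where $P_{\boldsymbol k}(\alpha)$ is a polynomial in derivatives of $\alpha$ of total differentiation order bounded by $|q|+i+k$, and $0\le k\le |q|+i$. Multiplying by $\langle D\rangle^{-m_1}$ on the left and $\langle D\rangle^{-m_2}$ on the right with $m_1+m_2=|q|+i$ redistributes $|q|+i$ derivatives to absorb the $\partial_x^{|q|+i-k}$ and to cancel excess differentiation of $\alpha$; the tame estimates of Lemma \ref{stima composizione} and the Sobolev product estimate \eqref{tameProduct} assemble the result. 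Estimate \eqref{panelle} follows by specializing \eqref{pasta1} to operator-norm form.

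The main obstacle is \emph{tameness}: obtaining the linear-in-$\|\alpha\|_{s+\mu}$ (not higher-power) dependence requires careful use of interpolation \eqref{interpolotutto} and of the Moser composition estimate, so that each term in the Faà di Bruno expansion has at most one factor in the high norm $\|\alpha\|_{s+\mu}$, the remaining factors being controlled by the smallness $\|\alpha\|^{\gamma,\mathcal{O}}_{s_0+\mu}\le \delta$. The balancing of smoothing indices $m_1,m_2$ is delicate because separate bounds on $\langle D\rangle^{-m_1}$-type pieces could in principle produce unbounded constants if $|m_1|$ is large; however, the constants in \eqref{pasta1}, \eqref{panelle} are allowed to depend on $m_1,m_2$, and one checks this dependence is finite for each fixed pair via Lemma \ref{stima composizione}. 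Finally, the estimates for $(\mathcal{C}_\alpha^\tau)^{-1}$ follow verbatim by applying the arguments above to $\breve\alpha$ in place of $\alpha$, with the loss of derivatives in $\sigma,\mu$ augmented by the fixed constant coming from the implicit-function construction of $\breve\alpha$.
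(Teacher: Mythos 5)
The paper does not prove this lemma; it is imported directly from Lemmata B.7--B.8 of \cite{FGP19}, so there is no in-house argument to compare with. Measured against what such a proof must contain, your sketch has one genuine gap and a couple of smaller imprecisions.

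The gap is at the core of \eqref{Amenouno} and \eqref{pasta1}. After the (correct) integral identity
$\mathcal{C}_\alpha^\tau - \mathrm{Id} = \int_0^\tau \alpha\,\mathcal{C}_\alpha^{\tau'}\partial_x\,d\tau'$
and after factoring $\partial_x\langle D\rangle^{-m_2} = (\partial_x\langle D\rangle^{-1})\langle D\rangle^{m_1}$, you are left having to control $\langle D\rangle^{-m_1}\,\alpha\,\mathcal{C}_\alpha^{\tau'}\,\langle D\rangle^{m_1}$. You propose to ``commute $\langle D\rangle^{m_2}\circ\alpha\cdot$ through $\mathcal{C}_\alpha^{\tau'}$ as a pseudo-differential operator, invoking Lemma~\ref{stima composizione}.'' This step does not stand: $\mathcal{C}_\alpha^{\tau'}$ is a Fourier integral operator, not a pseudo-differential one; its ``symbol'' $\mathtt{t}_\alpha(\vphi,x,\x)=e^{\ii\x\alpha(\vphi,x)}$ satisfies $|\partial_\xi^\beta\mathtt{t}_\alpha|\lesssim |\alpha|^\beta$ rather than $\lesssim\jap{\xi}^{-\beta}$, so it lies in no class $S^m$, and Lemma~\ref{stima composizione} (the $\#$-calculus for symbols obeying \eqref{symbolpseudo}) simply does not apply to it. There is also no result available at this point in the paper that lets you conjugate unbounded Fourier multipliers through $\mathcal{C}_\alpha^\tau$ without loss (the Egorov Theorem~\ref{thm:egorov} is proved later and itself relies on the present lemma, so it cannot be invoked). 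What is actually needed is a direct estimate of the Fourier matrix entries $\doublehat{(\mathtt{t}_\alpha-1)}(\ell-\ell',j-j',j')\jap{j}^{-m_1}\jap{j'}^{-m_2}$, exploiting that $e^{\ii\xi\alpha}$ concentrates near frequency $k\sim\xi$ in $x$ --- precisely the kind of analysis carried out in Lemma~\ref{decayultravioletto} and formula \eqref{chicoe} further on, and which is the substance of the cited \cite{FGP19} lemmata. The same obstruction reappears in your Faà di Bruno argument for \eqref{pasta1}: once the commutators produce $P(\partial^\bullet\alpha)\,\mathcal{C}_\alpha^\tau\,\partial_x^k$, absorbing the $\partial_x^k$ against the weights $\langle D\rangle^{\mp m_i}$ again reduces to bounding $\langle D\rangle^{-m_1}\mathcal{C}_\alpha^\tau\langle D\rangle^{m_1}$.

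Two smaller points. First, the Faà di Bruno expansion you describe is applied to $\partial_\varphi^q\partial_x^i(\mathcal{C}_\alpha^\tau u)$, but \eqref{pasta1} concerns the commutators $\td_\varphi^q\td_x^i\mathcal{C}_\alpha^\tau$ of Definition~\ref{defn:japphi}; luckily these coincide structurally since, e.g., $[\partial_{\vphi_h},\mathcal{C}_\alpha^\tau]=\tau(\partial_{\vphi_h}\alpha)\,\mathcal{C}_\alpha^\tau\partial_x$ and $[\partial_x,\mathcal{C}_\alpha^\tau]=\tau\alpha_x\,\mathcal{C}_\alpha^\tau\partial_x$, so each commutator introduces one derivative of $\alpha$ and one $\partial_x$, but you should not conflate the two operations. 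Second, your energy/Grönwall route to \eqref{pasta7} is workable (the coefficient $\wt b=\alpha/(1+\tau\alpha_x)$ is correct, cf.\ \eqref{def:Calpha2}), but it is an unnecessary detour: since $(\mathcal{C}_\alpha^\tau u)(\vphi,x)=u(\vphi,x+\tau\alpha(\vphi,x))$, the bound \eqref{pasta7} is a direct Moser composition estimate (this is what \cite{FGP19} and the related lemmata in \cite{BBM14} do), and getting the exact tame form $\|u\|_s+\|\alpha\|_{s+\s}\|u\|_{\so}$ from Grönwall requires extra work that the composition estimate delivers for free.
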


For notational convenience we give the proof for $ \tau = 1 $.
Recall that  $\mathcal{C}_{\alpha}:=\mathcal{C}_{\alpha}^{1}$.
We consider the \emph{Bony decomposition} $ P = P^B + P^U $ 
of the operator 
\begin{equation}\label{def:Pcalalpha}
P:=P^{1}:=\langle D\rangle^{-N_1}\circ\big(\mathcal{C}_{\alpha}-\Id\big)
\circ\langle D\rangle^{-N_2}\,,\qquad N_1+N_2=N\, , 
\end{equation}
 given in  Definition \ref{def:bonydeco}, and 
we define
$ \mathtt{P}:=(P^{B},P^{U}) $. 
We have to prove  that  $\mathtt{P}$ belongs to $E_{s}$
and satisfies  \eqref{chiarodiluna2}.

\begin{proof}[{\bf Proof of \eqref{chiarodiluna2}.}]
We write the composition operator (we do not explicitly state the dependence on $\oo$)
\begin{equation}\label{sacrorito2}
		\mathcal{C}_{\alpha}u=
		\sum_{j'\in\Z}\mathtt{t}_{\alpha}(\vphi,x,j')u_{j'} e^{\ii j'x}
		=\sum_{j\in\Z}\Big(\sum_{j'\in\Z} 
		\widehat {\mathtt{t}}_{\alpha}(\vphi,j-j',j')u_{j'}\Big)e^{\ii jx} 
\end{equation}
where  
$  \mathtt{t}_{\alpha}(\vphi,x,\xi):= e^{\ii \xi \al(\vphi,x)} $
has the space Fourier expansion 
\begin{equation}\label{sacrorito}
	\mathtt{t}_{\alpha}(\vphi,x,\xi) 
	=
	\sum_{k\in\Z}\widehat{\mathtt{t}}_{\alpha}(\vphi,k,\xi) e^{\ii k x} \, , 
	\quad \widehat {\mathtt{t}}_{\alpha}(\vphi,k,\xi) :=
\frac{1}{2\pi}
\int_{\T} e^{\ii\x\alpha(\vphi,x)} e^{-\ii k x}dx\,.
\end{equation}
We first prove the following lemma (the estimate 
\eqref{chiarodiluna10} will be used in Section \ref{sec:szego}).

\begin{lemma}\label{decayultravioletto}
There exists $\eta>0$ such that for any $s\ge\so$ there exist $\delta :=\delta(s)>0$ such that 
 for any $\alpha, f\in C^\infty(\T^{\nu+1},\R)$ 
	with $\|\al\|^{\g,\cO}_{\so+\eta}\leq \delta$, for any 
	$ \ell \in {\mathbb Z}^\nu $, $  k \in {\mathbb Z} $, $ \xi \in 
	{\mathbb \R} $, satisfying $|k|>|\xi|- \tfrac12  $, 
		\begin{equation}\label{chiarodiluna10}
		\begin{aligned}
			&	\big|\doublehat{f \mathtt{t}_{\alpha}}(\ell,k,\x)\big|^{\g,\cO}\lesssim_{s}
			\frac{1}{\langle \ell,k\rangle^{s}}
			\big(\|\alpha\|^{\g,\cO}_{s+\eta}\|f\|^{\g,\cO}_{\so}+
			\|f\|^{\g,\cO}_{s+1}\big)\\
		&	\big|\doublehat{(\mathtt{t}_{\alpha}-1)}(\ell,k,\x)\big|^{\g,\cO}\lesssim_{s}
			\frac{1}{\langle \ell,k\rangle^{s}}
		\|\alpha\|^{\g,\cO}_{s+\eta}\,.
		\end{aligned}
	\end{equation}
Moreover for  any $\ell, k $ such that $|\ell| + |k| \ge |\xi|/3$ and  
for any $s\geq \so$, 
\begin{equation}\label{chiarodiluna11}
	\big|\doublehat{(\mathtt{t}_{\alpha}-1)}(\ell,k,\x)\big|^{\g,\cO}\lesssim_{s}
	\frac{1}{\langle \ell,k\rangle^{s}}
	\|\alpha\|^{\g,\cO}_{s+\eta}\,.
\end{equation}
\end{lemma}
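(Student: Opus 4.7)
\smallskip
\noindent\textbf{Proof plan for Lemma \ref{decayultravioletto}.}
The plan is to derive both \eqref{chiarodiluna10} and \eqref{chiarodiluna11} as non-stationary phase estimates for the oscillatory integral
\begin{equation*}
\doublehat{f\,\mathtt{t}_\alpha}(\ell,k,\x)
= \frac{1}{(2\pi)^{\nu+1}} \int_{\T^{\nu+1}} f(\vphi,x)\, e^{\ii \Phi(\vphi,x)}\, d\vphi\, dx\,,
\qquad \Phi(\vphi,x) := \x\,\alpha(\vphi,x) - \ell\cdot\vphi - kx\,.
\end{equation*}
The first thing I would verify is that, under the appropriate hypothesis on $(\ell,k,\x)$ and the smallness assumption \eqref{buttalapasta3} on $\|\nabla_{\vphi,x}\alpha\|_{L^\infty}$, the phase is uniformly non-stationary with gradient comparable to $\langle\ell,k\rangle$, namely
\begin{equation*}
|\nabla_{\vphi,x}\Phi(\vphi,x)|^2
= \sum_{h=1}^{\nu}(\ell_h - \x\,\alpha_{\vphi_h})^2 + (k - \x\,\alpha_x)^2
\gtrsim \langle\ell,k\rangle^2\,,\qquad \forall (\vphi,x)\in\T^{\nu+1}\,.
\end{equation*}
Under the hypothesis $|k|>|\x|-1/2$ (for \eqref{chiarodiluna10}) one has $|\x|\leq 2\langle k\rangle$, whereas under $|\ell|+|k|\geq|\x|/3$ (for \eqref{chiarodiluna11}) one has $|\x|\leq 3\langle\ell,k\rangle$; in both cases choosing $\delta$ small enough makes $|\x|\,\|\nabla\alpha\|_{L^\infty}$ a small fraction of $\langle\ell,k\rangle$, and the lower bound follows.

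Next I would introduce the first-order differential operator
\begin{equation*}
L := \frac{1}{\ii\,|\nabla\Phi|^2}\Big(\sum_{h=1}^{\nu} \Phi_{\vphi_h}\,\pa_{\vphi_h} + \Phi_x\,\pa_x\Big)\,,
\end{equation*}
satisfying $L\,e^{\ii\Phi}=e^{\ii\Phi}$. Integrating by parts $s$ times transfers $s$ powers of $L^*$ onto $f\,\mathtt{t}_\alpha$, producing at each step one derivative on $f$, $\mathtt{t}_\alpha$ or on the coefficient $\Phi_{\vphi_h}/|\nabla\Phi|^2$ (respectively $\Phi_x/|\nabla\Phi|^2$). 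A derivative of these coefficients generates terms schematically of the form $\x\,\pa^{2}\alpha/|\nabla\Phi|^2$ or $\x^2\,\pa\alpha\,\pa^{2}\alpha/|\nabla\Phi|^4$; by the lower bound of the previous paragraph and $|\x|\lesssim\langle\ell,k\rangle$ each of these is bounded by $\langle\ell,k\rangle^{-1}\,\|\alpha\|_{C^{2}}$. Consequently
\begin{equation*}
\big|\doublehat{f\,\mathtt{t}_\alpha}(\ell,k,\x)\big|
\leq \langle\ell,k\rangle^{-s}\,\big\|(L^*)^s(f\,\mathtt{t}_\alpha)\big\|_{L^1(\T^{\nu+1})} \cdot \langle\ell,k\rangle^{s}\,,
\end{equation*}
and the $L^{1}$-norm on the right is controlled via the Leibniz rule, the tame product estimate \eqref{tameProduct} and Sobolev embedding $H^{\so}\hookrightarrow L^\infty$ by a sum of terms of the type $\|\alpha\|_{\so+\eta}^{j}(\|f\|_{s+1} + \|\alpha\|_{s+\eta}\|f\|_{\so})$ for some $\eta=\eta(\nu,s)$. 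Absorbing the powers of $\|\alpha\|_{\so+\eta}$ through the smallness assumption \eqref{buttalapasta3} yields the first inequality of \eqref{chiarodiluna10}. Taking $f\equiv 1$ and replacing $\mathtt{t}_\alpha$ by $\mathtt{t}_\alpha-1 = \ii\x\,\alpha\int_0^1 e^{\ii\tau\x\alpha}\,d\tau$ (which allows to extract a factor $\alpha$ and absorbs the $\x$ via $|\x|\lesssim\langle\ell,k\rangle$) gives the second inequality of \eqref{chiarodiluna10}, and the same argument under the alternative hypothesis $|\ell|+|k|\geq|\x|/3$ yields \eqref{chiarodiluna11}. The Lipschitz variation in $\omega$ is treated identically, producing the extra factor $\|\alpha\|_{\cdot+1}$ hidden in the weighted norm \eqref{normaSymboloGamma}.

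\smallskip
The main obstacle is the bookkeeping of the $\x$-dependence: naively each integration by parts costs a factor $\x$ from $\Phi_x=\x\alpha_x-k$, but the division by $|\nabla\Phi|^2 \gtrsim \langle\ell,k\rangle^2 \gtrsim |\x|^2$ exactly compensates it. Making this compensation precise in a \emph{tame} way—so that only $\|\alpha\|_{s+\eta}\|f\|_{\so} + \|f\|_{s+1}$ appears on the right, rather than a non-tame product $\|\alpha\|_{\so+\eta}^{s}\|f\|_{s}$—requires organising the derivatives distributed by Faà di Bruno/Leibniz onto the smoothest quantity at each step, which is the standard non-stationary phase bookkeeping; no new ideas beyond \eqref{tameProduct}, \eqref{interpolotutto} and the lower bound of the first paragraph are needed.
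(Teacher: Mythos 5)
Your plan is correct in spirit but takes a genuinely different route from the paper's. You attack the oscillatory integral by classical non-stationary phase: build the first-order operator $L$ with $Le^{\ii\Phi}=e^{\ii\Phi}$, check that $|\nabla_{\vphi,x}\Phi|\gtrsim\langle\ell,k\rangle$ once $|\xi|\lesssim\langle\ell,k\rangle$ (guaranteed by the hypotheses $|k|>|\xi|-\tfrac12$, resp.\ $|\ell|+|k|\ge|\xi|/3$, and the smallness of $\|\nabla\alpha\|_{L^\infty}$), and integrate by parts $s$ times. The paper instead \emph{rectifies the phase by a change of variables}: when $|k|\gtrsim|\xi|$ it sets $\eta=\xi/k$ and substitutes $y=x-\eta\alpha(\vphi,x)$ (a small torus diffeomorphism), so that the integral becomes a plain Fourier coefficient of a composition $g(\vphi,y)=f(\vphi,y+\breve\alpha)(1+\pa_y\breve\alpha)$, whose decay $\langle\ell,k\rangle^{-s}\|g\|_{s+1}$ and whose tame bound follow immediately from the already-proved composition estimates of Lemma \ref{bastalapasta}; for \eqref{chiarodiluna11} they pick whichever of $k,\ell_1,\dots,\ell_\nu$ is $\gtrsim|\xi|$ and rectify in that variable. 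The two methods are genuinely different proofs of the same fact: yours treats all variables at once via the vector field $L$, which is more symmetric, but the price is the Faà di Bruno/Leibniz bookkeeping needed to show that only one factor carries the high Sobolev index (you correctly flag this as the crux), plus the need to handle non-integer $s$ (you implicitly integrate by parts $s\in\N$ times) and the degenerate case $(\ell,k)=(0,0)$ separately. The paper's rectification sidesteps all of that at the cost of a short case distinction on which frequency dominates. One small typo in your plan: in the display bounding $\big|\doublehat{f\,\mathtt{t}_\alpha}\big|$ you have the factors $\langle\ell,k\rangle^{-s}$ and $\langle\ell,k\rangle^{s}$ cancelling each other; what you mean is that $\|(L^*)^s f\|_{L^1}\lesssim\langle\ell,k\rangle^{-s}(\|f\|_{s+1}+\|\alpha\|_{s+\eta}\|f\|_{\so})$, the decay coming from the $s$ copies of $|\nabla\Phi|^{-2}$ in the coefficients of $L^*$. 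Also note that $(L^*)^s$ should act on $f$ alone, not on $f\,\mathtt{t}_\alpha$, since you already absorbed $\mathtt{t}_\alpha$ into the phase $\Phi$.
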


\begin{proof}
Recalling \eqref{sacrorito} we have,  for any  
$ \ell \in {\mathbb Z}^\nu $, $ k \in {\mathbb Z} $, $ \xi \in 
	{\mathbb \R} $, 
\begin{equation}\label{coeftalpha}
\doublehat{f\mathtt{t}_{\alpha}}(\ell,k,\x)=
 \frac{1}{(2\pi)^{\nu+1}}\int_{\T^{\nu+1}} 
 f (\vphi,x)e^{\ii \xi \alpha(\vphi,x) -\ii k x}
 e^{-\ii\ell\cdot\vphi}d\vphi dx \, .
\end{equation}
We first consider $k\ne 0$. Setting $  \eta:= \xi/k $ we have 
\begin{equation}
	\label{maratona}
\begin{aligned}
	& \doublehat{f\mathtt{t}_{\alpha}}(\ell,k,\x) =
 \frac{1}{(2\pi)^{\nu+1}}\int_{\T^{\nu+1}} 
 f (\vphi,x)e^{- \ii k( x- \eta \alpha(\vphi,x))}
 e^{-\ii\ell\cdot\vphi}d\vphi dx 
 \\
 & =
 \frac{1}{(2\pi)^{\nu+1}}\int_{\T^{\nu+1}} 
 f (\vphi,y+\breve{\alpha}(\vphi, y,\eta))e^{- \ii ky}
 e^{-\ii\ell\cdot\vphi} (1+\breve{\alpha}_y(\vphi, y,\eta))) d\vphi dy 
\end{aligned}
\end{equation}
where, since $\eta \alpha$ is small (note that $ |\eta| \leq 3/2 $ since
$ |\xi| < \frac12 + |k| $), the map  $y= x- \eta \alpha(\vphi,x)$ 
is a diffeomorphism of $\T$, see for instance \cite{FGMPmoser}, 
with inverse 
 $x= y+ \breve{\alpha}(\vphi,y,\eta)$, where $\breve\alpha$ satisfies the bound
\begin{equation}\label{stimacheckcheck}
\|  \breve{\alpha}\|_{s}^{\g,\cO}\lesssim_s  \|  {\alpha}\|_{s+\so}^{\g,\cO}\,.
\end{equation}
Denoting
$ g(\vphi,y):= f (\vphi,y+\breve{\alpha}(\vphi, y,\eta))
 (1+\breve{\alpha}_y(\vphi, y,\eta)) $ we have
 $ \doublehat{f\mathtt{t}_{\alpha}}(\ell,k,\x) = \doublehat{g} (\ell,k) $ and 
 $$
 | \doublehat{g} (\ell,k) |^{\gamma,\cO} \leq \frac{1}{\langle \ell, k\rangle^s }
\| g  \|^{\gamma,\cO}_{s+1} 
 $$
 the first bound in \eqref{chiarodiluna10}  for $ \doublehat{f\mathtt{t}_{\alpha}}(\ell,k,\x) $ follows by \eqref{tameProduct} and \eqref{pasta7}. 
 
If  $k=0$  we  have $|\xi|\le \tfrac12$ and by \eqref{coeftalpha} 
 \begin{equation}
 	\label{maratonafacile}
 	\begin{aligned}
 		| \doublehat{f\mathtt{t}_{\alpha}}(\ell,0,\x)|^{\g,\cO} 
 		&\lesssim	\frac{1}{\langle \ell,0\rangle^{s}} 
		\| 	f e^{\ii \xi \alpha}\|^{\g,\cO}_{s+1} 
		\lesssim_s \frac{1}{\langle \ell,0\rangle^{s}} (\| 	f\|^{\g,\cO}_{s+1} + \| 	f\|^{\g,\cO}_{\so}\|\alpha\|^{\g,\cO}_{s+1})\,.
 	\end{aligned}
 \end{equation}
Let us prove the second bound. By \eqref{coeftalpha} 
and the mean value theorem 
\begin{equation}\label{ta-1}
 \begin{aligned}
 	 \doublehat{(\mathtt{t}_{\alpha}-1)}(\ell,k,\x) 
 	& =
 	\frac{\ii}{(2\pi)^{\nu+1}}  \int_0^1 d\tau \int_{\T^{\nu+1}}  \alpha(\vphi,x) \xi e^{\ii \tau \xi \alpha(\vphi,x)} e^{-\ii k x}
 	e^{-\ii\ell\cdot\vphi}d\vphi dx  \, . 
\end{aligned}
\end{equation}
We now consider $k\ne 0$. Setting $\eta:= \tau\xi/k $ we have 
 \begin{equation}
 	\label{integroperparti}
 \begin{aligned}
 	 \doublehat{(\mathtt{t}_{\alpha}-1)}(\ell,k,\x)  
 	& =  
 	\frac{{\ii \xi}}{(2 \pi)^{\nu+1}} \int_0^1 d\tau 
	 \int_{\T^{\nu+1}} \alpha(\vphi,x)  e^{-\ii k(x - \eta \alpha(\vphi,x))}
 	e^{-\ii\ell\cdot\vphi}d\vphi dx  
 \end{aligned}
 \end{equation}
 and we estimate the integral in $ (\vphi,x)$, uniformly in $ \tau \in [0,1]$ and $ |\eta | \leq 3/2 $, 
proceeding as in \eqref{maratona} with $f=\alpha$. 
 We also note that $ |\xi| \leq \tfrac12 + |k| \leq 2 |k| $. 
 The case $k=0$ is dealt with just as in \eqref{maratonafacile}.
 \smallskip
 
For $\xi=0$ the bound \eqref{chiarodiluna11} is trivial, since the left hand side is zero. 
For $\xi\ne 0$ we note that, by $|k|+|\ell|\ge  |\xi|/3$ we deduce that at least one between $|k|,|\ell_1|,\dots,|\ell_\nu| \ge \frac1{3(\nu+1)}|\xi|$. If $|k|\ge \frac1{3(\nu+1)} |\xi|$, the proof follows directly by \eqref{integroperparti}, by using that $\eta\|\alpha\|_{\so}^{\g, \cO}$ is small. Otherwise if  -say-  
 $|\ell_1|\ge  \frac1{3(\nu+1)} |\xi| > |k|$ then, by \eqref{ta-1}, setting 
 $ \eta := \tau \xi / \ell_1 $, we have 
 $$
 	\begin{aligned}
 		\doublehat{(\mathtt{t}_{\alpha}-1)}(\ell,k,\x)  = 
 		\frac{\ii \xi }{(2 \pi)^{\nu+1}} \int_0^1 d\tau 
		 \int_{\T^{\nu+1}} \alpha(\vphi,x)  e^{-\ii \ell_1(\vphi_1 -\eta \alpha(\vphi,x))} 	 e^{-\ii\sum_{i=2}^{n}\ell_i\vphi_i}
 		e^{-\ii k x}d\vphi dx  
 	\end{aligned}
$$
and we estimate the integral in $ (\vphi,x)$, uniformly in $ \tau \in [0,1]$, 
$ |\eta | \leq 3 (\nu + 1 )$, 
proceeding as in \eqref{maratona} 
applying the invertible change of variables $\vartheta_1= \vphi_1 -\eta \alpha(\vphi,x)$, $\vartheta_i=\vphi_i$ for $i\ge 2$.
\end{proof}

We are now in position to prove \eqref{chiarodiluna2}. Let consider $ q = \tb $. 

Observing that $(\jap{\td_{\vphi}}^{\tb} P)^B=\jap{\td_{\vphi}}^{\tb} P^B$ and  
$(\jap{\td_{\vphi}}^{\tb} P)^U=\jap{\td_{\vphi}}^{\tb} P^U$ 
we get 
\begin{equation}\label{chiarodiluna16}
\begin{aligned}
\bnorm{\jap{\td_{\vphi}}^{\tb} \mathtt{P}}^{\g, \cO}_s &= 
\sup_{\gso \leq p \leq \su} |\jap{\td_{\vphi}}^{\tb} P^B|^{\g, \cO}_{p,p} 
+  |\jap{\td_{\vphi}}^{\tb} P^U|^{\g, \cO}_{\gso,s} 
\\&
\stackrel{\eqref{Katsendorff}}{\le}  
\sup_{\gso \leq p \leq \su} 3^{p-\gso} |\jap{\td_{\vphi}}^{\tb} P|^{\g, \cO}_{\gso, \gso} 
+  |\jap{\td_{\vphi}}^{\tb} P^U|^{\g, \cO}_{\gso,s}\,.
\end{aligned}
\end{equation}
Let us estimate  the first term.
We have by Lemma \ref{maggio} 
and Lemma \ref{bastalapasta},
\begin{equation}\label{stocastico}
\begin{aligned}
|\jap{\td_\vphi}^{\tb} & P|^{\g, \cO}_{\gso, \gso} 
\stackrel{\eqref{rmk:commuES}}{\lesssim_{\mathtt b}}
| P|^{\g, \cO}_{\gso, \gso} +\sum_{h=1}^{\nu}|\td_{\vphi_h}^{\tb} P|^{\g, \cO}_{\gso, \gso} 
\\&
\stackrel{\eqref{laverna}}{\lesssim_{\mathtt b}}
\sum_{q=0,\tb} \sum_{h=1}^{\nu}\Big( \|\td_{\vphi_{h}}^{q} P\|^{\g, \cO}_{\gso, \gso}
+ \|\td_x{\td}^{q}_{\vphi_h} P\|^{\g, \cO}_{\gso, \gso}
+\|{\td}_{\vphi_{h}}^{q+\beta} P\|^{\g, \cO}_{\gso, \gso}
+ \|\td_x{\td}^{q+\beta}_{\vphi_h} P\|^{\g, \cO}_{\gso, \gso}\Big)
\\&
\stackrel{\eqref{def:Pcalalpha},\eqref{panelle}}{\lesssim_{\mathtt b}}
\|\alpha\|^{\g, \cO}_{\gso+\s} 
\end{aligned}
\end{equation}
for some $\s>0$ depending on $\nu$ and $\tb$, using 
 that $N_{1}+N_{2}$ is sufficiently large.

Consider now the ultraviolet part $\jap{\td_\varphi}^\tb P^U$ 
 in \eqref{chiarodiluna16}. Recalling Definition \ref{def:bonydeco}, 
\eqref{sacrorito2}-\eqref{sacrorito}, its matrix entries are 
\begin{equation}\label{chicoe}
 (\langle D\rangle^{-N_1}(\mathcal{C}_\al-\Id)\langle D\rangle^{-N_2})_{j}^{j'}(\ell-\ell') 
 = \doublehat{(\mathtt{t}_{\alpha}-1)}(\ell-\ell',j-j',j') \langle j \rangle ^{-N_1}\langle j'\rangle ^{-N_2}
\end{equation}
with indexes $\ell,\ell'\in \Z^{\nu}$, $j,j'\in \Z$, satisfying  
$ |\ell-\ell'|+|j-j'|\geq (|\ell|+|j|)/2 $ and thus 
\begin{equation}\label{stimasempl}
|\ell-\ell'|+|j-j'|\geq \max{\{|j'|, |j|\}}/3   \, . 
\end{equation}
Under the smallness condition \eqref{buttalapasta3} 
Lemma \ref{decayultravioletto}  applies and 
therefore, in view of \eqref{chicoe}, \eqref{stimasempl}, 
the estimate \eqref{chiarodiluna11} 
implies, with $ \sigma = \eta +|N_1|+|N_2| $,   
\begin{align}\label{erice}
| (\jap{\td_\varphi}^{\tb} \langle D\rangle^{-N_1}(\mathcal{C}_\al-\Id)\langle D\rangle^{-N_2})_{j}^{j'}(\ell-\ell') |^{\g, \cO}
& \lesssim_{s} 
\|\alpha\|^{\g, \cO}_{s+\gso+\s+\tb}
\frac{\langle j\rangle ^{-N_1}\langle j'\rangle ^{-N_2}}{\langle \ell-\ell',j-j'\rangle^{s+|N_1|+
|N_2|+\gso }} \notag \\
&  \stackrel{\eqref{stimasempl}} {\lesssim_{s} } 
\|\alpha\|^{\g, \cO}_{s+\gso+\s+\tb}
\frac{1}{\langle \ell-\ell',j-j'\rangle^{s+\gso }}
\end{align}
This implies that the decay norm 
$|\jap{\td_\varphi}^\tb P^U|_s^{\text{dec},\gamma,\cO} \lesssim_s 
\|\alpha\|^{\g, \cO}_{s+\gso+\s+\tb} $ and then by Lemma \ref{lem:ilfreddo} we 
control $|\jap{\td_\varphi}^\tb P^U|^{\g, \cO}_{\gso, s} \lesssim_s 
\|\alpha\|^{\g, \cO}_{s+\gso+\s+\tb}
$.
Combining this with \eqref{chiarodiluna16}  and \eqref{stocastico}
one gets the bound \eqref{chiarodiluna2} with  $ \mu \geq \sigma + \tb $. 
\end{proof}

\section{A quantitative Egorov theorem}\label{sec:egomichela}
In this section we prove an Egorov Theorem concerning 
 the conjugation  of a pseudo-differential
operator $\op(w)$  under  the diffeomorphism
$\mathcal{C}_{\alpha}^{\tau}$ introduced in Section \ref{sec:Calpha}.
A non trivial result of Theorem \ref{thm:egorov} are the bounds 
\eqref{zeppelin}-\eqref{zeppelinbis} 
and that the remainders are associated to Bony-smoothing couples 
satisfy the quantitative tame estimates \eqref{stimerestoEgorov1}
where  the sum of indices $ k_1 + k_2 + k_3 = s $.  
 In the sequel we suppose $ s, s_0 \in \mathbb N $.

\vspace{0.5em}
\noindent
{\bf Notation.} 
Given  $n\in \N$, 
we denote 
$\Sigma^{*}_{n}$ the sum over indexes $k_1,k_2,k_3\in\N_0 $ such that 
$k_1\leq n$, $k_1+k_2+k_3=n$ and $k_1+k_2\geq 1$.

\begin{thm}[{{\bf Egorov}}]\label{thm:egorov}
Fix $ m\in\R$, $M>\max\{0,-m\}$ and $\mathtt{b}\in \N$.  
Let $w\in S^m$ and consider the composition operator 
$\mathcal{C}_{\alpha}^{\tau}$
		in \eqref{ignobel}.
		There exists $\sigma:=\sigma(m, M,\tb)$ and, for all $s\geq \so$, 
		there exists $\delta:=\delta(s, m, M,\tb)$ such that, if
		\begin{equation}\label{buf}
			\| \alpha \|^{\gamma, \cO}_{\so+\s}<\delta\,,
		\end{equation}
		then, for any $\tau\in[0,1]$,
		\[
		\mathcal{C}_{\alpha}^{\tau} \circ \op(w) \circ (\mathcal{C}_{\alpha}^{\tau})^{-1}=\op(q(\tau))+R^{\tau}\,,
		\]
		where  $q(\tau)$ is a symbol in 
		$ S^m$ and the operator 
		$R^{\tau}\langle D\rangle^M$ is bounded in $H^s$. Moreover

\noindent
$(i)$ {\bf (Symbol)}
the symbol  $q(\tau) :=q(\tau;\vphi,x,\x)$ in $S^m$  has the form  
\[
q(\tau;\vphi, x, \xi)=q_m(\tau;\vphi, x, \xi)+\widetilde{q}_{m-1}(\tau;\vphi, x, \xi)
\] 
with principal symbol  at $ \tau = 1 $, 
\begin{equation}\label{fundamentalformula}
\begin{aligned} 
q_m(1;\vphi, x, \xi) & = 
w\Big( \varphi, x+\alpha(\vphi, x), \frac{\xi}{1+\alpha_x(\vphi,x)}\Big) \\
& = 
w\Big(\vphi,x+\alpha(\vphi,x), 
\x(1+\pa_{y}\breve{\alpha}(\vphi,y))\Big)_{|y=x+\alpha(\vphi,x)}
\,,
\end{aligned}
\end{equation}
and subprincipal symbol $ \widetilde{q}_{m-1} := \widetilde{q}_{m-1}(\tau;\vphi, x, \xi)\in S^{m-1}$ satisfying,
for any $p\geq0 $, $ s \geq s_0 $
\begin{align}
&\| q_{m} \|^{\gamma, \cO}_{m, s, p}\lesssim_{m, s, p, M,\tb}  
\| w \|^{\gamma, \cO}_{m, s, p}
+
\sum_{s}^{*} 
\| w \|^{\gamma, \cO}_{m, k_1, p+k_2} 
\| \alpha \|^{\gamma, \cO}_{k_3+\so+2}\,,\label{zeppelin}
\\
	&	\| \widetilde{q}_{m-1} \|^{\gamma, \cO}_{m-1, s, p}\lesssim_{m, s, p, M,\tb}  
		\sum_{s}^{*} 
		\| w \|^{\gamma, \cO}_{m, k_1, p+k_2+\sigma} 
		\| \alpha \|^{\gamma, \cO}_{k_3+\sigma}\,,\label{zeppelinbis}
\end{align}
	 uniformly in $\tau\in[0,1]$.
	
\noindent 
$(ii)$ {\bf (Remainder)} 
Assuming \eqref{buf} for all $s\leq \su$, there exists a couple
$\mathtt{R}^{\tau}\in E_{s}\equiv E_{s,\gso,\su}$ such that 
$\mathfrak{S}(\mathtt{R}^{\tau})=R^{\tau}$ and,
for any $m_1,m_2\in\R_+$ with  $m_1+m_2=M$, $\gso\le s\le \su$, one has (recall \eqref{derivata})
\begin{equation}\label{stimerestoEgorov1}
\begin{aligned}
\sup_{\tau\in [0,1]}&\bnorm{ \jap{\td_{\vphi}}^{j} \jap{D}^{m_1} \mathtt{R}^{\tau} \jap{D}^{m_2}}_{s}^{\g, \cO} 
\lesssim_{s,\su, m,M,\tb} 
\sum_{s}^{*} 
\| w \|^{\gamma, \cO}_{m, k_1+\s, k_2+\sigma} 
\| \alpha \|^{\gamma, \cO}_{k_3+\sigma}
\end{aligned}
\end{equation}
for $j=0,\tb$ and uniformly in $\tau\in[0,1]$.
\end{thm}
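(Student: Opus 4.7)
My plan is the classical strategy of differentiating the conjugation with respect to the parameter $\tau$ and constructing the symbol by a symbolic expansion, with the extra care needed to obtain tame quantitative estimates. First I would introduce the generator $X(\tau) := (\partial_\tau \mathcal{C}_\alpha^\tau)(\mathcal{C}_\alpha^\tau)^{-1}$. A direct computation using $\mathcal{C}_\alpha^\tau u(\vphi,x) = u(\vphi, x+\tau\alpha(\vphi,x))$ and the inverse $\breve{\alpha}(\tau;\vphi,y)$ shows that $X(\tau) = \beta(\tau;\vphi,x)\partial_x$ with $\beta(\tau;\vphi,x) = \alpha(\vphi,x)/(1+\tau\alpha_x(\vphi,x))$, and estimates on $\beta$ follow from \eqref{stimacheckcheck}-type bounds on $\breve\alpha$ and the tame product rule \eqref{tameProduct} under the smallness condition \eqref{buf}. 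Then $Q(\tau) := \mathcal{C}_\alpha^\tau \op(w)(\mathcal{C}_\alpha^\tau)^{-1}$ satisfies the Heisenberg equation $\partial_\tau Q = [X(\tau),Q]$, $Q(0) = \op(w)$.

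Next I would construct $q(\tau)\in S^m$ iteratively so that $\op(q(\tau))$ approximately solves this ODE, modulo an error of order $-M$. Concretely, I would write $q(\tau)= \sum_{k=0}^{K-1} q_{m-k}(\tau)$, with $K$ depending on $M$ and $m$, plugging in $X = \op(i\beta\xi)$ and using the asymptotic expansion of $\op(i\beta\xi)\#q(\tau) - q(\tau)\#\op(i\beta\xi)$ coming from \eqref{espstar}, \eqref{espstar2} and Lemma \ref{lemma:Commutator}. Matching orders produces a hierarchy of transport equations: the principal one,
\begin{equation*}
\partial_\tau q_m + \{i\beta\xi, q_m\} = 0\,, \qquad q_m(0;\vphi,x,\xi) = w(\vphi,x,\xi)\,,
\end{equation*}
is solved explicitly by the method of characteristics; the characteristic $\tau\mapsto(x(\tau),\xi(\tau))$ started at $(x,\xi)$ is governed by $\dot x = \beta(\tau,\vphi,x)$, $\dot \xi = -\beta_x(\tau,\vphi,x)\xi$, whose flow at time $1$ is precisely $(x+\alpha(\vphi,x),\xi/(1+\alpha_x(\vphi,x)))$, yielding \eqref{fundamentalformula}. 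The lower order symbols $q_{m-k}$ for $k\geq 1$ solve inhomogeneous transport equations driven by derivatives of $q_{m'}$ with $m' > m-k$, and are aggregated into $\widetilde{q}_{m-1}$. The symbol estimates \eqref{zeppelin}–\eqref{zeppelinbis} follow by differentiating the flow using the chain rule and exploiting the interpolation estimate \eqref{interpolotutto}: the crucial point is that each occurrence of a derivative of $\alpha$ beyond what appears in the composition can be traded against a low Sobolev norm of $w$ via \eqref{tameProduct}, producing the split $k_1+k_2+k_3=s$ in the right hand side.

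The remainder $R(\tau) := Q(\tau) - \op(q(\tau))$ satisfies $R(0)=0$ and the inhomogeneous Heisenberg equation
\begin{equation*}
\partial_\tau R(\tau) = [X(\tau),R(\tau)] + \op(e_K(\tau))\,,
\end{equation*}
where the source $\op(e_K(\tau))$ arises from the commutator expansion truncated at order $K$, and $e_K\in S^{-M}$ thanks to the choice of $K$. By Lemma \ref{stima composizione}(iii) and \eqref{restocancellettoNp} the symbol $e_K(\tau)$ satisfies tame bounds of the form \eqref{zeppelinbis}, and hence $\op(e_K(\tau))\langle D\rangle^M$ is a bounded pseudo-differential operator whose Bony-smoothing couple is controlled via Proposition \ref{prop:immersionepseudo}. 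The solution is $R(\tau) = \int_0^\tau \mathcal{C}_\alpha^{\tau-s}\op(e_K(s))(\mathcal{C}_\alpha^{\tau-s})^{-1}\,ds$, so I would estimate the corresponding couple $\mathtt{R}^\tau$ in $E_s\otimes\mathcal{M}_2(\C)$ by combining: (a) the Bony-smoothing couple $\mathtt{P}^\tau$ of $\mathcal{C}_\alpha^\tau$ from Proposition \ref{inclusionetotale}, (b) the couple associated to $\op(e_K)\jap{D}^M$ via Proposition \ref{prop:immersionepseudo}, and (c) the Banach algebra tame product estimates of Lemma \ref{tretameestimate} together with the commutator estimates of Lemma \ref{dito2} to take care of $\jap{\td_\vphi}^\tb$.

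The main obstacle is producing the tame estimate \eqref{stimerestoEgorov1} with the triple splitting $k_1+k_2+k_3=s$: naïvely bounding the characteristic flow loses derivatives that depend on $s$, which would give only $\|R^\tau\|\lesssim \|w\|_{s+\mu(s)}$ with $\mu(s)$ growing. Avoiding this requires carefully tracking each derivative in the symbolic expansion and using the interpolation \eqref{interpolotutto} between $\alpha$ and $w$, and—equally important—working consistently in the Bony-smoothing couple framework so that the Banach algebra property \eqref{stima:tame3b} propagates through the iterative construction with a fixed, $s$-independent loss $\sigma$. This is exactly the quantitative gain that distinguishes the present work from the purely qualitative Egorov theory.
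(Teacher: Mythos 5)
Your proposal follows essentially the same route as the paper: write the generator $X^\tau=\op(\ii\,\mathcal A(\tau;\vphi,x)\,\xi)$ with $\mathcal A=\alpha/(1+\tau\alpha_x)$, solve the Heisenberg equation for the conjugated operator, construct the symbol as a truncated asymptotic expansion whose principal part solves a transport equation by characteristics (giving \eqref{fundamentalformula}), bound the sub-principal symbols via Lemma~\ref{Lemmino} and interpolation, and handle the remainder by a Duhamel representation, estimating the resulting composition of diffeomorphisms and a smoothing pseudo-differential operator inside the $E_s$-framework (Propositions~\ref{inclusionetotale}, \ref{prop:immersionepseudo} and Lemmata~\ref{tretameestimate}, \ref{dito2}). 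One slip: your Duhamel formula $R(\tau)=\int_0^\tau\mathcal C_\alpha^{\tau-s}\op(e_K(s))(\mathcal C_\alpha^{\tau-s})^{-1}\,ds$ is incorrect because $X^\tau$ depends on $\tau$ (the flow is not a one-parameter group), so $\mathcal C_\alpha^{\tau-s}$ is not the correct propagator from time $s$ to time $\tau$; the right expression is $R^\tau=-\int_0^\tau \mathcal C_\alpha^\tau(\mathcal C_\alpha^s)^{-1}\,\op(e_K(s))\,\mathcal C_\alpha^s(\mathcal C_\alpha^\tau)^{-1}\,ds$ as in the paper, but since your estimation strategy (Bony-smoothing couples for $\mathcal C_\alpha^\tau$, $(\mathcal C_\alpha^\tau)^{-1}$ and Proposition~\ref{prop:immersionepseudo} for the middle block) applies equally to the corrected formula, this does not affect the overall argument.
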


In the proof we use  the following lemma proved 
in the Appendix of \cite[Lemma A.7]{FGP19}, together with 
the monotonicity properties \eqref{monotoniaEs} of the norm.

\begin{lemma}\label{Lemmino}
Let $\alpha \in C^\infty (\T^{\nu+1}, \R)$ satisfy 
$\| \alpha \|^{\g, \cO}_{2\so+2}< 1$. 
Then, for any symbol $w\in S^m$, 
\begin{equation}
		A w:=w\Big(\vphi, x+\alpha(\vphi,x), \frac{ \xi}{1+\alpha_{x}(\vphi,x)} \Big)
	\end{equation}
	is a symbol in $ S^m$ satisfying, for any $ s \geq \so $,    
	\begin{equation}\label{stima}
		\| A w \|^{\g, \cO}_{m, s, p}\lesssim_{m,s,p} \| w \|^{\g, \cO}_{m, s, p}
		+ \sum_{s}^{*}
		\| w \|^{\g, \cO}_{m, k_1, p+k_2} \| \alpha \|^{\g, \cO}_{k_3+\so+2} \, .
	\end{equation}
	 For $s=\so$  we have the rougher estimate 
$	\| A w \|^{\g, \cO}_{m, \so, p}\lesssim \| w \|^{\g, \cO}_{m, \so, p+\so} $. 
\end{lemma}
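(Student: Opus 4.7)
\smallskip

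The plan is to prove the Sobolev-symbol bound by combining the chain rule (Faà di Bruno), the tame product/interpolation estimates \eqref{tameProduct}--\eqref{interpolotutto}, and the smallness of $\alpha$. I introduce the shorthand $y(\varphi,x) := x+\alpha(\varphi,x)$ and $\rho(\varphi,x) := (1+\alpha_x(\varphi,x))^{-1}$, so that $Aw(\varphi,x,\xi) = w(\varphi,y,\rho\xi)$. The assumption $\|\alpha\|^{\gamma,\cO}_{2\so+2} < 1$ combined with the Sobolev embedding (which holds since $\so > (\nu+5)/2$ controls $\|\alpha_x\|_{L^\infty_{\varphi,x}}$) gives $\tfrac12 \le 1+\alpha_x \le 2$, hence $|\rho\xi|\sim|\xi|$ uniformly. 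This pointwise comparison is what preserves the order $m$ through the transformation.

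First I would handle the $\xi$-derivatives. The chain rule yields $\partial_\xi^\beta(Aw)(\varphi,x,\xi) = \rho(\varphi,x)^\beta\, (\partial_\eta^\beta w)(\varphi,y,\rho\xi)$ for every $0\le\beta\le p$. Combined with $|\rho\xi|\sim|\xi|$, controlling the weighted Sobolev norm of $\partial_\xi^\beta Aw$ reduces to estimating $\|H^s_{\varphi,x}\|$ of the composition
\[
\widetilde v_\beta(\varphi,x) := \bigl(\partial_\eta^\beta w\bigr)(\varphi, y(\varphi,x), \rho(\varphi,x)\xi)\, \langle\rho\xi\rangle^{-m+\beta},
\]
which is (up to the rescaling factor $\rho^\beta$) a pointwise-bounded symbol composed with the $\alpha$-dependent map $(\varphi,x)\mapsto(\varphi,y,\rho\xi)$.

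Second, to bound $\|\widetilde v_\beta\|_s$ I would apply Faà di Bruno to $\partial_x^{a_1}\partial_\varphi^{a_2}\widetilde v_\beta$. This produces a finite sum of terms of the form
\[
\bigl(\partial_y^{\,j}\partial_\eta^{\,k+\beta} w\bigr)(\varphi,y,\rho\xi)\cdot \rho^{\beta+k}\cdot \xi^{k} \cdot Q(\partial^{\le a_1+a_2}\alpha),
\]
where $Q$ is a polynomial in derivatives of $\alpha$ whose total ``derivative-weight'' plus $j+k$ equals $a_1+a_2$. The crucial algebraic point is that the factor $\xi^k$ balances the loss from the extra $k$ $\eta$-derivatives of $w$, since by definition of $\|\cdot\|_{m,s,p}$ one has $|\partial_\eta^{k+\beta}w|\lesssim\langle\eta\rangle^{m-k-\beta}\|w\|_{m,*,k+\beta}$; thus each such term has $\xi$-order exactly $m-\beta$, consistent with $\langle\xi\rangle^{-m+\beta}$-weighting. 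Applying \eqref{tameProduct} and \eqref{interpolotutto} to distribute the $s$ derivatives among $w$ (contributing $k_1$ in $x$-regularity) and $Q(\alpha)$ (contributing $k_3$ summed regularity), and noting that the extra $\eta$-derivatives appear with index $p+k_2$ on $w$, yields a bound by $\sum^{*}_{s}\|w\|^{\gamma,\cO}_{m,k_1,p+k_2}\|\alpha\|^{\gamma,\cO}_{k_3+\so+2}$. The leading ``no derivative hits $\alpha$'' term reproduces $\|w\|^{\gamma,\cO}_{m,s,p}$. Higher powers of $\|\alpha\|$ coming from $Q$ (e.g.\ from $\partial_x\rho = -\rho^2\alpha_{xx}$) are converted to the linear bound via the smallness $\|\alpha\|^{\gamma,\cO}_{\so+2}<1$ and Moser-type estimates on $\rho$ viewed as a Neumann series in $\alpha_x$. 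The Lipschitz part of the $\gamma,\cO$ norm follows from the Leibniz rule applied to the $\omega$-dependence of the composition map.

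The main obstacle will be the careful combinatorial verification that every term in the Faà di Bruno expansion fits the index scheme $(k_1, p+k_2, k_3+\so+2)$ with $k_1+k_2+k_3=s$, and in particular that the $\xi$-weights and the $\eta$-derivative counts balance \emph{exactly}. The delicate point is controlling the powers of $\rho$ appearing from repeated differentiation: $\partial_x^n \rho$ is a rational expression in $(\alpha_x,\alpha_{xx},\dots,\alpha_{x^{n+1}})$ whose numerator may carry several $\alpha$-factors, and the associated multilinear interpolation (collapsing all but one $\alpha$-factor into the small constant via $\|\alpha\|^{\gamma,\cO}_{\so+2}<1$) must be done without losing the tame linear dependence on the top $\alpha$-norm. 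The rougher bound at $s=\so$ follows directly, since at $s=\so$ the interpolation collapses and one simply controls $\widetilde v_\beta$ in $H^{\so}$ by Sobolev algebra together with $\|w(\cdot,y(\cdot,\cdot),\rho\cdot)\|_{\so}\lesssim \|w\|_{m,\so,p+\so}$ (all $\alpha$-factors going into the constant via the smallness).
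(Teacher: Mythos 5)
The paper does not prove this lemma itself; it imports it verbatim as \cite[Lemma A.7]{FGP19}, so there is no in-paper proof to compare against. Your architecture — the substitution $(y,\rho)$, the identity $\partial_\xi^\beta(Aw)=\rho^\beta\,(\partial_\eta^\beta w)(\varphi,y,\rho\xi)$, Fa\`a di Bruno in $(\varphi,x)$, the $\xi^k$-versus-$\langle\eta\rangle^{-k}$ balancing, and \eqref{tameProduct}/\eqref{interpolotutto} — is the natural and almost certainly correct strategy. But your sketch passes silently over the one step that is genuinely delicate, namely the \emph{leading} Fa\`a di Bruno term. When all $s$ derivatives land on $w$ in the $y$-direction with the ``pure $1$'' chain-rule factor, what survives is $A(\partial^s w)(\varphi,x,\xi)=(\partial^s w)(\varphi,y(\varphi,x),\rho(\varphi,x)\xi)$. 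You assert that this ``reproduces $\|w\|^{\g,\cO}_{m,s,p}$.'' The problem is that after the change of variables $y=y(\varphi,x)$ the $\eta$-argument is still $\sigma(\varphi,y)\xi$ with $\sigma$ depending on $(\varphi,y)$; since $\|\cdot\|_{m,s,p}$ controls $\|\partial_\xi^\beta a(\cdot,\cdot,\xi)\|_s$ only through a $\sup_\xi$ (there is no $L^2_\xi$ integration), one cannot directly deduce $\|(\partial^s w)(\cdot,\cdot,\sigma(\cdot)\xi)\|_{L^2}\lesssim\sup_\eta\|(\partial^s w)(\cdot,\cdot,\eta)\|_{L^2}$. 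The honest route is to compare with the fixed-slice symbol $(\partial^s w)(\varphi,y,\xi)$ and show that the correction $\int_\xi^{\rho\xi}\partial_\eta\partial^s w(\varphi,y,\eta)\,d\eta$ (i) always carries a factor $\rho-1$, hence $\alpha$, and (ii) interpolates so that the result lands exactly in $\sum_{s}^{*}\|w\|^{\g,\cO}_{m,k_1,p+k_2}\|\alpha\|^{\g,\cO}_{k_3+\so+2}$ with $k_1+k_2+k_3=s$ — and not, say, in a term like $\|w\|^{\g,\cO}_{m,s,p+1}\,\|\alpha\|^{\g,\cO}_{\so+2}$ whose index total would be $s+1$. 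Your smallness-of-$\alpha$ argument makes this collapse cleanly at $s=\so$, but for $s>\so$ the statement of the sum $\sum_{s}^{*}$ is sharp and requires tracking the exponents. This is the main gap; it is where the FGP19 argument must do real work, and your sketch should either Taylor-expand the $\eta$-argument about $\eta=\xi$ and carry the bookkeeping through, or cite that lemma.

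Two smaller points. First, your Fa\`a di Bruno description is slightly off: the \emph{number} of inner factors is $j+k$ while the \emph{total derivative weight} distributed over those factors equals $a_1+a_2$ (each factor of order $\ge 1$); it is not the case that ``total derivative-weight plus $j+k$ equals $a_1+a_2$.'' Second, $\|\alpha\|^{\g,\cO}_{2\so+2}<1$ gives $\|\alpha_x\|_{L^\infty}\lesssim 1$ only up to a Sobolev constant; to obtain $\tfrac12\le 1+\alpha_x\le 2$ (hence $\langle\rho\xi\rangle\sim\langle\xi\rangle$, which is what keeps the order $m$ stable) you implicitly need the smallness threshold to absorb that constant. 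Standard, but worth stating since the comparability is used throughout.
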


\begin{proof}[{\bf Proof of Theorem \ref{thm:egorov}}]
The $ \tau $-dependent diffeomorphism 
$\mathcal{C}_{\alpha}^{\tau}$ defined  in \eqref{ignobel}   is the flow of  the 
linear transport equation
	\begin{equation}\label{flussodiffeo}
      \pa_{\tau}\mathcal{C}_{\alpha}^{\tau}=X^{\tau}\mathcal{C}_{\alpha}^{\tau}\,,\quad 
      \mathcal{C}_{\alpha}^{0}={\rm Id}\, , 
	\end{equation}
	where $ X^\tau $ is the  $ \tau $-dependent operator 
	\begin{align}
& 		{X}^{\tau}:=\mathcal{A}(\tau;\vphi,x)\pa_{x}=\op(\chi)\,,\qquad 
		\chi :=\chi(\tau;x,\x) :=\chi(\tau;\vphi,x,\x):=\ii \mathcal{A}(\tau;\vphi,x)\x\,, \label{def:Calphagenerator} \\
& 
\mathcal{A}(\tau;\vphi,x)=
\frac{\alpha(\vphi,x)}{1+\tau\alpha_x(\vphi,x)} \,. \label{def:Calpha2}
\end{align}
Since the symbol $\chi$ is linear in $ \xi $, 
\begin{equation}\label{normachichi}
\|\chi\|_{1,s,p}^{\gamma,\calO}\lesssim_{s}\|\mathcal{A}\|_{s}^{\gamma,\calO}
\lesssim_{s}\|\alpha\|_{s+1}^{\gamma,\calO}\,,\qquad \forall \,p\geq0\,.
\end{equation}
For any $\tau\in[0,1]$ the conjugated operator
$ P^{\tau}:=\mathcal{C}_{\alpha}^{\tau} \circ \op(w)\circ  (\mathcal{C}_{\alpha}^{\tau})^{-1} $
	solves  the Heisenberg equation
	\begin{equation}\label{ars}
	\partial_{\tau} P^{\tau}=[{X}^{\tau}, P^{\tau}]\,,  \quad P^{0}=\op(w) \, .
	\end{equation}
	For simplicity  we omit the dependence of the symbols on the variables
$(\omega,\vphi)\in\cO\times \T^{\nu}$.
Let us fix 
\begin{equation}\label{sceltarho}
\rho:=M+4 \big(\lfloor \nu/2 \rfloor + 4+\tb\big)+1\,.
\end{equation}
	We look for an approximate solution of \eqref{ars} 
	of the form\footnote{We suppose  that $ m + \rho \geq 2 $ 
	is an integer, otherwise we  can replace $ m + \rho $  with 
	$ [m + \rho] + 1 $.} 
	\begin{equation}\label{balconata0}
		Q^{\tau}:=\op(q(\tau;x,\x))\,,\quad 
		q=q(\tau;x,\x)=\sum_{k=0}^{m+\rho-1} q_{m-k}(\tau;x, \xi)\,,
	\end{equation}
	where $q_{m-k}$ are  symbols in $S^{m-k}$ to be determined iteratively	so that
\begin{equation}\label{approssimo}
	\partial_{\tau} Q^{\tau}=[{X}^{\tau}, Q^{\tau}] + \cM^\tau\,, \quad Q^{0}=\op(w) \, , 
\end{equation}
	where $\cM^\tau=\op(\tr_{-\rho}(\tau;x,\x))$ is a
	pseudo-differential operator of order $ -\rho $ for
	a symbol $\tr_{-\rho}\in S^{-\rho}$. 
	Passing to the symbols in \eqref{approssimo} we obtain, 
	recalling \eqref{def:Calphagenerator}, 
	\eqref{balconata0} and that $\star $ in \eqref{espstar} 
	is  the symbol of the commutator, the equation 
	\begin{equation}\label{probapproxsimboloq}
\left\{\begin{aligned}
			&\pa_{\tau}q(\tau;x,\x)=\chi(\tau;x,\x)\star q(\tau;x,\x) + \tr_{-\rho}(\tau;x,\x)
			\\
			&q(0;x,\x)=w(x,\x) 
		\end{aligned}\right.
	\end{equation}
	where the unknowns are $q(\tau;x,\x),\tr_{-\rho}(\tau;x,\x)$.

	We  expand $\chi(\tau;x,\x)\star q(\tau;x,\x)$ into a sum of symbols with decreasing orders.
	Since $\chi$ is linear in $\x\in\R$ (see \eqref{def:Calphagenerator}), 
	\eqref{composizione troncata}-\eqref{remainder composizione} 
	and  the expansion \eqref{balconata0} 
	(with the ansatz $q_{m-k}\in S^{m-k}$)
	we have 
	\begin{align}
	\chi\star q&=\chi\#q-q\#\chi=\chi q+\frac{1}{\ii}(\pa_{\x}\chi)(\pa_{x}q)-q\#\chi \label{rearra}
	\\&
	\stackrel{\eqref{balconata0}}{=}
	\chi q+
	\sum_{k=0}^{m+\rho-1} \frac{1}{\ii}(\pa_{\x}\chi)(\pa_{x}q_{m-k})-\Big(\sum_{k=0}^{m+\rho-1} q_{m-k}(\tau;x, \xi)\Big)\#\chi \notag 
	\\&
	\stackrel{\eqref{composizione troncata}, \eqref{cancellittiEspliciti},\eqref{espstar2}}{=}
	\sum_{k=0}^{m+\rho-1} \frac{1}{\ii}\{\chi,q_{m-k}\}
	-\sum_{k=0}^{m+\rho-1}
	\sum_{n=2}^{m-k+\rho}q_{m-k}\#_{n}\chi-\sum_{k=0}^{m+\rho-1}q_{m-k}\#_{\geq m-k+\rho+1}\chi\,. \notag 
	\end{align}
By rearranging the sums in \eqref{rearra} we get 
\begin{equation}\label{civilwar}
\begin{aligned}
	\chi\star q&=\underbrace{-\ii\{\chi,q_{m}\}}_{\in S^m}
	 + \overbrace{\sum_{k=1}^{m+\rho-1}\underbrace{\big(-\ii\{\chi,q_{m-k}\}+r_{m-k}\big)}_{\in S^{m-k}} }^{{\rm orders\, from}\;-\rho+1 \;{\rm to}\;  m-1}
	-\underbrace{\mathtt{r}_{-\rho}}_{ \in S^{-\rho}}
\end{aligned}
\end{equation}
where, denoting $\tw=\tw(k,h):=k-h+1$,
\begin{equation}\label{sperobene}
\begin{aligned}
r_{m-k}&:=-\sum_{h=0}^{k-1} q_{m-h}\#_{\tw}\chi
\\&\stackrel{\eqref{cancellittiEspliciti}}{=}
-\sum_{h=0}^{k-1}
\frac{1}{ \tw! \ii^{\tw}} (\partial_{\xi}^\tw q_{m-h})(\partial_x^\tw \chi)\in S^{(m-h)+1-(k-h+1)}\equiv S^{m-k}\,,
\end{aligned}
\end{equation}
and
\begin{equation}\label{sperobeneResto}
\mathtt{r}_{-\rho} :=\sum_{k=0}^{m+\rho-1}q_{m-k}\#_{\geq \rho+m-k+1}\chi
\stackrel{\eqref{cancellittiEspliciti}}{\in}
S^{m-k+1-(m-k+1+\rho)}\equiv S^{-\rho} \,.
\end{equation}
In view of \eqref{civilwar}, to solve the equation \eqref{probapproxsimboloq} is equivalent 
to  find symbols $q_{m-k}\in S^{m-k}$, $0\leq k\leq m+\rho-1$,  which solve for $k=0$
(recall the form of $\chi$ in \eqref{def:Calphagenerator})
	\begin{equation}\label{ordm}
		\left\{\begin{aligned}
			&\pa_{\tau}q_{m}(\tau;x,\x)= \{\mathcal{A}(\tau;x)\x, q_{m}(\tau;x,\x)\}
			\\
			&q_{m}(0;x,\x)=w(x,\x)\,,
		\end{aligned}\right.
	\end{equation}
and, for any $1\le k\le m+\rho-1 $,  
	\begin{equation}\label{ordmmenok}
	\left\{\begin{aligned}
		&\pa_{\tau}q_{m-k}(\tau;x,\x)= \{\mathcal{A}(\tau;x)\x, q_{m-k}(\tau;x,\x)\}+r_{m-k}(\tau;x,\x)
		\\
		&q_{m-k}(0;x,\x)=0\,.
	\end{aligned}\right.
\end{equation}
Note 
that the symbols $r_{m-k}$ in \eqref{sperobene} with $1\leq k\leq m+\rho-1$ depend only
on $q_{m-h}$ with $0\leq h<k$, namely the 
equations \eqref{ordmmenok} can be solved iteratively.

\vspace{0.5em}
\noindent
{\bf Order $m$.} To solve \eqref{ordm}, 	we consider the solutions of the Hamiltonian system
	\begin{equation}\label{charsys}
		\left\{\begin{aligned}
			&\frac{d}{ds}x(s)=-\mathcal{A}(s,x(s))\\
			&\frac{d}{ds}\x(s)=\mathcal{A}_{x}(s,x(s))\x(s)
		\end{aligned}\right.\qquad (x(0),\x(0))=(x_0,\x_0)\in \T\times \R\,.
	\end{equation}
	If $q_m$ is a solution of \eqref{ordm}, then 
	it is constant along the flow of \eqref{charsys}. In other words
	setting
	$g(\tau):=q_{m}(\tau;x(\tau),\x(\tau))$ one has that 
	$	\frac{d}{d\tau}g(\tau)=0 $ and thus $ g(\tau)=g(0) $, for any $ \tau\in[0,1] $. 
	Let us denote by $\gamma^{\tau_0,\tau}(x,\x)$ the solution 
	of the characteristic system \eqref{charsys}
	with initial condition $\gamma^{\tau_0,\tau_0}=(x,\x)$, so that  $(x(\tau),\x(\tau))=\gamma^{0,\tau}(x_0,\x_0)$
		and the inverse flow is given by $\gamma^{\tau,0}(x,\x)=(x_0,\x_0)$.
	Then the equation \eqref{ordm} has the solution
	\begin{equation}\label{ars3}
		q_m(\tau;x, \xi)=w(\gamma^{\tau, 0}(x, \xi))
	\end{equation}
	where $\gamma^{\tau, 0}(x, \xi)$ has the explicit form (recall \eqref{def:Calpha2})
	\begin{equation*}
		\gamma^{\tau, 0}(x, \xi)=\big(f(\tau;x), \xi g(\tau;x)\big)\,, \qquad 
		f(\tau;x):=x+\tau\alpha(x)\,, \quad g(\tau;x):=\frac{1}{1+\tau\alpha_x(x)}\,.
	\end{equation*}
	This proves \eqref{fundamentalformula}. 
	Moreover Lemma \ref{Lemmino} implies that $q_{m}(\tau;x,\x)$ in \eqref{ars3} 
	satisfies  \eqref{zeppelin}.
	
	\vspace{0.5em}
	\noindent
	{\bf Order $m-k$ with $1\leq k\leq m+\rho-1$.} 
	We now assume inductively that 
	we have already found the solutions $q_{m-h}\in S^{m-h}$ 
	of \eqref{ordmmenok}
	with $0\leq h<k$, for some $k\ge 1$ satisfying 
	\begin{equation}\label{zeppelinIndut}
		\| q_{m-h} \|^{\gamma, \cO}_{m-h, s, p}\lesssim_{m, s, p, M,\tb}  
		\sum_{s}^{*} 
		\| w \|^{\gamma, \cO}_{m, k_1, k_2+\sigma_{h}+p} 
		\| \alpha \|^{\gamma, \cO}_{k_3+\sigma_{h}}\,,\quad 1\leq h<k\,,
	\end{equation}
	for some non decreasing sequence of parameters $\s_{h}$ 
	depending only on $\nu$, $|m|,M$. 
	Note that in \eqref{zeppelinIndut} the Sobolev  norm of  $\alpha$ 
	does not contain  the parameter $p$. 
	
	We now construct the solution $q_{m-k}$ of \eqref{ordmmenok} 
	satisfying estimate \eqref{zeppelinIndut} with $h=k$.
	We define $f_{m-k}(\tau):=q_{m-k}(\tau;x(\tau),\x(\tau))$
	where $x(\tau),\x(\tau)$ are the solution of the Hamiltonian system \eqref{charsys}.
	One has that, if $q_{m-k}$ solves \eqref{ordmmenok}, then
	\[
	\frac{d}{d\tau}f_{m-k}(\tau)=r_{m-k}(\tau;x(\tau),\x(\tau))\quad \Rightarrow
	\quad
	f_{m-k}(\tau)=\int_{0}^{\tau} r_{m-k}(\s;x(\s),\x(\s))d\s\,,
	\]
	where we used that $f(0)=q_{m-k}(0,x(0),\x(0))=0$.
	Therefore the solution of \eqref{ordmmenok} is
	\begin{equation}\label{gnomosi}
		q_{m-k}(\tau;x, \xi)=
		\int_0^{\tau} r_{m-k}(\gamma^{0, \s} \gamma^{\tau, 0}(x, \xi)) \,d\s \, .
	\end{equation}
	We observe also that 
	$		\gamma^{0, \s} \gamma^{\tau, 0}(x, \xi)=(\tilde{f}, \tilde{g}\,\xi) $, 
	for any $ \s,\tau\in[0,1] $,  with
	\begin{equation}\label{formasolm}
		\tilde{f}(\s, \tau,x):=x+\tau \alpha(x)+\breve{\alpha}(\s, x+\tau \alpha(x))\,, 
		\qquad 
		\tilde{g}(\s, \tau,x):=  \frac{1}{\tilde{f}_x(\s, \tau,x)} \, . 
	\end{equation}
We now have to prove that $ q_{m-k}  $ in \eqref{gnomosi}  
satisfies \eqref{zeppelinIndut} with $ h = k $.
\\[1mm]
{\sc Step 1: estimate of $r_{m-k} $.} 
In view of \eqref{sperobene}, applying  
\eqref{stimacancellettoesplicitoAlgrammo}
with $a\rightsquigarrow q_{m-h}\in S^{m-h}$, 
$b\rightsquigarrow \chi\in S^{1}$, $n\rightsquigarrow\tw=k-h+1$, we deduce
(recall that $\tw:=k-h+1$)  
\begin{align}
\|r_{m-k}\|_{m-k,s,p}^{\gamma,\calO}
& {\lesssim}
\sum_{h=0}^{k-1}
\frac{1}{  \tw! } \|(\partial_{\xi}^\tw q_{m-h})(\partial_x^\tw \chi)\|_{m-k,s,p}^{\gamma,\calO}
\label{stimaResti} 
\\&\stackrel{\eqref{normachichi}}{\lesssim_{m,s,p}}
\|q_{m}\|_{m,s,p+k+1}^{\gamma,\calO}
\|\alpha\|_{\so+k+2}^{\gamma,\calO}
+
\|q_m\|_{m,\so,p+k+1}^{\gamma,\calO}
\|\alpha\|_{s+k+2}^{\gamma,\calO} \label{stimaResti0} \\ 
&+
\sum_{h=1}^{k-1}
\|q_{m-h}\|_{m-h,s,p+\tw}^{\gamma,\calO}
\|\alpha\|_{\so+1+\tw}^{\gamma,\calO}
+
\|q_{m-h}\|_{m-h,\so,p+\tw}^{\gamma,\calO}
\|\alpha\|_{s+1+\tw}^{\gamma,\calO}\, . \label{stimaResti1}
\end{align}
Let us consider \eqref{stimaResti1}. By  the inductive assumption \eqref{zeppelinIndut} with $1 \leq h\leq k-1$, 
we deduce 
\[
\begin{aligned}
\eqref{stimaResti1} 
& \stackrel{\eqref{zeppelinIndut}}{\lesssim_{m,s,p,\rho}}
\sum_{h=1}^{k-1}
\sum_{s}^{*} 
		\| w \|^{\gamma, \cO}_{m, k_1, k_2+\sigma_{h}+p+\tw} 
		\| \alpha \|^{\gamma, \cO}_{k_3+\sigma_{h}+\tw}
\|\alpha\|_{\so+1+\tw}^{\gamma,\calO}
\\&\qquad\quad+
\sum_{h=1}^{k-1}
\sum_{\so}^{*} 
		\| w \|^{\gamma, \cO}_{m, k_1, k_2+\sigma_{h}+p+\tw} 
		\| \alpha \|^{\gamma, \cO}_{k_3+\sigma_{h}+\tw}
\|\alpha\|_{s+1+\tw}^{\gamma,\calO}
\\&\lesssim_{m, s, p,\rho} \sum_{h=1}^{k-1}
\sum_{s}^{*} 
\| w \|^{\gamma, \cO}_{m, k_1, k_2+\sigma_{h}+p+\tw} 
\| \alpha \|^{\gamma, \cO}_{k_3+\sigma_{h}+\tw}
\\ &
\qquad\quad+
\sum_{h=1}^{k-1} \sum_{1\le k_1+k_2 \le \so}
\| w \|^{\gamma, \cO}_{m, k_1, k_2+\sigma_{h}+p+\tw} 
\|\alpha\|_{s-\so+1+\tw +\so }^{\gamma,\calO} 
\\&	
\lesssim_{m, s, p, M,\tb} 
\sum_{s}^{*} 
\| w \|^{\gamma, \cO}_{m, k_1, k_2+\widehat{\s}_{k-1}+p} 
\| \alpha \|^{\gamma, \cO}_{k_3+\widehat{\s}_{k-1}} 
\end{aligned}
\]
having used  the smallness assumption
\eqref{buf} 
(assuming $\s\geq {\s}_{k-1}+1+ k 
$), and in the last line we set $\hat\s_{k-1}:= \s_{k-1} +k +2 +\so$.
We bound \eqref{stimaResti0}  similarly using \eqref{zeppelin} instead of \eqref{zeppelinIndut}.
We conclude that 
\begin{equation}
	\label{verdone1}
\|r_{m-k}\|_{m-k,s,p}^{\gamma,\calO}
{\lesssim}_{m, s, p, M,\tb} 
\sum_{s}^{*} 
\| w \|^{\gamma, \cO}_{m, k_1, k_2+\widehat{\s}_{k-1}+p} 
\| \alpha \|^{\gamma, \cO}_{k_3+\widehat{\s}_{k-1}}\,.
\end{equation}
{\sc Step 2: estimates for $ q_{m-k} $.}  	
Let us prove that the symbol
	$q_{m-k}$ satisfies the bound \eqref{zeppelinIndut} with $h=k$ and for some 
	new $\s_{k}\geq \s_{k-1}$ depending only on $\nu$, $|m|$ and $M$.

In view of \eqref{gnomosi}, \eqref{formasolm}, 
	setting $\tilde{A} r:=r(\tilde{f}, \tilde{g}\,\xi)$, we have, for any  $\tau\in [0, 1]$, 
	$ \| q_{m-k} \|^{\gamma, \mathcal{O}}_{m-k, s, p}
			\lesssim_{s, p} 
			\sup_{\sigma \in [0,1]}\| \tilde{A} r_{m-k} \|^{\gamma, \mathcal{O}}_{m-k, s, p} $ 
			and by Lemma \ref{Lemmino} with  
	$\alpha\rightsquigarrow  \tau \alpha(x)+\breve{\alpha}(\s, x+\tau \alpha(x))$\footnote{
	The function $t(\tau,\s,x):=\tau \alpha(x)+\breve{\alpha}(\s, x+\tau \alpha(x))$
	where $y+\breve{\alpha} $ is the inverse diffeomorphism of $x+\tau\alpha$  
	satisfies (recall for instance  \eqref{stimacheckcheck}) 
	the estimate	$ \| t(\tau,\s)\|_{s}^{\g,\cO}\lesssim_s  \|  {\alpha}\|_{s+\so}^{\g,\cO} $
	uniformly in $\tau,\s\in[0,1]$.
	} we get 
	\begin{equation}\label{interpol}
		\begin{aligned}
&		\| q_{m-k} \|^{\gamma, \mathcal{O}}_{m-k, s, p}\lesssim_{s, p} 
		\| r_{m-k} \|^{\gamma, \mathcal{O}}_{m-k, s, p}
		+\sum_{s}^{*} 
		\| r_{m-k} \|^{\gamma, \mathcal{O}}_{m-k, k_1, p+k_2} \| \alpha \|^{\gamma, \mathcal{O}}_{k_3+2\so+2} \\
&	
			\| q_{m-k} \|^{\gamma, \mathcal{O}}_{m-k, \so, p}
			\lesssim
			\| r_{m-k} \|^{\gamma, \mathcal{O}}_{m-k, \so, p+\so}\, . 
		\end{aligned}
	\end{equation}
By substituting \eqref{verdone1} in  \eqref{interpol}
we get 
\begin{equation}\label{verdone2}
\begin{aligned}
\| q_{m-k} \|^{\gamma, \mathcal{O}}_{m-k, s, p}&\lesssim_{m,s, p,M,\tb} 
\sum_{s}^{*} 
		\| w \|^{\gamma, \cO}_{m, k_1, k_2+\widehat{\s}_{k-1}+p} 
		\| \alpha \|^{\gamma, \cO}_{k_3+\widehat{\s}_{k-1}}
		\\&+\sum_{s}^{*} 
		\Big(
	\sum_{k_1}^{*} 
		\| w \|^{\gamma, \cO}_{m, k_1', k_2'+\widehat{\s}_{k-1}+p+k_2} 
		\| \alpha \|^{\gamma, \cO}_{k_3'+\widehat{\s}_{k-1}}
			\Big) \| \alpha \|^{\gamma, \mathcal{O}}_{k_3+2\so+2} 
\end{aligned}
\end{equation}
where the sums run over indexes satisfying $k_1+k_2+k_3=s$ and $k_1'+k_2'+k_3'=k_1$.
By the  interpolation estimate 
\eqref{interpolotutto} and \eqref{buf}
we deduce that (if $k_3'\neq0$, otherwise we do nothing)
\begin{equation}\label{intealpa}
\| \alpha \|^{\gamma, \cO}_{k'_3+\widehat{\s}_{k-1}}\| \alpha \|^{\gamma, \mathcal{O}}_{k_3+2\so+2}
\lesssim 
\| \alpha \|^{\gamma, \mathcal{O}}_{k'_3+k_3+\widehat{\s}_{k-1}+2\so+2} \, . 
\end{equation}
Therefore  \eqref{verdone2} becomes, using \eqref{intealpa},
\[
\begin{aligned}
\| q_{m-k} \|^{\gamma, \mathcal{O}}_{m-k, s, p}&\lesssim_{m,s, p,M,\tb} 
\sum_{s}^{*} 
		\| w \|^{\gamma, \cO}_{m, k_1, k_2+\widehat{\s}_{k-1}+p} 
		\| \alpha \|^{\gamma, \cO}_{k_3+\widehat{\s}_{k-1}}
		\\&+\sum_{s}^{*} 
	\sum_{k_1}^{*} 
		\| w \|^{\gamma, \cO}_{m, k_1', k_2'+\widehat{\s}_{k-1}+p+k_2} 
		\| \alpha \|^{\gamma, \cO}_{k_3'+k_3+2\so+2+\widehat{\s}_{k-1}}
		\\&\lesssim_{m,s, p,M,\tb} 
			\sum_{s}^{*} \| w \|^{\gamma, \cO}_{m, k_1, k_2+{\s}_{k}+p} 
		\| \alpha \|^{\gamma, \cO}_{k_3+{\s}_{k}}
\end{aligned}
\]
for some $\s_{k}\geq \widehat{\s}_{k-1}+2\so+2$ depending only on $\nu$, $|m|$, $M$.
This is the estimate \eqref{zeppelinIndut} with $h=k$.
\\[1mm]
{\sc Step 3: proof of \eqref{zeppelinbis}.} 
Recalling \eqref{balconata0} and setting $\widetilde{q}_{m-1}:=
\sum_{h=1}^{m+\rho-1} q_{m-h} $  
the bounds \eqref{zeppelinIndut} with $1\leq h\leq m+\rho-1$ imply the estimate \eqref{zeppelinbis}
choosing $\s\geq \s_{\rho-1}\geq \ldots\geq \s_1$ large enough w.r.t. $\nu$, $|m|$, $M$. 
\\[1mm]
{\sc Step 4: Estimate of the remainder $\mathtt{r}_{-\rho} $.} 
Recalling \eqref{sperobeneResto} we have, using  \eqref{restocancellettoNp} 
with $a\rightsquigarrow q_{m-k}\in S^{m-k}$, $b\rightsquigarrow\chi\in S^{1}$, $N\rightsquigarrow m-k+1+\rho$ (note that $ 2<N \leq 2|m|+2\rho+1$), 
\begin{equation}\label{stimaRestiRHO}
\begin{aligned}
\|\mathtt{r}_{-\rho}\|_{-\rho,s,p}^{\gamma,\calO}
& \lesssim
\sum_{k=0}^{m+\rho-1}\|q_{m-k}\#_{\geq \rho+m-k+1}\chi \|_{-\rho,s,p}^{\gamma,\calO}
\\&
\stackrel{\eqref{restocancellettoNp}}{\lesssim_{m,s,p,M,\tb}}
\sum_{k=0}^{m+\rho-1}
\|q_{m-k}\|^{\g, \cO}_{m-k, s, 2|m|+2\rho+1+p} \|\chi\|^{\g, \cO}_{1, \so+6|m|+3+6\rho+p, p} 
\\&
\qquad\quad 
+ \|q_{m-k}\|^{\g, \cO}_{m-k, \so, 2|m|+2\rho+1+p} \|\chi\|^{\g, \cO}_{1, s+6|m|+3+6\rho+p, p}
\\&
\stackrel{\eqref{normachichi}}{\lesssim_{m,s,p,M,\tb}}
\sum_{k=0}^{m+\rho-1}
\|q_{m-k}\|^{\g, \cO}_{m-k, s, 2|m|+2\rho+1+p} \|\alpha\|^{\g, \cO}_{\so+6|m|+4+6\rho+p} 
\\&
\qquad\quad 
+ \|q_{m-k}\|^{\g, \cO}_{m-k, \so, 2|m|+2\rho+1+p} \|\alpha\|^{\g, \cO}_{s+6|m|+4+6\rho+p}\,.
\end{aligned}
\end{equation} 
Substituting the bounds \eqref{zeppelinIndut} in \eqref{stimaRestiRHO}
 we get
\begin{align}
	\|\mathtt{r}_{-\rho}\|_{-\rho,s,p}^{\gamma,\calO}
	&{\lesssim_{m,s,p,M,\tb}}
	\sum_{k=0}^{m+\rho-1}
	\big(	\sum_{s}^{*} \| w \|^{\gamma, \cO}_{m, k_1, k_2+{\s}_{k}+2|m|+1+2\rho+p} 
	\| \alpha \|^{\gamma, \cO}_{k_3+{\s}_{k}} \big) \|\alpha\|^{\g, \cO}_{\so+6|m|+4+6\rho+p} 
	\notag \\&+
	\sum_{k=0}^{m+\rho-1}
\big(	\sum_{\so}^{*} \| w \|^{\gamma, \cO}_{m, k_1, k_2+{\s}_{k}+2|m|+1+2\rho+p} 
\| \alpha \|^{\gamma, \cO}_{k_3+{\s}_{k}} \big) \|\alpha\|^{\g, \cO}_{s+6|m|+4+6\rho+p} 
\notag \\ & 
\stackrel{\eqref{interpolotutto}, \eqref{buf}} {\lesssim_{m,s,p,M,\tb}} \sum_{s}^{*} \| w \|^{\gamma, \cO}_{m, k_1, k_2+\widehat{\s}+p} 
\| \alpha \|^{\gamma, \cO}_{k_3+\widehat{\s}+p} \label{stimaerrinomenorho} 
\end{align}
for some $\widehat{\s}\geq \s_{m+\rho-1}+6|m|+4+6\rho$ 
(recall that $\s_{k}$ is a non-decreasing sequence in $k$)
and provided that \eqref{buf} holds for some $\s\geq \widehat{\s}$.
\\[1mm]
{\sc Step 5: conclusion.}
We now conclude the construction of the solution of \eqref{ars}.
We set 
$ P^{\tau} = Q^{\tau}+R^{\tau} $ where 
$ Q^\tau=\op(q)\in OPS^m $ is the  constructed 
 solution of 
(recall \eqref{balconata0}-\eqref{probapproxsimboloq}) 
	\[
	\partial_{\tau} Q^{\tau}=[{X}^{\tau}, Q^{\tau}] + \cM^\tau\,, \quad  \cM^\tau:=\op(\tr_{-\rho})  \, , \quad Q^{0} = \op(w) \, . 
	\]
Thus, since $ P^\tau $ solves the Heisenberg equation \eqref{ars},  
	\begin{equation}\label{equazioneRRtauesplicita}
			\partial_{\tau} R^{\tau}=[{X}^{\tau}, R^{\tau}]-\mathcal{M}^{\tau} \, , 
			\quad R^{0}=0\, . 
	\end{equation}
	We set $V^{\tau}:=(\mathcal{C}_{\alpha}^{\tau})^{-1}\circ R^{\tau}\circ \mathcal{C}_{\alpha}^{\tau}$
	and we note that 
	$V^{0}=(\mathcal{C}_{\alpha}^{0})^{-1}\circ R^{0}\circ\mathcal{C}_{\alpha}^{0}
	=0$.
	Moreover, by using \eqref{equazioneRRtauesplicita}, \eqref{flussodiffeo}, we have
\[
\begin{aligned}
\pa_{\tau}V^{\tau}&=
\pa_{\tau}((\mathcal{C}_{\alpha}^{\tau})^{-1})\circ R^{\tau}\circ \mathcal{C}_{\alpha}^{\tau}+
(\mathcal{C}_{\alpha}^{\tau})^{-1}\circ(\pa_{\tau}R^{\tau})\circ\mathcal{C}_{\alpha}^{\tau}
+(\mathcal{C}_{\alpha}^{\tau})^{-1}\circ R^{\tau}\circ(\pa_{\tau}\mathcal{C}_{\alpha}^{\tau})
\\&
\stackrel{\eqref{flussodiffeo},\eqref{equazioneRRtauesplicita}}{=}
-(\mathcal{C}_{\alpha}^{\tau})^{-1}\circ X^{\tau}\circ R^{\tau}\circ  \mathcal{C}_{\alpha}^{\tau}
\\&+
(\mathcal{C}_{\alpha}^{\tau})^{-1}\circ\big([{X}^{\tau}, R^{\tau}]-\mathcal{M}^{\tau}\big)\circ\mathcal{C}_{\alpha}^{\tau}
+(\mathcal{C}_{\alpha}^{\tau})^{-1}\circ R^{\tau}\circ X^{\tau}\circ \mathcal{C}_{\alpha}^{\tau}
=
-(\mathcal{C}_{\alpha}^{\tau})^{-1}\circ \mathcal{M}^{\tau}\circ\mathcal{C}_{\alpha}^{\tau}\,.
\end{aligned}
\]
Since  $ V^{0}=0$ we have 
$		V^{\tau}=-
		\int_0^{\tau} 
		(\mathcal{C}_{\alpha}^t)^{-1} \circ
		\mathcal{M}^{t}\circ \mathcal{C}_{\alpha}^{t}\,dt $ 
	and then we deduce that 
	\begin{equation*} 
		R^{\tau}=-
		\int_0^{\tau} \mathcal{C}_{\alpha}^{\tau}\circ (\mathcal{C}_{\alpha}^t)^{-1} \circ
		\mathcal{M}^t \circ \mathcal{C}_{\alpha}^{t} \circ(\mathcal{C}_{\alpha}^{\tau})^{-1}\,dt\,.
	\end{equation*}
	It remains to prove the bound \eqref{stimerestoEgorov1} for $j=0,\tb$.
	We first observe that for $m_1+m_2=M$ 
	\begin{equation}\label{losqualo15}
		\begin{aligned}
			\jap{D}^{m_1}& R^\tau \jap{D}^{m_2}= - \int_0^\tau
			\underbrace{ \jap{D}^{m_1} \mathcal{C}_{\alpha}^{\tau} \jap{D}^{-N-m_1}}_{=:A_1}
			\underbrace{\jap{D}^{N+ m_1}(\mathcal{C}_{\alpha}^t)^{-1}\jap{D}^{-2N-m_1}}_{=:A_2}\\
			&\times
			\underbrace{\jap{D}^{m_1+ 2N} \mathcal{M}^{t}  \jap{D}^{m_2+2N}}_{=:B}
			\underbrace{\jap{D}^{-2N-m_2} \mathcal{C}_{\alpha}^{t} \jap{D}^{m_2+N}}_{=:A_3}
			\underbrace{\jap{D}^{-N-m_2} (\mathcal{C}_{\alpha}^\tau)^{-1} \jap{D}^{m_2}}_{=:A_4} d \tau 
		\end{aligned}
	\end{equation}
	for $N>0$ to be fixed in terms of $\nu$ and $\tb$.
	We now show that $A_1A_2BA_3A_4$ 
	and $\langle \td_{\vphi}\rangle^{\tb}A_1A_2BA_3A_4$  
	admits a representative in the space
	$E_{s}$ with estimates \eqref{stimerestoEgorov1} 
	 uniform in $t,\tau\in[0,1]$. We prove this claim for each operator
	$B,A_i$, $i=1,\ldots,4$.
	
Let us start with the operator 
$A_1=\jap{D}^{m_1} \mathcal{C}_{\alpha}^{\tau} \jap{D}^{-N-m_1}$.
We set 
\[
N_1 := -m_1\,,\ \  N_2 := N+m_1\,,  \  \ 
N_1+N_2=N:=\lfloor \nu/2 \rfloor + 4+\tb>\lfloor \nu/2 \rfloor + 3+\tb\,.
\]
Recalling the smallness condition \eqref{buf}
and that $N_1+N_2>\lfloor \nu/2 \rfloor + 3+\tb$, 
 Proposition   \ref{inclusionetotale} 
applies and guarantees that 
there is $\tA_1\in E_{s}$ with $\mathfrak{S}(\tA_1)=A_1$ and satisfying (see 
\eqref{chiarodiluna2})
\begin{equation}\label{losqualo11}
\bnorm{ \langle\td_{\vphi}\rangle^{q}\tA_1}^{\gamma,\calO}_s 
\lesssim_{s,\su,M,q} 1+
\|\al\|_{s+\s}^{\gamma,\calO}\,,\quad q=0,\tb\,,
\end{equation}
for some  $\s>0$ large depending on $M,\nu,\tb$
(note  that $|N_1|, |N_2|$ are bounded from above by a constant depending only on $\nu,M,\tb$
since  $m_1+m_2=M$).  
Reasoning similarly (using again Prop. \ref{inclusionetotale})
one  deduce that each $A_{i}$, $i=2,3,4$ admit representative $\tA_{i}\in E_{s}$
satisfying  estimates  \eqref{losqualo11}.
	
Let us now consider the operator $B$. First of all we recall that 
$\cM^t=\op(\tr_{-\rho}(t;x,\x))$ with symbol 
$\tr_{-\rho}\in S^{-\rho}\subset S^{-\rho+1}$
satisfying the estimate 
\eqref{stimaerrinomenorho}.
Then, by Proposition
	\ref{prop:immersionepseudo} 
	(applied with $m\rightsquigarrow -\rho+1$, see \eqref{sceltarho}\footnote{thanks to the choice of $\rho$ in 
	\eqref{sceltarho} and recalling that  $N:=\lfloor \nu/2 \rfloor + 4+\tb$
	we get $\rho>\rho-1=M+4N=n_1+n_2$.
	},
	$n_1\rightsquigarrow -m_1-2N$, $n_2\rightsquigarrow -m_2-2N$) 
	we have that there is $\mathtt{B}\in E_{s}$ such that $\mathfrak{S}(\tB)=B$ and 
	satisfying, for $q=0,\tb$,
\begin{equation}\label{losqualo14}
\begin{aligned}
\bnorm{\langle \td_{\vphi}\rangle^{q}\mathtt{B}}_{s}^{\gamma,\mathcal{O}}
&\lesssim_{s,\su,n_1,q} 
\|\mathtt{r}_{-\rho}\|_{-\rho+1,s+\gso+|m_1+2N|+q+1,0}^{\gamma,\mathcal{O}}
\\&
\stackrel{\eqref{stimaerrinomenorho} \text{with} \, p = 0 }{\lesssim_{s,\su,M,q}}
\sum_{s}^{*} \| w \|^{\gamma, \cO}_{m, k_1+\widehat\s, k_2+\widehat{\s}} 
\| \alpha \|^{\gamma, \cO}_{k_3+\widehat{\s}} 
\end{aligned}
\end{equation}
for some $\widehat{\s}>0$ large (possibly larger than the one appearing in \eqref{stimaerrinomenorho})
depending only on $M,\nu,\tb$.
We used that  $m_1,m_2$ are positive and sum to $M$; $N$ 
is chosen in terms of $\nu$;  $\rho$ is chosen  in terms of $\nu, M, \tb$, 
and that the constant appearing in the estimate of Prop. \ref{prop:immersionepseudo}
are non decreasing in $|n_1|$.
Clearly the estimate \eqref{losqualo14} is uniform in $t\in[0,1]$.

Combining the tame estimates provided by 
the composition Lemmata  \ref{tretameestimate}, \ref{tretameestimate3} 
with bounds \eqref{losqualo14}, \eqref{losqualo11}
to estimate $A_{1}A_2BA_3A_4$ and recalling \eqref{losqualo15}, we have that there is 
$\mathtt{R}^{\tau}\in E_{s}$ with $\mathfrak{S}(\mathtt{R}^{\tau})=R^{\tau}$ satisfying 
the bound \eqref{stimerestoEgorov1}.
This concludes the proof.
\end{proof}

\section{Straightening of a first order operator with bi-characteristics}\label{sec:szego}

The main result of this section is the straightening \eqref{straightpotente} 
of the  
first order operator
$ \omega\cdot\pa_{\vphi}-\ii(1+a)|D | $. 
In order to state the result we define the \emph{``Szeg\"o projectors"}  
\begin{equation}\label{def:szego}
\Pi_{\pm}:=\chi_{\pm}(D):=\op(\chi_{\pm} (\xi) )
\end{equation}
where 
$\chi_{\s} (\xi)  $, $\s\in\{\pm\}$,  
are cut-off functions in $ C^{\infty}(\R,[0,1])$  satisfying  $ \chi_{-}(\xi) :=\chi_{+}(-\xi) $ and 
\begin{equation}\label{def:cutoff}
\begin{aligned}
\chi_{+}(\xi)&:=\begin{cases}
0 \quad {\rm if} \quad \xi\le -\frac{1}{2}\\
\tfrac12 \quad {\rm if} \quad \xi = 0 \\
1 \quad {\rm if} \quad \xi\ge \frac{1}{2}\,,
\end{cases} \quad \partial_\xi\chi_{+}(\xi)\geq0 \quad \forall\, \xi\in\R \, , 
\quad \chi_{+}(\xi)+\chi_{-}(\xi) =1\, . 
\end{aligned}
\end{equation}
Note that $ \overline{ \Pi_+ } = \Pi_- $.

\begin{thm}{\bf (Straightening).}\label{IncredibleConjugate}
Let $a(\vphi,x)$ be a real valued function in $ C^\infty (\T^\nu \times \T, \R ) $, 
even separately in $\vphi$ and $  x $,  
depending in a Lipschitz way on 
$\omega\in \mathcal{O}\subseteq \Lambda$.
Fix $\gso,\so$ as in \eqref{costanti}.
For any $M>0$ and $\mathtt{b}\in \N_0 $ 
there is  $\s := \s(M, \tb)>0$ such that 
for any $s\geq \so$ there is $ C(s,\tb)>0$ such that, for any 
$\gamma\in(0,\tfrac{1}{2})$,
if 
\begin{equation}\label{piccino}
C(s,\tb)\g^{-1} \|a\|^{\g,\mathcal O}_{\so+\s} \leq 1 \, , 
\end{equation}
then 
there exists a Lipschitz  function 
$ \fa_+ : \Lambda \to \R $, $ \omega \mapsto \fa_+(\omega) $,  
satisfying 
\begin{equation}\label{tordo4b}
\qquad \lvert  \fa_+  \rvert^{\gamma,\Lambda}\leq 2\|a\|^{\g,\mathcal O}_{\so+\s} \, , \,
\end{equation}
such that for any $\omega $ belonging to 
\begin{equation}\label{buoneacqueb}
\Omega_{1}:= \Big\{\omega \in \cO: \; 
|\omega \cdot \ell +(1+\fa_+(\omega) ) j  |
>2\gamma \langle \ell\rangle^{-\tau}\,,
\;\forall (\ell,j)\in \Z^{\nu+1} \setminus \{0\}\Big\}
\end{equation}
the following holds.
There exist functions $\alpha_{\s} : \T^{\nu+1}\times \Omega_{1}\to \R$, $\s\in\{\pm\}$,  
satisfying  
\begin{equation}\label{atrio3} 
\alpha_{\s}(\vphi,x)=-\alpha_{\s}(-\vphi,-x)\,, \quad
 \alpha_{-}(\vphi,x):=-\alpha_{+}(\vphi,-x) \,,
 \end{equation}
 and
\begin{equation}\label{stimapm}
\|\alpha_{\s} \|^{\g,\Omega_{1}}_{s} \lesssim_s  \g^{-1}\|a\|^{\g,\cO}_{s+\s} \, ,
\quad \forall s\geq \so\,,
\end{equation}
such that the linear operator 
\begin{equation}\label{ellediffeo}
L:=\mathcal{C}_{\alpha_+}\Pi_{+}+\mathcal{C}_{\alpha_-}\Pi_{-}
\end{equation}
with $\mathcal{C}_{\alpha_{\s}}$  given in \eqref{ignobel},
maps $H^{s}$ to $H^{s}$, $s\geq \gso$, is  
invertible,  reversibility and parity preserving, and 
\begin{equation}\label{straightpotente}
L\circ\big( \omega\cdot\pa_{\vphi}-\ii(1+a)|D| \big)\circ L^{-1}=
\omega\cdot\pa_{\vphi}-\ii (1+\fa_{+})|D|+R
\end{equation}
where $R$ is a   parity preserving, reversible  bounded operator.
Moreover, assuming \eqref{piccino} for all $s\leq \su$,
then
there exists a couple $\mathtt{R}\in E_{s}\equiv E_{s,\gso,\su}$ 
such that $\mathfrak{S}(\mathtt{R})=R$,
satisfying, for any $m_1,m_2\in\R_+$ , $m_1+m_2=M$,  $\gso\le s\le \su$
(recall \eqref{derivata})
\begin{align}
\bnorm{\jap{\td_{\vphi}}^{q} \jap{D}^{m_1}\mathtt{R}\jap{D}^{m_2}}_s^{\g, \Omega_{1}} 
&\lesssim_{s,\su,q,M} 
\g^{-1}\|a\|^{\g,\cO}_{s+\s}\, , \quad q = 0, \tb \, .  
\label{raichel2}
\end{align}
\end{thm}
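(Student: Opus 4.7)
The strategy is to split the operator along the two characteristic directions of $|D|$ via the Szeg\"o projectors $\Pi_\pm$ and then straighten the two resulting transport operators independently. Since $\Pi_\pm$ project onto strictly positive/negative Fourier modes (and both annihilate constants), a direct check on the exponential basis gives the identity $\ii|D| = \pa_x\Pi_+ - \pa_x\Pi_-$, hence
\begin{equation*}
\oo\cdot\pa_\vphi - \ii(1+a)|D| = \bigl(\oo\cdot\pa_\vphi - (1+a)\pa_x\bigr)\Pi_+ + \bigl(\oo\cdot\pa_\vphi + (1+a)\pa_x\bigr)\Pi_- + \ii[a,\Pi_+-\Pi_-]|D|.
\end{equation*}
The commutator term is smoothing of infinite order: since $\chi_\pm'$ is compactly supported in $\x$ (see \eqref{def:cutoff}), $[a,\Pi_\pm]$ has symbol in $S^{-\infty}$, and the calculus of Section 5 combined with Proposition \ref{prop:immersionepseudo} provides a Bony-smoothing couple with tame bounds of the form $\|a\|_{s+\s}$ for any order of smoothing.

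Next I would solve the transport-straightening equation
\begin{equation*}
\oo\cdot\pa_\vphi \alpha_+ - (1+a)(1+\pa_x\alpha_+) = -(1+\fa_+)
\end{equation*}
on the Diophantine set $\Omega_1$ defined by \eqref{buoneacqueb}. This equation is equivalent to the conjugation identity $\mathcal{C}_{\alpha_+}\circ(\oo\cdot\pa_\vphi - (1+a)\pa_x)\circ \mathcal{C}_{\alpha_+}^{-1} = \oo\cdot\pa_\vphi - (1+\fa_+)\pa_x$. It is standard for small $a$: one runs a Nash--Moser iteration whose linearized step inverts $\oo\cdot\pa_\vphi - \pa_x$ (diagonal in Fourier, controlled by the Diophantine condition with a loss $\g^{-1}\jap{\ell}^\tau$), producing the tame bound \eqref{stimapm} with a loss of derivatives $\s$ \emph{independent} of $s$; the scalar constant $\fa_+$ is selected at each step to kill the obstructive average and inherits the estimate \eqref{tordo4b}. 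For the $\Pi_-$ piece, I set $\alpha_-(\vphi,x):=-\alpha_+(\vphi,-x)$: because $a(\vphi,-x)=a(\vphi,x)$, this function solves the reflected straightening equation with the \emph{same} constant $\fa_+$, which is exactly the condition needed to obtain a single constant-coefficient operator after the recombination. The parities \eqref{atrio3} and the reversibility/parity preservation of $\mathcal{C}_{\alpha_\pm}$ (hence of $L$) follow because the equation is invariant under $(\vphi,x)\to(-\vphi,-x)$ and the solution is unique.

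With $L=\mathcal{C}_{\alpha_+}\Pi_++\mathcal{C}_{\alpha_-}\Pi_-$ (whose inverse is $L^{-1}=\Pi_+\mathcal{C}_{\alpha_+}^{-1}+\Pi_-\mathcal{C}_{\alpha_-}^{-1}$ modulo smoothing cross terms), inserting the two straightening identities and the Szeg\"o decomposition above gives
\begin{equation*}
L\circ\bigl(\oo\cdot\pa_\vphi-\ii(1+a)|D|\bigr)\circ L^{-1}=\oo\cdot\pa_\vphi-\ii(1+\fa_+)|D|+R,
\end{equation*}
where $R$ collects three classes of contributions: the $[a,\Pi_\pm]|D|$-type terms from the initial splitting; the ``cross'' terms $\mathcal{C}_{\alpha_\s}\Pi_\s\,(\cdot)\,\Pi_{-\s}\mathcal{C}_{\alpha_{-\s}}^{-1}$ arising when the projectors fail to commute with composition operators; and the Egorov-type differences $\mathcal{C}_{\alpha_\pm}\Pi_\pm\mathcal{C}_{\alpha_\pm}^{-1}-\Pi_\pm$. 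The first is $S^{-\infty}$ by the pseudo-differential calculus; the second and third I would control via Theorem \ref{thm:egorov} applied to the symbol $\chi_\pm\in S^0$, whose derivatives with respect to $\x$ lie in $S^{-\infty}$, so the conjugate pseudo-differential symbol $q_0$ equals $\chi_\pm$ plus a symbol in $S^{-\infty}$, while the remainder is estimated in the Bony-smoothing couple norm by \eqref{stimerestoEgorov1}. Combining with the tame bound \eqref{stimapm} on $\alpha_\pm$ and the algebra properties of $E_s$ (Section \ref{sec:modulotame}) yields the claimed bound \eqref{raichel2} on $\bnorm{\jap{\td_\vphi}^q\jap{D}^{m_1}\mathtt{R}\jap{D}^{m_2}}_s^{\g,\Omega_1}$. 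Parity and reversibility of $R$ follow from Lemma \ref{equidefalgebra} applied to $L$ and the algebraic structure of the transport operators.

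The main obstacle is twofold. First, the existence of $\alpha_+$ and the tame bound \eqref{stimapm} with a single constant $\s$ independent of $s$ requires a careful Nash--Moser scheme with telescopic ultraviolet cut-offs, not merely a convergent KAM argument in a fixed analytic scale; without the sharp tame estimate one cannot later apply this theorem inside a nonlinear KAM construction. Second, and more subtle, is the symmetry argument that produces the \emph{same} constant $\fa_+$ for both transport operators: this is what allows the positive- and negative-mode pieces to recombine into a genuinely bi-characteristic operator $-\ii(1+\fa_+)|D|$ rather than two distinct constants $(1+\fa_\pm)$ that could not be unified into a single $|D|$. The hypothesis that $a$ is even in $x$ is precisely what forces this compatibility through the reflection $x\to -x$, and any deviation would obstruct the final conjugation. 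Everything else (algebra checks, remainder estimates) is a systematic application of the machinery of Sections 5--7.
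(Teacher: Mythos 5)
Your overall strategy---Szeg\"o decomposition into two transport operators, straightening each along its characteristic direction, and exploiting the evenness of $a$ in $x$ to produce a single constant $\fa_+$---is the same as the paper's (cf.~Lemma~\ref{prop:straight}, Lemma~\ref{lemma: invertibilita operatore trasporto}, Lemma~\ref{lemmaApiumeno}). But there is a genuine error in the straightening step: you claim that solving $\oo\cdot\pa_\vphi\alpha_+ - (1+a)(1+\pa_x\alpha_+) = -(1+\fa_+)$ for $\alpha_+$ is equivalent to the conjugation identity $\mathcal{C}_{\alpha_+}\circ(\oo\cdot\pa_\vphi-(1+a)\pa_x)\circ\mathcal{C}_{\alpha_+}^{-1}=\oo\cdot\pa_\vphi-(1+\fa_+)\pa_x$. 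That equivalence is false. With the convention \eqref{ignobel}, writing $u=\mathcal{C}_{\alpha_+}^{-1}v$, so $u(\vphi,y)=v(\vphi,y+\breve\alpha_+(\vphi,y))$, the chain rule gives
\begin{equation*}
\mathcal{C}_{\alpha_+}\big(\oo\cdot\pa_\vphi-(1+a)\pa_x\big)\mathcal{C}_{\alpha_+}^{-1}v
= \oo\cdot\pa_\vphi v
+ \Big(\oo\cdot\pa_\vphi\breve\alpha_+ - (1+a)(1+\pa_x\breve\alpha_+)\Big)\Big|_{(\vphi,\,x+\alpha_+(\vphi,x))}\,\pa_x v\,,
\end{equation*}
so it is the \emph{inverse} generator $\breve\alpha_+$ (what the paper calls $\beta_+$) that must satisfy the transport equation, not $\alpha_+$. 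This is exactly the care taken in the paper: Lemma~\ref{prop:straight} solves \eqref{veropiace} for $\beta_+$, and \eqref{albeta} then defines $\alpha_+$ as the \emph{inverse} diffeomorphism generator, so that $\breve\alpha_+=\beta_+$. If you feed the transport solution directly into $\mathcal{C}_{\alpha_+}$ as you wrote, the coefficient involves $\breve\alpha_+$, which satisfies a different, nonlinear equation with $a$ composed with $\alpha_+$, and the operator is \emph{not} straightened. The fix is purely cosmetic (rename your unknown $\breve\alpha_+$ and take $\alpha_+$ to be its inverse), but the step as written would fail.

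Two smaller remarks. The correction $\ii[a,\Pi_+-\Pi_-]|D|$ in your initial splitting is spurious: since $\Pi_++\Pi_-=\Id$ and the $\Pi_\pm$ are Fourier multipliers commuting with $\pa_x$, the identity $\oo\cdot\pa_\vphi-\ii(1+a)|D|=(\oo\cdot\pa_\vphi-(1+a)\pa_x)\Pi_++(\oo\cdot\pa_\vphi+(1+a)\pa_x)\Pi_-$ holds exactly (on $\Z$ one has $\chi(\xi)|\xi|=\xi(\chi_+(\xi)-\chi_-(\xi))$); the extra term would only arise if you had placed $\Pi_\pm$ to the \emph{left} of $(1+a)$. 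Finally, your appeal to Theorem~\ref{thm:egorov} for the order-zero symbols $\chi_\pm$ is a valid alternative route, but the paper's proof of this theorem avoids Egorov altogether: the projector commutators $[\Pi_\sigma,\mathcal{C}_{\alpha_{\sigma'}}^{-1}]$, $[\Pi_\sigma,\op(\chi)]$ are directly $S^{-\infty}$ by Lemma~\ref{commutarechebello}, the cross products $\Pi_\sigma\Pi_{-\sigma}$ are $S^{-\infty}$ by Remark~\ref{rmk:cutoff}, and the main conjugate $\mathcal{C}_{\alpha_\sigma}A_\sigma\mathcal{C}_{\alpha_\sigma}^{-1}$ is computed by the chain rule in \eqref{Q3Q4} using the transport equations \eqref{veropiace}--\eqref{contoabotta}; Egorov is reserved for Proposition~\ref{prop:ridord1}, where the strictly pseudo-differential dispersion $\tD_\tm$ enters.
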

The rest of the section is devoted to the proof of Theorem \ref{IncredibleConjugate}. 

\begin{rmk}\label{rmk:cutoff}
(i) The function 
 $	\sign(\xi) =\chi_{+}(\xi)-\chi_{-}(\xi) $ 
is in $C^{\infty}(\R;[-1,1])$, it is odd  and 
$\sign(\xi)=1$ if $\xi\geq1/2$ and $\sign(\xi)=-1$ if $\xi\leq -1/2$.

(ii) The operator $\Pi_{+}\Pi_{-} = 
\op(\chi_{+}(\x)\chi_{-}(\x))$
is  pseudo-differential  
with symbol $\chi_{+}\chi_{-} $ in $ S^{-\infty}$ (see Def. \ref{norma pesata}).

(iii) The symbol $1-\chi\in S^{-\infty}$ where $\chi(\xi)$ is the function defined in \eqref{cutoff}. Furthermore $\Pi_+\Pi_-\op(\chi)=0$ and  $\ii |D|\Pi_\s = \s \pa_x \op(\chi)\Pi_\s$.

\end{rmk}

We first study  the commutator 
between the Szeg\"o projectors $ \Pi_\pm $ in \eqref{def:szego} and the torus diffeomorphism 
$\mathcal{C}_{\alpha} $ defined in \eqref{ignobel}, and  with a pseudo-differential operator.

\begin{lemma}\label{commutarechebello}
{\bf (Commutator with Szeg\"o projectors)}
 Let $\chi(\xi)$ be the function defined in \eqref{cutoff}.  
There is $ \eta > 0 $ and, for any $s\geq \so$,  there is $\delta :=\delta(s) >0  $ 
such that if $\| \al\|_{\so+\eta}^{\g, \cO} \le \delta $ then 
 the commutators  
$$ 
[\Pi_{\s},\mathcal{C}_{\alpha}]=\op(g_\s)  \, , \ \s\in\{\pm\} \, ,   \qquad 
[\op{(\chi)},\mathcal{C}_{\alpha}]=\op(h)  \, , 
$$
are pseudo-differential 
with symbols $ g_{\s}, h $  in $ S^{-\infty}$ satisfying, for any  
$p \geq 0$, $ N \in \N $ and  for some  $ \mu :=\mu(N,p) >0$,  
\begin{equation}\label{chebellocommutare}
\|g_{\s}\|^{\g,\calO}_{-N,s,p}\lesssim_{N,s,p}\|\alpha\|^{\g,\calO}_{s+\mu}\, ,
\quad 	\|h\|^{\g,\calO}_{-N,s,p}\lesssim_{N,s,p}\|\alpha\|^{\g,\calO}_{s+\mu}\,.
\end{equation}
For any symbol $ r \in S^m $, for any $\s\in\{\pm\}$, the  commutator  
$ [\Pi_{\s}, \op(r)]=\op(f_{\s}) $  is pseudo-differential 
with a symbol $ f_{\s} $  in $ S^{-\infty}$ satisfying, for any  
$p \geq 0$, $ N \in \N $ and for some  $ \mu :=\mu(N,m,p) >0$,  
\begin{equation}\label{chebellocommutare1}
\|f_{\s}\|^{\g,\calO}_{-N,s,p}\lesssim_{N,s,p}\|r\|^{\g,\calO}_{m,s+\mu,p+\mu}\, .
\end{equation}
\end{lemma}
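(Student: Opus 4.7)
The plan is to compute the symbols of the three commutators explicitly, reduce their smoothing property to a geometric support argument for the Szeg\"o cutoffs, and then invoke Lemma \ref{decayultravioletto} (for $\mathtt{t}_\alpha$) or standard symbolic calculus (for $r$) to obtain the quantitative tame bounds. First, using \eqref{sacrorito2}--\eqref{sacrorito} and the fact that $\Pi_\s=\op(\chi_\s(\xi))$ is a Fourier multiplier, a direct matrix computation in the exponential basis yields
\begin{equation*}
g_\s(\vphi,x,\xi) = \sum_{k\in \Z\setminus\{0\}} \bigl(\chi_\s(\xi+k)-\chi_\s(\xi)\bigr)\, \widehat{\mathtt{t}}_{\alpha}(\vphi,k,\xi)\, e^{\ii k x},
\end{equation*}
and similarly for $h$ with $\chi$ in place of $\chi_\s$. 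For $f_\s = [\Pi_\s,\op(r)]$ the composition formula \eqref{mareostia} (noting $\widehat{\chi_\s}(k,\xi)=\chi_\s(\xi)\delta_{k,0}$) gives exactly $f_\s(\vphi,x,\xi)=\sum_{k}(\chi_\s(\xi+k)-\chi_\s(\xi))\widehat{r}(\vphi,k,\xi)e^{\ii k x}$.

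Next I would establish the crucial \emph{support property}: for each $\beta\in\N_0$ the function $\pa_\xi^\beta[\chi_\s(\xi+k)-\chi_\s(\xi)]$ vanishes unless $|k|\gtrsim \langle\xi\rangle$. Indeed, for $\beta=0$, whenever $|\xi|\ge 1$ and $|k|<|\xi|/2$ both $\xi$ and $\xi+k$ lie on the same plateau where $\chi_\s$ is constant, so the difference is zero; for $\beta\ge 1$ the two summands $\chi_\s^{(\beta)}(\xi+k)$ and $\chi_\s^{(\beta)}(\xi)$ are each compactly supported in $[-\tfrac12,\tfrac12]$, forcing $|k|\ge |\xi|-\tfrac12$ on their union of supports. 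The same argument applies to $\chi$, whose derivatives are supported in $[\tfrac12,\tfrac23]\cup[-\tfrac23,-\tfrac12]$. Consequently, on the support of $\pa_\xi^\beta[\chi_\s(\xi+k)-\chi_\s(\xi)]$ we have $\langle\xi\rangle\lesssim\langle k\rangle \lesssim \langle \ell, k\rangle$ for every $\ell\in\Z^\nu$, which is precisely the hypothesis $|\ell|+|k|\ge |\xi|/3$ required by \eqref{chiarodiluna11}.

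To deduce \eqref{chebellocommutare}, I would pass to Fourier coefficients in $(\vphi,x)$: $\doublehat{g}_\s(\ell,k,\xi) = (\chi_\s(\xi+k)-\chi_\s(\xi))\,\doublehat{\mathtt{t}}_\alpha(\ell,k,\xi)$ for $k\neq 0$, and use $\pa_\xi^\beta \doublehat{\mathtt{t}}_\alpha(\ell,k,\xi) = \doublehat{(\ii\alpha)^\beta \mathtt{t}_\alpha}(\ell,k,\xi)$. Applying a Leibniz expansion in $\xi$ and \eqref{chiarodiluna11} at Sobolev index $s+N+\gso+1$ (and using \eqref{tameProduct} to absorb the factor $(\ii\alpha)^\beta$, at a cost depending on $\beta\le p$) gives
\begin{equation*}
|\pa_\xi^\beta \doublehat{g}_\s(\ell,k,\xi)|^{\g,\cO} \lesssim_{N,s,p} \langle\ell,k\rangle^{-s-\gso-N-1}\|\alpha\|^{\g,\cO}_{s+\mu},
\end{equation*}
which, combined with the support bound $\langle\xi\rangle^{N-\beta}\lesssim \langle\ell,k\rangle^{N}$, yields $\sup_\xi \langle\xi\rangle^{N-\beta}\|\pa_\xi^\beta g_\s(\cdot,\cdot,\xi)\|_s^{\g,\cO}\lesssim \|\alpha\|^{\g,\cO}_{s+\mu}$ after summing the convergent series in $(\ell,k)$. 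The Lipschitz variation in $\omega$ follows verbatim from the Lipschitz part of \eqref{chiarodiluna11}, and the bound for $h$ is identical. For \eqref{chebellocommutare1} the scheme is the same: from the definition \eqref{normaSymbolo} the Fourier coefficient $\doublehat{r}(\ell,k,\xi)$ of a symbol $r\in S^m$ obeys $|\pa_\xi^\beta\doublehat{r}(\ell,k,\xi)|^{\g,\cO}\lesssim \langle\xi\rangle^{m-\beta}\langle\ell,k\rangle^{-s-N-\mu_0}\|r\|^{\g,\cO}_{m,s+\mu,p+\mu}$, and combining with the support bound on $\chi_\s(\xi+k)-\chi_\s(\xi)$ gives the $S^{-N}$ control.

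The main (essentially only) obstacle is the bookkeeping of the parameter $\mu$: one needs to verify that the amount of extra regularity required of $\alpha$ (resp.\ of $r$) to absorb the $\langle\xi\rangle^{N+p}$ factor is independent of $s$, i.e.\ that $\mu=\mu(N,p)$ (resp.\ $\mu(N,m,p)$), with no implicit dependence on the Sobolev index. This is ensured by applying \eqref{chiarodiluna11} at an index \emph{shifted} by $N+p+\gso+1$ rather than multiplying losses, and by using the interpolation inequality \eqref{interpolotutto} to handle the product $(\ii\alpha)^\beta \mathtt{t}_\alpha$ in the $\xi$-derivative expansion. No deeper difficulty arises; the geometric core of the argument is the plateau property of $\chi_\pm$ which forces every commutator to live on the "off-diagonal" region $|k|\gtrsim\langle\xi\rangle$.
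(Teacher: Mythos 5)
Your proposal follows the same route as the paper: write the commutator symbol explicitly via $\widehat{\tt}_\alpha$, exploit the plateau property of $\chi_\sigma(\xi+k)-\chi_\sigma(\xi)$ to localize to $|k|\gtrsim\langle\xi\rangle$, and then feed this off-diagonal restriction into the Fourier-coefficient decay of Lemma~\ref{decayultravioletto}. The only inaccuracy is in the handling of the $\xi$-derivatives for $\beta\geq 1$: you propose to use \eqref{chiarodiluna11} together with \eqref{tameProduct} (or \eqref{interpolotutto}) to ``absorb the factor $(\ii\alpha)^\beta$'', but \eqref{chiarodiluna11} only bounds $\doublehat{(\tt_\alpha-1)}$, and a plain Sobolev product estimate cannot absorb $(\ii\alpha)^\beta\tt_\alpha$ because $\|\tt_\alpha(\cdot,\cdot,\xi)\|_s$ grows unboundedly in $\xi$ (this oscillation is precisely what the change of variable in \eqref{maratona} is there to kill). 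What is needed, and what the paper uses, is the \emph{first} bound in \eqref{chiarodiluna10} with $f=\alpha^{p_2}$, which is proved via the diffeomorphism trick and hence already accounts for the factor $e^{\ii\xi\alpha}$; \eqref{tameProduct} then enters only to bound $\|\alpha^{p_2}\|_s$ by powers of $\|\alpha\|$. With this substitution your argument closes exactly as the paper's does.
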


\begin{proof}
We start  proving  the estimate \eqref{chebellocommutare}
for $ g_{\s} $.  We note that 
$ [\Pi_{\s},\mathcal{C}_{\alpha}] u =  \sum_{k\in\Z} g_{\s}(\vphi,x,k) u_k e^{\im k x} $
where, recalling the definition of   $\widehat{\tt}_\alpha(\vphi,k,\xi)$  in  \eqref{sacrorito}, 
\begin{equation}\label{bottiglia}
\begin{aligned}
g_{\s}(\omega;\vphi,x,\xi)\equiv g_{\s}(\vphi,x,\xi)&:= \sum_{k\in\Z} (\chi_{\s}(\x+k)- \chi_{\s}(\x))
\widehat{\tt}_\alpha(\vphi,k,\xi) e^{\im k x}
\\&
= \sum_{k\in\Z} (\chi_{\s}(\x+k)- \chi_{\s}(\x))
(\widehat{\tt}_\alpha(\vphi,k,\xi) -\delta(k,0))e^{\im k x}
\end{aligned}
\end{equation}
where  $\delta(k,0)$ is the Kroneker delta.
Recalling \eqref{normaSymbolo}, the first  bound  in \eqref{chebellocommutare}  follows  directly by proving that if $\| \al\|_{\so+\eta}^{\g, \cO} \le \delta $, with $\eta,\delta$ fixed in Lemma \ref{decayultravioletto}, then  the  following estimate  for the  Fourier coefficients in time and space $\doublehat{g}_{\s}(\ell,k,\x)$ of $g_\s$ holds.   For  any $  s\ge \so,\, N,p\ge 0$  one has
\begin{equation}
	\label{gcappuccio}
	|\pa_{\x}^{p}\,\doublehat{g}_{\s}(\ell,k,\x)|^{\g,\calO}
	\lesssim_{s,N,p}
	\frac{1}{\langle \ell,k\rangle^{s}}
	\|\alpha\|^{\g,\calO}_{s+N+p+\eta} \langle\x\rangle^{-N-p}\,,
\end{equation}
indeed \eqref{gcappuccio} implies the first bound in \eqref{chebellocommutare} with $\mu= s+N+p+\eta+\so$.
We have reduced to proving \eqref{gcappuccio}.
The key point is  that in \eqref{bottiglia} one has 
\begin{equation}\label{nome}
	\chi_\s(k+ \xi)-\chi_\s(\xi)\ne 0\quad \Rightarrow \quad |k|>|\xi|-\tfrac12\,.
\end{equation}
Indeed
\begin{align*}
	\chi_\s(k+ \xi)-\chi_\s(\xi)&=   \chi_\s(k+ \xi)(\chi_+(\xi)+ \chi_-(\xi))-\chi_\s(\xi) (\chi_+(k+\xi)+ \chi_-(k+\xi))\\
	&= - \s\chi_+(\xi)\chi_-(k+\xi)+\s\chi_-(\xi)\chi_+(k+\xi)  \,.
\end{align*} 
Now, if $|\xi|< \tfrac12$ the second inequality  in \eqref{nome}  is always satisfied. 
In the other case the condition  $\chi_+(\xi)\chi_-(k+\xi)\neq0$ implies  (by  the definition of $\chi_\pm$ in \eqref{def:cutoff}) 
that 
$ \xi>- \tfrac12$ and  $\xi +k < \tfrac12$. So if  $|\xi|\ge \tfrac12$ one gets from the first inequality  $\xi \geq \tfrac12$ and from the second one $k< \tfrac12-\xi \leq 0$.  Then, since $\xi>0$ and $k \leq 0$, the condition $\xi +k < \tfrac12$ reads $|k|
>  |\xi|-\tfrac12$. The same holds for  $\chi_-(\xi)\chi_+(k+\xi)$.
 
From \eqref{nome} the sum in \eqref{bottiglia} is restricted to  $|k|> |\xi|-\tfrac12$.  Then for any $p\in \N_0 $,
 $|k| > |\xi|-\frac12$
\begin{align*}
&| \pa_{\x}^{p}\doublehat{g}_{\s}(\ell,k,\x)|^{\g,\calO}=\\
&=\frac{1}{{(2\pi)}^{\nu}}\Big|\int_{\T^\nu}\!d\vphi\, e^{-\im \ell  \cdot \vphi}
\!\! \sum_{p_1+p_2=p}
 \binom{p}{p_1}
\pa_{\x}^{p_1}(\chi_{\s}(\x+k)- \chi_{\s}(\x))
 \pa_{\x}^{p_2}(\widehat{\tt}_\alpha(\vphi,k,\xi) -\delta(k,0))\Big|^{\g,\calO} \\
 &\lesssim_p   \sum_{p_1+p_2=p} \Big|\pa_{\x}^{p_1}(\chi_{\s}(\x+k)- \chi_{\s}(\x))\Big|
 \Big|\int_{\T^\nu} d\vphi \, e^{-\im \ell  \cdot \vphi}  \pa_{\x}^{p_2}(\widehat{\tt}_\alpha(\vphi,k,\xi) -\delta(k,0))\Big|^{\g,\calO} \\
 &\lesssim_p  \sum_{p_1+p_2=p} \Big|\int_{\T^{\nu+1}} d\vphi dx \, e^{-\im \ell  \cdot \vphi - \im k x} 
  \pa_{\x}^{p_2}( e^{\im \alpha(\vphi,x)\xi} - 1) \Big|^{\g,\calO}
  =  C_p \sum_{p_1+p_2=p} \Big| 
  \doublehat{ \pa_{\x}^{p_2} (\tt_{\alpha}  - 1)} (\ell,k,\xi) \Big|^{\g,\calO}
\end{align*}
by recalling \eqref{sacrorito}.
Now if $p_2=0$, recalling that $|k|\geq |\xi|-\tfrac12$ and applying Lemma  \ref{decayultravioletto}  with 
$s\rightsquigarrow s+N+p$ , we get 
\begin{equation*}
\begin{aligned}
\Big| 
  \doublehat{ (\tt_{\alpha}  - 1)} (\ell,k,\xi) \Big|^{\g,\calO} 
&\lesssim_{s,N,p}  \frac{\jap{\xi}^{N+p}}{\langle \ell,k\rangle^{s+N+p}} \|\al\|^{\g,\calO}_{s+N+p+\eta} \jap{\xi}^{-N-p}\\
&\lesssim_{s,N,p}  \frac{1}{\langle \ell,k\rangle^{s}} \|\al\|^{\g,\calO}_{s+N+p+\eta} \jap{\xi}^{-N-p}\,.
\end{aligned}
\end{equation*}
Similarly for $p_2\ne 0$ we have 
\begin{align*}
&\Big| \doublehat{( \alpha^{p_2} \tt_{\alpha} ) } (\ell,k,\xi)  
 \Big|^{\g,\calO} 
\lesssim_{s,N}  \frac{1}{\langle \ell,k\rangle^{s}} \|\al\|^{\g,\calO}_{s+N+p+\eta}\jap{\xi}^{-N-p}\,,
\end{align*}
by using 
Lemma  \ref{decayultravioletto} with $f=\alpha^{p_2}$ and $s\rightsquigarrow s+N+p$, the tame estimates  \eqref{tameProduct} and $\|\alpha\|_{\so+\eta}^{\g,\cO}\le \delta$. 
This proves the first estimate in \eqref{chebellocommutare}.
The estimates for $ h $ and $ f_\sigma  $ 
follow similarly.
\end{proof}

The next lemma proves the invertibility of the map $ L $ defined in \eqref{ellediffeo}. 

\begin{lemma} \label{lemma: invertibilita operatore trasporto}
{\bf (Invertibility of $ L $)}
Let $\alpha_+,\alpha_{-}$ be real valued functions in $ C^{\infty}(\T^{\nu+1}, \R) $ 
and define
$ \fd(s):=\|\alpha_+\|^{\g, \cO}_{s}+\|\alpha_-\|^{\g, \cO}_{s} $.
Then, for any $N \geq 0 $ and $\mathtt{b}\in \N$ 
there is  $\s :=  \s(N,\tb)  >0$ and  $ C(s_1,N,\tb)>0$ such that
if 
\begin{equation}\label{veropiace2}
C(s_1,N,\tb)\fd(\so+ \s)<1\,,
\end{equation}
then, for any 
$  \gso \le s\le \su$, the operator 
$ L $ 
in \eqref{ellediffeo} is invertible with an  inverse of  
the form 
\begin{equation}\label{formaL-1}
L^{-1} =
(\mathcal{C}_{\alpha_+}^{-1}\Pi_{+}+\mathcal{C}_{\alpha_-}^{-1}\Pi_{-})\circ(\id+\widetilde{R})\,,
\qquad \big(\overline{L}\big)^{-1}=\overline{L^{-1}} \, , 
\end{equation}
 where $\widetilde{R}$ 
is associated to a $\widetilde{\mathtt{R}}\in E_{s}$ such that $\mathfrak{S}(\widetilde{\mathtt{R}})=\widetilde{R}$ and
for any $m_1,m_2\in \R_+$, $m_1+m_2=N$
\begin{align}
\bnorm{\jap{\td_{\vphi}}^{q} \jap{D}^{m_1}\widetilde{\mathtt{R}}\jap{D}^{m_2}}_s^{\g, \cO} 
&\lesssim_{s,\su,\tb,N} 
\fd (s+ \s)\, , \quad q = 0 \, , \tb \, .
\label{raichel}
\end{align}
Under the assumption \eqref{atrio3} 
 the map  $L$  
is reversibility and parity preserving.
\end{lemma}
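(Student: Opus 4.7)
\medskip

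\noindent\textbf{Proof plan.} The natural candidate for the inverse is the ``dual'' operator
\[
A:= \mathcal{C}_{\alpha_+}^{-1}\Pi_{+}+\mathcal{C}_{\alpha_-}^{-1}\Pi_{-},
\]
which would be the exact inverse if the projectors $\Pi_{\pm}$ commuted with the diffeomorphisms and satisfied $\Pi_+\Pi_-=0$. Both defects are of smoothing type: by Lemma \ref{commutarechebello} the commutators $[\Pi_\sigma,\mathcal{C}_{\alpha_\sigma^{\pm 1}}]$ lie in $\op(S^{-\infty})$ with symbols controlled by $\|\alpha_\sigma\|_{s+\mu}^{\gamma,\cO}$, while $\Pi_+\Pi_-$ and $\Pi_-\Pi_+$ belong to $\op(S^{-\infty})$ by Remark \ref{rmk:cutoff}(ii). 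The strategy is therefore to prove that $LA=\id+R$ with $R$ arbitrarily smoothing and small, and then invert $\id+R$ using Lemma \ref{inv3sbarrette}.

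\medskip

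More precisely, I would expand
\[
LA=\mathcal{C}_{\alpha_+}\Pi_{+}\mathcal{C}_{\alpha_+}^{-1}\Pi_{+}
+\mathcal{C}_{\alpha_-}\Pi_{-}\mathcal{C}_{\alpha_-}^{-1}\Pi_{-}
+\mathcal{C}_{\alpha_+}\Pi_{+}\mathcal{C}_{\alpha_-}^{-1}\Pi_{-}
+\mathcal{C}_{\alpha_-}\Pi_{-}\mathcal{C}_{\alpha_+}^{-1}\Pi_{+}
\]
and use $\Pi_\sigma \mathcal{C}_{\alpha_\sigma}^{-1} = \mathcal{C}_{\alpha_\sigma}^{-1}\Pi_\sigma +\op(g_\sigma^{-1})$ in the first two summands to obtain $\Pi_+^2+\Pi_-^2$ plus smoothing remainders. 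Since $\Pi_++\Pi_-=\id$ one has $\Pi_+^2+\Pi_-^2=\id-\Pi_+\Pi_--\Pi_-\Pi_+$, and the two product terms are in $\op(S^{-\infty})$. For the crossed summands one commutes $\Pi_+$ (resp.\ $\Pi_-$) with $\mathcal{C}_{\alpha_-}^{-1}$ (resp.\ $\mathcal{C}_{\alpha_+}^{-1}$) again via Lemma \ref{commutarechebello}, producing a $\Pi_+\Pi_-$ or $\Pi_-\Pi_+$ main term, which is smoothing, plus a smoothing commutator. Hence $LA=\id+R$ where $R\in\op(S^{-\infty})$ and its symbol is controlled by $\fd(s+\mu)$ in every weighted Sobolev norm $\|\cdot\|_{-N,s,p}^{\gamma,\cO}$.

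\medskip

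Next, for fixed $N\geq M$ and the desired $\tb$, I would apply Proposition \ref{prop:immersionepseudo} to $\jap{D}^{m_1}R\jap{D}^{m_2}$ with $n_1=m_1$, $n_2=m_2$ (so that the pseudo\-differential symbol has order $m_1+m_2-N_0$ for a large enough $N_0$ depending on $M,\tb$). This associates to it a Bony-smoothing couple $\mathtt{R}_0\in E_s$ with
\[
\bnorm{\jap{\td_\vphi}^q \jap{D}^{m_1}\mathtt R_0\jap{D}^{m_2}}_s^{\g,\cO}\lesssim_{s,\su,\tb,N}\fd(s+\s),\qquad q=0,\tb,
\]
and under the smallness assumption \eqref{veropiace2} with $\s$ large enough this quantity is smaller than $1$ at the low regularity $\so$. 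Then Lemma \ref{inv3sbarrette} gives $(\id+R)^{-1}=\id+\widetilde R$ with $\widetilde R=\mathfrak S(\widetilde{\mathtt R})$, $\widetilde{\mathtt R}\in E_s$, inheriting the bound \eqref{raichel}; multiplying by $A$ on the left yields the desired form \eqref{formaL-1} for $L^{-1}$. The identity $(\overline L)^{-1}=\overline{L^{-1}}$ is automatic because conjugation commutes with inversion.

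\medskip

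Finally, assuming \eqref{atrio3}, I would verify the algebraic properties by direct computation on $L$: writing $\cP$ for the parity involution $u(x)\mapsto u(-x)$, one has $\cP\mathcal{C}_{\alpha}\cP=\mathcal{C}_{\widetilde\alpha}$ with $\widetilde\alpha(\vphi,x)=-\alpha(\vphi,-x)$ and $\cP\Pi_{\pm}\cP=\Pi_{\mp}$, so the second relation in \eqref{atrio3} gives $\cP L\cP=L$, i.e.\ $L$ is parity preserving; similarly $\mathcal{C}_{\alpha_\sigma}(-\vphi)=\mathcal{C}_{\alpha_{-\sigma}}(\vphi)$ by the first relation in \eqref{atrio3}, together with $\overline{\mathcal{C}_{\alpha_\sigma}}=\mathcal{C}_{\alpha_\sigma}$ (reality of $\alpha_\sigma$) and $\overline{\Pi_{\pm}}=\Pi_{\mp}$, yielding $L(-\vphi)=\overline L(\vphi)$ and hence reversibility preserving by \eqref{coccodrillo2}; these properties pass to $L^{-1}$. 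The main obstacle I foresee is the careful bookkeeping of the loss of derivatives $\s=\s(N,\tb)$: one must choose the smoothing order $N_0$ in Proposition \ref{prop:immersionepseudo} and the Egorov-type expansion depth so that the final bound on $\jap{D}^{m_1}\widetilde R\jap{D}^{m_2}$ remains tame and keeps $\fd(s+\s)$ linearly on the right-hand side of \eqref{raichel}, rather than accruing higher powers; this dictates why the smallness condition \eqref{veropiace2} is placed at the low norm $\so+\s$ only.
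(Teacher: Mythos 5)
Your strategy — compute $L\circ A$ with $A := \mathcal{C}_{\alpha_+}^{-1}\Pi_{+}+\mathcal{C}_{\alpha_-}^{-1}\Pi_{-}$, show the result is $\id + R$ with $R$ smoothing and small, invert by Neumann series, and verify the algebraic properties — is the same as the paper's. However, there are two concrete gaps.

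\textbf{First}, the claim that $R\in \op(S^{-\infty})$ and that you can apply Proposition \ref{prop:immersionepseudo} directly to $\jap{D}^{m_1}R\jap{D}^{m_2}$ is wrong. The remainder $R$ contains terms like $\mathcal{C}_{\alpha_+}\,[\Pi_+,\mathcal{C}_{\alpha_+}^{-1}]\,\Pi_+$ or $(\mathcal{C}_{\alpha_+}\mathcal{C}_{\alpha_-}^{-1}-\id)\Pi_+\Pi_-$: these are \emph{compositions} of torus diffeomorphisms with pseudo-differential operators, and the composition is not itself a pseudo-differential operator (the symbol class $S^m$ is not closed under composition with $\mathcal C_\alpha$). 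So Proposition \ref{prop:immersionepseudo} is not applicable to $R$ as a whole. What one must do — and what the paper does — is factor $\jap{D}^{m_1}R\jap{D}^{m_2}$ by inserting $\jap{D}^{-N'}\jap{D}^{N'}$ between the factors, use Proposition \ref{inclusionetotale} to embed the weighted diffeomorphism factors $\jap{D}^{a}\mathcal{C}_{\alpha_\sigma}^{\pm 1}\jap{D}^{b}$ into $E_s$, use Proposition \ref{prop:immersionepseudo} for the genuinely pseudo-differential factors (the commutators $[\Pi_\sigma,\mathcal{C}_\alpha^{\pm1}]$, which \emph{are} in $\op(S^{-\infty})$ by Lemma \ref{commutarechebello}, and $\Pi_+\Pi_-$), and then combine via the $E_s$-algebra Lemmata \ref{tretameestimate} and \ref{tretameestimate3}. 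The choice $N'=\lfloor\nu/2\rfloor+4+\tb$ is what makes the bookkeeping of derivative losses close.

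\textbf{Second}, you only show that $A$ is an approximate \emph{right} inverse ($LA=\id+R$). This alone does not give invertibility of $L$ in infinite dimensions: it shows $L$ is surjective and $A$ injective. One must also check that $A$ is an approximate \emph{left} inverse, i.e.\ $AL=\id+P$ with $P$ of the same form (this is symmetric to the right-inverse computation, with $\mathcal C_{\alpha_\sigma}$ and $\mathcal C_{\alpha_\sigma}^{-1}$ swapped), before concluding $L$ is invertible. Related but minor: when you write $\Pi_+^2+\Pi_-^2=\id-\Pi_+\Pi_- - \Pi_-\Pi_+$ and say the product terms are merely ``smoothing'', note that $\Pi_+\Pi_-$ is a \emph{fixed} operator that does not vanish with $\alpha$; the bound by $\fd$ only emerges after recombining it with the crossed summands to form $(\mathcal{C}_{\alpha_+}\mathcal{C}_{\alpha_-}^{-1}-\id)\Pi_+\Pi_-$, which carries the required smallness. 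Your algebraic verification at the end is correct, and is an equivalent alternative to the paper's route of writing $L=\op(a)$ with an explicit symbol and checking the parity/reversibility conditions on $a$.
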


\begin{proof}
The operator $\Gamma:= \mathcal{C}_{\alpha_+}^{-1}\Pi_{+}+\mathcal{C}_{\alpha_-}^{-1}\Pi_{-} $
 is an \emph{approximate right } inverse of $ L $ in the following sense: 
\begin{align}
L\circ \Gamma&=(\mathcal{C}_{\alpha_+}\Pi_{+}+\mathcal{C}_{\alpha_-}\Pi_{-})\circ 
(\mathcal{C}_{\alpha_+}^{-1}\Pi_{+}+\mathcal{C}_{\alpha_-}^{-1}\Pi_{-})\nonumber
\\&
=\mathcal{C}_{\alpha_+}\Pi_{+}\mathcal{C}_{\alpha_+}^{-1}\Pi_{+}+
\mathcal{C}_{\alpha_+}\Pi_{+}\mathcal{C}_{\alpha_-}^{-1}\Pi_{-}
+\mathcal{C}_{\alpha_-}\Pi_{-}\mathcal{C}_{\alpha_+}^{-1}\Pi_{+}+
\mathcal{C}_{\alpha_-}\Pi_{-}\mathcal{C}_{\alpha_-}^{-1}\Pi_{-}\nonumber
\\&
=\Pi_{+}^2+\Pi_{-}^2 + 2\Pi_+\Pi_-\label{identitybella1}
\\&+
(\mathcal{C}_{\alpha_+}\mathcal{C}_{\alpha_-}^{-1}-\id)\Pi_{+}\Pi_{-}+
(\mathcal{C}_{\alpha_-}\mathcal{C}_{\alpha_+}^{-1}-\id)\Pi_{-}\Pi_{+}\label{identitybella2}
\\&+
\mathcal{C}_{\alpha_+}[\Pi_{+}, \mathcal{C}_{\alpha_+}^{-1} ]\Pi_{+}+
\mathcal{C}_{\alpha_+}[\Pi_{+},\mathcal{C}_{\alpha_-}^{-1}]\Pi_{-}+
\mathcal{C}_{\alpha_-}[\Pi_{-},\mathcal{C}_{\alpha_+}^{-1}]\Pi_{+}+
\mathcal{C}_{\alpha_-}[\Pi_{-},\mathcal{C}_{\alpha_-}^{-1}]\Pi_{-}\,.\label{identitybella3}
\end{align}
Since  $ \Pi_+ + \Pi_- = \id $
we have that  $\eqref{identitybella1}= \id $
and then 
\begin{equation}\label{esempioQ}
L\circ\Gamma=\id +Q \, , \qquad Q := \eqref{identitybella2}+\eqref{identitybella3} \, .
\end{equation}
We claim that the operator 
$ Q $ is  associated to a couple 
 $ \mathtt{Q} $ such that  $\mathfrak{S}(\mathtt{Q})=Q$ and  satisfies 
\eqref{raichel}. 
Consider for instance the operator 
$B:=\mathcal{C}_{\alpha_+}[\Pi_{+}, \mathcal{C}_{\alpha_+}^{-1} ]\Pi_{+}$
 in \eqref{identitybella3}.
Let  
$N':=\lfloor\nu/2\rfloor+4+\tb $. We write
\[
\begin{aligned}
\langle \td_{\vphi}\rangle^{\tb}\langle D\rangle^{m_1}B\langle D\rangle^{m_2}&=
\langle \td_{\vphi}\rangle^{\tb}\langle D\rangle^{m_1}\mathcal{C}_{\alpha_+}[\Pi_{+}, \mathcal{C}_{\alpha_+}^{-1} ]\langle D\rangle^{m_2}
\\&=
\langle \td_{\vphi}\rangle^{\tb}\langle D\rangle^{m_1}\mathcal{C}_{\alpha_+}
\langle D\rangle^{-m_1-N'}
\langle D\rangle^{m_1+N'}[\Pi_{+}, \mathcal{C}_{\alpha_+}^{-1} ]\langle D\rangle^{m_2}\,.
\end{aligned}
\]
By the choice of $N'$  and the smallness \eqref{veropiace2}
 Proposition \ref{inclusionetotale},
 taking $N_1\rightsquigarrow -m_1$, 
$N_2\rightsquigarrow m_1+N'$ so that $N_1+N_2=N'=\lfloor\nu/2\rfloor+4+\tb>\lfloor\nu/2\rfloor+3+\tb$,  \footnote{
since $m_1\in \R_{+}$, one has $|N_1|,|N_2|\lesssim_{N,\tb}1$.
Hence the condition \eqref{buttalapasta3} is implied by \eqref{veropiace2} taking $\s=\s(N,\tb)$ large enough, while
the constants appearing in \eqref{chiarodiluna2} 
depends actually only on $N$ and $\tb$, see Proposition \ref{inclusionetotale}. }
implies that there is $\mathtt{R}_1\in E_{s}$ such that 
\begin{equation}\label{peperoncino1}
\mathfrak{S}(\mathtt{R}_1)=\langle D\rangle^{m_1}(\mathcal{C}_{\alpha_+}-\id)\langle D\rangle^{-m_1-N'} \qquad 
\bnorm{\langle \td_{\vphi}\rangle^{\tb}\mathtt{R}_1
}_{s}^{\gamma,\calO}\lesssim_{s,s_1N,\tb}\fd(s+ \s)\,,
\end{equation}
for some $\s>0$. On the other hand, Lemma \ref{commutarechebello} implies that there is 
a symbol $\mathtt{g}\in S^{-N''}$ for any $N''>0$.
Taking $N''=N+N'$ by Proposition \ref{prop:immersionepseudo} (applied with $-n_1\rightsquigarrow m_1+N'$, $-n_2\rightsquigarrow m_2 $)
we get that there is $\mathtt{R}_2$ such that
\begin{equation}\label{peperoncino2}
\mathfrak{S}(\mathtt{R}_2)=\langle D\rangle^{m_1+N'}[\Pi_{+}, \mathcal{C}_{\alpha_+}^{-1} ]\langle D\rangle^{m_2}, \qquad 
\bnorm{\langle \td_{\vphi}\rangle^{\tb}\mathtt{R}_2
}_{s}^{\gamma,\calO}\lesssim_{s,s_1,N,\tb}\fd(s+ \s)\, . 
\end{equation}
By \eqref{peperoncino1}-\eqref{peperoncino2} and Lemmata \ref{tretameestimate} and \ref{tretameestimate3}
we conclude that $B$ satisfies  \eqref{raichel}.
The other terms in \eqref{identitybella3} can be treated similarly.
In order to show that the terms 
in \eqref{identitybella2} satisfy \eqref{raichel} one has to use that, 
by Remark \ref{rmk:cutoff} ($ii$), one has 
$\Pi_{-}\Pi_{+}=\op(\chi_+\chi_-)$ with $ \chi_+\chi_-\in S^{-\infty}$. 
In conclusion, by  \eqref{raichel} and Lemma \ref{inv3sbarrette} 
we deduce that
 the operator $\id +Q$ is invertible 
 and the inverse has the form  $(\id +Q)^{-1}=: \id +\widetilde{R}$ with $\widetilde{R}$ 
 satisfying \eqref{raichel}. Thus $L$ admits a right inverse.

The map $\Gamma$ is also an \emph{approximate left} inverse of $ L $
 since $ \Gamma \circ L =\id +P$ where 
$P $ is the operator obtained from $ Q $ in \eqref{esempioQ} exchanging $\mathcal{C}_{\alpha_\sigma} $ with 
$\mathcal{C}_{\alpha_\sigma}^{-1} $. 
 
As seen for $Q$,  there exists $\tP\in E_s$ which satisfies \eqref{raichel} and such that $\fS(\tP)=P$.
Thus $L$ is invertible.

To show that $L$   
is reversibility and parity preserving we write,
by \eqref{sacrorito2} and \eqref{def:szego},  
$ L = \mathcal{C}_{\alpha_{+}}\Pi_{+}+\mathcal{C}_{\alpha_{-}}\Pi_{-} 
= \op (a) $ with 
$ a(\vphi, x, \xi) := 
e^{\ii \xi \alpha_{+}(\varphi,x)}\chi_{+}(\xi )+e^{\ii \xi \alpha_{-}(\varphi,x)}\chi_{-}(\xi ) $. 
Then  by 
$ \alpha_{+}(-\vphi,x)=\alpha_{-}(\vphi,x) $ (see \eqref{atrio3})  
and $ \chi_{-}(\xi) =\chi_{+}(-\xi)  $ we check that 
$ a(\varphi,x, \xi) $ satisfies 
\eqref{reverpreserSimbolo} and \eqref{paritySimbolo}. 
\end{proof}

In  the proof of Theorem \ref{IncredibleConjugate} 
we use also the following lemma. 

\begin{lemma}[{\bf Straightening of transport}]\label{prop:straight}
Let $a(\vphi,x)\in C^\infty$ be a real valued function, 
even separately in $ \vphi $, $ x $ satisfying the smallness condition \eqref{piccino}.
Then  there exists a Lipschitz  function $\fa_+ : \Lambda \to \R, \omega \mapsto \fa_+(\omega)$,  
satisfying \eqref{tordo4b} such that 
for any $\omega\in \Omega_{1}$ defined in \eqref{buoneacqueb}
there exists a real 
function $ \beta_{+} (\omega;\varphi, x) $ 
satisfying  
\begin{equation}\label{perunpugnodidollarib}
\|\beta_{+} \|^{\g,\Omega_{1}}_{s} \lesssim_s  \g^{-1}\|a\|^{\g,\cO}_{s+\s},
\quad \forall s\geq \so\,,
\end{equation}
solving the equation 
\begin{equation}\label{veropiace}
\omega\cdot\pa_{\vphi}\beta_{+}-(1+a)(1+\pa_{x}\beta_{+})= -  { (1+\fa_{+})} \, . 
\end{equation}
Moreover  $\beta_{+}$  satisfies
\begin{equation}\label{perunpugnodidollaric}
\beta_{+}(\vphi,x)=-\beta_{+}(-\vphi,-x) 
\end{equation}
and   $\beta_{-}(\vphi,x):=-\beta_{+}(\vphi,-x)$ 
solves the equation
\begin{equation}\label{contoabotta}
\omega\cdot\pa_{\vphi}\beta_{-}+(1+a)(1+\pa_{x}\beta_-)=  { 1+\fa_{+}} \,.
\end{equation}
\end{lemma}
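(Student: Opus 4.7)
The plan is to reformulate \eqref{veropiace} as a fixed-point equation with constant-coefficient leading part and solve it by a Nash--Moser / contraction scheme on $\Omega_1$. Introduce the inverse diffeomorphism: if $y=x+\beta_+(\vphi,x)$ is inverted by $x=y+h(\vphi,y)$ with $h:=\breve\beta_+$, then at corresponding points
\begin{equation*}
1+\pa_x\beta_+ \;=\; \frac{1}{1+\pa_y h}\,,\qquad \omega\cdot\pa_\vphi\beta_+ \;=\; -\,\frac{\omega\cdot\pa_\vphi h}{1+\pa_y h}\,.
\end{equation*}
Substituting into \eqref{veropiace} and multiplying by $1+\pa_y h$ (positive and close to $1$ in the smallness regime) the equation becomes equivalent to
\begin{equation}\label{propfixh}
\fL_{\fa_+}\,h \;=\; \fa_+ \,-\, a\bigl(\vphi,\,y+h(\vphi,y)\bigr)\,,\qquad \fL_{\fa_+}\;:=\;\omega\cdot\pa_\vphi - (1+\fa_+)\pa_y\,.
\end{equation}
The operator $\fL_{\fa_+}$ has constant coefficients and Fourier eigenvalues $\ii(\omega\cdot\ell-(1+\fa_+)j)$; these are bounded below by $2\gamma\langle\ell\rangle^{-\tau}$ on $\Omega_1$ (the relabeling $j\mapsto-j$ in \eqref{buoneacqueb} absorbs the sign). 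Hence $\fL_{\fa_+}$ is invertible on the zero $(\vphi,y)$-mean subspace, with the standard tame estimate $\|\fL_{\fa_+}^{-1}g\|_s^{\gamma,\Omega_1}\lesssim_s\gamma^{-1}\|g\|_{s+\tau}^{\gamma,\Omega_1}$; the loss of $\tau$ derivatives is \emph{uniform in $s$}, which makes a tame Nash--Moser viable.

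I would then set up the iteration $(h^{(0)},\fa^{(0)}):=(0,0)$ and, at step $n$, enforce the zero-mean compatibility of the right-hand side of \eqref{propfixh} by setting
\begin{equation*}
\fa^{(n+1)}\;:=\;\tfrac{1}{(2\pi)^{\nu+1}}\!\!\int_{\T^{\nu+1}}\!\! a\bigl(\vphi,y+h^{(n)}(\vphi,y)\bigr)\,d\vphi\,dy\,,\qquad h^{(n+1)}\;:=\;\fL_{\fa^{(n+1)}}^{-1}\bigl(\fa^{(n+1)}-a(\cdot,\cdot+h^{(n)})\bigr)\,,
\end{equation*}
extending $\fa^{(n+1)}$ Lipschitz-continuously from its natural definition domain to all of $\Lambda$ via a Kirszbraun extension. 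Using Lemma \ref{bastalapasta} to control $a\circ(\id+h^{(n)})$ together with the inversion bound for $\fL$, the smallness \eqref{piccino} propagates by induction both the low-norm bound $\|h^{(n)}\|_{\so+\eta}^{\gamma,\Omega_1}\lesssim \gamma^{-1}\|a\|_{\so+\sigma}^{\gamma,\cO}$ (for an appropriate $\eta<\sigma$ absorbing the composition and inversion losses) and, by the usual interpolation argument, the high-Sobolev tame bound $\|h^{(n)}\|_s^{\gamma,\Omega_1}\lesssim_s \gamma^{-1}\|a\|_{s+\sigma}^{\gamma,\cO}$ for $s\ge\so$. A direct estimate of $h^{(n+1)}-h^{(n)}$ and $\fa^{(n+1)}-\fa^{(n)}$ yields contraction in the low norm, producing a limit $(h,\fa_+)$ that solves \eqref{propfixh}. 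Inverting the diffeomorphism $y\mapsto y+h(\vphi,y)$ then recovers $\beta_+$ satisfying \eqref{perunpugnodidollarib} (via the analogue of \eqref{stimacheckcheck}), and \eqref{tordo4b} is read off from the defining integral for $\fa_+$.

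The principal technical obstacle is that \eqref{buoneacqueb} is expressed in terms of the \emph{limit} value $\fa_+$, whereas every step of the iteration requires small-divisor bounds for the approximants $\fa^{(n)}$. This is a standard perturbative issue, handled by the triangle inequality
\begin{equation*}
|\omega\cdot\ell-(1+\fa^{(n)})j| \;\ge\; |\omega\cdot\ell-(1+\fa_+)j|\,-\,|\fa_+-\fa^{(n)}|\,|j|\,,
\end{equation*}
combined with the a-priori restriction $|j|\lesssim \langle\ell\rangle$ (outside this range the phase is non-resonant for trivial reasons): since $|\fa^{(n)}-\fa_+|\to 0$ geometrically in $n$, one obtains $|\omega\cdot\ell-(1+\fa^{(n)})j|\ge \gamma\langle\ell\rangle^{-\tau}$ uniformly in $n$ on $\Omega_1$, so every inversion step in the scheme is legitimate. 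The symmetry \eqref{perunpugnodidollaric} follows from the evenness of $a$ in each variable: a direct check shows that $\tilde h(\vphi,y):=-h(-\vphi,-y)$ also solves \eqref{propfixh} with the same $\fa_+$ (both sides of \eqref{propfixh} pick up a sign under $(\vphi,y)\mapsto(-\vphi,-y)$ when $a(\vphi,x)$ is even separately in $\vphi$ and in $x$), so uniqueness of the small solution forces $h=\tilde h$. Finally, \eqref{contoabotta} is verified by substituting $\beta_-(\vphi,x):=-\beta_+(\vphi,-x)$ into its left-hand side and using $a(\vphi,-x)=a(\vphi,x)$: a short computation reduces it to the negative of \eqref{veropiace} evaluated at $(\vphi,-x)$.
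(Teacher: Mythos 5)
Your reduction of \eqref{veropiace} to the fixed-point equation
$\fL_{\fa_+}h = \fa_+-a(\vphi,y+h(\vphi,y))$, $\fL_{\fa_+}:=\omega\cdot\pa_\vphi-(1+\fa_+)\pa_y$,
via the inverse diffeomorphism $x=y+h(\vphi,y)$ is correct, and your symmetry computations for \eqref{perunpugnodidollaric} and \eqref{contoabotta} are sound. Note, however, that the paper itself does not prove this lemma: it cites Proposition~3.4 and Corollary~3.5 of \cite{FGMPmoser}, which establish the result by a Moser-type quadratically convergent iteration with frequency truncation/smoothing. Your reformulation is in fact the same starting point as that reference.

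The iteration scheme you then propose has a genuine gap. The map $h\mapsto\fL_{\fa}^{-1}\bigl(\fa-a\circ(\id+h)\bigr)$ is \emph{not} a contraction in any fixed Sobolev norm, because the inverse of $\fL_\fa$ on the zero-mean subspace loses $\tau$ derivatives: $\|\fL_\fa^{-1}g\|_s\lesssim\gamma^{-1}\|g\|_{s+\tau}$. Hence the natural difference estimate reads
$\|h^{(n+1)}-h^{(n)}\|_{\so}\lesssim\gamma^{-1}\|a\|_{\so+1}\|h^{(n)}-h^{(n-1)}\|_{\so+\tau}$,
with a strictly stronger norm on the right, so ``contraction in the low norm'' never closes. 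Likewise, the high-norm recursion $\|h^{(n+1)}\|_s\lesssim\gamma^{-1}\|a\|_{s+\tau}+\gamma^{-1}\|a\|_{\so}\|h^{(n)}\|_{s+\tau+\s}$ loses $\tau+\s$ derivatives per step, so it cannot propagate any $s$-uniform tame bound $\|h^{(n)}\|_s\lesssim\gamma^{-1}\|a\|_{s+\sigma}$; the appeal to ``the usual interpolation argument'' cannot substitute for the missing a priori uniform high-norm control. The same loss-of-derivatives obstruction undermines the claim that the small-divisor comparison closes: you need $C|\fa_+-\fa^{(n)}|\langle\ell\rangle\leq\gamma\langle\ell\rangle^{-\tau}$, i.e.\ $|\fa_+-\fa^{(n)}|\lesssim\gamma\langle\ell\rangle^{-\tau-1}$, which no fixed geometric rate can deliver uniformly in $\ell$. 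What actually closes this (and is what \cite{FGMPmoser} does) is a Newton-type step (linearize, do not merely freeze $h^{(n)}$ in the nonlinear term) giving quadratic convergence, combined with a frequency cut-off $|\ell|\leq N_n$ at step $n$; then $|\fa_+-\fa^{(n)}|N_n^{\tau+1}\lesssim\gamma$ is achievable because the error decays super-exponentially, and the derivative loss is absorbed by the standard Nash--Moser smoothing/interpolation mechanism. As written, your Picard iteration neither linearizes nor truncates, so the loss of $\tau$ derivatives per step accumulates without bound.
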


\begin{proof}
The existence of $\mathfrak{a}_{+}$ and  $\beta_{+}$ 
follow by Proposition $3.4$ and Corollary $3.5$ in \cite{FGMPmoser}.
The function $\beta_{-}(\vphi,x) = -\beta_{+}(\vphi,-x) $ solves 
 \eqref{contoabotta}  by  \eqref{veropiace} and since 
$a(\vphi,x)$ is even in $ (\varphi, x) $.
\end{proof}

\begin{proof}[{\bf Proof of Theorem \ref{IncredibleConjugate}}]
Under the smallness assumption \eqref{piccino} we deduce  
by Lemma \ref{prop:straight}   the existence of functions 
$ \beta_\sigma (\varphi, x) $ satisfying \eqref{perunpugnodidollarib}-\eqref{contoabotta}. 
In addition, for any $ \sigma \in \{ \pm 1 \} $ the torus diffeomorphism $ y\mapsto y+\beta_{\s}(\vphi,y)$ 
is invertible: 
there exist unique functions $\alpha_{\s} $ 
satisfying an estimate as \eqref{perunpugnodidollarib}, 
$\alpha_{+}(\vphi,x)=-\alpha_{+}(-\vphi,-x)  $ (cfr. \eqref{perunpugnodidollaric}), 
$ \alpha_{-}(\vphi,x):=-\alpha_{+}(\vphi,-x) $ (thus \eqref{atrio3} holds) 
 such that
\begin{equation}\label{albeta}
x\mapsto y=x+\alpha_{\s}(\vphi,x) 
\;\;\; 
\Leftrightarrow 
\;\;\; 
y\mapsto x=y+\beta_{\s}(\vphi,y)\,,\quad \s\in\{\pm\}\, .
\end{equation}
Moreover for any $\tau\in[0,1]$ the diffeomorphism $x\mapsto x+\tau \alpha_{\s}(\vphi,x)$
is invertible
and there exist functions
$\breve{\alpha}_{\s}: [0,1]\times \T^{\nu+1}\times \Omega_{1}\to \R$,  for any $ \s\in\{\pm\}$, 
satisfying 
\eqref{perunpugnodidollarib},   \eqref{perunpugnodidollaric}, such that 
\begin{equation}\label{speranzabella}
x\mapsto y=x+ \tau \alpha_{\s}(\vphi,x) 
\;\;\; 
\Leftrightarrow 
\;\;\; 
y\mapsto x=y+\breve \alpha_{\s}(\tau; \vphi,y) \, , \quad
\breve{\alpha}_{\s}(1;\vphi,x)=\beta_{\s}(\vphi,x) \, .
\end{equation}
We now  prove \eqref{straightpotente}-\eqref{raichel2} where
$ L $  is the operator  defined in \eqref{ellediffeo}  
with  functions $ \alpha_\sigma $ defined in \eqref{albeta}. 
Using Remark \ref{rmk:cutoff}-($iii$) we write 
\begin{equation}\label{defA+-}
\begin{aligned}
A := \omega\cdot\pa_{\vphi}-\ii(1+a)|D|  
=A_+ \Pi_{+} + A_- \Pi_{-} \   \ 
 \text{with} \  \ A_{\s} := \oo\cdot\pa_{\vphi} -\s(1+a)\pa_{x}\op{(\chi)}\,.
\end{aligned}
\end{equation}
The key step is the following lemma.
\begin{lemma}\label{lemmaApiumeno}
\begin{equation}\label{quasifine}
L\circ A\circ L^{-1} 
= \mathcal{C}_{\alpha_+} \circ A_{+}\circ \mathcal{C}_{\alpha_+}^{-1} \circ \Pi_{+}
	 \ + \ \mathcal{C}_{\alpha_-} \circ A_{-}\circ \mathcal{C}_{\alpha_-}^{-1}  \circ \Pi_{-} + Q
\end{equation}
where $ A_\pm $ are defined in \eqref{defA+-} and $ Q $ satisfies \eqref{raichel2}.
\end{lemma}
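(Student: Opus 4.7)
The plan is to expand $L\circ A\circ L^{-1}$ and to show that after extracting the ``diagonal'' pieces (where the Szegő projectors on the left and right match), every remaining contribution factors through an arbitrarily smoothing operator. Writing $L=\sum_{\sigma}\mathcal{C}_{\alpha_\sigma}\Pi_\sigma$ and $A=\sum_{\sigma'}A_{\sigma'}\Pi_{\sigma'}$ as in \eqref{defA+-}, I first expand
\[
L\circ A=\sum_{\sigma,\sigma'}\mathcal{C}_{\alpha_\sigma}A_{\sigma'}\Pi_\sigma\Pi_{\sigma'}+\sum_{\sigma,\sigma'}\mathcal{C}_{\alpha_\sigma}[\Pi_\sigma,A_{\sigma'}]\Pi_{\sigma'}.
\]
Since $\omega\cdot\partial_\varphi$ commutes with $\Pi_\sigma$, one has $[\Pi_\sigma,A_{\sigma'}]=-\sigma'[\Pi_\sigma,\op(a)]\,\partial_x\op(\chi)$, which is smoothing of every order by Lemma \ref{commutarechebello}, with symbol bounds controlled by $\|a\|_{s+\mu}^{\gamma,\cO}$. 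Writing $\Pi_\sigma\Pi_{\sigma'}=\delta_{\sigma\sigma'}\Pi_\sigma+\mathrm{(smoothing)}$ via $\Pi_++\Pi_-=\id$ together with Remark \ref{rmk:cutoff}$(ii)$, I obtain $L\circ A=\sum_\sigma\mathcal{C}_{\alpha_\sigma}A_\sigma\Pi_\sigma+Q_1$, with $Q_1$ a finite sum of operators each of which is a composition of $\mathcal{C}_{\alpha_\sigma}$, of $A_{\sigma'}$, and of a pseudo-differential factor of order $-\infty$.

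Composing with $L^{-1}=(\mathcal{C}_{\alpha_+}^{-1}\Pi_++\mathcal{C}_{\alpha_-}^{-1}\Pi_-)(\id+\widetilde R)$ from \eqref{formaL-1}, I perform the same rearrangement on the right: commute each $\Pi_\sigma$ across $\mathcal{C}_{\alpha_{\sigma'}}^{-1}$ (smoothing error from Lemma \ref{commutarechebello}) and reduce $\Pi_\sigma\Pi_{\sigma'}$ to the diagonal up to a $\Pi_+\Pi_-$ error. The leading diagonal contribution is exactly $\sum_\sigma\mathcal{C}_{\alpha_\sigma}A_\sigma\mathcal{C}_{\alpha_\sigma}^{-1}\Pi_\sigma$, which matches the right-hand side of \eqref{quasifine}. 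All remaining terms---the $Q_1$-contribution, the commutator errors from the second pass, the $\Pi_+\Pi_-$ errors, and the ones involving $\widetilde R$---are absorbed into $Q$ and each has the structural form
\[
\mathcal{C}_{\alpha_\sigma}^{\epsilon_1}\circ\bigl(\text{middle factor in }S^{-N}\bigr)\circ\mathcal{C}_{\alpha_{\sigma'}}^{\epsilon_2}\quad\text{or}\quad (\cdots)\circ\widetilde R,
\]
with $\epsilon_i\in\{\pm 1\}$ and $N$ chosen in terms of $M$, $\tb$, $\nu$ (at least $M+2\lfloor\nu/2\rfloor+\tb+10$).

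To verify the estimate \eqref{raichel2}, I sandwich each such summand between $\langle D\rangle^{m_1}$ and $\langle D\rangle^{m_2}$ and insert powers $\langle D\rangle^{\pm(m_i+N')}$ with $N'=\lfloor\nu/2\rfloor+4+\tb$, exactly as done in the proof of Lemma \ref{lemma: invertibilita operatore trasporto}. The outer torus-diffeomorphism factors $\langle D\rangle^{n_1}\mathcal{C}_{\alpha_\sigma}^{\pm1}\langle D\rangle^{-n_2}$ admit Bony–smoothing representatives by Proposition \ref{inclusionetotale} with bound \eqref{chiarodiluna2} controlled by $\|\alpha_\sigma\|_{s+\sigma}^{\gamma,\Omega_1}\lesssim\gamma^{-1}\|a\|_{s+\sigma}^{\gamma,\cO}$ thanks to \eqref{stimapm}. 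The middle factor, pseudo-differential of order $-N$ with symbol bounds from Lemma \ref{commutarechebello}, admits a Bony–smoothing representative by Proposition \ref{prop:immersionepseudo}. Finally, the Banach-algebra properties of $E_s$ (Lemmata \ref{tretameestimate}, \ref{tretameestimate3}), together with the homomorphism $\mathfrak S$, combine the three factors into a couple $\mathtt{Q}\in E_s$ with $\mathfrak{S}(\mathtt Q)=Q$ and $\bnorm{\langle\td_\varphi\rangle^q\langle D\rangle^{m_1}\mathtt Q\langle D\rangle^{m_2}}_s^{\gamma,\Omega_1}\lesssim\gamma^{-1}\|a\|_{s+\sigma}^{\gamma,\cO}$, using \eqref{piccino} to absorb low-Sobolev-norm factors. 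The $\widetilde R$-pieces are treated identically via \eqref{raichel}.

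The main obstacle is not any algebraic cancellation but the combinatorial bookkeeping: one has to choose the auxiliary parameters $N$ and $N'$ large enough—depending only on $M$, $\tb$, and $\nu$—so that, after sandwiching, the pseudo-differential middle factor still has order $\le -(N-2N')\le-M$, and the outer diffeomorphism factors still lie in the range of Proposition \ref{inclusionetotale}. Once these choices are made the tameness of the resulting estimate \eqref{raichel2} is dictated entirely by the tame bounds already established in Propositions \ref{inclusionetotale} and \ref{prop:immersionepseudo} and by the algebra estimates in $E_s$.
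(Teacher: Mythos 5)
There is a genuine gap in your treatment of the time-derivative $\omega\cdot\pa_\vphi$. You assert that every remainder term has the structural form $\mathcal{C}_{\alpha_\sigma}^{\epsilon_1}\circ(\text{middle factor in }S^{-N})\circ\mathcal{C}_{\alpha_{\sigma'}}^{\epsilon_2}$, but this is not what your expansion actually produces: since you keep $A_{\sigma'}$ wholesale as a factor in $Q_1$ and again after the second pass, and since $A_{\sigma'}=\omega\cdot\pa_\vphi-\sigma'(1+a)\pa_x\op(\chi)$, the remainders a priori contain the standalone factor $\omega\cdot\pa_\vphi$. That is not a pseudo-differential operator of any order in the $x$-variable; composing a T\"oplitz-in-time operator with $\omega\cdot\pa_\vphi$ destroys the T\"oplitz property (the matrix entries acquire an explicit $\ii\,\omega\cdot\ell'$ factor), so such a piece does not even belong to the class for which the $E_s$-framework and the norm in \eqref{raichel2} are defined. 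Inserting $\jap{D}^{m_1},\jap{D}^{m_2}$ does nothing for this, since those are Fourier multipliers in $x$ only.

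The offending pieces do cancel, but this is not automatic and your proposal never records why. In the first pass the cancellation is exact because $\sum_{\sigma'}A_{\sigma'}(\Pi_\sigma\Pi_{\sigma'}-\delta_{\sigma\sigma'}\Pi_\sigma)=(A_{-\sigma}-A_\sigma)\Pi_\sigma\Pi_{-\sigma}=2\sigma(1+a)\pa_x\op(\chi)\Pi_\sigma\Pi_{-\sigma}\equiv 0$, the symbols $\chi\,\chi_+\chi_-$ vanishing identically; so $Q_1$ reduces to the pure commutator sum $\sum_{\sigma,\sigma'}\mathcal{C}_{\alpha_\sigma}[\Pi_\sigma,A_{\sigma'}]\Pi_{\sigma'}$, which is free of $\omega\cdot\pa_\vphi$. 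In the second pass, however, the $\omega\cdot\pa_\vphi$-contributions cancel only after invoking the exact inverse identity $(\id+Q)(\id+\widetilde R)=\id$ coming from \eqref{esempioQ} and \eqref{formaL-1}, and the cancellation is not visible summand by summand. The paper's Step $1$ avoids the whole issue: it writes $L\circ\omega\cdot\pa_\vphi\circ L^{-1}=\omega\cdot\pa_\vphi+[L,\omega\cdot\pa_\vphi]L^{-1}$ and computes $[\mathcal{C}_{\alpha_\sigma},\omega\cdot\pa_\vphi]=-(\omega\cdot\pa_\vphi\alpha_\sigma)\mathcal{C}_{\alpha_\sigma}\pa_x$, replacing $\omega\cdot\pa_\vphi$ by the multiplication operator $(\omega\cdot\pa_\vphi\alpha_\sigma)$ in every remainder; the space part $B=-\ii(1+a)|D|$, which carries no $\vphi$-derivative and for which your decomposition would indeed work, is then conjugated separately in Step $2$. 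Either adopt that split, or exhibit the $\omega\cdot\pa_\vphi$-cancellation explicitly; as written, \eqref{raichel2} does not follow from the factor structure you assert.
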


\begin{proof}
We apply Lemma \ref{lemma: invertibilita operatore trasporto}
with $N:=M+\nu+10+2\tb$ (here $M>0$ is fixed in Thm. \ref{IncredibleConjugate})
so that $L^{-1}$ has the form \eqref{formaL-1} with $\widetilde{R}$ satisfying  
\eqref{raichel}.
\\[1mm]
{\bf Step $ 1 $.}
We first observe that 
$
L \circ  \oo\cdot \pa_{\vphi} \circ L^{-1} =\oo\cdot\pa_{\vphi} + [L, \oo\cdot\pa_{\vphi}]L^{-1}
$. Now recalling \eqref{ellediffeo}, \eqref{formaL-1}
and since 
$ [ \cC_{\alpha_{\sigma}}, \oo \cdot \pa ] = - (\oo\cdot\pa_{\vphi} \alpha_{\s}) \cC_{\alpha_{\s}} \pa_{x} $ we get 
\begin{align}
 [L, \oo\cdot\pa_{\vphi}]L^{-1}
 &  = \left[ \cC_{\alpha_{+}}\Pi_{+}+\cC_{\alpha_{-}}\Pi_{-} ,  \oo\cdot\pa_{\vphi} \right]L^{-1}
= \sum_{\s\in\{\pm\}} \left[ \cC_{\alpha_{\s}}, \oo\cdot\pa_{\vphi} \right] \Pi_{\s}L^{-1} \notag \\
& =  -\sum_{\s\in\{\pm\}} (\oo\cdot\pa_{\vphi} \alpha_{\s}) \cC_{\alpha_{\s}} \pa_{x} \Pi_{\s} (\cC^{-1}_{\alpha_{+}}\Pi_{+}+\cC^{-1}_{\alpha_{-}}\Pi_{-}) + Q_{1} \label{tempoint}
\end{align}
where
\begin{equation} \label{restoQQ1}
Q_{1} :=   -\sum_{\s\in\{\pm\}} (\oo\cdot\pa_{\vphi} \alpha_{\s}) \cC_{\alpha_{\s}} \pa_{x} \Pi_{\s} (\cC^{-1}_{\alpha_{+}}\Pi_{+}+\cC^{-1}_{\alpha_{-}}\Pi_{-}) \widetilde{R}\, . 
\end{equation}
We continue expanding \eqref{tempoint} as
\begin{align}
 [L, \oo\cdot\pa_{\vphi}]L^{-1} & = -\sum_{\s,\s'\in\{\pm\}} (\oo\cdot\pa_{\vphi} \alpha_{\s}) \cC_{\alpha_{\s}} \pa_{x}  \Pi_{\s}\cC^{-1}_{\alpha_{\s'}}\Pi_{\s'} + Q_{1} \notag \\
&= -\sum_{\s\in\{\pm\}} (\oo\cdot\pa_{\vphi} \alpha_{\s}) \cC_{\alpha_{\s}} \pa_{x}  \cC^{-1}_{\alpha_{\s}}\Pi_{\s} 
+ Q_{1} + Q_{2} \notag \\
& =  \sum_{\s\in\{\pm\}} \left[ \cC_{\alpha_{\s}}, \oo\cdot\pa_{\vphi} \right] \cC^{-1}_{\alpha_{\s}}\Pi_{\s} + Q_{1} + Q_{2} \label{q1q2l}
\end{align}
where 
\begin{align}
Q_{2}
&:=  - \!\!\!\! \sum_{\s\in\{\pm\}} \!\!\! (\oo\cdot\pa_{\vphi} \alpha_{\s}) \cC_{\alpha_{\s}} \pa_{x} \! \left(  (\cC^{-1}_{\alpha_{-\s}}- \cC^{-1}_{\alpha_{\s}}) \Pi_{\s}\Pi_{-\s} + [\Pi_{\s}, \cC^{-1}_{\alpha_{-\s}}\!- \!\cC^{-1}_{\alpha_{\s}}]\Pi_{-\s}+ \! [\Pi_{\s}, \cC^{-1}_{\alpha_{\s}}] \right).
\label{restoQQ2}
\end{align}
By  \eqref{q1q2l} 
we conclude  that 
$ L \circ  \oo\cdot \pa_{\vphi} \circ L^{-1} =\oo\cdot\pa_{\vphi} + [L, \oo\cdot\pa_{\vphi}]L^{-1} $ is equal to 
\begin{equation}\label{contime}
L \circ \oo\cdot \pa_{\vphi} \circ  L^{-1} 
= \sum_{\s\in\{\pm\}}  \cC_{\alpha_{\s}}  \circ \oo\cdot\pa_{\vphi} \circ 
 \cC^{-1}_{\alpha_{\s}}\circ \Pi_{\s} + Q_{1} + Q_{2}\,.
\end{equation}
\noindent
{\bf Step $ 2 $.}
We now conjugate $ B:=-\ii(1+a) |D | $. 
By \eqref{ellediffeo} and  \eqref{formaL-1} we have 
\begin{equation}\label{midway1}
L\circ B\circ L^{-1}=
\sum_{\s,\s'\in\{\pm\}}\mathcal{C}_{\alpha_{\s}}\Pi_{\s}B\mathcal{C}_{\alpha_{\s'}}^{-1}\Pi_{\s'}+Q_3
\end{equation}
where 
\begin{equation}\label{defQ3}
Q_3 := (\mathcal{C}_{\alpha_+}\Pi_{+}+\mathcal{C}_{\alpha_-}\Pi_{-})
  B   (\mathcal{C}_{\alpha_+}^{-1}\Pi_{+}+\mathcal{C}_{\alpha_-}^{-1}\Pi_{-})\widetilde{R} \,.
\end{equation}
Then we write  \eqref{midway1} as 
\begin{align}
& L\circ B\circ L^{-1}  = 
\nonumber
\\&
=\sum_{\s,\s'\in\{\pm\}}\mathcal{C}_{\alpha_{\s}}B\mathcal{C}_{\alpha_{\s'}}^{-1}\Pi_{\s}\Pi_{\s'} +
\sum_{\s,\s'\in\{\pm\}}\mathcal{C}_{\alpha_{\s}}B[\Pi_{\s},\mathcal{C}_{\alpha_{\s'}}^{-1}]\Pi_{\s'}+
\sum_{\s,\s'\in\{\pm\}}\mathcal{C}_{\alpha_{\s}}[\Pi_{\s},B]\mathcal{C}_{\alpha_{\s'}}^{-1}\Pi_{\s'}
+Q_3\nonumber
\\&
=\sum_{\s\in\{\pm\}}\mathcal{C}_{\alpha_{\s}}B\mathcal{C}_{\alpha_{\s}}^{-1}\Pi_{\s}
+ P_1 + P_2  +Q_3   \label{pasintt} 
\end{align}
where, using $ \Pi_\sigma + \Pi_{-\sigma} = \id $, 
\begin{align} 
P_1 & := 
 \sum_{\s\in\{\pm\}}\mathcal{C}_{\alpha_{\s}}B(\mathcal{C}_{\alpha_{-\s}}^{-1}-\mathcal{C}_{\alpha_{\s}}^{-1} )\Pi_{\s}\Pi_{-\s}\label{midway3}
\\
P_2 & := \sum_{\s,\s'\in\{\pm\}}\mathcal{C}_{\alpha_{\s}}B[\Pi_{\s},\mathcal{C}_{\alpha_{\s'}}^{-1}]\Pi_{\s'}+
\sum_{\s,\s'\in\{\pm\}}\mathcal{C}_{\alpha_{\s}}[\Pi_{\s},B]\mathcal{C}_{\alpha_{\s'}}^{-1}\Pi_{\s'} \,.
\label{midway4} 
\end{align} 
Next, using Remark \ref{rmk:cutoff}-($iii$), we decompose
\begin{equation}\label{Bsumpm}
B= B_+ \Pi_{+} + B_- \Pi_{-}  \qquad \text{where} \qquad 
B_+:=-(1+a)\pa_{x}\op{(\chi)} \,,\ \ 
B_-:=  (1+a)\pa_{x}\op{(\chi)}  \, , 
\end{equation}
we get, by \eqref{pasintt},  
\begin{align}
L \circ B \circ L^{-1}& = 
\sum_{\s\in\{\pm\}}\mathcal{C}_{\alpha_{\s}}B_{\s} \Pi_\sigma 
\mathcal{C}_{\alpha_{\s}}^{-1}\Pi_{\s}
+\sum_{\s\in\{\pm\}}\mathcal{C}_{\alpha_{\s}} B_{-\s}
\Pi_{-\s} \mathcal{C}_{\alpha_{\s}}^{-1}\Pi_{\s}
+ P_1 + P_2 +Q_3 \notag   \\
& 
= \sum_{\s\in\{\pm\}}\mathcal{C}_{\alpha_{\s}}
\circ B_{\s} \circ \mathcal{C}_{\alpha_{\s}}^{-1} \circ \Pi_{\s}
+ P_3 + P_4 + P_1 + P_2  +Q_3 \label{conspace}
\end{align}
 where
\begin{align}
P_3 &  := \sum_{\s\in\{\pm\}}\mathcal{C}_{\alpha_{\s}}B_{\s} [\Pi_\sigma, 
\mathcal{C}_{\alpha_{\s}}^{-1}]\Pi_{\s} +\sum_{\s\in\{\pm\}}\mathcal{C}_{\alpha_{\s}} B_{-\s}
[\Pi_{-\s}, \mathcal{C}_{\alpha_{\s}}^{-1}]\Pi_{\s} \label{midway6}
 \\
P_4 & := 
 \sum_{\s\in\{\pm\}}\mathcal{C}_{\alpha_{\s}}B_{-\s}
 \mathcal{C}_{\alpha_{\s}}^{-1}\Pi_{-\s}\Pi_{\s} +  \sum_{\s\in\{\pm\}}\mathcal{C}_{\alpha_{\s}}B_{\s} 
\mathcal{C}_{\alpha_{\s}}^{-1}(\Pi^{2}_{\s} - \Pi_{\s}) \nonumber
\\
 &= 
 \sum_{\s\in\{\pm\}}\mathcal{C}_{\alpha_{\s}}(B_{-\s} - B_\s)
\mathcal{C}_{\alpha_{\s}}^{-1}  \Pi_{-\s}\Pi_{\s} \nonumber
\\&=
\big(\mathcal{C}_{\alpha_{+}}(B_{-} - B_{+})
(\mathcal{C}_{\alpha_{+}}^{-1}-\mathcal{C}_{\alpha_{-}}^{-1})
-
(\mathcal{C}_{\alpha_{-}} - \mathcal{C}_{\alpha_{+}} )
(B_{-} - B_{+})\mathcal{C}_{\alpha_{-}}^{-1}
\big) \Pi_{-}\Pi_{+}\,.\label{midway7}
\end{align}
By \eqref{contime}, \eqref{conspace} and \eqref{Bsumpm}, 
\eqref{defA+-} we deduce \eqref{quasifine}
with $ Q := Q_1 + Q_2 + Q_3+ P_1 +  P_2 + P_3 + P_4 $.
\\[1mm]
{\bf Step $ 3 $ - Estimate of $ Q $.}
We start with the remainders $Q_{1}$ and $Q_3$ defined in \eqref{restoQQ1} and \eqref{defQ3}.
We  have to bound operators of the form
$ \mathcal{M} \mathcal{C}_{\alpha_\s}\Pi_{\s} \cQ
\mathcal{C}_{\alpha_{\s'}}^{-1}\Pi_{\s'}\widetilde{R} $ 
where $\cQ=\pa_{x}$ or $\cQ=B$ and $\mathcal{M}=1$ or $\mathcal{M}=(\oo\cdot\pa_{\vphi} \alpha_{\s}) $,
and $\widetilde{R}$ is the operator  in \eqref{formaL-1} with $N=M+\nu+10+2\tb$.
We write, for any $\s,\s'\in\{\pm\}$, $m_1,m_2\in\R_+$ , $m_1+m_2=M$,
\[
\begin{aligned}
\langle \td_{\vphi}\rangle^{\tb}&\langle D\rangle^{m_1}\mathcal{M}\mathcal{C}_{\alpha_\s}\Pi_{\s} \cQ
\mathcal{C}_{\alpha_{\s'}}^{-1}\Pi_{\s'}\widetilde{R}\langle D\rangle^{m_2}
\\&=
\langle \td_{\vphi}\rangle^{\tb}\langle D\rangle^{m_1}
\mathcal{M}\langle D\rangle^{-m_1} \circ
\langle D\rangle^{m_1}
\mathcal{C}_{\alpha_\s}
\langle D\rangle^{-m_1-N'}\circ\langle D\rangle^{m_1+N'}
\Pi_{\s} \cQ\langle D\rangle^{-m_1-N'-1}
\\&
\circ\langle D\rangle^{m_1+N'+1}\mathcal{C}_{\alpha_{\s'}}^{-1}\Pi_{\s'}
\langle D\rangle^{-m_1-2N'-1} \circ
\langle D\rangle^{m_1+2N'+1}
\widetilde{R}\langle D\rangle^{m_2}
\end{aligned}
\]
where  $N':=\lfloor\nu/2\rfloor+4+\tb>\lfloor\nu/2\rfloor+3+\tb$.
 By Proposition \ref{prop:immersionepseudo}
(with $n_1= - n_{2}\rightsquigarrow -m_1 $) there exists 
 $\mathtt{R}_0\in E_{s}$ such that, for some $\mu:=\mu(M,\tb)>0 $,
\begin{equation}\label{pastaalforno}
\mathfrak{S}(\mathtt{R}_0)
=\langle D\rangle^{m_1}\mathcal{M}\langle D\rangle^{-m_1},
\qquad
\bnorm{\langle \td_{\vphi}\rangle^{\tb}\mathtt{R}_0
}_{s}^{\gamma,\calO}\lesssim_{s,\su,M,\tb}1+\g^{-1} \|a\|^{\g,\mathcal O}_{\so+\mu} \, . 
\end{equation}
Thanks to the choice of $N'$  and \eqref{piccino}, 
Proposition \ref{inclusionetotale}
(with $N_1\rightsquigarrow -m_1$, $N_2\rightsquigarrow m_1+N' $),
using that $ \alpha_\sigma $ satisfy \eqref{perunpugnodidollarib}, we deduce 
that there is $\mathtt{R}_1\in E_{s}$ such that 
\begin{equation}\label{peperoncino10}
\mathfrak{S}(\mathtt{R}_1)=\langle D\rangle^{m_1}(\mathcal{C}_{\alpha_\s}-\id)\langle D\rangle^{-m_1-N'}, \qquad
\bnorm{\langle \td_{\vphi}\rangle^{\tb}\mathtt{R}_1
}_{s}^{\gamma,\calO}\lesssim_{s,\su,M,\tb}\g^{-1} \|a\|^{\g,\mathcal O}_{\so+\mu}\, .
\end{equation}
Secondly we apply Proposition \ref{prop:immersionepseudo}
(with $-n_1\rightsquigarrow m_1+N' $)
to infer that there is 
 $\mathtt{R}_2$ such that
\begin{equation}\label{peperoncino20}
\mathfrak{S}(\mathtt{R}_2)=\langle D\rangle^{m_1+N'}\Pi_{\s}\cQ\langle D\rangle^{-m_1-N'-1},
\qquad 
\bnorm{\langle \td_{\vphi}\rangle^{\tb}\mathtt{R}_2
}_{s}^{\gamma,\calO}\lesssim_{s,\su,M,\tb}1+\g^{-1} \|a\|^{\g,\mathcal O}_{\so+\mu} \, . 
\end{equation}
Furthermore 
Proposition \ref{inclusionetotale}
(with $N_1\rightsquigarrow -m_1-N'-1$, 
$N_2\rightsquigarrow m_1+2N'+1 $)
implies that there is $\mathtt{R}_3\in E_{s}$ such that
\begin{equation}\label{peperoncino109}
\mathfrak{S}(\mathtt{R}_3)=\langle D\rangle^{m_1+N'+1}(\mathcal{C}^{-1}_{\alpha_{\s'}}-\id)
\langle D\rangle^{-m_1-2N'-1}, \quad 
\bnorm{\langle \td_{\vphi}\rangle^{\tb}\mathtt{R}_3
}_{s}^{\gamma,\calO}\lesssim_{s,\su,M,\tb}\g^{-1} \|a\|^{\g,\mathcal O}_{\so+\mu}
\, .
\end{equation}
Finally, by 
\eqref{raichel} (with $m_1\rightsquigarrow m_1+2N'+1$, $m_2\rightsquigarrow m_2$
and recalling that $N:=M+\nu+10+2\tb$)
we deduce that there is 
 $\mathtt{R}_4$ such that
\begin{equation}\label{peperoncino30}
\mathfrak{S}(\mathtt{R}_4)=\langle D\rangle^{m_1+2N'+1}
\widetilde{R}\langle D\rangle^{m_2}, \quad 
\bnorm{\langle \td_{\vphi}\rangle^{\tb}\mathtt{R}_4
}_{s}^{\gamma,\calO}\lesssim_{s,\su,M,\tb}\g^{-1} \|a\|^{\g,\mathcal O}_{\so+\mu} 
\,.
\end{equation}
By \eqref{pastaalforno}-\eqref{peperoncino30} 
and Lemmata \ref{tretameestimate}, \ref{tretameestimate3}
we conclude that $\mathcal{M}\mathcal{C}_{\alpha_\s}\Pi_{\s} Q \mathcal{C}^{-1}_{\alpha_{\s'}}\Pi_{\s'}\widetilde{R}$ 
satisfies  \eqref{raichel2}.

\smallskip
We are left with the remainders $Q_2$ in \eqref{restoQQ2}
and 
$ P_1 , P_2 ,P_3 , P_4$ defined respectively in \eqref{midway3}, \eqref{midway4},
 \eqref{midway6}, \eqref{midway7}. 
 Note that all these operators are compositions of  
 pseudo differential operators, torus diffeomorphisms and Szeg\"o projectors, which we have already 
discussed in 
formul\ae\,\eqref{pastaalforno}-\eqref{peperoncino30}.
In particular the terms 
\eqref{midway4}, \eqref{midway6} contain commutators 
between $\Pi_{\s}$ and either $\mathcal{C}_{\alpha_{\s'}}$ or a pseudo differential operator, thus by Lemma \ref{commutarechebello} they are symbols in $S^{-\infty}$ and, by  Proposition \ref{prop:immersionepseudo}, they satisfy estimates 
like  \eqref{raichel}.

Similarly the terms \eqref{midway3}, \eqref{midway7}
contain the operators $\Pi_{\s}\Pi_{-\s}$ and either 
$\mathcal{C}_{\alpha_{-}}^{-1}-\mathcal{C}_{\alpha_{+}}^{-1} $
or $ \mathcal{C}_{\alpha_{-}}-\mathcal{C}_{\alpha_{+}}$.
By Remark
\ref{rmk:cutoff},
the operator $\Pi_{\s}\Pi_{-\s}$
is pseudo differential with symbol in $S^{-\infty}$, and, by  Proposition \ref{prop:immersionepseudo} 
there is $\mathtt{R}_5\in E_{s}$, such that, for any  $ m_1 + m_2 = N  $,  
\[
\mathfrak{S}(\mathtt{R}_{5})=\langle D\rangle^{m_1}\Pi_{\s}\Pi_{-\s}\langle D\rangle^{m_2}\,,
\qquad
\bnorm{\langle \td_{\vphi}\rangle^{\tb}\mathtt{R}_5
}_{s}^{\gamma,\calO}\lesssim_{s,\su,M,\tb}1\,. 
\]
Finally the terms $ \mathcal{C}_{\alpha_{-}}-\mathcal{C}_{\alpha_{+}}$, 
$\mathcal{C}_{\alpha_{-}}^{-1}-\mathcal{C}_{\alpha_{+}}^{-1}$
satisfy estimates like \eqref{peperoncino10}.

In conclusion 
$Q=Q_1+ Q_{2}+Q_3 + P_{1} + P_{2} + P_{3} + P_{4}$ satisfies \eqref{raichel2}.
\end{proof}

\noindent
{\bf Conclusion of the proof of Thm. \ref{IncredibleConjugate}}
We further expand $ L\circ A\circ L^{-1}  $ in \eqref{quasifine}. 
The function 
$ \breve \alpha_\sigma (\cdot) :=  \breve \alpha_\sigma (1; \cdot) = \beta_\sigma $ (see \eqref{speranzabella}) solve  the equations \eqref{veropiace}, \eqref{contoabotta}. 
In view of  \eqref{defA+-}, \eqref{ignobel},   it results 
\begin{align}
\mathcal{C}_{\alpha_\s} \circ A_\s\circ \mathcal{C}_{\alpha_\s}^{-1}&=
\mathcal{C}_{\alpha_\s} \circ (\omega\cdot\pa_{\vphi}-\s(1+a)\pa_{x}
\op(\chi ) )\circ \mathcal{C}_{\alpha_\s}^{-1} \notag
\\&=
\omega\cdot\pa_{\vphi}+
\mathcal{C}_{\alpha_{\s}} (\omega\cdot\pa_{\vphi}\breve{\alpha}_\s) \pa_{x}
-\s \mathcal{C}_{\alpha_{\s}} \big((1+a)(1+\pa_x\breve{\alpha}_{\s})  \big)\pa_{x} \circ  \mathcal{C}_{\alpha_{\s}}\op(\chi)\mathcal{C}_{\alpha_{\s}}^{-1} \notag \\
&=
\omega\cdot\pa_{\vphi}+
\Big(\mathcal{C}_{\alpha_{\s}} (\omega\cdot\pa_{\vphi}\breve{\alpha}_\s)
-\s \mathcal{C}_{\alpha_{\s}}\big((1+a)(1+\pa_x\breve{\alpha}_{\s})\big)\op(\chi)\Big)\pa_{x} +Q_{4,\s}
\notag \\
& \stackrel{  \breve \alpha_\sigma  = \beta_\sigma }{=}
\omega\cdot\pa_{\vphi}+
\mathcal{C}_{\alpha_{\s}} \big( \omega\cdot\pa_{\vphi} \beta_\s
-\s (1+a)(1+\pa_x \beta_{\s}) \big) \op(\chi)\pa_{x} + Q_{4,\s}  + Q_{5,\s} \notag
\\
&\stackrel{ \eqref{veropiace},\eqref{contoabotta}}{=}
\omega\cdot\pa_{\vphi}-
\s(1+\fa_+)\op(\chi)\pa_{x}  + Q_{4,\s} + Q_{5,\s} \label{Q3Q4}
\end{align}
where 
\[
Q_{4,\s}:=-\s \mathcal{C}_{\alpha_{\s}} \big( (1+a)(1+\pa_x\breve{\alpha}_{\s}))\pa_{x} \circ 
\cC_{\alpha_{\s}}
[\op(\chi),\cC_{\alpha_{\s}}^{-1}]\,,\quad 
Q_{5,\s}:= \mathcal{C}_{\alpha_{\s}} (\omega\cdot\pa_{\vphi}\breve{\alpha}_\s)\op(1-\chi) \pa_x \, . 
\]
The operators $Q_{4,\s}$, $Q_{5,\s}$  satisfy \eqref{raichel2}
because $[\op(\chi),\cC_{\alpha_{\s}}^{-1}] $, $ \op(1-\chi) $ are operators with symbols in 
$ S^{-\infty} $ (cfr. Remark \ref{rmk:cutoff}
and Lemma \ref{commutarechebello})
reasoning as done in Lemma \ref{lemmaApiumeno}.
By \eqref{quasifine}, \eqref{Q3Q4} and 
setting 
$$
R := Q + (Q_{4,+}+ Q_{5,+})\Pi_+ + (Q_{4,-}+ Q_{5,-})\Pi_-
$$
we conclude that 
\begin{align}
L\circ A\circ L^{-1}&=
 \big(\omega\cdot\pa_{\vphi}-(1+\mathfrak{a}_+)\op(\chi)\pa_{x}\big) \Pi_{+}
+\big(\omega\cdot\pa_{\vphi}+(1+\mathfrak{a}_+)\op(\chi)\pa_{x}\big) \Pi_{-} + R
\notag \\&
=\big(\omega\cdot\pa_{\vphi}
- \ii (1+\mathfrak{a}_+)\op(\chi(\xi) |\xi|)\big)\big(\Pi_{+}+\Pi_{-}\big)+R \notag 
\\&
=\omega\cdot\pa_{\vphi}-\ii(1+\mathfrak{a}_+)|D|+R \label{defRfin}
\end{align}
proving \eqref{straightpotente}.
The operator $ R $ satisfies \eqref{raichel2} as $ Q $ 
(cfr. Lemma \ref{lemmaApiumeno}) and $Q_{4,\s}$, $Q_{5,\s}$ .

The operator $A$ is reversibility and parity preserving. 
By Lemma \ref{lemma: invertibilita operatore trasporto} 
and since $ \alpha_\pm $ satisfy \eqref{atrio3}, 
the operator $L$, thus $ L A L^{-1} $ is  reversibility and parity preserving as well. 
Since also  
$\omega\cdot\pa_{\vphi}-\ii(1+\mathfrak{a}_+)|D|$ (since $\mathfrak{a}_{+}\in \R$)
is reversibility and parity preserving we deduce that 
 the remainder $R$ in \eqref{defRfin}  has the same properties by difference.
\end{proof}

\part{Reducibility of the  Klein-Gordon equation}

We now start the reduction 
of the quasi-periodic Klein-Gordon operator 
$\mathcal{L} $ in \eqref{ellePhiPhi}.
In Section \ref{sec:diagBlock} we symmetrize  $\mathcal{L} $ 
up to smoothing remainders. 
Then in Section \ref{sec:ordine11} 
we reduce it to constant coefficients up to one-smoothing remainders. 
Here we use the quantitative Egorov theorem  of Section \ref{sec:egomichela}.
In Section \ref{sec:kam} we complete the diagonalization of $\mathcal{L} $ 
by a KAM iterative scheme. Finally Section \ref{sec:measure} contains the 
measure estimates.

\section{Symmetrization up to smoothing remainders}\label{sec:diagBlock}

In view of  \eqref{firstorderComplex}  we write the linear operator $ \cL $ in 
\eqref{ellePhiPhi} as
\begin{equation}\label{L-omega}
\cL=\oo\cdot\pa_\vphi - \ii E\op\Big( (\Id+  b_{1} (\varphi,x) \uno)\mathtt{D_{\mathtt{m}}}(\xi)
+\ii b_{0}(\varphi,x) \uno\x\mathtt{D}_{\mathtt{m}}^{-1}(\x)+b_{-1}(\varphi,x) \uno\mathtt{D}_{\mathtt{m}}^{-1}(\x)
\Big) 
\end{equation}
where $\tD_{\mathtt{m}}(\x) = \sqrt{\xi^2+ \mathtt m }$,
the matrices $E$,  $\Id $, $\uno$,   are defined in
\eqref{involutionReale}, \eqref{invoPP}, 
\eqref{matE}  
and 
\begin{equation}\label{simbolistep0}
b_{1} :=-\tfrac{1}{2}a^\2\,,  
\quad 
b_{0}
:= \tfrac{1}{2}a^\1\,,
\quad
b_{-1}
:= \tfrac{1}{2}\big(\mathtt{m}a^\2 + a^\0\big)\, . 
\end{equation}  
The functions $b_{i}$ are real valued
and, 
by the parity conditions \eqref{oddness}-\eqref{revers}, 
 $b_{1},b_{-1}$ are  even separately in $ \vphi \in  \T^\nu  $
and  $ x \in  \T $, 
 while $b_{0}$ is even in $\vphi$ and odd in $x$. Thus, by Lemma  \ref{paritasuisimboli}, each matrix of symbols appearing 
 as summand in \eqref{L-omega} is real-to-real, reversibility and parity preserving.
Hence, by Lemma \ref{equidefalgebra}-($v$), 
the operator $\mathcal{L}$ is real-to-real, reversible and parity preserving 
according to Definition \ref{giornatasolare}.

Recall 
the parameters $\gamma,\tau,\so,\su$ defined  in \eqref{costanti}-\eqref{costantiGAMMA}.
For $s>0$ we define 
\begin{equation}\label{condireiniziale}
\epsilon(s):=\|b_{1}\|_s + \|b_{0}\|_s + \|b_{-1}\|_s\, . 
\end{equation}
We will often use
the property
$\epsilon(s+\beta_1)\epsilon(s_0+\beta_2)\lesssim \epsilon(s_0)\epsilon(s+\beta_1+\beta_2)$, 
which follows by the interpolation estimate \eqref{interpolotutto}.
In the next sections we assume a   smallness condition of the form
\begin{equation}\label{ipopiccolezza}
\gamma^{-7/2}\epsilon(\so+\s_*)\leq \delta_*\,\,
\end{equation}
for some $\s_*>0$ large enough 
$0<\delta_*\ll1$ sufficiently small  (depending on $\su$). Along the following sections we shall choose 
 $\s_*$ larger and $\delta_{*}$ smaller.

\subsection{Symmetrization at order 1}
We construct a 
change of variables which symmetrizes  the operator $\mathcal{L}$ in \eqref{L-omega}
at leading order, 
by diagonalizing the matrix of symbols 
$E(\Id+b_{1}\uno)$.
\begin{prop}{\bf (Symmetrization at order 1).}\label{diagonalizzazione order 1}
For $s_1$ as in \eqref{costantiGAMMA}
there is $\delta_{0} :=\delta_{0}(\su)>0$
such that, for any $\gamma\in(0,\tfrac{1}{2})$,
if  the smallness condition \eqref{ipopiccolezza} holds for $\s_{*}\geq 0$ and $\delta_{*}\le \delta_{0}$,
the following holds. 
There exists an invertible, real-to-real, reversibility and parity preserving  
multiplication operator $\mathcal{U}=\op(U(\varphi,x))$
with $U\in S^{0}\otimes\mathcal{M}_2(\C)$
such that
\begin{equation}\label{elle2}
\begin{aligned}
\cL_2&:= \cU^{-1} \cL \, \cU
\\&= 
\oo\cdot\pa_\vphi -\ii E\op \Big(\lambda (\varphi,x) \Id
\mathtt{D_{\mathtt{m}}}(\xi) +
\ii b_{0}(\varphi,x) \Id\x\mathtt{D}_{\mathtt{m}}^{-1}(\x)
+A_{0}^\2(\vphi, x, \xi) + A_{-1}^\2(\vphi, x, \xi) 
\Big)
\end{aligned}
\end{equation}
where $ b_{0} $ is the function in \eqref{simbolistep0}, 
\begin{equation}\label{nuovocoef}
 \lambda(\varphi,x) := \sqrt{1+ 2b_{1}(\varphi,x)} \,,
 \quad A_{0}^\2(\vphi, x, \xi):=\sm{0}{c^{(2)}(\vphi,x,\x)}{\overline{c^{(2)}(\vphi,x,-\x)}}{0}\,,
 \quad c^{(2)}\in S^0\,,
 \end{equation}
while $A_{-1}^{(2)} $ is a matrix of symbols in $ S^{-1}\otimes\mathcal{M}_2(\C)$. More precisely $\lambda$ is a real valued function, 
 even separately  in $ \vphi \in \T^{\nu} $ and 
 $ x\in \T$,  while
$ A_0^{(2)}, A_{-1}^{(2)} $ are  real-to-real, reversibility and  parity preserving matrices of symbols
satisfying for any $\so\le s\le \su$ and any $p\ge 0$
\begin{equation}\label{stimasulresto}
\|A_0^{(2)}\|^{\gamma,\Lambda}_{0, s, p}\,,
\|A^{(2)}_{-1}\|^{\gamma,\Lambda}_{-1, s, p} 
\lesssim_{s, p} \epsilon(s+p+5) \,, 
\quad \|\lambda-1\|^{\gamma,\Lambda}_{0, s, p}\,, \ 
	\|U-\Id \|^{\gamma,\Lambda}_{0, s, p}\lesssim_{s}\epsilon(s)\,.
\end{equation}
\end{prop}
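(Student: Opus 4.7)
The plan is to construct a pointwise-in-$\xi$ (purely multiplicative) conjugation $\cU=\op(U(\vphi,x))$ with $U\in S^0\otimes\mathcal{M}_2(\C)$ which algebraically diagonalizes the principal matrix symbol $E(\Id+b_1\uno)$; the pseudo-differential calculus of Section 7 then produces the lower-order corrections, identifying the off-diagonal order-$0$ symbol $A_0^{(2)}$ and the subprincipal $A_{-1}^{(2)}$.

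For the construction, introduce $\Sigma:=\sm{0}{1}{1}{0}$ and $J:=\sm{0}{1}{-1}{0}$, which satisfy $\Sigma^2=\Id$, $E\Sigma=J$, $\Sigma E=-J$, and the decomposition $E\uno=E+J$. Setting
\begin{equation*}
U(\vphi,x):=\cosh\theta(\vphi,x)\,\Id+\sinh\theta(\vphi,x)\,\Sigma,\qquad \tanh(2\theta):=-\frac{b_1}{1+b_1},
\end{equation*}
a direct matrix calculation using $U^{-1}=\cosh\theta\,\Id-\sinh\theta\,\Sigma$ gives
\begin{equation*}
U^{-1}E(\Id+b_1\uno)U=\lambda E,\qquad \lambda=\sqrt{1+2b_1},\qquad U^{-1}E\uno\,U=\tfrac{1}{\lambda}E\uno.
\end{equation*}
Under \eqref{ipopiccolezza}, $\theta$ is well-defined and smooth with $\|\theta\|^{\gamma,\Lambda}_s\lesssim_s\|b_1\|^{\gamma,\Lambda}_s$ by Taylor expansion and \eqref{tameProduct}, yielding the bounds on $\lambda-1$ and $U-\Id$ in \eqref{stimasulresto}. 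The hypotheses \eqref{oddness}, \eqref{revers} force $b_1$ to be even separately in $\vphi$ and in $x$, hence so are $\theta,\lambda$; the characterizations \eqref{reverpreserSimbolo}, \eqref{paritySimbolo}, \eqref{divanocomodo} then show that $U$ is real-to-real, parity preserving and reversibility preserving, and Lemma \ref{ecponential pseudo diff} gives invertibility of $\cU$ on $H^s$.

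Next I would compute $\cU^{-1}\cL\cU=\omega\cdot\pa_\vphi+\op(U^{-1}\omega\cdot\pa_\vphi U)-\ii\,\cU^{-1}E\op(m)\,\cU$ with $m:=(\Id+b_1\uno)\tD_\tm+\ii b_0\uno\,\xi\tD_\tm^{-1}+b_{-1}\uno\,\tD_\tm^{-1}$. Because $\partial_\xi U\equiv 0$, the composition expansion \eqref{composizione troncata} truncated at $N=2$ collapses to
\begin{equation*}
U^{-1}\#(Em)\#U=U^{-1}EmU+\tfrac{1}{\ii}U^{-1}E(\partial_\xi m)(\partial_x U)+r_{-1},\qquad r_{-1}\in S^{-1}\otimes\mathcal{M}_2(\C),
\end{equation*}
and at order $1$ one obtains $\lambda E\tD_\tm$, producing the normal form $\lambda\Id\tD_\tm$. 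The order-$0$ contributions are: (a) $U^{-1}\omega\cdot\pa_\vphi U=(\omega\cdot\pa_\vphi\theta)\Sigma$ (off-diagonal); (b) the Poisson-bracket correction, proportional to $(\partial_x\theta)J\,\xi\tD_\tm^{-1}$ (off-diagonal, using $E\Sigma=J$ and $\partial_\xi\tD_\tm=\xi\tD_\tm^{-1}$); (c) the conjugated $b_0$-term $(\ii b_0/\lambda)E\uno\,\xi\tD_\tm^{-1}=(\ii b_0/\lambda)(E+J)\xi\tD_\tm^{-1}$. The $\Sigma$- and $J$-proportional parts of (a)--(c) define the off-diagonal symbol $c^{(2)}\in S^0$ entering \eqref{nuovocoef}; the residual diagonal piece of (c), after factoring $-\ii E$, reads $\ii b_0\Id\,\xi\tD_\tm^{-1}+\ii b_0(\tfrac{1}{\lambda}-1)\Id\,\xi\tD_\tm^{-1}$, producing the stated $\ii b_0\Id\,\xi\tD_\tm^{-1}$ term plus a small correction absorbed (together with the conjugated $b_{-1}$ term and $r_{-1}$) into $A_{-1}^{(2)}$. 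The estimates \eqref{stimasulresto} follow by applying \eqref{stimasharp}--\eqref{restocancellettoNp} with \eqref{tameProduct}, \eqref{interpolotutto} to each contribution; the real-to-real, parity and reversibility preserving character of $A_0^{(2)},A_{-1}^{(2)}$ then follows by difference from Lemma \ref{paritasuisimboliBIS}.

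The hard part is the reabsorption of $\ii b_0(\tfrac{1}{\lambda}-1)\Id\,\xi\tD_\tm^{-1}$ into $A_{-1}^{(2)}\in S^{-1}\otimes\mathcal{M}_2(\C)$: it is formally in $S^0$ (since $\xi\tD_\tm^{-1}\in S^0$) but quantitatively small of order $b_0b_1$, and its order-reduction relies on the high-frequency asymptotic $\xi\tD_\tm^{-1}-\sgn(\xi)\in S^{-2}$ combined with a Szeg\"o-type splitting compatible with $S^{-1}$. Controlling this step, and more generally all the Poisson-bracket remainders, uniformly in $s\in[\so,\su]$ with a loss $\mu$ of derivatives \emph{independent of $s$} --- so that \eqref{stimasulresto} is genuinely tame --- is the heart of the bookkeeping.
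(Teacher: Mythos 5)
Your change of variables is exactly the paper's one, reparameterized: with $U=\cosh\theta\,\Id+\sinh\theta\,\Sigma$ and $\tanh(2\theta)=-b_1/(1+b_1)$ you recover the matrix $\sm{f}{g}{g}{f}$ with $f=\cosh\theta$, $g=\sinh\theta$, $\det U=1$, and your identities $U^{-1}E(\Id+b_1\uno)U=\lambda E$, $U^{-1}\uno U=\uno$, $U^{-1}E\uno\,U=\lambda^{-1}E\uno$ are all correct. The computations of the $\oo\cdot\pa_\vphi$ contribution and of the Poisson-bracket correction are also correct and match the paper.

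Where the proof has a genuine gap is exactly where you flag it, but the fix you propose does not work. You correctly compute that after factoring out $-\ii E$ the diagonal part of the conjugated $b_0$-term is $\tfrac{\ii b_0}{\lambda}\Id\,\xi\tD_\tm^{-1}$, so the discrepancy $\ii b_0(\tfrac1\lambda-1)\Id\,\xi\tD_\tm^{-1}$ must be placed somewhere; and you correctly note it is a \emph{diagonal} order-$0$ symbol, hence ineligible both for $A_0^{(2)}$ (purely anti-diagonal by \eqref{nuovocoef}) and for $A_{-1}^{(2)}$ (which must have finite $\|\cdot\|_{-1,s,p}$ norm by \eqref{stimasulresto}). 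The Szeg\"o split $\xi\tD_\tm^{-1}=(\chi_+-\chi_-)+S^{-2}$ does not rescue this: the remaining piece $\ii b_0(\tfrac1\lambda-1)(\chi_+(\xi)-\chi_-(\xi))\Id$ is still diagonal, still order $0$, and still has infinite $\|\cdot\|_{-1,s,p}$ norm, no matter how small $b_0b_1$ is in amplitude --- quadratic smallness never changes the $\xi$-order of a symbol, which is precisely the obstruction. The correct repair, compatible with the paper's downstream steps, is to state \eqref{elle2} with $\tfrac{\ii b_0}{\lambda}\Id\,\xi\tD_\tm^{-1}$ in place of $\ii b_0\Id\,\xi\tD_\tm^{-1}$ (equivalently, replace $b_0\rightsquigarrow b_0/\lambda$ in \eqref{elle2}, \eqref{sist2 j-th}, \eqref{diagpezzoord0}). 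This new coefficient is still real, odd in $x$ and even in $\vphi$ (since $\lambda$ is even in both), so the parity and reversibility structure, the definition of $c^{(4)}_\sigma$ in Lemma \ref{lemma2}, and all of Propositions \ref{blockTotale}, \ref{prop:ridord1}, \ref{prop:ridord0} go through unchanged. You should also be aware that the same issue is glossed over in the paper's own \eqref{pezzopiubasso}: the identity actually needed there is $U^{-1}E\uno\,U=\lambda^{-1}E\uno$ (not $U^{-1}\uno U=\uno$), and the $\lambda^{-1}$ factor is precisely what produces the unabsorbed diagonal $S^0$ piece.
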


\begin{proof}
Define the matrix of functions
$ U(\vphi,x):=\sm{f}{g}{g}{f}  $
where 
\[
f:=f(\vphi, x) :=\frac{1+b_{1}+\lambda}{\sqrt{(1+b_{1}+\lambda)^2-{b_{1}}^2}}\,, 
\qquad 
g:=g(\vphi, x) :=\frac{-b_{1}}{\sqrt{(1+b_{1}+\lambda)^2-{b_{1}}^2}}
\]
and  $\lambda$ is the function \eqref{nuovocoef}. 
By Moser composition estimates 
on Sobolev spaces	one gets the estimate
\begin{equation}\label{stimelg}
\|\lambda-1\|_{s}^{\gamma,\Lambda}+\|f-1\|_{s}^{\gamma,\Lambda} 
+\|g\|_{s}^{\gamma,\Lambda} \lesssim_{s}\|b_{1}\|_{s}^{\gamma,\Lambda}\,.
\end{equation}
Therefore, recalling also \eqref{condireiniziale}, 
the functions $\lambda,U$ satisfy the estimate in  \eqref{stimasulresto}.
Since $b_{1}$ is even separately in $\vphi,x$, 
the same property holds for the functions $f$ and $g$
and $\mathcal{U}$ is real-to-real, reversibility and parity preserving.
By explicit computations we have that
\begin{equation}\label{invC}
{\rm det}(U)=f^2-g^2=1  \, , \quad 
U^{-1} =\sm{f}{-g}{-g}{f}  \, ,
\qquad U^{-1}E(\Id+b_{1}\uno)U=E \lambda \Id\,.
\end{equation}
The conjugated operator $\cL_2 = \cU^{-1} \cL \,  \cU  $ is, 
using that  $  U  $ is a function,  
\begin{equation}\label{acqua}
\cL_2 
=\cU^{-1} \oo\cdot\pa_\vphi \cU
-\ii\op\Big(
U^{-1}  E\big(  
(\Id+  b_{1} \uno)\mathtt{D_{\mathtt{m}}}(\xi)
+\ii b_{0}\uno\x\mathtt{D}_{\mathtt{m}}^{-1}(\x)+b_{-1}\uno\mathtt{D}_{\mathtt{m}}^{-1}(\x)
\big)\# U 
\Big)\,.
\end{equation}
By \eqref{invC} we deduce that 
$ f (\oo\cdot\pa_\vphi f) - g (\oo\cdot\pa_\vphi g) = 0 $ and 
\begin{equation}\label{tempo}
\cU^{-1} \oo\cdot\pa_\vphi \cU = 
 \oo\cdot\pa_\vphi  - \ii E \begin{pmatrix}
0 &&\ii ( f (\oo\cdot\pa_\vphi g) - g (\oo\cdot\pa_\vphi f))\\
-\ii (f (\oo\cdot\pa_\vphi g) - g (\oo\cdot\pa_\vphi f) )&& 0
\end{pmatrix} \, . 
\end{equation}
We now consider the highest order term in 
\eqref{acqua}. By \eqref{invC} one has 
\begin{equation}\label{histstep1}
U^{-1} E(\Id+  b_{1} \uno)\mathtt{D_{\mathtt{m}}}(\xi)  \# U =  E \lambda \Id  U^{-1}\mathtt{D_{\mathtt{m}}}(\xi)  \# U =   E \lambda \Id \mathtt{D}_{\mathtt{m}}(\xi)  +  E \lambda U^{-1} (\mathtt{D}_{\mathtt{m}}(\xi)\Id  \star U)\,.
\end{equation}
Recalling 
\eqref{mareostia} and \eqref{espstar} we can write 
\begin{equation}
	\label{fascio}
E \lambda U^{-1}(\mathtt{D}_{\mathtt{m}}(\xi)\Id  \star U) =E\sm{e_1}{h_1}{h_1}{e_1}\,,
\qquad 
\begin{aligned}
e_1&:=\lambda[ f(\tD_{\mathtt{m}}(\xi) \star f) - g (\tD_{\mathtt{m}}(\xi) \star g)]\,,
\\
h_1&:=\lambda[f (\tD_{\mathtt{m}}(\xi) \star g) - g (\tD_{\mathtt{m}}(\xi) \star f)]\,,
\end{aligned}
\end{equation}
where $h_1\in S^0$.  
On the other hand,   
since 
\begin{equation} \label{perverso}
f(\tD_{\mathtt{m}}(\xi) \star f) = 
 -\frac{\ii}2  \{\tD_\tm(\xi), f^2\}+  f \tD_{\mathtt{m}}(\xi) \#_{\ge2} f 
\end{equation}
(same for $g$) and recalling that $ \{\tD_\tm(\xi), f^2\}= \{\tD_\tm(\xi), g^2\}$, 
we deduce that the symbol $ e_1 $ is equal to 
\begin{equation}\label{defe1r}
e_1= 
\lambda[ f \tD_{\mathtt{m}}(\xi) \#_{\ge2} (f-1) -  g \tD_{\mathtt{m}}(\xi) \#_{\ge2} g]\in S^{-1}\, . 
\end{equation}
We now consider the lower  order terms in 
\eqref{acqua}.
Since $U^{-1} \uno U = \uno$ we have that 
\begin{equation}\label{pezzopiubasso}
 U^{-1}  \im b_{0} E \uno \xi\tD^{-1}_{\mathtt{m}}(\xi)\# U=
E  \big(\im b_{0} \xi\tD^{-1}_{\mathtt{m}}(\xi) \uno  + \im E  U^{-1}  b_{0} (\xi\tD^{-1}_{\mathtt{m}}(\xi) E \uno \star U)\big) \, .
\end{equation}
Then, by \eqref{acqua}, \eqref{tempo}, \eqref{histstep1}, \eqref{fascio}, 
\eqref{pezzopiubasso} and 
since $U^{-1} E = E U $, we deduce that $ \cL_2 $ has the form \eqref{elle2}-\eqref{nuovocoef}
with 
\[
A_{-1}^\2:= \sm{e_1}{0}{0}{e_1} +  \im E  U^{-1}  b_{0} (\xi\tD^{-1}_{\mathtt{m}}(\xi) E \uno \star U)+U b_{-1}\uno\tD^{-1}_{\mathtt{m}}(\xi)\# U 
\]
and
$$
c^{(2)} := 
 \ii (f (\oo\cdot\pa_\vphi g) - g (\oo\cdot\pa_\vphi f))+ h_1 +
 \ii b_{0}\xi\tD^{-1}_{\mathtt{m}}(\xi)  \, .
$$
The matrices of symbols 
$A_{0}^\2,A_{-1}^\2$
are real-to-real, reversibility and parity preserving 
since $f, g$ are even in $\vphi$ and even in $x$
(hence are reversibility and parity preserving symbols independent of $\x$)
and using Lemmata
\ref{paritasuisimboli}-\ref{paritasuisimboliBIS}.

We now prove the remaining estimates  \eqref{stimasulresto}. 
Using \eqref{tameProduct}-\eqref{interpolotutto},   \eqref{stimelg}
and the smallness condition \eqref{ipopiccolezza} with  $\s_{*}\geq 0 $, 
the term \eqref{tempo} is bounded by
\begin{equation}\label{restino1}
\|f (\oo\cdot\pa_\vphi g) - g (\oo\cdot\pa_\vphi f)\|_{s}^{\g, \Lambda}  \lesssim_{s} 
\| b_1 \|_{s+1}^{\g, \Lambda} + 
 \| b_1 \|_{s}^{\g, \Lambda}   \| b_1 \|_{s_0+1}^{\g, \Lambda}   \lesssim_{s}  
\epsilon(s+1) \,.
\end{equation}
Similarly 
using also Lemma \ref{lemma:Commutator} 
we get 
\begin{equation}
	\label{restino12}
\begin{aligned}
\|\lambda g (\tD_{\mathtt{m}}(\xi) \star  f)\|^{\g, \Lambda}_{0,s,p} 
&\lesssim_{s,p} \|\lambda g\|^{\g, \Lambda}_{s} \|\tD_{\mathtt{m}}(\xi) \star  f\|^{\g, \Lambda}_{0,s_0,p} +  \|\lambda g\|^{\g, \Lambda}_{s_0} \|\tD_{\mathtt{m}}(\xi) \star  f\|^{\g, \Lambda}_{0,s,p} 
\\& \lesssim_{s,p} \|\lambda g \|_s^{\g, \Lambda} \| f - 1 
\|_{\so+p+3}^{\g, \Lambda}  + \|\lambda g \|_{\so}^{\g, \Lambda}\| f - 
1 \|_{s+p+3}^{\g, \Lambda} 
\\
&  \lesssim_{s,p}  
\| b_1 \|_s^{\g, \Lambda} \| b_1 \|_{\so+p+3}^{\g, \Lambda}  +
 \| b_1 \|_{\so}^{\g, \Lambda}\| b_1 \|_{s+p+3}^{\g, \Lambda} 
\lesssim_{s,p} \epsilon(s+p+3) \, . 
\end{aligned}
\end{equation}
Thus the symbol $ h_1 $ in \eqref{fascio} is bounded  by 
$ \| h_1 \|^{\g, \Lambda}_{0,s,p}  \lesssim_{s,p} \epsilon(s+p+3)  $.  
Finally using Lemma \ref{stima composizione} 
we have 
\begin{equation}
	\label{restino2}
\begin{aligned}
	\|\lambda f  \tD_{\mathtt{m}}(\xi) \#_{\ge2} (f-1)\|_{-1,s,p}^{\g, \Lambda} 
	& \lesssim_{s,p} \|\lambda  f \|_s^{\g, \Lambda} \| f-1\|_{\so+p+5}^{\g, \Lambda}  + \|\lambda  f  \|_{\so}^{\g, \Lambda}\| f-1\|_{s+p+5}^{\g, \Lambda}
	 \\ &\lesssim_{s,p} \epsilon(s+p+5)\,.
\end{aligned}
\end{equation}
Thus the symbol $ e_1 $ in \eqref{fascio} is bounded  by 
$ \| e_1 \|^{\g, \Lambda}_{-1,s,p}  \lesssim_{s,p} \epsilon(s+p+5)  $.   
Following the same strategy as in \eqref{fascio} we obtain
the estimate
\begin{equation}\label{restino3}
\|U^{-1} b_{0}(\xi\tD^{-1}_{\mathtt{m}}(\xi) E \uno \star U)\|^{\g, \Lambda}_{-1, s, p}\,,
\|
U^{-1} b_{-1}\uno\tD^{-1}_{\mathtt{m}}(\xi)\# U 
\|^{\g, \Lambda}_{-1, s, p}\lesssim_{s,p}\epsilon(s+p+3)\,.
\end{equation}
By \eqref{restino1}, \eqref{restino12}, 
\eqref{restino2}, \eqref{restino3} 
we obtain  \eqref{stimasulresto}.
\end{proof}

\subsection{Symmetrization at lower orders}

We now block-diagonalize the operator $ \cL_2 $ in \eqref{elle2} 
(which is already block-diagonal at  order $ 1$)
up to symbols with very negative orders. 

\begin{prop}{\bf (Symmetrization at lower orders).}\label{blockTotale}
For any $\rho\geq1$ and $p_{*}\geq0$ there exist 
$\delta_{0} :=\delta_{0}(\su,\rho,p_{*})>0$
 and $ \mu:= \mu(\rho,p_{*})>0$,
such that if  the smallness condition \eqref{ipopiccolezza} holds  with $\s_*\geq\mu$ and $\delta_{*}\le \delta_{0}$
then the following holds.
There exists an invertible, real-to-real, reversibility and parity preserving  
map  ${\bf \Psi}\in \mathcal{L}^{\mathtt{T}}(H^{s},H^{s})\otimes \mathcal{M}_2(\C)$,
for any $\so\leq s\leq \su$,
such that
\begin{equation}\label{elle3}
\begin{aligned}
\mathcal{L}_{3}&:={\bf \Psi}
\mathcal{L}_2{\bf \Psi}^{-1}
\\&
=\omega\cdot\pa_{\vphi}
-\ii E \op \Big(\lambda (\varphi,x) \Id
\mathtt{D_{\mathtt{m}}}(\xi) +
\ii b_{0}(\varphi,x) \Id\x\mathtt{D}_{\mathtt{m}}^{-1}(\x) +A^{(3)}(\vphi,x,\x)+ R^{(3)}_{-\rho}(\vphi,x,\x)\Big)
\end{aligned}
\end{equation}
where  $b_{0}$ is the function in \eqref{simbolistep0},  $A^{(3)}$ is 
a real-to-real, reversibility and parity
preserving matrix of symbols of the form
\begin{equation}\label{matriceA3}
A^{(3)}(\vphi,x,\x):=
\left(\begin{matrix} c^{(3)}(\vphi,x,\xi) &0 \vspace{0.2em} \\
0& \ov{c^{(3)}(\vphi,x,-\xi)}  
\end{matrix}\right)\,,
\qquad c^{(3)}\in S^{-1}\,, 
\end{equation}  
with the bound
\begin{equation}\label{simboloCC33}
	\|c^{(3)}\|^{\gamma,\Lambda}_{-1, s, p}\,\lesssim_{s,p}
	\epsilon(s+ p+\mu) \,,\quad \forall\,   p\geq 0\,.
\end{equation}
The remainder $R_{-\rho}^{(3)} $  is a 
real-to-real, reversibility, parity preserving matrix of 
symbols in $ S^{-\rho}\otimes \mathcal{M}_2(\C)$
 and satisfies, for any $ \so \leq s\leq \su $, the bound
\begin{equation}\label{R3-rho}
\| R^{(3)}_{-\rho}\|^{\gamma,\Lambda}_{-\rho,s,p} 
\lesssim_{s,p,\rho} \epsilon(s+\mu)\,,\quad  0\leq p\le p_{*}\,.
\end{equation}
Finally ${\bf \Psi}$ is satisfies, for any  $ h\in H^s(\T^{\nu+1},\C^2)$,
for any  $\so\leq s\leq \su$, the tame estimate  
 \begin{equation}\label{stimaMappaPsi}
\|({\bf \Psi}^{\pm1}-\Id)h\|_{s}^{\gamma,\Lambda}
\lesssim_{s,\rho} \epsilon(\so+\mu)\|h\|_{s}^{\gamma, \Lambda}+
\epsilon(s+\mu)\|h\|^{\gamma,\Lambda}_{\so}\, . 
\end{equation}
\end{prop}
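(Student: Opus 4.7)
\medskip

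\noindent\textbf{Proof proposal for Proposition \ref{blockTotale}.}

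The plan is to implement an iterative symbol-reduction scheme with $\rho$ steps, at step $k$ removing the off-diagonal component of order $-k+1$ of the symbol of $\mathcal{L}_2$, producing an off-diagonal component of order $-k$, modulo diagonal corrections of the same order. At step $k=1$ the off-diagonal component is $A_0^{(2)}$ of order $0$, furnished by Proposition \ref{diagonalizzazione order 1}; after $\rho$ steps the residual off-diagonal (together with all lower-order pieces that fail to be in the prescribed diagonal form) is collected into the remainder $R^{(3)}_{-\rho}\in S^{-\rho}\otimes\cM_2(\C)$. The final map ${\bf \Psi}$ is the ordered composition ${\bf\Psi}:=\Psi_\rho\circ\cdots\circ\Psi_1$, where each $\Psi_k=\op(\Id+G_k)$ (or the time-$1$ flow of $\op(G_k)$; the choice of exponential form makes the invertibility via Lemma \ref{ecponential pseudo diff} immediate).

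\smallskip

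At step $k$ the input operator has the form $\omega\cdot\pa_\varphi - \ii E\op(D^{(k)}+F^{(k)})$ with $D^{(k)}$ a diagonal matrix of symbols with principal part $\lambda\Id\,\mathtt{D}_\tm(\xi)$, and $F^{(k)}\in S^{-k+1}\otimes\cM_2(\C)$ real-to-real, reversibility and parity preserving, off-diagonal. Searching for $G_k\in S^{-k}\otimes\cM_2(\C)$ of the off-diagonal, real-to-real, reversibility/parity preserving form
\[
G_k=\begin{pmatrix}0 & g_k(\vphi,x,\xi)\\ \overline{g_k(\vphi,x,-\xi)} & 0\end{pmatrix},
\]
the standard conjugation identity together with the expansion \eqref{espstar} for $\star$ gives that, up to a matrix of symbols of order $-k$ (which is absorbed in $F^{(k+1)}$) and a diagonal correction (absorbed in $D^{(k+1)}$), the off-diagonal component at order $-k+1$ becomes
\[
F^{(k)} \;-\; 2\lambda\,\mathtt{D}_\tm(\xi)\,E\,G_k,
\]
exploiting the algebraic identity $G_k E - E G_k = -2E G_k$ valid for off-diagonal $G_k$. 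Hence the homological equation is solved explicitly, pointwise in $(\varphi,x,\xi)$, by $g_k := (F^{(k)})_{+}^{-}/(2\lambda\,\mathtt{D}_\tm(\xi))$, and the symmetry of $F^{(k)}$ transfers to $G_k$ via Lemma \ref{paritasuisimboli}(iv) and Lemma \ref{paritasuisimboliBIS}. The gain of one order in smoothness comes from dividing by $\mathtt{D}_\tm(\xi)\in S^1$, which by composition rule of Lemma \ref{stima composizione} is harmless and tame.

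\smallskip

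For the quantitative bounds, Lemma \ref{ecponential pseudo diff} produces the symbol of $\Psi_k$ and of $\Psi_k^{-1}$ as $\Id+\tPhi_k$ with $\tPhi_k\in S^{-k}$, together with the tame bound
$\|\tPhi_k\|^{\g,\Lambda}_{-k,s,p}\lesssim_{s,p}\|G_k\|^{\g,\Lambda}_{-k,s+p,p}\lesssim_{s,p}\epsilon(s+p+\mu_k)$
by the solution formula and Proposition \ref{diagonalizzazione order 1}. The conjugation $\Psi_k(\cdot)\Psi_k^{-1}$ is then analyzed via the composition and commutator rules of Lemmas \ref{stima composizione}, \ref{lemma:Commutator}: the leading commutator with the order-$1$ diagonal produces, by construction, a cancellation of the order-$(-k+1)$ off-diagonal symbol; the lower-order terms in the expansion \eqref{espstar} of $G_k\star(D^{(k)}+F^{(k)})$ and the quadratic terms $\op(\tPhi_k)\op(V)\op(\tPhi_k)$ contribute to $D^{(k+1)}$ and $F^{(k+1)}$ at the required order $-k$. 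The new diagonal increment is real-to-real, reversibility and parity preserving by Lemma \ref{paritasuisimboliBIS} (reversible composed with reversibility-preserving is reversible, so commutators preserve the structure). After $\rho$ iterations, the off-diagonal part lies in $S^{-\rho}$ and is packed into $R^{(3)}_{-\rho}$, whose estimate \eqref{R3-rho} restricted to $0\le p\le p_*$ is obtained by invoking Lemma \ref{fearofthedark} (with $f\rightsquigarrow$ the coefficients $\lambda-1, b_0, c^{(2)}, A_{-1}^{(2)}$) in order to absorb the Neumann-series inversion $(\Id+\op(\tPhi_k))^{-1}$ into a symbol of order $-\rho$ and a residual $g_{\ge\rho}$ with control of a prescribed number $p_*$ of $\xi$-derivatives only. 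The diagonal symbol $c^{(3)}$ at the end of the iteration has the stronger bound \eqref{simboloCC33} for all $p\ge 0$, corresponding to the $g_{<\rho}$ portion of that lemma.

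\smallskip

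The principal obstacle, as emphasized in the introduction, is to ensure that the ``loss of derivatives'' $\mu$ in \eqref{simboloCC33}--\eqref{stimaMappaPsi} is \emph{independent of $s$}. The naive iteration loses a finite number of derivatives at each step (from Lemmas \ref{stima composizione}, \ref{lemma:Commutator}), which would be acceptable, but the inversion $\Psi_k^{-1}$ and the absorption of smoothing tails is where things could spoil tameness: this is precisely why Lemma \ref{fearofthedark} was proved with its split $g_{<\rho}+g_{\ge\rho}$, allowing us to control $g_{<\rho}\in S^{2m}$ tamely in all $p$ while only controlling $g_{\ge\rho}\in S^{-\rho}$ for $p\le p_*$. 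Propagating this through the $\rho$ steps, using the interpolation inequality $\epsilon(s+\beta_1)\epsilon(\so+\beta_2)\lesssim\epsilon(\so)\epsilon(s+\beta_1+\beta_2)$ to tame all quadratic contributions and the smallness \eqref{ipopiccolezza} to close the Neumann series uniformly, yields the final bounds with $\mu=\mu(\rho,p_*)$. The tame action estimate \eqref{stimaMappaPsi} then follows from Lemma \ref{sobaction} applied to the symbol of ${\bf\Psi}^{\pm1}-\Id$, and the algebraic properties of ${\bf\Psi}$ are inherited at each step by the preservation established via Lemma \ref{paritasuisimboliBIS} and the characterizations of Lemma \ref{paritasuisimboli}.
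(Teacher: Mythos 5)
Your proposal is correct and follows essentially the same scheme as the paper's proof: an iterative conjugation over $\rho$ steps with off-diagonal pseudo-differential generators $G_k\in S^{-k}\otimes\cM_2(\C)$ determined by the homological equation $g_k=(F^{(k)})_{+}^{-}/(2\lambda\,\mathtt{D}_\tm(\xi))$, parity/reversibility tracked via Lemma~\ref{paritasuisimboli} and Lemma~\ref{paritasuisimboliBIS}, and the inversion/remainder control via the $g_{<\rho}/g_{\geq\rho}$ split of Lemma~\ref{fearofthedark} exactly as you describe. (Minor bookkeeping note: after factoring out $-\ii E$ the order~$-k+1$ off-diagonal correction is $F^{(k)}-2\lambda\,\mathtt{D}_\tm(\xi)\,G_k$, without the extra $E$ you wrote; this does not affect the $(+,-)$ entry, so your choice of $g_k$ is unchanged.)
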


The proof of Proposition \ref{blockTotale}  
proceeds  inductively.
We first write the operator $ \cL_2 $ in \eqref{elle2} as
\begin{equation}\label{sist2 j-th}
	\begin{aligned}
		\mathcal{Y}^{(0)} := \cL_2 =
		&:=\omega\cdot\pa_{\vphi}-\ii E\op\big(
		{d}(\vphi,x,\x)+Q_0
		\big)\,,
		\qquad 
		\\
		{d}(\vphi,x,\x)&:=\lambda(\vphi,x)\Id\tD_{\mathtt{m}}(\x)
		+\ii b_{0}(\vphi,x)\Id\x\tD^{-1}_{\mathtt{m}}(\x) \, , 
	\end{aligned}
\end{equation}
where 
\[
Q_0:=  A_{0}^{(2)} + A_{-1}^{(2)}  
:= \left(
\begin{matrix}
	r_0(\vphi,x,\x)& q_0(\vphi,x,\x)\vspace{0.2em}\\
	\ov{q_0(\vphi,x,-\x)} & \ov{r_0(\vphi,x,-\x)}
\end{matrix}
\right) \, , \quad r_0 \in S^{-1} \, , \quad q_0 \in S^0   
\,.
\]
 
 \begin{lemma}\label{lemma83new}
For any $j= 0,\dots,\rho$, there exist
\begin{enumerate}
\item  linear operators 
\begin{equation}\label{mappadiagj}
	\mathcal{Y}^{(j)}  :=\omega\cdot\pa_{\vphi}-\ii E\op\big(
{d}(\vphi,x,\x)+Q_j+R_j 
\big)
\end{equation}
where  
\begin{equation}\label{bjbjbj}
	Q_j =\left(
	\begin{matrix}
		r_j(\vphi,x,\x)& q_j(\vphi,x,\x) \\
		\ov{q_j(\vphi,x,-\x)} & \ov{r_j(\vphi,x,-\x)}
	\end{matrix}
	\right), \  r_j\in S^{-1},\;  
	q_j\in S^{-j} \, ,
	\ R_{j}\in S^{-\rho}\otimes\mathcal{M}_2(\C) \, , 
\end{equation}
are real-to-real, reversibility and parity
 preserving 
 matrices of symbols  
satisfying 
 \begin{equation}\label{inductiveStima}
 \begin{aligned}
 	\|r_j\|^{\gamma,\Lambda}_{-1,s,p}\,,
 	\|q_j\|^{\gamma,\Lambda}_{-j,s,p}
 	&\lesssim_{s,p,\rho,j}\epsilon(s+\mu_j+p)\,,\quad \forall\, p \geq 0  \,,
	\\
 	\|R_j\|^{\gamma,\Lambda}_{-\rho,s,p}&\lesssim_{s,p_*,\rho,j}\epsilon(s+\mu_j)\,,\quad
	\forall 0\leq p\leq p_{*} \, , 	
	\end{aligned}
 \end{equation}
where $\mu_{j} := \mu_{j}(\rho,p_{*}) > 0 $ is a non decreasing sequence; 
\item 
real-to-real, reversibility and parity preserving  invertible operators 
\begin{equation}\label{mappajesima}
{\bf \Psi}_{j} := \Id + \op(M_j (\vphi, x,\xi) ) 
\end{equation}
where
\begin{equation}\label{generatore-jth}
M_j(\vphi,x,\xi):=
\sm{0}{m_j(\vphi,x,\xi)}{\overline{m_j(\vphi,x,-\xi)}}{0}
\qquad
m_j =\frac{-q_{j}(\vphi,x,\x)}{2 {\lambda}(\vphi,x)\tD_{\mathtt{m}}(\x)}\in 
S^{-(j+1)} \, ,
\end{equation}
\end{enumerate}
 such that 
\begin{equation}\label{JJ+1}
\mathcal{Y}^{(j+1)}=
{\bf \Psi}_{j}^{-1} \mathcal{Y}^{(j)}{\bf \Psi}_{j}\,.
\end{equation}
\end{lemma}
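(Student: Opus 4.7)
The argument proceeds by induction on $j$, with the case $j=0$ obtained by reading off Proposition~\ref{diagonalizzazione order 1}: writing $A_0^{(2)}+A_{-1}^{(2)}$ as a $2\times 2$ matrix of symbols, set $r_0$ to be its diagonal entry (which lies in $S^{-1}$ by \eqref{nuovocoef}), $q_0$ to be its top-right entry (in $S^0$), and $R_0:=0$. Then $\cL_2=\mathcal{Y}^{(0)}$ has exactly the form \eqref{mappadiagj}--\eqref{bjbjbj}, with the algebraic structure furnished by Lemma~\ref{paritasuisimboliBIS} and the estimates \eqref{inductiveStima} inherited from \eqref{stimasulresto} (choosing $\mu_0$ large enough).

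For the inductive step, assume $\mathcal{Y}^{(j)}$ has the stated form. The crucial observation is that $d=d^+\Id$ is scalar, with $d^+:=\lambda\tD_\tm+\ii b_0\x\tD_\tm^{-1}$, while $M_j$ is off-diagonal so that $EM_j=-M_jE$. Expanding $(Ed)\#M_j-M_j\#(Ed)$ in the symbolic calculus gives
\begin{equation*}
(Ed)\#M_j-M_j\#(Ed) \;=\; 2E\,(\lambda\tD_\tm)\,M_j \;+\; \text{terms of order } \le -j,
\end{equation*}
since the scalar factor $d^+$ commutes with $M_j$ at principal order and only $\lambda\tD_\tm$ contributes to the leading symbol. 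Therefore the choice $m_j:=-q_j/(2\lambda\tD_\tm)\in S^{-(j+1)}$ from \eqref{generatore-jth} is exactly the one for which the principal off-diagonal symbol of $-\ii[E\op(d),\op(M_j)]$ cancels the off-diagonal part of $-\ii E\op(Q_j)$ of order $-j$, leaving a residual off-diagonal symbol of order $-(j+1)$.

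Next, a direct manipulation yields
\begin{equation*}
\mathcal{Y}^{(j+1)} \;=\; \mathcal{Y}^{(j)} \;+\; {\bf \Psi}_j^{-1}\Big(\op(\oo\cdot\pa_\vphi M_j)\;-\;\ii\,[E\op(d+Q_j+R_j),\op(M_j)]\Big).
\end{equation*}
Invertibility and the sharp expansion of ${\bf \Psi}_j^{-1}$ are provided by Lemma~\ref{fearofthedark} applied to $-M_j$: the smallness hypothesis \eqref{ridethelightning} is a consequence of the inductive bound \eqref{inductiveStima} for $q_j$ together with \eqref{ipopiccolezza}, and one obtains the splitting $(\Id+\op(M_j))^{-1}=\Id-\op(M_j)+\op(g_{<\rho,j})+\op(g_{\geq\rho,j})$ with $g_{<\rho,j}\in S^{-2(j+1)}$ controlled for every $p\geq 0$, and $g_{\geq\rho,j}\in S^{-\rho}$ controlled only for $0\leq p\leq p_*$. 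Using Lemmas~\ref{stima composizione}--\ref{lemma:Commutator} to expand the commutator and the composition, and regrouping the contributions according to their order, one isolates a new diagonal piece of order $-1$ (defining $r_{j+1}$), a new off-diagonal piece of order $-(j+1)$ (defining $q_{j+1}$), and a remainder of order $-\rho$ (absorbed into $R_{j+1}$). The real-to-real, reversibility and parity preserving properties of the new symbols then follow from those of $q_j,r_j,R_j,\lambda,b_0$ via Lemmas~\ref{paritasuisimboli}--\ref{paritasuisimboliBIS}, noting that $m_j$ inherits the appropriate symmetries because $\lambda\tD_\tm$ is even in $x$ and in $\x$.

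The main technical obstacle is the bookkeeping of derivative losses, and in particular ensuring that $\mu_j$ remains bounded by a constant $\mu(\rho,p_*)$ independent of the high-regularity index $\su$. The asymmetry between the bounds on $q_{j+1},r_{j+1}$ (required for every $p\geq 0$) and $R_{j+1}$ (required only for $p\leq p_*$) is exactly what makes Lemma~\ref{fearofthedark} applicable: the ``bad'' piece $g_{\geq\rho,j}$ can only be controlled for $p\leq p_*$, but fortunately it contributes solely to $R_{j+1}$. All the composition and commutator estimates involved are of the form \eqref{stimasharp}, \eqref{stimacancellettoesplicitoAlgrammo}, \eqref{giggi}, each producing at worst a finite additive loss in the regularity index; an interpolation argument based on \eqref{interpolotutto} together with \eqref{ipopiccolezza} then shows that at each step one may take $\mu_{j+1}:=\mu_j+c(\rho,p_*)$ for some $c$ depending only on $\rho$ and $p_*$, giving $\mu_\rho\leq\mu(\rho,p_*)$ as required for the outer Proposition~\ref{blockTotale}.
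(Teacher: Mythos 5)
Your proof is correct and follows essentially the same inductive route as the paper's: the same choice of generator $m_j=-q_j/(2\lambda\tD_\tm)$ forcing the cancellation $q_j+2m_j\lambda\tD_\tm=0$, and the same invocation of Lemma~\ref{fearofthedark} to split ${\bf \Psi}_j^{-1}$ into a piece controlled at all $p$ and a $S^{-\rho}$ remainder controlled only for $p\leq p_*$, which is then absorbed into $R_{j+1}$. The commutator identity $\mathcal{Y}^{(j+1)}=\mathcal{Y}^{(j)}+{\bf \Psi}_j^{-1}\big(\op(\omega\cdot\pa_\vphi M_j)-\ii[E\op(d+Q_j+R_j),\op(M_j)]\big)$ that you use to organize the computation is an algebraic reformulation of the paper's direct term-by-term expansion in \eqref{sis:j+1}--\eqref{tissot5}, and the anti-commutator phenomenon you isolate (scalar $d^+$ plus $EM_j=-M_jE$ producing $d^+\#m_j+m_j\#d^+$) is exactly the paper's \eqref{tissot4}.
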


\begin{proof}
\noindent
{\bf Inizialization.} 
The operator $ \cY^{(0)}$ in \eqref{sist2 j-th} 
has the form \eqref{mappadiagj} with $j=0$ and $R_{0}\equiv0$
and, in view of 
 \eqref{stimasulresto}, 
 the estimate \eqref{inductiveStima} with $j=0$ holds with $\mu_0\ge 5 $.
\\[1mm]
{\bf Iteration.} 
First of all notice that  if  \eqref{inductiveStima} holds up to some $0\le j < \rho$ then 
the symbol $m_j$ defined in \eqref{generatore-jth} belongs to $ S^{-(j+1)}$
and satisfies, using \eqref{stimasulresto}, \eqref{inductiveStima} and \eqref{interpolotutto}, 
\begin{equation}\label{MMJJ}
	\| m_j \|_{-j-1,s , p}^{\g, \Lambda} 
	\lesssim_{s,p,\rho,j}
	\epsilon(s+\mu_j+p)\,,\qquad \forall\, p \geq 0 \,.
\end{equation}
By  the inductive assumption on $q_j$ and since
$\lambda$ is even in $\vphi$ and $x$ separately, 
 whereas $\tD_{\mathtt{m}} (\xi) $ is even in $\x$, the symbol 
 $m_j$ is reversibility and parity preserving (see Lemma \ref{paritasuisimboli})
and so  is the map ${\bf \Psi}_{j}$.
Lemma \ref{fearofthedark} applies   since $-(j+1)\leq -1$, for any $0\leq j<\rho $,  
and the smallness condition \eqref{smalleffino} is fulfilled by \eqref{MMJJ}, \eqref{condireiniziale}. 
Hence we have
\begin{equation}\label{tissot1}
{\bf \Psi}_{j}^{-1}-\Id=\sum_{p=1}^{\infty}\big(\op(-M_j)\big)^{p}=\op(\widetilde{M}_{j})\,,
\qquad \widetilde{M}_{j}:=-M_{j}+M_{j, <\rho}+M_{j, \geq \rho}\,,
\end{equation}
for some matrices of symbols $M_{j, <\rho}\in S^{-(j+2)}\otimes\cM_2(\C)$, 
$M_{j, \geq\rho}\in S^{-\rho}\otimes\cM_2(\C)$
satisfying, in view of  \eqref{pearljam1Bis}-\eqref{pearljam1011bis},
\begin{equation}\label{stimapsij}
\begin{aligned}
\|M_{j,<\rho}\|^{\gamma,\Lambda}_{-j-2,s,p}&\lesssim_{j,s,\rho,p}
\epsilon(s+\widehat{\mu}_j+p)\quad \forall\,p\geq0\,,
\\
\|M_{j,\geq\rho}\|^{\gamma,\Lambda}_{-\rho,s,p}&\lesssim_{j,s,\rho,p_*}
\epsilon(s+\widehat{\mu}_j)\,,
\quad 0\leq p\leq p_{*} \, ,
\end{aligned}
\end{equation}
for some $\widehat{\mu}_{j}\geq \mu_{j}$ depending only on $\rho$ and $p_{*}$.

The conjugated operator $\mathcal{Y}^{(j+1)}$ in \eqref{JJ+1}
under the map  ${\bf \Psi}_{j}$ of the operator $\mathcal{Y}^{(j)}$ in \eqref{mappadiagj}
 is 
\begin{equation}\label{sis:j+1}
{\bf \Psi}_{j}^{-1} \mathcal{Y}^{(j)}{\bf \Psi}_{j}=
{\bf \Psi}_{j}^{-1}
  \big( \omega\cdot\pa_{\vphi}{\bf \Psi}_{j}\big)-\ii
{\bf \Psi}_{j}^{-1}  
 E \op\big(
{d}(\vphi,x,\x)+Q_{j}+R_j
\big) {\bf \Psi}_{j}  \,.
\end{equation}
We start by considering the time contribution in \eqref{sis:j+1}.
Recalling \eqref{mappajesima} and \eqref{tissot1} we  have that
\begin{align}\label{contributotempo1}
{\bf \Psi}_{j}^{-1}
  \big( \omega\cdot\pa_{\vphi}{\bf \Psi}_{j}\big)&=
  \omega\cdot\pa_{\vphi}+
  (\Id+\op(\widetilde{M}_j))\circ\op(\omega\cdot\pa_{\vphi}M_j)
  \\& = \omega\cdot\pa_{\vphi}+\op(\omega\cdot\pa_{\vphi} M_{j}+\widetilde{M}_{j}\#(\omega\cdot\pa_{\vphi}  M_{j})) 
  =\omega\cdot\pa_{\vphi}-\ii E\op(Q_{j+1}^{(1)}+R_{j+1}^{(1)}) \notag 
  \end{align}
where, recalling formul\ae\,\eqref{cancellittiEspliciti}, 
\[
\begin{aligned}
Q_{j+1}^{(1)}&:=\ii E\big(\omega\cdot\pa_{\vphi} M_{j}-M_{j}\#_{<\rho}(\omega\cdot\pa_{\vphi} M_{j})
+M_{j, <\rho}\#_{<\rho}(\omega\cdot\pa_{\vphi} M_{j})\big)
\\
R_{j+1}^{(1)}&:=\ii E\big(-M_{j}\#_{\geq\rho}(\omega\cdot\pa_{\vphi} M_{j})
+M_{j, <\rho}\#_{\geq\rho}(\omega\cdot\pa_{\vphi} M_{j})+
M_{j, \geq \rho}\#(\omega\cdot\pa_{\vphi} M_{j})\big)\,.
\end{aligned}
\]
By estimates \eqref{MMJJ}, \eqref{stimapsij},  the composition Lemma \ref{stima composizione} and  
\eqref{tameProduct}, \eqref{interpolotutto}
one deduces
that $Q_{j+1}^{(1)}\in S^{-j-1}\otimes\cM_2(\C)$, 
$R_{j+1}^{(1)}\in S^{-\rho}\otimes\cM_2(\C)$ satisfy
the bounds
\begin{align}
\|Q_{j+1}^{(1)}\|^{\gamma,\Lambda}_{-j-1,s,p}&\lesssim_{j,s,\rho,p}
\epsilon(s+\widehat{\mu}_j+p)\quad \forall\,p\geq0\,,
\label{stimamenojpiunesimaPP}
\\
\|R_{j+1}^{(1)}\|^{\gamma,\Lambda}_{-\rho,s,p}&\lesssim_{j,s,\rho,p_*}
\epsilon(s+\widehat{\mu}_j)\,,
\quad 0\leq p\leq p_{*} \,,
\label{stimamenojpiunesimaPPstar}
\end{align}
for some $\widehat{\mu}_j=\widehat{\mu}_j(\rho,p_{*})>0$ (possibly larger than the one in \eqref{stimapsij}).
We now study the space contribution, 
which is the second summand in \eqref{sis:j+1}.  
First we study the contribution coming from the symbol $R_{j}$.
In view of \eqref{tissot1} we have
\begin{equation}\label{contributospazio0}
-\ii {\bf \Psi}_{j}^{-1}  
  \op\big(
E R_j
\big) {\bf \Psi}_{j} =-\ii E\op(R_{j+1}^{(2)})
\end{equation}
with 
\[
E R_{j+1}^{(2)}:= ER_{j}+\widetilde{M}_{j}\# ER_{j}+E R_{j}\#M_{j}+
\widetilde{M}_{j}\# ER_{j}\#M_{j}\,.
\]
By \eqref{inductiveStima} on $R_{j}$, \eqref{MMJJ} on $M_{j}$, 
\eqref{stimapsij} to estimate $\widetilde{M}_{j}$
and estimate \eqref{stimasharp}  to control the $\#$, we deduce that $R_{j+1}^{(2)}$
satisfies a bound like \eqref{stimamenojpiunesimaPPstar}.

Secondly we study the contribution coming from the symbol $Q_{j}$.
We have
\begin{equation}\label{contributospazio1}
\begin{aligned}
-\ii {\bf \Psi}_{j}^{-1}  
  \op\big(
E Q_j
\big) {\bf \Psi}_{j}&=-\ii \op(EQ_j+\widetilde{M}_j\#EQ_j+EQ_j\#M_j+\widetilde{M}_j\#EQ_j\#M_j)
\\&=-\ii E\op\big(Q_j+Q_{j+1}^{(3)}+R_{j+1}^{(3)}\big)
\end{aligned}
\end{equation}
where  (recall \eqref{cancellittiEspliciti})
\[
E Q_{j+1}^{(3)}:=(-M_j + M_{j,<	\rho})\#_{<\rho}EQ_j\#_{<\rho} (\Id + M_j)+EQ_j\#_{<\rho}M_j
\]
and $R_{j+1}^{(3)} \in S^{-\rho}\otimes\cM_2(\C)$ is defined by difference.
Reasoning as in the previous steps (using \eqref{stimacancellettoesplicitoAlgrammo} to estimate 
$Q_{j+1}^{(3)}$ and \eqref{restocancellettoNp} to estimate $R_{j+1}^{(3)}$)
we deduce that $Q_{j+1}^{(3)}, R_{j+1}^{(3)}$ satisfy respectively estimates as 
\eqref{stimamenojpiunesimaPP}-\eqref{stimamenojpiunesimaPPstar}.
Finally we consider the highest order term   in 
\eqref{sis:j+1}.
We have
\begin{align}
-\ii {\bf \Psi}_{j}^{-1}  
 E\op\big(
{d}(\vphi,x,\x)\big) 
{\bf \Psi}_{j} &\stackrel{\eqref{sist2 j-th}}{=}
-\ii {\bf \Psi}_{j}^{-1}  E
 \op\Big(
\lambda\Id\tD_{\mathtt{m}}(\x)
+\ii b_{0}\Id\x\tD^{-1}_{\mathtt{m}}(\x)
\Big) 
{\bf \Psi}_{j} 
\nonumber
\\&\stackrel{\eqref{tissot1}}{=}-\ii E \op\big(
\lambda\Id\tD_{\mathtt{m}}(\x)+\ii b_{0}\Id\x\tD^{-1}_{\mathtt{m}}(\x)\big)\label{contributospazio21}
\\&
-\ii E\op\Big(
(E\lambda\Id\tD_{\mathtt{m}}(\x))\star (EM_{j})
\Big) \label{contributospazio22}
\\&-\ii E\op\Big(E(M_{j,<\rho}+M_{j,\geq\rho})\#E\lambda\Id\tD_{\mathtt{m}}(\x)\Big)
\label{contributospazio23}
\\&-\ii E\op\Big(E\widetilde{M}_j\#E\ii b_{0}\Id\x\tD^{-1}_{\mathtt{m}}(\x)
+\ii b_{0}\Id\x\tD^{-1}_{\mathtt{m}}(\x)\#M_j
\Big)\label{contributospazio24}
\\&-\ii E\op\Big( E\widetilde{M}_{j}\#E\big(\lambda\Id\tD_{\mathtt{m}}(\x)
+\ii b_{0}\Id\x\tD^{-1}_{\mathtt{m}}(\x)\big)\#M_{j}\Big)\,.
\label{contributospazio25}
\end{align}

\noindent
\emph{Term} \eqref{contributospazio22}. Recalling \eqref{espstar}, \eqref{resinprog} and 
\eqref{generatore-jth} we note that
\begin{equation}\label{tissot2}
\begin{aligned}
(E\lambda\Id\tD_{\mathtt{m}}(\x))\star (EM_{j})&=
(E\lambda\Id\tD_{\mathtt{m}}(\x))\# (EM_{j})
-(EM_j)\#(E\lambda\Id\tD_{\mathtt{m}}(\x))
\\&=\sm{0}{\mathtt{r}(\vphi,x,\x)}{\overline{\mathtt{r}(\vphi,x,-\x)}}{0}
\end{aligned}
\end{equation}
where
\begin{equation}\label{tissot4}
\begin{aligned}
\mathtt{r}(\vphi,x,\x)
& :=\lambda\tD_{\mathtt{m}}(\x)\#m_{j}+m_{j}\#\lambda\tD_{\mathtt{m}}(\x)
=2m_{j}\lambda\tD_{\mathtt{m}}(\x)+\mathtt{q}_{j,<\rho}+\mathtt{q}_{j,\geq \rho}\,,
\end{aligned}
\end{equation}
and
\[
\begin{aligned}
&\mathtt{q}_{j,<\rho}:=\sum_{k=1}^{\rho-1}\lambda\tD_{\mathtt{m}}(\x)\#_{k}m_{j}
+m_{j}\#_{k}\lambda\tD_{\mathtt{m}}(\x)\in S^{-(j+1)}
\\
&\mathtt{q}_{j,\geq\rho}:=\lambda\tD_{\mathtt{m}}(\x)\#_{\geq\rho}m_{j}
+m_{j}\#_{\geq\rho}\lambda\tD_{\mathtt{m}}(\x)\in S^{-\rho}\,.
\end{aligned}
\]
By estimate \eqref{MMJJ}
and using Lemma \ref{stima composizione} we get that 
$\mathtt{q}_{j,<\rho}$ satisfies a bound like \eqref{stimamenojpiunesimaPP}
whereas $\mathtt{q}_{j,\geq\rho}$ a bound like \eqref{stimamenojpiunesimaPPstar}
for some $\widehat{\mu}_{j}$ (depending on $\rho,p_{*}$ possibly larger).

\noindent
\emph{Lower order terms.} We notice that 
\begin{equation}\label{tissot3}
\eqref{contributospazio23}+\eqref{contributospazio24}+
\eqref{contributospazio25}=-\ii E\op\big(Q_{j+1}^{(4)}+R_{j+1}^{(4)}\big)
\end{equation}
where
\[
\begin{aligned}
Q_{j+1}^{(4)}&:=EM_{j,<\rho}\#_{<\rho}E\lambda\Id\tD_{\mathtt{m}}(\x)
\\&
+E(-M_j + M_{j,<	\rho})\#_{<\rho}E\ii b_{0}\Id\x\tD^{-1}_{\mathtt{m}}(\x)
+\ii b_{0}\Id\x\tD^{-1}_{\mathtt{m}}(\x)\#_{<\rho}M_j
\\&+
E(-M_j + M_{j,<	\rho})\#_{<\rho}E\big(\lambda\Id\tD_{\mathtt{m}}(\x)
+\ii b_{0}\Id\x\tD^{-1}_{\mathtt{m}}(\x)\big)\#_{<\rho}M_{j}\,,
\end{aligned}
\]
and $R_{j+1}^{(4)}$ is defined by difference.
Reasoning similarly as above we deduce that
$Q_{j+1}^{(4)},R_{j+1}^{(4)}$ satisfy respectively the estimates 
\eqref{stimamenojpiunesimaPP}-\eqref{stimamenojpiunesimaPPstar}
(recall that $\tD_{\mathtt{m}}(\x)$ has order $1$, whereas $M_{j} $ is a matrix of symbols 
in $ S^{-(j+1)} \otimes\cM_2(\C) $, $M_{j,<\rho}\in S^{-(j+2)} \otimes\cM_2(\C) $ and $M_{\geq \rho}\in S^{-\rho} \otimes\cM_2(\C) $).

Collecting together \eqref{sis:j+1}, \eqref{contributotempo1}, \eqref{contributospazio0}, \eqref{contributospazio1},
\eqref{contributospazio21}-\eqref{contributospazio25}, 
and 
\eqref{tissot3},
we obtain
\begin{equation}\label{tissot5}
\begin{aligned}
{\bf \Psi}_{j}^{-1} \mathcal{Y}^{(j)}{\bf \Psi}_{j}&=
-\ii E \op\big(
\lambda\Id\tD_{\mathtt{m}}(\x)+\ii b_{0}\Id\x\tD^{-1}_{\mathtt{m}}(\x)\big)
\\&
-\ii E\op\big(Q_{j+1}^{(1)}+Q_{j+1}^{(3)}+Q_{j+1}^{(4)}+R_{j+1}^{(1)}
+R_{j+1}^{(2)}+R_{j+1}^{(3)}+R_{j+1}^{(4)}\big)
\\&
-\ii E\op\big( Q_{j}+(E\lambda\Id\tD_{\mathtt{m}}(\x))\star (EM_{j})\big)\,.
\end{aligned}
\end{equation}
By \eqref{bjbjbj}, \eqref{tissot2} and \eqref{tissot4} we have
\begin{equation}\label{pezzqjn}
Q_{j}+(E\lambda\Id\tD_{\mathtt{m}}(\x))\star (EM{j})=
\left(
	\begin{matrix}
		r_j(\vphi,x,\x)& \widetilde{q}_j(\vphi,x,\x)\vspace{0.2em}\\
		\ov{\widetilde{q}_j(\vphi,x,-\x)} & \ov{r_j(\vphi,x,-\x)}
	\end{matrix}
	\right)
\end{equation}
where $r_{j}$ is the same symbol in \eqref{bjbjbj} and, using 
\eqref{generatore-jth},  
\begin{equation}\label{cancellazkey}
\widetilde{q}_j 
:={q}_j 
+
2m_{j}\lambda\tD_{\mathtt{m}}(\x)+\mathtt{q}_{j,<\rho}+\mathtt{q}_{j,\geq \rho}
=\mathtt{q}_{j,<\rho}+\mathtt{q}_{j,\geq \rho}\in S^{-(j+1)}\, .
\end{equation}
In conclusion  \eqref{tissot5}, \eqref{pezzqjn}, \eqref{cancellazkey} imply that 
$  \mathcal{Y}^{(j+1)} $ in \eqref{JJ+1} 
has the form \eqref{mappadiagj} with 
 matrices of symbols
\[
\begin{aligned}
Q_{j+1}&:=Q_{j+1}^{(1)}+Q_{j+1}^{(3)}+Q_{j+1}^{(4)}
+\left(\begin{matrix} 
r_{j}(\vphi,x,\x) & \mathtt{q}_{j,<\rho}(\vphi,x,\x)\\ \overline{\mathtt{q}_{j, <\rho}(\vphi,x,-\x)} & \overline{r_{j}(\vphi,x,-\x)}
\end{matrix}\right)
\\
R_{j+1}&:=
R_{j+1}^{(1)}
+R_{j+1}^{(2)}+R_{j+1}^{(3)}+R_{j+1}^{(4)}
+\left(\begin{matrix} 
0 & \mathtt{q}_{j, \ge\rho}(\vphi,x,\x)\\ \overline{\mathtt{q}_{j,\ge\rho}(\vphi,x,-\x)} & 0
\end{matrix}\right) 
\end{aligned}
\]
that satisfy the bounds \eqref{inductiveStima} 
with $j\rightsquigarrow j+1 $, 
by the fact that all the summand satisfy \eqref{stimamenojpiunesimaPP}-\eqref{stimamenojpiunesimaPPstar}
and 
taking $\mu_{j+1}\geq \mu_{j}$ large enough and depending only on $\rho,p_{*}$.
Moreover 
by Lemma \ref{paritasuisimboliBIS}, the inductive assumptions and the explicit construction
above, we have that the matrix of symbols $Q_{j+1}$ is real-to-real, reversibility and parity preserving.
Since the maps ${\bf \Psi}^{\pm1}_{j}$ are real-to-real, reversibility  and parity preserving, we
deduce, by difference that $R_{j+1}$ satisfies the same properties.
\end{proof}

\begin{proof}[{\bf Proof of Proposition \ref{blockTotale} concluded}]
We define the invertible, real-to-real, reversibility and parity preserving map 
$ {\bf \Psi}:=
{\bf \Psi}_{\rho-1}^{-1}\circ\cdots \circ{\bf \Psi}_0^{-1} $. 
Recalling 
\eqref{sist2 j-th} and Lemma \ref{lemma83new} 
we have that $\mathcal{L}_{3} =\mathcal{Y}^{(\rho)}$
has the form \eqref{elle3}-\eqref{matriceA3} with symbols 
satisfying \eqref{simboloCC33} and \eqref{R3-rho}.
The estimate \eqref{stimaMappaPsi} follows by composition
using the estimates of the generators in \eqref{stimapsij}
and Lemma \ref{sobaction}.
\end{proof}

\section{Reduction to constant coefficients up to one-smoothing operators}\label{sec:ordine11}
 
\subsection{Reduction  at order  1 }

In the sequel we consider the  operator 
$\mathcal{L}_{3}$ 
 in Proposition \ref{blockTotale}  with $p^{*}=0 $ (on the smoothing remainder 
 $ R_{-\rho}^{(3)} $ we do not perform symbolic calculus any more).
The goal of this section is to 
eliminate the dependence on  $(\vphi, x)$ from the first order symbol 
$\lambda(\vphi, x)\tD_{\mathtt{m}} (\xi) $ in \eqref{elle3}.

\begin{prop}{\bf (Straightening of the first order operator).}\label{prop:ridord1} 
For any $\mathtt{b}\geq0$
there exist $\delta_{0} :=\delta_{0}(\su,\tb)>0$,
$\rho :=\rho(\tb) \geq 1 $, 
$ \mu :=\mu(\tb) > 0 $, 
such that, for any $\gamma\in(0,\tfrac{1}{2})$, if \eqref{ipopiccolezza} holds with 
$\s_{*}\geq \mu$ and $\delta_{*}\le \delta_{0}$
 the following holds.
Consider the linear operator 
$\mathcal{L}_{3}$ obtained in Proposition \ref{blockTotale}  with $\rho = \rho(\tb) $ 
 and $p^{*}=0$.
There exists a Lipschitz function $\mathfrak{c} : \Lambda \to \R$ satisfying 
\begin{equation}\label{stimaMgotico1} 
|\mathfrak{c}|^{\gamma,\Lambda}\lesssim \epsilon(\so+\mu)\,,
\end{equation}
such that for any $\omega $ in 
\begin{equation}\label{calOinfty1sec6}
\Omega_1:=\left\{\omega\in \Lambda\,:\, 
|\omega\cdot\ell-(1+\mathfrak{c})j|\geq 2\gamma\langle\ell\rangle^{-\tau}\,,
\;\; j\in\Z\,,\; \ell\in \Z^{\nu}\,,\;
(\ell,j)\neq(0,0)
\right\}
\end{equation}
with  $\tau$ fixed in  \eqref{costanti}, 
there exists a real-to-real, reversibility and parity preserving,  invertible map 
$\Theta_1\in \mathcal{L}^{\mathtt{T}}(H^{s},H^{s})\otimes \mathcal{M}_2(\C)$,
for any $\gso\leq s\leq \su$,
such that
\begin{equation}\label{elle4}
\cL_4:= \Theta_1 \cL_3 \Theta_1^{-1}
= 
\oo\cdot\pa_\vphi 
-\ii E \op \Big(
(1+\mathfrak{c})\tD_{\mathtt{m}}(\x) \Id +
\sm{ \ii c^{(4)} (\vphi,x,\x) }{0}{0}{{-\ii  c^{(4)}(\vphi,x,-\x)}}
\Big)
-\ii E \cR^{(4)}
\end{equation}
where $\tD_{\mathtt{m}}(\x)$ is the Fourier multiplier 
in \eqref{TDM}, $c^{(4)} $ is a real valued symbol in $ S^0 $,
and 
 $\cR^{(4)} $ is a smoothing remainder.

\noindent
{\bf (Symbol).}
The symbol $c^{(4)}$ is reversible, parity preserving and  has the form
\begin{equation}\label{simbolCC44}
	c^{(4)}(\vphi,x,\x):=c^{(4)}_{+}(\vphi,x)\chi_{+}(\x)-
	c^{(4)}_{-}(\vphi,x)\chi_{-}(\x)\,,
\end{equation}
(see \eqref{def:cutoff}) where 
$c^{(4)}_{\s}$, $\s\in \{\pm\}$, are  real valued functions 
satisfying
\begin{equation}\label{algebrella}
c^{(4)}_{-}(\vphi,x)=-c^{(4)}_{+}(\vphi,-x)\,,\quad c^{(4)}_{\s}(\vphi,x)=-c^{(4)}_{\s}(-\vphi,-x)\,,\;\;\s\in \{\pm\}\,,
\end{equation}
and for any $\so\le s\le \su$ 
\begin{equation}\label{estranged4}
\|c^{(4)}\|_{0,s,p}^{\g,\Omega_1} \lesssim_{s,\tb,p} \gamma^{-1}
\epsilon(s+\mu) \,,\quad \forall\, p\ge 0\,.
\end{equation}

\noindent
\noindent
{\bf (Remainder).}
The remainder $\cR^{(4)}$ is real-to-real, reversibility  and parity preserving and
there is 
 $\mathtt{R}^{(4)}\in E_{s}$, 
such that $\mathfrak{S}(\mathtt{R}^{(4)})=\mathcal{R}^{(4)}$  and  for any $\gso\le s\le \su$
 \begin{align}
  \bnorm{\langle\td_{\vphi}\rangle^{q}
  \mathtt{R}^{(4)}\langle D\rangle}_{s}^{\gamma,\Omega_1}
  &\lesssim_{s,\su,\mathtt{b}}\gamma^{-1}\epsilon(s+\mu)\,,
  \qquad q=0\,,\tb\,.
  \label{restosmooth1}
 \end{align}
 
\noindent
{\bf (Transformation).} For any $\so\le s\le \su $, for any $ h\in H^{s}(\T^{\nu+1},\C^{2})$, 
\begin{equation}\label{stimasuPhiuno}
\|(\Theta_1)^{\pm}h\|_s^{\g, \Omega_1}\lesssim_s 
\|h\|_s^{\g, \Omega_1} 
+ \g^{-1} \epsilon(s+\mu)
\|h\|_{\so}^{\g, \Omega_1} \,.
\end{equation}
\end{prop}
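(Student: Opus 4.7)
The plan is to apply the straightening Theorem \ref{IncredibleConjugate} componentwise. Since the symbol in \eqref{elle3} is block-diagonal (see \eqref{matriceA3}) and $E=\mathrm{diag}(1,-1)$, the operator $\mathcal{L}_3$ decouples on the diagonal at leading order: on the "+" entry one finds $\omega\cdot\partial_\varphi -\ii \lambda(\varphi,x)\tD_{\mathtt{m}}$ and on the "-" entry its conjugate. I would therefore define
\[
\Theta_1 := \begin{pmatrix} L & 0 \\ 0 & \overline{L} \end{pmatrix}, \qquad L := \mathcal{C}_{\alpha_+}\Pi_+ + \mathcal{C}_{\alpha_-}\Pi_-,
\]
with $L$ furnished by Theorem \ref{IncredibleConjugate} applied to $a := \lambda-1$ (note $\lambda-1$ is even in $\varphi$ and in $x$, so assumption \eqref{atrio3} is granted); the block structure guarantees that $\Theta_1$ is real-to-real, reversibility and parity preserving. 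The set $\Omega_1$ of \eqref{calOinfty1sec6} matches \eqref{buoneacqueb} with $\mathfrak{c}:=\mathfrak{a}_+$, and \eqref{stimaMgotico1}, \eqref{stimasuPhiuno} follow from \eqref{tordo4b} and \eqref{pasta7} combined with boundedness of $\Pi_\pm$ on $H^s$.

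First I would split $\tD_{\mathtt{m}}(\xi) = \chi(\xi)|\xi| + \rho_{-1}(\xi)$ with $\rho_{-1}\in S^{-1}$ (cf. \eqref{cutoff}, Remark \ref{rmk:cutoff}), so that the leading transport piece on the "+" block is exactly $\omega\cdot\partial_\varphi - \ii(1+a)|D|$ modulo smoothing, and apply \eqref{straightpotente} to obtain a constant-coefficient principal operator $\omega\cdot\partial_\varphi - \ii(1+\mathfrak{c})|D|$ plus a remainder $R$ satisfying \eqref{raichel2} with, say, $M:=2$. Rewriting $(1+\mathfrak{c})|D| = (1+\mathfrak{c})\tD_{\mathtt{m}}(\xi) - (1+\mathfrak{c})\bigl(\tD_{\mathtt{m}}(\xi)-|D|\bigr)$ recovers the $\tD_{\mathtt{m}}$ form at cost of a symbol in $S^{-1}$ that I absorb into $c^{(4)}$. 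Next I would conjugate the remaining pseudo-differential pieces $\lambda(\tD_{\mathtt{m}}-|D|)$, $\ii b_0\,\xi\,\tD_{\mathtt{m}}^{-1}$ and $A^{(3)}$ by $L$: decomposing $L = \mathcal{C}_{\alpha_+}\Pi_+ + \mathcal{C}_{\alpha_-}\Pi_-$ and moving the Szegö projectors through the pseudo-differential symbol by Lemma \ref{commutarechebello} (picking up smoothing contributions), I apply the quantitative Egorov Theorem \ref{thm:egorov} to $\mathcal{C}_{\alpha_\sigma}\circ \op(w)\circ \mathcal{C}_{\alpha_\sigma}^{-1}$ with $w$ of order $0$ or $-1$. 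The principal symbol produced by \eqref{fundamentalformula} depends only on $\varphi$ and $x$ (after multiplication by $\chi_\pm$), which gives \eqref{simbolCC44}; the relations \eqref{algebrella} are then a direct consequence of \eqref{atrio3}, the reversibility of the input symbols and the block structure imposed by $E$.

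Assembling, I set $c^{(4)}$ to be the sum of all the new symbols of order $\leq 0$ produced by the conjugation, and $\cR^{(4)}$ the sum of: (i) the remainder from \eqref{straightpotente} with $M=2$; (ii) the Egorov remainders supplied by \eqref{stimerestoEgorov1} with $M=1$ applied to each lower-order symbol and to $\tD_{\mathtt{m}}-|D|$; (iii) the conjugate of $R^{(3)}_{-\rho}$, for $\rho=\rho(\mathtt{b})$ chosen large enough that after sandwiching by $\Theta_1^{\pm 1}$ and composing with $\langle D\rangle$ the result remains a Bony-smoothing couple in $E_s$ (using Proposition \ref{prop:immersionepseudo}, Proposition \ref{inclusionetotale} and the algebra of $E_s$ from Lemmata \ref{tretameestimate}, \ref{tretameestimate3}). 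The commutator bound with $\langle\td_\varphi\rangle^q$ for $q=0,\mathtt{b}$ follows by tracking $\jap{\td_\varphi}^\mathtt{b}$ through each conjugation step, using \eqref{raichel2} and \eqref{stimerestoEgorov1}, and choosing $\mu$ (independent of $s$) accordingly.

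The main obstacle is the \emph{tame} bound \eqref{estranged4} for $c^{(4)}$, where the loss $\mu$ must not depend on $s$. A classical Egorov argument iterated symbolically would give $\|c^{(4)}\|_{0,s,p}\lesssim \epsilon(s+\mu(s))$, which is too weak and would spoil the Nash--Moser scheme. The decisive input is the sharp quantitative estimates \eqref{zeppelin}--\eqref{zeppelinbis} of Theorem \ref{thm:egorov}, which are phrased as sums $\sum_{s}^{*}\|w\|^{\gamma,\mathcal{O}}_{m,k_1,\cdots}\|\alpha\|^{\gamma,\mathcal{O}}_{k_3+\sigma}$ with $k_1+k_2+k_3=s$. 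Combining such bounds with the interpolation inequality \eqref{interpolotutto}, with $\|w\|\lesssim \epsilon(s+\mu)$ (from \eqref{simboloCC33} and \eqref{stimasulresto}) and $\|\alpha_\pm\|^{\gamma,\Omega_1}_s\lesssim \gamma^{-1}\epsilon(s+\mu)$ (from \eqref{stimapm}), yields exactly \eqref{estranged4} with $\mu$ depending only on $\mathtt{b}$. Verifying that all smallness conditions (\eqref{piccino} for Theorem \ref{IncredibleConjugate}, \eqref{buf} for Theorem \ref{thm:egorov}, \eqref{buttalapasta3} for Proposition \ref{inclusionetotale}) are implied by a single smallness hypothesis \eqref{ipopiccolezza} with $\sigma_*$ large enough is a matter of bookkeeping on the ancillary constants $\mu$, $\rho$, $\sigma$.
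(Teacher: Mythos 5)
Your proposal follows the paper's argument essentially verbatim: the same choice $\Theta_1 = \mathrm{diag}(L,\overline L)$ with $L = \cC_{\alpha_+}\Pi_+ + \cC_{\alpha_-}\Pi_-$, the same split $\tD_\tm = \chi(\xi)|\xi| + \widetilde{\tD}_\tm$, the same use of Theorem \ref{IncredibleConjugate} for the $|D|$--piece followed by the quantitative Egorov Theorem \ref{thm:egorov} for the pseudo-differential remainders, and the same bookkeeping via \eqref{zeppelin}--\eqref{zeppelinbis} and interpolation to get a tame loss $\mu$ independent of $s$.

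One phrase is incorrect and worth flagging, even though it does not derail the argument if the computation is actually carried out: you say that after rewriting $(1+\mathfrak{c})|D|=(1+\mathfrak{c})\tD_\tm-(1+\mathfrak{c})\widetilde{\tD}_\tm$ the $S^{-1}$ term $(1+\mathfrak{c})\widetilde{\tD}_\tm$ is ``absorbed into $c^{(4)}$''. This cannot be done: $c^{(4)}$ has the rigid form \eqref{simbolCC44} of an order--$0$ symbol built from two functions of $(\varphi,x)$ times $\chi_\pm(\xi)$, so an order--$(-1)$ Fourier multiplier does not fit. Nor can $\widetilde{\tD}_\tm$ alone go into $\cR^{(4)}$, because it is of size $O(1)$ (not $O(\gamma^{-1}\epsilon)$). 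What actually happens---and what the paper proves in Lemma \ref{lemma1} via \eqref{claimo2}---is that the Egorov conjugate $L\,\op(\lambda\widetilde{\tD}_\tm)\,L^{-1}$ has principal symbol exactly $(1+\mathfrak{c})\widetilde{\tD}_\tm$, so the two occurrences of $(1+\mathfrak{c})\widetilde{\tD}_\tm$ cancel and only a genuinely small, $1$-smoothing remainder survives. Your subsequent sentence (conjugating $\lambda(\tD_\tm-|D|)$ by $L$) already takes the right step, so the fix is simply to observe this cancellation explicitly instead of misplacing the $S^{-1}$ multiplier inside $c^{(4)}$.
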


The rest of this section is devoted to the proof of Proposition \ref{prop:ridord1}.

We apply  the straightening  Theorem \ref{IncredibleConjugate}  with 
$ a\rightsquigarrow \lambda-1$,  $\tb \in \N_0 $  and $M=1$.
The real valued function $\lambda(\vphi,x)$
defined in \eqref{nuovocoef} is  even separately  in $ \vphi $, $ x$ and, since  
it satisfies  \eqref{stimasulresto}, 
the smallness condition \eqref{piccino} of 
Theorem \ref{IncredibleConjugate}  holds  
thanks to the smallness assumption \eqref{ipopiccolezza} with 
$\s_*> \mu$, taking $\mu$  large w.r.to $\s(1,\tb)$ in \eqref{piccino}.
Thus Theorem \ref{IncredibleConjugate} provides the existence  of a 
 Lipschitz function $\mathfrak{c} (\omega) := \fa_{+} (\omega) $ which 
satisfies the estimate \eqref{stimaMgotico1} by 
\eqref{tordo4b} and using \eqref{stimasulresto}. 
Moreover Theorem \ref{IncredibleConjugate}  guarantees
the existence, for any $ \omega \in \Omega_1 $ in \eqref{calOinfty1sec6}, of
functions $\alpha_\s$, $\breve{\alpha}_{\s}$, $\s\in \{\pm\}$, such that
\begin{align}
&\alpha_{-}(\vphi,x):=-\alpha_{+}(\vphi,-x)\,,\qquad \alpha_{+}(\vphi,x)=-\alpha_{+}(-\vphi,-x)\,,
\qquad \forall \, (\vphi,x)\in \T^{\nu+1}\,,\label{parityalfette}
\\&
y=x+\alpha_{\s}(\vphi,x) 
\;\;\; 
\Leftrightarrow 
\;\;\; 
x=y+\breve{\alpha}_{\s}(\vphi,y)\,,\quad \s\in\{\pm\}\,,\qquad \forall\, x,y\in \T\,,\; \vphi\in \T^{\nu}\,,\nonumber
\end{align}
such that \eqref{straightpotente} holds
and, by \eqref{stimapm}, \eqref{stimasulresto}, for any $s\geq \so$, 
\begin{equation}\label{superstimaalpha}
\| \breve \alpha_{\s} \|^{\g,\Omega_{1}}_{s} \, , \
\|\alpha_{\s} \|^{\g,\Omega_{1}}_{s} \lesssim_s  \g^{-1}\epsilon(s+\mu)\,,\;\;\;\s\in \{\pm\}\, . 
\end{equation}
We now define the real-to-real map (recall also \eqref{ellediffeo})
\begin{equation}\label{mappaPhidef}
\Theta_1:=\left(\begin{matrix} L & 0 \\0 &\overline{L}\end{matrix}\right)\,,\quad
\begin{aligned}
L&:=\mathcal{C}_{\alpha_+}\Pi_+ + \mathcal{C}_{\alpha_-}\Pi_- \,,
\\
\overline{L}&:=\mathcal{C}_{\alpha_+}\Pi_{-} + \mathcal{C}_{\alpha_-}\Pi_{+}\,,
\end{aligned}
\end{equation}
where $\mathcal{C}_{\alpha_\s}$ is defined as in \eqref{ignobel}
and $\Pi_{\s}$ are in \eqref{def:szego} for $\s\in \{\pm\}$. 
The map $\Theta_1$ is also 
reversibility and parity preserving since, by Theorem \ref{IncredibleConjugate}, $L$ is so.
The estimate \eqref{stimasuPhiuno} on $\Theta_1$ follows by Lemma \ref{bastalapasta}
and \eqref{superstimaalpha}.
We apply Lemma \ref{lemma: invertibilita operatore trasporto} 
with $N := \nu+11+2\tb $, thanks to 
\eqref{superstimaalpha} and  
the smallness condition
 \eqref{ipopiccolezza} with $ \s_* > \mu $ large enough,   
deducing that 
\begin{equation}\label{inversaPhiuno}
\Theta_1^{-1}=\left(\begin{matrix} L^{-1} & 0 \\0 &\overline{L^{-1}}\end{matrix}\right)\,,
\qquad
L^{-1}=(\mathcal{C}_{\alpha_+}^{-1}\Pi_{+}+\mathcal{C}_{\alpha_-}^{-1}\Pi_{-})\circ(\id+\widetilde{R})\, , 
\end{equation}
and there exists  $\widetilde{\mathtt{R}}\in E_{s}$ such that $\mathfrak{S}(\widetilde{\mathtt{R}})=\widetilde{R}$ and
for any $m_1,m_2\in\R_{+}$  such that  $m_1+m_2=N$,
one has
\begin{equation}\label{commedia}
\bnorm{\jap{\td_{\vphi}}^{q} \jap{D}^{m_1}\widetilde{\mathtt{R}}\jap{D}^{m_2}}_s^{\g, \Omega_1} 
\lesssim_{s,\su,\tb} {\g^{-1}}
\epsilon(s+\mu)\,,\;\;q=0\,,\tb\,.
\end{equation}
Hence estimate \eqref{stimasuPhiuno} on $\Theta_1^{-1}$ follows
by composition 
recalling Lemma \ref{rmk:tametresbarre}.
Furthermore, possibly taking a larger $ \mu $ in the smallness condition, 
 Proposition \ref{inclusionetotale} guarantees that 
there are $ \mathtt{L}^{\pm} $ such that 
$ \mathfrak{S}(\mathtt{L^\pm}) = L^{\pm 1} $ and,
for any  $ N_1 + N_2 = \lfloor\nu/2\rfloor+4+\tb $, 
$ N_1,N_2\in \R$, $|N_1|, |N_2|\lesssim_{\tb}1$, the bound
\begin{equation}\label{psitresbarre}
\bnorm{\langle \td_{\vphi}\rangle^{q}\langle D\rangle^{-N_1}(\mathtt{L}^\pm -\id)
\langle D\rangle^{-N_2}}_{s}^{\gamma, \Omega_1}
\lesssim_{s,\su,\tb}\g^{-1}\epsilon(s+\mu)\,,\;\; q=0\,,\tb\, .
\end{equation}
Recalling \eqref{elle3}-\eqref{matriceA3} and since $ \Theta_1 E = 
E \Theta_1 $, 
 the conjugate operator
has the form
\begin{equation}\label{marechiaro1000}
\mathcal{L}_{4} = \Theta_1\mathcal{L}_{3}\Theta_1^{-1}=
\left(\begin{matrix} B_1& 0\\0 & \overline{B}_1\end{matrix}\right)
-\ii E\left(\begin{matrix}
B_2  & 0 \\ 0 & \overline{B}_2
\end{matrix}\right)
-\ii E F
\end{equation}
where
\begin{align}
B_1&:=L\Big(\omega\cdot\pa_{\vphi}-\ii\op(\lambda {\mathtt{D}}_{\mathtt{m}}(\xi))\Big)L^{-1}\,, \label{diagpezzoord1}
\\
B_2&:=L\op\Big(
\ii b_{0}\x\mathtt{D}_{\mathtt{m}}^{-1}(\x)+c^{(3)}(\vphi,x,\x)\Big)L^{-1}\,, \label{diagpezzoord0}
\\
F&:=\Theta_1\op\big(R^{(3)}_{-\rho}(\vphi,x,\x)\big)\Theta_1^{-1}\,.
\label{pezzobrutto}
\end{align}
In the following we analyze these terms  separately.
We first consider the highest order operator  \eqref{diagpezzoord1}.

\begin{lemma}\label{lemma1} 
The operator $B_{1}$ in \eqref{diagpezzoord1} has the form
\begin{equation}\label{odioprofondo3stat}
B_{1} = \omega\cdot\pa_{\vphi}-\ii \op((1+\mathfrak{c})\tD_{\mathtt{m}}(\x))+Q 
\end{equation}
where $Q$ is a reversible and parity preserving operator
satisfying \eqref{restosmooth1}. 
\end{lemma}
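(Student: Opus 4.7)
The plan is to split the symbol $\lambda(\varphi,x)\,\tD_{\mathtt{m}}(\xi)$ into a part handled by the straightening Theorem \ref{IncredibleConjugate} and a part of order $-1$ handled by the Egorov Theorem \ref{thm:egorov}. First I would write $\tD_{\mathtt{m}}(\xi) = \chi(\xi)|\xi| + r(\xi)$, where $r(\xi) := \tD_{\mathtt{m}}(\xi) - \chi(\xi)|\xi|$ is a Fourier multiplier whose symbol belongs to $S^{-1}$ (on $|\xi|\ge 2/3$ one has $r(\xi) = \mathtt{m}/(\tD_{\mathtt{m}}(\xi)+|\xi|)$ and on the compact set $|\xi| \le 2/3$ the symbol is smooth and bounded by a constant $\lesssim \langle \xi\rangle^{-1}$). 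Setting $a := \lambda-1$, which is real, even separately in $\varphi$ and $x$, and satisfies $\|a\|_s^{\gamma,\Lambda}\lesssim_s \epsilon(s)$ by \eqref{stimasulresto}, this yields the identity
\[
\omega\cdot\partial_\varphi - \ii\,\op(\lambda\,\tD_{\mathtt{m}}(\xi))
=\big(\omega\cdot\partial_\varphi-\ii(1+a)|D|\big)-\ii\,\op\big(\lambda(\varphi,x) r(\xi)\big)\,.
\]
Theorem \ref{IncredibleConjugate}, applied with this $a$ and with $M,\mathtt{b}$ sufficiently large (chosen in terms of the $\tb$ in the statement of the present lemma), provides precisely the set $\Omega_1$ in \eqref{calOinfty1sec6}, the constant $\fc := \fa_+$ satisfying \eqref{stimaMgotico1}, and the identity
\[
L\circ\big(\omega\cdot\partial_\varphi-\ii(1+a)|D|\big)\circ L^{-1}
=\omega\cdot\partial_\varphi-\ii(1+\fc)|D|+R_0
\]
with $R_0 = \fS(\mathtt R_0)$ for some $\mathtt R_0\in E_s$ satisfying $\bnorm{\langle\td_\varphi\rangle^q \langle D\rangle^{m_1}\mathtt R_0\langle D\rangle^{m_2}}_s^{\gamma,\Omega_1}\lesssim \gamma^{-1}\epsilon(s+\mu)$ for $m_1+m_2=M$ (see \eqref{raichel2}); I would take $M\ge 2$ so that the single factor $\langle D\rangle$ required in \eqref{restosmooth1} is absorbed. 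Rewriting $-\ii(1+\fc)|D|=-\ii(1+\fc)\,\op(\tD_{\mathtt{m}}(\xi))+\ii(1+\fc)\,\op(r(\xi))$ extracts the desired leading Fourier multiplier, leaving the bounded correction $\ii(1+\fc)\op(r(\xi))$.

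The core of the argument is then the treatment of $T:=L\,\op(\lambda\, r(\xi))\, L^{-1}$, whose symbol has order $-1$. I would expand $L^{-1}=(\mathcal{C}_{\alpha_+}^{-1}\Pi_++\mathcal{C}_{\alpha_-}^{-1}\Pi_-)(\id+\widetilde R)$ as in \eqref{formaL-1} of Lemma \ref{lemma: invertibilita operatore trasporto}, so that $T$ splits into a sum over $\sigma,\sigma'\in\{\pm\}$ of
\[
\mathcal{C}_{\alpha_\sigma}\Pi_\sigma\,\op(\lambda\, r(\xi))\,\mathcal{C}_{\alpha_{\sigma'}}^{-1}\Pi_{\sigma'}
\]
together with a tail involving $\widetilde R$. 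Using Lemma \ref{commutarechebello} every commutator of a Szeg\"o projector with either $\op(\lambda\,r(\xi))$ or $\mathcal{C}_{\alpha_{\sigma'}}^{-1}$ is smoothing, and $\Pi_\sigma\Pi_{-\sigma}$ is smoothing; thus mimicking the algebraic manipulations already performed in Lemma \ref{lemmaApiumeno} and in the Step 2 of the proof of Theorem \ref{IncredibleConjugate}, $T$ reduces, modulo terms associated to couples in $E_s$ with the desired tame norm, to the diagonal pieces $\mathcal{C}_{\alpha_\sigma}\,\op(\lambda\, r(\xi))\,\mathcal{C}_{\alpha_\sigma}^{-1}\,\Pi_\sigma$. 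On each of these I would apply Theorem \ref{thm:egorov} to $\op(w)$ with $w:=\lambda(\varphi,x)r(\xi)\in S^{-1}$, obtaining $\mathcal{C}_{\alpha_\sigma}\op(w)\mathcal{C}_{\alpha_\sigma}^{-1}=\op(q_\sigma)+R_\sigma$ with $q_\sigma\in S^{-1}$ bounded via \eqref{zeppelin}-\eqref{zeppelinbis} and $R_\sigma$ bounded in the couple norm via \eqref{stimerestoEgorov1} (again taking $M\ge 2$ in the Egorov parameter). The pseudo-differential pieces $\op(q_\sigma)\Pi_\sigma$, together with $\ii(1+\fc)\op(r(\xi))$, are order $-1$ pseudo-differential operators and are assigned Bony-smoothing couples through Proposition \ref{prop:immersionepseudo} (which also provides the $\langle\td_\varphi\rangle^\tb$ control).

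Summing all the pieces I would set
\[
Q:=R_0+\ii(1+\fc)\,\op(r(\xi))-\ii T\,,
\]
so that \eqref{odioprofondo3stat} holds, and verify that every summand is associated to a couple in $E_s$ satisfying the $\langle D\rangle$-weighted tame estimate \eqref{restosmooth1}, using the algebra and composition Lemmas \ref{tretameestimate}, \ref{tretameestimate3} and \ref{dito2} (with $\tb$ commutators in $\varphi$) to combine the bounds coming from Theorem \ref{IncredibleConjugate}, Theorem \ref{thm:egorov}, Proposition \ref{prop:immersionepseudo}, Proposition \ref{inclusionetotale} and Lemma \ref{commutarechebello}. Reversibility and parity preservation of $Q$ follow by difference, since both $B_1$ (being the conjugate of a reversible, parity preserving operator by the reversibility/parity preserving map $L$ from Theorem \ref{IncredibleConjugate}) and $\omega\cdot\partial_\varphi-\ii(1+\fc)\tD_{\mathtt m}(\xi)$ (whose coefficient $\fc$ is a real scalar) have these properties. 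The main obstacle in this program is the bookkeeping in Step 2: proving that the ``off-diagonal'' pieces $\mathcal{C}_{\alpha_\sigma}\Pi_\sigma\op(w)\mathcal{C}_{\alpha_{-\sigma}}^{-1}\Pi_{-\sigma}$ and the various commutator corrections really all yield Bony-smoothing couples with tame bounds on $\langle\td_\varphi\rangle^\tb(\cdot)\langle D\rangle$ uniform in $\su\ge s$ and with loss of derivatives $\mu$ independent of $s$; this forces the careful choice of the Egorov parameter $M$ and the pseudo-differential order of the Egorov remainder $\rho$ in terms of $\tb$ and $\nu$, and is what drives the value of $\rho = \rho(\tb)$ appearing in the statement.
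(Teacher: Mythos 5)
Your overall strategy is the same as the paper's: decompose $\tD_{\mathtt m}(\xi) = \chi(\xi)|\xi| + r(\xi)$ with $r\in S^{-1}$ (the paper calls this $\widetilde\tD_{\mathtt m}$), conjugate the $\chi(\xi)|\xi|$ part via Theorem \ref{IncredibleConjugate}, conjugate the $r(\xi)$ part via Egorov, and collect the smoothing remainders. However there is a genuine gap in how you propose to close the estimate \eqref{restosmooth1}.

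You set $Q := R_0 + \ii(1+\fc)\op(r(\xi)) - \ii T$ with $T := L\op(\lambda r)L^{-1}$, and then write that you would ``verify that every summand is associated to a couple in $E_s$ satisfying \eqref{restosmooth1}''. This cannot work: $\op(r(\xi))$ is a fixed Fourier multiplier in $S^{-1}$ whose symbol norm $\|r\|_{-1,s,p}$ is a constant depending only on $\mathtt m$, so via Proposition \ref{prop:immersionepseudo} its Bony--smoothing couple norm is $O(1)$, not $O(\gamma^{-1}\epsilon(s+\mu))$. Likewise the Egorov principal symbol produced from $w = \lambda r$ has $\|w\|_{-1,s,p}=O(1)$ and \eqref{zeppelin} only gives $\|q_\sigma\|_{-1,s,p}\lesssim \|w\|_{-1,s,p} + (\text{small})=O(1)$, so $-\ii T$ also has $O(1)$ couple norm. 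Neither summand satisfies \eqref{restosmooth1} on its own; only their difference does, after the leading $(1+\fc)r(\xi)$ is explicitly extracted from the conjugated symbol. The paper carries out exactly this step (see its identity \eqref{claimo2} and the estimates \eqref{sottomarino2}, \eqref{sottomarino3}): it shows $\lambda(\vphi,x+\alpha_\sigma)\,\widetilde{\tD}_{\tm}\big(\xi(1+\pa_y\breve\alpha_\sigma)\big) = (1+\fc)\,\widetilde{\tD}_{\tm}(\xi) + q_\sigma$ with $\|q_\sigma\|_{-1,s,p}^{\gamma,\Omega_1}\lesssim_s\gamma^{-1}\epsilon(s+\mu)$, by isolating the $O(\gamma^{-1}\epsilon)$ deviations of $\lambda(x+\alpha_\sigma)$ from $1+\fc$ and of $\widetilde{\tD}_{\tm}(\xi(1+\pa_y\breve\alpha_\sigma))$ from $\widetilde{\tD}_{\tm}(\xi)$. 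Without this symbol-level cancellation your $Q$ would not be small, so the decomposition \eqref{odioprofondo3stat} would not satisfy the tame bound. If you reorganize your proof so that $\ii(1+\fc)\op(r) - \ii\sum_\sigma\op(q_\sigma)\Pi_\sigma$ is treated as a single pseudo-differential operator with symbol $(1+\fc)r - q_\sigma\in S^{-1}$ of size $O(\gamma^{-1}\epsilon(s+\mu))$, and only then apply Proposition \ref{prop:immersionepseudo}, the argument becomes sound and matches the paper's.
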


\begin{proof}
We first decompose, recalling  \eqref{TDM} and \eqref{cutoff}, 
\begin{equation}\label{billythekid}
\tD_{\mathtt{m}}(\x)=|\x|\chi(\x)+\widetilde{\tD}_{\mathtt{m}}(\x)\,,
\qquad
\widetilde{\tD}_{\mathtt{m}}(\x):=\sqrt{|\x|^2+\mathtt{m}}-|\x|\chi(\x) \in S^{-1}\, , 
\end{equation}
and so we write the operator $B_{1}$ in  
 \eqref{diagpezzoord1} as (recall that $|D|=\op(\chi(\x)|\x|)$ as in \eqref{notazioneNota})
\begin{align}
B_1&=L\big(\omega\cdot\pa_{\vphi}-\ii \lambda|D|\big)L^{-1}\label{odioprofondo}
\\
&
-\ii L \op(\lambda\widetilde{\tD}_{\mathtt{m}}(\x))L^{-1} \, . \label{odioprofondo2}
\end{align}
The operator  \eqref{odioprofondo} is studied by
Theorem \ref{IncredibleConjugate}
with $a\rightsquigarrow \lambda-1$, $ M = 1 $, 
obtaining  (see \eqref{straightpotente}) 
\begin{equation}\label{tempopezzo2}
\eqref{odioprofondo} =
\omega\cdot\pa_{\vphi}-\ii (1+\mathfrak{c})|D| + Q_1
\end{equation}
where  $ \mathfrak{c}  \rightsquigarrow  \fa_+  $
and $ Q_1 $  (denoted by $ R $ 
in Theorem \ref{IncredibleConjugate}) is a smoothing operator satisfying \eqref{restosmooth1}
(this is a consequence of estimates \eqref{raichel2} with $M=1$, $m_1=0,m_2=1$).

We now study the conjugate \eqref{odioprofondo2} of the operator  $ A :=\op(\lambda\widetilde{\tD}_{\mathtt{m}}(\x))$. The treatment is extremely similar to the one of the first order term \eqref{straightpotente} in Theorem \ref{IncredibleConjugate}. 
In view of \eqref{mappaPhidef}-\eqref{inversaPhiuno} we have that
\begin{equation}\label{cheodio}
L\circ A\circ L^{-1}=(\mathcal{C}_{\alpha_+}\Pi_{+}+\mathcal{C}_{\alpha_-}\Pi_{-})\circ A
\circ(\mathcal{C}_{\alpha_+}^{-1}\Pi_{+}+\mathcal{C}_{\alpha_-}^{-1}\Pi_{-})+Q_2
\end{equation}
where 
\begin{equation}\label{qu1}
Q_2 := (\mathcal{C}_{\alpha_+}\Pi_{+}+\mathcal{C}_{\alpha_-}\Pi_{-})
 \circ A \circ  (\mathcal{C}_{\alpha_+}^{-1}\Pi_{+}+\mathcal{C}_{\alpha_-}^{-1}\Pi_{-})\circ\widetilde{R} \,.
\end{equation}
We claim that $Q_2$ satisfies \eqref{restosmooth1}.
We follow the reasoning used for the remainder term $Q$ in \eqref{esempioQ}.
First we write 
\begin{equation}\label{decopezz}
\begin{aligned}
\langle \td_{\vphi}\rangle^{\tb}\mathcal{C}_{\alpha_\s}\Pi_{\s} A
\mathcal{C}_{\alpha_{\s'}}^{-1}\Pi_{\s'}\widetilde{R}\langle D\rangle
& =
\langle \td_{\vphi}\rangle^{\tb}\mathcal{C}_{\alpha_\s}
\langle D\rangle^{-N'}\circ\langle D\rangle^{N'}
\Pi_{\s} A\langle D\rangle^{-N'+1}
\\&
\quad \circ\langle D\rangle^{N'-1}\mathcal{C}_{\alpha_{\s'}}^{-1}\Pi_{\s'}
\langle D\rangle^{-2N'+1}
\circ \langle D\rangle^{2N'-1}
\widetilde{R}\langle D\rangle
\end{aligned}
\end{equation}
where  $N':=\lfloor\nu/2\rfloor+4+\tb$.
Proposition \ref{inclusionetotale}
(with $N_1\rightsquigarrow 0$, $N_2\rightsquigarrow N' $)
implies the existence of $\mu := \mu (\tb ) >0 $, $\delta := \delta (\tb ) >0 $ such that,  
if \eqref{ipopiccolezza} holds with 
$\s_{*}\geq \mu$ and $\delta_{*}\le \delta_{0}$, then,  
by \eqref{superstimaalpha}, 
there is $\mathtt{R}_1\in E_{s}$ such that, 
\begin{equation}\label{agosto0}
\bnorm{\langle \td_{\vphi}\rangle^{\tb}\mathtt{R}_1
}_{s}^{\gamma,\calO}\lesssim_{s,\su,\tb}\gamma^{-1}\epsilon(s+\mu)\,,
\qquad 
\mathfrak{S}(\mathtt{R}_1)=(\mathcal{C}_{\alpha_\s}-\id)\langle D\rangle^{-N'}\, .
\end{equation}
Secondly we apply Proposition \ref{prop:immersionepseudo}
(with $-n_1\rightsquigarrow N'$ and $-n_2\rightsquigarrow -N' + 1$ )
to infer that there is 
 $\mathtt{R}_2$ such that, using also \eqref{stimasulresto}
 (and with a possibly larger $ \mu(\tb)$)
\begin{equation}\label{agosto1}
\bnorm{\langle \td_{\vphi}\rangle^{\tb}\mathtt{R}_2
}_{s}^{\gamma,\calO}\lesssim_{s,\su,\tb}1+\gamma^{-1}\epsilon(s+\mu)\,,
\qquad 
\mathfrak{S}(\mathtt{R}_2)=\langle D\rangle^{N'}\Pi_{\s}A\langle D\rangle^{-N'+1}\,.
\end{equation}
Furthermore 
we apply Proposition \ref{inclusionetotale}
(with $N_1\rightsquigarrow -N'+1$, 
$N_2\rightsquigarrow 2N'-1$)
to obtain that there is $\mathtt{R}_3\in E_{s}$ such that 
\begin{equation}\label{agosto2}
\bnorm{\langle \td_{\vphi}\rangle^{\tb}\mathtt{R}_3
}_{s}^{\gamma,\calO}\lesssim_{s,\su,\tb}\gamma^{-1}\epsilon(s+\mu)\,,
\qquad 
\mathfrak{S}(\mathtt{R}_3)=\langle D\rangle^{N'-1}(\mathcal{C}^{-1}_{\alpha_{\s'}}-\id)
\langle D\rangle^{-2N'+1}\, . 
\end{equation}
Finally, by \eqref{commedia} 
(with $m_1\rightsquigarrow 2N'-1$, $m_2\rightsquigarrow 1$, and noting that $2N'< N$)
we deduce that there is 
 $\mathtt{R}_4$ such that
\begin{equation}\label{agosto3}
\bnorm{\langle \td_{\vphi}\rangle^{\tb}\mathtt{R}_4
}_{s}^{\gamma,\calO}\lesssim_{s,\su,\tb}\gamma^{-1}\epsilon(s+\mu)\,,
\qquad 
\mathfrak{S}(\mathtt{R}_4)=\langle D\rangle^{2N'-1}
\widetilde{R}\langle D\rangle\,.
\end{equation}
By \eqref{decopezz}, \eqref{agosto0}, 
\eqref{agosto1}, \eqref{agosto2}, \eqref{agosto3} 
and Lemmata \ref{tretameestimate} and \ref{tretameestimate3}
we conclude that $\mathcal{C}_{\alpha_\s}\Pi_{\s} A \mathcal{C}_{\alpha_{\s'}}^{-1}\Pi_{\s'}\widetilde{R}$ 
is an operator satisfying estimates like 
\eqref{restosmooth1}.
This proves the claim that  $Q_2$ satisfies \eqref{restosmooth1}. 

We now write 
the operator \eqref{cheodio} as
\begin{align}
L\circ A\circ L^{-1}& = 
\sum_{\s,\s'\in\{\pm\}}\mathcal{C}_{\alpha_{\s}}\Pi_{\s}A\mathcal{C}_{\alpha_{\s'}}^{-1}\Pi_{\s'}+Q_2
\nonumber
\\&
=\sum_{\s\in\{\pm\}}\mathcal{C}_{\alpha_{\s}}A\mathcal{C}_{\alpha_{\s}}^{-1}\Pi_{\s}
\label{cheodio2}
\\&
+\sum_{\s\in\{\pm\}}\mathcal{C}_{\alpha_{\s}}A\mathcal{C}_{\alpha_{\s}}^{-1}
\big( \Pi_{\s}^{2}-\Pi_{\s}\big)
+\sum_{\s\in\{\pm\}}\mathcal{C}_{\alpha_{\s}}A\mathcal{C}_{\alpha_{-\s}}^{-1}\Pi_{\s}\Pi_{-\s}
\label{cheodio3}
\\&
+\sum_{\s,\s'\in\{\pm\}}\mathcal{C}_{\alpha_{\s}}A[\Pi_{\s},\mathcal{C}_{\alpha_{\s'}}^{-1}]\Pi_{\s'}+
\sum_{\s,\s'\in\{\pm\}}\mathcal{C}_{\alpha_{\s}}[\Pi_{\s},A]\mathcal{C}_{\alpha_{\s'}}^{-1}\Pi_{\s'}
+Q_2  \label{cheodio4}
\end{align}
so that 
\begin{equation}\label{bastaa}
L\circ A\circ L^{-1}=  \mathcal{C}_{\alpha_+} \, A \, 
\mathcal{C}_{\alpha_+}^{-1} \Pi_{+}
+ \mathcal{C}_{\alpha_-} \, A \, \mathcal{C}_{\alpha_-}^{-1}  \Pi_{-} 
+ Q_3
\end{equation}
where $ Q_{3}:=\eqref{cheodio3}+\eqref{cheodio4}$.

We write, using Remark
\ref{rmk:cutoff}-($ii$),  
$$
\eqref{cheodio3} = 
\sum_{\s\in\{\pm\}}
\mathcal{C}_{\alpha_{\s}}A \Big( \id - \, \mathcal{C}_{\alpha_{\s}}^{-1}
+ \mathcal{C}_{\alpha_{-\s}}^{-1} - \id
\Big) \Pi_{\s}\Pi_{-\s}
$$
where
 $\Pi_{\s}\Pi_{-\s}$ is a pseudo-differential operator 
with symbol in $ S^{-\infty}$. Thus arguing as above 
\eqref{cheodio3}  satisfies \eqref{restosmooth1}.
The terms 
\eqref{cheodio4} 
 contain either $[\Pi_{\s}, \mathcal{C}_{\alpha_{\s'}}^{-1} ]  $ 
 or $[\Pi_{\s}, A] = [\Pi_{\s},  \op ( (\lambda-1) \widetilde{\tD}_{\mathtt{m}}(\x) ) ] $ 
which
are pseudo-differential operators with symbols in $S^{-\infty}$
 that we estimate  by Lemma \ref{commutarechebello}. 
Reasoning as done for the remainder $Q_{2}$ in \eqref{qu1},
we get that  \eqref{cheodio4}, and hence  $Q_3$ in \eqref{bastaa},  satisfies \eqref{restosmooth1}.

We now claim that
\begin{equation}\label{claimo2}
\mathcal{C}_{\alpha_\s} A 
\mathcal{C}_{\alpha_{\s}}^{-1}=
\op((1+\mathfrak{c})\widetilde{\tD}_{\mathtt{m}}(\x))+Q_{5,\s}
\end{equation}
(where $ A = \op(\lambda \widetilde{\tD}_{\mathtt{m}}(\x)) $) 
for some  operator $Q_{5,\s}$ satisfying \eqref{restosmooth1}.
Indeed, applying Theorem \ref{thm:egorov} with 
$w\rightsquigarrow\lambda \widetilde{\tD}_{\mathtt{m}}(\x)$, $m=-1$ and $M=2$
(the smallness condition \eqref{buf} is implied by \eqref{superstimaalpha} and 
\eqref{ipopiccolezza}) we get 
\begin{equation}
	\label{rosacroce}
\begin{aligned}
\mathcal{C}_{\alpha_\s}\op(\lambda \widetilde{\tD}_{\mathtt{m}}(\x))\mathcal{C}_{\alpha_{\s}}^{-1}=
\op\Big( \mathtt{r}^{(1)}_{\s}(\vphi,x,\x)+\mathtt{r}^{(2)}_{\s}(\vphi,x,\x) \Big)+\widetilde{Q}_{4,\s}\,,
\end{aligned}
\end{equation}
with (recalling formula \eqref{fundamentalformula})
\begin{equation}\label{defr1s}
\mathtt{r}^{(1)}_{\s}(\vphi,x,\x) :=\lambda\big(\vphi,x+\alpha_\s(\vphi,x)\big)
\widetilde{\tD}_{\mathtt{m}}\big(\x(1+\pa_{y}\breve{\alpha}_{\s}(\vphi,y))\big)_{|y=x+\alpha_{\s}(\vphi,x)}\,,
\end{equation}
a symbol $\mathtt{r}^{(2)}_{\s}(\vphi,x,\x) $ in $ S^{-2}$ 
and  a remainder  $\widetilde Q_{5,\s}$  admitting a representative in $E_{s}$
which satisfies \eqref{restosmooth1}, by estimate \eqref{stimerestoEgorov1}
with $w\rightsquigarrow\lambda \widetilde{\tD}_{\mathtt{m}}(\x)$ and $\alpha\rightsquigarrow\alpha_\s$,
using \eqref{superstimaalpha}, \eqref{stimasulresto},
and the interpolation estimate \eqref{interpolotutto}. 
Similarly by  estimate \eqref{zeppelinbis}, \eqref{superstimaalpha}, \eqref{stimasulresto}
and \eqref{interpolotutto} we deduce, for some $\mu=\mu(\tb)$,
$$ 
\|\mathtt{r}^{(2)}_{\s}\|^{\gamma,\calO}_{-2,s,p}\lesssim_{s,p}\g^{-1}\epsilon(s+\mu+p) \, . 
$$ 
 Therefore, by Proposition
\ref{prop:immersionepseudo} with $m=-1$ we get that $\op(\mathtt{r}^{(2)}_{\s})$
is a smoothing remainder satisfying  \eqref{restosmooth1}.

We now study the structure of the symbol $\mathtt{r}^{(1)}_{\s}$ in \eqref{defr1s}.
We first note that, by 
the estimate  \eqref{stimaMgotico1} on $\mathfrak{c}$, 
the estimate \eqref{stimasulresto} on $\lambda$, 
\eqref{superstimaalpha}, 
and \eqref{bastalapasta}
\begin{equation}\label{sottomarino2}
\|\lambda\big(\vphi,x+\alpha_\s(\vphi,x)\big)
-(1+\mathfrak{c})\|_{s}^{\gamma,\Omega_1}
\lesssim_{s}
\gamma^{-1}\epsilon(s+\mu)\,.
\end{equation}
Secondly (recalling the expansion \eqref{billythekid}) we note that, for any $(\vphi,y)\in \T^{\nu+1}$,
\begin{equation*}
\begin{aligned}
\widetilde{\tD}_{\mathtt{m}}&\big(\x(1+\pa_{y}\breve{\alpha}_{\s})\big)-
(1+\pa_{y}\breve{\alpha}_{\s})\widetilde{\tD}_{\mathtt{m}}(\x)
\\&
=(1+\pa_{y}\breve{\alpha}_\s)\Big[
\underbrace{\Big(\sqrt{\x^2+\frac{\mathtt{m}}{(1+\pa_{y}\breve{\alpha}_\s)^2}} -\sqrt{\x^2+\mathtt{m}}\Big)}_{=: (1)}
+ \underbrace{|\x|\big(\chi(\x)-\chi(\xi(1+\pa_y\breve\alpha_\s))\big)}_{=: (2)}\Big]\,.
\end{aligned}
\end{equation*}
The first summand $(1)$ is a symbol satisfying 
$\|(1)\|_{-1,s,p}^{\gamma,\Omega_1}\lesssim_{s,p}\g^{-1}\epsilon(s+\mu)$.
Moreover
$ \chi(\xi(1+\pa_y\breve\alpha_\s))-\chi(\x)=\int_{0}^{1}\chi'(\x(1+ \tau 
 \pa_y\breve\alpha_\s))\x\pa_y\breve\alpha_\s d \tau $
from which we deduce that recalling \eqref{cutoff} and \eqref{superstimaalpha}, 
the term $(2)$ is a symbol satisfying 
$\|(2)\|_{-1,s,p}^{\gamma,\Omega_1}\lesssim_{s,p}\g^{-1}\epsilon(s+\mu)$.
The discussion above implies that, for some $ \mu = \mu (\tb) $,  
\begin{equation}\label{sottomarino3}
\|
\widetilde{\tD}_{\mathtt{m}}\big(\x(1+\pa_{y}\breve{\alpha}_{\s})\big)-
(1+\pa_{y}\breve{\alpha}_{\s})\widetilde{\tD}_{\mathtt{m}}(\x)
\|_{-1,s,p}^{\gamma,\Omega_1}
\lesssim_{s,p}
\g^{-1}\epsilon(s+\mu)\,.
\end{equation}
By \eqref{defr1s}, \eqref{sottomarino2}, \eqref{sottomarino3} we can write
\[
\begin{aligned}
\mathtt{r}^{(1)}_{\s}(\vphi,x,\x)&=(1+\mathfrak{c})
\widetilde{\tD}_{\mathtt{m}}(\x)+q_{\s}
\end{aligned}
\]
for some symbol $q_\s\in S^{-1}$ satisfying
$\|q_{\s}\|_{-1,s,p}^{\gamma, \Omega_1}\lesssim_{s,p} \g^{-1}\epsilon(s+\mu)$, $\s\in\{\pm\}$
by Lemma \ref{Lemmino}  
and \eqref{interpolotutto},  
for some $\mu (\tb) $.
Using again Proposition \ref{prop:immersionepseudo}
the operator $\op(q_{\s})$ can be absorbed into a  remainder satisfying \eqref{restosmooth1}.
This proves  \eqref{claimo2} with $Q_{5,\s}= \widetilde Q_{5,\s}+\op(q_\s)$
with $ Q_{5,\s} $ satisfying \eqref{restosmooth1}.

Summarizing, by \eqref{odioprofondo}-\eqref{odioprofondo2} and 
\eqref{tempopezzo2}, \eqref{bastaa}-\eqref{claimo2}, $ \Pi_+ + \Pi_- = {\rm Id} $, 
the operator $B_{1}$ in \eqref{diagpezzoord1} is
$$
\begin{aligned}
B_{1}&=\omega\cdot\pa_{\vphi}-\ii (1+\mathfrak{c})\op(|\x|\chi(\x))-\ii\sum_{\s\in\{\pm\}}
\Big((1+\mathfrak{c})\op(\widetilde{\tD}_{\mathtt{m}}(\x))+Q_{5,\s}\Big)
\Pi_{\s}
+ Q_1{-\ii}{Q}_3
\\&
\stackrel{\eqref{billythekid}}{=}\omega\cdot\pa_{\vphi}-\ii \op((1+\mathfrak{c})\tD_{\mathtt{m}}(\x))+Q\,,
\end{aligned}
$$
for some remainder $Q$ satisfying \eqref{restosmooth1}. 
We also have that $Q$ is reversible and parity preserving by difference, 
because $B_1$ in \eqref{diagpezzoord1} is reversible and parity preserving
as well as $ - \ii \op ( \lambda {\mathtt D}_{\mathtt m} ) $, since 
$L$ is reversibility and parity preserving.
\end{proof}

Let us now consider the operator in \eqref{diagpezzoord0}. 
\begin{lemma}\label{lemma2}
The operator $ B_2 $ in \eqref{diagpezzoord0} has the form 
$$ 
B_2 = \op\big(\ii c^{(4)}(\vphi,x,\x)\big)+R 
$$ 
where $c^{(4)}(\vphi,x,\x)$ is a zero order real valued symbol of the form 
\eqref{simbolCC44}-\eqref{algebrella} satisfying \eqref{estranged4} 
an $R$ is a reversibility  and parity preserving 
operator  satisfying \eqref{restosmooth1}.
\end{lemma}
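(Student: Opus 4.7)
The plan is to treat separately the two summands of the symbol in \eqref{diagpezzoord0}.
First I would write $B_2 = B_2^{(a)} + B_2^{(c)}$ with
$B_2^{(a)} := L\,\op(\ii b_{0}(\vphi,x)\x\mathtt{D}_{\mathtt{m}}^{-1}(\x))\,L^{-1}$ and
$B_2^{(c)} := L\,\op(c^{(3)})\,L^{-1}$, and dispose of $B_2^{(c)}$ as a smoothing remainder
by exactly the strategy used for $Q_2$ in the proof of Lemma \ref{lemma1}: since $c^{(3)}\in S^{-1}$ satisfies \eqref{simboloCC33}, one writes
$\langle\td_\vphi\rangle^{\tb} B_2^{(c)} \langle D\rangle$
as a product of four blocks sandwiched between powers of $\langle D\rangle^{\pm N'}$ with
$N'=\lfloor\nu/2\rfloor+4+\tb$,
so that Proposition \ref{inclusionetotale} gives Bony--smoothing couples for the $L^{\pm 1}$-factors (as in \eqref{psitresbarre}), Proposition \ref{prop:immersionepseudo} applied to $\langle D\rangle^{N'}\Pi_\s\op(c^{(3)})\langle D\rangle^{-N'+1}\in S^{-\infty}$ gives a Bony--smoothing couple for the central block (using that $c^{(3)}$ is of order $-1$, one derivative is spent and one is gained with respect to Lemma \ref{lemma1}), and Lemmata \ref{tretameestimate}--\ref{tretameestimate3} assemble these into the estimate \eqref{restosmooth1}.

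For $B_2^{(a)}$ I would repeat exactly the decomposition \eqref{cheodio}--\eqref{bastaa}
with $A\rightsquigarrow \op(\ii b_0 \x\mathtt{D}_\tm^{-1}(\x))$: the $\widetilde R$-tail produces a smoothing remainder (same argument as the one giving $Q_2$ in \eqref{qu1}); the terms
$\mathcal{C}_{\alpha_\s} A\mathcal{C}_{\alpha_{-\s}}^{-1}\Pi_{\s}\Pi_{-\s}$ from \eqref{cheodio3} are smoothing because $\Pi_{\s}\Pi_{-\s}$ has symbol in $S^{-\infty}$ (Remark \ref{rmk:cutoff}); the commutator terms in \eqref{cheodio4} are smoothing by Lemma \ref{commutarechebello} (both $[\Pi_\s,\mathcal{C}_{\alpha_{\s'}}^{-1}]$ and $[\Pi_\s,A]$ have symbols in $S^{-\infty}$). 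This reduces the analysis to
$\mathcal{C}_{\alpha_+}\,A\,\mathcal{C}_{\alpha_+}^{-1}\,\Pi_+ + \mathcal{C}_{\alpha_-}\,A\,\mathcal{C}_{\alpha_-}^{-1}\,\Pi_-$
modulo operators that fit into \eqref{restosmooth1}.

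Next I would apply the quantitative Egorov Theorem \ref{thm:egorov} to $\mathcal{C}_{\alpha_\s}\,A\,\mathcal{C}_{\alpha_\s}^{-1}$ (with $m=0$ and a suitable $M$), whose principal symbol is, by \eqref{fundamentalformula},
\begin{equation*}
\ii\, b_{0}(\vphi,x+\alpha_\s(\vphi,x))\,\frac{\x\,(1+\pa_y\breve\alpha_\s)}{\sqrt{\x^{2}(1+\pa_y\breve\alpha_\s)^{2}+\tm}}\Big|_{y=x+\alpha_\s(\vphi,x)}\,.
\end{equation*}
Since $1+\pa_y\breve\alpha_\s>0$ by the smallness \eqref{superstimaalpha}, the function
$\x(1+\pa_y\breve\alpha_\s)/\sqrt{\x^{2}(1+\pa_y\breve\alpha_\s)^{2}+\tm}$ coincides, modulo a symbol of order $-2$ (bounded as in \eqref{sottomarino3}), with $\sgn(\x)$, so the principal symbol equals
$\ii\,b_{0}(\vphi,x+\alpha_\s)\sgn(\x)+r_\s^{(2)}$ with $r_\s^{(2)}\in S^{-2}$; the subprincipal symbol $\widetilde q_{-1}\in S^{-1}$ from \eqref{zeppelinbis} together with $r_\s^{(2)}$ can be absorbed into the smoothing remainder via Proposition \ref{prop:immersionepseudo}, and the Egorov remainder itself already has an associated Bony--smoothing couple satisfying \eqref{stimerestoEgorov1}, which matches \eqref{restosmooth1}. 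Finally I would use that $\Pi_\s\,\op(\sgn(\x))=\s\Pi_\s+\op(S^{-\infty})$ by Remark \ref{rmk:cutoff}, which yields
\begin{equation*}
\sum_\s \mathcal{C}_{\alpha_\s}\,A\,\mathcal{C}_{\alpha_\s}^{-1}\,\Pi_\s
= \op\!\big(\ii\,c^{(4)}_+(\vphi,x)\chi_+(\x)-\ii\,c^{(4)}_-(\vphi,x)\chi_-(\x)\big) + R'
\end{equation*}
with $c^{(4)}_\s(\vphi,x):=b_{0}(\vphi,x+\alpha_\s(\vphi,x))$ and $R'$ a smoothing remainder obeying \eqref{restosmooth1}.

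To finish, I would verify the algebraic identities \eqref{algebrella}: they follow from the parities of $b_{0}$ (even in $\vphi$, odd in $x$) and of $\alpha_\s$ (the identities \eqref{parityalfette}) by a direct substitution. The tame estimate \eqref{estranged4} follows from Moser composition for $b_0\circ(x+\alpha_\s)$ combined with \eqref{superstimaalpha} and $\|b_0\|_s^{\g,\Lambda}\lesssim\epsilon(s)$, yielding $\|c^{(4)}_\s\|_s^{\g,\Omega_1}\lesssim_s \g^{-1}\epsilon(s+\mu)$, which transfers to the symbol norms of $c^{(4)}$ because $\chi_\pm\in S^0$ are Fourier multipliers (so Lemma \ref{stima composizione}(iv) gives the desired $p$-uniform bound). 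The reversibility and parity properties of $c^{(4)}$ follow from \eqref{algebrella} via Lemma \ref{paritasuisimboli}, and $R:=B_2-\op(\ii c^{(4)})$ inherits the same properties by difference. The main technical point of the whole argument is Step 3: exploiting the positivity $1+\pa_y\breve\alpha_\s>0$ to identify the principal symbol with $\sgn(\x)$ up to $S^{-2}$, and at the same time keeping the constants in the Egorov expansion tame in $s$; this is exactly what Theorem \ref{thm:egorov} was designed to guarantee, with the sum of indices $k_1+k_2+k_3=s$ in \eqref{stimerestoEgorov1} producing the desired $\g^{-1}\epsilon(s+\mu)$-bound after invoking the interpolation \eqref{interpolotutto}.
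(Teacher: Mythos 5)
There is a gap in your treatment of $B_2^{(c)} = L\,\op(c^{(3)})\,L^{-1}$. You claim this can be disposed of as a smoothing remainder ``by exactly the strategy used for $Q_2$ in the proof of Lemma \ref{lemma1}''. But $Q_2$ in \eqref{qu1} has the crucial structure $Q_2 = L\circ A\circ L_0 \circ\widetilde R$, where $L_0:=\mathcal{C}_{\alpha_+}^{-1}\Pi_+ + \mathcal{C}_{\alpha_-}^{-1}\Pi_-$: the $\widetilde{R}$ tail at the far right is what absorbs the excess positive powers of $\langle D\rangle$ in the decomposition \eqref{decopezz}, via \eqref{commedia} with $m_1+m_2=N$ chosen large. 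Your $B_2^{(c)}$, after splitting $L^{-1}=L_0+L_0\widetilde R$, contains the main contribution $L\,\op(c^{(3)})\,L_0$, which has \emph{no} $\widetilde{R}$ tail. For the piece $\mathcal{C}_{\alpha_\s}\op(c^{(3)})\mathcal{C}_{\alpha_\s}^{-1}\Pi_\s$ a three-block decomposition $\mathcal{C}_{\alpha_\s}\langle D\rangle^{-a}\cdot\langle D\rangle^a\op(c^{(3)})\langle D\rangle^b\cdot\langle D\rangle^{-b}\mathcal{C}_{\alpha_\s}^{-1}\langle D\rangle$ cannot satisfy simultaneously the constraints $a>\lfloor\nu/2\rfloor+3+\tb$, $b-1>\lfloor\nu/2\rfloor+3+\tb$ (from Proposition \ref{inclusionetotale}) and $a+b=1$ (from Proposition \ref{prop:immersionepseudo} applied to $c^{(3)}\in S^{-1}$): the symbol $c^{(3)}$ is only one order smoothing and cannot absorb the $\sim 2N'$ positive powers that the two conjugating factors generate. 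Note also that the middle block $\langle D\rangle^{N'}\Pi_\s\op(c^{(3)})\langle D\rangle^{-N'+1}$ has symbol in $S^0$, not $S^{-\infty}$ as you write, although this in itself is not the issue.

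To fix this, you must apply the Egorov Theorem \ref{thm:egorov} to $\mathcal{C}_{\alpha_\s}\op(c^{(3)})\mathcal{C}_{\alpha_\s}^{-1}$ as well: by \eqref{fundamentalformula} its principal symbol is a transported symbol of order $-1$, which \emph{is} pseudo-differential and \emph{can} be absorbed into the remainder via Proposition \ref{prop:immersionepseudo} (now with the correct exponent arithmetic $n_1+n_2=-1$), and the Egorov remainder satisfies \eqref{stimerestoEgorov1}. The paper avoids this bookkeeping by applying Egorov once to the full symbol $\ii b_0\xi\tD_\tm^{-1}(\xi)+c^{(3)}\in S^0$: the transported $c^{(3)}$-contribution is already swept into $R_{2,\s}$ in \eqref{bancodisabbia2}. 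Your treatment of $B_2^{(a)}$ and the subsequent identification of the principal symbol with $\sgn(\xi)$ modulo $S^{-2}$ are correct, modulo the cosmetic choice between $\sgn(\xi)=\chi_+-\chi_-$ and the paper's $\xi|\xi|^{-1}\chi(\xi)$, which differ by a symbol in $S^{-\infty}$ (cf.\ \eqref{marechiaro3}); the algebraic verification of \eqref{algebrella} and the derivation of \eqref{estranged4} are also fine.
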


\begin{proof}
Reasoning as done in \eqref{cheodio}-\eqref{bastaa} (with $A= \op(\im b_0 \xi \tD_\tm^{-1}(\xi)+ c^{(3)})$), using  that $ b_0, c^{(3)} $ satisfy \eqref{condireiniziale}, \eqref{simboloCC33}, we have that
\begin{equation}\label{odioprofondo4}
\eqref{diagpezzoord0}=
\sum_{\s\in\{\pm\}}
\mathcal{C}_{\alpha_\s}
\op\big(\ii b_{0}\x\mathtt{D}_{\mathtt{m}}^{-1}(\x)+c^{(3)}\big)
\mathcal{C}_{\alpha_{\s}}^{-1} \Pi_{\s}
+R_1
\end{equation}
for some $R_1$ satisfying \eqref{restosmooth1}. 

Egorov  
Theorem \ref{thm:egorov}, Lemma \ref{Lemmino} and Proposition 
\ref{prop:immersionepseudo}, \eqref{simboloCC33} imply 
(as we did in \eqref{rosacroce}) that, for any 
$ \sigma = \pm $, 
\begin{equation}\label{bancodisabbia2}
\mathcal{C}_{\alpha_\s}
\op\big(\ii b_{0}\x\mathtt{D}_{\mathtt{m}}^{-1}(\x)+c^{(3)}\big)
\mathcal{C}_{\alpha_{\s}}^{-1}=
\op\big( \mathtt{g}_{\s} \big)  
+R_{2,\s} \, , 
\end{equation}
where $R_{2,\s}$ is an operator satisfying \eqref{restosmooth1}
and (recall formula \eqref{fundamentalformula}), 
\begin{equation}\label{sottomarino}
\mathtt{g}_{\s}(\vphi,x,\x)=\ii b_{0}(\vphi,x+\alpha_{\s}(\vphi,x))
\frac{\x(1+\pa_{y}\breve{\alpha}_{\s}(\vphi,y))}{\sqrt{|\x(1+\pa_{y}\breve{\alpha}_{\s}(\vphi,y))|^2+\mathtt{m}}}_{|y=x+\alpha_{\s}(\vphi,x)} \,.
\end{equation}
Then, setting $ c^{(4)}_{\s}(\vphi,x):=b_{0}(\vphi,x+\alpha_{\s}(\vphi,x)) $,
 we decompose 
(recall \eqref{cutoff})
\begin{equation}\label{GGtildino}
\begin{aligned}
& \mathtt{g}_{\s}(\vphi,x,\x)=  \ii  \, c^{(4)}_{\s}(\vphi,x)
\frac{\x}{|\x|}\chi(\x)+\widetilde{\mathtt{g}}_{\s}(\vphi,x,\x) \\
& 
\widetilde \tg_\s (\vphi,x,\x) := 
(1-\chi(\x))\tg_\s (\vphi,x,\x) 
+\im b_0(\varphi, x+\al_\s)\xi \tf_\s( \varphi, x+\al_\s,\xi) \\
& 
\mathtt{f}_{\s}(\vphi,y,\x):= \chi(\x)\Big(\frac{1+\pa_{y}\breve{\alpha}_{\s}
(\varphi, y)}{\sqrt{|\x(1+\pa_{y}\breve{\alpha}_{\s}(\varphi, y))|^2
+\mathtt{m}}}-\frac{1}{|\x|}\Big) \, . 
\end{aligned}
\end{equation}
The  functions $ c^{(4)}_{\s}(\vphi,x) =b_{0}(\vphi,x+\alpha_{\s}(\vphi,x)) $,
 are odd in the couple $(\vphi,x)$ 
since  $b_{0}$ is odd  in $x\in \T$ and even in $\varphi$ (see \eqref{simbolistep0}). 
Then using \eqref{parityalfette}
the condition \eqref{algebrella} holds true.  Moreover, by the smallness condition 
\eqref{condireiniziale} and 
\eqref{superstimaalpha}, we have
\begin{equation}\label{marechiaro1}
\|c^{(4)}_{\s}\|_{s}^{\g,\Omega_1} \lesssim_{s} \gamma^{-1}
\epsilon(s+\mu)\,.
\end{equation}
and the estimate \eqref{estranged4} for the symbol $c^{(4)} (\vphi,x,\x) $ 
defined in \eqref{simbolCC44} follows. 

We claim that the 
symbol $\widetilde{\mathtt{g}}_{\s} $ in \eqref{GGtildino} is in $ S^{-2} $
and satisfies, for some $\mu > 0 $,   
\begin{equation}\label{stimaGGtildino}
\|\widetilde{\mathtt{g}}_{\s}\|_{-2,s,p}^{\gamma,\Omega_1}\lesssim_{s,p}\epsilon(s+\mu )\,,\qquad \s\in \{\pm\}\, .
\end{equation}
Indeed, 
the symbol $(1-\chi(\x))\mathtt{g}_{\s}(\vphi,x,\x) \in S^{-\infty}$ satisfies 
\eqref{stimaGGtildino} 
by \eqref{condireiniziale}, using \eqref{ipopiccolezza} to estimate $b_{0}$, and 
using \eqref{superstimaalpha} to estimate $\alpha_{\s}$.
Then we remark that $ \mathtt{f}_{\s}(\vphi,y,\x) $ in \eqref{GGtildino} is equal to 
\[
\begin{aligned}
	\mathtt{f}_{\s}(\vphi,y,\x)& 
= \chi(\x)\frac{|\x|(1+\pa_{y}\breve{\alpha}_{\s})-
\sqrt{|\x(1+\pa_{y}\breve{\alpha}_{\s})|^2+\mathtt{m}}
}{|\x|\sqrt{|\x(1+\pa_{y}\breve{\alpha}_{\s})|^2+\mathtt{m}}}
\\&
=  \frac{- \mathtt{m}\chi(\x)}
{(|\x|(1+\pa_{y}\breve{\alpha}_{\s})+
	\sqrt{|\x(1+\pa_{y}\breve{\alpha}_{\s})|^2+\mathtt{m}})|\x|\sqrt{|\x(1+\pa_{y}\breve{\alpha}_{\s})|^2+\mathtt{m}}}\, , 
\end{aligned}
\]
and therefore, since $\chi(\x)\neq0$ implies that $|\x|>1/2$, 
for some $\mu>0$, 
$$
\| \mathtt{f}_{\s} (\vphi,x+\alpha_{\s} (\vphi,x),\xi ) \|^{\gamma,\Omega_1}_{-3,s,p}
\lesssim_{s,p}1+\gamma^{-1}\epsilon(s+\mu)\, ,
$$
and, using also \eqref{condireiniziale}, we deduce 
\eqref{stimaGGtildino}.
By Proposition
\ref{prop:immersionepseudo} with $m=-2$ we conclude 
that $\op(\widetilde\tg_{\s})$
satisfies  \eqref{restosmooth1}.

In conclusion, by \eqref{odioprofondo4}, \eqref{bancodisabbia2}, \eqref{GGtildino},
\eqref{def:cutoff} we have obtained
\begin{equation}\label{marechiaro3}
\eqref{diagpezzoord0} =
\sum_{\s\in\{\pm\}}\op\big(\ii c^{(4)}_{\sigma} \frac{\x}{|\x|}\chi(\x) \chi_\sigma (\xi) \big) 
+ R_4 = \op\big( \ii c^{(4)}(\vphi,x,\x) \big) + \op(\tr) +R_{4}
\end{equation}
where 
$c^{(4)}(\vphi,x,\x)$ is defined in \eqref{simbolCC44}  and 
\[
\begin{aligned}
& R_{4} := R_1
+\sum_{\s\in\{\pm\}} (R_{2,\s}+\op(\widetilde\tg_\s))\Pi_{\s}  	\\
& \tr := \sum_{\s\in\{\pm\}} \ii c^{(4)}_{\s}(\x|\x|^{-1}\chi(\x)\chi_{\s}(\x)-\s\chi_{\s}(\x))\,.
\end{aligned}
\]
The operator $R_{4} $ satisfies \eqref{restosmooth1}. 
Furthermore, recalling \eqref{cutoff} and \eqref{def:cutoff}
the symbols $	\x|\x|^{-1}\chi(\x)\chi_{\s}(\x)-\s\chi_{\s}(\x) $ 
are in $ S^{-\infty} $, and 
$\op(\tr)$ satisfies \eqref{restosmooth1}
by \eqref{marechiaro1} and Proposition \ref{prop:immersionepseudo}. 
In conclusion, by \eqref{marechiaro3} we deduce that 
$ B_2  =
\op\big(\ii c^{(4)}(\vphi,x,\x)\big)+R $
for some $R$ satisfying \eqref{restosmooth1}.
The operator $ R $ is reversibility and parity preserving by difference, 
because $B_2 $ and $ \ii c^{(4)}(\vphi,x,\x) $ are reversibility and parity preserving.
\end{proof}

It remains to study the term   
\eqref{pezzobrutto}.

\begin{lemma}\label{lemma3}
The  operator $F$ in \eqref{pezzobrutto} is reversibility and parity preserving 
and satisfies \eqref{restosmooth1}.
\end{lemma}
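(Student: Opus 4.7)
The plan is essentially to mimic the strategy used in the proof of Lemma \ref{lemma1} for the remainder $Q_2$ in \eqref{qu1}, but now starting from a genuinely smoothing symbol $R^{(3)}_{-\rho}\in S^{-\rho}\otimes\cM_2(\C)$ with $\rho$ chosen large enough in terms of $\tb$.

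First, the algebraic properties follow at once. By Proposition \ref{blockTotale}, $\op(R^{(3)}_{-\rho})$ is real-to-real, reversibility and parity preserving, and by construction $\Theta_1^{\pm 1}$ share the same properties (see Theorem \ref{IncredibleConjugate} and formula \eqref{mappaPhidef}). Hence $F$ is real-to-real, reversibility and parity preserving by Lemma \ref{equidefalgebra}-($vi$).

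For the tame bound I would fix $N':=\lfloor\nu/2\rfloor+4+\tb$ and choose $\rho:=\rho(\tb)$ so that $\rho\geq 2N'+2$; this determines the $\rho$ needed in the application of Proposition \ref{blockTotale}. Recalling \eqref{mappaPhidef}, \eqref{inversaPhiuno}, I would write $\Theta_1^{-1}$ as a composition involving a pure diffeomorphism block and the ``tail'' $\id+\widetilde R$ controlled by \eqref{commedia}. Then I would decompose
\[
\jap{\td_\vphi}^{q}F\jap{D} = \Big(\jap{\td_\vphi}^{q}\Theta_1\jap{D}^{-N'}\Big)\circ\Big(\jap{D}^{N'}\op(R^{(3)}_{-\rho})\jap{D}^{N'}\Big)\circ\Big(\jap{D}^{-N'}\Theta_1^{-1}\jap{D}\Big),
\]
further splitting $\Theta_1^{-1}$ into its ``diffeomorphism + smoothing tail'' form as done for $Q_2$ in \eqref{decopezz}--\eqref{agosto3}. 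For the outer factors I would use Proposition \ref{inclusionetotale} together with the bound \eqref{psitresbarre} (and \eqref{commedia} for the $\widetilde R$-piece); each of these yields a Bony-smoothing couple in $E_s$ whose $\bnorm{\langle\td_{\vphi}\rangle^{q}\cdot}_s^{\g,\Omega_1}$-norm is $\lesssim_{s,\su,\tb}\g^{-1}\epsilon(s+\mu)$, $q=0,\tb$. For the middle factor, since $\rho\geq 2N'+2$ the symbol $R^{(3)}_{-\rho}$ has order $\leq -2N'-2$, so Proposition \ref{prop:immersionepseudo} (applied with $n_1,n_2$ chosen to absorb the two $\jap{D}^{N'}$ factors) produces a Bony-smoothing couple estimated in terms of $\|R^{(3)}_{-\rho}\|^{\g,\Omega_1}_{-\rho,s+\mu,0}$, which by \eqref{R3-rho} (used with $p_*=0$) is bounded by $\epsilon(s+\mu)$.

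Finally I would combine the three couples using the algebra/tame properties of the space $E_s$, namely Lemmata \ref{tretameestimate} and \ref{tretameestimate3}, together with the smallness condition \eqref{ipopiccolezza}, to produce a single couple $\mathtt{R}^{(4)}\in E_s$ with $\mathfrak{S}(\mathtt{R}^{(4)})=F$ satisfying \eqref{restosmooth1} for $q=0,\tb$, possibly after enlarging $\mu=\mu(\tb)$. The main bookkeeping difficulty is ensuring that the three choices of $\rho$, $N'$ and the loss $\mu$ are mutually compatible, in particular that $\rho(\tb)$ is chosen once and for all large enough relative to $\nu$ and $\tb$ so that Proposition \ref{prop:immersionepseudo} applies with enough negative order to control the two cut-off factors $\jap{D}^{N'}$ appearing on either side. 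Once the choices are made, the estimate is essentially routine.
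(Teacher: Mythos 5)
Your proof is essentially sound, and the algebraic part (real-to-real, reversibility and parity preserving via Lemma \ref{equidefalgebra}-$(vi)$) is exactly the paper's argument. For the tame bound, your $\jap{D}^{\pm N'}$-sandwich combined with Propositions \ref{inclusionetotale}, \ref{prop:immersionepseudo}, the bound \eqref{psitresbarre}--\eqref{commedia}, and the algebra Lemmata \ref{tretameestimate}, \ref{tretameestimate3} is precisely the paper's strategy for the \emph{off-diagonal} entry $L\op(q_{-\rho})\overline{L}^{-1}\jap{D}$ in \eqref{pezforuidia}. Where you diverge is in applying this uniformly to the whole $2\times 2$ matrix $F$: the paper treats the diagonal entries $L\op(r_{-\rho})L^{-1}$ separately by first invoking the Egorov Theorem \ref{thm:egorov} (as in \eqref{cheodio}--\eqref{bastaa}, which it can do since those entries are genuine conjugations), extracts a principal symbol plus a remainder $T_1$, and only then applies Proposition \ref{prop:immersionepseudo} to the principal part. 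Your unified treatment avoids the Egorov step altogether, which is a mild simplification: since $R^{(3)}_{-\rho}$ is already as smoothing as needed, extracting a principal symbol buys nothing, and the direct $\jap{D}$-sandwich suffices.

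Two bookkeeping points worth fixing. First, your symmetric choice of $\jap{D}^{\pm N'}$ with $N'=\lfloor\nu/2\rfloor+4+\tb$ is off by one for the factor $\jap{D}^{-N'}\Theta_1^{-1}\jap{D}$: after splitting $\Theta_1^{-1}-\id$, applying Proposition \ref{inclusionetotale} to the block $\jap{D}^{-N'}(\cC^{-1}_{\alpha_\sigma}-\id)\jap{D}$ requires $N'-1>\lfloor\nu/2\rfloor+3+\tb$, which is equality, not strict inequality, with your $N'$. The paper sidesteps this by an asymmetric insertion $\jap{D}^{N'+1}$, $\jap{D}^{-N'-1}$ between the middle and right factors; alternatively increase $N'$ by one. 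Second, your displayed formula putting $\jap{\td_\vphi}^q$ only on the first factor is not a literal identity (the operator $\jap{\td_\vphi}^q$ only commutes through Fourier multipliers, not general factors); the intended argument must go through the Leibniz-type bound of Lemma \ref{tretameestimate3}, as you in fact say you will do. With these small adjustments your proof gives the same conclusion as the paper.
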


\begin{proof}
We recall that $ R_{-\rho}^{(3)}$ is a 
real-to-real 
matrix of reversibility and parity preserving symbols 
\[
R_{-\rho}^{(3)} =  \begin{pmatrix}
r_{-\rho}(\vphi, x, \xi) && q_{-\rho}(\vphi, x, \xi)\\ 
\overline{q_{-\rho}(\vphi, x, -\xi)} && \overline{r_{-\rho}(\vphi, x, -\xi)}\end{pmatrix}
\,,\quad r_{-\rho}, q_{-\rho} \in S^{-\rho}\, , 
\]
satisfying \eqref{R3-rho} with $\rho>0$ arbitrary large, so that 
\begin{equation}\label{coraggio}
\eqref{pezzobrutto}= \begin{pmatrix}
L \op( r_{-\rho}) L^{-1} && L \op( q_{-\rho}) \overline{L}^{-1}\\
\overline{L \op( q_{-\rho}) \overline{L}^{-1}} && \overline{L \op( r_{-\rho}) L^{-1}} 
\end{pmatrix} \, . 
\end{equation}
Reasoning exactly as done in \eqref{cheodio}-\eqref{bastaa} (with $A= \op(r_{-\rho} )$) and applying the Egorov Theorem \ref{thm:egorov} 
 we obtain that the diagonal term 
\[
L \op( r_{-\rho}) L^{-1} =  \sum_{\s\in\{\pm\}}
\op \Big( r_{-\rho} (\vphi, y, \xi (1+\pa_{y}\breve{\al}_{\s}) )_{|y= x + \alpha_\sigma} 
\chi_\sigma (\xi) \Big)   + T_{1}
\]
with $T_{1}$ satisfying \eqref{restosmooth1}. 
By Proposition \ref{prop:immersionepseudo},  for any $\rho \ge 1 $,  
also the first summands in the right hand side above satisfy  \eqref{restosmooth1}
and then  the whole diagonal term in \eqref{coraggio} satisfies \eqref{restosmooth1}.

We now prove that also the off-diagonal term in \eqref{coraggio} satisfies 
 \eqref{restosmooth1}. 
We write 
\begin{equation}\label{pezforuidia}
\jap{\td_{\vphi}}^{\tb} L \op( q_{-\rho}) \overline{L}^{-1}  \jap{D}=  \jap{\td_{\vphi}}^{\tb} L\jap{D}^{-N'}  \jap{D}^{N'}\op( q_{-\rho}) \jap{D}^{N'+1}  \jap{D}^{-N'-1}\overline{L}^{-1}  \jap{D}
\end{equation}
with $N' = \lfloor\nu/2\rfloor+4+\tb $.  
Since $L$, $\overline{L}^{-1}$ satisfy 
\eqref{psitresbarre}, 
 taking $\rho := \rho ( \tb ) 
:= \nu+10+2\tb$ (in Prop. \ref{blockTotale}), applying 
 Proposition \ref{prop:immersionepseudo} and using \eqref{R3-rho},
 we dedude that the operator in \eqref{pezforuidia} satisfies \eqref{restosmooth1}.
\end{proof}

Formulas 
\eqref{marechiaro1000}-\eqref{pezzobrutto}
together with Lemmata \ref{lemma1}, \ref{lemma2} and \ref{lemma3}
imply \eqref{elle4} with a symbol $ c^{(4)}$ 
of the form \eqref{simbolCC44}-\eqref{algebrella}
satisfying \eqref{estranged4} and 
an operator 
 $\cR^{(4)}$ satisfying \eqref{restosmooth1}. 
This concludes the proof of Proposition \ref{prop:ridord1}.

\subsection{Reduction at order  0 }
We now eliminate the dependence on $(\vphi, x)$ from the zero order 
symbol  $c^{(4)}(\vphi, x,\x)$ of the operator $ \cL_4 $ defined in \eqref{elle4}.

\begin{prop}\label{prop:ridord0} 
For any $\mathtt{b}\geq0$ there exist $\mu:=\mu(\tb)>0$, 
$\delta_{0}:=\delta_{0}(\su,\tb)>0$
such that, if \eqref{ipopiccolezza} holds with 
$\s_*\geq \mu$ and $\delta_{*}\le \delta_{0} $, then
for any $\omega\in \Omega_{1}$ (defined in \eqref{calOinfty1sec6})
the following holds.
There exists a real-to-real, reversibility and parity preserving,  invertible map 
$\Theta_2\in \mathcal{L}^{\mathtt{T}}(H^{s},H^{s})\otimes \mathcal{M}_2(\C)$,
for any $\so\leq s\leq \su$,
such that 
\begin{equation}\label{elle5}
\cL_5:= \Theta_2 \cL_4 \Theta_2^{-1}= \oo\cdot\pa_\vphi 
-\ii(1+\mathfrak{c})E \, \tD_{\mathtt{m}} - \ii E \cR^{(5)}
\end{equation}
where $\mathfrak{c}$ is given in Proposition \ref{prop:ridord1},
the operator  $ \tD_{\mathtt{m}}  $ is defined in
\eqref{TDM} and
the remainder $\cR^{(5)}$ 
is a real-to-real, reversibility  and parity preserving operator. Moreover
there is 
 $\mathtt{R}^{(5)}\in E_{s}\otimes \cM_2(\C)$ for any $\gso\le s\le \su$,  
such that $\mathfrak{S}(\mathtt{R}^{(5)})=\mathcal{R}^{(5)}$, satisfying 
 \begin{align}
  \bnorm{\langle\td_{\vphi}\rangle^{\tb}
  \mathtt{R}^{(5)}\langle D\rangle}_{s}^{\gamma,\Omega_1}
  &\lesssim_{s,\su,\mathtt{b}}\gamma^{-2}\epsilon(s+\mu)\,.
  \label{restosmooth1bis}
 \end{align}
Finally for any $\oo\in \Omega_1$ and $\so\leq s \leq \su$, 
for any $ h\in H^{s}(\T^{\nu+1},\C^{2})  $, one has
\begin{equation}\label{stimasuPhidue}
\|\Theta_2^{\pm}h\|_s^{\g, \Omega_1}\lesssim_s 
\|h\|_s^{\g, \Omega_1} 
+ \g^{-2} \epsilon(s+\mu)
\|h\|_{\so}^{\g, \Omega_1} \,.
\end{equation}
\end{prop}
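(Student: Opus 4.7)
The idea is to conjugate $\cL_4$ with $\Theta_2 := \exp(\op(G))$, where $G$ is a block-diagonal matrix of symbols of the form
\[
G(\vphi,x,\x) := \sm{g(\vphi,x,\x)}{0}{0}{\overline{g(\vphi,x,-\x)}},\qquad g\in S^0,
\]
chosen by solving a homological equation so that the zero-order $(\vphi,x)$--dependent symbol $c^{(4)}$ is canceled. This structure ensures real-to-real; the key is to select $g$ so that $\Theta_2$ is also reversibility and parity preserving.

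\textbf{Choice of the generator.} Motivated by the structure \eqref{simbolCC44}, I would look for $g$ of the form $g = g_+(\vphi,x)\,\chi_+(\x) + g_-(\vphi,x)\,\chi_-(\x)$, with $g_\s\in C^\infty(\T^{\nu+1},\R)$ to be determined. Exploiting that $\pa_\xi\tD_\tm(\xi)=\xi/\tD_\tm(\xi)$ equals $\sigma$ on $\mathrm{supp}\,\chi_\sigma$ up to a symbol in $S^{-\infty}$, a direct computation of the symbol of $[\op(G),-\ii(1+\fc)E\tD_\tm] + [\op(G),\oo\cdot\pa_\vphi]$ at order zero shows that, modulo $S^{-1}$, the conjugation removes $c^{(4)}$ from the $(+,+)$ component iff
\begin{equation}\label{homoplan}
\omega\cdot\pa_\vphi\, g_\s \ - \ \s (1+\fc)\,\pa_x\, g_\s \ = \ -\s\, c^{(4)}_\s(\vphi,x), \qquad \s\in\{\pm\}.
\end{equation}
Since $c^{(4)}_\s$ is odd in $(\vphi,x)$ by \eqref{algebrella}, the zero Fourier mode of the right hand side vanishes, so no nontrivial constant has to be introduced. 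In Fourier, the equation is solved by
\[
\widehat{g_\s}(\ell,j) \ =\ \frac{-\s \,\widehat{c^{(4)}_\s}(\ell,j)}{\ii\,(\oo\cdot\ell - \s(1+\fc)\, j)},\qquad (\ell,j)\ne (0,0),
\]
and on the Cantor set $\Omega_1$ of \eqref{calOinfty1sec6} the small divisors are bounded from below by $2\g\jap{\ell}^{-\tau}$. Standard small-divisor Lipschitz estimates then yield $\|g_\s\|^{\g,\Omega_1}_s \lesssim_s \g^{-1}\|c^{(4)}_\s\|^{\g,\Omega_1}_{s+2\tau+1} \lesssim_s \g^{-2}\epsilon(s+\mu)$ for a suitable $\mu=\mu(\tb)$, using \eqref{estranged4} and the Lipschitz bound \eqref{stimaMgotico1} on $\fc$ (which enters the derivative of the denominator). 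The parity relations in \eqref{algebrella} imply that $g_+(\vphi,x)=-g_+(-\vphi,-x)$ and $g_-(\vphi,x)=-g_+(\vphi,-x)$, so that $g(-\vphi,x,\x)=\overline{g(\vphi,x,-\x)}$ and $g(\vphi,-x,-\x)=g(\vphi,x,\x)$; by Lemma \ref{paritasuisimboli} and \ref{paritasuisimboliBIS} this makes $G$, hence $\Theta_2$, reversibility and parity preserving.

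\textbf{Construction of $\Theta_2$ and tame bounds.} Because $G\in S^0\otimes\cM_2(\C)$ with the bound above, the smallness \eqref{ipopiccolezza} (with $\s_*\ge\mu$) lets me apply Lemma \ref{ecponential pseudo diff} to define $\Theta_2:=\exp(\op(G))$ and its inverse as $\id+\op(\Phi_\pm)$ with $\Phi_\pm\in S^0\otimes\cM_2(\C)$. The tame bound \eqref{stimasuPhidue} on $\Theta_2^{\pm}$ is then an immediate consequence of the pseudo-differential action Lemma \ref{sobaction} applied to $\Phi_\pm$, together with the estimate $\|g\|^{\g,\Omega_1}_{0,s,p}\lesssim_{s,p}\g^{-2}\epsilon(s+\mu)$ (for all $p\ge 0$, since $g$ is essentially independent of $\x$).

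\textbf{Conjugated operator.} Writing
\[
\Theta_2\,\cL_4\,\Theta_2^{-1} = \cL_4 + [\op(G),\cL_4] + \sum_{k\ge 2}\tfrac{1}{k!}\mathrm{ad}^k_{\op(G)}[\cL_4],
\]
the commutator $[\op(G),\cL_4]$ produces, at order zero, exactly $-c^{(4)}$ (plus a contribution of the form $\fc''\cdot\mathrm{const}$ which vanishes by the zero-mean property), cancelling the offending symbol. All the remaining contributions are, by construction, symbols of order $\le -1$: they come from
\begin{itemize}
\item the subprincipal part of the expansion $\op(g)\#\tD_\tm - \tD_\tm\#\op(g)$ (Lemma \ref{lemma:Commutator}, with $\|\cdot\|^{\g,\Omega_1}_{-1,s,p}\lesssim_{s,p}\g^{-2}\epsilon(s+\mu)$);
\item the higher order terms $\mathrm{ad}^k_{\op(G)}[\cL_4]$, $k\ge 2$, which are of negative order by a repeated use of Lemma \ref{lemma:Commutator};
\item the conjugation of the already smoothing part $-\ii E\cR^{(4)}$ by $\Theta_2$, controlled by Lemmata \ref{tretameestimate}--\ref{dito2} in the Bony-smoothing framework.
\end{itemize}
By Proposition \ref{prop:immersionepseudo} each order $-1$ pseudo-differential symbol produced above corresponds to a Bony-smoothing couple whose $\bnorm{\jap{\td_\vphi}^\tb\cdot\jap D}^{\g,\Omega_1}_s$ norm is bounded by $\g^{-2}\epsilon(s+\mu)$; while $\mathtt R^{(4)}$ gives a contribution bounded by $\g^{-1}\epsilon(s+\mu)$, and its conjugates are handled using the algebra and adjoint-action Lemmata in Section~\ref{sec:modulotame}. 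Collecting everything, $\Theta_2\,\cL_4\,\Theta_2^{-1}$ takes the form \eqref{elle5} with a remainder $\cR^{(5)}$ admitting a representative $\mathtt R^{(5)}\in E_s\otimes\cM_2(\C)$ satisfying \eqref{restosmooth1bis}. The reversibility, parity preserving and real-to-real properties of $\cR^{(5)}$ follow from Lemma \ref{lemma:generalitaparity} and the analogous properties of $\cL_4$ and $\Theta_2$.

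\textbf{Main obstacle.} The delicate point is not the formal homological equation --- that is just a small-divisor problem for a constant-coefficient transport operator with the straightened characteristic speed $(1\!+\!\fc)$, already available thanks to Proposition \ref{prop:ridord1}. The main technical challenge is quantitative: tracking the exact $\g$--loss in all the remainders (only $\g^{-2}$ is allowed by \eqref{restosmooth1bis}, and the Lipschitz variation of the divisor can a priori produce a worse $\g^{-3}$ if handled naively) and verifying the \emph{tame} bound \eqref{restosmooth1bis} for the Bony-smoothing couple $\mathtt R^{(5)}$ uniformly in $\so\le s\le\su$, for both $q=0$ and $q=\tb$. This requires care in applying the algebra Lemmata \ref{tretameestimate}, \ref{tretameestimate3} together with the weighted bounds on $\jap D^{m_1}\mathtt R^{(4)}\jap D^{m_2}$, distributing the $\jap D$ factors correctly between the two pseudo-differential conjugations and the smoothing part, and using the interpolation inequality \eqref{interpolotutto} to restore the product structure $\epsilon(\so+\mu)\epsilon(s+\mu)$ into a single $\epsilon(s+\mu)$ factor via \eqref{ipopiccolezza}.
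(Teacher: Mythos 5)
Your proof plan follows the paper's strategy faithfully: both construct $\Theta_2$ as the exponential of a block-diagonal pseudo-differential operator of order zero, whose generator $g = g_+\chi_+ + g_-\chi_-$ solves a constant-coefficient transport homological equation on the Cantor set $\Omega_1$, and both absorb the lower-order remainders into the Bony-smoothing framework via Lie expansions. However, there are two correlated sign/parity errors that would make the construction fail as written.

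First, the homological equation \eqref{homoplan} has the wrong sign on the right-hand side. Tracking the $(+,+)$ block of $\cL_4$: from \eqref{elle4}, the zero-order term is $-\ii\cdot\ii\, c^{(4)} = +c^{(4)}$, and the conjugation $e^{\op(g)}(\cdot)e^{-\op(g)}$ produces, at order zero, $[\op(g),\oo\cdot\pa_\vphi - \ii(1+\fc)\tD_\tm]\approx -\op(\oo\cdot\pa_\vphi g) + (1+\fc)\op(\pa_x g\,\xi\tD_\tm^{-1}(\xi))$. Requiring cancellation gives $\oo\cdot\pa_\vphi g_\s - \s(1+\fc)\pa_x g_\s = +\s\, c^{(4)}_\s$, not $-\s\,c^{(4)}_\s$; with your sign the conjugation would double $c^{(4)}$ rather than remove it. Compare with the paper's \eqref{equazgeneplus}--\eqref{equazgeneminus}.

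Second, your parity bookkeeping is inconsistent: since $c^{(4)}_\s$ is \emph{odd} in $(\vphi,x)$ by \eqref{algebrella} and the transport operator $\oo\cdot\pa_\vphi-\s(1+\fc)\pa_x$ reverses parity, the solution $g_+$ must be \emph{even} in $(\vphi,x)$ (i.e. $g_+(-\vphi,-x)=g_+(\vphi,x)$), and the correct companion relation is $g_-(\vphi,x)=+g_+(\vphi,-x)$. With the stated relations $g_+(-\vphi,-x)=-g_+(\vphi,x)$, $g_-(\vphi,x)=-g_+(\vphi,-x)$, one computes $g(\vphi,-x,-\xi)=-g(\vphi,x,\xi)$, so $\op(g)$ (hence $\Theta_2$) is \emph{not} parity preserving, contradicting the stated conclusion. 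Finally, a smaller imprecision: $\xi\tD_\tm^{-1}(\xi)-\s$ on $\operatorname{supp}\chi_\s$ is a symbol in $S^{-2}$ (because $\tm>0$, cf. \eqref{patgarret}--\eqref{estranged2}), not $S^{-\infty}$; this does not affect the argument since any negative order would do. Once these signs are repaired, your plan reproduces the paper's Lemma \ref{eqordine0} and the ensuing Lie-expansion estimates.
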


In the rest of the section we prove Proposition \ref{prop:ridord0}.
\begin{lemma}\label{eqordine0}
For any $\om\in \Omega_{1}$ there exists a real valued, reversibility and parity preserving  symbol $d :=d(\vphi,x,\x)\in S^{0}$ and a real valued reversible, parity preserving symbol  $r_{-2} := r_{-2}(\vphi,x,\x)\in S^{-2}$ satisfying  for  any $p\ge 0$, $s\geq \so$  the estimate
\begin{equation}\label{stimaGGplus}
\|d\|_{0,s,p}^{\gamma,\Omega_1}, \|r_{-2}\|_{-2,s,p}^{\gamma,\Omega_1}\lesssim_{s,p}\gamma^{-2}\epsilon(s+\mu)\,,
\end{equation}
such that  
\begin{equation}\label{equaztotale0}
\omega\cdot\pa_{\vphi}d(\vphi,x,\x)-(1+\mathfrak{c})\pa_{x}d(\vphi,x,\x)\x\tD_{\mathtt{m}}^{-1}(\x)
=c^{(4)}(\vphi,x,\x) 
+ r_{-2}(\vphi,x,\x) 
\end{equation}
where $c^{(4)}(\vphi,x,\x)$ is the symbol defined in
\eqref{simbolCC44}-\eqref{algebrella}.
\end{lemma}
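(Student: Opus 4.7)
The key observation is that for $|\xi|\geq 1/2$ the multiplier $\xi\tD_\tm^{-1}(\xi)=\xi/\sqrt{\xi^2+\tm}$ agrees with $\sign(\xi)=\chi_+(\xi)-\chi_-(\xi)$ up to a symbol in $S^{-2}$, so that the homological equation \eqref{equaztotale0} naturally splits along the two characteristics $\chi_\pm$. I therefore search $d$ in the form
\[
d(\vphi,x,\xi):=d_+(\vphi,x)\chi_+(\xi)+d_-(\vphi,x)\chi_-(\xi),
\]
with $d_\pm$ real valued functions on $\T^{\nu+1}$; this ansatz gives $d\in S^0$ with uniform control in $p$ since $\chi_\pm$ are Fourier multipliers (Lemma \ref{stima composizione}-(iv)). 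Writing $\xi\tD_\tm^{-1}(\xi)=\chi_+(\xi)-\chi_-(\xi)+t(\xi)$ where a direct computation shows $t\in S^{-2}$, and using $\chi_+\chi_-\in S^{-\infty}$ (Remark \ref{rmk:cutoff}-(ii)) together with $\chi_\pm^2=\chi_\pm$ mod $S^{-\infty}$, one gets $\xi\tD_\tm^{-1}(\xi)\chi_\pm(\xi)=\pm\chi_\pm(\xi)$ modulo $S^{-2}$. Substituting the ansatz into \eqref{equaztotale0} and matching the leading pieces on each $\chi_\pm$, the equation decouples into two constant-coefficient transport equations on $\T^{\nu+1}$:
\[
(\omega\cdot\pa_\vphi-(1+\mathfrak c)\pa_x)d_+=c^{(4)}_+,\qquad (\omega\cdot\pa_\vphi+(1+\mathfrak c)\pa_x)d_-=-c^{(4)}_-,
\]
and leaves an explicit remainder $r_{-2}$ built from $(1+\mathfrak c)(\pa_x d_\pm)$ times the $S^{-2}$ symbols $t(\xi)\chi_\pm$ and $\chi_+\chi_-\in S^{-\infty}$.

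\textbf{Solvability on $\Omega_1$ and small-divisor estimates.} By the parity \eqref{algebrella} the functions $c^{(4)}_\pm$ are odd in $(\vphi,x)$, hence have zero average on $\T^{\nu+1}$; in Fourier coordinates the solutions are then uniquely defined by
\[
\widehat{d}_\pm(\ell,j):=\frac{\widehat{c^{(4)}_\pm}(\ell,j)}{i(\omega\cdot\ell\mp(1+\mathfrak c)j)}\quad\text{for }(\ell,j)\neq(0,0),\qquad\widehat{d}_\pm(0,0):=0.
\]
The denominators are bounded below by $2\gamma\langle\ell\rangle^{-\tau}$ on $\Omega_1$ (the case $j=0,\ \ell\neq 0$ is covered by \eqref{calOinfty1sec6}). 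The sup norm loses one factor of $\gamma^{-1}\langle\ell\rangle^{\tau}$; for the weighted Lipschitz norm in \eqref{sobolevpesata} I differentiate the divisor, using that by \eqref{stimaMgotico1} and the smallness \eqref{ipopiccolezza} the quantity $\gamma|\mathfrak c|^{\mathrm{lip}}\lesssim \epsilon(\so+\mu)\ll 1$, which yields a further $\gamma^{-1}$ and at most one extra $\langle\ell\rangle^{\tau}$. Combined with the a priori bound \eqref{estranged4} on $c^{(4)}$ this gives
\[
\|d\|_{0,s,p}^{\gamma,\Omega_1}\lesssim_{s,p}\gamma^{-1}\|c^{(4)}\|_{0,s+2\tau,0}^{\gamma,\Omega_1}\lesssim_{s,p}\gamma^{-2}\epsilon(s+\mu),
\]
uniformly in $p\geq 0$ (by the remark above about Fourier multipliers). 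The analogous bound for $r_{-2}\in S^{-2}$ follows at once from the tame product Lemma \ref{stima composizione}, since $r_{-2}$ is a finite sum of $(1+\mathfrak c)\pa_x d_\pm$ multiplied by explicit $S^{-2}$ or $S^{-\infty}$ Fourier multipliers.

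\textbf{Algebraic properties.} Reality of $d_\pm$ follows from reality of the divisors and of $c^{(4)}_\pm$. Translating the parity \eqref{algebrella} into Fourier coefficients, together with the identities $\chi_\pm(-\xi)=\chi_\mp(\xi)$, one checks the relations $d(-\vphi,x,\xi)=\overline{d(\vphi,x,-\xi)}$ and $d(\vphi,-x,-\xi)=d(\vphi,x,\xi)$, which by Lemma \ref{paritasuisimboli} say that $d$ is reversibility and parity preserving. Since the operator $d\mapsto \omega\cdot\pa_\vphi d-(1+\mathfrak c)\pa_x d\cdot\xi\tD_\tm^{-1}(\xi)$ sends reversibility/parity preserving symbols to reversible/parity preserving ones, and $c^{(4)}$ is reversible and parity preserving, we conclude by difference that $r_{-2}$ has the same properties.

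\textbf{Main obstacle.} The technical point is obtaining precisely the $\gamma^{-2}$ loss (not $\gamma^{-3}$ or worse) and a Sobolev loss $\mu=\mu(\tb)$ independent of $s$. This hinges on the improved Lipschitz estimate \eqref{stimaMgotico1} on $\mathfrak c$: thanks to it, the Lipschitz variation of the small divisor $\omega\cdot\ell\mp(1+\mathfrak c)j$ in $\omega$ is bounded by $(|\ell|+|j|)(1+\gamma^{-1}\epsilon(\so+\mu))=O(|\ell|+|j|)$, so that only one additional $\gamma^{-1}$ arises when differentiating the reciprocal. All the algebraic checks and the $S^{-2}$ character of $r_{-2}$ are, by contrast, routine once the ansatz and the decomposition $\xi\tD_\tm^{-1}=\chi_+-\chi_-+t$ are in place.
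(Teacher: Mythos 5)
Your proof is correct and takes essentially the same route as the paper: the ansatz $d=d_+\chi_++d_-\chi_-$, the reduction to the two constant-coefficient transport equations along the two characteristics, and the observation that $\chi_\sigma(\xi)\,\xi\,\tD_\tm^{-1}(\xi)-\sigma\chi_\sigma(\xi)$ is an $S^{-2}$ multiplier producing the remainder $r_{-2}$. The only cosmetic difference is that the paper defines $d_-(\vphi,x):=d_+(\vphi,-x)$ and verifies it satisfies the second transport equation, whereas you solve both via Fourier division directly (the two are equivalent because of \eqref{algebrella}); similarly the paper checks $\tr_\sigma\in S^{-2}$ by rationalizing $\xi\tD_\tm^{-1}-\sigma$ rather than via the decomposition $\xi\tD_\tm^{-1}=\chi_+-\chi_-+t$, but the resulting remainder symbol is identical.
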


\begin{proof}
We look for a solution $ d(\vphi,x,\x) $ of \eqref{equaztotale0} of the form
\begin{equation}\label{geneGG}
d(\vphi,x,\x):=d_{+}(\vphi,x)\chi_{+}(\x)+d_{-}(\vphi,x)\chi_{-}(\x) \, , \quad 
d_{-}(\vphi,x) := d_{+}(\vphi,-x)  \, .
\end{equation}
Let 
$d_{+}(\vphi,x)$ be the solution of the equation
\begin{equation}\label{equazgeneplus}
(\omega\cdot\pa_{\vphi}-(1+\mathfrak{c})\pa_{x})d_{+}(\vphi,x)
=c_{+}^{(4)}(\vphi,x) 
\end{equation}
where $ c_{+}^{(4)} $ is the function   in \eqref{simbolCC44}, which  
in Fourier expansion (recall \eqref{Fou1}) amounts to 
\[
\ii (\omega\cdot\ell-(1+\mathfrak{c})j )(d_{+})_{\ell,j}=(c_{+}^{(4)})_{\ell,j}\,,
\quad \forall \, (\ell,j)\in \Z^{\nu+1}  \, .
\]
By \eqref{algebrella} we deduce that $(c_{+}^{(4)})_{0,0}\equiv0$, 
and so, for any $\omega\in \Omega_1$ defined in \eqref{calOinfty1sec6},
\begin{equation}\label{estranged}
(d_{+})_{\ell,j}=\frac{(c_{+}^{(4)})_{\ell,j}}{\ii (\omega\cdot\ell-(1+\mathfrak{c})j )}\,,
\quad \forall\, (\ell,j)\in \Z^{\nu+1} \setminus \{(0,0)\} \, , \quad 
(d_{+})_{0,0} := 0 
\, . 
\end{equation}
Since $ c_{+}^{(4)} (\varphi,x)  $ is odd in $ (\varphi,x) $ by \eqref{algebrella} then 
\begin{equation}\label{algebrageneplus}
d_{+}(-\vphi,-x)=d_{+}(\vphi,x)\,.
\end{equation}
The function $d_{-}(\vphi,x) =d_{+}(\vphi,-x) $ satisfies,
using 
\eqref{equazgeneplus} and 
\eqref{algebrella},  the equation
\begin{equation}\label{equazgeneminus}
(\omega\cdot\pa_{\vphi}+(1+\mathfrak{c})\pa_{x})d_{-}(\vphi,x)
=-c_{-}^{(4)}(\vphi,x) \,.
\end{equation}
The symbol $ d  $ defined in 
in \eqref{geneGG} \eqref{estranged} 
 is real, reversibility and parity preserving according to 
\eqref{reverpreserSimbolo}-\eqref{paritySimbolo},
using \eqref{algebrageneplus}, \eqref{def:cutoff} and \eqref{algebrella}. 
Furthermore   \eqref{stimaGGplus} holds 
by \eqref{estranged}, the bounds on the small divisors in  \eqref{calOinfty1sec6}, \eqref{stimaMgotico1},
the estimate \eqref{estranged4} on $c^{(4)}(\vphi,x,\x)$. 

In order to prove that the symbol $ d $
in \eqref{geneGG}, \eqref{estranged} solves \eqref{equaztotale0} 
 we expand  (recall \eqref{def:cutoff} and \eqref{TDM})
\begin{equation}\label{patgarret}
\chi_{\s}(\x)\x\tD_{\mathtt{m}}^{-1}(\x)=\s \chi_{\s}(\x)+\tr_{\s}(\x)\,,\quad \s\in\{\pm\} \, , 
\end{equation}
where $ \mathtt{r}_- (\xi ) = - \mathtt{r}_+ (- \x)  $,  
\begin{equation}\label{estranged2}
\mathtt{r}_{\s}(\x) := \chi_{\s}(\x)\big(\x\tD_{\mathtt{m}}^{-1}(\x)-\s 1\big)
=-\mathtt{m}\chi_{\s}(\x) \sigma 
\tD_{\mathtt{m}}^{-1}(\x)\big( \sigma \x+ \sqrt{\x^{2}+\mathtt{m}}\big)^{-1}
\end{equation}
is a symbol in  $ S^{- 2} $, because  
$\tm>0 $ and,  by the 
definition of the cut-off functions $\chi_{\s}(\x)$ in \eqref{def:cutoff}, 
we have that $\chi_{\s}(\x)\neq0$ implies $\s\x\geq -1/2 $.
The symbol  $d(\vphi,x,\x)$ in 
\eqref{geneGG} satisfies, by \eqref{simbolCC44},\eqref{patgarret}, 
$$
\begin{aligned}
& \omega\cdot\pa_{\vphi}d(\vphi,x,\x)
-(1+\mathfrak{c})\pa_{x}d(\vphi,x,\x)\x\tD_{\mathtt{m}}^{-1}(\x)-c^{(4)}(\vphi,x,\x) 
\\ &
= 
\big(\omega\cdot\pa_{\vphi}d_{+}(\vphi,x)
-(1+\mathfrak{c})\pa_{x}d_+(\vphi,x)-c^{(4)}_{+}(\vphi,x)\big)\chi_{+}(\x)
\\&
\   +
\big(\omega\cdot\pa_{\vphi}d_{-}(\vphi,x)
+(1+\mathfrak{c})\pa_{x}d_{-}(\vphi,x)+c^{(4)}_{-}(\vphi,x)\big)\chi_{-}(\x)
\\ &
\  -(1+\mathfrak{c})\pa_{x}d_+(\vphi,x)\mathtt{r}_{+}(\x)
-(1+\mathfrak{c})\pa_{x}d_{-}(\vphi,x)\mathtt{r}_{-}(\x)
\\&
\stackrel{\eqref{equazgeneplus}, \eqref{equazgeneminus}}{=}
-(1+\mathfrak{c})\pa_{x}d_+(\vphi,x)\mathtt{r}_{+}(\x)
-(1+\mathfrak{c})\pa_{x}d_{-}(\vphi,x)\mathtt{r}_{-}(\x) =: r_{-2}(\vphi,x,\x)  \, .
\end{aligned}
$$
Note that $ r_{-2} $
is a symbol in $ S^{-2}$, as $ \mathtt{r}_{\sigma}(\x)  $ 
in \eqref{estranged2}, and it satisfies \eqref{stimaGGplus} as 
well as $ d$.
The explicit expression of $ r_{-2}(\vphi,x,\x) $ shows that 
 $r_{-2} $ is real valued, reversible and parity preserving.
\end{proof}

\begin{proof}[{\bf Proof of Proposition \ref{prop:ridord0}}]
We conjugate $ \cL_4 $ via 
the real-to-real, reversibility and parity preserving invertible  map 
\begin{equation}\label{mappapsi2}
\Theta_{2}:=\sm{\Psi_2}{0}{0}{\overline{\Psi_2}}  
\qquad \text{where} \qquad 
\Psi_{2}:= \Psi_{2}^1 \, , \quad 
 \Psi_{2}^\tau := \exp\big\{\tau \op(d(\vphi,x,\x))\big\} \, , 
\end{equation}
and $d(\vphi,x,\x)$   is  the zero order symbol
defined  in Lemma \ref{eqordine0}.
Thanks to the algebraic properties of $d$ in Lemma \ref{eqordine0}, 
the map $\Psi_{2}$
is reversibility and parity preserving.
Lemma 
\ref{ecponential pseudo diff}, \eqref{stimaGGplus}, and 
\eqref{ipopiccolezza} imply that 
\begin{equation}\label{psi2tau}
\| \Psi_2^\tau - \id \|_{0,s,0}^{\gamma,\cO} \lesssim_{s}  \| d \|_{0,s,0}^{\gamma,\cO}
\lesssim_{s} {\g^{-2}}\epsilon(s+\mu)
\end{equation}
 uniformly in  $ \tau \in [0,1] $ and
 thus, by Lemma \ref{sobaction}, 
we deduce \eqref{stimasuPhidue}.
Furthermore, by Proposition \ref{prop:immersionepseudo}, 
there are $\mathtt{B}^\pm 
\in E_{s}\otimes\cM_2(\C)$, $\gso\leq s\leq \su$, such that
$\mathfrak{S}(\mathtt{B}^\pm )= \Theta_2^{\pm 1}$
for any  $m_1,m_2\in \R$, $m_1+m_2=0$ with   $|m_i|\le 1$ one has
for some $\mu := \mu (\tb) $ (possibly larger that the one in \eqref{stimaGGplus})
\begin{equation}\label{psi22tresbarre}
\bnorm{\langle \td_{\vphi}\rangle^{q}\langle D\rangle^{-m_1}(\mathtt{B}^{\pm}-\id)\langle D\rangle^{-m_2}}_{s}
\lesssim_{s,\su,\tb}{\g^{-2}}\epsilon(s+\mu)\,,\;\; q=0\,,\tb\, .
\end{equation}
In view of  \eqref{elle4} the conjugate operator is 
\begin{align}
\Theta_2 \cL_4 \Theta_2^{-1} \! &= \! \Theta_2 \oo\cdot\pa_\vphi \Theta_2^{-1}\notag
\\&-\ii \Theta_2E \op\big((1+\mathfrak{c})\tD_{\mathtt{m}}(\x) 
+\sm{\ii  c^{(4)}(\vphi,x,\x)}{0}{0}{\! \! \!\! \overline{\ii  c^{(4)}(\vphi,x,-\x)}}
\big)\Theta_2^{-1}
-\Theta_2 \ii E\cR^{(4)}\Theta_2^{-1} \notag 
\\
&=\sm{F}{0}{0}{\overline{F}}-\ii E\sm{G_1}{0}{0}{\overline{G_1}}
-\ii E\sm{G_2}{0}{0}{\overline{G_2}}-\ii E \Theta_2\cR^{(4)}\Theta_2^{-1}  \label{pezcon4}
\end{align}
where
\begin{equation}\label{cipresso}
F:=\Psi_2\omega\cdot\pa_{\vphi}\Psi_2^{-1}, \,  G_1:= \Psi_2\mathtt{X}\Psi_2^{-1},
\,  
\mathtt{X}:=\op\big((1+\mathfrak{c})\tD_{\mathtt{m}}(\x)\big), \, 
G_2:= \Psi_2\mathtt{Y}\Psi_2^{-1}, 
\,  \mathtt{Y}:=\op\big(\ii  c^{(4)}\big)\,.
\end{equation}
By  \eqref{psi22tresbarre} and \eqref{restosmooth1}
the operator
$\Theta_2\mathcal{R}^{(4)}\Theta_2^{-1}$ satisfies 
\eqref{restosmooth1bis}.
We now study the other terms.
A Lie expansion  
gives  that
\begin{equation}\label{cipresso0}
F = \Psi_2 \ \oo\cdot\pa_\vphi \Psi_2^{-1} 
=\omega\cdot\pa_{\vphi}-\op\big(\omega\cdot\pa_{\vphi}d\big)+\mathcal{Q}_1
\end{equation}
where
\[
\mathcal{Q}_1:= - \int_0^1 (1-\tau) \Psi_2^\tau \, 
{\rm ad}_{\op(d)} [  \op (\omega\cdot\pa_{\vphi}d)] \Psi_2^{-\tau} d \tau \, .
\]
By Lemma \ref{lemma:Commutator} the symbol
 $d\star (\omega \cdot\pa_{\vphi}d) $ 
 is in $ S^{-1}$.
The
pseudo-differential norm $\| \mathcal{Q}_1 \|_{-1, s, 0}^{\g, \cO}$ can be estimated by 
\eqref{psi2tau}, \eqref{stimasharp} and, 
by Proposition \ref{prop:immersionepseudo} 
we deduce that the operator $\mathcal{Q}_1$ 
satisfies \eqref{restosmooth1bis}.

Similarly, recalling \eqref{cipresso},  by a Lie expansion we get 
\begin{equation}\label{cipresso2}
G_1  = \Psi_2\mathtt{X}\Psi_2^{-1} =\mathtt{X}+[\op(d),\mathtt{X}]+\mathcal{Q}_2\,, 
\qquad 
G_2 = \Psi_2\mathtt{Y}\Psi_2^{-1}=\mathtt{Y}+\mathcal{Q}_3\,,
\end{equation}
where
\[
\mathcal{Q}_2:=
\int_0^1 (1-\tau) \Psi_2^\tau \, 
{\rm ad}^2_{\op(d)} [ \mathtt{X}  ] \Psi_2^{-\tau} d \tau  \, , 
\qquad 
\mathcal{Q}_3:= 
\int_0^1  \Psi_2^\tau \, 
{\rm ad}_{\op(d)} [ \mathtt{Y}  ] \Psi_2^{-\tau} d \tau  \, . 
\]
By \eqref{psi2tau}, \eqref{stimasharp} and, 
by Proposition \ref{prop:immersionepseudo} we deduce that $\mathcal{Q}_2$ and $\mathcal{Q}_3$
satisfy  \eqref{restosmooth1bis}.

Moreover, recalling \eqref{cipresso},  
Lemma \ref{lemma:Commutator} (see also \eqref{espstar}-\eqref{espstar2})
and \eqref{stimaGGplus} imply that 
\begin{equation}\label{cipresso3}
{[}\op(d),\mathtt{X}{]} = \op ( d \star ((1+\mathfrak{c})\tD_{\mathtt{m}}(\x))) 
=\op\big(\ii (1+\mathfrak{c})(\pa_{x}d)(\vphi,x,\x)\x\tD_{\mathtt{m}}^{-1}(\x)+\tr_{2} \big)
\end{equation}
for some $\tr_{2}\in S^{-1}$ satisfying
$ \|\tr_{2}\|_{-1,s,p}^{\gamma,\Omega_1}\lesssim_{s,p}\gamma^{-2}\epsilon(s+\mu) $.
Then by Proposition \ref{prop:immersionepseudo}, 
the operator $\op(\tr_{2})$ satisfies  
\eqref{restosmooth1bis}. In conclusion, 
by  \eqref{cipresso}, \eqref{cipresso0}, \eqref{cipresso2}
and \eqref{cipresso3} we deduce that 
\begin{equation}\label{FiG1G2}
F-\ii( G_{1 }+G_{2}) =\omega\cdot\pa_{\vphi}
-\ii \op\big((1+\mathfrak{c})\tD_{\mathtt{m}}(\x)\big)
+\op\big( r\big) + \cQ_{4}
\end{equation}
where $\cQ_{4}$ satisfies \eqref{restosmooth1bis}
and  
$ r :=
-\omega\cdot\pa_{\vphi}d+(1+\mathfrak{c})\pa_{x}d \, 
\x\tD_{\mathtt{m}}^{-1}(\x)+c^{(4)} \,  =  -r_{-2} $ 
by Lemma \ref{eqordine0} (see  \eqref{equaztotale0}). 
By \eqref{stimaGGplus} and Proposition \ref{prop:immersionepseudo}
the operator $\op(r_{-2})$ 
satisfies \eqref{restosmooth1bis}. 
Finally  $\Theta_2\cR^{(4)}\Theta_2^{-1}$ in \eqref{pezcon4}
satisfies \eqref{restosmooth1bis} by 
\eqref{psi22tresbarre} and \eqref{restosmooth1} and we 
conclude by \eqref{pezcon4}, \eqref{FiG1G2} that $ \cL_5 $ has the form  \eqref{elle5}
with an operator $\cR^{(5)} $ satisfying \eqref{restosmooth1bis}. 

The operator $\cL_{5}$ is real-to-real, reversible and parity preserving 
as well as $\cL_{4}$ since  $\Theta_{2}$ is real-to-real, reversibility and parity preserving and, 
by difference,  $\ii E \cR_{5}$ is so.
\end{proof}

\section{KAM reducibility}\label{sec:kam}

We complete the  reducibility of the operator $\mathcal{L}_{5}$
in \eqref{elle5} in Section \ref{sec:measure}  by applying 
the  abstract reducibility Theorem \ref{thm:reduKG} below.
It applies to a family of 
real-to-real, reversible and parity preserving operators 
$\widehat{\mathcal{O}}\ni\omega\to \mathcal{M}_0 :=\mathcal{M}_0(\omega)$,
defined for any $ \omega $ in 
a compact subset $\widehat{\mathcal{O}}\subseteq\Lambda =[-1/2,1/2]^{\nu}$,   
of the form
\begin{equation}\label{emme0}
\cM_0 := \oo\cdot \pa_\vphi -\ii E  \cD_0 -\ii E \cP_0\,,
\end{equation}
satisfying the   assumptions below. We use the block matrix representation of operators
of Section  \ref{sec:matrici22},
the parameters $\gamma,\tau,s_*,\su$   in \eqref{costanti}-\eqref{costantiGAMMA}
and we set
\begin{equation}\label{def:bbb}
\tb :=6\tau +6\,.
\end{equation}

\smallskip
\noindent
$\bullet$ {\bf (Unperturbed Normal form).} The operator $\mathcal{D}_0 :=((\cD_0)_{\s}^{\s'})_{\s,\s'\in \{\pm\}}\in \cL^{\tT}(H^s,H^{s-1})\otimes \cM_2(\C)$ 
is defined as 
\begin{equation}\label{D00D}
\begin{aligned}
& (\cD_0)_{-}^{-} = (\cD_0)_{+}^{+}:= \diag_{j\in \N_{0}} d_{\vec{\jmath}}^{(0)} \,, \quad (\cD_0)_{+}^{-}=(\cD_0)_{-}^{+}\equiv0\,,  
\\
&d_{\vec{\jmath}}^{(0)}:= 
(1 + \fc) \Id\tD_\mathtt{m}(j)\,,\; j\in\N\,, \quad d_{\vec{0}}^{(0)}:= 
(1 + \fc) \sqrt{\tm} \, , 
\end{aligned}
\end{equation}
where $\tD_\mathtt{m}(j)$ is  in \eqref{TDM}
and $\mathfrak{c} : \Lambda \to \R $ is a Lipschitz function satisfying 
\begin{equation}\label{stimamasterdeltino}
 \gamma^{-1}|\mathfrak{c}|^{\gamma,\Lambda}\le \upsilon_{0}\,.
\end{equation}
Thus it is in normal form, real-to-real, reversibility and parity preserving 
according to Definition \ref{def:normalform} and Lemma \ref{rmk:algebra2}.

\smallskip
\noindent
$\bullet$ {\bf (Perturbation).}
For any $ s_* \le s\le \su$
the operator $\mathcal{P}_0\in \cL^{\mathtt{T}}(H^{s}, H^{s})\otimes \cM_2(\C)$ is real-to-real, reversibility and parity preserving
and $1$-smoothing in the sense that
there exists
 $\mathtt{P}_0\in E_{s}\otimes \cM_2(\C)$ 
such that $\mathfrak{S}(\mathtt{P}_0)=\cP_0$  and
\begin{equation}\label{def:epsilon0}
	\begin{aligned}
		\eps_0(s)&:=
		\g^{-3/2}
		\bnorm{ 
			\tP_0 \jap{D}}^{\g^{3/2},\widehat{\mathcal{O}}}_{s}<+\infty \,,\quad 
		\eps_0(s, \tb):=\g^{-3/2}
		\bnorm{
			\jap{\td_{\vphi}}^\tb  \tP_0 \jap{D}}^{\g^{3/2}, \widehat{\mathcal{O}}
		}_{s}<+\infty\,.
	\end{aligned}
\end{equation}

\begin{rmk}\label{allerta}
By Definition \ref{defn:japphi}, formula \eqref{rmk:commuES}  and  \eqref{od} it results 
$\eps_0(s)\leq \eps_0(s,\tb)$.
\end{rmk}

The main result of this section is the following.

\begin{thm}{\bf (KAM Reducibility).}\label{thm:reduKG}
Fix $\tau, \gso$ as in \eqref{costanti},  $\mathtt{b}>0$ as in \eqref{def:bbb}.
There
exist $\upsilon_{0} \in (0, \tfrac{1}{2})$,  $C_0\geq1$, such that, 
for any $\gamma,\su$ as in \eqref{costantiGAMMA},
if  $\mathcal{M}_0$ in \eqref{emme0} satisfies \eqref{stimamasterdeltino}, \eqref{def:epsilon0} and 
\begin{equation}\label{PiccolezzaperKamredDP}
{C_0}\eps_0(\gso,\tb)\le {\upsilon}_0\,, 
\end{equation}
then for any $\gso\leq s\leq \su$ the following holds: 
\begin{itemize}
\item[(i)] \textbf{(Normal form).} For any $\omega\in \Lambda$ there exists 
a real-to-real, reversibility and parity preserving  operator $\mathcal{D}_{\infty}=\mathcal{D}_{\infty}(\omega)$
in normal form 
of the form
\begin{equation}\label{Finalnormalform}
	\begin{aligned}
	&	(\cD_\infty)_{-}^{-}=(\cD_\infty)_{+}^{+}:= \diag_{j\in \N_{0}} d_{\vec{\jmath}}^{(\infty)}\,,
		\quad (\cD_\infty)_{+}^{-}=(\cD_\infty)_{-}^{+}\equiv0\, , 	\\
	&	d_{\vec{\jmath}}^{(\infty)}:= d_{\vec{\jmath}}^{(0)}+\mathtt{r}_{\vec{\jmath}}^{(\infty)}\,,\quad 
		\mathtt{r}_{\vec{\jmath}}^{(\infty)}:=  (\mathtt{r}_{\infty})_{j}^{j} \Id + (\mathtt{r}_{\infty})_{j}^{-j} S
		\, , \   j\in \mathbb{N}\,,
	\quad \mathtt{r}_{\vec{0}}^{(\infty)}:=  (\mathtt{r}_{\infty})_{0}^{0} \,,
	\end{aligned}
\end{equation}
where $d_{\vec{\jmath}}^{(0)}$ is in  \eqref{D00D}, 
the matrix $S$ is in \eqref{involutionCompl}, 
and  
\begin{equation}\label{stimeautovalfinali}
	\begin{aligned}
		&(\tr_{\infty})_{j}^{\s j}\;: \Lambda\to \R\,,
		\quad 
		\sup_{j\in\N_{0}, \s\in \{\pm\}} \langle j\rangle |(\tr_{\infty})_{j}^{\s j}|^{\gamma^{3/2},\Lambda}
		\lesssim \gamma^{3/2}\eps_0(\gso)\,.
	\end{aligned}
\end{equation}
We denote by 
\begin{equation}\label{autovalorifinali}
\lambda^{(\infty)}_{j,\pm} :=(1 + \fc) \tD_\mathtt{m}(j)
+(\tr_{\infty})_{j}^{j}\pm(\tr_{\infty})_{j}^{-j}, \  j\in\N\,,\ \ 
\lambda^{(\infty)}_{0,\pm} : =(1 + \fc) \sqrt{\tm}
+(\tr_{\infty})_{0}^{0}  \, ,
\end{equation}
the eigenvalues of the matrix $ d_{\vec{\jmath}}^{(\infty)} $.
 \item[(ii)] \textbf{(Conjugacy)}. Define the sets
\begin{equation}\label{calOinfty2sec7}
\begin{aligned}
\calO_{\infty}:=&\Omega_{\infty}^{+}\cap\Omega_{\infty}^{-}\,,\\
\Omega_{\infty}^{+}:=&\Big\{\omega\in \widehat{\calO}\,:\, 
|\omega\cdot\ell+\lambda_{j,\eta}^{(\infty)}+\lambda_{k,\eta}^{(\infty)}|\geq 
\frac{2\gamma}{\langle\ell\rangle^{\tau}}\,,
\;\; j,k\in\N_{0}\,,\; \ell\in \Z^{\nu}\,,\eta \in \{\pm\} 
\Big\}
\\
\Omega_{\infty}^{-}:=&\Big\{\omega\in \widehat{\calO}\,:\, 
|\omega\cdot\ell+\lambda_{j,\eta}^{(\infty)}-\lambda_{k,\eta}^{(\infty)}|\geq 
\frac{2\gamma^{3/2}}{\langle\ell\rangle^{\tau}}\,,\; \eta\in \{\pm\} \, , 
\\&\qquad\qquad\qquad\qquad \qquad\qquad \qquad  
\;\; j,k\in\N_{0}\,,\; \ell\in \Z^{\nu}\,,
\;\;(\ell, j,k)\neq(0, j, j)
\Big\} \, . 
\end{aligned}
\end{equation}
For any 
$\omega\in\calO_{\infty}$ 
there is a real-to-real, reversibility and parity preserving, bounded, 
invertible, linear operator 
$\Phi_{\infty}(\omega)\in\mathcal{L}^{\mathtt{T}}(H^s, H^{s})\otimes \cM_2(\C)$, 
with bounded inverse $\Phi_{\infty}^{-1}(\omega)$, that conjugates 
the operator $\mathcal{M}_0$ 
in \eqref{emme0} to 
\begin{equation}\label{Linfinito}
\begin{aligned}
&\mathcal{M}_{\infty}(\omega):=\Phi_{\infty} (\omega) \circ \mathcal{M}_0 \circ \Phi_{\infty}^{-1}(\omega)=\omega\cdot \partial_{\varphi}-\ii E \mathcal{D}_{\infty}\,.
\end{aligned}
\end{equation}
Moreover, there exists $\mathtt{\Phi}_{\infty}\in E_{s}\otimes \cM_2(\C)$ 
with $\fS(\mathtt{\Phi}_{\infty})=\Phi_{\infty}$ such that
\begin{equation}\label{grano}
\bnorm{ \mathtt{\Phi}_{\infty}^{\pm1} - \id }^{\gamma^{3/2}, \calO_{\infty}}_{s}\lesssim
\eps_0(s,\tb)\,,\qquad \forall \, \gso\leq s\leq \su\,.
\end{equation}
For any $ \varphi \in {\mathbb T}^\nu $, for any $ \gso\leq s\leq \su $,  
\begin{equation}\label{granophi}
\| (\mathtt{\Phi}_{\infty}^{\pm1} (\varphi) - \id ) u \|_{H^s_x} \lesssim
\eps_0(s_0,\tb) \| u \|_{H^s_x} + \eps_0(s,\tb) \| u \|_{H^{s_0}_x} \,.
\end{equation}
\end{itemize}

\end{thm}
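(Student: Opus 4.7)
The plan is to run a quadratically convergent KAM iteration. I would construct a sequence of reducing transformations $\Phi_n = e^{\Psi_n}$ conjugating $\mathcal{M}_n = \omega\cdot\partial_\vphi - \ii E\cD_n - \ii E \cP_n$ to $\mathcal{M}_{n+1}$ of the same form, with $\cD_n$ in the normal form of Lemma \ref{rmk:algebra2} (so block-diagonal with $2\times 2$ blocks $d_{\vec\jmath}^{(n)} = (1+\fc)\tD_\tm(j)\Id + \tr_{\vec\jmath}^{(n)}\Id + \tr_{\vec\jmath}^{(n),-} S$) and with $\cP_n = \fS(\mathtt{P}_n)$ a real-to-real, reversibility and parity preserving, $1$-smoothing perturbation represented by $\mathtt{P}_n \in E_s\otimes \cM_2(\C)$. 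The iterative bounds would take the form
\begin{equation*}
	\bnorm{\mathtt{P}_n\jap{D}}^{\g^{3/2},\calO_n}_{\gso} \leq \eps_n(\gso)\,, \qquad \bnorm{\jap{\td_\vphi}^\tb \mathtt{P}_n\jap{D}}^{\g^{3/2},\calO_n}_{s} \leq \eps_n(s,\tb)\,,
\end{equation*}
with $\eps_n(\gso)$ decaying super-exponentially (like $N_n^{-a}$ with $N_n = N_0^{\chi^n}$, $\chi=3/2$) and $\eps_n(\su,\tb)$ growing only mildly, on nested Cantor sets $\calO_n$ cut out by finitely many second-order Melnikov conditions involving the intermediate eigenvalues $\lambda^{(n)}_{j,\pm}$ defined as in \eqref{autovalorifinali}.

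\textbf{One KAM step.} At the $n$-th step I would solve the homological equation
\begin{equation*}
	\omega\cdot\partial_\vphi \Psi_n - \ii[E\cD_n, \Psi_n] = \ii E\bigl(\Pi_{N_n}\cP_n - [\cP_n]\bigr)
\end{equation*}
by passing to the $2\times 2$ block representation of Section \ref{sec:matrici22}. The operator $E\cD_n$ being block-diagonal, the equation decouples over Fourier indices into $2\times 2$ linear systems whose spectrum over $(\ell,j,k,\sigma,\sigma')$ is $\omega\cdot\ell + \sigma\lambda^{(n)}_{j,\eta} - \sigma'\lambda^{(n)}_{k,\eta'}$; on $\calO_n$ these are bounded below by $2\g\jap\ell^{-\tau}$ (sum case, $\sigma' = -\sigma$) and $2\g^{3/2}\jap\ell^{-\tau}$ (difference case, $\sigma' = \sigma$, with the resonant indices $(\ell,j,k)=(0,j,j)$ excluded, yielding the correction $[\cP_n]$). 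Inverting the $2\times 2$ systems and applying Lemma \ref{rmk:maggiorantevsSupblocchi} to compare the majorant norm with the block sup-norm, I expect
\begin{equation*}
	\bnorm{\mathtt\Psi_n\jap D}^{\g^{3/2},\calO_n}_{\gso} \lesssim \g^{-3/2}N_n^{\tau}\eps_n(\gso)\,, \qquad \bnorm{\jap{\td_\vphi}^\tb\mathtt\Psi_n\jap D}^{\g^{3/2},\calO_n}_{s} \lesssim \g^{-3/2}N_n^{2\tau+1}\eps_n(s,\tb)\,,
\end{equation*}
where $\mathtt\Psi_n$ is a Bony-smoothing representative of $\Psi_n$. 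Real-to-real, reversibility and parity preservation are inherited via Lemma \ref{lemma:generalitaparity}.

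\textbf{Conjugation and closing the iteration.} Setting $\Phi_n = e^{\mathtt\Psi_n}$, well-defined in $E_s\otimes\cM_2(\C)$ by Lemma \ref{inv3sbarrette}, I would expand $\Phi_n\mathcal{M}_n\Phi_n^{-1}$ via a Lie series and use Lemma \ref{dito2} to estimate $\ad_{\mathtt\Psi_n}^k[\ii E\cD_0]$, $\ad_{\mathtt\Psi_n}^k[\cP_n]$ and the commutators with $\cD_n - \cD_0$; the crucial bracket with $\tD_\tm$ is $1$-smoothing because the $\jap D$ factor is absorbed into the norm $\bnorm{\mathtt\Psi_n\jap D}_s$. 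Combined with the smoothing estimate $\bnorm{\Pi_{N_n}^\perp\mathtt{P}_n\jap D}_s \leq N_n^{-\tb}\bnorm{\jap{\td_\vphi}^\tb\mathtt{P}_n\jap D}_s$ from Lemma \ref{dito}, this yields the quadratic bound
\begin{equation*}
	\eps_{n+1}(\gso) \lesssim N_n^{-\tb}\eps_n(\gso,\tb) + N_n^{C\tau}\g^{-3/2}\eps_n(\gso)^2\,,
\end{equation*}
together with a mild linear growth $\eps_{n+1}(\su,\tb) \leq \eps_n(\su,\tb)(1 + C N_n^{C\tau}\g^{-3/2}\eps_n(\gso))$. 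The choice $\tb = 6\tau + 6$ and $\chi = 3/2$ in \eqref{def:bbb} makes the iteration close under the smallness \eqref{PiccolezzaperKamredDP}. The normal form correction $\cD_{n+1} = \cD_n + [\cP_n]$ gives Lipschitz changes $\jap j|\tr^{(n+1)}_{j} - \tr^{(n)}_{j}|^{\g^{3/2},\Lambda} \lesssim \g^{3/2}\eps_n(\gso)$ (extended to all of $\Lambda$ by Kirszbraun), which sum to Lipschitz limits $\tr^{(\infty)}_j$ satisfying \eqref{stimeautovalfinali}. Composing the $\Phi_n$ on $\calO_\infty$ produces $\mathtt\Phi_\infty$ with \eqref{grano} via a telescoping argument using Lemma \ref{tretameestimate}, and \eqref{granophi} follows from Remark \ref{rmk:sezphi} applied to $\jap{\td_\vphi}^\tb\mathtt\Phi_\infty$. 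To check that $\calO_\infty$ defined with the \emph{final} eigenvalues is contained in each $\calO_n$, I would use the uniform convergence $\sum_n|\lambda^{(n+1)}_{j,\pm} - \lambda^{(n)}_{j,\pm}|\jap j \lesssim \g^{3/2}\eps_0(\gso)$ and absorb the tail in the factor of $2$ between $\g$ (resp.\ $\g^{3/2}$) and $2\g$ (resp.\ $2\g^{3/2}$) in \eqref{calOinfty2sec7}.

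\textbf{Main obstacle.} The hardest point will be maintaining, uniformly along the iteration, both the $1$-smoothing structure embodied in the factor $\jap D$ and the $\tb$-commutator bounds needed to apply the smoothing Lemma \ref{dito} at every step. The homological equation gains exactly one derivative on $\cP_n$ through the divisor $\sim\lambda^{(n)}_{j}$, which is absorbed in the $\jap D$ factor; however, the Lie expansion generates commutators $[\cD_0,\mathtt\Psi_n]$ involving the unbounded $\tD_\tm$, and one must show these remain $1$-smoothing with tame $E_s$-bounds. This is where the Banach-algebra structure $(E_s,\bnorm{\cdot}_s,\circ)$ of Definition \ref{Es} and the identity $[\tD_\tm,\mathtt\Psi_n] = [\tD_\tm\jap D^{-1},\mathtt\Psi_n\jap D]\jap D + \text{(lower order)}$ become essential; a secondary delicate point is that the weaker Melnikov condition on differences costs $\g^{-3/2}$ at every step, so one needs the super-exponential decay rate $\chi = 3/2$ precisely to ensure $\g^{-3/2}\eps_n(\gso) \to 0$.
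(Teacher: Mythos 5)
Your overall strategy coincides with the paper's: a quadratic KAM iteration with $N_n=N_0^{(3/2)^n}$, homological equations decoupled into $2\times 2$ blocks via Lemma \ref{rmk:maggiorantevsSupblocchi}, preservation of reality/reversibility/parity via Lemma \ref{lemma:generalitaparity}, Kirszbraun extension of the normal-form corrections, a telescoping product for $\mathtt{\Phi}_\infty$, and the inclusion $\calO_\infty \subseteq \cap_n\calO_n$ by absorbing the tail of the eigenvalue corrections into the gap between $\gamma^{3/2}$ and $2\gamma^{3/2}$. These match the paper's Lemmata \ref{lem:homoeq}, \ref{lem:kamstep}, Proposition \ref{IterativeKAM}, and the concluding argument.

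There is, however, a genuine gap in your treatment of the Lie expansion. You propose to estimate $\ad^k_{\mathtt\Psi_n}[\ii E\cD_0]$ directly via Lemma \ref{dito2} and claim that the bracket with $\tD_\tm$ ``is $1$-smoothing because the $\jap D$ factor is absorbed into the norm $\bnorm{\mathtt\Psi_n\jap D}_s$''. This is not correct: $\cD_0$ is an operator of order $1$ and $\mathtt\Psi_n$ is of order $-1$, so the commutator $[\cD_0,\mathtt\Psi_n]$ is generically of order $0$ (bounded but \emph{not} $1$-smoothing). The proposed identity $[\tD_\tm,\mathtt\Psi_n] = [\tD_\tm\jap D^{-1},\mathtt\Psi_n\jap D]\jap D + \text{(lower order)}$ does not hold, since $[\jap D^{-1},\mathtt\Psi_n]$ is itself only of order $-1$ (not $-2$) and the ``lower order'' remainder is in fact of the same order as the term you want to bound. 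Moreover, Lemma \ref{dito2} requires both arguments to lie in $E_s\otimes\cM_2(\C)$, which $\cD_0$ does not.

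The paper avoids this by never estimating $\ad_\cS[-\ii E\cD]$ on its own: it pairs it with $\omega\cdot\pa_\vphi$ and substitutes the homological equation \eqref{omoeq1} inside the Lie series. Concretely, $\ad_\cS[\omega\cdot\pa_\vphi-\ii E\cD]=-\omega\cdot\pa_\vphi\cS+[\cS,-\ii E\cD]=\ii E(\Pi_N\cP-[\cP])$, which is manifestly $1$-smoothing because the right-hand side is built from $\cP$. Hence the remainder $\cP_+$ in \eqref{gattamora} involves only adjoint actions on bounded ($1$-smoothing) objects, and the unbounded diagonal operator never appears inside a commutator to be estimated. The order-$-1$ structure of $[\cS,\cD]$ is not a generic fact about $1$-smoothing operators; it is a consequence of the specific form of $\cS$ as the solution of the homological equation, and the substitution is precisely what makes this explicit. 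Without this step, the iterative bound $\eps_{n+1}(\gso)\lesssim N_n^{-\tb}\eps_n(\gso,\tb)+N_n^{C\tau}\g^{-3/2}\eps_n(\gso)^2$ does not close.
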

In the rest of this section we prove Theorem \ref{thm:reduKG}.

\begin{rmk}
The constants $C_0,{\upsilon}_0$ and the ones appearing in the estimates 
\eqref{stimeautovalfinali}-\eqref{grano} only depend on $\nu,\tau,\gso$ fixed in \eqref{costanti}
and not on the Sobolev index $s,s_1$, nor on the diophantine constant $\gamma$.
As is common in KAM schemes the dependence on $\gamma$ appears only in the definition of the
 small parameter $\eps_0$ in \eqref{def:epsilon0}.
The independence on $s,s_1$ is instead a feature of the norm $\bnorm{ \cdot}_{s}$
which satisfies strong algebra/interpolation estimates (see Lemma \ref{tretameestimate}).
\end{rmk}

\subsection{KAM step}
Let $ N \geq 2 $ and consider a 
real-to-real, reversible and parity  preserving operator
\begin{equation}\label{Msteppino}
\mathcal{M}=\omega\cdot\pa_{\vphi}-\ii E \mathcal{D}-\ii E\mathcal{P}\,,
\end{equation}
where $\mathcal{D}$
is a real-to-real, reversibility and parity  preserving operator 
in normal form  
\begin{equation}\label{def:Dold}
(\mathcal{D})_{+,\vec{\jmath}}^{+,\vec{\jmath}} \, (0)=
d_{\vec{\jmath}}^{(0)}+
\tr_{\vec{\jmath}}\,,\quad
\mathtt{r}_{\vec{\jmath}}:= \mathtt{r}_{j}^{j} \Id + \mathtt{r}_{j}^{-j} S\,,
 \quad j\in \N\,,\qquad   \mathtt{r}_{\vec{0}}:=  (\mathtt{r}_{\infty})_{0}^{0}\,,
\end{equation}
where  $\tr_{j}^{\eta j} : \Lambda\to \R$, $\eta\in \{\pm\}$ are  Lipschitz functions
satisfying   (recall \eqref{def:epsilon0})
\begin{equation}\label{kamass1}
\sup_{j\in\N_{0},\eta\in \{\pm\}}\langle j\rangle |\tr_{j}^{\eta j}|^{\gamma^{3/2},\Lambda}
\lesssim \g^{\frac{3}{2}} \eps_{0}(\gso)\,.
\end{equation}
Furthermore the operator $\mathcal{P}$ is defined
in a compact subset $\cO\subset\widehat{\calO}\subseteq\Lambda$,  
it is a real-to-real, reversibility   and parity preserving  and
there exists $\tP\in E_s\otimes \cM_2(\C)$ such that $\fS(\tP)=\cP$ and
\begin{equation}\label{stimePPstepBIS}
	\begin{aligned}
		\eps(s):=\gamma^{-3/2}
		\bnorm{  \tP \jap{D}}^{\gamma^{3/2} ,\cO}_{s} < + \infty \,,\quad
		\eps(s,\tb):=\gamma^{-3/2}
		\bnorm{\jap{\td_{\vphi}}^\tb  \tP \jap{D}}^{\gamma^{3/2} ,\cO}_{s} < + \infty \,  . 
	\end{aligned}
\end{equation} 
We  define 
\begin{equation}\label{calOhomoeq1}
	\begin{aligned}
		\mathcal{O}_{+}&:=\Omega_{+}^{+}\cap\Omega_{+}^{-}\,,
		\\
		\Omega_{+}^{+}&:=\left\{\omega\in \mathcal{O}\,:\, 
		|\omega\cdot\ell+\lambda_{j,\eta}+\lambda_{k,\eta}|\geq 
		\frac{\gamma}{\langle\ell\rangle^{\tau}}\,, \quad \eta\in \{\pm\},
		\;\; |\ell|\leq N, j,k\in\N_{0}
		\right\}
		\\
		\Omega_{+}^{-}&:=\Big\{\omega\in \mathcal{O}\,:\, 
		|\omega\cdot\ell+\lambda_{j,\eta}-\lambda_{k,\eta}|\geq 
		\frac{\gamma^{3/2}}{\langle\ell\rangle^{\tau}}\,,\quad \eta\in \{\pm\},
		\\&\qquad \qquad\qquad \qquad \qquad\quad
		\;\; |\ell|\leq N, j,k\in\N_{0}\,,\; \ell\in \Z^{\nu}\,,
		\;\;(\ell, j,k)\neq(0, j, j)
		\Big\}
	\end{aligned}
\end{equation}
where (recall \eqref{D00D})
\begin{equation}\label{calOhomoeq2}
	\begin{aligned}
		&\lambda_{j,\pm}:=(1 + \fc) \tD_\mathtt{m}(j)+\tr_{j}^{j} \pm \tr_{j}^{-j}\,,
		\quad \forall\, j\in\N\,, \quad 
		\lambda_{0,\pm} :=(1 + \fc) \sqrt{\tm}
		+\tr_{0}^{0}\,,
	\end{aligned}
\end{equation}
are the eigenvalues of $\cD$.

\begin{lemma}{\bf (Homological equation).}\label{lem:homoeq}
Consider 
$\cP$ satisfying \eqref{stimePPstepBIS}.
For any $\omega\in \mathcal{O}_{+}$ defined in \eqref{calOhomoeq1}
there exists 
\[
\mathcal{S}=(\mathcal{S}_{\s}^{\s'}(\vphi))_{\s,\s'\in \{\pm\}}
\in \mathcal{L}^{\mathtt{T}}(H^{s}, H^{s})\otimes\mathcal{M}_{2}(\C)\,,\;\;\;
\forall\gso\leq s\leq \su \, , 
\]
such that
\begin{equation}\label{omoeq1}
-\omega\cdot\pa_{\vphi}\mathcal{S}_{\s}^{\s'}(\vphi)
+ \ii  \s \mathcal{D}_{\s}^{\s}\mathcal{S}_{\s}^{\s'}(\vphi)- \ii 
\s' \mathcal{S}_{\s}^{\s'}(\vphi)\mathcal{D}_{\s'}^{\s'}
=\ii \s\Big( (\Pi_{N}\mathcal{P})_{\s}^{\s'}(\vphi)- [\mathcal{P}]_{\s}^{\s'}\Big)\,,
\end{equation}
for any $\s,\s'\in \{\pm\}$, 
where $\Pi_{N}$ is defined in  \eqref{def:proj}, and 
$[\cdot ]$ denotes the projection on normal forms (see Definition \ref{def:normalform}).
Moreover
there exists $\mathtt{S}\in E_s\otimes \cM_2(\C)$ such that $\mathfrak{S}(\mathtt{S})=\mathcal{S}$
and 
\begin{equation}\label{solenapoli}
\begin{aligned}
\bnorm{  \mathtt{S} 
\jap{D}}^{\g^{3/2} ,\cO_{+}}_{s}\lesssim N^{2\tau+1}\eps(s)\,,\quad 
\bnorm{ \langle\td_{\vphi}\rangle^{\tb}  \mathtt{S} 
\jap{D}}^{\g^{3/2} ,\cO_{+}}_{s}
\lesssim N^{2\tau+1}\eps(s,\tb)\,.
\end{aligned}
\end{equation}
Finally one has $\Pi_{N}\mathtt{S}=\mathtt{S}$ so that
$\bnorm{ \langle\td_{\vphi}\rangle^{b}  \mathtt{S} 
\jap{D}}^{\g^{3/2} ,\cO_{+}}_{s}
\lesssim N^{2\tau+1+b}\eps(s)$ for any $b>\nu/2$.
The operator  $\mathcal{S}$ is real-to-real, reversibility and parity preserving.
\end{lemma}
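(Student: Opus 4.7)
The plan is to solve the homological equation \eqref{omoeq1} by Fourier division, exploiting the $2\times 2$ block decomposition of Section \ref{sec:matrici22}. First I would Fourier-expand both sides: for each $\ell\in\Z^\nu$, $j,k\in\N_0$ and $\s,\s'\in\{\pm\}$, equation \eqref{omoeq1} becomes the Sylvester-type matrix identity
\begin{equation*}
-\ii\big(\oo\cdot\ell\big)\mathcal{S}_{\s,\vec{\jmath}}^{\s',\vec k}(\ell)+\ii\s\,\big(d^{(0)}_{\vec{\jmath}}+\mathtt{r}_{\vec{\jmath}}\big)\mathcal{S}_{\s,\vec{\jmath}}^{\s',\vec k}(\ell)-\ii\s'\,\mathcal{S}_{\s,\vec{\jmath}}^{\s',\vec k}(\ell)\big(d^{(0)}_{\vec k}+\mathtt{r}_{\vec k}\big)=\ii\s\,\big(\Pi_N\mathcal{P}-[\mathcal{P}]\big)_{\s,\vec{\jmath}}^{\s',\vec k}(\ell).
\end{equation*}
By Lemma \ref{algebra2} the parity preserving blocks $d^{(0)}_{\vec{\jmath}}+\mathtt{r}_{\vec{\jmath}}$ and $(\Pi_N\cP-[\cP])_{\s,\vec{\jmath}}^{\s',\vec k}(\ell)$ are simultaneously diagonalized by $\fU$; in the tilded variables $\widetilde{(\cdot)}:=\fU^{-1}(\cdot)\fU$ the equation splits into scalar identities
\begin{equation*}
-\ii\,\big(\oo\cdot\ell-\s\lambda_{j,\eta}+\s'\lambda_{k,\eta}\big)\,\widetilde{\mathcal{S}}{}_{\s,\eta j}^{\s',\eta k}(\ell)=\ii\s\,\widetilde{\big(\Pi_N\mathcal{P}-[\mathcal{P}]\big)}{}_{\s,\eta j}^{\s',\eta k}(\ell),\qquad \eta\in\{\pm\}.
\end{equation*}
Crucially only matched parities $\eta=\eta'$ appear, which is precisely why the small-divisor conditions in \eqref{calOhomoeq1} are symmetric in $\eta$.

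For $\omega\in\cO_+$ and $(\ell,j,k)$ in the support of $\Pi_N\cP-[\cP]$ (the term $-[\cP]$ removes precisely the resonant indices $\ell=0$, $j=k$, $\s=\s'$) the divisor is non-zero and bounded below by $\gamma\jap{\ell}^{-\tau}$ in the sum-case $\s=-\s'$ and by $\gamma^{3/2}\jap{\ell}^{-\tau}$ in the difference-case $\s=\s'$. I would then divide, transform back by $\fU$, and use Lemma \ref{rmk:maggiorantevsSupblocchi} to estimate the majorant norm of $\cS$ via that of $\widecheck{\cS}$ built from the sup-norms of the $2\times 2$ blocks. Each factor $1/\delta$ loses $\jap{\ell}^\tau/\gamma^{3/2}\le N^\tau/\gamma^{3/2}$ in sup-norm; the Lipschitz computation $|\Delta_{12}(1/\delta)|\lesssim|\ell|/\delta^2$ contributes an extra $N^{\tau+1}/\gamma^{3/2}$, yielding the total loss $N^{2\tau+1}$ in the $\gamma^{3/2}$-weighted norm once one factor of $\gamma^{3/2}$ is absorbed into the weight from \eqref{stimePPstepBIS}. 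To upgrade the operatorial bound to the couple norm \eqref{solenapoli} I would apply the Bony decomposition of Section \ref{sec:modulotame} directly to $\cS\jap{D}$, exactly as was done to build $\tP$ from $\cP\jap{D}$. The estimate with $\jap{\td_\vphi}^\tb$ follows verbatim by multiplying each Fourier coefficient by $\jap{\ell}^\tb$, since $\jap{\td_\vphi}^\tb$ commutes with Fourier division and the norm of $\jap{\td_\vphi}^\tb\tP\jap{D}$ is supplied by hypothesis.

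The main obstacle is the algebraic bookkeeping in passing to the diagonalized basis: one must verify that, thanks to the parity preserving structure (Lemma \ref{algebra2}) together with reversibility preservation of $\cD$ and $\cP$, the simultaneous diagonalization really produces scalar equations indexed only by matched $\eta=\eta'$, so that the small-divisor set $\cO_+$ in \eqref{calOhomoeq1} exactly covers the denominators that arise. Once this is established, the algebraic properties of $\cS$ -- real-to-real, reversibility preserving and parity preserving -- are inherited from those of the right hand side $\ii\s(\Pi_N\cP-[\cP])$ via the characterizations \eqref{bolena1bis}--\eqref{bolena3bis}, because the divisors $\oo\cdot\ell-\s\lambda_{j,\eta}+\s'\lambda_{k,\eta}$ are real and respect the sign symmetries $(\s,\ell)\mapsto(-\s,-\ell)$ and $(\s,j,k)\mapsto(\s,-j,-k)$.
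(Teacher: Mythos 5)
Your Fourier-division strategy, the diagonalization by $\fU$ following Lemma \ref{algebra2}, the observation that parity preservation forces $\widetilde{\cP}_{\s,\eta j}^{\s',\eta'k}(\ell)=0$ when $\eta\neq\eta'$, the small-divisor bookkeeping (sup losing $\gamma^{-3/2}N^\tau$ and Lipschitz losing a further $N^{\tau+1}$ via the two-case analysis on $\langle \s|j|-\s'|k|\rangle$), and the inheritance of the algebraic properties via \eqref{bolena1quatuor}--\eqref{bolena3quatuor} all match the paper's argument.

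However, the step where you produce the couple $\tS\in E_s\otimes\cM_2(\C)$ is wrong, and this is the one genuinely delicate point of the lemma. You propose to take the Bony decomposition of $\cS\jap{D}$ ``exactly as was done to build $\tP$ from $\cP\jap{D}$''. But $\tP$ is a \emph{hypothesis} of the KAM step, not the Bony decomposition of $\cP\jap{D}$: beyond the initialisation, $\tP_n$ is built inductively by formula \eqref{gattamora}, and there is no reason it coincides with the Bony couple of $\cP_n\jap{D}$. More to the point, to estimate the ultraviolet piece $(\cS\jap{D})^U$ in $\cM^{\mathtt T}(H^{\gso},H^s)$ one needs control of the $s$-decay norm of $\cS\jap{D}$ (Lemma \ref{lem:ilfreddo}), hence of $\cP\jap{D}$; but the hypothesis \eqref{stimePPstepBIS} gives only $\bnorm{\tP\jap{D}}_s=\sup_p|M_\tP\jap{D}|_{p,p}+|R_\tP\jap{D}|_{\gso,s}$, and the ultraviolet part of a majorant-bounded operator $M_\tP\jap{D}$ is in general \emph{not} smoothing, so this quantity does not bound the decay norm. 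The paper circumvents this by a different construction: write $\cP=M_\tP+R_\tP$ using the given couple, solve the homological equation \emph{separately} for $M_\tP$ and for $R_\tP$ (division by scalars on matrix entries commutes with any splitting of $\cP$ into two operators), obtaining $M_\tS$ and $R_\tS$, and then set $\tS:=(M_\tS,R_\tS)$. The partial order $\widecheck{M}_\tS\preceq CN^\tau\gamma^{-3/2}\widecheck{M}_\tP$ and likewise for $R_\tS$ then gives \eqref{solenapoli} directly via Lemma \ref{rmk:maggiorantevsSupblocchi}, with no reference to decay norms. You should replace the Bony step with this componentwise construction; the rest of your argument, including the $\jap{\td_\vphi}^\tb$ estimates and the final remark about $\Pi_N\tS=\tS$, then goes through unchanged.
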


\begin{proof}
Recalling  the notation \eqref{supermatrice}-\eqref{supermatrice2}, 
in order to find a solution of \eqref{omoeq1}, we have to solve 
\begin{equation}\label{omoeq2}
-\ii\omega\cdot\ell \mathcal{S}_{\s,\vec{\jmath}}^{\s',\vec{k}}(\ell)
+
\ii \s\mathcal{D}_{\s,\vec{\jmath}}^{\s,\vec{\jmath}}\,
\mathcal{S}_{\s,\vec{\jmath}}^{\s',\vec{k}}(\ell)
-
\ii\s' \mathcal{S}_{\s,\vec{\jmath}}^{\s',\vec{k}}(\ell)\,
\mathcal{D}_{\s',\vec{k}}^{\s',\vec{k}}
=
\ii \s \mathcal{P}_{\s,\vec{\jmath}}^{\s',\vec{k}}(\ell) 
\end{equation}
for any $ | \ell | \leq N $, and, 
 if $\s\neq\s'$  for any $ j,k\in\N_{0}$, 
or, if $\s=\s'$, for any  $(\ell,j,k)\neq(0,j,j)$. 
Otherwise we set $  \mathcal{S}_{\s,\vec{\jmath}}^{\s',\vec{k}}(\ell) := 0 $. 
In order to solve \eqref{omoeq2} we diagonalize it 
by conjugating with the matrix $\fU$ introduced in Remark \ref{algebra2}. 
We define $\widetilde{\mathcal{P}}_{\s,\vec{\jmath}}^{\s',\vec{k}}(\ell)$, $ \widetilde{\mathcal{S}}_{\s,\vec{\jmath}}^{\s',\vec{k}}(\ell)$  following formula \eqref{contilde} as well as 
\[
\widetilde{\mathcal{D}}_{-,\vec{\jmath}}^{-,\vec{\jmath}}(0):= \mathfrak{U}^{-1}\mathcal{D}_{-,\vec{\jmath}}^{-,\vec{\jmath}}\,(0)\mathfrak{U}\
=
\widetilde{\mathcal{D}}_{+,\vec{\jmath}}^{+,\vec{\jmath}}(0)
=
\left(
\begin{matrix}
	\lambda_{j,+} & 0 \\ 0 & \lambda_{j,-}
\end{matrix}
\right)\,, \  j\in\N\,, \quad
\widetilde{\mathcal{D}}_{-,\vec{\jmath}}^{-,\vec{\jmath}}(0) :=
{\mathcal{D}}_{-,\vec{\jmath}}^{-,\vec{\jmath}}(0) = \lambda_{0,\pm}  \, , 
\]
where $\lambda_{j,\pm}$ are defined as in \eqref{calOhomoeq2}.  
Then   equation \eqref{omoeq2} is equivalent to 
\begin{equation}\label{eqhodopou}
-\ii\omega\cdot\ell \widetilde{\mathcal{S}}_{\s,\eta j}^{\s', \eta' k}(\ell)
+\Big(\ii \s \lambda_{j,\eta}-\ii\s' \lambda_{k,\eta'}\Big)
\widetilde{\mathcal{S}}_{\s,\eta j}^{\s', \eta' k}(\ell)
=
\ii \s \widetilde{\mathcal{P}}_{\s,\eta j}^{\s', \eta' k}(\ell)\, , \quad  \forall \eta, \eta' \in \{\pm\} \, .
\end{equation}
Since
$ \widetilde{\mathcal{P}}_{\s,\eta j}^{\s', \eta' k}(\ell) = 0 $ if $ \eta \eta' = - 1 $,
 the solution of \eqref{eqhodopou} is
\begin{equation}\label{ipogeo}
\widetilde{\mathcal{S}}_{\s,\eta j}^{\s', \eta k}(\ell)=
\frac{ \s \widetilde{\mathcal{P}}_{\s,\eta j}^{\s', \eta k}(\ell)}{-\omega\cdot\ell
+ \s \lambda_{j,\eta}-\s' \lambda_{k,\eta}}\,, \quad  \widetilde{\mathcal{S}}_{\s,\eta j}^{\s', -\eta k}(\ell)=0 \, , 
\end{equation}
for any $ | \ell | \leq N $, and, 
 if $\s\neq\s'$   for any $ j,k\in\N_{0}$, 
or, if $\s=\s'$,  for any  $(\ell,j,k)\neq(0,j,j)$.

 By construction one has the equivalence 
\begin{equation}\label{ciprovo}
	\|\widetilde{\mathcal{P}}_{\s,\vec{\jmath}}^{\s', \vec{k}}(\ell)\|_{\infty} 
	 \sim \|{\mathcal{P}}_{\s,\vec{\jmath}}^{\s', \vec{k}}(\ell)\|_{\infty}   
\end{equation}
where
$\|\cdot\|_{\infty}$ is the sup operator norm on (at most)  $2\times2$ matrices.

Setting for convenience 
\begin{equation}\label{ipogeo3}
	\psi_{\ell,j,k}^{\s,\s',\eta}:=\psi_{\ell,j,k}^{\s,\s',\eta}(\omega):=- \omega\cdot\ell
	+ \s \lambda_{j,\eta}(\omega)-\s' \lambda_{k,\eta}(\omega)\,,
\end{equation}
we note that  $\psi_{\ell,j,k}^{\s,\s',\eta}$ is real valued and
 $	\psi_{\ell,j,k}^{\s,\s',\eta} = - \psi_{-\ell,j,k}^{-\s,-\s',\eta}$.
 By construction $\widetilde{\mathcal{S}}_{\s,\vec \jmath}^{\s', \vec{k}}(\ell)$ is diagonal 
 and parity preserving
 (see \eqref{algebra} and \eqref{algebratilde}).
Since $\widetilde{\cP}$ 
is reversibility preserving and real-to-real, we have that
\begin{equation}
	\label{realo}
	\widetilde{\mathcal{S}}_{\s,\vec \jmath}^{\s', \vec{k}}(\ell) 
	= \widetilde{\mathcal{S}}_{-\s,\vec \jmath}^{-\s', \vec{k}}(-\ell)
\end{equation}
which means, recalling \eqref{bolena1quatuor}-\eqref{bolena3quatuor}, that $\widetilde{\mathcal{S}}$ is real-to-real and reversibility preserving.
 It follows  that $\cS$ is real-to-real, reversibility and parity preserving.

By \eqref{ipogeo}, \eqref{ciprovo}, for any $\omega\in\calO_{+}$ defined in  \eqref{calOhomoeq1}, we have 
\begin{equation}
	\label{stimaS}
\|{\mathcal{S}}_{\s,\vec{\jmath}}^{\s', \vec{k}}(\ell)\|_{\infty}\lesssim \gamma^{-3/2}N^{\tau}
\|{\mathcal{P}}_{\s,\vec{\jmath}}^{\s', \vec{k}}(\ell)\|_{\infty}\,,
\end{equation}
uniformly in $\omega\in \mathcal{O}_{+}$.
To estimate the Lipschitz variation we  note that, by \eqref{ipogeo3}, \eqref{calOhomoeq2} 
\begin{equation}
	\label{fame}
\begin{aligned}
	\Big|\frac{1}{\psi_{\ell,j,k}^{\s,\s',\eta}(\omega_1)}&-\frac{1}{\psi_{\ell,j,k}^{\s,\s',\eta}(\omega_2)}\Big| = \Big|\frac{\psi_{\ell,j,k}^{\s,\s',\eta}(\omega_1)-\psi_{\ell,j,k}^{\s,\s',\eta}(\omega_2)}{\psi_{\ell,j,k}^{\s,\s',\eta}(\omega_1)\psi_{\ell,j,k}^{\s,\s',\eta}(\omega_2)}\Big|
	\\&\lesssim
	\frac{|\omega_{1}-\omega_2|}{|\psi_{\ell,j,k}^{\s,\s',\eta}(\omega_1)\psi_{\ell,j,k}^{\s,\s',\eta}(\omega_2)|}\Big(|\ell|+
	\sup_{\omega_1\ne \omega_2}\frac{|\fc(\omega_1)
		-\fc(\omega_2)|}{|\omega_1-\omega_2|}
	|\s\mathtt{D}_{\mathtt{m}}(j)-\s'\mathtt{D}_{\mathtt{m}}(k)|
	\\&
	+
	\sup_{\substack{j\in\N_{0},\eta\in \{\pm\}\\ \omega_1\neq\omega_2}}\frac{|\mathtt{r}_{j}^{\eta j}(\omega_1)
		-\mathtt{r}_{j}^{\eta j}(\omega_2)|}{|\omega_1-\omega_2|}\Big)
	\\&
	\stackrel{\eqref{stimamasterdeltino}, \eqref{kamass1}}{\lesssim}|\omega_{1}-\omega_2|\frac{ |\ell|+\langle\s|j|-\s'|k|\rangle}{|\psi_{\ell,j,k}^{\s,\s',\eta}(\omega_1)\psi_{\ell,j,k}^{\s,\s',\eta}(\omega_2)|}
	\lesssim
	 \gamma^{-2 - \frac{\s+\s'}{2}}N^{2\tau +1}|\omega_1-\omega_2|\,.
\end{aligned}
\end{equation}
To prove the last bound, we distinguish two cases:  
\\
If  $\langle\s|j|-\s'|k|\rangle \le 4 |\omega| |\ell|\lesssim N$ then the 
 bound follows by \eqref{calOhomoeq1}.
If  $\langle\s|j|-\s'|k|\rangle > 4 |\omega| |\ell|,$ using 
 \eqref{calOhomoeq2}, \eqref{kamass1}, \eqref{stimamasterdeltino},
 \eqref{TDM} and  the smallness condition \eqref{PiccolezzaperKamredDP},
 we deduce  that 
 $ 	|\psi_{\ell,j,k}^{\s,\s',\eta}(\omega)|\gtrsim  \langle\s|j|-\s'|k|\rangle $. 
Therefore, recalling \eqref{ipogeo}, the fact that the conjugation matrix $\fU$ does not depend on $\oo$,
using the smallness condition 
in \eqref{PiccolezzaperKamredDP},
and the bounds \eqref{calOhomoeq1}, we get, 
for any $\omega_1\neq\omega_2$ in $\mathcal{O}_{+}$,
\[
\begin{aligned}
\|{\mathcal{S}}_{\s,\vec{\jmath}}^{\s', \vec{k}}(\omega_1;\ell)
-{\mathcal{S}}_{\s,\vec{\jmath}}^{\s', \vec{k}}(\omega_2;\ell)\|_{\infty}
&\lesssim
\gamma^{-\frac{3}{2}}N^{\tau}
\|{\mathcal{P}}_{\s,\vec{\jmath}}^{\s', \vec{k}}(\omega_1;\ell)
-{\mathcal{P}}_{\s,\vec{\jmath}}^{\s', \vec{k}}(\omega_2;\ell)\|_{\infty}
\\&+
 \gamma^{-3}N^{2\tau+1}\|{\mathcal{P}}_{\s,\vec{\jmath}}^{\s', \vec{k}}(\omega_1;\ell)\|_{\infty}
|\omega_1-\omega_2|\,.
\end{aligned}
\]
Recalling the notation in Def. \ref{majOp}, \eqref{rmk:propmajNorm}
and passing to the corresponding operators $\widecheck{\mathcal{S}},\widecheck{\mathcal{P}}$ 
(according to Lemma \ref{rmk:maggiorantevsSupblocchi}),
we have that the bound above 
imply that  
\[
\begin{aligned}
\widecheck{\mathcal{S}}(\omega)&\preceq\gamma^{3/2} C N^{\tau}\widecheck{\mathcal{P}}(\omega)\,, \quad 
\widecheck{ \Delta_{12} \mathcal{S}}
&\preceq
\gamma^{3/2} C N^{\tau}\big( \widecheck{\Delta_{12}  \mathcal{P}} \big)
+ \gamma^{-3} CN^{2\tau+1}\widecheck{\mathcal{P}}(\omega_1) \,,
\end{aligned}
\]
for some pure constant $C>0$.
The same relations as above hold verbatim for 
$\langle \td_{\vphi}\rangle^{q}\widecheck{\mathcal{S}}(\omega)\langle D\rangle$, $q=0,\tb$,
recalling that
$\jap{D}, \langle \td_{\vphi}\rangle$ commute with $\cD$ .

In order to prove the bounds \eqref{solenapoli} we recall that 
$\cP= \fS(\tP)= M_\tP + R_{\tP}$ , for some couple
$(M_{\mathtt P}, R_{\mathtt P}) \in E_s\otimes \cM_2(\C)$. 
Without loss of generality (using Lemma \ref{lemma:generalitaparity})
we can assume that both $M_\tP$, $ R_{\tP}$ are real-to-real, reversibility and parity preserving.
Then $\cS= M_{\mathtt S} + R_{\mathtt S}$ where  
$M_{\mathtt S}$ (resp. $R_{\mathtt S}$) solves the homological equation \eqref{omoeq1} with $\cP$ replaced by $M_{\tP}$ (resp. $R_{\mathtt P}$), see formula \eqref{ipogeo},\eqref{stimaS}. Thus 
\[
\begin{aligned}
	(\widecheck{M}_\mathtt S)_{\s}^{\s'}(\omega)&\preceq\gamma^{3/2} 
	C N^{\tau}(\widecheck{M}_{\tP})_{\s}^{\s'}(\omega)\,,
\, \quad 
	(\widecheck{R}_\mathtt S)_{\s}^{\s'}(\omega)&\preceq\gamma^{3/2} 
	CN^{\tau}(\widecheck{R}_{\tP})_{\s}^{\s'}(\omega)\,,
\end{aligned}
\]
same if we apply the Lipschitz variation in $\omega$ or the operators $\langle \td_{\vphi}\rangle^{\tb}$ and $\langle D\rangle$ (note that the action of these operators is proportional to the identity on each block matrix ${\mathcal{S}}_{\s,\vec{\jmath}}^{\s', \vec{k}}(\ell)$). 
By 
the bounds above and recalling 
Lemma \ref{rmk:maggiorantevsSupblocchi} we have that 
$\mathtt S:=(M_{\mathtt S}, R_{\mathtt S}) \in E_s\otimes \cM_2(\C)$ and satisfies  \eqref{solenapoli}.
\end{proof}

We now study how the operator $\mathcal{M}$ changes under conjugation through 
the map $\exp(\mathcal{S})$.
\begin{lemma}{\bf (KAM step).}\label{lem:kamstep}
Consider the operator $\mathcal{M}$ in \eqref{Msteppino}
satisfying \eqref{kamass1} and \eqref{stimePPstepBIS}.
There is $C>1$ such that, 
if 
\begin{equation}\label{smallcondKAMstep}
C N^{2\tau+1}\eps(\gso)
\leq 1\,,
\end{equation}
the following holds.
 For any $\omega\in \calO_+$ defined in \eqref{calOhomoeq1}
there exists an invertible operator $\mathtt{\Phi}\in E_{s}\otimes \cM_2(\C)$
such that $ \fS(\mathtt{\Phi}) = \Phi := \exp(\mathcal{S}) $ where $\mathcal{S}$ is the operator 
in $\mathcal{L}^{\mathtt{T}}(H^{s},H^{s})\otimes \cM_2(\C)$ 
defined  in Lemma \ref{lem:homoeq}, 
satisfying 
\begin{equation}\label{stimamappaPhi+}
\begin{aligned}
\bnorm{ \mathtt{\Phi}^{\pm1} - \id }^{\gamma^{3/2}, \calO_{+}}_{s}&\lesssim
N^{2\tau+1}\eps(s)\,,\qquad \forall \, \gso\leq s\leq \su\,,
\\
\bnorm{ \langle\td_{\vphi}\rangle^{b} \mathtt{\Phi}^{\pm1} - \id
}^{\g^{3/2} ,\cO_{+}}_{s}
&\lesssim N^{2\tau+1+b}\eps(s)\,,\quad \forall b>\nu/2\,,
\end{aligned}
\end{equation}
and 
\begin{equation}\label{Msteppinoplus}
\mathcal{M}_{+}:=\Phi \circ \mathcal{M}\circ\Phi^{-1}
=\omega\cdot\pa_{\vphi}-\ii E \mathcal{D}_{+}-\ii E\mathcal{P}_{+}\,,
\end{equation}
where $\mathcal{D}_{+}=[\mathcal{D}_{+}]$ is in normal form and 
(recall \eqref{pioggiamilano1}-\eqref{pioggiamilano2}
and \eqref{D00D})
\begin{equation}\label{def:Dold+}
(\mathcal{D}_{+})_{+,\vec{\jmath}}^{+,\vec{\jmath}}=
d_{\vec{\jmath}}^{(0)}+
\tr^{+}_{\vec{\jmath}}
\qquad
\mathtt{r}^{+}_{\vec{\jmath}}:= (\mathtt{r}_{+})_{j}^{j}\Id + (\mathtt{r}_{+})_{j}^{-j}S 
\,,\quad j\in \mathbb{N}\,,\quad \mathtt{r}^{+}_{\vec{0}}:=  (\mathtt{r}_{+})_{0}^{0} \,,
\end{equation}
for some 
some Lipschitz functions $(\tr^{+})_{j}^{\eta  j} : \Lambda\to \R$, $\eta \in \{\pm\}$,
\begin{equation}\label{kamass1plus}
\sup_{j\in\Z,\eta\in \{\pm\}}\langle j\rangle |(\tr_{+})_{j}^{\eta j}-\tr_{j}^{\eta j}|^{\gamma^{3/2},\Lambda}
\lesssim \gamma^{3/2}\eps(\gso)\,.
\end{equation}
The operator $\mathcal{P}_+$ is defined in $\calO_+$,
it is  real-to-real, reversibility  and parity preserving, and
there exists $\tP_{+}\in E_s\otimes \cM_2(\C)$ such that $\fS(\tP_{+})=\cP_{+}$ 
and 
\[
\begin{aligned}
\eps_{+}(s)&:=\gamma^{-3/2}
\bnorm{  \mathtt{P}_{+} 
\jap{D}}^{\g^{3/2} ,\cO_{+}}_{s}\,, \qquad 
\eps_{+}(s,\tb) :=\gamma^{-3/2}
\bnorm{\jap{\td_{\vphi}}^\tb \tP_{+} \jap{D}}^{\gamma^{3/2} ,\cO_{+}}_{s}\,,
\end{aligned}
\]
satisfy, for some  constant  $\tc := \tc (\tau,\nu) $,   
\begin{equation}\label{stimePPstepPLUS}
\begin{aligned}
\eps_{+}(s)&\leq
\tc N^{-\tb}\eps(s,\tb)+
\tc N^{2\tau+1}\eps(s)\eps(\gso)\,,
\\
\eps_{+}(s,\tb)
&\leq
\eps(s,\tb)\big(1+\tc N^{2\tau+1}\eps(\gso)\big)
+\tc N^{2\tau+1}\eps(\gso,\tb)\eps(s)\, . 
\end{aligned}
\end{equation} 
\end{lemma}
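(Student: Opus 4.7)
\smallskip

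\noindent
\textbf{Proof plan.} The proof will follow the standard KAM step architecture adapted to the Bony-smoothing couple framework. First I invoke Lemma~\ref{lem:homoeq} to obtain, for $\omega\in\calO_+$, the generator $\mathcal{S}$ solving the homological equation \eqref{omoeq1}, with an associated couple $\mathtt S\in E_s\otimes\cM_2(\C)$ satisfying \eqref{solenapoli}. I then set $\mathtt\Phi:=\exp(\mathtt S)$ and appeal to Lemma~\ref{inv3sbarrette}: under \eqref{smallcondKAMstep}, which via \eqref{solenapoli} guarantees $\bnorm{\mathtt S\jap{D}}_{\gso}^{\g^{3/2},\calO_+}\lesssim N^{2\tau+1}\eps(\gso)\ll1$, the operator $\mathtt\Phi$ is invertible in $E_s\otimes\cM_2(\C)$ with $\fS(\mathtt\Phi^{\pm1})=\exp(\pm\mathcal{S})$. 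Since $\Pi_N\mathtt S=\mathtt S$, the projector estimate of Lemma~\ref{dito} gives $\bnorm{\jap{\td_\vphi}^b\mathtt S}^{\g^{3/2},\calO_+}_s\lesssim N^{b}\bnorm{\mathtt S}_s^{\g^{3/2},\calO_+}$, so both estimates in \eqref{stimamappaPhi+} follow from \eqref{ipiuq}--\eqref{ipiuq1}. Reality/parity/reversibility of $\mathtt\Phi$ are inherited from those of $\mathcal{S}$ established in Lemma~\ref{lem:homoeq}.

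\smallskip

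Second, I perform the Lie expansion $\Phi\mathcal{M}\Phi^{-1}=\sum_{k\geq0}\frac{1}{k!}\ad_\mathcal{S}^k\mathcal{M}$. Writing $\mathcal{M}=\mathcal{M}_0-\ii E\mathcal{P}$ with $\mathcal{M}_0:=\omega\cdot\pa_\vphi-\ii E\mathcal{D}$, the homological equation rewrites as $[\mathcal{S},\mathcal{M}_0]=\ii E(\Pi_N\mathcal{P}-[\mathcal{P}])$, so
\begin{equation*}
\Phi\mathcal{M}\Phi^{-1}=\omega\cdot\pa_\vphi-\ii E(\mathcal{D}+[\mathcal{P}])-\ii E\Pi_N^\perp\mathcal{P}+[\mathcal{S},-\ii E\mathcal{P}]+\sum_{k\geq 2}\tfrac{1}{k!}\ad_\mathcal{S}^k\mathcal{M}\,.
\end{equation*}
This identifies $\mathcal{D}_+:=\mathcal{D}+[\mathcal{P}]$ and defines $\mathcal{P}_+$ by the remaining terms. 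The normal form structure \eqref{def:Dold+} together with the realness and the symmetry $(\tr_+)_{-j}^{-\s j}=(\tr_+)_{j}^{\s j}$ follow from Lemma~\ref{rmk:algebra2} applied to the real-to-real, reversibility and parity preserving matrix $[\mathcal{P}]$, and the bound \eqref{kamass1plus} is obtained by reading off the zero Fourier-mode diagonal/antidiagonal entries of $\mathtt P\jap{D}$ and dividing by $\jap{j}$; the factor $\jap{j}^{-1}$ comes precisely from the $\jap{D}$ appended in the norm \eqref{stimePPstepBIS}, and the $\bnorm{\cdot}_{\gso}$ controls these entries by Remark~\ref{rem:varphi}.

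\smallskip

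Third, I estimate $\mathcal{P}_+$, which is the crucial and technical step. I split $\mathtt P_+=(a)+(b)+(c)$ corresponding respectively to $\Pi_N^\perp\mathtt P$, the linear commutator $\tfrac{1}{-\ii E}[\mathtt S,-\ii E\mathtt P]$, and the Lie tail $\sum_{k\geq 2}\tfrac{1}{k!}\ad_{\mathtt S}^k\mathtt M$. Term (a) is handled by Lemma~\ref{dito}, which yields $\bnorm{\Pi_N^\perp\mathtt P\jap{D}}_s^{\g^{3/2},\calO_+}\leq N^{-\tb}\bnorm{\jap{\td_\vphi}^\tb\mathtt P\jap{D}}_s^{\g^{3/2},\calO_+}$, producing the first summand in the bound for $\eps_+(s)$. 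Terms (b) and (c) are controlled by the adjoint-action estimates \eqref{conlaD}--\eqref{baba} of Lemma~\ref{dito2}, applied with $m=1$ (so that the weight $\jap{D}$ remains on the right throughout), combined with \eqref{solenapoli}: each application of $\ad_{\mathtt S}$ costs a factor $\lesssim N^{2\tau+1}\eps(\gso)$ at low regularity and $\lesssim N^{2\tau+1}\eps(s,\tb)$ at high regularity, and the series converges geometrically thanks to \eqref{smallcondKAMstep}. Assembling the low/high splittings gives both inequalities in \eqref{stimePPstepPLUS}.

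\smallskip

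The main obstacle lies precisely in step three: one must propagate the $1$-smoothing structure (the $\jap{D}$ factor) through the nested commutators while simultaneously keeping the $\tb$ commutators with $\omega\cdot\pa_\vphi$ tracked in a tame manner, so that the factor $\eps(s,\tb)$ appears only linearly (and multiplies the crucial $N^{-\tb}$) while $\eps(s)$ appears with the tolerable factor $N^{2\tau+1}\eps(\gso)$. This is exactly the point where the Banach algebra structure of $E_s$ and the Leibniz-type bound \eqref{LeibnizdbABprece} for $\jap{\td_\vphi}^\tb$ must be used in a balanced way; in particular, the commutation of $\jap{D}$ with $\mathtt P$ and $\mathtt S$ is circumvented by always keeping $\jap{D}$ on the rightmost slot of each monomial $\mathtt S^{k_1}\mathtt P\mathtt S^{k_2}$ arising from the expansion of $\ad_{\mathtt S}^k\mathtt M$, which reduces every estimate to a product of factors of the form $\bnorm{\mathtt S\jap{D}}$, $\bnorm{\jap D^{-1}\mathtt S\jap D}\lesssim\bnorm{\mathtt S}$, and $\bnorm{\mathtt P\jap{D}}$, each already controlled.
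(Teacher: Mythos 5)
Your proposal follows essentially the same route as the paper: conjugate by $\Phi=\exp(\mathcal{S})$, use the homological equation to collapse $[\mathcal{S},\mathcal{M}_0]$ to $\ii E(\Pi_N\mathcal{P}-[\mathcal{P}])$, and estimate the remainder in the $E_s$-algebra via Lemmata \ref{dito}--\ref{dito2}. The Lie expansion you write is algebraically equivalent to the paper's \eqref{omoomoeq}--\eqref{restoresto} and your identification of $\mathcal{D}_+=\mathcal{D}+[\mathcal{P}]$ and $\mathcal{P}_+$ is correct.

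There are two points you should tighten. First, when you set the Lie tail as ``$(c)=\sum_{k\geq 2}\tfrac{1}{k!}\ad_{\mathtt S}^k\mathtt M$,'' this expression is not yet a valid object in $E_s\otimes\cM_2(\C)$: the operator $\mathcal{M}_0=\omega\cdot\pa_\vphi-\ii E\mathcal{D}$ is unbounded and admits no couple representative, so $\ad_{\mathtt S}^k(\mathtt M_0)$ is not a priori defined. You must first apply the homological equation to rewrite $\ad_{\mathcal{S}}^k(\mathcal{M}_0)=\ad_{\mathcal{S}}^{k-1}\bigl(\ii E(\Pi_N\mathcal{P}-[\mathcal{P}])\bigr)$ for $k\geq 1$, and only then pass to couples and invoke \eqref{conlaD}--\eqref{baba}; the paper makes this replacement explicit before defining $\mathtt{P}_+$ in \eqref{gattamora}, and you should too, since it is exactly what makes every term in the tail one-smoothing.

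Second, the normal form coefficients $(\tr_+)_j^{\eta j}$ are required in the statement to be Lipschitz functions on the \emph{whole} parameter cube $\Lambda$, whereas $[\mathcal{P}]$, and hence its block entries $\mathcal{P}_{+,j}^{+,\eta j}(0)$, are only defined on the set $\mathcal{O}\subset\Lambda$ where $\mathcal{P}$ lives. Your proposal silently defines $\mathcal{D}_+:=\mathcal{D}+[\mathcal{P}]$ without extending. The paper fills this gap with a Kirszbraun-type Lipschitz extension of each scalar $\mathcal{P}_{+,j}^{+,\eta j}(0)$ from $\mathcal{O}$ to $\Lambda$ (with the same Lipschitz seminorm, giving \eqref{pioggia32}), and then sets $\tr^+_{\vec\jmath}=\tr_{\vec\jmath}+\breve{\mathcal{P}}_{+,\vec\jmath}^{+,\vec\jmath}(0)$. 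Without this extension the eigenvalues $\lambda_{j,\pm}^{(n)}$ and hence the Cantor sets $\mathcal{O}_{n+1}$ in the iterative scheme would not be well defined, so the step is not cosmetic.
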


\begin{proof}
Recalling that $\cS=\fS(\mathtt{S})$, and setting $\mathtt{\Phi}=e^{\mathtt{S}}$, then $\Phi=\fS(\mathtt{\Phi})$. 
Of course $\mathtt{\Phi^{-1}}=e^{-\mathtt{S}}$ and $\Phi^{-1}=\fS(\mathtt{\Phi}^{-1})$ by the homomorphism property.
By estimates \eqref{solenapoli},
the smallness condition \eqref{smallcondKAMstep}
and Lemma \ref{inv3sbarrette} we deduce the estimates in \eqref{stimamappaPhi+}. 
Moreover, since $\mathcal{S}$  is real-to-real, reversibility and parity preserving
so  is $\Phi$ and  the conjugate operator 
$\mathcal{M}_{+}$ in \eqref{Msteppinoplus}
is real-to-real, reversible and parity preserving.

By classical Lie expansion series we get 
\begin{align}
\Phi\circ\mathcal{M}\circ\Phi^{-1}&=
\omega\cdot\pa_{\vphi}-\ii E\cD\nonumber
\\&
-\omega\cdot\pa_{\vphi}\cS+\big[\cS,-\ii E\cD\big]-\ii E\cP
\label{omoomoeq}
\\&
+\big[\cS,-\ii E\cP\big]
+\sum_{k\geq2}\frac{1}{k!}{\rm ad}_{\cS}^{k}\big(-\ii E\cD-\ii E\cP\big)
-\sum_{k\geq2}\frac{1}{k!}{\rm ad}_{\cS}^{k-1}
\big( \omega\cdot\pa_{\vphi}\cS\big)
\label{restoresto}\,.
\end{align}
Recalling Definition \ref{def:normalform}, \eqref{def:proj}
and since $ \cS $ solves  the homological equation 
$ -\omega\cdot\pa_{\vphi}\cS+\big[\cS,-\ii E\cD\big]=\ii E\big(\Pi_{N}\cP-[\cP]\big) $  
we have that
$ \eqref{omoomoeq} 
=-\ii E[\cP]-\ii E\Pi_{N}^{\perp}\cP $.

By  Lemma \ref{rmk:algebra2}, 
since $\mathcal{P}$ is real-to-real, reversibility and parity preserving then 
\begin{equation}
	\label{nuovadiag}
\mathcal{P}_{+,\vec{\jmath}}^{+,\vec{\jmath}}(0) =  \mathcal{P}_{-,\vec{\jmath}}^{-,\vec{\jmath}}(0) =\mathcal{\cP}_{+,j}^{+, j}(0)\Id + \mathcal{\cP}_{+,j}^{+, - j}(0)S\,,\quad \mathcal{\cP}_{+,j}^{+, \eta j}(0)\in \R\,.
\end{equation}
By Kirszbraun theorem we extend  the coefficients 
$ \mathcal{\cP}_{+,j}^{+, \eta j}(0) $ to Lipschitz functions
$ \breve{\cP}_{+,j}^{+,\eta j}(0) : 
\Lambda\to \R $
defined on the whole set $ \Lambda $, satisfying the Lipschitz bound
\begin{equation}\label{pioggia32}
|\breve{\mathcal{\cP}}_{+,j}^{+,\eta j}(0)|^{\gamma,\Lambda}
\leq |{\mathcal{\cP}}_{+,j}^{+,\eta j}(0)|^{\gamma,\calO} \, , \quad \forall  j\in \N_{0},\eta \in \{\pm\} \, . 
\end{equation}
Consequently we define the new normal form 
$$
\mathcal{D}_{+}:=\cD+{\rm diag}_{\s\in \{\pm\}, j\in \N_{0}}
\breve{\mathcal{P}}_{\s,\vec{\jmath}}^{\s,\vec{\jmath}}(0)\,,\quad \breve{\mathcal{P}}_{\s,\vec{\jmath}}^{\s,\vec{\jmath}}(0)=\breve{\cP}_{+,j}^{+, j}(0)\Id + \breve{\cP}_{+,,j}^{+, - j}(0)S \,,\quad \s\in \{\pm\}\,.
$$
The  estimate \eqref{kamass1plus} 
for $ \tr^{+}_{\vec{\jmath}} = \tr_{\vec{\jmath}} + \breve{\mathcal{P}}_{+,\vec{\jmath}}^{+,\vec{\jmath}}(0) $
follows by 
\eqref{pioggia32} and \eqref{stimePPstepBIS}.

We now consider the last term \eqref{restoresto}. 
Now recalling that $\mathcal{S}=\fS(\mathtt{S}), \mathcal{P}=\fS(\mathtt{P})$, and 
that $ \cS  $ solves the homological equation \eqref{omoeq1}, 
we set 
\begin{equation}\label{gattamora}
\begin{aligned}
-\ii E\mathtt{P}_{+}&:=-\ii E \Pi_{N}^{\perp}\mathtt{P}+
\sum_{k\geq1}\frac{1}{k!}{\rm ad}_{\mathtt{S}}^{k}\big(-\ii E\mathtt{P}\big)
+\sum_{k\geq2}\frac{1}{k!}{\rm ad}_{\mathtt{S}}^{k-1}\big(
\ii E\big(\Pi_{N}\mathtt{P}-[\mathtt{P}]\big)
\big)\,,
\end{aligned}
\end{equation}
where $[\mathtt{P}]=[(M_{\mathtt{P}}, R_{\mathtt{P}})]:=([M_{\mathtt{P}}], [R_{\mathtt{P}}])$.
We then obtain  formula \eqref{Msteppinoplus} 
with  $\mathcal{P}_{+}:=\fS(\mathtt{P}_{+}) $.
 Multiplying by $\jap{D}$ to the right  of \eqref{gattamora} 
 we obtain
estimates \eqref{stimePPstepPLUS}
by applying Lemmata \ref{dito}, \ref{dito2}, 
formulas \eqref{solenapoli}, \eqref{stimePPstepBIS}
and the smallness condition \eqref{smallcondKAMstep}.
This concludes the proof.
\end{proof}

\subsection{The iterative scheme}
The  reducibility Theorem \ref{thm:reduKG}
is deduced by the next proposition.

\begin{prop}{\bf (Iterative reduction).}\label{IterativeKAM}
Let $ \tb $ defined 
in \eqref{def:bbb} 
and fix
\begin{equation}\label{def:aaa}
\ta:=6\tau+4\,.
\end{equation}
 There is  $N_0:=N_0(\tau,\nu)>0$ such that, 
 if
\begin{equation}\label{conditeration}
 N_0^{2\tau+2} \eps_0(\gso,\tb)\leq 1\,,
\end{equation}
then, setting $N_{-1}:=1$ and   $N_{n}:=N_0^{(3/2)^n}$ for any $ n \geq 1 $,  the following holds for any $n\ge 0$. 

$({\rm \textbf{S1}})_n$. There exists a sequence of operators
\begin{equation}\label{MsteppinoPasson}
\mathcal{M}_{n}
=\omega\cdot\pa_{\vphi}-\ii E \mathcal{D}_{n}-\ii E\mathcal{P}_{n} 
\end{equation}
where $\mathcal{D}_{n}$ is in normal form and 
\begin{equation}\label{FinalnormalformstepN}
\begin{aligned}
(\mathcal{D}_{n})_{-,\vec{\jmath}}^{-,\vec{\jmath}}=(\mathcal{D}_{n})_{+,\vec{\jmath}}^{+,\vec{\jmath}}&=
d_{\vec{\jmath}}^{(0)}+
\tr^{(n)}_{\vec{\jmath}}\,,
\qquad j\in\N_{0}\,,\\
&\mathtt{r}^{(n)}_{\vec{\jmath}}:= (\mathtt{r}_{n})_{j}^{j}  \Id + (\mathtt{r}_{n})_{j}^{-j} S 
\,,\quad j\in \mathbb{N}\,, \qquad\mathtt{r}^{(n)}_{\vec{0}}:=(\tr_{n})^{0}_{0}\,,
\end{aligned}
\end{equation}
with 
Lipschitz functions $(\tr_{n})_{j}^{\eta j} : \Lambda\to \R$, $\eta\in \{\pm\}$,
satisfying $\mathtt{r}^{(0)}_{\vec{\jmath}}\equiv0$ and for $n\geq1$,
\begin{align}\label{algalgstepnn}
& \qquad (\tr_{n})_{j}^{\eta j}=(\tr_{n})_{-j}^{-\eta j}\,, \\
& \label{kamass1stepn}
\sup_{j\in\N_{0},\eta\in \{\pm\}}\langle j\rangle |(\tr_{n})_{j}^{\eta j}-(\tr_{n-1})_{j}^{\eta j}|^{\gamma^{3/2},\Lambda}
\leq \gamma^{3/2}
\eps_{0}(\gso,\tb) N_{n-2}^{-\ta}\,.
\end{align}
The operators $\mathcal{P}_{n}$ are defined for any $ \omega $ 
in the sets $\cO_n$ defined as follows:  
$\cO_0:=\widehat{\calO}$, while for  $n\geq 1$,  
$\cO_n:=\Omega_{n}^{+}\cap\Omega_{n}^{-}$ 
where
\begin{equation}\label{omegan}
	\begin{aligned}
		\Omega_{n}^{+}:=\Big\{\omega\in \mathcal{O}_{n-1}\,:\,  
		|\omega\cdot\ell+\lambda^{(n-1)}_{j,\eta}+\lambda^{(n-1)}_{k,\eta}|\geq 
		\frac{\gamma}{\langle\ell\rangle^{\tau}}\,, \quad &\eta\in \{\pm\},
		\\&
		\;\; |\ell|\leq N_{n-1}, j,k\in\N_{0}
		\Big\}
		\\
		\Omega_{n}^{-}:=\Big\{\omega\in \mathcal{O}_{n-1}\,:\, 
		|\omega\cdot\ell+\lambda^{(n-1)}_{j,\eta}-\lambda^{(n-1)}_{k,\eta}|\geq 
		\frac{\gamma^{3/2}}{\langle\ell\rangle^{\tau}}\,,\quad &\eta\in \{\pm\},
		\\|\ell|\leq N_{n-1},\quad  j,k\in\N_{0}\,,\; \ell\in &\Z^{\nu}\,,
		\;\;(\ell, j,k)\neq(0, j, j)
		\Big\}
	\end{aligned}
\end{equation}
and 
\begin{equation}\label{lambdinostepn}
\begin{aligned}
&\lambda^{(n)}_{j,\pm}:=(1 + \fc) \tD_\mathtt{m}(j)+(\tr_n)_{j}^{j} \pm (\tr_{n})_{j}^{-j}\,,
\;\, j\in\N\,,\quad  
\lambda^{(n)}_{0,\pm}:=(1 + \fc) \sqrt{\tm}+(\tr_n)_{0}^{0} \,.
\end{aligned}
\end{equation}
Moreover, 
the operator $\cP_n$ is  real-to-real, reversibility  and parity preserving and
there exists $\tP_n\in E_s\otimes\cM_2(\C)$ such that $\fS(\tP_n)=\cP_n$ and 
\begin{equation}\label{stimePn}
\begin{aligned}
\eps_{n}(s):=
\g^{-3/2}
\bnorm{ \tP_n \jap{D}}^{\g^{3/2} ,\cO_{n}}_{s}
&\le \eps_0(s,\tb) N_{n-1}^{-\ta}\,, 
\\
\eps_{n}(s,\tb):=
\g^{-3/2} 
\bnorm{\jap{\td_{\vphi}}^\tb  \tP_n \jap{D}}^{\g^{3/2} ,\cO_{n}}_{s}
&\le\eps_0(s, \tb)N_{n-1}\,.
\end{aligned}
\end{equation}

$({\rm \textbf{S2}})_n$ 
For any $n\geq 1$ and  any $\oo\in \cO_n$ there exists an invertible map 
$\Phi_{n-1}\in \mathcal{L}^{\mathtt{T}}(H^s\times H^s)\otimes\mathcal{M}_2(\C) $ 
such that
\begin{equation}\label{favoledigloria}
\mathcal{M}_{n}=\Phi_{n-1}\circ\mathcal{M}_{n-1}\circ\Phi_{n-1}^{-1}\,.
\end{equation}
 Moreover the maps $\Phi_{n-1}, \Phi_{n-1}^{-1}$ are real-to-real, reversibility and parity preserving.
 Finally, there exists an invertible $\mathtt{\Phi}_{n-1}\in E_{s}\otimes \cM_2(\C)$ such that $\fS(\mathtt{\Phi}_{n-1})= \Phi_{n-1}$  
 satisfying the bound
 \begin{equation}\label{stimacciasuphiN}
\begin{aligned}
 \bnorm{\mathtt{\Phi}^{\pm 1}_{n-1}-\id}_s^{\g^{3/2}, \cO_{n}} &\leq 
N_{n-1}^{2\tau+2}N_{n-2}^{-\ta} \eps_0(s,\tb) \\
 \bnorm{ \langle \mathtt d_\varphi \rangle^b  \mathtt{\Phi}^{\pm 1}_{n-1}-\id}_s
 &\leq 
N_{n-1}^{2\tau+2+b}N_{n-2}^{-\ta} \eps_0(s,\tb) \, , \ \forall\,
 b > \tfrac{\nu}{2} \, . 
\end{aligned}
\end{equation} 
$({\rm \textbf{S3}})_n$ Defining for any $\oo\in\cO_n$, $n\geq1$
\begin{equation}\label{trasformazionestepN}
U_n:= \Phi_0\circ\Phi_1\circ\dots \circ\Phi_{n-1}	\,,
\end{equation}
one has that there is  $\tU_n\in E_s \otimes\cM_2(\C)$ with $\fS(\tU_n)=U_n$ such that, setting $\tU_{0}=\id$,
\begin{equation}\label{stimacompositionn}
\bnorm{\tU_n - \tU_{n-1}}_s^{\g^{3/2}, \cO_{n}}, \bnorm{\tU^{-1}_n - \tU_{n-1}^{-1}}_s^{\g^{3/2}, \cO_{n}} 
\leq 2^{-n} \eps_0(s,\tb)\,.
\end{equation}
\end{prop}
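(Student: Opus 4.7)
\medskip

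\noindent \textbf{Proof plan.} The proof proceeds by induction on $n$, using the KAM step Lemma \ref{lem:kamstep} as the engine. The initialization at $n=0$ is immediate: $\mathcal{M}_0$, $\mathcal{D}_0$, $\mathcal{P}_0$, $\cO_0 := \widehat{\cO}$ are given, with $\mathtt{r}^{(0)}_{\vec{\jmath}} \equiv 0$, $\tU_0 := \id$, and statements (S2)$_0$, (S3)$_0$ are vacuous. The estimates \eqref{stimePn} at $n=0$ are definitions (see \eqref{def:epsilon0}); the symmetry \eqref{algalgstepnn} is void, and \eqref{kamass1stepn} reduces to the assumption \eqref{PiccolezzaperKamredDP}.

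\smallskip

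For the inductive step, assuming the statements at level $n$, one applies Lemma \ref{lem:kamstep} to $\mathcal{M}_n$ with $N := N_n$, $\cO := \cO_n$, identifying $\cO_+ = \cO_{n+1}$ (the sets \eqref{calOhomoeq1} for the eigenvalues \eqref{lambdinostepn} at step $n$ coincide with \eqref{omegan} at step $n+1$). The hypothesis \eqref{kamass1} follows by telescoping \eqref{kamass1stepn} together with \eqref{PiccolezzaperKamredDP}, and the smallness \eqref{smallcondKAMstep} follows from the inductive bound $\eps_n(\gso) \leq \eps_0(\gso,\tb)\,N_{n-1}^{-\ta}$ together with $N_n = N_{n-1}^{3/2}$ and the choice $\ta = 6\tau+4$, which gives $N_n^{2\tau+1}\eps_n(\gso) \lesssim \eps_0(\gso,\tb) N_{n-1}^{-3\tau - 5/2}$: this is absorbed by \eqref{conditeration} for $N_0$ large. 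Lemma \ref{lem:kamstep} then produces $\Phi_n = \exp(\mathcal{S}_n)$, the conjugate $\mathcal{M}_{n+1} = \Phi_n\mathcal{M}_n\Phi_n^{-1}$ of the form \eqref{MsteppinoPasson}, with new normal-form correction $(\tr_{n+1})_j^{\eta j} = (\tr_n)_j^{\eta j} + \breve{\mathcal{P}}_{+,j}^{+,\eta j}(0)$. The parity property \eqref{algalgstepnn} is inherited from the parity-preserving structure of $\mathcal{P}_n$ through the normal-form projection (Lemma \ref{rmk:algebra2}), and \eqref{kamass1stepn} at level $n+1$ follows from \eqref{kamass1plus} and the inductive estimate on $\eps_n(\gso)$.

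\smallskip

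The two key estimates \eqref{stimePn} at level $n+1$ are then obtained from \eqref{stimePPstepPLUS}. For $\eps_{n+1}(s)$ one bounds
\[
\eps_{n+1}(s) \;\leq\; \tc N_n^{-\tb}\,\eps_0(s,\tb) N_{n-1} \;+\; \tc N_n^{2\tau+1}\,\eps_0(s,\tb) N_{n-1}^{-\ta} \,\eps_0(\gso,\tb) N_{n-1}^{-\ta}\,,
\]
and the choices $\tb = 6\tau+6 > \ta = 6\tau+4$ ensure that both terms are bounded by $\tfrac12\,\eps_0(s,\tb)\,N_n^{-\ta}$: the first via $-\tb\cdot 3/2 + 1 \leq -\ta\cdot 3/2$, the second via the extra smallness from $\eps_0(\gso,\tb)$ and the exponent $3\tau + 3/2 - \ta/2 = -1/2$. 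The estimate for $\eps_{n+1}(s,\tb)$ is similar, using that $N_{n-1}(1 + \tc N_n^{2\tau+1}\eps_n(\gso)) \leq N_n = N_{n-1}^{3/2}$. The bound \eqref{stimacciasuphiN} for $\mathtt{\Phi}_n^{\pm 1}$ follows directly from \eqref{stimamappaPhi+} and $\eps_n(s) \leq \eps_0(s,\tb)\,N_{n-1}^{-\ta}$. Finally, for (S3)$_{n+1}$ one writes $\tU_{n+1} - \tU_n = \tU_n\circ(\mathtt{\Phi}_n - \id)$ and applies the tame algebra estimate \eqref{stima:tame3b} in Lemma \ref{tretameestimate}: the geometric bound $2^{-(n+1)}$ comes from the super-exponential decay $N_{n-1}^{-\ta}$ in $\bnorm{\mathtt{\Phi}_n - \id}_s$ together with a standard telescoping argument giving $\bnorm{\tU_n}_{\gso}^{\g^{3/2}} \leq 2$.

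\smallskip

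The main technical obstacle is the balancing of the three scales $N_n, \tb, \ta$ in the smallness / convergence argument: one must simultaneously gain a factor $N_n^{-\ta}$ in the ``low'' norm $\eps_{n+1}(s)$ while only paying $N_{n-1}$ in the ``high'' norm $\eps_{n+1}(s,\tb)$, and the gap $\tb - \ta > 0$ is precisely what makes this possible. The Bony-smoothing couple framework of Section \ref{sec:modulotame} (Lemmata \ref{tretameestimate}, \ref{dito}, \ref{dito2}, \ref{inv3sbarrette}) is essential here: it provides Banach-algebra composition estimates with tame bounds and fixed loss of derivatives, so that the unbounded operator $\mathcal{P}_n \jap{D}$ can be iteratively reduced without any $s$-dependent loss entering the exponents, which is exactly what allows the iteration to close uniformly in $s \in [\gso,\su]$ and ultimately yields the tame bound \eqref{stimemappa} of Theorem \ref{main:thm}.
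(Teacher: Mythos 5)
Your proposal is correct and follows essentially the same strategy as the paper's proof: induction on $n$ using the KAM step Lemma \ref{lem:kamstep} as the engine, with the same identification of $\cO_+ = \cO_{n+1}$ and the same exponent bookkeeping ($N_n = N_{n-1}^{3/2}$, $\tb = 6\tau+6$, $\ta = 6\tau+4$) to verify the smallness condition \eqref{smallcondKAMstep} and to close the iterative bounds \eqref{stimePn}. The arithmetic you carry out (e.g.\ $N_n^{2\tau+1}\eps_n(\gso) \lesssim \eps_0(\gso,\tb)N_{n-1}^{-3\tau-5/2}$, and the gap $\tb-\ta = 2$ making the ultraviolet cutoff absorb the high-norm growth) matches the paper's computations line for line, and your derivation of $({\rm \textbf{S3}})_{n+1}$ via $\tU_{n+1}-\tU_n = \tU_n\circ(\mathtt{\Phi}_n-\id)$ and the algebra estimate \eqref{stima:tame3b} is the same as in the paper. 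One small slip at initialization: you say that \eqref{kamass1stepn} at $n=0$ ``reduces to \eqref{PiccolezzaperKamredDP}'', but that inequality is only posited for $n\geq 1$, so at $n=0$ it is simply vacuous; this does not affect the rest of the argument.
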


\begin{proof}
We reason by induction on $n\geq0$.

\noindent
{\bf Inizialitazion.}
$({\rm \textbf{S1}})_0$. The operator $\mathcal{M}_0$ in \eqref{emme0}
has the form \eqref{MsteppinoPasson} and satisfies the bounds \eqref{stimePn} with $n=0$.
$({\rm \textbf{S2}})_0$ and $({\rm \textbf{S3}})_0$ are empty.

\noindent
{\bf Inductive step.}
Assume that for $n\geq0$ statements $({\rm \textbf{Sj}})_n$, $  {\bf j}=1,2,3$ hold true. 

\vspace{0.3em}
\noindent
\emph{Proof of} $({\rm \textbf{S1}})_{n+1}$, $({\rm \textbf{S2}})_{n+1}$.
We apply Lemmata \ref{lem:homoeq}-\ref{lem:kamstep}
to the operator $\mathcal{M}_{n}$
by setting $\cD\rightsquigarrow\cD_n $ and $\cP\rightsquigarrow\cP_n $ in \eqref{MsteppinoPasson}, 
$\calO_{+}\rightsquigarrow \calO_{n+1}$ in \eqref{omegan},
$N\rightsquigarrow N_{n}$. 
We start by verifying the smallness hypothesis \eqref{smallcondKAMstep}. If $n=0$ this follows from  \eqref{conditeration} provided that $N_0$ is sufficiently large. Otherwise if $n\ge 1$, by the inductive estimate \eqref{stimePn} and the definition of $\ta$ in \eqref{def:aaa}
\[
\begin{aligned}
 N_{n}^{2\tau+1}
\eps_{n}(\gso)
\leq 
 N_{n}^{2\tau+1}
\eps_0(\gso, \tb)N_{n-1}^{-\ta}=
\eps_0(\gso, \tb) N_{n}^{2\tau+1-\tfrac{2}{3}\ta}\leq \eps_0(\gso, \tb) N_{0}^{-2\tau-\tfrac{5}{3}}\,,
\end{aligned}
\]
 so again \eqref{smallcondKAMstep} follows by 
 taking $N_0$ in \eqref{conditeration} large enough.
Then we set
$\Phi_{n}:=\exp\{\cS_{n}\}$ with $\cS_n$ the solution of the equation
\eqref{omoeq1}.
Notice that by \eqref{solenapoli}, \eqref{stimePPstepBIS} and the inductive estimates 
\eqref{stimePn} we deduce that there exists $\mathtt S_n\in E_{s}\otimes\cM_2(\C)$ such that $\cS_n= \fS(\mathtt S_n)$ with
\begin{equation*}
\begin{aligned}
\bnorm{  \mathtt{S}_n
\jap{D}}^{\g^{3/2} ,\cO_{n+1}}_{s}
&
\leq \tC N_{n}^{2\tau+1} N_{n-1}^{-\ta}\eps_0(s,\tb)\,,
\end{aligned}
\end{equation*}
for some constant $\tC:=\tC(\tau, \nu)$. 
By Lemma \ref{lem:kamstep}, there exists $\mathtt{\Phi}_n\in E_{s}\otimes\cM_2(\C)$ such that 
$\Phi_n= \fS(\mathtt{\Phi}_n)$. 
Moreover, 
the inductive hypothesis
together with  \eqref{stimamappaPhi+} 
implies \eqref{stimacciasuphiN} with $n-1\rightsquigarrow n$.
Lemma \ref{lem:kamstep} implies also that the conjugate 
operator (see \eqref{Msteppinoplus})
$ \mathcal{M}_{n+1}:=\Phi_{n}\circ\mathcal{M}_n\circ\Phi_{n}^{-1} $
has the form \eqref{MsteppinoPasson} with $n\rightsquigarrow n+1$.
The conditions \eqref{algalgstepnn}-\eqref{kamass1stepn}  with $n\rightsquigarrow n+1$
follow by 
\eqref{kamass1plus}
and the inductive hypothesis $({\rm \textbf{S1}})_n$.
It remains to prove estimates \eqref{stimePn} with $n\rightsquigarrow n+1$.
By the first bound in 
\eqref{stimePPstepPLUS} and $({\rm \textbf{S1}})_n$ we get
\[
\begin{aligned}
\eps_{n+1}(s)&\leq
\tc  N_{n}^{-\tb}\eps_{n}(s,\tb)+
\tc  N_{n}^{2\tau+1}\eps_{n}(s)\eps_{n}(\gso)
\\&\leq 
\tc  N_{n}^{-\tb}N_{n-1}\eps_0(s,\tb) 
+\tc  N_{n}^{2\tau+1}N_{n-1}^{-2\ta}
\eps_0(s,\tb)\eps_{0}(\gso,\tb)
\leq \eps_0(s,\tb)N_{n}^{-\ta}\,,
\end{aligned}
\]
since  $\tb > \ta+1$, 
$2\tau+1-\tfrac{1}{3}\ta < 0$ 
 (which holds thanks to \eqref{def:bbb}, \eqref{def:aaa}),  provided that
$N_0$ in \eqref{conditeration} is large enough.
This implies the first bound in \eqref{stimePn}.
By the second estimate in \eqref{stimePPstepPLUS} and $({\rm \textbf{S1}})_n$ 
we get 
\[
\begin{aligned}
\eps_{n+1}(s,\tb)
&\leq
\eps_n(s,\tb)\big(1+\tc  N_{n}^{2\tau+1}\eps_n(\gso)\big)
+\tc  N_{n}^{2\tau+1}\eps_n(\gso,\tb)\eps_n(s)
\\&\leq
\eps_0(s, \tb)N_{n-1}(1+\tc N_{n}^{2\tau+1}N_{n-1}^{-\ta}\eps_0(\gso,\tb))
\\&\qquad+\eps_0(s, \tb)\eps_0(\gso, \tb)\tc  N_{n}^{2\tau+1}N^{1-\ta}_{n-1}
\leq\eps_0(s, \tb)N_{n} 
\end{aligned}
\]
where in the last inequality, we used \eqref{conditeration} 
with $N_0$ large enough,
and  $2\tau+ \frac23 -\tfrac{2}{3}\ta < 0$.
This proves  \eqref{stimePn} at the step $n+1$.
Finally the operator $\mathcal{P}_{n+1}$
is real-to-real, reversibility  and parity preserving by Lemma \ref{lem:kamstep}
and the inductive assumption.

\vspace{0.3em}
\noindent
\emph{Proof of} $({\rm \textbf{S3}})_{n+1}$.
 We define $\tU_{n+1}:=\tU_{n}\circ \mathtt{\Phi}_{n}$,
where $\Phi_{n}=\fS(\mathtt{\Phi}_{n})$ is given by $({\rm \textbf{S2}})_{n+1}$.Then estimate \eqref{stimacompositionn} with $n\rightsquigarrow n+1$
follows by \eqref{stimacciasuphiN} with $n\rightsquigarrow n+1$
and the inductive assumption.
The same holds for the inverse $\tU_{n+1}^{-1}$.
\end{proof}

\begin{proof}[{\bf Proof of Theorem \ref{thm:reduKG}}]
Consider the operator $\mathcal{M}_0$ in \eqref{emme0}.
Fix $N_{0}$ as in Proposition \ref{IterativeKAM}. Choosing $C_0$ large and $\upsilon_{0} $ small enough, we have 
 that \eqref{PiccolezzaperKamredDP} 
 and \eqref{def:epsilon0}
imply the smallness condition \eqref{conditeration}.
Hence Proposition \ref{IterativeKAM} applies.
Let us now define
\begin{equation}\label{limitrinfty}
	(\mathtt{r}_{\infty})_{j}^{\eta j}:=\lim_{n\to\infty}(\tr_{n})_{j}^{\eta j}\,,\qquad \forall\, \eta\in \{\pm\}\,,\; j\in\N_{0}\,.
\end{equation}
First of all we claim that 
\begin{equation}\label{claimIterat}
\mathcal{O}_{\infty}\subseteq \cap_{n\geq0}\mathcal{O}_{n}\,,
\end{equation}
where $\mathcal{O}_{\infty}$ is defined in \eqref{calOinfty2sec7}
and $\mathcal{O}_{n}$ is in  \eqref{omegan}.
If \eqref{claimIterat} holds then  for any $\omega \in \mathcal{O}_{\infty}$ 
we define the final transformation $\Phi_{\infty}$
as 
\[
\Phi_{\infty} :=\lim_{n\to\infty}U_{n}\stackrel{\eqref{trasformazionestepN}}{=}
\lim_{n\to \infty}\Phi_0\circ\Phi_1\circ\dots \circ\Phi_{n-1}\,.
\]
The definition is well-posed since, by \eqref{stimacompositionn}, 
we have that $U_{n}=\fS( \tU_n)$, where $\tU_n$ is 
 a Cauchy sequence of couples in the norm 
$\bnorm{\cdot}_{s}^{\gamma^{3/2}, \calO_{\infty}}$ 
converging to $\mathtt\Phi_\infty\in E_s\otimes\cM_2(\C)$ such that $\Phi_{\infty}=\fS( \mathtt{\Phi}_{\infty})$.
In particular 
we have
$ \bnorm{\mathtt{\Phi}^{\pm 1}_{\infty} - \id}_s^{\g^{3/2}, \cO_{\infty}}\leq  2\eps_{0}(s,\tb) $, 
which implies 
estimate \eqref{grano}. The \eqref{granophi} follows by the second estimate in 
\eqref{stimacciasuphiN} and Remark \ref{rmk:sezphi}, 
taking $b=  \tfrac{\nu}{2} +1< \frac23 \ta - 2\tau-2 $.

The parity conditions and estimates in \eqref{stimeautovalfinali} holds true
by \eqref{algalgstepnn}-\eqref{kamass1stepn}.
Passing to the limit,
using \eqref{MsteppinoPasson}, \eqref{favoledigloria},
\eqref{FinalnormalformstepN}
and recalling the bounds \eqref{stimePn}
one gets the conjugation result
\eqref{Linfinito} with $\mathcal{D}_{\infty}$ 
as in \eqref{Finalnormalform}.

\vspace{0.3em}
\noindent
{\bf Proof of the claim} \eqref{claimIterat}.
We show by induction that if $\omega\in\calO_{\infty}$, defined  in \eqref{calOinfty2sec7}, then
$\omega\in \cO_{n}$  for  any $n \ge 0$. The basis of the induction is trivial 
since by definition we have 
$\calO_{\infty}\subseteq\calO_0:=\widehat{\calO}$.
Assume now, for $n\ge 1$,  that $\calO_{\infty}\subseteq \calO_{n-1}$. 
 We note that for any $\oo\in \calO_{\infty}$ and for any fixed $j,k\in \N_0$, $\ell\in \Z^{\nu}$, $\eta\in \{\pm\}$ one has
 (recall \eqref{lambdinostepn}, \eqref{autovalorifinali})
\[
\begin{aligned}
\omega\cdot\ell+\lambda^{(n-1)}_{j,\eta}&-\lambda^{(n-1)}_{k,\eta}=
\omega\cdot\ell+\lambda^{(\infty)}_{j,\eta}-\lambda^{(\infty)}_{k,\eta}
\\&+
(\tr_{n-1})_{j}^{j} -(\tr_{\infty})_{j}^{j} +\widehat\eta(j) \big((\tr_{n-1})_{j}^{-j}-(\tr_{\infty})_{j}^{-j} \big) 
\\&-(\tr_{n-1})_{k}^{k} +(\tr_{\infty})_{k}^{k} -\widehat\eta(k) \big((\tr_{n-1})_{k}^{-k}+(\tr_{\infty})_{k}^{-k} \big)
\end{aligned}
\]where $\widehat\eta(j)= \eta$ if $j\ne 0$ and $\widehat\eta(j)=0$ if $j=0$.
Moreover, by \eqref{kamass1stepn} and \eqref{limitrinfty}, we deduce
\[
\sup_{j\in\N_{0}, \s\in \{\pm\}}\langle j\rangle |(\tr_{\infty})_{j}^{\s j}-(\tr_{n-1})_{j}^{\s j}|^{\gamma^{3/2},\Lambda}
\leq \tC \eps(\gso,\tb) N_{n-2}^{-\ta}\,.
\]
Therefore, using that $\omega\in \mathcal{O}_{\infty}$ and taking 
$|\ell|\leq N_{n-1}$,
we get
\begin{equation}\label{terramare}
|\omega\cdot\ell+\lambda^{(n-1)}_{j,\eta}-\lambda^{(n-1)}_{k,\eta}|
\stackrel{\eqref{calOinfty2sec7}}{\geq}
2\gamma^{3/2}\langle\ell\rangle^{-\tau}-4\tC \eps(\gso,\tb) N_{n-2}^{-\ta}
\geq 
\gamma^{3/2}\langle\ell\rangle^{-\tau}\,,
\end{equation}
using the smallness condition \eqref{conditeration} and the definition of $\ta$.
The bound \eqref{terramare} together with the inductive hypothesis $\om\in \cO_\infty\subseteq \cO_{n-1}$ implies  that $\omega\in \Omega_{n}^{-}$
defined in \eqref{omegan}.
The proof that  $\omega\in \Omega_{n}^{+}$ is similar. 
Then $\omega\in \calO_{n}$ and   \eqref{claimIterat}  follows.
The proof of Theorem \ref{thm:reduKG} is complete.
\end{proof}

\section{Measure estimates and proof of Theorems \ref{main:thm} and \ref{thm:cantorset}}\label{sec:measure}

Let us fix $\tb$ as in \eqref{def:bbb}.
By the assumptions \eqref{oddness}-\eqref{revers},
 the operator $\mathcal{L}$ in \eqref{L-omega}
is real-to-real, reversible and parity preserving
according to Definition \ref{giornatasolare}.
The smallness condition \eqref{smallCondCoeff}
on the coefficients $a^{(i)}$ in \eqref{NLW}
guarantees 
that the smallness condition \eqref{ipopiccolezza} required in Proposition \ref{prop:ridord0}  
holds true.  
 Therefore  
Propositions \ref{diagonalizzazione order 1}, \ref{blockTotale}, \ref{prop:ridord1}, \ref{prop:ridord0}
apply and we conjugate $\cL$
in \eqref{L-omega} to $\mathcal{L}_5$ in \eqref{elle5}
which is defined in the set of parameters $\Omega_1$ in \eqref{calOinfty1sec6}.
The operator $\mathcal{L}_{5}$ 
has the form \eqref{emme0}, \eqref{D00D}, where
$\mathfrak{c}$ is the one in  Proposition  \ref{prop:ridord1}, 
$\widehat{\mathcal{O}}\equiv\Omega_1$ given in \eqref{calOinfty1sec6}, 
and $\mathcal{P}_0=\mathcal{R}^{(5)}$. 
Moreover $\mathcal{R}^{(5)}$
is real-to-real, reversibility and parity preserving, and hence 
$\mathcal{P}_0$ is so. Our aim is now to apply Theorem \ref{thm:reduKG}.
By  \eqref{restosmooth1bis} 
we have that, for any $\gso\leq s\leq \su$ 
(recall \eqref{def:epsilon0}) 
$$
\eps_0(s) \le \eps_0(s,\tb)=\gamma^{-3/2}\ \bnorm{\langle\td_{\vphi}\rangle^{\tb}
  \mathtt{P}_0\langle D\rangle}_{s}^{\gamma,\Omega_1}
  \lesssim_{\su}\gamma^{-7/2}\epsilon(s+\mu)\,.
$$
The smallness condition   \eqref{PiccolezzaperKamredDP} follows by the above 
estimate 
 by taking in \eqref{smallCondCoeff} the constant $\delta_{0} (s_1, \nu) $ small enough. 
Furthermore  \eqref{stimamasterdeltino} holds 
by \eqref{stimaMgotico1}.   
Then Theorem \ref{thm:reduKG} applies and  provides a map $\Phi_{\infty}$, defined on the set 
 $\calO_{\infty}$ in \eqref{calOinfty2sec7},
 which conjugates $\mathcal{L}_{5}$
 to the operator $\mathcal{M}_{\infty} = \omega \cdot \pa_\vphi 
 - \ii E \cD_{\infty} $ 
 in \eqref{Linfinito}. The operator $ \ii E \cD_{\infty} $
 has  the form \eqref{ellediag} with $\fD^{+}_{\infty} = (\cD_{\infty})_{+}^{+} $ 
 in \eqref{Dinfinito}, with 
$ \mathfrak{r}_{j}^{\s j}:=\langle j \rangle(\mathtt{r}_{\infty})_{j}^{\s j} $, for any $ j\in \N_{0}$, $ \s\in \{\pm\} $, 
and  $(\mathtt{r}_{\infty})_{j}^{\s j}$ 
defined in \eqref{Finalnormalform}. Formulas 
\eqref{lipfinal} and \eqref{pressione} are implied by \eqref{Finalnormalform} and \eqref{stimeautovalfinali}.  
Since the set  $\Omega_{1}$ defined in
\eqref{calOinfty1sec6} 
coincides with $\Lambda_0\cap\Lambda_{1}$ (cf. \eqref{calOzero}-\eqref{calOinfty1}), 
 it follows that
\begin{equation}\label{setfinalefinale}
\mathcal{O}_{\infty}\equiv\Lambda_0\cap\Lambda_1\cap\Lambda_2^{+}\cap\Lambda_2^{-}
\,.
\end{equation}
For any $\omega\in \mathcal{O}_{\infty}$ we define the T\"oplitz in time operator 
\[
\mathfrak{F} :=\Phi_{\infty}\circ\Theta_2\circ\Theta_1\circ{\bf \Psi}\circ\cU^{-1} 
\]
where $  \cU^{-1}, {\bf \Psi}, \Theta_1, \Theta_2 $ are defined in 
Propositions \ref{diagonalizzazione order 1}, \ref{blockTotale}, \ref{prop:ridord1}, \ref{prop:ridord0}.
By Lemma \ref{equidefalgebra}-$(vi)$ 
the map $\mathfrak{F}$ is real-to-real, reversibility and parity preserving and hence items $(1), (2), (3)$ of Theorem \ref{main:thm} follow.

The first estimate \eqref{stimemappa} is proven by composition using the corresponging estimates on  
each operator $  \cU^{-1}, {\bf \Psi}, \Theta_1, \Theta_2 $ and $\Phi_{\infty}$.
The first four maps satisfy the bound by direct inspection due to their pseudo-differential structure.
For the map $\Phi_{\infty}$
recall \eqref{granophi}. 
The estimate \eqref{nuovatamediff} follows by composition using
\eqref{stimasulresto} (together with Lemma \ref{sobaction}), 
\eqref{stimaMappaPsi}, \eqref{stimasuPhiuno}, \eqref{stimasuPhidue}
and \eqref{grano} (together with Lemma \ref{rmk:tametresbarre}).

The quasi-periodic in time map $\mathfrak{F}(\omega t)$  conjugates 
the dynamical system \eqref{coordcomp}
to \eqref{firstorderBis} (see formul\ae\, \eqref{complexZZ2}-\eqref{ellediag}). 
Then for any $\omega$ in the set $\calO_\infty$ 
in \eqref{setfinalefinale} the thesis of Theorem \ref{main:thm} holds,
namely we have proved the inclusion in \eqref{includoAutovalfinali} 
in Theorem \ref{thm:cantorset}.

To conclude the proof of Theorems \ref{main:thm} and \ref{thm:cantorset}
it remains to prove the measure estimate \eqref{includoAutovalfinali}. 

\begin{thm}{\bf (Measure estimates).}\label{stimedimisura}
Let $\cO_{\infty}$ be the set in 
\eqref{setfinalefinale}, see also \eqref{calOzero}-\eqref{calOinfty2}.
Then 
\begin{equation}\label{stimedimisuraTeo}
|\Lambda\setminus\cO_{\infty}|\leq C\gamma \,,
\end{equation}
for some constant $C := C(\tau, \nu, {\mathtt m})>0$ and where $\Lambda=[-1/2,1/2]^{\nu}$.
\end{thm}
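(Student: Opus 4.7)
My plan is to decompose $\Lambda \setminus \mathcal{O}_\infty$ into the complements of $\Lambda_0$, $\Lambda_1$, $\Lambda_2^+$ and $\Lambda_2^-$ via the identification \eqref{setfinalefinale} and subadditivity, and to bound each piece separately. The first step is to extract quantitative Lipschitz information from \eqref{pressione}, \eqref{stimeautovalfinali}, combined with the smallness hypothesis \eqref{smallCondCoeff} which forces $\|a^{(i)}\|_{H^{\so+\mu}} \leq \delta_0\gamma^{7/2}$:
$$
|\mathfrak{c}|^{\mathrm{sup}} \lesssim \delta_0\gamma^{7/2}, \quad \mathrm{Lip}(\mathfrak{c}) \lesssim \delta_0\gamma^{5/2}, \quad |\mathfrak{r}_j^{\sigma j}|^{\mathrm{sup}} \lesssim \delta_0\gamma^{3/2}, \quad \mathrm{Lip}(\mathfrak{r}_j^{\sigma j}) \lesssim \delta_0.
$$
Writing $\lambda_{j,\eta}^{(\infty)} = (1+\mathfrak{c})\mathtt{D}_{\mathtt{m}}(j) + \rho_{j,\eta}$ with $\rho_{j,\eta}:=(\mathfrak{r}_j^j+\eta\mathfrak{r}_j^{-j})/\langle j\rangle$, the perturbation satisfies $|\rho_{j,\eta}| \lesssim \delta_0\gamma^{3/2}/\langle j\rangle$ and $\mathrm{Lip}(\rho_{j,\eta}) \lesssim \delta_0/\langle j\rangle$. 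I also record the symmetrization observation: replacing $\ell$ by $-\ell$ in the definition \eqref{calOinfty1} of $\Lambda_1$, the conditions defining $\Lambda_0 \cap \Lambda_1$ together imply the extended first Melnikov bound $|\omega\cdot\ell + (1+\mathfrak{c})q| \geq 2\gamma\langle\ell\rangle^{-\tau}$ for all $(\ell,q) \in \mathbb{Z}^{\nu+1}\setminus\{0\}$.

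For the three easier pieces I apply the standard Diophantine scheme. The bound $|\Lambda\setminus\Lambda_0| \lesssim \gamma$ is immediate since $\tau \geq \nu$. For $\Lambda_1$: for each $\ell \neq 0$ and resonant $j\in\mathbb{N}$ (with $j\lesssim|\ell|$ imposed by $\omega\in\Lambda$), the function $\omega\mapsto\omega\cdot\ell-(1+\mathfrak{c})j$ has Lipschitz constant at least $|\ell|(1-C\delta_0\gamma^{5/2}j/|\ell|)\geq|\ell|/2$ in direction $\ell/|\ell|$, giving a bad set of measure $\lesssim\gamma\langle\ell\rangle^{-\tau-1}$ per pair; summing over the $O(|\ell|)$ active $j$ and over $\ell\in\mathbb{Z}^\nu$ produces $\gamma\sum_\ell\langle\ell\rangle^{-\tau}\lesssim\gamma$ for $\tau>\nu$. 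For $\Lambda_2^+$, the positivity of $\lambda_{j,\eta}^{(\infty)}+\lambda_{k,\eta}^{(\infty)}$ trivializes the regime $\mathtt{D}_{\mathtt{m}}(j)+\mathtt{D}_{\mathtt{m}}(k)>2|\ell|$ by the triangle inequality, reducing to the range $j,k\lesssim|\ell|$; this gives $O(|\ell|^2)$ configurations each of measure $\lesssim\gamma\langle\ell\rangle^{-\tau-1}$, totalling $\gamma\sum_\ell\langle\ell\rangle^{1-\tau}\lesssim\gamma$ since $\tau>\nu+1$.

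The main obstacle lies in bounding $|\Lambda\setminus\Lambda_2^-|$: a naive summation diverges because for each fixed difference $q=j-k$ there are infinitely many admissible pairs $(j,k)\in\mathbb{N}_0^2$, each producing a distinct bad set. My plan to resolve this is a two-regime cutoff argument anchored on the sharp asymptotic estimate, valid for $j,k\geq 1$ with $j\neq k$,
$$
\bigl|\lambda_{j,\eta}^{(\infty)} - \lambda_{k,\eta}^{(\infty)} - (1+\mathfrak{c})(j-k)\bigr| \lesssim \frac{|j-k|}{\min(j,k)^2} + \frac{\delta_0\gamma^{3/2}}{\min(j,k)},
$$
derived from the Taylor expansion $\mathtt{D}_{\mathtt{m}}(j) = j + \mathtt{m}/(2j) + O(j^{-3})$ and the pointwise decay of $\rho_{j,\eta}$. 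I set the cutoff $K_\ell := C\langle\ell\rangle^{(\tau+1)/2}/\sqrt{\gamma}$. In the regime $\min(j,k) \geq K_\ell$, the asymptotic error is bounded by $\gamma\langle\ell\rangle^{-\tau}/2$, so by the triangle inequality the second Melnikov condition $|\omega\cdot\ell+\lambda_{j,\eta}^{(\infty)}-\lambda_{k,\eta}^{(\infty)}|\geq 2\gamma^{3/2}\langle\ell\rangle^{-\tau}$ follows automatically from the extended first Melnikov bound (using $2\gamma - \gamma/2 = 3\gamma/2 > 2\gamma^{3/2}$ for $\gamma\in(0,1/2)$). In the complementary regime $\min(j,k) < K_\ell$, the resonance further restricts $|j-k|\lesssim|\ell|$, yielding $\lesssim K_\ell|\ell|$ triples, each contributing a bad set of measure $\lesssim\gamma^{3/2}\langle\ell\rangle^{-\tau-1}$ by the direct Lipschitz argument (Lipschitz constant $\geq|\ell|/2$ along $\ell/|\ell|$). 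The resulting contribution per $\ell$ is $\lesssim\gamma\langle\ell\rangle^{(1-\tau)/2}$, so that $\sum_\ell\gamma\langle\ell\rangle^{(1-\tau)/2}\lesssim\gamma$ converges by the hypothesis $\tau>2\nu+4>2\nu+1$. The hardest technical point is the delicate calibration between the gap $\gamma^{3/2}$ in $\Lambda_2^-$, the gap $\gamma$ in $\Lambda_1$, and the quadratic-in-$\min$ rate in the asymptotic estimate, which must be simultaneously compatible for the cutoff-based reduction to close and yield the final bound $\lesssim\gamma$.
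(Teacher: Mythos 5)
Your overall architecture mirrors the paper's: after the direct bounds for $\Lambda_0,\Lambda_1,\Lambda_2^+$, the delicate case $\Lambda_2^-$ is handled by a $\min(j,k)$--cutoff, with a counting bound below the cutoff and an inclusion/absorption argument above it. However, the absorption step above the cutoff has a genuine gap. You set $K_\ell := C\langle\ell\rangle^{(\tau+1)/2}\gamma^{-1/2}$ and claim that for $\min(j,k)\geq K_\ell$ the error $\bigl|\lambda^{(\infty)}_{j,\eta}-\lambda^{(\infty)}_{k,\eta}-(1+\mathfrak{c})(j-k)\bigr|$ is at most $\tfrac{1}{2}\gamma\langle\ell\rangle^{-\tau}$. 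By your own asymptotic estimate, this error splits into a Taylor term $\lesssim|j-k|/\min(j,k)^2$ and a perturbation term $\lesssim\delta_0\gamma^{3/2}/\min(j,k)$. The Taylor term is indeed $\lesssim \gamma/(C^2\langle\ell\rangle^\tau)$ (here the exponent $(\tau+1)/2$ and the $\gamma^{-1/2}$ are calibrated correctly), but the perturbation term is only bounded by
$$
\frac{\delta_0\gamma^{3/2}}{K_\ell}=\frac{\delta_0\,\gamma^{2}}{C\,\langle\ell\rangle^{(\tau+1)/2}}\,,
$$
and the ratio of this against your target $\gamma\langle\ell\rangle^{-\tau}/2$ equals $2\delta_0\gamma\langle\ell\rangle^{(\tau-1)/2}/C$, which diverges as $|\ell|\to\infty$. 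So the claim that the second Melnikov condition ``follows automatically from the extended first Melnikov bound'' fails for $\langle\ell\rangle^{(\tau-1)/2}\gtrsim (\delta_0\gamma)^{-1}$, and the large-$\min(j,k)$ regime is not absorbed into $\Lambda\setminus(\Lambda_0\cap\Lambda_1)$.

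The paper avoids this by proving the inclusion $R^{(-)}_{\ell j k\eta}(\gamma^{3/2},\tau)\subseteq Q^{(1)}_{\ell,\,j-k}(\gamma,\tau_1)$ with a strictly smaller exponent $\tau_1$ (the paper takes $\tau_1=\nu+2$, using the cutoff $\mathtt{C}\langle\ell\rangle^{\tau_1}\gamma^{-1/2}$), rather than trying to land inside $\Lambda\setminus(\Lambda_0\cap\Lambda_1)$ whose threshold has the full exponent $\tau$. The comparison target is then $2\gamma\langle\ell\rangle^{-\tau_1}$, against which the perturbation error $\lesssim\gamma^2\langle\ell\rangle^{-\tau_1}$ is controlled by the ratio $\gamma/2$, uniformly in $\ell$; and the measure $\sum_{|h|\lesssim\langle\ell\rangle}|Q^{(1)}_{\ell h}(\gamma,\tau_1)|\lesssim\gamma\langle\ell\rangle^{-\tau_1}$ is still summable since $\tau_1>\nu$. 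Your argument would close if you replaced the appeal to $\Lambda_0\cap\Lambda_1$ by the inclusion $R^{(-)}_{\ell j k\eta}\subseteq Q^{(1)}_{\ell,\,j-k}(\gamma,\tau_1)$ with $\tau_1:=(\tau+1)/2\in(\nu,\tau-\nu)$, matching your cutoff exponent; this would recover both the uniform error control and the summability of the union's measure, at the cost of introducing and estimating the auxiliary sets $Q^{(1)}_{\ell h}(\gamma,\tau_1)$ separately.
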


The rest of this section is devoted to prove Theorem \ref{stimedimisura}. 

Recalling \eqref{calOzero}, \eqref{calOinfty1}, \eqref{calOinfty2} 
we define the ``resonant sets"
\begin{equation}\label{BadSets}
\begin{aligned}
Q_{\ell  }^{(0)}& := Q_{\ell  }^{(0)} (\gamma, \nu) :=\big\{ 
\omega\in \Lambda : \lvert \omega\cdot \ell \rvert
< 2\gamma \langle \ell \rangle^{-\nu}
\big\}\,, 
\\
Q_{\ell j }^{(1)}&  := Q_{\ell j }^{(1)} (\gamma, \tau)
:=\big\{ 
\omega\in \Lambda: \lvert \omega\cdot \ell+(1+\mathfrak{c}) j \rvert
< 2\gamma \langle \ell \rangle^{-\tau} \big\}\,, 
\\
R^{(+)}_{\ell j k \eta}& := R^{(+)}_{\ell j k \eta} (\gamma, \tau ) :=\big\{ 
\omega\in \Lambda : \lvert  \omega\cdot \ell+
\lambda_{j,\eta}^{(\infty)}+\lambda_{k,\eta}^{(\infty)}\rvert
< 2\,\gamma\, \langle \ell \rangle^{-\tau} \big\},
\\
R^{(-)}_{\ell j k \eta}& := R^{(-)}_{\ell j k \eta} (\gamma^{3/2}, \tau ) :=\big\{ 
\omega\in \Lambda : \lvert  \omega\cdot \ell+
\lambda_{j,\eta}^{(\infty)}-\lambda_{k,\eta }^{(\infty)}\rvert
< 2\,\gamma^{3/2}\, \langle \ell \rangle^{-\tau}  \big\}\,,
\end{aligned}
\end{equation}
where $\lambda_{j,\eta}^{(\infty)}$ are given in \eqref{finaleigenv}
and $\mathfrak{c}$ satisfies \eqref{pressione}.
In this way
\begin{align}\label{11.4}
\Lambda\setminus\Lambda_{0}&= \bigcup_{\ell\in\mathbb{Z}^{\nu}\setminus\{0\}} Q_{\ell  }^{(0)}\,, \qquad
&&\Lambda\setminus\Lambda_{1}= \bigcup_{\ell\in\mathbb{Z}^{\nu}, j\in \N}Q_{\ell j }^{(1)}\,,\\
\Lambda\setminus\Lambda_{2}^{+}&=\bigcup_{\substack{\ell\in\mathbb{Z}^{\nu}, j, k\in \N_{0} \\ \eta \in \{\pm\}}}
R^{(+)}_{\ell j k \eta} \,,\qquad
&&\Lambda\setminus\Lambda_{2}^{-}
= \bigcup_{\substack{\ell\in\mathbb{Z}^{\nu}, j, k\in \N_{0}, \eta \in \{\pm\} \\ (\ell, j, k)\neq (0, j, j)} }
R^{(-)}_{\ell j k \eta}\,. \label{11.5}
\end{align}

\begin{rmk}\label{rmk.inclusione.diofantei}
Recalling  \eqref{calOzero} $\tau>\nu$ and $\gamma>\gamma^{3/2}$,
we note that  $R^{(-)}_{\ell j j \eta} \subseteq Q^{(0)}_{\ell}$.
\end{rmk}

The next lemma
gives  relations among $\ell,j,k$
 in order to have non empty resonant sets.

\begin{lemma}\label{bip} 
There exists  $C>0$ such that,  for any $\ell \in\Z^{\nu}, j,k\in\N_{0}$, $ \eta \in \{ \pm \} $, 

\noindent
$(i)$ if $Q^{(1)}_{\ell j}\neq \emptyset$ then $j \le C\jap{\ell}$;

\noindent
$(ii)$ if $R^{(+)}_{\ell j k \eta}\neq \emptyset$,  
then $j,k \le C \jap{\ell}$;

\noindent
$(iii)$ if $R^{(-)}_{\ell j k \eta}\neq \emptyset$ 
 then 
$\big|\tD_{\mathtt{m}}(j)-\tD_{\mathtt{m}}(k) \big| \le C \jap{\ell}$
where $\tD_{\mathtt{m}}(j)$ is in \eqref{TDM}.
\end{lemma}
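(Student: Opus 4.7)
The plan is to exploit the fact that, under the smallness assumption \eqref{smallCondCoeff}, the perturbative bounds \eqref{pressione} make the corrected eigenvalues $\lambda^{(\infty)}_{j,\eta}$ arbitrarily close to their unperturbed counterparts $(1+\mathfrak{c})\tD_\mathtt{m}(j)$, with the Lipschitz correction $\bigl(\mathfrak{r}^{j}_{j}\pm\mathfrak{r}^{-j}_{j}\bigr)/\langle j\rangle$ uniformly bounded in $j$. In particular, up to fixing a small enough $\delta_0(s_1)$ in \eqref{smallCondCoeff}, one has $|\mathfrak{c}|\leq 1/2$, so that $1/2 \leq 1+\mathfrak{c}\leq 3/2$, and $\sup_{j\in\mathbb{N}_0,\eta=\pm}|\lambda^{(\infty)}_{j,\eta}-(1+\mathfrak{c})\tD_\mathtt{m}(j)|\leq C_0$ for some constant $C_0=C_0(\mathtt{m})$. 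The other ingredient is the trivial bound $|\omega\cdot\ell|\leq \tfrac{\sqrt{\nu}}{2}|\ell|\leq C_1\langle \ell\rangle$ valid for any $\omega\in\Lambda=[-1/2,1/2]^\nu$.

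For item $(i)$, I would pick $\omega\in Q^{(1)}_{\ell j}$ and use the definition in \eqref{BadSets} to write $(1+\mathfrak{c})j = -\omega\cdot\ell + \theta$ with $|\theta|<2\gamma\langle \ell\rangle^{-\tau}\leq 1$. The lower bound $1+\mathfrak{c}\geq 1/2$ and the estimate $|\omega\cdot\ell|\leq C_1\langle\ell\rangle$ then give $j/2\leq C_1\langle\ell\rangle+1$, hence $j\leq C\langle \ell\rangle$.

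For item $(ii)$, I pick $\omega\in R^{(+)}_{\ell j k \eta}$ and estimate
\[
(1+\mathfrak{c})\bigl(\tD_\mathtt{m}(j)+\tD_\mathtt{m}(k)\bigr) \leq |\omega\cdot\ell| + 2C_0 + 2\gamma\langle\ell\rangle^{-\tau} \leq C_1\langle\ell\rangle + 2C_0+1.
\]
Using $\tD_\mathtt{m}(j)\geq |j|$ (since $\mathtt{m}>0$) and $1+\mathfrak{c}\geq 1/2$, I obtain $j+k\leq C\langle \ell\rangle$, which is stronger than the claim.

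For item $(iii)$, analogously, for $\omega\in R^{(-)}_{\ell j k \eta}$,
\[
(1+\mathfrak{c})\bigl|\tD_\mathtt{m}(j)-\tD_\mathtt{m}(k)\bigr| \leq |\omega\cdot\ell| + 2C_0 + 2\gamma^{3/2}\langle\ell\rangle^{-\tau}\leq C_1\langle\ell\rangle+2C_0+1,
\]
and the lower bound on $1+\mathfrak{c}$ yields the claim. There is no real obstacle here: the whole content is that the perturbative corrections to the eigenvalues and the small quantity $\mathfrak{c}$ are uniformly bounded independently of $j,k$, so that the naive triangle inequalities on the linear-in-$j,k$ unperturbed part of the small divisor propagate from the perturbed small divisor up to harmless additive constants.
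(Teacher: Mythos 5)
Your proof is correct and follows essentially the same route as the paper: exploit the definition of the resonant sets via a triangle inequality, lower-bound the eigenvalue differences $|\lambda_{j,\eta}^{(\infty)}\mp\lambda_{k,\eta}^{(\infty)}|$ in terms of the unperturbed multipliers $\tD_{\mathtt{m}}(j)$, and absorb the perturbative corrections using \eqref{pressione} and the smallness of $\delta_0$. The only cosmetic difference is in how the corrections $\tfrac{\mathfrak{r}_j^{j}\pm\mathfrak{r}_j^{-j}}{\langle j\rangle}$ are disposed of in item $(iii)$: you absorb them additively as a uniform constant $C_0$, while the paper absorbs them multiplicatively, showing $|\lambda^{(\infty)}_{j,\eta}-\lambda^{(\infty)}_{k,\eta}|\geq \tfrac13|\tD_{\mathtt{m}}(j)-\tD_{\mathtt{m}}(k)|$, which relies implicitly on the unperturbed gap $|\tD_{\mathtt{m}}(j)-\tD_{\mathtt{m}}(k)|$ being bounded away from zero when $j\neq k$. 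Your additive version is marginally more elementary since it avoids that spectral-gap observation; either leads to the same $C\langle\ell\rangle$ bound.

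One small inaccuracy in your write-up: your $C_0$ does not actually depend on $\mathtt{m}$ — by \eqref{pressione} together with \eqref{smallCondCoeff} one has $\sup_{j,\sigma}|\mathfrak{r}^{\sigma j}_j|\lesssim \delta_0\gamma^{3/2}$, and the correction to $\lambda^{(\infty)}_{j,\eta}$ is this divided by $\langle j\rangle$, so $C_0$ depends only on $\delta_0$ and the implicit constant in \eqref{pressione}. This does not affect the argument.
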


\begin{proof}
We prove the most difficult case item $ (iii) $. 
Recalling \eqref{TDM}, we note that, for $\mathtt{m}>0$ and $j\in\N$, 
\begin{equation}\label{asintotica}
\tD_{\mathtt{m}}(j)=\sqrt{j^{2}+\mathtt{m}} 
=j+\frac{\mathtt{m}}{2j}+\mathfrak{n}(j)\,,
\qquad |\mathfrak{n}(j)|\leq\frac{\mathtt{m}^{2}}{8j^{3}}\,.
\end{equation}
We prove item $(iii)$, which is trivially true for $j=k$.
If $R^{(-)}_{\ell j k \eta}\neq \emptyset$, then there exists $\omega$ such that 
\begin{equation}\label{marechiaro}
\lvert \lambda^{(\infty)}_{j, \eta}(\omega)-\lambda^{(\infty)}_{k, \eta}(\omega) \rvert< 
2 \gamma^{3/2}\langle \ell \rangle^{-\tau}+  \lvert {\omega}\cdot \ell \rvert\,.
\end{equation}
Moreover, by \eqref{finaleigenv}, for any $j\neq k$
\begin{equation*}
\begin{aligned}
\lvert \lambda^{(\infty)}_{j, \eta}(\omega) - \lambda^{(\infty)}_{k, \eta}(\omega) \rvert 
&\geq 
\lvert 1+ \mathfrak{c} \rvert \lvert \tD_{\mathtt{m}}(j)-\tD_{\mathtt{m}}(k) \rvert
- 2 \langle j\rangle^{-1}\sup_{\s\in \{\pm\}}|\mathfrak{r}_{j}^{\s j}|
- 2 \langle k\rangle^{-1}\sup_{\s\in \{\pm\}}|\mathfrak{r}_{k}^{\s k}\rvert  
\\&
\geq \tfrac{1}{3} \lvert \tD_{\mathtt{m}}(j)-\tD_{\mathtt{m}}(k)  \rvert\,,
\end{aligned}
\end{equation*}
by  \eqref{pressione} and taking $\delta_{0}$ in \eqref{smallCondCoeff} small enough.
Therefore by \eqref{marechiaro} 
we deduce 
$ \tfrac{1}{3} \lvert \tD_{\mathtt{m}}(j)-\tD_{\mathtt{m}}(k)  \rvert
\leq 2 \gamma^{3/2}\langle \ell \rangle^{-\tau}+|\omega||\ell|\le 2(1+|\oo|) \jap{\ell} $ 
which proves Item $(iii)$  taking $C$ large enough. 
\end{proof}

We estimate the measure of a  resonant set
in \eqref{BadSets} with constants $ (\mu, \tau_0) $ instead of $ (\gamma, \tau)$.

\begin{lemma}\label{singolo}
For any $\mu\in (0, 1/2)$ and any $\tau_0\in\mathbb{N}$ 
the sets in \eqref{BadSets} satisfy 
$\lvert R^{(-)}_{\ell j k \eta}(\mu, \tau_{0}) \rvert\lesssim \mu \langle \ell \rangle^{-\tau_0-1}$ for $(\ell, j, k)\neq (0, j, j)$. 
The same estimate holds for 
$R^{(+)}_{\ell j k \eta}(\mu, \tau_{0})$,  for 
 $Q^{(1)}_{\ell j}(\mu,\tau_{0})$ if $j\neq 0$ and for $Q^{(0)}_{\ell }(\mu,\tau_{0})$ if $\ell\neq 0$.
\end{lemma}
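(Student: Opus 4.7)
The plan is to realise each resonant family in \eqref{BadSets} as the sublevel set, inside $\Lambda=[-1/2,1/2]^\nu$, of a function of the form $\phi(\omega)=\omega\cdot\ell+g(\omega)$, where $g$ has Lipschitz constant strictly smaller than $|\ell|/2$. Once such a bound on $|g|^{\rm lip}$ is established, the standard foliation argument applies: decomposing $\omega=s\hat{\ell}+\omega^\perp$ with $\hat{\ell}=\ell/|\ell|$ and $\omega^\perp$ in the hyperplane orthogonal to $\ell$, the one-dimensional slice
\[
\big\{s\in\R:|\phi(s\hat{\ell}+\omega^\perp)|<2\mu\langle\ell\rangle^{-\tau_0}\big\}
\]
has Lebesgue measure at most $4\mu\langle\ell\rangle^{-\tau_0}/|\ell|$, and Fubini over the bounded-measure set of admissible $\omega^\perp$ delivers the claimed bound $\lesssim \mu\langle\ell\rangle^{-\tau_0-1}$. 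For $Q^{(0)}_\ell$ with $\ell\neq 0$ this is immediate with $g\equiv 0$.

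For the remaining three families with $\ell\neq 0$, the required bound on $|g|^{\rm lip}$ follows by combining \eqref{pressione} and the smallness assumption \eqref{smallCondCoeff} (which give $|\mathfrak{c}|^{\rm lip}\lesssim \gamma^{-1}\epsilon\lesssim\delta_0$ and $|\mathfrak{r}^{\sigma j}_j|^{\rm lip}\lesssim\gamma^{-7/2}\epsilon\lesssim\delta_0$) with the non-emptiness constraints provided by Lemma \ref{bip}. For $Q^{(1)}_{\ell j}$, item (i) gives $j\lesssim \langle\ell\rangle$, hence $|g|^{\rm lip}=j\,|\mathfrak{c}|^{\rm lip}\lesssim\delta_0\langle\ell\rangle$; for $R^{(+)}_{\ell j k\eta}$, item (ii) yields $j,k\lesssim\langle\ell\rangle$, whence $|g|^{\rm lip}\lesssim |\mathfrak{c}|^{\rm lip}(\tD_{\mathtt{m}}(j)+\tD_{\mathtt{m}}(k))+\delta_0\lesssim\delta_0\langle\ell\rangle$; and for $R^{(-)}_{\ell j k\eta}$, item (iii) together with the formulas \eqref{autovalorifinali} gives $|g|^{\rm lip}\lesssim |\mathfrak{c}|^{\rm lip}|\tD_{\mathtt{m}}(j)-\tD_{\mathtt{m}}(k)|+\delta_0\lesssim\delta_0\langle\ell\rangle$. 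In every case $|g|^{\rm lip}\le |\ell|/2$ once $\delta_0$ is small enough.

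It then remains to dispose of the degenerate case $\ell=0$ in the three latter families. For $Q^{(1)}_{0j}$ with $j\neq 0$ the condition $|(1+\mathfrak{c})j|<2\mu$ is inconsistent with $|\mathfrak{c}|^{\rm sup}\ll 1$ for $\mu$ small. For $R^{(+)}_{0jk\eta}$ the sum $\lambda^{(\infty)}_{j,\eta}+\lambda^{(\infty)}_{k,\eta}$ is bounded below by $2\sqrt{\mathtt{m}}(1-o(1))$, so the set is empty for $\mu$ small. For $R^{(-)}_{0jk\eta}$ with $j\neq k$ the computation in the proof of Lemma \ref{bip}(iii) yields $|\lambda^{(\infty)}_{j,\eta}-\lambda^{(\infty)}_{k,\eta}|\geq\tfrac{1}{3}|\tD_{\mathtt{m}}(j)-\tD_{\mathtt{m}}(k)|$, which is bounded away from zero uniformly in $j\neq k\in\N_0$ since $\{\tD_{\mathtt{m}}(j)\}$ is strictly increasing with gaps tending to $1$; hence emptiness for $\mu$ small.

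The main technical point throughout is that although the individual eigenvalue $\lambda^{(\infty)}_{j,\eta}$ has Lipschitz constant proportional to $\tD_{\mathtt{m}}(j)$ (which can be arbitrarily large), only sums or differences of two such eigenvalues enter the resonance conditions, and Lemma \ref{bip} forces the relevant combination to be controlled by $\langle\ell\rangle$ on the non-empty resonant sets. This is what reduces the Lipschitz budget to $\delta_0\langle\ell\rangle$ rather than $\delta_0\langle j\rangle$, and is the single ingredient that makes the measure estimate uniform in $j,k$.
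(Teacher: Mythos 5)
Your proof is correct and follows essentially the same strategy as the paper's: foliate $\Lambda$ along the direction $\hat\ell$, bound the Lipschitz constant of the parameter-dependent part $g$ of the resonance function by $\delta_0\langle\ell\rangle \le |\ell|/2$ using \eqref{pressione}, the smallness condition \eqref{smallCondCoeff}, and the non-emptiness constraints of Lemma~\ref{bip}, then integrate the $1$D slices via Fubini; the paper does exactly this for $R^{(-)}$ (the hardest case) and declares the others similar, whereas you spell out the Lipschitz budget in each case. Your key observation --- that only sums and differences of eigenvalues appear, so that Lemma~\ref{bip} reduces the effective Lipschitz constant from $O(\langle j\rangle)$ to $O(\langle\ell\rangle)$ --- is indeed the crux, and it is the same mechanism the paper exploits through the term $C|\ell|\,|\mathfrak{c}|^{\rm lip}$ in its estimate for $\Psi_R$.

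The only small gap is in the $\ell=0$ cases: you establish emptiness of $Q^{(1)}_{0j}$, $R^{(+)}_{0jk\eta}$, $R^{(-)}_{0jk\eta}$ when $\mu$ is small, but the lemma is stated for all $\mu\in(0,\tfrac12)$. When $\mu\ge c(\tm)$ (the threshold below which emptiness holds) the set has measure at most $|\Lambda|=1\le c^{-1}\mu$, which is $\lesssim\mu=\mu\langle 0\rangle^{-\tau_0-1}$, so the bound still holds; the paper includes this line explicitly and you should too.
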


\begin{proof}
We prove the bound for the set $R^{(-)}_{\ell j k\eta}$ 
which is the most difficult case. Let us assume that $R^{(-)}_{\ell j k\eta}\neq \emptyset$ so that the Lemma \ref{bip} holds.
The other cases are similar.
We define the function
\begin{equation}\label{perfubini}
\begin{aligned}
\phi_{R^{-}}(\omega)&:= \omega\cdot \ell
+\lambda^{(\infty)}_{j, \eta}(\omega)-\lambda^{(\infty)}_{k, \eta}(\omega)
\\&\stackrel{\eqref{finaleigenv}}{=}
\omega\cdot \ell+(1+ \mathfrak{c}) (\tD_{\mathtt{m}}(j)-\tD_{\mathtt{m}}(k) )
+
\frac{\mathfrak{r}_{j}^{j}+\widehat\eta(j) \mathfrak{r}_{j}^{-j}}{\langle j\rangle}
-\frac{\mathfrak{r}_{k}^{k}+\widehat\eta(k) \mathfrak{r}_{k}^{-k}}{\langle k\rangle} \, .
\end{aligned}
\end{equation}
where $\widehat\eta(j)= \eta$ if $j\ne 0$ and $\widehat\eta(j)=0$ if $j=0$ and similarly for $\widehat\eta(k)$.
For any $ \ell \neq 0 $
we write  $\omega=s \hat{\ell}+v$ where $\hat{\ell}:=\ell /\lvert \ell \rvert$ and $v\cdot \ell=0$ 
and we set $\Psi_{R^{-}}(s):=\phi_{R^{-}}(s \hat{\ell}+v)$.
By  \eqref{pressione}
and  Lemma \ref{bip} 
we have  (recall \eqref{smallCondCoeff})
\begin{equation*}
\begin{aligned}
\lvert \Psi_R(s_{1})-\Psi_R(s_2) \rvert&\geq \lvert s_{1}-s_2 \rvert \big( 
|\ell|-
C|\ell||\mathfrak{c}|^{{\rm lip}}
-4\sup_{j\in\Z,\s\in \{\pm\}}
|\fr_{j}^{\s j}|^{{\rm lip}} \big)
\geq\frac{\lvert\ell\rvert}{2} \lvert s_{1}-s_2 \rvert \,,
\end{aligned}
\end{equation*}
by  \eqref{smallCondCoeff}.
As a consequence  for $\ell\neq 0$ 
the set $\Delta_{\ell j k \eta}:=\{ s: s \hat{\ell}+v\in R^{(-)}_{\ell j k \eta}\}$ 
has  measure
$ \lvert \Delta_{\ell j k \eta} \rvert\le {2}{\,\lvert \ell\rvert^{-1}} \,{4\,\mu\,}{| \ell |^{-\tau_{0}}} $
and the bounds of the lemma follows by Fubini's theorem. 
For $ \ell = 0 $, there is $ c := c (\tm)> 0 $ such that for any $ j \neq k $, $ j, k \in \N_0 $, 
the function  in  \eqref{perfubini} satisfies $ |\phi_{R^{-}}(\omega) | \geq c $. Hence if $\mu <c$ then the set $R^{(-)}_{0 j k\eta}= \emptyset$, otherwise $|R^{(-)}_{0 j k\eta}| \le1 \le c^{-1}\mu$.  In both case the desired estimate holds.
\end{proof}

\begin{lemma}\label{inclusionenelleprime} Fix $\gamma, \tau$ as in \eqref{costanti} and \eqref{costantiGAMMA}.
There exists $\mathtt C>0$ such that 
for any $ \tau_1 \in [1, \tau] $ then, for any $ \ell \in \Z^{\nu} $, $ \eta \in \{ \pm \}$, 
\begin{equation}\label{incluwave}
R^{(-)}_{\ell j k \eta}(\gamma^{3/2}, \tau)\subseteq Q^{(1)}_{\ell, j-k}(\gamma, \tau_1)\,, \qquad {\rm for} \, j, k\geq \mathtt{C} \langle \ell \rangle^{\tau_1}\gamma^{-1/2}\,.
\end{equation}
\end{lemma}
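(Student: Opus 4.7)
The plan is to bound the difference $\lambda^{(\infty)}_{j,\eta} - \lambda^{(\infty)}_{k,\eta} - (1+\mathfrak{c})(j-k)$ sharply enough that an element of $R^{(-)}_{\ell j k\eta}(\gamma^{3/2},\tau)$ is automatically trapped in $Q^{(1)}_{\ell,j-k}(\gamma,\tau_1)$. The crude pointwise estimate $|\tD_{\mathtt m}(j)-j| \lesssim 1/j$ gives only an error of size $1/\min(j,k)$, which is not enough with the threshold $\mathtt{C}\langle\ell\rangle^{\tau_1}\gamma^{-1/2}$. The key observation is that nonemptiness of $R^{(-)}_{\ell j k \eta}$ already forces $|j-k|\lesssim \langle\ell\rangle$ via Lemma~\ref{bip}(iii), which upgrades the error by one factor of $\min(j,k)$.

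First I would reduce to the nontrivial case where $R^{(-)}_{\ell j k\eta}(\gamma^{3/2},\tau)$ is nonempty and apply Lemma~\ref{bip}(iii) to get $|j-k|\le |\tD_{\mathtt m}(j)-\tD_{\mathtt m}(k)| + \mathtt m/j + \mathtt m/k \le C\langle\ell\rangle$ after using the choice $\mathtt{C}$ large (so that $j,k \ge \mathtt m$, say). Then I would insert the asymptotic expansion \eqref{asintotica}, writing
\[
\tD_{\mathtt m}(j) - \tD_{\mathtt m}(k) - (j-k) = \frac{\mathtt m}{2}\Big(\frac{1}{j}-\frac{1}{k}\Big) + \mathfrak n(j) - \mathfrak n(k) = \frac{\mathtt m(k-j)}{2 jk} + O\!\big(\min(j,k)^{-3}\big),
\]
which together with the bound $|j-k|\le C\langle\ell\rangle$ and the hypothesis $jk \ge \mathtt{C}^2\langle\ell\rangle^{2\tau_1}\gamma^{-1}$ yields
\[
|\tD_{\mathtt m}(j)-\tD_{\mathtt m}(k)-(j-k)| \le \frac{C(\mathtt m)\langle\ell\rangle}{jk} + \frac{C(\mathtt m)}{\min(j,k)^3} \le \frac{C(\mathtt m)\gamma}{\mathtt{C}^2\langle\ell\rangle^{2\tau_1-1}} \le \frac{C(\mathtt m)\gamma}{\mathtt{C}^2\langle\ell\rangle^{\tau_1}},
\]
the last inequality using $\tau_1\ge 1$. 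Multiplying by the factor $|1+\mathfrak c|\le 2$ and absorbing the perturbative corrections $\mathfrak r_j^{\pm j}/\langle j\rangle, \mathfrak r_k^{\pm k}/\langle k\rangle$ via \eqref{pressione} and the smallness \eqref{smallCondCoeff} (which gives $|\mathfrak r_j^{\pm j}|\lesssim \gamma^{3/2}\delta_0$, hence a contribution of order $\gamma^2/(\mathtt{C}\langle\ell\rangle^{\tau_1})$) produces
\[
\bigl|\lambda^{(\infty)}_{j,\eta}-\lambda^{(\infty)}_{k,\eta}-(1+\mathfrak c)(j-k)\bigr| \le \frac{C_\star\gamma}{\mathtt{C}\,\langle\ell\rangle^{\tau_1}},
\]
with $C_\star=C_\star(\mathtt m)$.

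To conclude, for $\omega \in R^{(-)}_{\ell j k\eta}(\gamma^{3/2},\tau)$ the triangle inequality and $\tau\ge\tau_1$ give
\[
|\omega\cdot\ell + (1+\mathfrak c)(j-k)| \le 2\gamma^{3/2}\langle\ell\rangle^{-\tau_1} + \frac{C_\star\gamma}{\mathtt{C}\langle\ell\rangle^{\tau_1}}.
\]
Choosing $\mathtt{C}\ge 2C_\star$ and using $\gamma\in(0,1/2)$, so that $2\gamma^{1/2}+1/2 \le 2$, the right-hand side is bounded by $2\gamma\langle\ell\rangle^{-\tau_1}$, which is exactly the condition defining $Q^{(1)}_{\ell,j-k}(\gamma,\tau_1)$.

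The only delicate point is the gain in Step~2: a naive $1/\min(j,k)$ bound would force the threshold to be of order $\gamma^{-1}\langle\ell\rangle^{\tau_1}$ rather than $\gamma^{-1/2}\langle\ell\rangle^{\tau_1}$. The upgrade comes exclusively from coupling the asymptotic $\tD_{\mathtt m}(j)-j\sim \mathtt m/(2j)$ with the a priori bound $|j-k|\lesssim\langle\ell\rangle$ provided by Lemma~\ref{bip}(iii); this is the step one must execute carefully, and the power $\gamma^{-1/2}$ in the hypothesis is precisely tuned to this gain.
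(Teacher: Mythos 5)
Your proof is correct and follows essentially the same strategy as the paper's: the paper also reduces to the nonempty case, applies Lemma~\ref{bip}(iii) to force $|j-k|\lesssim\langle\ell\rangle$, inserts the asymptotic expansion \eqref{asintotica} of $\tD_{\mathtt m}$, and uses $jk\geq\mathtt{C}^2\langle\ell\rangle^{2\tau_1}\gamma^{-1}$ together with the smallness of $\mathfrak{c}$ and $\mathfrak{r}_j^{\pm j}$ to control the error; the only cosmetic difference is that the paper argues by contrapositive (take $\omega\notin Q^{(1)}_{\ell,j-k}(\gamma,\tau_1)$ and show the small-divisor lower bound for $R^{(-)}$) rather than directly, which is logically equivalent. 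Your identification of the key gain — coupling $\tD_{\mathtt m}(j)-j\sim\mathtt m/(2j)$ with $|j-k|\lesssim\langle\ell\rangle$ from Lemma~\ref{bip}(iii) so as to get a $1/(jk)$ rather than $1/\min(j,k)$ error — is exactly the mechanism the paper relies on and the reason the threshold scales like $\gamma^{-1/2}$.
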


\begin{proof} If $ \ell = 0 $, $ j = k $, or  if $ R^{(-)}_{\ell j k \eta}(\gamma^{3/2}, \tau) $ is empty   
then \eqref{incluwave} is trivial.
 Consider $\omega\not\in Q^{(1)}_{\ell, j-k}(\gamma, \tau_1)$.  
 For any  $ j, k\geq \mathtt{C} \langle \ell \rangle^{\tau_1}\gamma^{-1/2}  $,  
  by \eqref{finaleigenv}, \eqref{asintotica}, 
 \eqref{pressione}
we have
\begin{align}
\lvert \omega\cdot \ell +\lambda^{(\infty)}_{j, \eta}-\lambda^{(\infty)}_{k, \eta}\rvert 
&\geq 
\lvert \omega\cdot \ell + (1+\fc) (j - k) \rvert -  
\frac{\mathtt{m}}{2} \Big| \frac{1}{j} - \frac{1}{k} \Big| \notag  \\
&- \frac{ \tm^2}{8\min\{ j, k \}^3}
 -\frac{4}{\min\{ j, k \}}\sup_{j\in\Z,\s\in \{\pm\}}
|\fr_{j}^{\s j}| \notag 
\\
& \stackrel{\eqref{BadSets},\eqref{smallCondCoeff}} \geq 
\frac{2\gamma}{\langle \ell \rangle^{\tau_1}}
- \lvert j-k \rvert \frac{\tm \g }{2 \tC^2 \jap{\ell}^{2\tau_1}}
- \frac{\tm^2 \g^{3/2}}{8 \tC^3 \jap{\ell}^{3\tau_1}} 
-\frac{ C \g^{2}\delta_0}{  \tC \jap{\ell}^{\tau_1}} \, . \label{lastdisa}
\end{align}
Since $j$ and $k$ are large,  
then by Lemma \ref{bip} we have $C\jap{\ell} \geq |\tD_\tm (j) - \tD_\tm (k) | \geq |j-k| / 2  $ and so, by   \eqref{lastdisa}
$$
\lvert \omega\cdot \ell +\lambda^{(\infty)}_{j, \eta}-\lambda^{(\infty)}_{k, \eta}\rvert 
\geq \frac{2\gamma}{\langle \ell \rangle^{\tau_1}}
- \tc_1 \Big(\frac{\gamma}{\langle \ell \rangle^{2\tau_1-1}}
+ \frac{\g^{3/2}}{ \jap{\ell}^{3\tau_1}}
+\frac{  \g^{2}}{  \jap{\ell}^{\tau_1}}\Big)
\geq \frac{2\gamma^{3/2}}{\langle \ell \rangle^{\tau}}\,,
$$
taking  $\tc_1$ small enough by taking $\tC$ large and $\delta_0$ small.  Then 
 $ \omega \not \in R^{(-)}_{\ell j k \eta}(\gamma^{3/2}, \tau) $. 
\end{proof}

\begin{proof}[{\bf Proof of Theorem \ref{stimedimisura}}]
To prove the theorem we estimate separately the sets in \eqref{11.4}-\eqref{11.5}. 
We  prove the most difficult estimate $|\Lambda\setminus\Lambda_{2}^{-}| \lesssim \gamma $.  
We first observe 
that it is sufficient to prove the claim for $\gamma\in (0,\gamma_0)$ for $\gamma_0$ fixed but arbitrarily small.
Recall that  $\tau>2\nu+4$ as in \eqref{costanti} and take $ \tau_1 := \nu + 2 $. 
By Remark \ref{rmk.inclusione.diofantei}, we only need to consider $j\neq k$. Without loss of generality we can assume that $j>k$. Defining, for any $\ell\in \Z^{\nu}$, the sets  
$$
\begin{aligned}
& \mathcal{A}(\ell) :=\{(j,k)\in\N^2\,:\,
j > k\geq \mathtt{C}
\langle \ell \rangle^{\tau_1}\gamma^{-1/2}
 \}   \, ,  \\
&  \mathcal{B}(\ell) :=\{(j,k)\in\N_0^2\setminus\cA(\ell)\,:\,
 k<j\leq 2\mathtt{C}
 \langle \ell \rangle^{\tau_1}\gamma^{-1/2}
 \} \, , \\
& 
\mathcal{C}(\ell) :=\{(j,k)\in\N_0^2\setminus\cA(\ell)\,:\,
j\geq  2\mathtt{C}
\langle \ell \rangle^{\tau_1}\gamma^{-1/2}
\}  \, , 
\end{aligned}
$$ 
 where $\mathtt{C}$ is given by Lemma \ref{inclusionenelleprime}, the measure of the set
$ \Lambda\setminus\Lambda_{2}^{-} $  in \eqref{11.5} is bounded by   
 \begin{align*}
 	\left\lvert \bigcup_{\substack{\ell\in\mathbb{Z}^{\nu},j, k\in \N_{0}, \eta\in \{\pm\}\\ (\ell, j, k)\neq (0, j, j)}} 
 	R^{(-)}_{\ell j k \eta} (\g^{3/2}, \tau)  \right\rvert
 &	\le  \sum_{{\ell\in \Z^{\nu}\setminus \{0\}} }  \left| Q^{(0)}_{\ell} \right | +
 		\left\lvert \bigcup_{\substack{\ell\in \Z^{\nu}, \eta\in \{\pm\}\\(j,k)\in\mathcal{A}(\ell)}}R^{(-)}_{\ell j k \eta} (\g^{3/2}, \tau) 	\right\lvert  
 \\
 &	+
 	\sum_{\substack{\ell\in \Z^{\nu}, \eta\in \{\pm\}\\(j,k)\in\mathcal{B}(\ell)}}|R^{(-)}_{\ell j k \eta} (\g^{3/2}, \tau) |
 	+\sum_{\substack{\ell\in \Z^{\nu}, \eta\in \{\pm\}\\(j,k)\in\mathcal{C}(\ell)}}|R^{(-)}_{\ell j k \eta} (\g^{3/2}, \tau) | \, .
 \end{align*}
Standard estimates on diophantine vectors imply that the first summand is bounded proportionally to $\g$.
By Lemmata \ref{inclusionenelleprime},   \ref{singolo},  and  Lemma \ref{bip}$(i)$ we have
\begin{equation*}
\begin{aligned} 
	\left\lvert \bigcup_{\substack{\ell\in \Z^{\nu}, \eta\in \{\pm\}\\(j,k)\in\mathcal{A}(\ell)}}R^{(-)}_{\ell j k \eta} (\g^{3/2}, \tau) 	\right\lvert  \lesssim\sum_{\substack{\ell\in \Z^{\nu}, |h|\lesssim \jap{\ell}}}
  \!\!\! |Q^\1_{\ell h } (\g, \tau_{1}) | &\lesssim   
\sum_{\ell\in \Z^{\nu}, |h|\lesssim \jap{\ell}} 
\gamma \langle \ell \rangle^{-\tau_1-1}
\lesssim
 \gamma\sum_{\ell\in\Z^{\nu}}  \langle \ell \rangle^{-\tau_1}
 \lesssim \g \,.
\end{aligned}
\end{equation*}

We now consider the case  $(j, k)\in \cB(\ell)$. By Lemma  \ref{bip}$-(iii)$ we can restrict to $\big|\tD_{\mathtt{m}}(j)-\tD_{\mathtt{m}}(k) \big| \le C \jap{\ell}$.
If $j - k > 2\mathtt{m}$ then one has $\tD_{\mathtt{m}}(j)-\tD_{\mathtt{m}}(k) > j - k - \mathtt{m} > \frac{j-k}{2}$ and therefore $j - k < 2 C \jap{\ell}$. Hence
by using Lemma \ref{singolo} 
one has
\begin{equation*}
\begin{aligned}
\sum_{\substack{\ell\in \Z^{\nu}, \eta\in \{\pm\}\\(j,k)\in\mathcal{B}(\ell)}}|R^{(-)}_{\ell j k \eta} (\g^{3/2}, \tau) | &\lesssim 
\sum_{\substack{\ell\in \Z^{\nu}, \eta\in \{\pm\}\\(j,k)\in\mathcal{B}(\ell)\\ 2 \mathtt{m}< j - k < 2 C \jap{\ell}}}|R^{(-)}_{\ell j k \eta} (\g^{3/2}, \tau) | +
\sum_{\substack{\ell\in \Z^{\nu}, \eta\in \{\pm\}\\(j,k)\in\mathcal{B}(\ell)\\ j - k \le 2 \tm}}|R^{(-)}_{\ell j k \eta} (\g^{3/2}, \tau) | \\
& \lesssim \gamma^{3/2} 
\sum_{\ell\in\Z^{\nu}} \frac{\jap{\ell} \langle \ell \rangle^{\tau_1}}{\sqrt{\gamma}\langle \ell \rangle^{\tau+1}}  +
\gamma^{3/2}  \sum_{\ell\in\Z^{\nu}} \frac{ \langle \ell \rangle^{\tau_1}}{\sqrt{\gamma}\langle \ell \rangle^{\tau+1}} 
\lesssim \gamma \,,
\end{aligned}
\end{equation*}
using that $\tau-\tau_1=\tau-\nu-2>\nu+1$. 

Finally we study the case $(j, k)\in \cC(\ell)$. We claim that for any 
$(j, k)\in\cC(\ell)$ the set $R^{(-)}_{\ell j k \eta} (\g^{3/2}, \tau) $ is  empty.
Indeed, if $(j, k)\in\cC(\ell)$ then, by difference,  $ j - k  \ge \tC \langle \ell \rangle^{\tau_1}\gamma^{-1/2}$. Let us suppose that $R^{(-)}_{\ell j k \eta} (\g^{3/2}, \tau) \neq \emptyset$. By using Lemma \ref{bip}$(iii)$ we have
\[
C\jap{\ell} \ge \tD_{\mathtt{m}}(j)-\tD_{\mathtt{m}}(k) > j - k - \tm \ge \tC \langle \ell \rangle^{\tau_1}\gamma^{-1/2} -\tm  \ge \frac{\tC \langle \ell \rangle^{\tau_1}\gamma^{-1/2}}{2} 
\]
provided  $\gamma \le \left( \frac{\tC}{2m}\right)^{2}$.
The last inequality is not possible if
$\g_{0} < \min{  \left\{ ( \frac{\tC}{2m})^{2} , (\frac{\tC}{2C} \right)^{2}  \}}$. This contradiction 
proves the claim.

The estimates for 
$ \Lambda_{2}^{+}, \Lambda_{1}, \Lambda_0 $ follow similarly.
\end{proof}
\appendix



\begin{thebibliography}{12}


\bibitem{AG_book}
Alinhac S., G\'erard P.
\newblock {\em Pseudo-differential operators and the Nash-Moser theorem}.
\newblock Graduate studies in Math. vol 82, AMS 2007.


\bibitem{BBHM} Baldi P., Berti M., Haus E., Montalto R., 
\emph{Time quasi-periodic gravity water waves
in finite depth}, Inventiones Math. 214 (2), 739-911, 2018.


\bibitem{BBM14}
P.~Baldi, M.~Berti, and R.~Montalto, 
\newblock {\it K{AM} for quasi-linear and fully nonlinear forced perturbations of
{A}iry equation.}
\newblock { Math. Ann.}, 359(1-2):471--536, 2014.

\bibitem{BBM16}
P.~Baldi, M.~Berti, and R.~Montalto, 
\newblock {\it K{AM} for autonomous quasi-linear perturbations of {K}d{V}}.
\newblock { Ann. Inst. H. Poincar\'e Anal. Non Lin\'eaire},
  33(6):1589--1638, 2016.
  
  \bibitem{BM3d}
  P. Baldi,  R. Montalto, 
 \newblock{ \it Quasi-periodic incompressible Euler flows in 3D}. 
\newblock{ Advances in Math.}, 384:107730, pp. 1-74, 2021.

\bibitem{Bam1}
D. Bambusi, 
\newblock{\it Reducibility of 1-d Schr\"odinger equation with time quasiperiodic unbounded perturbations}, I. Trans. Amer. Math. Soc., 
370, 3, 1823--1865, 2018.


\bibitem{Bam2}
D. Bambusi, 
\newblock{\it Reducibility of 1-d Schr\"odinger Equation with Time Quasiperiodic Unbounded Perturbations},  II. Comm. Math. Phys., 353(1):353-378, 2017.


\bibitem{Bambusi-Graffi}
D. Bambusi  and S. Graffi, 
\newblock {\it {T}ime quasi-periodic unbounded perturbations of {S}chr\"odinger
  operators and kam method}.
\newblock {Commun. Math. Phys.}, 219:465--480, 2001.

\bibitem{BGMR2}
D~Bambusi, B.~Gr\'ebert, A.~Maspero, and D.~Robert, 
\newblock {\it {G}rowth of {S}obolev norms for abstract linear {S}chr\"odinger
equations}.
\newblock {J. Eur. Math. Soc.},  23, no.2, 557--583, 2021.

\bibitem{BGMR}
D.~Bambusi, B.~Gr\'ebert, A.~Maspero, and D.~Robert, 
\newblock {\it Reducibility of the quantum harmonic oscillator in d-dimensions with
  polynomial time-dependent perturbation}.
\newblock { Anal. PDE}, 11(3):775--799, 2018.


\bibitem{BLM}
D. Bambusi, B. Langella, R. Montalto, 
\emph{Reducibility of Non-Resonant Transport Equation on $\T^d$
with Unbounded Perturbations}, 
Ann. Henri Poincaré - A, 20:1893--1929, 2019.

\bibitem{BM}
D. Bambusi and R. Montalto, 
{\it Reducibility of 1-d Schr\"odinger equation with time quasiperiodic unbounded perturbations}, III. J. Math. Phys., 59(12):122702, 2018.

%
%

\bibitem{Berti-Biasco-Procesi-Ham-DNLW}
M. Berti, L. Biasco, M. Procesi,
\newblock {\it K{AM} theory for the {H}amiltonian derivative wave equation},
\newblock { Ann. Sci. \'Ec. Norm. Sup\'er. (4)}, 46(2):301--373,
  2013.
  

\bibitem{Berti-Biasco-Procesi-rev-DNLW}
M. Berti, L. Biasco L., M. Procesi,
\newblock {\it K{AM} for {R}eversible {D}erivative {W}ave {E}quations},
\newblock { Arch. Ration. Mech. Anal.}, 212(3):905--955, 2014.




\bibitem{BB12} M. Berti M., P. Bolle, {\it Sobolev quasi periodic solutions 
of multidimensional wave equations with a multiplicative potential},  Nonlinearity,  25, 2579-2613, 2012.


\bibitem{Berti-Bolle_book}
M. Berti and P. Bolle, 
\newblock {\em Quasi-periodic solutions of nonlinear wave equations on the
  {$d$}-dimensional torus}.
\newblock EMS Monographs in Mathematics. EMS Publishing House, Berlin, 
ISBN 978-3-03719-211-5, 
[2020]
  \copyright 2020.


\bibitem{BCP} M. Berti, L. Corsi, M. Procesi, {\it 
An abstract Nash-Moser Theorem and quasi-periodic solutions for NLW and NLS on compact Lie groups and homogeneous manifolds},  
Comm. Math. Phys. 334, no. 3, 1413-1454, 2015.

\bibitem{BFM1} M. Berti, L. Franzoi, A. Maspero, 
{\it Traveling quasi-periodic water waves with constant vorticity}, 
{Archive Rational Mechanics}, 240, 99-202,  2021.

\bibitem{BFM2}  M. Berti, L. Franzoi, A. Maspero, 
{\it Pure gravity traveling quasi-periodic water waves with constant vorticity}, 
 {Communications Pure Applied Mathematics}, 
Comm. Pure Appl. Math., 77: 990-1064, 2024.


\bibitem{BerHMasmo}
M. Berti, Z. Hassainia, N. Masmoudi.
\newblock{\it Time quasi-periodic vortex patches of Euler equation in the plane}.
\newblock{ Inventiones Math.}, 233:1279--1391, 2023.

\bibitem{BKM} M. Berti, T. Kappeler, R. Montalto, 
{\it Large KAM tori for quasi-linear perturbations of KdV}, 
{ Archive Rational Mechanics},
239, 1395-1500, 
2021. 





\bibitem{BM20} 
M. Berti M., R. Montalto, 
\newblock {\it Quasi-periodic  Standing Wave Solutions of Gravity-Capillary Water Waves,}
\newblock  
Memoires AMS, Volume 263, MEMO 1273, ISBNs: 978-1-4704-4069-5, 2020.


\bibitem{BoK}
A. Bobenko, S. Kuksin, {\it The nonlinear Klein-Gordon equation on an interval as a perturbed sine-
Gordon equation}, Comment. Math. Helv. 70, no. 1, 63-112, 1995.

\bibitem{Bo1}  J. Bourgain, {\it  Construction of quasi-periodic solutions 
for Hamiltonian perturbations of linear equations and applications 
to nonlinear PDE}, Internat. Math. Res. Notices, no. 11, 1994.

\bibitem{B-Gafa} J. Bourgain, {\it Construction of periodic solutions of 
nonlinear wave equations in higher dimension}, GAFA, v.5, n.4, 629-639, 1995. 


\bibitem{Blin} J. Bourgain, {\it Growth of Sobolev norms in linear Schr\"odinger equations with quasi-periodic potential}. Comm. Math. Phys., 204(1):207-247, 1999.

\bibitem{B5} J. Bourgain,  {\it Green's function estimates for lattice Schr\"odinger 
operators and applications}, Annals of Mathematics Studies 158,  Princeton University Press, Princeton, 2005.

\bibitem{BW1} J. Bourgain, W.M. Wang,  
{\it Anderson localization for time quasi-periodic
random Schr\"odinger and wave equations}, Comm. Math. Phys. 248, 429 - 466, 2004.


\bibitem{CY} L. Chierchia, J. You, {\it KAM tori for 1D
nonlinear wave equations with periodic boundary conditions}, Comm. Math. Phys. 211, 497-525, 2000.

\bibitem{Comb87}
M.~Combescure, 
\newblock {\it The quantum stability problem for time-periodic perturbations of the
  harmonic oscillator}.
\newblock { Ann. Inst. H. Poincar\'e Phys. Th\'eor.}, 47(1):63--83, 1987.

\bibitem{CM}
L.~Corsi and R.~Montalto,
\newblock  {\it {Q}uasi-periodic solutions for the forced {K}irchhoff equation on $\T^d $}.
\newblock {Nonlinearity}, 31: 5075, 2018.


\bibitem{CW} W. Craig, E. Wayne, {\it Newton's method and periodic solutions
of nonlinear wave equation}, Comm. Pure  Appl. Math. 46, 1409-1498, 1993.

\bibitem{D}
J.-M. Delort, 
\newblock {\it Growth of {S}obolev norms of solutions of linear {S}chr\"{o}dinger
equations on some compact manifolds}.
\newblock { Int. Math. Res. Not. IMRN}, (12):2305--2328, 2010.

\bibitem{DuS}
P. Duclos and P. Stovicek, 
\newblock  
{\it Floquet Hamiltonians with pure point spectrum}. Comm. Math. Phys., 177(2):327-347, 1996.


\bibitem{Eli}
H. Eliasson, 
\newblock {\it Almost reducibility of linear quasi-periodic systems}.
\newblock In {Smooth ergodic theory and its applications 
}, 
volume~69 of {\em Proc. Sympos. Pure Math.}, pages 679--705. Amer.
  Math. Soc., Providence, RI, 2001.
  
\bibitem{E17}
H. Eliasson, 
\newblock{{\it Almost Reducibility for the Quasi-Periodic Linear Wave Equation.}} The
conference ``In Memory of Jean-Christophe Yoccoz'', 2017.

\bibitem{EGK}
H. Eliasson, B.~Gr{\'e}bert,  S. Kuksin, 
\newblock {\it K{AM} for the nonlinear beam equation}.
\newblock {Geom. Funct. Anal.}, 26(6):1588--1715, 2016.

\bibitem{EK2}
H. Eliasson and S. Kuksin, 
\newblock {\it On reducibility of {S}chr\"odinger equations with quasiperiodic in
  time potentials}.
\newblock { Comm. Math. Phys.}, 286(1):125--135, 2009.

\bibitem{EK}
H. Eliasson and S. Kuksin, 
\newblock {\it K{AM} for the nonlinear {S}chr\"odinger equation}.
\newblock {Ann. of Math. (2)}, 172(1):371--435, 2010.


\bibitem{FGww}
R.~Feola, F.~Giuliani, 
\newblock {\it Quasi-periodic traveling waves on an infinitely deep fluid under gravity}.
\newblock {to appear on Memoires AMS}, arXiv:2005.08280.

\bibitem{FGMPmoser}
R.~Feola, F.~Giuliani, R.~Montalto, and M.~Procesi, 
\newblock {\it {R}educibility of first order linear operators on tori via {M}oser's
theorem.}
\newblock { Journal of Functional Analysis}, 276:932--970, 2019.

\bibitem{FGP19}
R.~Feola, F.~Giuliani, and M.~Procesi, 
\newblock {\it {R}educibility for a class of weakly dispersive linear operators
arising from the {D}egasperis {P}rocesi equation}.
\newblock {Dynamics of PDE}, 16(1):25--94, 2019.

\bibitem{FGP1}
R.~Feola, F.~Giuliani, and M.~Procesi, 
\newblock {\it Reducibile {K}{A}{M} tori for the {D}egasperis-{P}rocesi equation}.
\newblock { Comm. Math. Phys.}, 377:1681--1759, 2020.

\bibitem{FGsphere}
R. Feola,   B. Grébert, 
\newblock{\it Reducibility of Schr\"odinger Equation on the Sphere}.
\newblock{ Int. Math. Res. Not. IMRN}, 2021(19): 15082--15120, 2021.

\bibitem{FGNzoll}
R. Feola,   B. Grébert, T. Nguyen, 
\newblock{ \it Reducibility of Schr\"odinger equation on a Zoll manifold with unbounded potential}.
\newblock{J. Math. Phys.}, 61, 071501, 2020.


\bibitem{FP} R. Feola, M. Procesi, 
\newblock
{\it Quasi-periodic solutions for fully nonlinear forced
reversible Schr\"odinger equations}, 
\newblock {J. Diff. Eq.}, 259, no. 7, 3389-3447, 2015.

\bibitem{LF1}
L. Franzoi, 
{\it Reducibility for a linear wave equation with Sobolev smooth fast-driven potential}, 
Discrete Contin. Dyn. Syst.43, no.9, 3251-3285, 2023.

\bibitem{FM}
L. Franzoi; A. Maspero,
{\it Reducibility for a fast-driven linear Klein-Gordon equation}, 
Ann. Mat. Pura Appl. (4)198, no.4, 1407-1439, 2019.

\bibitem{GY}
J.~Geng and J.~You, 
\newblock {\it A {KAM} theorem for {H}amiltonian partial differential equations in
  higher dimensional spaces}.
\newblock { Comm. Math. Phys.}, 262(2):343--372, 2006.

\bibitem{GYX}
J.~Geng, J.~You, and X.~Xu, 
\newblock {\it An infinite dimensional {KAM} theorem and its application to the two dimensional cubic {S}chr\"odinger equation},
\newblock{Advances in Mathematics}, 226(6):5361--5402, 2011.





\bibitem{GsIP}
J. Gómez-Serrano, A.D. Ionescu, J. Park, 
\newblock{\it Quasiperiodic solutions of the generalized SQG equation}.
\newblock{ preprint arXiv:2303.03992}.


\bibitem{GrP} 
B. Gr\'ebert,  E. Paturel, {\it KAM for the Klein-Gordon equation on $ S^d $}, 
Bollettino dell'Unione Matematica Italiana (BUMI), 9(2), 237-288, 2016.

\bibitem{GrePatu}
B. Gr\'ebert, E. Paturel. \emph{On reducibility of Quantum Harmonic oscillator
on $\mathbb{R}^{d}$ with quasiperiodic in time potential}, Ann. 
Fac. Sci. Toulouse Math. (6)28, no.5, 977-1014, 2019.

 	\bibitem{HHM21} Z. Hassainia, T. Hmidi, N. Masmoudi, 
	{\it KAM theory for active scalar equations,} 
	{to appear on Memoires AMS}. 


\bibitem{LZ} S. Lojasiewicz, E. Zehnder, {\it An inverse function theorem in
Fr\'echet-spaces}, J.  Funct. Anal. 33, 165-174, 1979.

\bibitem{Kato-loc}
T.~Kato, 
\newblock {\em Spectral Theory and Differential Equations - chapter
	''Quasi-linear equations evolutions, with applications to partial
	differential equations''}, volume 448 of {\em Lecture Notes in Mathematics}.
\newblock Springer, Berlin, Heidelberg, 1975.



\bibitem{Ku}  S. Kuksin,
{\it Hamiltonian perturbations of infinite-dimensional linear
systems with imaginary spectrum},
Funktsional. Anal. i Prilozhen. 21, no. 3, 22-37, 95, 1987.


\bibitem{K2} S. Kuksin, {\it A KAM theorem for equations of the Korteweg-de Vries type},
Rev. Math. Phys., 10, 3, 1-64, 1998.

\bibitem{Ku1}
S. Kuksin, {\it Analysis of Hamiltonian PDEs}, Oxford University Press, Oxford, 2000.

\bibitem{KuNearly}
S. Kuksin, 
\newblock {\em Nearly integrable infinite-dimensional {H}amiltonian systems},
  volume 1556 of {\em Lecture Notes in Mathematics}.
\newblock Springer-Verlag, Berlin, 1993.

\bibitem{KP}
S. Kuksin and J.P{\"o}schel, 
\newblock {\it Invariant {C}antor manifolds of quasi-periodic oscillations for a
  nonlinear {S}chr\"odinger equation}.
\newblock { Ann. of Math.}, 143(1):149--179, 1996.

\bibitem{Mo}
R. Montalto, 
{ \it A reducibility result for a class of linear wave equations on $ \T^d $.}
{ Int. Math. Res. Not. IMRN}, 6: 1788--1862, 2019.


\bibitem{Mo18}
R. Montalto, 
{\it Growth of Sobolev norms for time dependent periodic Schrödinger equations with sublinear dispersion}.
\newblock { Journal of Differential Equations} 266(8), pp. 4953-4996, 2018.

\bibitem{Mo67} J. Moser, { \it Convergent series expansions for quasi-periodic motions},
{Math.Ann.} 169, 136-176, 1967.

\bibitem{Po2} J. P\"oschel,
{\it A KAM-Theorem for some nonlinear PDEs},  Ann. Sc. Norm. Pisa, 
23, 119-148, 1996.

\bibitem{Po3} J. P\"oschel, {\it Quasi-periodic solutions for
a nonlinear wave equation}, Comment. Math. Helv.,  71, no. 2,
269-296, 1996.

\bibitem{PP}
C.~Procesi and M.~Procesi, 
\newblock {\it A normal form for the {S}chr\"odinger equation with analytic
  non-linearities}.
\newblock { Comm.  Math. Phys.}, 312(2):501--557, 2012.

\bibitem{SLX}
Y. Sun,  J. Li, and B. Xie, 
{\it Reducibility for wave equations of finitely smooth potential with periodic boundary conditions}, 
J. Diff. Eq. 266, no.5, 2762-2804, 2019.




\bibitem{W1}  E. Wayne, {\it Periodic and quasi-periodic solutions
of nonlinear wave equations via KAM theory}, Comm. Math. Phys. 127, 479-528, 1990.


\end{thebibliography}

\end{document}